\newif\iffigureson
\newcounter{mtheorem}
\newtheorem{mtheorem}[mtheorem]{Theorem}
\newtheorem{theorem}{Theorem}[section]
\newtheorem{lemma}[theorem]{Lemma}
\newtheorem{prop}[theorem]{Proposition}
\newtheorem{corollary}[theorem]{Corollary}
\newtheorem{definition}[theorem]{Definition}
\theoremstyle{remark}
\newtheorem{remark}[theorem]{Remark}
\numberwithin{equation}{section}
\newcommand{\abs}[1]{\lvert#1\rvert}
\newcommand{\ymax}{y_\text{max}}
\newcommand{\ymin}{y_\text{min}}
\newcommand{\taumax}{\tau_\text{max}}
\newcommand{\fmax}{f_\text{max}}
\newcommand{\pp}{\mathsf{p}}
\newcommand{\tb}{{\underline{\tau}}}
\newcommand{\mb}{{\underline{m}}}
\newcommand{\pt}{\pp_\tau}
\newcommand{\ptb}{\pp_{\tb}}
\newcommand{\pthat}{\widehat{\pp}_\tau}
\newcommand{\ptbhat}{\widehat{\pp}_{\tb}}
\newcommand{\bw}{\mathbf{w}}
\newcommand{\RRR}{\mathsf{R}}
\newcommand{\SSS}{\mathsf{S}}
\newcommand{\TTT}{\mathsf{T}}
\newcommand{\CCC}{\mathsf{C}}
\newcommand{\EEE}{\mathsf{E}}
\newcommand{\MMM}{\mathsf{M}}
\newcommand{\LLL}{\mathsf{L}}
\newcommand{\III}{\mathsf{I}}
\newcommand{\DDD}{\mathsf{D}}
\newcommand{\tbar}{\underline{\TTT}}
\newcommand{\mtilde}{\widetilde{\MMM}}
\newcommand{\sbartilde}{\widetilde{\underline{\SSS}}}
\newcommand{\tbartilde}{\widetilde{\underline{\TTT}}}
\newcommand{\tbartildeplus}{\widetilde{\underline{\TTT}}_+}
\newcommand{\tbartildeminus}{\widetilde{\underline{\TTT}}_-}
\newcommand{\ttilde}{\widetilde{\TTT}}
\newcommand{\ccong}{\CCC}
\newcommand{\tcheck}{\hat{\TTT}}
\newcommand{\tbarcheck}{\underline{\tcheck}}
\newcommand{\sgn}{\operatorname{sgn}}
\newcommand{\beq}{\begin{equation}}
\newcommand{\eeq}{\end{equation}}
\newcommand{\bea}{\begin{eqnarray}}
\newcommand{\eea}{\end{eqnarray}}
\newcommand{\cy}{{C}alabi-{Y}au\ }
\newcommand{\ka}{{K}\"ahler\ }
\newcommand{\slg}{special {L}agrangian\ }
\newcommand{\Slg}{{S}pecial {L}agrangian\ }
\newcommand{\Lag}{\operatorname{Lag}}
\newcommand{\Slag}{\operatorname{SL}}
\newcommand{\C}{\mathbb{C}}
\newcommand{\R}{\mathbb{R}}
\newcommand{\Q}{\mathbb{Q}}
\newcommand{\Z}{\mathbb{Z}}
\newcommand{\N}{\mathbb{N}}
\newcommand{\M}{\mathbb{M}}
\newcommand{\HH}{\mathbb{H}}
\newcommand{\CP}{\mathbb{CP}}
\newcommand{\Sph}{\mathbb{S}}
\newcommand{\ra}{\rightarrow}
\newcommand{\no}{\noindent}
\newcommand{\diag}{\operatorname{diag}}
\newcommand{\vol}{\operatorname{Vol}}
\newcommand{\dvol}{\operatorname{dv}}
\newcommand{\Real}{\operatorname{Re}}
\newcommand{\Imag}{\operatorname{Im}}
\newcommand{\hcf}{\operatorname{hcf}}
\newcommand{\Sym}{\operatorname{Sym}}
\newcommand{\Symtilde}{\widetilde{\Sym}}
\newcommand{\Diff}{\operatorname{Diff}}
\newcommand{\sun}{\text{SU(n)}}
\newcommand{\son}{\text{SO(n)}}
\newcommand{\sech}{\operatorname{sech}}
\newcommand{\unit}[1]{\text{U}(#1)}
\newcommand{\sunit}[1]{\text{SU}(#1)}
\newcommand{\sorth}[1]{\text{SO}(#1)}
\newcommand{\orth}[1]{\text{O}(#1)}
\newcommand{\glc}[1]{\text{GL}(#1,\C)}
\newcommand{\ort}{\text{O}}
\newcommand{\lsorth}[1]{\mathfrak{so}(#1)}
\newcommand{\lorth}[1]{\mathfrak{o}(#1)}
\newcommand{\lsunit}[1]{\mathfrak{su}(#1)}
\newcommand{\Per}{\operatorname{Per}}
\newcommand{\Perh}{\operatorname{Per}_{\frac{1}{2}}}
\newcommand{\Id}{\operatorname{Id}}
\newcommand{\Isom}{\operatorname{Isom}}
\newcommand{\Isoml}{\Isom_{\textrm{Lag}}}
\newcommand{\Isomsl}{\Isom_{\textrm{SL}}}
\newcommand{\Isomslpm}{\Isom_{\pm \textrm{SL}}}
\newcommand{\IsomslpmJ}{\Isomslpm^{J}}
\newcommand{\iso}{\mathfrak{iso}}
\newcommand{\lcm}{\operatorname{lcm}}
\newcommand{\Aut}{\operatorname{Aut}}
\newcommand{\dihedral}[1]{\mathbf{D}_{#1}}
\newcommand{\cyclic}[1]{\mathbf{C}_{#1}}
\newcommand{\symoff}[1]{\mathrm{Sym}_{\textrm{off}}(#1,\R)}
\newcommand{\stabpq}{\mathrm{Stab}_{p,q}}
\newcommand{\surj}{\twoheadrightarrow}
\newcommand{\Stab}{\mathrm{Stab}}
\newcommand{\cylpq}{\mathrm{Cyl}^{p,q}}
\newcommand{\cylpp}{\mathrm{Cyl}^{p,p}}
\newcommand{\cylone}{\mathrm{Cyl}^{1,n-1}}
\newcommand{\merpq}{\mathrm{Mer}^{p,q}}
\newcommand{\merone}{\mathrm{Mer}^{1,n-1}}
\newcommand{\Omegasf}{\mathsf{\Omega}}
\newcommand{\ki}{\mathsf{k}}
\newcommand{\gbar}{\overline{g}}
\newcommand{\hot}{{\text{h.o.t.}}}
\newcommand{\psig}{\pp_\sigma}
\newcommand{\psighat}{\widehat{\pp}_\sigma}
\newcommand{\varphit}{f_{\mathsf{t} }}
\newcommand{\phitilde}{{\widetilde{\phi}}}
\newcommand{\varphitilde}{{\widetilde{\varphi}}}
\newcommand{\bz}{\mathbf{z}}
\begin{document}

\title{Twisted products and $\sorth{p}\times \sorth{q}$-invariant special Lagrangian cones}

\author[M.~Haskins]{Mark~Haskins}
\address{Department of Mathematics, South Kensington Campus, Imperial
College London}
\email{m.haskins@imperial.ac.uk}

\author[N.~Kapouleas]{Nikolaos~Kapouleas}
\address{Department of Mathematics, Brown University, Providence,
RI 02912} \email{nicos@math.brown.edu}


\date{\today}


\keywords{Differential geometry, isolated singularities, calibrated geometry, minimal submanifolds}

\begin{abstract}
We construct $\sorth{p} \times \sorth{q}$-invariant special Lagrangian (SL) cones 
in $\C^{p+q}$. These SL cones are natural higher-dimensional analogues of the $\sorth{2}$-invariant 
SL cones constructed previously by MH and used in our gluing constructions 
of higher genus SL cones in $\C^{3}$. We study in detail the geometry of these $\sorth{p}\times \sorth{q}$-invariant 
SL cones,  in preparation for their application to our higher dimensional special Legendrian gluing constructions. 
In particular the symmetries of these cones and their asymptotics near the spherical limit are analysed.

All $\sorth{p} \times \sorth{q}$-invariant SL cones arise from a more general construction of independent interest 
which we call the special Legendrian twisted product construction. Using this 
twisted product construction and simple variants of it we can construct a constellation of new 
special Lagrangian and Hamiltonian stationary cones in $\C^{n}$. We prove the following theorems: 
A. there are infinitely many topological types of special Lagrangian and Hamiltonian stationary cones in 
$\C^{n}$ for all $n\ge 4$, B. for $n\ge 4$ special Lagrangian and Hamiltonian stationary torus cones in $\C^{n}$ can occur in 
continuous families of arbitrarily high dimension and C. for $n\ge 6$ there are infinitely many 
topological types of special Lagrangian and Hamiltonian stationary cones in $\C^{n}$ that can occur in 
continuous families of arbitrarily high dimension.
\end{abstract}

\maketitle

\newif\ifcommentson
\commentsonfalse

\ifcommentson
\small
\textbf{General:}
\begin{enumerate}
\item Finalise title of this paper and others in series
\item Finalise Organisation of Paper section
\item Acknowledgements?
\item \ref{P:X:tau}: do we need to include a decay statement? Curvature properties of $X_{\tau}$?
\end{enumerate}

\small
\textbf{Mark to do:}
\begin{enumerate}
\item Legn nhd theorem
\item Marked spheres: finalise defn, does a marked sphere determine an SL embedding?
\item Lagn catenoid discussion and the total angle change $\pi/n$ as an integral.
\item \ref{E:w:ic:p:eq:1} Discuss geometric reasons for different choices of initial data for ODEs in $p=1$, $p\neq 1$?
\end{enumerate}

\small
\textbf{Nicos to do:}
\begin{enumerate}
\item Include discussion of marked SL spheres or not? Yes, do this in $\tau \ra 0$ section.
\item Prove carefully results about the necks approximating Lagrangian catenoids.
\item Section \ref{S:asymptotics}:
Finish detailed asymptotics for $\pt$, $\pthat$ and first derivatives
\end{enumerate}

\small
\textbf{Nicos request list:}
\begin{enumerate}
\item Non $\sunit{n}$ torques
\end{enumerate}

\small
\textbf{Done:}
\begin{enumerate}
\item (1/2/10) Wrote Abstract
\item (8/2/10) Tried to use UK spellings consistently throughout
\item (17/3/10) Merged section on heuristic discussion of geometry as $\tau \ra 0$ with Nicos discussion
\item (8/2/10) Corrected Figures 1 and 2 so that don't use $p$ and $q$ with $p>q$
\item (24/11/09) Section \ref{S:slg}. Added definitions of Hamiltonian stationary in $\C^{n}$, contact stationary 
in $\Sph^{2n-1}$; HS cones and CS links etc. Discussed relevance of HS cones to Schoen-Wolfson programme in 2d.
\item (1/2/10) Added explicit form of curves giving links of 2d HS cones in \ref{E:hs:cones:2d}.
\item (23/11/09) Section \ref{S:twist:prod}: Checked Lemma 3.1 for signs in the statement.
\item Added Remark \ref{R:twist:def:p:eq:1} on twisted products in the degenerate case where one factor is a point.
\item Added Remark \ref{R:product:cones} on product cones. Added discussion of products of Euclidean spaces.
\item (24/11/09) Following Prop \ref{P:leg:triple}: added analogous Prop \ref{P:leg:twist:p:eq:1} discussing degenerate case $p=1$ case
\item (1/2/10): Added subsubsection on twisted products of contact stationary immersions
\item (1/2/10): reworked consequences of the closed twisted product constructions, restated as Theorems A, B and C with proofs 
given.
\item Section \ref{S:twist:prod}: Prop \ref{C:so:invariant:leg} $p=2$ okay.
\item (24/11/09) Section \ref{S:twist:prod}: added several concrete theorems (A--C) about SL cones with arbitrary topology in high dimension and in cts families of arbitrarily high dim and similar results for Ham stationary cones.
\item Section \ref{S:sop:soq} intro: added reference to Anciaux in section on work of other authors.
\item Section \ref{S:sop:soq} p15: Discussion of isotropic orbits added.
\item Section \ref{S:sop:soq} p16, added reference for Joyce theorem.
\item (14/10/09) Finished proof of Prop \ref{P:odes:p:n}
\item (21/11/09) Added Defn \ref{D:M:period} and Lemma \ref{L:M:periods} on periods and half-periods of $\MMM$
\item (15/10/09) Section  \ref{S:sop:soq}: Added Figure \ref{fig:fplot} to illustrate $\ymax$, $\ymin$ the function $f(y)$ 
\item (14/10/09) Finished proof of \ref{P:y}: now discuss periodicity of $y_{\tau}$
\item (15/10/09) Added Figure \ref{fig:ctau} to show periodic orbits for $0<\abs{\tau}<\taumax$ and the geometry of the 
singular points.
\item (14/10/09) \ref{E:y:maxmin:asym}: Add error terms in $\tau \ra 0$ asymptotics for $\ymax$ and $\ymin$
\item (21/11/09) Added comment on $G$-invariant versus $G$-equivariant.
\item Lemma \ref{P:w:tau}: added proof of smooth dependence on $\tau$
\item Finish symmetries of $\bw_{\tau}$ in Lemma \ref{L:y:w:symmetry:p:neq:1}
\item (21/11/09) Altered definition of $\Perh(\bw)$ and $\Per(\bw)$ in Defn \ref{D:w:period} and discussion following.
\item (23/11/09) Rewrote proof of Lemma 5.43 on self-intersection points of $X_{\tau}$; it now uses only the ODEs for $\bw_{\tau}$ rather than the Cartan-Kahler theory as previously.
\item (8/12/09) Rewrote proof of Prop \ref{P:isom:pb} more carefully.
\item Wrote correct definition of (internal) central product of groups. (Checked with Liebeck).
\item  Wrote out structure of $\Sym(X_{\tau})$ etc for the case $p\neq1$
\item (24/11/09) Finished proof of Lemma \ref{L:discrete:str} on the structure of $\mathbf{D}$.
\item (9/12/09) Added Remark \ref{R:isom:pb:ex} on enhanced isometry group when $\tau=\taumax$ and situation for $\tau=0$.
\item 28/12/09 Added subsubsection on the rotational period of $X_{\tau}$ and its relation to the rotational period of $\bw_{\tau}$
\item (9/12/09) Added $p>1$ analogues of Remark \ref{R:pm:holo} on the $\pm$ holomorphic isometries in $\Symtilde(X_{\tau})$.
\item Derive results about $\Per(X_{\tau})$ from the structure results for  $\Sym$ and $\Symtilde$
\item (8/01/10) Added section on ``Waists, Bulges and Approximating Spheres''
\item (5/10/09) Added statement and calculation of torques for $X_{\tau}$ 
\item (10/10/09) Added proof of torque vanishing for off-diagonal $\ki$
\item (21/01/10) Added Remark \ref{R:flux:reposn} on behaviour of torques under repositioning
\item Restructured Section \ref{S:embedded} and added Lemmas \ref{L:symtilde:p:eq:1}, \ref{L:symtilde:p:neq:q} and \ref{L:symtilde:p:eq:q} and Corollaries \ref{C:xtau:period:p:eq:1}, 
\ref{C:xtau:period:p:neq:1} and \ref{C:xtau:period:p:eq:q}.
\item (12/3/10) Section  \ref{S:asymptotics}: mention consequences of $\pthat$ asymptotics for closed $(p,q)$-twisted SL curves.
\item  added proofs of Lemmas \ref{L:embed:p:eq:1} and \ref{L:embed:p:eq:q}
\end{enumerate}

\normalsize
\else
\fi

\section{Introduction}
\label{S:intro}
\nopagebreak
Let $M$ be a \cy manifold of complex dimension $n$ with \ka form $\omega$ and non-zero parallel
holomorphic $n$-form $\Omega$. Suitably normalised $\Real{\Omega}$ is a calibrated
form whose calibrated submanifolds are called special Lagrangian (SL) submanifolds \cite{harvey:lawson}.
Beginning in the mid 1990s moduli spaces of SL submanifolds appeared in string theory
\cite{becker,syz,hitchin:moduli,hitchin:slg:lectures,joyce:syz}.
On physical grounds, Strominger, Yau and Zaslow argued that a \cy manifold $M$
with its mirror partner $\widehat{M}$ admits a (singular) fibration by SL tori,
and that $\widehat{M}$ should be obtained by compactifying the dual fibration \cite{syz}.
To make these ideas rigorous one needs control over the singularities and compactness
properties of families of SL submanifolds.
In dimensions three and higher these properties
are not well understood. As a result there has been considerable recent interest
in singular SL subvarieties
\cite{butscher,gross:slg:examples,gross:slgfib1,gross:slgfib2,haskins:thesis,haskins:slgcones,haskins:complexity,joyce:sing:survey,y:lee,schoen:wolfson,schoen:wolfson:1996}.

One natural class of singular SL $n$-folds is the class of SL varieties with isolated
conical singularities \cite{joyce:sing:survey}. These are compact SL $n$-folds
of \cy manifolds which are singular at a finite number of points, near each of which
they resemble asymptotically some SL cone $C$ in $\C^n$ with the origin as the only singular
point of $C$.
This motivates the recent interest in constructing SL cones in $\C^n$ with an
isolated singularity at the origin
\cite{haskins:thesis,haskins:slgcones,mcintosh:slg,carberry:mcintosh,joyce:symmetries}.
Until recently few examples of such SL cones were known.
Between 2000 and 2005, many new families of examples were constructed using techniques from equivariant differential 
geometry, symplectic geometry and integrable systems
\cite{haskins:thesis,haskins:slgcones,joyce:symmetries,mcintosh:slg,carberry:mcintosh}.
In contrast, PDE techniques were not used in the construction of new special Lagrangian cones during this period
(see Joyce's series of papers \cite{joyce:u1:i,joyce:u1:ii,joyce:u1:iii} on $\unit{1}$-invariant special Lagrangians in $\C^3$ for PDE constructions of other special Lagrangian $3$-folds.)

In 2007 we used geometric PDE gluing methods to construct a plethora of
new special Lagrangian cones in $\C^3$ \cite{haskins:kapouleas:invent}. 
The basic building blocks of our gluing constructions were the $\sorth{2}$-invariant 
special Lagrangian cones first constructed in \cite{haskins:slgcones,haskins:thesis} and
studied further in \cite{haskins:kapouleas:invent}. It is very natural to ask if 
one can also use geometric PDE gluing methods to construct a variety of new 
special Lagrangian cones in $\C^{n}$ for $n>3$. Recently we achieved such 
a construction (as announced in our detailed survey article \cite{haskins:kapouleas:survey}). 
The current paper is the first in a series of 
three \cite{haskins:kapouleas:hd2,haskins:kapouleas:hd3}
in which the constructions announced and summarised in our survey 
\cite{haskins:kapouleas:survey} are carried out in detail.

\subsubsection*{Higher dimensional building blocks for gluing constructions} 
The main purpose of the current article is to construct the appropriate higher 
dimensional building blocks, analogous to the $\sorth{2}$-invariant SL cones 
we used in \cite{haskins:kapouleas:invent}, and to describe them at the level of detail needed 
for the gluing applications in \cite{haskins:kapouleas:hd2,haskins:kapouleas:hd3}. 
Appropriate higher dimensional 
building blocks are provided by $\sorth{p} \times \sorth{q}$-invariant SL 
cones in $\C^{p+q}$. When $(p,q)=(1,2)$, these are exactly the $\sorth{2}$-invariant
SL cones we used as building blocks in our three-dimensional constructions. 
$\sorth{p} \times \sorth{q}$-invariant SL cones share many features 
of the $\sorth{2}$-invariant SL cones in $\C^{3}$. 

\subsubsection*{A variety of building blocks}
Because in dimension $n$ 
there are $[\tfrac{n}{2}]$ possible building blocks available, there are many 
different ways to attempt gluing constructions of higher dimensional special Legendrian 
submanifolds. In the forthcoming articles \cite{haskins:kapouleas:hd2,haskins:kapouleas:hd3} and in our survey \cite{haskins:kapouleas:survey} we describe how to use both the $\sorth{n-1}$-invariant
and $\sorth{p}\times \sorth{p}$-invariant special Legendrians as the basic building blocks for two
different gluing constructions of a plethora of higher dimensional special Legendrian submanifolds. 
The constructions using the $\sorth{n-1}$-invariant special Legendrians are the most direct high-dimensional 
analogues of our previous three-dimensional gluing constructions. 
By contrast, the geometry and the analysis 
for constructions based on the $\sorth{p}\times \sorth{p}$-invariant special Legendrians 
turns out to be rather different to the three-dimensional construction.

\subsubsection*{$\sorth{p}\times\sorth{q}$-invariant special Legendrians}
In this article we will describe in detail the geometry of $\sorth{p} \times \sorth{q}$-invariant 
special Legendrians for all $p$ and $q$, even though  in our present gluing constructions 
we will only use the $\sorth{n-1}$-invariant and 
$\sorth{p}\times \sorth{p}$-invariant ones. 
We view the $\sorth{p}\times \sorth{q}$-invariant special Legendrians as very natural 
geometric objects worthy of study themselves and there is little extra effort (and indeed some 
economy) in studying them for all values of $p$ and $q$. Also while a direct analogue of the gluing 
constructions described in \cite{haskins:kapouleas:hd2,haskins:kapouleas:hd3,haskins:kapouleas:survey} will fail using these 
other building blocks (for reasons discussed in our survey article \cite{haskins:kapouleas:survey})
 there are certainly other plausible gluing constructions in which they might 
profitably be deployed in the future. 
While many features of the $\sorth{p}\times \sorth{q}$-invariant special
 Lagrangian cones do not depend on $p$ and $q$, there are some crucial geometric differences between
 the cases (i) $p=1$, (ii) $p=q$ and (iii) $p\neq q$ and $p\neq 1$. These differences are important for our gluing constructions and 
 for this reason we will at a later stage separate our study of $\sorth{p}\times \sorth{q}$-invariant special Lagrangian cones into these three cases. 
 
 As we remarked earlier for $(p,q)=(1,2)$ the $\sorth{p} \times \sorth{q}$-invariant special Legendrians  are precisely the $\sorth{2}$-invariant ones studied previously in \cite{haskins:thesis,haskins:slgcones,haskins:kapouleas:invent}.  To our knowledge, for general $(p,q)$, 
 $\sorth{p}\times \sorth{q}$-invariant special Legendrians were first studied 
 by Castro-Li-Urbano \cite{castro:li:urbano}.
 We study the geometry of  $\sorth{p}\times \sorth{q}$-invariant special Legendrians in 
 considerably more detail than in \cite{castro:li:urbano}; we pay particular attention to the 
 geometric features central to their use as the building blocks for our gluing constructions of higher dimensional special Legendrian submanifolds. In particular:
 (a) we prove the the existence of countably infinitely many closed embedded $\sorth{p}\times \sorth{q}$-invariant special Legendrians
 for any admissible $p$ and $q$, 
 (b) we classify the extra discrete symmetries enjoyed by $\sorth{p} \times \sorth{q}$-invariant special Legendrians, 
 and (c) we study the geometry and detailed asymptotics of the family of $\sorth{p}\times \sorth{q}$-invariant special Legendrians 
 close to the equatorial sphere limit where the family degenerates. 
 All three components play an essential role in the use of $\sorth{p} \times \sorth{q}$-invariant special Legendrians 
 as building blocks in our higher dimensional gluing constructions \cite{haskins:kapouleas:survey,haskins:kapouleas:hd2,haskins:kapouleas:hd3}.
 
 \subsubsection*{Special Legendrian twisted products}
The construction of $\sorth{p} \times \sorth{q}$-invariant special Legendrians 
can be viewed as a special case of what we call the \emph{twisted product} construction. 
This twisted product construction (see Definition \ref{D:twisted:product}) gives a way to combine a 
pair of lower-dimensional Legendrian immersions  $X_{1}: \Sigma_{1} \ra \Sph^{2p-1}$ and 
$X_{2}: \Sigma_{2} \ra \Sph^{2q-1}$ 
 with a Legendrian curve $\bw: I \ra \Sph^{3}\subset \C^{2}$ to produce a new Legendrian immersion  
 $X_{1}*_{\bw}X_{2}: I \times \Sigma_{1}\times \Sigma_{2} \ra \Sph^{2p+2q-1}\subset \C^{p+q}$. 
If the twisting curve  $\bw:I \ra \Sph^{3}$ is appropriately chosen then the cone over  $X_{1}*_{\bw}X_{2}$
is just the product of the cones over $X_{1}$ and $X_{2}$, hence the origin of the term twisted product.

The Lagrangian phase of the twisted product $X_{1}*_{\bw}X_{2}$ is determined by the
Lagrangian phases of $X_{1}$, $X_{2}$ and the twisting curve $\bw$ (see \ref{E:twist:angle}). This formula implies 
that if the twisting curve $\bw:I \ra \Sph^{3}$ satisfies a certain condition 
depending on $p$ and $q$ (see \ref{E:w:slg}) then the $\bw$-twisted product $X_{1}*_{\bw}X_{2}$ 
is special Legendrian whenever both $X_{1}$ and $X_{2}$ are also special Legendrian. 
Twisting curves $\bw:I \ra \Sph^{3}$ satisfying the condition \ref{E:w:slg} we call 
\emph{$(p,q)$-twisted SL curves} in $\Sph^{3}$. The crucial point here about condition \ref{E:w:slg} 
is that it depends on $p$ and $q$ but not on the immersions $X_{1}$ and $X_{2}$.
Thus we can use the twisted product construction 
to produce special Legendrian immersions from lower-dimensional special Legendrian immersions
provided that we can find $(p,q)$-twisted SL curves. To produce special Legendrian immersions 
of closed manifolds via $(p,q)$-twisted SL curves we need to find closed $(p,q)$-twisted SL curves. 

In the special case where the two Legendrian immersions $X_{1}$ and $X_{2}$ are the standard 
real equatorial embeddings $\Sph^{p-1}\ra \Sph^{2p-1}\subset \C^{p}$ and 
$\Sph^{q-1} \ra \Sph^{2q-1}\subset \C^{q}$ respectively the resulting twisted product  
$X_{1}*_{\bw} X_{2}$ is an $\sorth{p} \times \sorth{q}$-invariant Legendrian. 
If additionally, the twisting curve $\bw$ is a $(p,q)$-twisted SL curve then the twisted 
product of these two equatorial embeddings is an $\sorth{p} \times \sorth{q}$-invariant special 
Legendrian. Conversely, every $\sorth{p} \times \sorth{q}$-invariant special Legendrian 
arises in this way as a twisted product with some $(p,q)$-twisted SL curve. Thus the study 
of $\sorth{p} \times \sorth{q}$-invariant special Legendrians in $\Sph^{2p+2q-1}$ 
essentially reduces to the study of   $(p,q)$-twisted SL curves in $\Sph^{3}$.
The twisted product construction first appeared in the work of Castro-Li-Urbano \cite{castro:li:urbano},   
although the terminology twisted product is our invention.

\subsubsection*{A plethora of new special Lagrangian and Hamiltonian stationary cones}
Although our main interest in $(p,q)$-twisted SL curves in $\Sph^{3}$ is their intimate connection 
to $\sorth{p} \times \sorth{q}$-invariant special Legendrians,  surprisingly good mileage can be obtained by applying our results about $(p,q)$-twisted SL curves in $\Sph^{3}$ to more general 
twisted products, i.e. where $X_{1}$ and $X_{2}$ are not just standard equatorial embeddings of 
spheres. 

Our results about $(p,q)$-twisted SL curves in $\Sph^{3}$ together with 
our previous gluing constructions of higher genus SL cones in $\C^{3}$ \cite{haskins:kapouleas:invent} allow 
us to construct a wealth of new topological types of higher-dimensional special Lagrangian and Hamiltonian 
stationary cones. 

\medskip

\noindent
\textbf{Theorem A.}
\begin{itemize}
\item[(i)]
\emph{For any $n\ge 4$ there are infinitely many topological types of special Lagrangian cone in $\C^{n}$, 
each of which is diffeomorphic to  the cone over a product $S^{1}\times \Sigma'$ for some smooth manifold $\Sigma'$, 
and each of which admits infinitely many distinct geometric representatives.}
\item[(ii)]
\emph{For any $n \ge 4$ there are infinitely many topological types of Hamiltonian stationary cone in $\C^{n}$ 
which are not minimal Lagrangian, each of which is diffeomorphic to the cone over a product $S^{1}\times \Sigma'$ for some smooth manifold $\Sigma'$, 
and each of which admits infinitely many distinct geometric representatives.}
\end{itemize}
Similarly combining our results about $(p,q)$-twisted SL curves with the work of Carberry-McIntosh \cite{carberry:mcintosh} 
on special Legendrian $2$-tori via integrable systems methods we obtain the following 

\medskip

\noindent
\textbf{Theorem B.}
\begin{itemize}
\item[(i)] 
\emph{For $n\ge 3$ there exist special Legendrian immersions of $T^{n-1}$ in $\Sph^{2n-1}$ which come in continuous families of arbitrarily high dimension.}
\item[(ii)] 
\emph{For $n \ge 4$ there exist contact stationary (and not minimal Legendrian) immersions of $T^{n-1}$ in $\Sph^{2n-1}$ which come in continuous families 
of arbitrarily high dimension.}
\end{itemize}
Finally, by combining the twisted product construction with both integrable systems methods and our gluing methods 
for special Legendrian surfaces in $\Sph^{5}$ we obtain the following striking results

\medskip

\noindent
\textbf{Theorem C.}
\begin{itemize}
\item[(i)] 
\emph{For any $n\ge 6$ there are infinitely many topological types of special Lagrangian cone in $\C^{n}$ of 
product type which can occur in continuous families of arbitrarily high dimension.}
\item[(ii)]  
\emph{For each $n\ge 6$ there are infinitely many topological types of Hamiltonian stationary cone in $\C^{n}$ of product 
type which are not minimal Lagrangian and which can occur in continuous families of arbitrarily high dimension.}
\end{itemize}
It is difficult to see how either integrable systems methods or gluing methods by themselves could yield 
a result like Theorem C.

\subsubsection*{$(p,q)$-twisted SL curves, ODEs and $\sorth{p}\times \sorth{q}$-invariant special Legendrians}
The key to our study of $(p,q)$-twisted SL curves in $\Sph^{3}$ is the simple Lemma \ref{L:w:slg}
which shows that there is a very natural system of complex $1$st order ODEs (\ref{E:slg:ode})
whose integral curves are $(p,q)$-twisted SL curves in $\Sph^{3}$ and conversely that 
$(p,q)$-twisted SL curves in $\Sph^{3}$ always admit parametrisations satisfying \ref{E:slg:ode}. 
Since  the generic isotropic orbit of $\sorth{p} \times \sorth{q}$ has dimension $p+q-2$
one expects that the $1$st order PDEs for special Legendrians 
should reduce to a system of $1$st order ODEs under this symmetry assumption.
When $(p,q)=(1,2)$ \ref{E:slg:ode} reduces to the fundamental ODEs  used to 
study $\sorth{2}$-invariant special Legendrians in 
\cite[eqn. 3.18]{haskins:kapouleas:invent}. The ODEs \ref{E:slg:ode} for 
$(p,q)$-twisted SL curves in $\Sph^{3}$ can be studied by methods similar to 
those used in our earlier work \cite{haskins:kapouleas:invent}.

\subsubsection*{The $1$-parameter family $X_{\tau}$ and its geometry as $\tau \ra 0$}
Many features of the $\sorth{2}$-invariant special Legendrians in $\Sph^{5}$ 
have  analogues for $\sorth{p} \times \sorth{q}$-invariant special Legendrians in 
$\Sph^{2p+2q-1}$. For instance, for each $(p,q)$ there is a real $1$-parameter family of 
distinct $\sorth{p} \times \sorth{q}$-invariant special Legendrian cylinders 
$X_{\tau}:\R \times \Sph^{p-1} \times \Sph^{q-1} \ra \Sph^{2p+2q-1}$ depending real analytically 
on $\tau$ (see Proposition \ref{P:X:tau}). Geometrically, the parameter $\tau$ controls the maximum of
the absolute value of the curvature that occurs on the cylinder $X_\tau$ and this value tends to infinity
as $\tau \ra 0$. Hence as $\tau \ra 0$, the family $X_\tau$ must degenerate in some way.
In all our cases as $\tau \ra 0$, $X_\tau$ approaches a necklace of equatorial $n-1$
spheres. In this sense our building blocks are reminiscent of building blocks used in other gluing
constructions---Delaunay surfaces in the construction of CMC surfaces in $\R^3$ \cite{kapouleas:bulletin,kapouleas:annals,kapouleas:jdg,kapouleas:cmp} and
Delaunay/Fowler metrics in the construction of constant scalar curvature metrics \cite{schoen}.
The fact that the family $X_\tau$ degenerates to a union of very simple geometric
objects is fundamental to our gluing constructions in \cite{haskins:kapouleas:invent,haskins:kapouleas:hd2,haskins:kapouleas:survey}.

\subsubsection*{Spherical SL necklaces of type $(p,q)$}
For a given value of $p+q=n$, but different values of $p$ and $q$,
the $\sorth{p} \times \sorth{q}$-invariant special Legendrian submanifolds
all approach a necklace of equatorial $n-1$ spheres as $\tau \ra 0$. For different values of $(p,q)$
these give rise to different kinds of spherical necklaces, in which
the geometry of the transition regions that connects two adjacent almost spherical regions,
 and thus the relative positioning of adjacent almost spherical regions,  changes.
For $\sorth{n-1}$-invariant special Legendrians each limiting equatorial sphere has two identical transition regions 
which localise on two antipodal points of the equatorial sphere.  
Suitably enlarged the core of each of these transition regions resembles a Lagrangian catenoid as $\tau \ra 0$. 
If $p>1$, the geometry of the transition regions of the $\sorth{p} \times \sorth{q}$-invariant special Legendrians 
is more complicated. In this case there are two different kinds of transition regions: one which localises on 
a $p-1$ dimensional equatorial subsphere and another which localises on a $q-1$ dimensional equatorial subsphere. 
In the former case the core of the transition region resembles the product of a unit $p-1$ sphere with a small $q$ 
dimensional Lagrangian catenoid, and in the latter case the product of a small $p$ dimensional Lagrangian catenoid 
with a unit $q-1$ sphere. In the special case $p=q$ these two kinds of transition regions are isometric and there exist 
discrete symmetries that exchange the two kinds of transition regions; these symmetries cannot exist in the case $p \neq q$. 
The geometry of the almost spherical regions and the different kinds of transition regions are described in detail in Section \ref{S:xtau:limit}.

\subsubsection*{Symmetries of the building blocks $X_{\tau}$}
The $\sorth{p} \times \sorth{q}$-invariant special Legendrians $X_{\tau}$ possess 
additional discrete symmetries beyond the  $\sorth{p} \times \sorth{q}$ symmetry. 
An important feature of the gluing constructions carried out in \cite{haskins:kapouleas:hd2,haskins:kapouleas:hd3} is that we 
exploit fully these discrete symmetries to simplify the later analysis (as described in our survey article 
\cite{haskins:kapouleas:survey}). Hence we give a detailed description of all symmetries 
the building blocks $X_{\tau}$ enjoy.

\subsubsection*{Closed special Legendrian immersions from $X_{\tau}$ and closed $(p,q)$-twisted SL curves}
The immersions $X_{\tau}$ give us special Legendrian immersions
of the generalised cylinder $\R \times \Sph^{p-1}\times \Sph^{q-1}$.
As in the $\sorth{2}$-invariant case a single angular period $\pthat$ (defined precisely in \ref{E:pthat}) determines
when $X_\tau$ factors through a closed special Legendrian embedding 
and hence gives rise to a SL cone in $\C^n$ with closed link. 
By studying the dependence of $\pthat$ on $\tau$  (see Proposition \ref{P:asymptotics-pthat})
we prove that for a dense set of $\tau$, $X_\tau$ factors as above; 
this is intimately connected with  understanding for what values of $\tau$ we can find 
\emph{closed} $(p,q)$-twisted SL curves in $\Sph^{3}$ (see \ref{T:w:closed} and \ref{T:Xtau:embed}). The latter is important for the applications to
 construct new special Legendrian immersions of the closed manifold 
 $S^{1}\times \Sigma_{1}\times \Sigma_{2}$ from a pair of lower-dimensional special Legendrian immersions of 
$\Sigma_{1}$ and $\Sigma_{2}$. 

\subsubsection*{Asymptotics of the angular period $\pthat$ as $\tau \ra 0$}
The behaviour of the angular period $\pthat$ and its derivative $\frac{d\pthat}{d\tau}$ as $\tau \ra 0$ is 
needed to understand quantitively how the geometry of $X_{\tau}$ changes when we make a small change in $\tau$ (and $\tau$ itself is also small). 
Understanding this behaviour is crucial to the gluing applications in \cite{haskins:kapouleas:invent,haskins:kapouleas:hd2,haskins:kapouleas:hd3,haskins:kapouleas:survey}.
The angular period $\pthat$ can be expressed as an integral, which for $\sorth{2}$-invariant SL cones 
is an elliptic integral (of the third kind) \cite[eqn. 3.34 \& Appendix A]{haskins:kapouleas:invent}. 
In\cite{haskins:kapouleas:invent} 
we exploited results about elliptic integrals to prove our small $\tau$ asymptotics for $\tfrac{d\pthat}{d\tau}$ \cite[3.30]{haskins:kapouleas:invent}. 
In higher dimensions $\pthat$ has an expression in terms of hyperelliptic rather than elliptic integrals.  
In this paper, rather than studying the hyperelliptic integrals directly we adopt a more geometric approach 
that we hope will find application in other similar problems.

Every minimal submanifold of $\Sph^{m-1}$ has an associated homological invariant called the torque 
which arises directly from the First Variation Formula applied to Killing fields $\lorth{m}$ of $\Sph^{m-1}$. 
We show that the torque detects the difference between the $X_{\tau}$ for different values of $\tau$. 
By studying the linearisation of the torque for small rotationally invariant perturbations of $X_{\tau}$ 
and combining it with the Legendrian neighbourhood theorem we derive
an exact formula for the derivative $\tfrac{d\pthat}{d\tau}$ in terms of the values of 
a distinguished solution $Q$ to the rotationally invariant linearised operator. This formula  
is valid for all values of $\tau$ not just small $\tau$ and may itself be useful for other purposes. 
By studying the behaviour of the distinguished solution 
$Q$ for small $\tau$ we are able to prove our result on the small $\tau$ asymptotics of $\tfrac{d\pthat}{d\tau}$ and $\pthat$.

\subsection*{Organisation of the Paper}
$\phantom{ab}$
\nopagebreak
The paper is organised in ten sections and an Appendix.
Section \ref{S:intro} consists of the introduction, this section and some remarks on notation.
In Section \ref{S:slg} we recall basic facts and definitions from symplectic, contact,  special Lagrangian and Hamiltonian 
stationary geometry.
The reader already familiar with these basics could skip this section.

In Section \ref{S:twist:prod} we describe two ways to generate new special Lagrangian or 
special Legendrian immersions from other simpler or lower-dimensional special Lagrangian or 
special Legendrian immersions and a curve either in $\C$ or $\Sph^{3}$ 
satisfying some additional geometric condition. 

The first construction leads to the construction of the so-called Lagrangian catenoids---the unique family of nonflat 
special Lagrangian $n$-folds in $\C^{n}$ foliated by round $n-1$ spheres. 
Although well-known, we describe the Lagrangian catenoid and the associated ODEs in detail as a warmup for the second construction 
and because we will see the Lagrangian catenoids reappear in the ``necks'' of the $\sorth{p} \times \sorth{q}$-invariant 
special Legendrians constructed later in the paper.  

The second construction (see Definition 
\ref{D:twisted:product} and Proposition \ref{P:leg:triple}), which we call the 
\emph{twisted product} construction, is at the heart of the paper. 
Definition \ref{D:twist:slg} introduces the notion of a \emph{$(p,q)$-twisted special Legendrian (SL) curve} in $\Sph^{3}$.  
Corollary \ref{C:sl:triples} explains how to use $(p,q)$-twisted SL curves in $\Sph^{3}$ to construct new special Legendrian 
immersions from a pair of lower-dimensional special Legendrian immersions via the twisted product construction.
Lemma \ref{L:w:slg} reduces the study of $(p,q)$-twisted SL curves in $\Sph^{3}$ to 
a $1$st order system of complex ODEs \ref{E:slg:ode}.

We also sketch briefly the extension of the twisted product construction to the contact stationary realm. 
Definition \ref{D:pq:twist:cs}  introduces \emph{$(p,q)$-twisted contact stationary (CS) curves} in $\Sph^{3}$ 
and Lemma \ref{L:twist:cs} gives the contact stationary analogue of Corollary \ref{C:sl:triples}.
We do not systematically study $(p,q)$-twisted CS curves in this paper and instead content ourselves with 
exhibiting a very simple but still countably infinite family of closed $(p,q)$-twisted CS curves (see \ref{E:twist:cs:curve}).
The existence of this simple family of closed $(p,q)$-twisted CS curves nevertheless suffices to 
construct many new contact stationary (and non minimal Legendrian) immersions of closed manifolds from lower dimensional 
special Legendrian immersions. In the rest of the section, assuming results on the existence of countably infinitely many 
closed $(p,q)$-twisted SL curves proved later in Theorem \ref{T:w:closed},  we prove Theorems A--C quoted above
by combining the SL and CS twisted product constructions with (a) our gluing constructions of 
special Legendrian surfaces of higher genus in $\Sph^{5}$ \cite{haskins:kapouleas:invent} and 
(b) the integrable systems constructions of special Legendrian tori in $\Sph^{5}$ of Carberry-McIntosh \cite{carberry:mcintosh}.

In Section \ref{S:sop:soq} we begin our detailed study of $\sorth{p}\times \sorth{q}$-invariant 
special Legendrians in $\Sph^{2p+2q-1}$. Lemma \ref{L:iso:orbits} describes the isotropic $\sorth{p} \times \sorth{q}$ 
orbits $\mathcal{O} \subset \Sph^{2(p+q)-1}$ and leads to Corollaries \ref{C:so:invariant:leg} and \ref{C:so:invariant:slg} 
on the correspondence between $\sorth{p} \times \sorth{q}$-invariant special Legendrians in $\Sph^{2(p+q)-1}$ 
and $(p,q)$-twisted SL curves in $\Sph^{3}$. Proposition \ref{P:odes:p:n} establishes the basic facts about solutions 
to the $(p,q)$-twisted SL ODEs \ref{E:odes:p:n}: its conserved quantities, its symmetries, 
stationary points, local and global existence and dependence on initial data.
Proposition \ref{P:w:normal:form} gives a normal form for any solution $\bw$ to \ref{E:odes:p:n}. 
Using Propositions \ref{P:odes:p:n} and \ref{P:w:normal:form} we
define the $1$-parameter family $\bw_{\tau}$ of solutions of the fundamental ODE for $(p,q)$-twisted SL curves by 
specifying appropriate initial conditions (see \ref{E:w:ic:p:neq:1}, \ref{E:w:ic:p:eq:1} and Proposition \ref{P:w:tau}). 
In Definition \ref{D:X:tau} we use the $1$-parameter family of solutions $\bw_{\tau}$ to define the $1$-parameter family 
$X_{\tau}$ of $\sorth{p}\times \sorth{q}$-invariant special Legendrian immersions in $\Sph^{2p+2q-1}$. 
Proposition \ref{P:X:tau} establishes some basic properties of $X_{\tau}$. 

Section \ref{S:w:sym} studies the discrete symmetries of the $1$-parameter family $\bw_{\tau}$ of $(p,q)$-twisted 
SL curves defined in Section \ref{S:sop:soq}. 
We also introduce the \emph{periods} and \emph{half-periods} of $\bw_{\tau}$; 
the periods of $\bw_{\tau}$ control when $\bw_{\tau}$ forms a closed curve in $\Sph^{3}$, while the half-periods 
control when the curve of $\sorth{p}\times \sorth{q}$ orbits associated with $\bw_{\tau}$ is closed. 
The half-periods of $\bw_{\tau}$ also control the embedding properties of $X_{\tau}$ (see Proposition \ref{P:xtau:embed}).
The discrete symmetries of $\bw_{\tau}$ give rise to symmetries of $X_{\tau}$ beyond the 
$\sorth{p} \times \sorth{q}$ symmetry implicit in the construction of $X_{\tau}$. Many of these discrete symmetries 
do not belong to $\sunit{n}$, the obvious subgroup of $\orth{2n}$ that sends SL $n$-folds in $\C^{n}$ to other SL 
$n$-folds. For this reason and because the discrete symmetries play an important role in our subsequent gluing constructions 
it is important to give a careful study of the symmetries of $X_{\tau}$.

Thus Section \ref{S:sym:xtau} gives an in-depth analysis of all symmetries enjoyed by 
the $1$-parameter family of $\sorth{p}\times \sorth{q}$-invariant special Legendrian immersions 
$X_{\tau}$. A \emph{symmetry of  $X_{\tau}: \cylpq \ra \Sph^{2n-1}$}
 is a pair $(\MMM,\mtilde) \in \Diff(\cylpq) \times \orth{2n}$ such that 
$$ \mtilde \circ X_{\tau} = X_{\tau} \circ \MMM.$$
If $(\MMM,\mtilde)$ is any symmetry of $X_{\tau}$ then $\MMM \in \Isom(\cylpq,g_{\tau})$ where 
$g_{\tau}:= X_{\tau}^{\tiny *}\,g_{\tiny \,\Sph^{2n-1}}$ is the pullback metric on $\cylpq$ induced by 
the immersion $X_{\tau}$ (see    Remark \ref{R:sym:pullback}). Propositions \ref{P:isom:pb} and \ref{P:isom:pb:str} 
determine the structure of the group $\Isom(\cylpq,g_{\tau})$.
Propositions \ref{P:xtau:sym:p:eq:1}, \ref{P:xtau:sym:p:neq:q} and \ref{P:xtau:sym:p:eq:q} (for the three cases 
$p=1$, $p>1$, $p \neq q$ and $p>1$, $p=q$ respectively) show that in fact every element 
of $\Isom(\cylpq,g_{\tau})$ gives rise to a symmetry of $X_{\tau}$. Using this fact and our results on the structure of 
$\Isom(\cylpq,g_{\tau})$ we determine the structure of the group of domain symmetries $\Sym(X_{\tau})$ 
(defined in \ref{E:sym:xtau:def}) in Corollaries \ref{C:sym:isom:p:eq:1}, \ref{C:sym:isom:p:neq:1} and \ref{C:sym:isom:p:eq:q}.
Together with the results from Section \ref{S:w:sym} on the half-periods of $\bw_{\tau}$ this also allows us to 
determine the structure of the group of target symmetries $\Symtilde(X_{\tau})$ (defined in \ref{E:symtilde:xtau:def}) 
in Lemmas \ref{L:symtilde:p:eq:1}, \ref{L:symtilde:p:neq:q} and \ref{L:symtilde:p:eq:q}. 

Section \ref{S:xtau:limit} studies two related topics. 
The first part of the section introduces subsets of $\cylpq$ called 
the \emph{waists} (Definition \ref{D:waist}) and \emph{bulges} 
(Definition \ref{D:bulges}) of $X_{\tau}$ and associates to each bulge an equatorial $n-1$ sphere in $\Sph^{2n-1}$ 
called its \emph{approximating sphere} (Definition \ref{D:approx:sphere}). 
We study the action of $\Sym(X_{\tau})$ on the waists and bulges of $X_{\tau}$ 
and the action of $\Symtilde(X_{\tau})$ on the approximating spheres of $X_{\tau}$ 
(see Lemmas \ref{L:sym:k} and \ref{L:symtilde:app:sph}). 

The second part of the section studies the geometry of $X_{\tau}$ as $\tau \ra 0$. We define subsets of the bulges, 
called almost spherical regions, and show that as $\tau \ra 0$ the image of an almost spherical region under $X_{\tau}$ 
is close to its associated approximating sphere, thereby justifying the terminology. We also study the geometry of the necks of 
$X_{\tau}$---the core of the transition regions connecting two adjacent almost spherical regions centred around one of the waists---and show that as $\tau \ra 0$ the necks approach a limiting geometry: if $p=1$ the necks all resemble small $n-1$ dimensional 
Lagrangian catenoids, while if $p>1$ there are two kinds of necks both of which resemble the product of a unit sphere 
with a small Lagrangian catenoid of the appropriate dimensions. 

Section \ref{S:torques} introduces a homological invariant of minimal submanifolds of $\Sph^{2n-1}$ 
call its \emph{torque} and a variant called the \emph{restricted torque} for special Legendrian submanifolds of $\Sph^{2n-1}$.
Proposition \ref{P:torques} determines the restricted torque for the $\sorth{p} \times \sorth{q}$-invariant 
SL immersions $X_{\tau}$. This torque calculation is used later in the proof of the 
$\tau \ra 0$ asymptotics of the angular period $\pthat$. 

Section \ref{S:asymptotics} studies the asymptotics of the period $2\pt$ and the angular period $\pthat$ as $\tau \ra 0$. 
To prove the small $\tau$ asymptotics of $\tfrac{d\pthat}{d\tau}$ we need Lemma \ref{L:infinitesimal-force} 
which calculates the linearisation of the torque of $X_{\tau}$ when perturbed by a small rotationally-invariant function 
$\phi$. Lemma \ref{L:infinitesimal-force} is a key ingredient of Lemma \ref{L:phat:dt}, which gives an precise formula 
for $\tfrac{d\pthat}{d\tau}$ valid for any $0<\tau<\taumax$, in terms of the values of a particular solution 
to the rotationally-invariant linearised operator \ref{E:linear:phi}. The small $\tau$ asymptotics of $\pthat$ and 
$\tfrac{d\pthat}{d\tau}$ are easy consequences of this formula (see \ref{P:asymptotics-pthat}).

Section \ref{S:embedded} uses Proposition \ref{P:xtau:embed}, the structure of the half-periods of $\bw_{\tau}$ 
and the asymptotics of $\pthat$ as $\tau \ra 0$ to prove the existence of a countably infinite family of 
closed $(p,q)$-twisted SL curves for every admissible pair of integers $(p,q)$ (see Theorem \ref{T:w:closed}). 
Similar methods prove the existence of a countable dense set of $\tau$ for which $X_{\tau}$ factors 
through an embedding of a closed special Legendrian manifold (see Theorem \ref{T:Xtau:embed}).
In particular we find an infinite sequence of $\tau$ converging to $0$ 
for which $X_{\tau}: \cylpq \ra \Sph^{2n-1}$ factors through an embedding of a 
$S^{1}\times S^{n-2}$ if $p=1$ or $S^1 \times S^{p-1}\times S^{p-1}$ if $p=q\ge 2$
(see Lemma \ref{L:no:half:periods}). These closed special Legendrian ``necklaces'' with small $\tau$ 
are the building blocks for our subsequent gluing constructions \cite{haskins:kapouleas:survey,haskins:kapouleas:hd2,haskins:kapouleas:hd3}.

The Appendix contains material of a more general nature. 
Appendix \ref{A:groups} recalls some elementary group theory used to describe 
the structure of various symmetry groups that arise in the paper. 
Appendix \ref{A:isom:sl} describes all \emph{Lagrangian isometries} and 
\emph{special Lagrangian isometries} of $\C^{n}$: elements of $\orth{2n}$ 
that preserve the Lagrangian Grassmannian or special Lagrangian Grassmannian respectively. 
The structure of the special Lagrangian isometries and more generally the anti-special Lagrangian isometries 
(isometries that take all special Lagrangian planes to special Lagrangian planes with the wrong orientation) 
underpins the structure of the group $\Symtilde(X_{\tau})$ analysed in Section \ref{S:sym:xtau}; 
every element of $\Symtilde(X_{\tau})$ is either a special Lagrangian or anti-special Lagrangian isometry.


\subsection*{Notation and conventions}
$\phantom{ab}$
\nopagebreak
Throughout the paper we use the following notation to express elements of $\Isom(\R)$, the isometries of the real line.
We denote by $\TTT_{x}$,  translation by $x$, $t \mapsto t+x$. We denote by $\tbar$ reflection in the origin 
$t\mapsto -t$ and reflection in $x$, $t \mapsto 2x-t$ by $\tbar_{x}$. 

%
%


\subsection*{Acknowledgments}
N.K. would like to thank the Leverhulme Trust for funding his visit to Imperial College London in Spring 2009 
and to the Department of Mathematics at Imperial for the supportive research environment.
M.H. would like to thank the EPSRC for their continuing support of his research under Leadership Fellowship EP/G007241/1.

\section{Special Lagrangian cones and special Legendrian submanifolds of $\Sph^{2n-1}$}
\label{S:slg}
\nopagebreak

In this section we recall basic facts about \slg geometry in $\C^n$, \slg cones in $\C^n$
and their connection to minimal Legendrian submanifolds of $\Sph^{2n-1}$.
\Slg geometry is an example of a \textit{calibrated geometry} \cite{harvey:lawson}. 

\subsection*{Calibrations and Special Lagrangian geometry in $\C^n$}
\label{SS:slg}
$\phantom{ab}$
\nopagebreak

Let $(M,g)$ be a Riemannian manifold. Let $V$ be an oriented tangent $p$-plane on $M$, \textit{i.e.}
a $p$-dimensional oriented vector subspace of some tangent plane $T_xM$ to $M$. The restriction
of the Riemannian metric to $V$, $g|_V$, is a Euclidean metric on $V$ which together with the
orientation on $V$ determines a natural $p$-form on $V$, the volume form $\vol_{V}$.
A closed $p$-form $\phi$ on $M$ is a \textit{calibration} on $M$ if for every oriented tangent $p$-plane $V$ on $M$
we have $\phi|_V \le \vol_V$. Let $L$ be an oriented submanifold of $M$ with dimension $p$. $L$ is a
\textit{$\phi$-calibrated submanifold} if $\phi|_{T_xL} = \vol_{T_xL}$ for all $x\in L$.
There is a natural extension of this definition to singular calibrated submanifolds  
Geometric Measure Theory and rectifiable currents \cite[\S II.1]{harvey:lawson}.
The key property of calibrated submanifolds (even singular ones) is that they are \textit{homologically volume minimising} \cite[Thm. II.4.2]{harvey:lawson}.
In particular, any calibrated submanifold is automatically \textit{minimal}, \textit{i.e.} has vanishing mean curvature.

Let $z_1 = x_1 + i y_1, \cdots  ,z_n = x_n + i y_n$ be standard complex coordinates on $\C^n$ equipped with the Euclidean metric.
Let
$$\omega = \frac{i}{2} \sum_{j=1}^n{dz_j \wedge d\overline{z}_j} = \sum_{j=1}^n{dx_j \wedge dy_j},$$
be the standard symplectic $2$-form on $\C^n$.
Define a complex $n$-form $\Omega$ on $\C^n$ by
\begin{equation}
\addtocounter{theorem}{1}
\label{E:slg:form}
\Omega = dz_1 \wedge \cdots \wedge dz_n.
\end{equation}
The real $n$-form $\Real{\Omega}$ is a calibration on $\C^n$ whose calibrated submanifolds we call
\textit{\slg submanifolds} of $\C^n$, or SL $n$-folds for short.
There is a natural extension of \slg geometry to any Calabi-Yau manifold $M$ by replacing
$\Omega$ with the natural parallel holomorphic $(n,0)$-form on $M$. \Slg submanifolds 
play an important role in a number of interesting geometric properties
of \cy manifolds, \textit{e.g.} Mirror Symmetry \cite{syz,thomas:yau}.

Let $f:L \ra \C^n$ be a Lagrangian immersion of the oriented $n$-manifold $L$, and $\Omega$ be
the standard holomorphic $(n,0)$-form defined in \ref{E:slg:form}.
Then $f^*\Omega$ is a complex $n$-form on $L$ satisfying $|f^*\Omega| = 1$ \cite[p. 89]{harvey:lawson}. Hence we may write
\addtocounter{theorem}{1}
\begin{equation}
\label{E:lagn:phase}
f^*\Omega = e^{i\theta}\vol_L\quad \text{on \ } L,
\end{equation}
for some \textit{phase function} $e^{i\theta}:L \ra \Sph^1$.
We call $e^{i\theta}$ the \textit{phase of the oriented Lagrangian immersion $f$}.
$L$ is a SL $n$-fold in $\C^n$ if and only if the phase function $e^{i\theta}\equiv 1$.
Reversing the orientation of $L$ changes the sign of the phase function $e^{i\theta}$.
The differential $d\theta$ is a closed $1$-form on $L$ satisfying
\addtocounter{theorem}{1}
\begin{equation}
\label{mc:lagn:angle}
d\theta = \iota_H \omega,
\end{equation}
where $H$ is the mean curvature vector of $L$. In particular, \ref{mc:lagn:angle} implies
that a connected component of $L$ is minimal if and only if the phase function $e^{i\theta}$
is constant.
\ref{mc:lagn:angle} may also be restated as
\addtocounter{theorem}{1}
\begin{equation}
\label{mc:lagn:angle:grad}
H = -J \nabla \theta,
\end{equation}
where $J$ and $\nabla$ denote the
standard complex structure and gradient on $\C^n$ respectively.
For a general Lagrangian submanifold of $\C^{n}$ it is not possible to find a global lift
of the $\Sph^1$ valued phase function $e^{i\theta}$ to a real function $\theta$, although
of course such a lift always exists locally.
When a global lift $\theta$ exists we call $\theta:L \ra \R$ the \textit{Lagrangian angle} of $L$.
In particular, any Lagrangian submanifold which is sufficiently close to
a special Lagrangian submanifold will have a globally well-defined Lagrangian angle $\theta$.

If the Lagrangian phase $e^{i\theta}:L \ra \Sph^{1}$ of a Lagrangian submanifold $L$ is a harmonic map, 
or equivalently if the $1$-form $d\theta$ 
is harmonic, then $L$ is said to be \emph{Hamiltonian stationary}. Hamiltonian stationary submanifolds 
are so-called because the condition that $d\theta$ be harmonic is equivalent to $L$ being a critical point of volume 
with respect to compactly supported Hamiltonian variations \cite{schoen:wolfson}.

\subsection*{Contact geometry}
$\phantom{ab}$
\nopagebreak

We recall some basic definitions from contact geometry \cite{geiges,arnold,marle,mcduff:salamon}.
Let $M$ be a smooth manifold of dimension $2n+1$, and let $\xi$ be a hyperplane field on $M$.
$\xi$ is a (cooriented) \textit{contact structure} on $M$ if there exists a $1$-form $\gamma$ so that
$\ker{\gamma} = \xi$ and
\addtocounter{theorem}{1}
\begin{equation}
\label{contact:nonint}
\gamma \wedge (d\gamma)^n \neq 0.
\end{equation}
The pair $(M,\xi)$ is called a \textit{contact manifold}, and the $1$-form $\gamma$ a \textit{contact form} defining $\xi$.
Condition \ref{contact:nonint} is equivalent to the condition that $(d\gamma)^n|_{\xi} \neq 0$.
In particular, for each $p\in M$ the $2n$-dimensional subspace $\xi_p \subset T_pM$ endowed with the
$2$-form $d\gamma|{\xi_p}$ is a symplectic vector space.
Given a contact form $\gamma$ on $M$, the \textit{Reeb vector field} $R_\gamma$ is the unique vector field on $M$
satisfying
$$\iota(R_\gamma) d\gamma \equiv  0, \qquad \gamma(R_\gamma) \equiv  1.$$
Let $(M,\xi=\ker{\gamma})$ be a contact manifold.
A submanifold $L$ of $(M,\xi)$ is an \textit{integral submanifold of $\xi$} (also called an isotropic
submanifold) if $T_x L \subset \xi_{x}$ for all $x\in L$. Equivalently $L$
is an integral submanifold of $\xi$ if $\gamma|_L =0$.
A submanifold $L$ of $(M^{2n+1},\xi)$ is \textit{Legendrian}
if it is an integral submanifold of maximal dimension $n$.

\subsection*{Special Legendrian submanifolds and special Lagrangian cones}
$\phantom{ab}$

For any compact oriented embedded (but not necessarily connected) submanifold $\Sigma
\subset \Sph^{2n-1}(1)\subset \C^n$ define the \textit{cone on
$\Sigma$},
$$ C(\Sigma) = \{ tx: t\in \R^{\ge 0}, x \in \Sigma \}.$$
A cone $C$ in $\C^n$  (that is a subset invariant under dilations)
is $\textit{regular}$ if there exists
$\Sigma$ as above so that $C=C(\Sigma)$, in which case we call
$\Sigma$ the \textit{link} of the cone $C$. $C'(\Sigma):=C(\Sigma) - \{0\}$ is an
embedded smooth submanifold, but $C(\Sigma)$ has an isolated
singularity at $0$ unless $\Sigma$ is a totally geodesic sphere.
Sometimes it will also be convenient to allow $\Sigma$ to be just immersed not embedded,
in which case $C'(\Sigma)$ is no longer embedded. Then we call $C(\Sigma)$ an \textit{almost regular} cone.

Let $r$ denote the radial coordinate on $\C^n$ and let
$X$ be the Liouville vector field
$$X= \frac{1}{2}r \frac{\partial}{\partial r} = \frac{1}{2}\sum_{j=1}^{n}{x_j \frac{\partial}{\partial x_j} + y_j \frac{\partial}{\partial y_j}}.$$
From its embedding in $\C^n$, the unit sphere $\Sph^{2n-1}$ inherits a natural contact form
$$\gamma = \iota_X\omega | _{\Sph^{2n-1}} = {\sum_{j=1}^n{ x_j dy_j - y_j dx_j}}\Bigr|_{\Sph^{2n-1}}.$$

There is a one-to-one correspondence between regular Lagrangian cones in $\C^n$
and Legendrian submanifolds of $\Sph^{2n-1}$.
The Lagrangian angle or the phase of a Lagrangian cone in $\C^n$ is homogeneous of degree $0$.
We define the Lagrangian angle of a Legendrian submanifold $\Sigma$ of $\Sph^{2n-1}$ to
be the restriction to $\Sph^{2n-1}$ of the Lagrangian angle of the Lagrangian cone $C'(\Sigma)$.
We call a submanifold $\Sigma$ of $\Sph^{2n-1}$ \textit{special Legendrian}
if the cone over $\Sigma$, $C'(\Sigma)$
is special Lagrangian in $\C^n$. In other words, $\Sigma$ is special Legendrian
if and only if its Lagrangian phase is identically $1$ or its Lagrangian angle
is identically $0$ modulo $2\pi$.

A special Legendrian submanifold of $\Sph^{2n-1}$
is  minimal. Conversely, any minimal Legendrian submanifold of $\Sph^{2n-1}$
has constant Lagrangian phase. Hence up to rotation by a constant phase
$e^{i\theta}$ any connected minimal Legendrian submanifold of $\Sph^{2n-1}$ is special Legendrian.
The goal of our paper is thus to construct special Legendrian immersions of (orientable) $n-1$-manifolds
into $\Sph^{2n-1}$.

\subsection*{Contact stationary submanifolds and Hamiltonian stationary cones}
$\phantom{ab}$

A Legendrian submanifold $\Sigma$ of $\Sph^{2n-1}$ is said to be \emph{contact stationary} if 
it is a stationary point of volume with respect to all contact deformations of $\Sigma$. 
One can show that an oriented Legendrian submanifold $\Sigma$ is contact stationary if and only if the Lagrangian phase $e^{i\theta}:\Sigma \ra \Sph^{1}$ 
is harmonic \cite[Prop 2.4]{castro:li:urbano}. Contact stationary submanifolds in $\Sph^{2n-1}$ 
are to Hamiltonian stationary cones in $\C^{n}$ as special Legendrian submanifolds in $\Sph^{2n-1}$ 
are to special Lagrangian cones in $\C^{n}$. Namely, a regular cone $C(\Sigma)$ in $\C^{n}$ is 
Hamiltonian stationary if and only if the link $\Sigma \subset \Sph^{2n-1}$ is a contact stationary 
submanifold \cite[Prop. 5.1]{castro:li:urbano}.

Hamiltonian stationary cones play an important role in analysing singularities of Lagrangian minimisers 
in the variational programme initiated by Schoen-Wolfson \cite{schoen:wolfson}. 
For $2$-dimensional Lagrangian minimisers Schoen-Wolfson developed a rather complete theory.  
Their singularity analysis in two dimensions involves at a key stage 
the classification of $2$-dimensional Hamiltonian stationary cones and the stability analysis of these cones \cite[\S 7]{schoen:wolfson}. The classification of $2$-dimensional Hamiltonian stationary cones is rather straightforward; 
the Hamiltonian stationary cones are parametrised by a pair of relatively prime positive integers $(m,n)$ 
and the corresponding contact stationary curves in $\Sph^{3}$ are (up to unitary equivalence) the following explicit curves
(see Theorem 7.1 in \cite{schoen:wolfson})
\begin{equation}
\label{E:hs:cones:2d}
\gamma_{m,n}(s) = \frac{1}{\sqrt{m+n}}\left( \sqrt{n} e^{is\sqrt{m/n}}, i\sqrt{m} e^{-is\sqrt{n/m}}\right), \quad 
s\in [0,2\pi \sqrt{mn}].
\end{equation}
The simple explicit form \ref{E:hs:cones:2d} permits the second variation operator of these Hamiltonian 
stationary cones to be written down very explicitly. 
 
By contrast, we will show that in high dimensions there is a plethora of Hamiltonian stationary cones 
(besides the ones that are special Lagrangian). This suggests that higher-dimensional versions of the Schoen-Wolfson 
programme  may run into problems if one needs to rely on a classification of Hamiltonian stationary cones.

\section{Twisted products of Legendrian immersions: new immersions from old}
\label{S:twist:prod}

In this section we describe two ways to generate new special Lagrangian or special Legendrian immersions 
from other (simpler or lower dimensional) 
special Lagrangian or special Legendrian immersions and a curve either in $\C$ or $\C^2$ 
satisfying certain ODEs. 

In the first simpler construction, given a curve in $w:I \ra \C$ and 
a Legendrian immersion in $X:\Sigma \ra \Sph^{2n-1}$ we obtain a Lagrangian immersion 
$X_w:I \times \Sigma \ra \C^n$. If the curve $w$ satisfies a certain ODE and $X$ is special Legendrian 
then $X_w$ is a new special Lagrangian immersion. In the second more powerful construction, 
given a Legendrian immersion $\bw: I \ra \Sph^3$ and a pair of Legendrian immersions $X_1: \Sigma_1 \ra \Sph^{2p-1}$ 
and $X_2: \Sigma_2 \ra \Sph^{2q-1}$ we obtain a new Legendrian immersion 
$X_1 *_\bw X_2: I \times \Sigma_1 \times \Sigma_2 \ra \Sph^{2p+2q-1}$, that we call 
the \emph{$\bw$-twisted product of $X_1$ and $X_2$}.  
If the curve $\bw:I \ra \Sph^3 \subset \C^2$ is chosen appropriately then the cone over the $\bw$-twisted product
is precisely the product of the cone over $X_1$ with the cone over $X_2$---hence the name twisted product  
for the general case. 
If $\bw$ satisfies an appropriate ODE and both $X_1$ and $X_2$ are special Legendrian then 
the $\bw$-twisted product $X_1 *_\bw X_2$ is also special Legendrian. 
We call solutions of these ODEs, \emph{$(p,q)$-twisted special Legendrian curves}. 
To construct new special Legendrian immersions of closed manifolds, the key point 
is to find closed $(p,q)$-twisted special Legendrian (SL) curves.  
We achieve a good understanding of closed $(p,q)$-twisted SL curves in Section \ref{S:embedded}. 

Combining this understanding of closed $(p,q)$-twisted SL curves 
with our earlier work on gluing constructions of special Legendrian immersions in $\Sph^5$ 
\cite{haskins:kapouleas:invent} 
and constructions of special Legendrian $2$-tori via integrable systems methods \cite{carberry:mcintosh,mcintosh:slg} 
we are able to prove the existence of a plethora of new special Legendrian immersions with interesting 
geometric properties in dimensions greater than three. 
Very minor modifications also allow us to construct a similar variety of contact stationary 
Legendrian immersions and hence of new Hamiltonian stationary (and not special Lagrangian) cones.
However, all closed special Legendrians constructed via $(p,q)$-twisted SL curves are topologically products 
of the form $S^{1} \times \Sigma$. We construct infinitely many topological types of higher dimensional 
special Legendrians which are not topologically products using gluing methods in \cite{haskins:kapouleas:survey,haskins:kapouleas:hd2,haskins:kapouleas:hd3}.

When the immersions $X_1$ and $X_2$ are chosen to be the simplest possible special Legendrian immersions, 
namely the standard totally real equatorial embeddings of 
$\Sph^{p-1} \subset \R^p \subset \C^p$ and $\Sph^{q-1} \subset \R^q \subset \C^q$, then $\bw$-twisted 
special Legendrian immersions $X_1 *_{\bw} X_2$ turn out to be suitable building blocks for 
higher dimensional gluing constructions of special Legendrian immersions. When $p=1$ and $q=2$ 
these turn out to be precisely the building blocks used in our previous gluing construction 
of special Legendrian surfaces in $\Sph^5$ \cite{haskins:kapouleas:invent,haskins:slgcones,haskins:thesis}.

Throughout this section, given a Legendrian immersion $Y$ into an odd-dimensional sphere 
or a Lagrangian immersion $Y$ into $\C^n$, we shall denote its Lagrangian phase by $e^{i\theta_Y}$.

\subsection*{$n$-twisted SL curves in $\C$ and the Lagrangian catenoid}
In the first construction we combine a curve $w$ in $\C$ and a Legendrian immersion $X$ in $\Sph^{2n-1}$
to obtain a new Lagrangian immersion in $\C^n$ as follows.
\begin{lemma}
\addtocounter{equation}{1}
Let $w: I \subset \R \ra \C$ be a smooth immersion 
and let $X: (\Sigma,g) \ra \Sph^{2n-1}$ be a Legendrian isometric immersion. At points where $w\neq 0$ the map 
$X_w: I \times \Sigma \ra \C^n$ defined by $$X_w(t,\sigma) = w(t) X(\sigma),$$
is a Lagrangian immersion  whose Lagrangian phase $e^{i\theta}$ satisfies
$$e^{i\theta} = e^{i\theta_X} e^{i\theta_w + i(n-1) \arg{w}},$$
and the metric $g_w$ induced by $X_w$ is 
$$ g_w = \abs{\dot{w}}^2 dt^2 + \abs{w}^2 g.$$    
\end{lemma}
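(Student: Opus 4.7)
The plan is to compute the differential of $X_w$ at $(t,\sigma)$ and then verify each of the three assertions (metric, Lagrangian, phase) by direct calculation, using only: $|X|=1$ (since $X$ maps to the unit sphere), $\langle X, dX(v)\rangle=0$ (tangency to the sphere), the Legendrian condition $\langle JX, dX(v)\rangle=0$, and the fact that the cone $C'(\Sigma)$ over a Legendrian $\Sigma\subset \Sph^{2n-1}$ is Lagrangian with Lagrangian phase equal to $e^{i\theta_X}$.

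First I would write $dX_w(\partial_t)=\dot w(t)\,X(\sigma)$ and $dX_w(v)=w(t)\,dX(v)$ for $v\in T_\sigma\Sigma$, where complex multiplication by $w\in\C$ on $\C^n$ is identified with the real operator $\text{Re}(w)\,\Id+\Imag(w)\,J$. Using the real part formula $\langle \alpha Y,\beta Z\rangle_\R=\Real(\alpha\bar\beta)\langle Y,Z\rangle+\Imag(\bar\alpha\beta)\langle Y,JZ\rangle$, the three inner products $\langle \dot w X,\dot w X\rangle$, $\langle \dot w X,w\,dX(v)\rangle$, $\langle w\,dX(v),w\,dX(v')\rangle$ reduce (by the two vanishing conditions on $X$) to $|\dot w|^2$, $0$, and $|w|^2 g(v,v')$ respectively. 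This yields the claimed metric $g_w=|\dot w|^2 dt^2+|w|^2 g$.

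For the Lagrangian condition $X_w^*\omega=0$ I apply the identity $\omega(u,v)=\langle Ju,v\rangle_\R$. The mixed terms $\omega(dX_w(\partial_t),dX_w(v))=\langle (i\dot w)X,w\,dX(v)\rangle_\R$ vanish by the same two identities above. For the pure tangential terms $\omega(w\,dX(v),w\,dX(v'))=|w|^2\,X^*\omega(v,v')$, I would invoke $X^*\omega=d(X^*\alpha)=0$, where $\alpha=\tfrac12\sum(x_j dy_j-y_j dx_j)$ satisfies $d\alpha=\omega$ and $X^*\alpha$ vanishes because it equals (up to normalisation) the pullback of the contact form $\gamma$ and $X$ is Legendrian.

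Finally for the phase, pick any $g$-orthonormal frame $e_1,\dots,e_{n-1}$ on $\Sigma$; then $(\partial_t,e_1,\dots,e_{n-1})$ is a $g_w$-orthogonal frame of lengths $|\dot w|,|w|,\dots,|w|$. Using the $\C$-multilinearity and total antisymmetry of $\Omega$,
\begin{equation*}
X_w^*\Omega(\partial_t,e_1,\dots,e_{n-1})=\dot w\cdot w^{n-1}\,\Omega\bigl(X,dX(e_1),\dots,dX(e_{n-1})\bigr).
\end{equation*}
Since $X$ spans the unit radial direction orthogonal to the isometrically embedded $\Sigma$ at $X(\sigma)$, the vectors $(X,dX(e_1),\dots,dX(e_{n-1}))$ form an orthonormal frame for $T_{X(\sigma)}C'(\Sigma)$, so the right-hand factor equals the Lagrangian phase of the cone, which is $e^{i\theta_X}$. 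Dividing by the $g_w$-volume $|\dot w|\,|w|^{n-1}$ and splitting $\dot w\cdot w^{n-1}$ into modulus and argument gives $e^{i\theta}=e^{i\theta_X}e^{i\theta_w+i(n-1)\arg w}$, where $e^{i\theta_w}=\dot w/|\dot w|$ is precisely the Lagrangian phase of $w$ viewed as a curve in $\C$. The main subtlety to get right is the appearance of the factor $(n-1)\arg w$: it arises because $\Omega$ is $n$-linear over $\C$, with a single factor of $w$ coming from each of the $n-1$ tangent directions to $\Sigma$ and $\dot w$ from the $t$-direction; the metric $g_w$, being real, only sees the moduli $|w|^{n-1}|\dot w|$, so all of the phase information concentrates into $e^{i\theta}$.
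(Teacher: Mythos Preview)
Your proof is correct and is exactly the direct computation the paper has in mind; the paper's own proof simply declares the Lagrangian property ``straightforward to check \dots\ using the fact that $X$ is a Legendrian immersion'' and says the phase formula ``follows easily from the definition of the Lagrangian phase,'' so you have filled in precisely the details the authors omit.
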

\begin{proof}
It is straightforward to check that $X_{w}$ is a Lagrangian immersion away from $w=0$, using the fact 
that $X$ is a Legendrian immersion in $\Sph^{2n-1}$. 
The form for the Lagrangian phase of $X_{w}$ in terms of the Lagrangian phase of $X$ and the curve $w$ 
follows easily from the definition of the Lagrangian phase in \ref{E:lagn:phase}.
\end{proof}

\begin{definition}[cf. Definition \ref{D:twist:slg}] 
\addtocounter{equation}{1}
A smooth immersed curve $w: I \ra \C$ is called an \emph{$n$-twisted SL curve} if the Lagrangian phase of $w$ satisfies
\begin{equation}
\addtocounter{theorem}{1}
\label{E:n:twist:sl}
e^{i\theta_w} = e^{-i(n-1) \arg{w}}.
\end{equation}
\end{definition}
\noindent
The next simple result characterises $n$-twisted SL curves as solutions of a nonlinear ODE for $w$.

\begin{lemma}[cf. Lemma \ref{L:w:slg}]
\addtocounter{equation}{1}
A curve $w: I \ra \C^*$ is an $n$-twisted SL curve if and only if admits a parametrisation such that 
\begin{equation}
\addtocounter{theorem}{1}
\label{E:n:twist:sl:ode}
\dot{w} = \overline{w}^{n-1}.
\end{equation}
\end{lemma}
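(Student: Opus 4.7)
The plan is to unpack the definition of the Lagrangian phase for a curve in $\C$ and then show that the $n$-twisted SL condition amounts, after a convenient reparametrisation, to the ODE $\dot{w}=\overline{w}^{n-1}$.

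First I would observe that for $n=1$ the holomorphic $1$-form is simply $\Omega = dz$, so for any immersion $w: I \to \C$ one has $w^*\Omega = \dot{w}\,dt$, while the induced volume form is $\abs{\dot{w}}\,dt$. Comparing with \ref{E:lagn:phase} gives the identity
\[
e^{i\theta_w} \;=\; \frac{\dot{w}}{\abs{\dot{w}}},
\]
valid for any smooth parametrisation. Thus the Lagrangian phase of a curve in $\C$ is simply the unit tangent direction.

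Next I would rewrite the $n$-twisted SL condition \ref{E:n:twist:sl} using this identity. Since $e^{-i(n-1)\arg w} = \overline{w}^{n-1}/\abs{w}^{n-1}$, the defining condition $e^{i\theta_w} = e^{-i(n-1)\arg w}$ becomes
\[
\frac{\dot{w}}{\abs{\dot{w}}} \;=\; \frac{\overline{w}^{n-1}}{\abs{w}^{n-1}}.
\]
This is a parametrisation-free statement about the direction of $\dot{w}$. Equivalently,
\[
\dot{w} \;=\; \frac{\abs{\dot{w}}}{\abs{w}^{n-1}}\,\overline{w}^{n-1},
\]
so $\dot{w}$ is a positive real multiple of $\overline{w}^{n-1}$ at each point.

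Finally I would handle the (re)parametrisation. If $w$ satisfies $\dot{w} = \overline{w}^{n-1}$, then automatically $\dot{w}/\abs{\dot{w}} = \overline{w}^{n-1}/\abs{w}^{n-1}$, so $w$ is an $n$-twisted SL curve. Conversely, given an $n$-twisted SL curve $w: I \to \C^*$ in an arbitrary parametrisation, the positive scalar factor $\lambda(t) := \abs{\dot{w}}/\abs{w}^{n-1}$ is a smooth positive function on $I$ (since $w\neq 0$), so we may define a new parameter $s$ by $ds/dt = \lambda(t)$. In the new parameter the curve satisfies $dw/ds = \lambda^{-1}\dot{w} = \overline{w}^{n-1}$, as required. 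No step is a genuine obstacle here; the only point needing a little care is verifying that the reparametrising factor $\lambda$ is smooth and positive, which follows from $w\neq 0$ and the immersion hypothesis.
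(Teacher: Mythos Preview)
Your proof is correct and follows essentially the same approach as the paper: identify the Lagrangian phase of a curve in $\C$ as $\dot{w}/\abs{\dot{w}}$, rewrite the $n$-twisted SL condition as $\dot{w}/\abs{\dot{w}} = \overline{w}^{n-1}/\abs{w}^{n-1}$, and then reparametrise so that $\abs{\dot{w}} = \abs{w}^{n-1}$. Your explicit construction of the new parameter via $ds/dt = \abs{\dot{w}}/\abs{w}^{n-1}$ is just a slightly more detailed version of the paper's one-line ``reparametrise so that $\abs{\dot{w}} = \abs{w}^{n-1}$''.
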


\begin{proof}
The Lagrangian angle of a smooth immersed curve $w:I \ra \C$ is given by 
$e^{i\theta_w} = \frac{\dot{w}}{\abs{\dot{w}}}$. Hence if $w$ satisfies \ref{E:n:twist:sl} 
then 
$$ e^{i\theta_w} = \frac{\dot{w}}{\abs{\dot{w}}} = \frac{\overline{w}^{n-1}}{\abs{\overline{w}}^{n-1}} = e^{-i(n-1)\arg{w}}.$$
Conversely, if $w: I \ra \C^*$ is an $n$-twisted SL curve then 
$$ \frac{\dot{w}}{\abs{\dot{w}}} = \frac{\overline{w}^{n-1}}{\abs{\overline{w}}^{n-1}}.$$
Since $w$ is a smooth immersed curve in $\C^*$ we can reparametrise it so that $\abs{\dot{w}} = \abs{w}^{n-1}$, 
and then in this parametrisation we have $\dot{w} = \overline{w}^{n-1}$.
\end{proof}

\begin{corollary}[cf. Corollary \ref{C:sl:triples}] 
\addtocounter{equation}{1}
If $X: \Sigma \ra \Sph^{2n-1}$ is a special Legendrian immersion and $w:I \ra \C^*$ is an $n$-twisted SL curve, 
then $X_w: I \times \Sigma \ra \C^n$ is a special Lagrangian immersion.
\end{corollary}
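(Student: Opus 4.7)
The proof will be a direct combination of the two preceding lemmas and the definition of an $n$-twisted SL curve. The plan is essentially a phase computation with no substantive obstacle.

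First, since $w: I \to \C^*$ never vanishes, the preceding lemma applies on the whole domain and guarantees that $X_w: I \times \Sigma \to \C^n$ is a Lagrangian immersion with Lagrangian phase
\begin{equation*}
e^{i\theta} = e^{i\theta_X}\, e^{i\theta_w + i(n-1)\arg w}.
\end{equation*}
Next, I would use the two hypotheses to reduce each factor. Since $X$ is a special Legendrian immersion, by definition $e^{i\theta_X} \equiv 1$ on $\Sigma$. Since $w$ is an $n$-twisted SL curve, the defining relation \ref{E:n:twist:sl} gives $e^{i\theta_w} = e^{-i(n-1)\arg w}$, so the remaining factor becomes
\begin{equation*}
e^{i\theta_w + i(n-1)\arg w} = e^{-i(n-1)\arg w}\, e^{i(n-1)\arg w} = 1.
\end{equation*}
Combining these two simplifications shows that the Lagrangian phase of $X_w$ is identically $1$, which is precisely the condition for $X_w$ to be a special Lagrangian immersion.

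The only point deserving a small remark is the implicit identification of orientations: the preceding lemma fixes the Lagrangian phase formula in terms of the chosen orientations on $\Sigma$ and on $I \times \Sigma$ (product orientation with $I$ first), and the hypothesis that $X$ is special Legendrian is taken with respect to the same orientation convention. There is no hard step; the entire argument is a three-line phase calculation once the two lemmas are in hand.
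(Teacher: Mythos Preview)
Your proof is correct and is exactly the intended argument: the paper states this corollary without proof precisely because it follows immediately by combining the phase formula from the preceding lemma with the definition of an $n$-twisted SL curve and the hypothesis $e^{i\theta_X}\equiv 1$.
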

In fact, one can also easily show that if $X$ is a contact stationary Legendrian immersion (which is not special Legendrian) 
and $w$ is an $n$-twisted SL curve, then $X_w$ is a Hamiltonian stationary immersion of $I \times \Sigma$ 
which is not special Lagrangian.

\medskip

If $n=2$ then \ref{E:n:twist:sl:ode} becomes the linear ODE $\dot{w}=\overline{w}$. 
Straightforward calculation shows that any solution to this linear ODE has the form 
$$ w(t) = a e^{t} + i b e^{-t}, \quad \text{for $a$, $b \in \R$},$$
and that $\Imag{w^{2}}(t) \equiv 2ab$. In particular when $n=2$ all solutions of \ref{E:n:twist:sl} are 
defined for all $t\in \R$ (in contrast to the case when $n>2$). 
Solutions with either $a$ or $b$ zero but not both zero 
give horizontal or vertical half-lines through the origin in the complex plane which approach the origin 
either as $t\ra \infty$ or $t \ra -\infty$ and tend to infinity at the other end. 
Any solution with $ab \neq 0$ gives a connected curve (one of the two components of $\Imag{w^{2}}=2ab$) 
contained in a single quadrant asymptotic as $t\ra \pm \infty$ to either a vertical or a horizontal half-line.

From now on we assume $n>2$.
We will analyse \ref{E:n:twist:sl:ode}, its symmetries and solutions in detail as a warmup 
for the more complicated analysis of $(p,q)$-twisted SL curves we will encounter shortly. 
We will see that equation \ref{E:n:twist:sl:ode} reoccurs when analysing limiting 
behaviour of certain $(p,q)$-twisted SL curves. 

%
The ODE \ref{E:n:twist:sl:ode} has the following five obvious types of symmetry:
\begin{enumerate}
\item Time translation invariance, i.e. $t \mapsto t + t_0$ for some constant $t_0\in \R$.
\item Multiplication by an $n$th root of unity, i.e. $w \mapsto \alpha w$ where $\alpha^n=1$.
\item Complex conjugation, i.e. $w \mapsto \overline{w}$.
\item The simultaneous time and spatial transformation given by 
$$ t \mapsto -t, \quad w \mapsto \beta w, \quad \text{where} \quad \beta^n =-1.$$
\item The simultaneous time and spatial rescaling given by 
$$t \mapsto \lambda^{1-2/n} t, \quad w \mapsto \lambda^{1/n} w, \quad \text{for any } \lambda>0.$$ 
More precisely, $w$ is a solution of \ref{E:n:twist:sl:ode} if and only if 
$w_{\lambda}(t):= \lambda^{1/n} w( \lambda^{1-2/n}t)$ is.
\end{enumerate}

We now describe the basic properties  of the ODE \ref{E:n:twist:sl:ode}.
\begin{lemma}[cf. Proposition \ref{P:odes:p:n}] Assume that $n\ge 3$. \hfill
\addtocounter{equation}{1}
\label{L:lagn:cat:ode}
\begin{enumerate}
\item[(i)]
Solutions to the $n$-twisted SL ODEs \ref{E:n:twist:sl:ode}  admit the conserved quantity 
$$\mathcal{I}:= \Imag{w^n}.$$ 
Symmetries (1) and (2) preserve $\mathcal{I}$, (3) and (4) send $\mathcal{I} \mapsto -\mathcal{I}$ and (5) sends $\mathcal{I}\mapsto \lambda \mathcal{I}$. 
Hence by scaling as in (5) we may assume either $\mathcal{I}=0$ or $\mathcal{I}=\pm 1$, and also by using symmetry 
(4) we can assume $\mathcal{I}=1$.
\item[(ii)]
The origin is the only stationary point of \ref{E:n:twist:sl:ode}.
\item[(iii)]
The initial value problem for \ref{E:n:twist:sl:ode} with any initial data $w(0) \in \C^{*}$ has a unique real analytic solution $w:J \ra \C$ defined on a bounded interval $J \subset \R$.
\item[(iv)]
For any solution of \ref{E:n:twist:sl:ode} with $\mathcal{I}(w) = \lambda \neq 0$ 
write $w(t) = \sqrt{y(t)}\,e^{i\psi(t)}$ where $y:= \abs{w}^{2}$. Then 
$y$ and $\psi$ satisfy the following
$$
\frac{1}{2}\dot{y} + i \lambda = {w}^{n}, \quad \dot{y} = {2y}^{n/2}\cos{n\psi}, \quad \lambda = 
{y}^{n/2} \sin{n\psi}, \quad \dot{y}^{2} = 4 (y^{n} - \lambda^{2}),
\quad 
 y \,\dot{\psi} = -\lambda.
$$
In particular $y(t)$ is increasing when $\cos{n\psi}(t)=\Real(w^{n})(t)>0$, decreasing when $\cos{n\psi}(t)=\Real(w^{n})(t)<0$ and has a stationary point if and only if $\cos{n\psi}(t)=\Real(w^{n})(t)=0$, and for $\lambda>0$ $\psi(t)$ is a decreasing function of $t$.
\item[(v)]
Any solution of \ref{E:n:twist:sl:ode} with $\mathcal{I}(w) = \lambda>0$ can be written in the form 
$$
w_{\lambda,t_{0},k}(t) = \lambda^{1/n}\, e^{2\pi ki/n} w_{1}(\lambda^{1-2/n}(t+t_{0})),
$$
for some $k\in \Z/n\Z$ and $t_{0} \in \R$, where $w_{1}$ is the unique solution of \ref{E:n:twist:sl:ode} with 
$$
w(0)=e^{i\pi/2n}$$ 
(and hence $\mathcal{I}(w_{1})=1)$. 
\item[(vi)]
The solution $w_{1}$ defined in part (v) has the symmetry
$$ w_{1} \circ \tbar = e^{-i\pi/n} \overline{w}_{1},$$
where $\tbar$ denotes reflection in the origin $t\mapsto -t$.
\item[(vii)]
The solution $w_{\lambda,t_{0},k}$ from (v) is defined on the time interval 
$$ I_{\lambda,t_{0}} = \lambda^{-1+2/n}\left(-T_{1}-t_{0},T_{1}-t_{0}\right),$$
where
$$ T_{1}:=\int_{1}^{\infty}{\frac{dy}{2\sqrt{y^{n}-1}}} = \frac{\Gamma(\tfrac{1}{2}) \Gamma(\tfrac{1}{2}-\tfrac{1}{n})}{2\Gamma(-\tfrac{1}{n})} = \frac{\sqrt{\pi}\, \Gamma(\tfrac{1}{2}-\tfrac{1}{n})}{2\Gamma(-\tfrac{1}{n})},$$
and $\Gamma$ denotes the gamma function. 
\end{enumerate} 
\end{lemma}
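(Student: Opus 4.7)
Parts (i)--(iii) are essentially bookkeeping. For (i) the key computation is
$\tfrac{d}{dt}w^{n} = n w^{n-1}\dot{w} = n w^{n-1}\overline{w}^{n-1} = n\lvert w\rvert^{2(n-1)},$
which is real, so $\mathcal{I}=\Imag{w^n}$ is constant along trajectories. The behaviour of $\mathcal{I}$ under each of the five symmetries is then a one-line calculation ($\alpha^n=1$ gives $(\alpha w)^n=w^n$; $\beta^n=-1$ gives $(\beta w)^n=-w^n$; etc.), which also justifies using (5) and (4) to reduce to $\mathcal{I}=1$. For (ii), $\dot{w}=0$ forces $\overline{w}^{n-1}=0$, hence $w=0$. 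For (iii), the right-hand side of \ref{E:n:twist:sl:ode} is a polynomial in the real and imaginary parts of $w$, so Cauchy--Kovalevskaya (or Picard--Lindel\"of plus real-analytic persistence) gives a unique real-analytic solution on an open interval; finiteness of the maximal interval is a consequence of the blow-up analysis that emerges from (iv) and (vii) below.

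For (iv), writing $w=\sqrt{y}\,e^{i\psi}$ gives $w^{n}=y^{n/2}e^{in\psi}$, so the conserved quantity at once yields $\lambda = y^{n/2}\sin{n\psi}$. Differentiating $y=w\overline{w}$ gives $\dot{y}=\dot{w}\overline{w}+w\dot{\overline{w}}=\overline{w}^{n}+w^{n}=2\Real(w^{n})=2y^{n/2}\cos{n\psi}$, which combined with the previous identity gives $\tfrac{1}{2}\dot{y}+i\lambda=w^{n}$ and $\dot{y}^{2}=4y^{n}\cos^{2}{n\psi}=4(y^{n}-\lambda^{2})$. Finally, computing $\dot{w}/w = \overline{w}^{n-1}/w = y^{n-1}/w^{n} = y^{n/2-1}e^{-in\psi}$ and taking imaginary parts gives $\dot\psi = -y^{n/2-1}\sin{n\psi}=-\lambda/y$. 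Monotonicity statements for $y$ and $\psi$ read off immediately.

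For (v), I would argue as follows. Given any solution $w$ with $\mathcal{I}(w)=\lambda>0$, the identity $y^{n/2}\sin{n\psi}=\lambda>0$ forces $y(t)\ge\lambda^{2/n}$ with equality only when $\sin{n\psi}=1$. Since at equality $\dot{y}=0$ by $\dot{y}^{2}=4(y^{n}-\lambda^{2})$, and since $y\to\infty$ at both ends of the maximal interval (from the local blow-up of $\dot{y}^{2}=4y^{n}(1+O(\lambda^{2}y^{-n}))$ whose solution has $y^{1-n/2}$ linear in $t$), there is a unique time $t^{\ast}$ at which $y$ achieves $\lambda^{2/n}$. At that time $w(t^{\ast})=\lambda^{1/n}e^{i(\pi/(2n)+2\pi k/n)}$ for some $k\in\Z/n\Z$. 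Setting
\[
v(s):=\lambda^{-1/n}e^{-2\pi ik/n}\,w\bigl(\lambda^{-1+2/n}s+t^{\ast}\bigr),
\]
a direct check using the scaling symmetry (5) and the rotation symmetry (2) shows $v$ solves \ref{E:n:twist:sl:ode} with $v(0)=e^{i\pi/(2n)}$, so by ODE uniqueness $v\equiv w_{1}$, and with $t_{0}:=-t^{\ast}$ the formula in (v) follows.

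For (vi), I would combine symmetries (3) and (4): the map $w\mapsto\beta\overline{w(-\cdot)}$ preserves \ref{E:n:twist:sl:ode} precisely when $\beta^{n}=-1$ (check by direct differentiation, matching $-\beta=\overline{\beta}^{n-1}$). Choose $\beta$ with $\beta^{n}=-1$ so that $\beta\,\overline{w_{1}(0)}=w_{1}(0)$; since $w_{1}(0)=e^{i\pi/(2n)}$ this forces $\beta=e^{i\pi/n}$ (which indeed satisfies $\beta^{n}=-1$). The two solutions $w_{1}(t)$ and $e^{i\pi/n}\overline{w_{1}(-t)}$ now satisfy the same ODE with the same initial condition, so by uniqueness they coincide, which after conjugating yields the stated symmetry (the exact sign of the phase $e^{\pm i\pi/n}$ is fixed by this matching). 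For (vii), the solution $w_{1}$ starts at the minimum $y=1$ and $y$ increases to $\infty$; from $\dot{y}=2\sqrt{y^{n}-1}$ the positive blow-up time is $T_{1}=\int_{1}^{\infty}dy/(2\sqrt{y^{n}-1})$. The substitution $u=y^{-n}$ converts this to a Beta integral $\tfrac{1}{2n}B\bigl(\tfrac12-\tfrac{1}{n},\tfrac12\bigr)$, and the reflection/recursion formulae for $\Gamma$ produce the closed form quoted. Applying the scaling and translation in the normal form from (v) then yields the stated interval $I_{\lambda,t_{0}}$. The main obstacle in the argument is (v): one must carefully exploit all five symmetries while showing that the minimum of $y$ is always attained exactly once, which is what makes the normal form well-defined.
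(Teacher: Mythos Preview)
Your proof is correct and follows essentially the same approach as the paper. A few minor differences worth noting: in (iv) you compute $\dot\psi$ via $\dot w/w$, whereas the paper derives everything from the single identity $\tfrac12\dot y+i\lambda=w^n$; in (v) you are actually more careful than the paper, which simply asserts ``by time translation invariance we can assume $\dot y(0)=0$'' without justifying that a critical point of $y$ exists, while you argue convexity/blow-up to show the minimum is attained uniquely; and in (vii) you carry out the Beta-integral evaluation explicitly, which the paper's proof omits entirely (it just writes down the integral and the $\Gamma$-formula). None of these differences is substantive.
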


%
%

\begin{proof}
(i) For any solution of \ref{E:n:twist:sl:ode} we have 
$$ \frac{d}{dt}w^n = nw^{n-1}\dot{w} = nw^{n-1} \overline{w}^{n-1} = n \abs{w}^{2(n-1)} \in \R,$$
and hence $\mathcal{I}:= \Imag{w^n}$ is conserved. It is straightforward to check the action of the symmetries on $\mathcal{I}$
is as claimed.\\
(ii) Stationary points are zeros of the vector field $V(w):=w^{n-1}$. \\
(iii) The vector field $V$ from (ii) defining \ref{E:n:twist:sl:ode} is clearly real algebraic. Hence from standard local existence and uniqueness results for the initial 
value problem,  locally \ref{E:n:twist:sl:ode} admits a unique real analytic solution for any initial data and this local solution 
depends real analytically on the initial data. In (vii) we show that for $n>2$ any nonstationary 
solution exists only on a finite interval $J$. \\
(iv) Once we prove the first equation the others follow from looking at real and imaginary parts or comparing the modulus 
squared of both sides. 
\ref{E:n:twist:sl:ode} implies $2\Real(w^{n})=2\Real(\overline{w}^{n})=2\Real(\dot{w}\overline{w}) = \dot{y}$ 
and $\Imag(w^{n})=\lambda$. \\
(v) By time translation invariance we can assume $\dot{y}(0)=0$ and therefore by (iv) $w^{n}(0)=i\lambda$. 
By scaling we can assume that $\lambda=1$ and hence $w^{n}(0)=i$. Multiplying by an $n$th root of unity we 
can assume $w(0)=e^{i\pi/2n}$ and the result follows. \\
(vi) Define $\hat{w}= e^{i\pi/n} \overline{w}_{1} \circ \tbar$. $\hat{w}$ is also a solution of \ref{E:n:twist:sl:ode} 
and $\hat{w}(0) = w(0)$. Hence by uniqueness of the initial value problem $\hat{w}\equiv w_{1}$.\\
(vii) It suffices to prove the result for $w_{1}$: the result for $w_{\lambda,t_{0},k}$ then follows immediately by applying the scaling symmetry (5) to $w_{1}$ and a time translation. We can apply local existence repeatedly until $y=\abs{w}^{2}$ 
goes to infinity. Hence it suffices to study the maximal interval of existence for $y$. 
By the symmetry (vi) of $w_{1}$, $y:=\abs{w_{1}}^{2}$ is an even function 
of $t$, and hence is defined on a symmetric interval $(-T_{1},T_{1})$ and $T_{1}$ is given by
$$T_{1} = \int_{1}^{\infty}\frac{1}{2\sqrt{y^{n}-1}}dy.$$
\end{proof}

By choosing the $1$-parameter family of solutions $w_{\lambda,0,0}$ from \ref{L:lagn:cat:ode}.v we obtain
\addtocounter{equation}{1}
\begin{prop}[\mbox{\cite[Thm A]{haskins:slgcones}},\cite{joyce:symmetries,castro:urbano:foliated}]
\label{P:acslg}
Let $\Sigma$ be any special Legendrian submanifold in $\Sph^{2n-1}$.
For any $d\in \R$, let $\Sigma_d$ denote the set
$$ \{w\,p\in \C^n:\ p\in \Sigma,\, w\in \C,\  \text{with}\  \Imag{w^n}=d,\  \arg{w}\in (0,\tfrac{\pi}{n})\}.$$
Then for $d\neq 0$, $\Sigma_d$ is an immersed asymptotically conical special Lagrangian submanifold of $\C^n$
with two ends asymptotic to the two SL cones $C(\Sigma)$ and $e^{i\pi/n}C(\Sigma)$.
\end{prop}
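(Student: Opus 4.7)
The plan is to realize $\Sigma_d$ as the image of a special Lagrangian immersion of the form $X_{w_d}$ built from the given special Legendrian immersion $X\colon\Sigma\to\Sph^{2n-1}$ and a suitable $n$-twisted SL curve $w_d$, and then to read off the two asymptotic ends from the geometry of $w_d$. Throughout I take $d>0$; if $d<0$ then $\Sigma_d$ is empty, since $\arg w\in(0,\tfrac{\pi}{n})$ forces $\Imag(w^n)=|w|^n\sin(n\arg w)>0$.

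First I would identify the curve. By Lemma \ref{L:lagn:cat:ode}(iv)--(v), the set
\[
\{w\in\C^*:\Imag(w^n)=d,\ \arg w\in(0,\tfrac{\pi}{n})\}
\]
coincides with the image of the solution $w_d:=w_{d,0,0}$ of \ref{E:n:twist:sl:ode}: the identities $y\dot\psi=-d$ and $y^{n/2}\sin(n\psi)=d$, with $y=|w_d|^2$ and $\psi=\arg w_d$, show that $\psi$ is strictly monotone on the maximal interval $(-T_d,T_d)$ and sweeps $(0,\tfrac{\pi}{n})$ exactly once. Consequently $\Sigma_d$ equals the image of
\[
X_{w_d}\colon (-T_d,T_d)\times\Sigma\longrightarrow\C^n,\qquad (t,\sigma)\longmapsto w_d(t)X(\sigma).
\]
Since $w_d$ is an $n$-twisted SL curve and $X$ is special Legendrian, the preceding Corollary guarantees that $X_{w_d}$ is a special Lagrangian immersion, so $\Sigma_d$ is an immersed special Lagrangian submanifold of $\C^n$.

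Next comes the asymptotic analysis. As $t\to\pm T_d$, part (vii) of Lemma \ref{L:lagn:cat:ode} together with the ODE forces $y=|w_d|^2\to\infty$, and the conserved quantity $y^{n/2}\sin(n\psi)=d$ then forces $\sin(n\psi)\to 0$. Monotonicity of $\psi$ picks out the two limits $\psi(t)\to 0$ as $t\to T_d$ and $\psi(t)\to\tfrac{\pi}{n}$ as $t\to -T_d$. Writing $X_{w_d}(t,\sigma)=|w_d(t)|e^{i\psi(t)}X(\sigma)$, the two ends of $\Sigma_d$ therefore approach the cones $C(\Sigma)$ and $e^{i\pi/n}C(\Sigma)$ respectively. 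For the quantitative decay, near the ray $\arg=0$ the relation $\sin(n\psi)=d/y^{n/2}$ gives $\psi\sim d/(n|w|^n)$, so the normal displacement of $X_{w_d}(t,\sigma)$ from the limiting cone is of order $|w|\cdot\psi=O(|w|^{1-n})=O(r^{1-n})$, the standard asymptotically conical decay rate for SL submanifolds with a smooth link.

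The main technical point I anticipate is upgrading the pointwise convergence above to genuine $C^k$ asymptotic convergence of submanifolds with the polynomial rate $r^{1-n}$; this is routine once one differentiates the conservation laws along $w_d$ and uses the real analytic dependence on the parameter granted by Lemma \ref{L:lagn:cat:ode}(iii). Identifying which end is asymptotic to which cone is then simply a matter of tracking the monotone direction of $\psi$, and the two-endedness of $\Sigma_d$ is immediate from the dichotomy $t\to\pm T_d$.
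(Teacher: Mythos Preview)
Your proof is correct and is precisely the argument the paper has in mind: the text immediately preceding the proposition says ``By choosing the $1$-parameter family of solutions $w_{\lambda,0,0}$ from \ref{L:lagn:cat:ode}.v we obtain'' Proposition~\ref{P:acslg}, and then cites the result from \cite{haskins:slgcones,joyce:symmetries,castro:urbano:foliated} rather than proving it in full. Your identification of the level set with the image of $w_{d,0,0}$, the application of the preceding Corollary, and the asymptotic analysis via $y^{n/2}\sin(n\psi)=d$ are exactly the steps one would take to flesh out that one-line indication. Your observation that $\Sigma_d=\emptyset$ for $d<0$ is also correct given the constraint $\arg w\in(0,\tfrac{\pi}{n})$.
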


When we apply Proposition \ref{P:acslg} to $\Sigma = \Sph^{n-1}\subset \R^n \subset \C^n$  the 
asymptotically conical SL $n$-folds we obtain are known as the \emph{Lagrangian catenoids} $L$. 
The Lagrangian catenoids $L$ can be characterised in a number of ways. For example, they are the only non-flat SL $n$-folds in $\C^n$ foliated by round $n-1$-spheres \cite{castro:urbano:foliated}. 
Also any nonsingular SL $n$-fold in $\C^n$ invariant under the standard complex linear action of $\sorth{n}$ on $\C^n$ is (conjugate to a piece of) a Lagrangian catenoid.

\begin{definition}[Standard embeddings of the Lagrangian catenoid]\hfill
\addtocounter{equation}{1}
\label{D:lag:cat:std}
\begin{enumerate}
\item [(i)]
Suppose $n>2$. 
Let $w_1:\R \ra \C^{*}$ be the unique solution of \ref{E:n:twist:sl:ode} with $w_{1}(0)= e^{i\pi/2n}$ as in \ref{L:lagn:cat:ode}.v and 
let $T_{1}$ denote the lifetime defined in \ref{L:lagn:cat:ode}.vii. We call the special Lagrangian embedding 
$X_{1}: (-T_{1},T_{1}) \times \Sph^{n-1} \ra \C^{n}$ defined by 
$$X_{1}(t,\sigma) = w_{1}(t) \sigma, \quad \text{for\ } t\in (-T_{1},T_{1}), \quad \sigma \in \Sph^{n-1}\subset \R^{n},$$
the \emph{standard embedding of the Lagrangian catenoid of size $1$} or just the \emph{standard unit Lagrangian catenoid} for short. 
\item[(ii)]
 Suppose $n=2$. Let $w_{1}:\R \ra \C^{*}$ 
be the unique solution of \ref{E:n:twist:sl:ode} with $w_{1}(0)=e^{i\pi/4}$, i.e. 
$$w_{1}(t) = \frac{1}{\sqrt{2}} (e^{t}+ie^{-t}).$$
We call the special Lagrangian embedding 
$X_{1}: \R \times \Sph^{1} \ra \C^{2}$ defined by 
$$X_{1}(t,\sigma) := w_{1}(t) \sigma, \quad \text{for\ } t\in \R, \ \sigma \in \Sph^{1}\subset \R^{2},$$
the \emph{standard embedding of the $2$-dimensional Lagrangian catenoid of size $1$} or just the \emph{standard unit $2$-dimensional Lagrangian catenoid} for short. 
\end{enumerate}
\end{definition} 
More generally, if we replace $w_{1}$ above with the rescaled solution $w_{\lambda}(t):= \lambda^{1/n}w_{1}(\lambda^{1-2/n}t)$ 
and define $X_{\lambda}(t,\sigma) = w_{\lambda}(t) \sigma$ then we obtain the standard embedding of the 
Lagrangian catenoid of size $\lambda^{1/n}$. We call it size $\lambda^{1/n}$ because the waist of $X_{\lambda}$, 
i.e. the sphere $w_{\lambda}(t) \cdot \Sph^{n-2}$ of minimal radius has radius $\lambda^{1/n}$.
Symmetry \ref{L:lagn:cat:ode}.vi implies the following discrete symmetry of the standard embedding of the Lagrangian catenoid
(of any size $\lambda$) 
$$ \tbartilde \circ X_{\lambda} = X_{\lambda} \circ \tbar,$$
where $\tbar \in \Diff(\R \times \Sph^{n-1})$ is given by $\tbar(t,\sigma) = (-t,\sigma)$ and 
$\tbartilde \in \orth{2n}$ is defined by 
$$\tbartilde (z) = e^{-i\pi/n} \overline{z} \quad \text{for \ } z\in \C^{n}.$$

\subsection*{Twisted products of spherical Legendrian immersions}
$\phantom{ab}$
We turn now to a similar mechanism for producing a new 
Legendrian immersion $X_1 *_{\bw} X_2$ into a higher-dimensional 
odd sphere from a Legendrian curve $\bw$ in $\Sph^3$ and a pair 
of Legendrian immersions $X_1$ and $X_2$ into lower-dimensional odd spheres. We call $X_1 *_{\bw} X_2$ 
the \emph{$\bw$-twisted product} of $X_1$ and $X_2$ for reasons explained in \ref{R:product:cones}. 

\addtocounter{equation}{1}
\begin{definition}
\label{D:twisted:product}
Let $I \subseteq \R$ be a connected interval, 
$\Sigma_1$ and $\Sigma_2$ be two smooth manifolds of dimensions $n_1$ and $n_2$ respectively, and
$X_i: \Sigma_i \ra \Sph^{2m_i -1}$ for $i=1,2$ be smooth maps into odd-dimensional spheres.
Let $\bw=(w_1(t),w_2(t)\,): I \ra \Sph^3$ be a smooth immersed curve in $\Sph^3$. 
Then the $\bw$-twisted product of $X_1$ and $X_2$, denoted $X_1 *_{\bw} X_2$, is the smooth map
$$X_1 *_{\bw} X_2: I \times \Sigma_1 \times \Sigma_2 \ra \Sph^{2m_1+2m_2-1} \subset \C^{m_1+m_2} = \C^{m_1} \times \C^{m_2},$$
defined by
\addtocounter{theorem}{1}
\begin{equation}
\label{E:X1wX2}
X_1 *_{\bw} X_2\,(t,\sigma_1,\sigma_2) = (w_1(t)X_1(\sigma_1),\, w_2(t)X_2(\sigma_2)\,).
\end{equation}
\end{definition}
\begin{remark}
\addtocounter{equation}{1}
\label{R:twist:def:p:eq:1}
In the definition of a twisted product above it is also convenient to allow the degenerate case where $\Sigma_{1}$ 
is $0$-dimensional. We will need the case where $\Sigma_{1}$ is a single point $p$ and the map 
$X_{1}$ maps $p \mapsto (1,0)\in \Sph^{3}$. In this case we will drop the reference to $X_{1}$ and 
$\Sigma_{1}$ and the subscript 
for $X_{2}$ and $\Sigma_{2}$ and write $X_{\bw}: I \times \Sigma \ra \Sph^{2m-1}$
for the map defined by 
\begin{equation}
\addtocounter{theorem}{1}
\label{E:twist:def:p:eq:1}
X_{\bw}(t,\sigma) = (w_{1}(t), w_{2}(t)\,X(\sigma)).
\end{equation}
We will still refer to this degenerate case as a twisted product.
\end{remark}

The following extended remark explains the origin of the term \emph{twisted product} in Definition \ref{D:twisted:product}. 
\begin{remark}
\addtocounter{equation}{1}
\label{R:product:cones}
Let $C_{1}$ and $C_{2}$ be cones in $\C^{m_{1}}$ and $\C^{m_{2}}$ respectively.
The product $C_{1}\times C_{2} \subset \C^{m_{1}} \times \C^{m_{2}} \cong \C^{m_{1}+m_{2}}$
is also a cone. Suppose now that $C_{1}$ and $C_{2}$ are both regular cones, i.e. $C_{i}=C(\Sigma_{i})$ is the cone over a smooth closed submanifold 
$\Sigma_{i} \subset \Sph^{2m_{i}-1}$ and hence has an isolated singularity at $\mathbf{0} \in \C^{m_{i}}$.
Let $\Sigma_{12}\subset \Sph^{2m_{1}+2m_{2}-1}$ denote the link of the product cone $C_{1}\times C_{2} \subset \C^{m_{1}+m_{2}}$. 
Clearly
\begin{equation}
\addtocounter{theorem}{1}
\Sigma_{12}= \{( \cos{t}\, \sigma_{1}, \sin{t}\, \sigma_{2})\,| \, t\in [0,\tfrac{1}{2}\pi],\, \sigma_{1}\in \Sigma_{1}, \, \sigma_{2}\in \Sigma_{2}\} \subset \Sph^{2m_{1}+2m_{2}-1}.
\end{equation}
There is an obvious surjective map 
$$\Pi: [0,\pi/2] \times \Sigma_{1} \times \Sigma_{2} \ra \Sigma_{12}$$
from the manifold with boundary $[0,\pi/2] \times \Sigma_{1} \times \Sigma_{2}$ to the link of our product cone
 $\Sigma_{12}$
defined by 
\begin{equation}
\addtocounter{theorem}{1}
\label{E:Pi}
\Pi(t,\sigma_{1},\sigma_{2}) = (\cos{t} \,\sigma_{1},\sin{t} \,\sigma_{2}).
\end{equation}
Clearly, the map $\Pi$ can be written as a $\bw$-twisted product by taking $X_{1}$ and $X_{2}$ to be the inclusion maps 
$i_{1}: \Sigma_{1} \ra \Sph^{2m_{1}-1}$ and  $i_{2}: \Sigma_{2} \ra \Sph^{2m_{2}-1}$ respectively and 
$\bw:I \ra \Sph^{3}$ to be the equatorial curve $ \bw:[0,\pi/2] \ra \Sph^{1} \subset \Sph^{3}$ defined by 
\begin{equation}
\addtocounter{theorem}{1}
\label{E:w:s1}
\bw(t) = (\cos{t}, \sin{t}).
\end{equation}

We therefore view the $\bw$-twisted product defined in \ref{D:twisted:product} as a ``twisted'' version 
of taking the product of two regular cones. It ``twists'' the product construction by allowing a general curve $\bw \in \Sph^{3}$ 
instead of the standard equatorial curve $\Sph^{1}\subset \Sph^{3}$ defined in \ref{E:w:s1}. 
It is natural therefore to call the curve $\bw:I \ra \Sph^{3}$ the \emph{twisting curve}.

The degenerate case discussed in Remark \ref{R:twist:def:p:eq:1} also specialises to a product of cones $C_{1}\times C_{2}$
when the twisting curve is the equatorial curve \ref{E:w:s1} and $C_{1} = \R^{+}\subset \C$ and $C_{2}=C(\Sigma)$.
Thus we can still view $X_{\bw}$ (defined in \ref{E:twist:def:p:eq:1}) 
as a twisted version of the product of two cones $\R^{+}\times C$ and hence the name 
twisted product is appropriate even in this degenerate case.
\end{remark}

The product cone $C_{1} \times C_{2}$ is not a regular cone even when both $C_{1}$ and $C_{2}$ are regular cones. 
Equivalently, the link $\Sigma_{12} \subset \Sph^{2m_{1}+2m_{2}-1}$ is not a smooth submanifold. 
As a topological space we can think of $\Sigma_{12}$ as being obtained from the 
generalised cylinder $[0,\pi/2]\times \Sigma_{1}\times \Sigma_{2}$ by 
a modified ``coning-off the boundary''  construction. Namely, at the endpoint $t=0$ we cone-off 
$\Sigma_{2}$ inside $\{\mathbf{0}\}\times \Sigma_{1}\times \Sigma_{2}$ but leave $\Sigma_{1}$ untouched, 
whereas at the endpoint $t=\pi/2$ we instead cone-off $\Sigma_{1}$ but leave $\Sigma_{2}$ alone.
Thus  $\Sigma_{12}$ has two different types of singularities: conical singularities modelled on $\Sigma_{2}$ 
along a copy of $\Sigma_{1}$ and conical singularities modelled on $\Sigma_{1}$ along a copy of $\Sigma_{2}$.

$\Pi$ is a smooth embedding away from the endpoints of the interval $[0,\pi/2]$ 
and induces a Riemannian metric $g$ on  $(0,\pi/2)\times \Sigma_{1}\times \Sigma_{2}$ 
defined by 
$$g = dt^{2} + \cos^{2}{t}\, g_{1} + \sin^{2}{t}\,g_{2},$$
where $g_{1}$ and $g_{2}$ are the Riemannian metrics induced on $\Sigma_{1}$ and $\Sigma_{2}$ by the spherical inclusions $i_{1}$ and $i_{2}$.
In particular, we see that the metric $g$ degenerates at $t=0$ and $t=\pi/2$ in a manner consistent with the description of the  
singularities of $\Sigma_{12}$ we gave in the previous paragraph.

In the exceptional case where $C_{1}=\R^{m_{1}}\subset \C^{m_{1}}$ and $C_{2}= \R^{m_{2}} \subset \C^{m_{2}}$ then 
obviously $C_{1}\times C_{2} \cong \R^{m_{1}+m_{2}}$ and therefore 
$\Sigma_{12}=\Sph^{m_{1}+m_{2}-1}\subset \Sph^{2m_{1}+2m_{2}-1}$ is not singular. 
In this case the images of the hypersurfaces with $t$ constant under the map 
$$\Pi: [0,\tfrac{1}{2}\pi]\times \Sph^{m_{1}-1} \times \Sph^{m_{2}-1} \ra \Sph^{m_{1}+m_{2}-1}$$
give a (singular) codimension one foliation of $\Sph^{m_{1}+m_{2}-1}$ by hypersurfaces isometric to the product 
of spheres $\Sph^{m_{1}-1}(\cos{t}) \times \Sph^{m_{2}-1}(\sin{t})$. 
As $t\ra 0$ the second spherical factor shrinks to radius $0$, 
while the first spherical factor shrinks to radius $0$ as $t \ra \pi/2$. Restricting $\Pi$ to the open interval $(0,\pi/2)$ gives a 
foliation of $\Sph^{m_{1}+m_{2}-1}\setminus (\Sph^{m_{1}-1},0) \cup (0,\Sph^{m_{2}-1})$ that omits the two singular 
leaves corresponding to the endpoints $t=0$ and $t=\pi/2$. The leaves of this singular foliation of $\Sph^{m_{1}+m_{2}-1}$ 
are exactly the orbits of the group $\sorth{m_{1}}\times \sorth{m_{2}} \subset \sorth{m_{1}+m_{2}}$. 
When $m_{1}=m_{2}=2$ the singular foliation above yields the  standard singular foliation of 
$\Sph^{3}$ by an open interval of $2$-tori which degenerates at the ends of the interval to the linked Hopf circles $(\Sph^{1},0) \subset \Sph^{3}$ and $(0,\Sph^{1})\subset \Sph^{3}$.

\medskip
Moving from the smooth to the Legendrian category we can refine the notion of twisted product to generate new 
Legendrian immersions from a pair of lower-dimensional Legendrian immersions, provided the twisting curve itself 
is Legendrian in $\Sph^{3}$.


\addtocounter{equation}{1}
\begin{prop}[Legendrian twisted products,  \cite{castro:li:urbano} Thm 3.1]
\label{P:leg:triple}
Suppose that the twisting curve $\bw$ is a Legendrian curve in $\Sph^3$, that
$(\Sigma_1,g_1)$ and $(\Sigma_2,g_2)$ are oriented Riemannian manifolds of dimension $p-1>0$ and $q-1>0$ respectively, and
that $X_1:\Sigma_{1} \ra \Sph^{2p-1}$ and $X_2:\Sigma \ra \Sph^{2q-1}$ are Legendrian isometric immersions.
Away from points where $w_{1}$ or $w_{2}$ vanish the $\bw$-twisted product
$$X_1 *_{\bw} X_2: I \times \Sigma_1 \times \Sigma_2 \ra \Sph^{2p+2q-1} \subset \C^{p+q} = \C^p \times \C^{q},$$
defined in \ref{D:twisted:product} is a Legendrian immersion
whose Lagrangian phase $e^{i\theta_X}$  satisfies the following twisted product relation
\addtocounter{theorem}{1}
\begin{equation}
\label{E:twist:angle}
e^{i\theta_{X}} = (-1)^{p-1}e^{i\theta_{X_1}} e^{i\theta_{X_2}} e^{i\theta_{\bw} + i(p-1)\arg{w_1} +i(q-1) \arg{w_2}},
\end{equation}
and the metric $g$ induced by $X_1 *_{\bw} X_2$ is
\addtocounter{theorem}{1}
\begin{equation}
\label{E:twist:metric}
g = \abs{\dot{\bw}}^2 dt^2 + \abs{w_1}^2 g_1 + \abs{w_2}^2 g_2.
\end{equation}
\end{prop}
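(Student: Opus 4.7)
The plan is to verify in turn three claims: that $X := X_1 *_{\bw} X_2$ lands in $\Sph^{2p+2q-1}$, that it is a Legendrian immersion with pullback metric \ref{E:twist:metric}, and finally that its Lagrangian phase satisfies \ref{E:twist:angle}. The spherical claim is immediate, since $\abs{X}^2 = \abs{w_1}^2\abs{X_1}^2 + \abs{w_2}^2\abs{X_2}^2 = \abs{\bw}^2 = 1$. The rest exploits the natural splitting $\C^{p+q} = \C^p \oplus \C^q$ and some preliminary consequences of the Legendrian hypotheses.

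First I extract these preliminary consequences using the Hermitian inner product $\langle\cdot,\cdot\rangle_{\C}$ on each factor. For $X_i : \Sigma_i \ra \Sph^{2m_i-1}$ Legendrian, combining $\abs{X_i}^2\equiv 1$ (which gives $\Real\langle dX_i, X_i\rangle_{\C} = 0$) with $X_i^{*}\gamma = \Imag\langle dX_i, X_i\rangle_{\C} = 0$ yields $\langle dX_i, X_i\rangle_{\C}\equiv 0$. Likewise $\dot{w}_1\overline{w}_1 + \dot{w}_2\overline{w}_2 = 0$ for the Legendrian curve $\bw$ in $\Sph^3$. These Hermitian orthogonality identities force all cross-terms in the subsequent real-inner-product computations to vanish. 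Writing out $dX$ on the split tangent space, the partial derivatives are $\partial_t X = (\dot{w}_1 X_1, \dot{w}_2 X_2)$, $dX(v_1) = (w_1 dX_1(v_1), 0)$, and $dX(v_2) = (0, w_2 dX_2(v_2))$, and the metric \ref{E:twist:metric} together with $X^{*}\gamma = 0$ then fall out immediately. For immersiveness, any kernel element $\alpha\partial_t + v_1 + v_2$ must satisfy $\alpha\dot{w}_i = 0$ after Hermitian contraction with $X_i$, forcing $\alpha=0$ since $\bw$ is an immersion, and then $v_i = 0$ away from $w_i = 0$ because each $X_i$ is an immersion.

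For the phase I work in the orthonormal frame $e_0 = \partial_t/\abs{\dot{\bw}}$, $e^{(1)}_i = f_i/\abs{w_1}$, $e^{(2)}_j = h_j/\abs{w_2}$, with $\{f_i\}$, $\{h_j\}$ orthonormal frames on $(\Sigma_1,g_1)$, $(\Sigma_2,g_2)$. The Lagrangian phase of $X$, regarded as a Legendrian, equals $\Omega(X, dX(e_0), dX(e^{(1)}_1), \ldots, dX(e^{(2)}_{q-1}))$, so up to the normalising factor $\abs{\dot{\bw}}\,\abs{w_1}^{p-1}\,\abs{w_2}^{q-1}$ the central object is the complex determinant $\Omega(X, \partial_t X, dX(f_1), \ldots, dX(h_{q-1}))$. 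I evaluate this by Laplace expansion along the $\C^p \oplus \C^q$ split. Only two column subsets yield simultaneously nonzero $p\times p$ and $q\times q$ minors, each minor detecting the phase of one of $X_1$, $X_2$ via the definition of $e^{i\theta_{X_i}}$; the two terms combine to $(-1)^{p-1} e^{i\theta_{X_1}} e^{i\theta_{X_2}} w_1^{p-1} w_2^{q-1}(w_1 \dot{w}_2 - \dot{w}_1 w_2)$. An analogous two-dimensional cone calculation for $\bw$ itself gives $w_1\dot{w}_2 - \dot{w}_1 w_2 = \abs{\dot{\bw}}\,e^{i\theta_{\bw}}$, and splitting $w_i^{m-1} = \abs{w_i}^{m-1} e^{i(m-1)\arg{w_i}}$ allows the moduli to cancel against the normalising factor, producing exactly \ref{E:twist:angle}.

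The main obstacle will be careful sign-tracking in the Laplace expansion: both the overall prefactor $(-1)^{p-1}$ and the correct identification of the twisting contribution as $e^{i\theta_{\bw}}$ depend on consistent orientation conventions for the orthonormal frames and for the cone over $\bw$. Once the Hermitian orthogonality consequences are in hand, the metric, Legendrian, and immersion claims are routine; the real work is the phase determinant.
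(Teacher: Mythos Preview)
Your proof is correct and complete. The paper itself does not give a proof of this proposition---it simply cites Castro--Li--Urbano \cite{castro:li:urbano}---so there is nothing to compare against except the bare statement. Your direct verification via Hermitian orthogonality identities for the metric and Legendrian claims, followed by the Laplace expansion of the $(p+q)\times(p+q)$ complex determinant along the $\C^p\oplus\C^q$ splitting for the phase, is exactly the natural approach and the signs do work out as you indicate; in particular your identification $w_1\dot{w}_2 - \dot{w}_1 w_2 = |\dot{\bw}|\,e^{i\theta_{\bw}}$ is the same formula the paper records as \ref{E:legn:curve:phase} in the subsequent proof of Lemma~\ref{L:w:slg}.
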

\noindent
The analogue of Proposition \ref{P:leg:triple} in the degenerate case $p=1$ considered in \ref{E:twist:def:p:eq:1} is 
\begin{prop}
\label{P:leg:twist:p:eq:1}
\addtocounter{equation}{1}
Suppose that the twisting curve $\bw$ is a Legendrian curve in $\Sph^{3}$,  that $(\Sigma,g)$ is an oriented Riemannian 
manifold of dimension $n-2$ and that $X: \Sigma \ra \Sph^{2n-3}$ is a Legendrian isometric immersion. 
Away from points where $w_{2}$ vanishes the $\bw$-twisted product 
$$X_{w}: I \ra \Sigma \ra \Sph^{2n-1}\subset \C^{n}=\C \times\C^{n-1},$$
defined in \ref{E:twist:def:p:eq:1} is a Legendrian immersion whose Lagrangian phase $e^{i\theta}$ satisfies 
the twisted product relation
$$ e^{i\theta} = e^{i\theta_{X}} e^{i\theta_{\bw}+i(n-2)\arg{w_{2}}},$$
and the metric  induced by $X_{\bw}$ is $\abs{\dot{w}}^{2}dt^{2} + \abs{w_{2}}^{2}\, g$.
\end{prop}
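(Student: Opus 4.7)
The statement is essentially the $p=1$ specialization of Proposition \ref{P:leg:triple} under the convention of Remark \ref{R:twist:def:p:eq:1}, and the proof proceeds by direct verification of the three assertions—immersion and Legendrian, induced metric, and Lagrangian phase—following the same template as Castro--Li--Urbano but with simplifications reflecting the trivial first factor.

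Compute $\partial_t X_{\bw} = (\dot{w}_1, \dot{w}_2 X)$ and $\partial_{\sigma^i} X_{\bw} = (0, w_2\, \partial_i X)$. Off $\{w_2=0\}$ these span a subspace of dimension $n-1$, since $\dot{\bw}\ne 0$ makes the $t$-derivative nonzero and $X$ is itself an immersion. Writing the standard contact form on $\Sph^{2n-1}$ as $\gamma = \Imag(\bar{z}\cdot dz)$, pullback gives
\[
X_{\bw}^*\gamma \;=\; \Imag\bigl(\bar{w}_1\, dw_1 + \abs{X}^2\,\bar{w}_2\, dw_2 + \abs{w_2}^2\,\bar{X}\cdot dX\bigr),
\]
which vanishes because $\bw$ is Legendrian in $\Sph^3$, $\abs{X}^2\equiv 1$, and $X$ is Legendrian in $\Sph^{2n-3}$. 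The induced metric is read off analogously: the cross term $\Real(w_2\,\overline{\dot{w}_2}\,\bar{X}\cdot\partial_i X)$ vanishes because $\bar{X}\cdot\partial_i X$ has zero real part (from $\abs{X}^2\equiv 1$) and zero imaginary part (from $X$ being Legendrian); the diagonal pieces yield $\abs{\dot{\bw}}^2 dt^2 + \abs{w_2}^2 g$.

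The Lagrangian phase is the heart of the calculation. Form the cone $Y(r,t,\sigma) = r\, X_{\bw}(t,\sigma)$ and expand
\[
Y^*\Omega \;=\; (w_1\, dr + r\dot{w}_1\, dt)\wedge \bigwedge_{j=1}^{n-1}\bigl(w_2 X_j\, dr + r\dot{w}_2 X_j\, dt + rw_2\,dX_j\bigr).
\]
Since the domain is $n$-dimensional, only terms with exactly one $dr$ and one $dt$ contribute, so the remaining $n-2$ bracketed factors must all supply $rw_2\,dX_j$ (producing the power $w_2^{n-2}$, the source of the exponent $(n-2)\arg w_2$). Two cases arise: either the $dt$ comes from a bracketed factor and $dr$ from the outer factor, or vice versa. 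The identity
\[
\sum_{j=1}^{n-1}(-1)^{j-1} X_j\, dX_1\wedge\cdots\wedge\widehat{dX_j}\wedge\cdots\wedge dX_{n-1} \;=\; e^{i\theta_X}\,\vol_{\Sigma},
\]
which is precisely the Lagrangian phase definition applied to the cone $\R^+\times\Sigma \to \C^{n-1}$ over $X$, collapses both cases. Combining them gives a common factor of $w_1\dot{w}_2 - \dot{w}_1 w_2$, which divided by $\abs{\dot{\bw}}$ is exactly $e^{i\theta_{\bw}}$, the Lagrangian phase of the cone over $\bw\subset\Sph^3$; together with the orthogonality computations from the metric step this assembles to
\[
Y^*\Omega \;=\; e^{i\theta_X}\, e^{i\theta_{\bw} + i(n-2)\arg w_2}\,\vol_Y,
\]
yielding the claimed phase. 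The principal obstacle is careful bookkeeping: tracking the wedge-product signs arising from reordering the $dr$, $dt$, and $dX_j$ factors, recombining the two surviving cases into the intrinsic combination $e^{i\theta_{\bw}}$, and matching the volume form of the cone using the orthogonality relations established earlier.
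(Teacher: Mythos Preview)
Your proof is correct. The paper itself does not supply a proof of this proposition: it is stated as the $p=1$ analogue of Proposition~\ref{P:leg:triple}, which in turn is cited from Castro--Li--Urbano \cite{castro:li:urbano} without proof. Your direct computation---pulling back the contact form to verify the Legendrian condition, reading off the metric from the orthogonality relations $\bar X\cdot\partial_i X=0$, and expanding $Y^*\Omega$ on the cone to extract the phase via the combination $w_1\dot w_2-\dot w_1 w_2$---is exactly the natural verification and matches the structure of the Castro--Li--Urbano argument in the nondegenerate case.
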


\addtocounter{equation}{1}
\begin{remark}
$X_1 *_{\bw} X_2$ fails to be an immersion at points where either $w_{1}$ or $w_{2}$ vanish.
Away from such points we have $\vol_g = \abs{\dot{\bw}}\, \abs{w_1}^{p-1} \, \abs{w_2}^{q-1} dt \vol_{g_1}\, \vol_{g_2}$, and hence when both $\Sigma_{1}$ and $\Sigma_{2}$ are closed 
the $\bw$-twisted product has volume
\addtocounter{theorem}{1}
\begin{equation}
\label{E:twist:vol}
\vol{(X_1 *_{\bw} X_2)} = \vol(X_1) \vol(X_2) \int_{I}{ \abs{\dot{\bw}}\, \abs{w_1}^{p-1} \abs{w_2}^{q-1}\, dt}.
\end{equation}
The obvious analogue of \ref{E:twist:vol} holds for the degenerate case $p=1$.
\end{remark}

\subsubsection*{Twisted products of special Legendrians and $(p,q)$-twisted special Legendrian curves}
From now on we will always consider the case where the integers $p$ and $q$ satisfy $p \le q$, $p\ge 1$ and $q \ge 2$. 
There is no loss of generality in making this assumption. 
We call such a pair $(p,q)$ of positive integers \emph{admissible}. 
For each admissible pair of integers $(p,q)$ we define a distinguished class of Legendrian curves in $\Sph^3$.
\addtocounter{equation}{1}
\begin{definition}
\label{D:twist:slg}
We call a Legendrian curve $\bw$ in $\Sph^3$ a \emph{$(p,q)$-twisted special Legendrian (SL) curve} if
the Lagrangian phase of $\bw$ satisfies
\addtocounter{theorem}{1}
\begin{equation}
\label{E:w:slg}
e^{i\theta_{\bw}} = (-1)^{p-1}e^{-i(p-1)\arg{w_1}-i(q-1) \arg{w_2}}.
\end{equation}
\end{definition}
Proposition \ref{P:leg:triple} (and \ref{P:leg:twist:p:eq:1} for the degenerate case $p=1$) 
has the following easy corollary which allows us to generate a new
special Legendrian immersion in $\Sph^{2(p+q)-1}$ from  a $(p,q)$-twisted SL
curve in $\Sph^3$ and a pair of special Legendrian immersions into $\Sph^{2p-1}$ and
$\Sph^{2q-1}$ respectively.
\addtocounter{equation}{1}
\begin{corollary}[Special Legendrian twisted products]
\label{C:sl:triples}
Let $X_1$, $X_2$ and $\bw$ be as in Proposition \ref{P:leg:triple}.
If additionally, $X_1$ and $X_2$ are both special Legendrian then the $\bw$-twisted product $X_1 *_{\bw} X_2$ is
special Legendrian if and only if $\bw$ is a $(p,q)$-twisted SL curve in $\Sph^3$.
Similarly, let $X$ and $\bw$ be as in Proposition \ref{P:leg:twist:p:eq:1}.
If additionally, $X$ is special Legendrian then the $\bw$-twisted product $X_{\bw}$ is special Legendrian 
if and only if $\bw$ is a $(1,n-1)$-twisted SL curve in $\Sph^{3}$.
\end{corollary}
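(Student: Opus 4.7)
The plan is to extract the corollary directly from the twisted product phase formulae \ref{E:twist:angle} (for the non-degenerate case) and its analogue in Proposition \ref{P:leg:twist:p:eq:1} (for the degenerate case $p=1$), by substituting in the defining condition that both $X_1$ and $X_2$ are special Legendrian. Since a Legendrian immersion is special Legendrian precisely when its Lagrangian phase is identically $1$, everything reduces to an algebraic identity on the unit circle.

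First I would treat the non-degenerate case $p\ge 2$. By Proposition \ref{P:leg:triple}, at all points where $X_1 *_{\bw} X_2$ is an immersion, its Lagrangian phase $e^{i\theta_X}$ satisfies \ref{E:twist:angle}. The assumption that $X_1$ and $X_2$ are special Legendrian means $e^{i\theta_{X_1}}\equiv 1$ and $e^{i\theta_{X_2}}\equiv 1$ on $\Sigma_1$ and $\Sigma_2$ respectively, so \ref{E:twist:angle} collapses to
\[
e^{i\theta_X} \;=\; (-1)^{p-1}\, e^{i\theta_{\bw}}\, e^{i(p-1)\arg w_1 + i(q-1)\arg w_2}.
\]
Therefore $e^{i\theta_X}\equiv 1$ on $I\times\Sigma_1\times\Sigma_2$ if and only if
\[
e^{i\theta_{\bw}} \;=\; (-1)^{p-1}\, e^{-i(p-1)\arg w_1 - i(q-1)\arg w_2}
\]
holds on $I$, which is exactly the defining condition \ref{E:w:slg} for $\bw$ to be a $(p,q)$-twisted SL curve.

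For the degenerate case $p=1$, the same strategy applies using Proposition \ref{P:leg:twist:p:eq:1} in place of \ref{P:leg:triple}. Here the Lagrangian phase of $X_{\bw}$ is
\[
e^{i\theta} \;=\; e^{i\theta_X}\, e^{i\theta_{\bw} + i(n-2)\arg w_2},
\]
and with $X$ special Legendrian so that $e^{i\theta_X}\equiv 1$, the immersion $X_{\bw}$ is special Legendrian precisely when $e^{i\theta_{\bw}} = e^{-i(n-2)\arg w_2}$. Setting $p=1$, $q=n-1$ in \ref{E:w:slg} produces the same condition (the sign $(-1)^{p-1}$ becomes $+1$ and the $\arg w_1$ term drops out), so again $X_{\bw}$ is special Legendrian if and only if $\bw$ is a $(1,n-1)$-twisted SL curve.

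There is essentially no obstacle: the work has already been done in establishing the twisted product phase identities, and the corollary is a direct specialisation. The only minor point of care is to verify that the condition is required to hold pointwise on all of $I$ (away from the zero set of $w_1 w_2$), which follows because the functions $e^{i\theta_{X_1}}$, $e^{i\theta_{X_2}}$, $e^{i\theta_{\bw}}$ and $\arg w_i$ depend on independent variables, so the equality of phases on the product cannot be achieved by any cancellation between factors coming from different tensor slots.
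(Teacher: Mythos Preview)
Your proof is correct and matches the paper's approach: the paper does not give an explicit proof, presenting the result as an immediate corollary of the phase formulae in Propositions \ref{P:leg:triple} and \ref{P:leg:twist:p:eq:1} together with Definition \ref{D:twist:slg}, which is precisely the substitution argument you have written out.
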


The following characterisation of $(p,q)$-twisted SL curves in $\Sph^3$ is central to the rest of this paper
\addtocounter{equation}{1}
\begin{lemma}[\mbox{\cite[Cor 1]{castro:li:urbano}}]
\label{L:w:slg}
Any curve $\bw: I \ra \C^2$ satisfying
\addtocounter{theorem}{1}
\begin{equation}
\label{E:slg:ode}
\overline{w}_1 \dot{w}_1 = - \overline{w}_2 \dot{w}_2 = (-1)^p \overline{w}_1^p\, \overline{w}_2^{q}, \qquad \abs{\bw(0)}=1,
\end{equation}
is a $(p,q)$-twisted SL curve in $\Sph^3$.
Conversely, any $(p,q)$-twisted SL curve in $\Sph^3$ containing no points with $w_1(t)=0$ or $w_2(t)=0$
admits a parametrisation satisfying \ref{E:slg:ode}.
\end{lemma}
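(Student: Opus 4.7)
The plan is to split the $(p,q)$-twisted SL condition into its Legendrian-on-$\Sph^{3}$ content and its Lagrangian-phase content, and to show that each piece is encoded in \ref{E:slg:ode} up to a choice of parametrisation.

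First I would observe that the sphere constraint $|\bw|^{2}=1$ and the Legendrian condition $\gamma(\dot{\bw})=0$ together amount to the single complex equation $\overline{w}_{1}\dot{w}_{1}+\overline{w}_{2}\dot{w}_{2}=0$. By the Hermitian-orthogonal decomposition of $T_{\bw}\C^{2}$ this is equivalent to writing $\dot{\bw}=\lambda(t)(-\overline{w}_{2},\overline{w}_{1})$ for a complex-valued function $\lambda$. Now the first equality in \ref{E:slg:ode} is exactly $\overline{w}_{1}\dot{w}_{1}=-\overline{w}_{2}\dot{w}_{2}$, so any solution of \ref{E:slg:ode} with $|\bw(0)|=1$ automatically lies on $\Sph^{3}$ and is Legendrian there, with tangent of the above form for some $\lambda$.

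Next I would compute the Lagrangian phase $e^{i\theta_{\bw}}$ directly from the cone $C(\bw)=\{r\bw(t)\}$: a short calculation using $\Omega=dz_{1}\wedge dz_{2}$ evaluated on the cone yields $w_{1}\dot{w}_{2}-w_{2}\dot{w}_{1}=\lambda$ and induced area element of length $|\lambda|\,dr\wedge dt$, hence $e^{i\theta_{\bw}}=\lambda/|\lambda|$. Consequently the $(p,q)$-twisted SL condition \ref{E:w:slg} is equivalent to the statement that $\lambda$ is a positive real multiple of $(-1)^{p-1}\overline{w}_{1}^{p-1}\overline{w}_{2}^{q-1}$. The second equality $\overline{w}_{1}\dot{w}_{1}=(-1)^{p}\overline{w}_{1}^{p}\overline{w}_{2}^{q}$ in \ref{E:slg:ode} is exactly the case where that multiple equals one (substituting $\dot{w}_{1}=-\lambda\overline{w}_{2}$ into it gives $\lambda=(-1)^{p-1}\overline{w}_{1}^{p-1}\overline{w}_{2}^{q-1}$), which closes the forward direction. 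For the converse, given a $(p,q)$-twisted SL curve in $\Sph^{3}$ avoiding the Hopf circles $\{w_{1}=0\}\cup\{w_{2}=0\}$, the phase condition forces $\lambda=\mu(t)(-1)^{p-1}\overline{w}_{1}^{p-1}\overline{w}_{2}^{q-1}$ for a positive function $\mu$; reparametrising via $\frac{ds}{dt}=\mu(t)$ is orientation-preserving (since $\mu>0$ on the domain) and produces a parametrisation in which $\lambda=(-1)^{p-1}\overline{w}_{1}^{p-1}\overline{w}_{2}^{q-1}$, so \ref{E:slg:ode} holds.

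The main obstacle is not analytic but one of sign-tracking: one must verify that the $(-1)^{p-1}$ in \ref{E:w:slg} is converted to the $(-1)^{p}$ in \ref{E:slg:ode} by the single extra sign in $\dot{w}_{1}=-\lambda\overline{w}_{2}$ introduced by the Hermitian decomposition. Once this is done carefully, the entire argument reduces to the Hermitian-orthogonal description of the Legendrian tangent direction, the standard phase computation for a Lagrangian cone, and one reparametrisation, exactly paralleling the argument already given for $n$-twisted SL curves in $\C$ in the proof of \ref{E:n:twist:sl:ode}.
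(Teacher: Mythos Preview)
Your proof is correct and follows essentially the same approach as the paper's. Both arguments identify the Legendrian-on-$\Sph^3$ condition as $\overline{w}_1\dot{w}_1+\overline{w}_2\dot{w}_2=0$, use the phase formula $e^{i\theta_{\bw}}=(w_1\dot{w}_2-\dot{w}_1 w_2)/|\dot{\bw}|$, and reparametrise to normalise speed in the converse direction; your packaging via $\dot{\bw}=\lambda(-\overline{w}_2,\overline{w}_1)$ is slightly cleaner than the paper's explicit computation of $|\dot{\bw}|$ and $w_1\dot{w}_2-\dot{w}_1 w_2$ separately, but the content is identical.
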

\begin{proof}
First notice that the Lagrangian phase $e^{i\theta_{\bw}}$ of any Legendrian curve $\bw$ in $\Sph^3$ can be expressed as
\addtocounter{theorem}{1}
\begin{equation}
\label{E:legn:curve:phase}
e^{i\theta_{\bw}} = \frac{w_1 \dot{w}_2 - \dot{w}_1 w_2}{\abs{\dot{\bw}}},
\end{equation}
since $\bw$ has norm $1$ and is hermitian orthogonal to $\dot{\bw}$.

Now suppose $\bw$ is a curve in $\C^2$ satisfying \ref{E:slg:ode}.
The real part of the equality $\overline{w}_1 \dot{w}_1 + \overline{w}_2 \dot{w}_2=0$ implies that $\tfrac{d}{dt}\abs{\bw}^2=0$, and
hence $\bw$ lies in $\Sph^3$. The imaginary part of the same equality implies that $\bw$ is a Legendrian curve.
Straightforward calculation using \ref{E:slg:ode} shows that $\bw$ satisfies
\addtocounter{theorem}{1}
\begin{equation}
\label{E:w:reparam}
\abs{\dot{\bw}} = \abs{w_1}^{p-1} \abs{w_2}^{q-1},
\end{equation}
and
\addtocounter{theorem}{1}
\begin{equation}
\label{E:w1:w2:dot}
w_1 \dot{w}_2 - \dot{w}_1 w_2 = (-1)^{p-1}\overline{w}_1^{p-1}\overline{w}_2^{q-1}.
\end{equation}
Combining \ref{E:legn:curve:phase}, \ref{E:w:reparam} and \ref{E:w1:w2:dot} it follows that the Lagrangian phase of $\bw$ satisfies
\ref{E:w:slg} as required.

For the converse, notice that any Legendrian curve $\bw$ in $\Sph^3$ satisfies the first and third equalities in \ref{E:slg:ode}, \textit{i.e.}
$\overline{w}_1 \dot{w}_1 = - \overline{w}_2 \dot{w}_2$ and $\abs{\bw(0)}=1$.
Also we can rewrite \ref{E:w:slg} as
$$e^{i\theta_{\bw}} = (-1)^{p-1} \frac{\overline{w}_1^{p-1} \overline{w}_2^{q-1}}{\abs{w_1}^{p-1} \abs{w_2}^{q-1}},$$
and hence using \ref{E:legn:curve:phase} also as
$$ \frac{w_1 \dot{w}_2 - \dot{w}_1 w_2}{\abs{\dot{\bw}}} = (-1)^{p-1} \frac{\overline{w}_1^{p-1} \overline{w}_2^{q-1}}{\abs{w_1}^{p-1} \abs{w_2}^{q-1}}.$$
Now if we reparametrise $\bw$ so that it satisfies \ref{E:w:reparam} then from the previous equality
we see that \ref{E:w:slg} is equivalent to equation \ref{E:w1:w2:dot}.
Multiplying \ref{E:w1:w2:dot} by $\overline{w}_1 \overline{w}_2$ and using the fact that $\bw$ satisfies
$\abs{\bw}^2=1$ and $\overline{w}_1 \dot{w}_1 = - \overline{w}_2 \dot{w}_2$, we get the second equality of \ref{E:slg:ode} as required.
\end{proof}

\addtocounter{equation}{1}
\begin{remark}
\label{R:sl:twist:sign}
By changing the parameter $t$ of the curve $\bw$ to $-t$ if necessary one can always absorb the dimension-dependent sign $(-1)^p$ from \ref{E:slg:ode}
and therefore it suffices to study curves $\bw$ in $\Sph^3$ satisfying
$$\overline{w}_1 \dot{w}_1 = - \overline{w}_2 \dot{w}_2 = \overline{w}_1^p \overline{w}_2^{q}, $$
with initial condition $\abs{\bw(0)}=1$.
Moreover, away from points where $w_{1}w_{2}=0$ these ODEs are equivalent to 
\begin{equation}
\addtocounter{theorem}{1}
\label{E:slg:ode2}
\dot{w}_{1} = \overline{w}_{1}^{p-1}\overline{w}_{2}^{q}, \quad 
\dot{w}_{2} = - \overline{w}_{1}^{p} \overline{w}_{2}^{q-1}.
\end{equation}
\ref{E:slg:ode2} will be the most convenient form of the equations to use since it allows the cleanest treatment of 
the degenerate solutions where $w_{1}$ or $w_{2}$ can become zero.
\end{remark}

\addtocounter{equation}{1}
\begin{remark}
\label{R:sl:twist:vol}
If $\bw$ is a $(p,q)$-twisted SL curve in $\Sph^3$ with $p>1$, parametrized as in \ref{E:slg:ode}, then by combining \ref{E:twist:vol} and \ref{E:w:reparam}
we see that when $\Sigma_{1}$ and $\Sigma_{2}$ are both closed 
\addtocounter{theorem}{1}
\begin{equation}
\label{E:sl:twist:vol}
\vol{(X_1 *_{\bw} X_2)}  = \vol(X_1) \vol(X_2) \int_{I}{ \abs{\dot{\bw}}^2 \, dt}.
\end{equation}
Again the obvious analogue of \ref{E:sl:twist:vol} holds in the degenerate case $p=1$.
Therefore there is a close relation between volume of special Legendrian twisted products and the \textit{energy} of
$(p,q)$-twisted SL curves in $\Sph^3$ when using the parametrisation forced by \ref{E:slg:ode}.
\end{remark}

\subsubsection*{Twisted products of contact stationary immersions}
Although the main focus of this paper is the construction of $\sorth{p} \times\sorth{q}$-invariant 
special Lagrangian cones in $\C^{n}$ 
or equivalently  $\sorth{p} \times\sorth{q}$-invariant special Legendrian submanifolds of $\Sph^{2n-1}$ with very little extra effort one can also construct many
Hamiltonian stationary cones in $\C^{n}$ or equivalently contact stationary submanifolds in $\Sph^{2n-1}$ via the twisted product construction. 

To this end we define the following class of Legendrian curves in $\Sph^{3}$ generalising \ref{E:slg:ode}
\begin{definition}
\addtocounter{equation}{1}
\label{D:pq:twist:cs}
We call a curve $\bw: I \subset \R \ra \Sph^{3}$ a \emph{$(p,q)$-twisted contact stationary (CS) curve} if it satisfies the ODEs
\begin{equation}
\addtocounter{theorem}{1}
\label{E:pq:twist:cs}
\overline{w}_{1}\dot{w}_{1} = -\overline{w}_{2}\dot{w}_{2} = e^{i(a+bt)} \overline{w}_{1}^{p} \overline{w}_{2}^{q},
\quad t\in I,
\end{equation}
for some $a$, $b \in \R$.
\end{definition}
\begin{remark}
\addtocounter{equation}{1}
\label{R:sw:compare}
Note in the degenerate case $p=q=1$ these ODEs occur as equation (7.1) in Schoen-Wolfson's work on the classification 
of $2$-dimensional Hamiltonian stationary cones in $\C^{2}$ \cite{schoen:wolfson}. The system \ref{E:pq:twist:cs} is very simple 
to understand in this case because $\bw$ satisfies a system of linear equations.
Moreover, by direct differentiation of the equations for $\dot{w}_{1}$ and $\dot{w}_{2}$, 
$w_{1}$ and $w_{2}$ each satisfy autonomous second order linear equations.
\end{remark}
The reason for making this definition is the following
\begin{lemma}[Contact stationary twisted products\cite{castro:li:urbano}, Cor 3.2]
\addtocounter{equation}{1}
\label{L:twist:cs}
Let $X_{1}$, $X_{2}$ and $\bw$ be as in Proposition \ref{P:leg:triple}. If additionally $X_{1}$ and $X_{2}$ 
are both oriented contact stationary immersions and $\bw$ is a $(p,q)$-twisted contact stationary curve then the 
$\bw$-twisted product $X_{1} *_{\bw} X_{2}: I \times \Sigma_{1} \times \Sigma_{2} \ra \Sph^{2(p+q)-1}$ is also
a contact stationary immersion away from points where $w_{1}$ or $w_{2}$ vanish.
Moreover, if either $X_{1}$ or $X_{2}$ is contact stationary but not minimal Legendrian or if $\bw$ is a 
$(p,q)$-twisted CS curve with $b\neq 0$ then  $X_{1} *_{\bw} X_{2}$ is contact stationary but not minimal Legendrian.

Similarly, let $X$ and $\bw$ be as in Proposition \ref{P:leg:twist:p:eq:1}.
If additionally, $X$ is an oriented contact stationary immersion then the $\bw$-twisted product $X_{\bw}$ is an oriented contact 
stationary immersion if $\bw$ is a $(1,n-1)$-twisted CS curve in $\Sph^{3}$.
\end{lemma}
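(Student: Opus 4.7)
The plan is to reduce the contact stationary condition for the twisted product to a statement about the Lagrangian angle being locally harmonic with respect to the twisted product metric, and then use the very special warped-product structure of that metric together with the explicit formula for the Lagrangian phase in \ref{E:twist:angle} to decompose $\Delta_g \theta_X$ into three pieces that vanish by hypothesis.

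First I would compute the Lagrangian phase of $\bw$ by repeating, essentially verbatim, the manipulation used in the proof of Lemma \ref{L:w:slg}, but now starting from the CS ODE \ref{E:pq:twist:cs} instead of \ref{E:slg:ode}. Taking moduli in \ref{E:pq:twist:cs} gives $\abs{\dot w_1}=\abs{w_1}^{p-1}\abs{w_2}^q$ and $\abs{\dot w_2}=\abs{w_1}^p\abs{w_2}^{q-1}$, so $\abs{\dot{\bw}}=\abs{w_1}^{p-1}\abs{w_2}^{q-1}$ is unchanged from \ref{E:w:reparam}. Using the Legendrian identity $\overline{w}_1\dot{w}_1=-\overline{w}_2\dot{w}_2$ and $\abs{\bw}=1$ together with \ref{E:legn:curve:phase} one obtains, away from zeros of $w_1w_2$,
\begin{equation*}
e^{i\theta_{\bw}} \;=\; (-1)^{p-1}\, e^{i(a+bt)}\, e^{-i(p-1)\arg w_1 - i(q-1)\arg w_2}.
\end{equation*}
Substituting this into the twisted-product phase formula \ref{E:twist:angle} (or, in the degenerate $p=1$ case, into the analogous formula from Proposition \ref{P:leg:twist:p:eq:1}) the $\arg w_1$ and $\arg w_2$ contributions cancel completely and we obtain that a local Lagrangian angle of $X_1*_\bw X_2$ is
\begin{equation*}
\theta_X \;\equiv\; \theta_{X_1}(\sigma_1)+\theta_{X_2}(\sigma_2)+bt+\mathrm{const}\pmod{2\pi}.
\end{equation*}

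Next I would check harmonicity of $\theta_X$ in the warped product metric $g=\abs{\dot\bw}^2 dt^2+\abs{w_1}^2 g_1+\abs{w_2}^2 g_2$. Setting $f_i(t):=\abs{w_i(t)}$ and $h(t):=\abs{\dot\bw(t)}$, the relation $h=f_1^{p-1}f_2^{q-1}$ derived above is the crucial cancellation. For functions depending only on $\sigma_i$, the warped product structure gives $\Delta_g \theta_{X_i}=f_i^{-2}\Delta_{g_i}\theta_{X_i}$, which vanishes since each $X_i$ is contact stationary. For the purely $t$-dependent piece,
\begin{equation*}
\Delta_g(bt)\;=\;\frac{1}{h\,f_1^{p-1}f_2^{q-1}}\,\frac{d}{dt}\!\left(\frac{f_1^{p-1}f_2^{q-1}}{h}\,b\right)\;=\;\frac{b}{h^2}\,\frac{d}{dt}(1)\;=\;0,
\end{equation*}
precisely because $h=f_1^{p-1}f_2^{q-1}$. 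Adding the three contributions gives $\Delta_g\theta_X=0$, so $\theta_X$ is (locally) harmonic and hence $X_1*_\bw X_2$ is contact stationary.

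For the non-minimality assertion I would use that a Legendrian immersion is minimal iff its Lagrangian angle is locally constant. Since $\sigma_1$, $\sigma_2$ and $t$ vary independently, the decomposition $\theta_X\equiv \theta_{X_1}+\theta_{X_2}+bt+\mathrm{const}$ is constant precisely when each summand is; hence if either $\theta_{X_i}$ is nonconstant (i.e.\ $X_i$ is contact stationary but not minimal Legendrian) or if $b\neq 0$, then $\theta_X$ is nonconstant and $X_1*_\bw X_2$ is not minimal. The degenerate $p=1$ statement is handled by the same computation, reading Proposition \ref{P:leg:twist:p:eq:1} in place of Proposition \ref{P:leg:triple} and noting that only the $w_2$-factor appears in the metric and phase formula. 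There is no real obstacle in the argument: the only delicate point is the cancellation $h=f_1^{p-1}f_2^{q-1}$, which is exactly the content of the $(p,q)$-twisted CS ODE and is what makes Definition \ref{D:pq:twist:cs} the natural one.
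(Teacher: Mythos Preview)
Your proof is correct and follows exactly the approach the paper sketches: use \ref{E:twist:angle} and \ref{E:twist:metric} to write the Lagrangian angle and metric of the twisted product, then verify harmonicity of $\theta_X$ in the warped-product metric using the contact stationary hypotheses on the factors together with the identity $|\dot{\bw}|=|w_1|^{p-1}|w_2|^{q-1}$ coming from the CS ODE. (A harmless slip: in your intermediate phase formula for $\bw$ the sign should be $-1$ rather than $(-1)^{p-1}$, but this is absorbed into the additive constant and has no effect on the argument.)
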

\begin{proof}
The proof follows from Proposition \ref{P:leg:triple} together with the characterisation of 
contact stationary and minimal Legendrian submanifolds of $\Sph^{2n-1}$ in terms of harmonicity and constancy
 of the Lagrangian phase $e^{i\theta}$ respectively. We sketch the proof.
Using the form of the metric $g$ induced by $X_{1} *_{\bw}X_{2}$ given in \ref{E:twist:metric} and the 
Lagrangian phase $e^{i\theta_{X}}$ of $X_{1} *_{\bw}X_{2}$ given by \ref{E:twist:angle} we calculate $\Delta_{g}
 e^{i\theta_{X}}$. 
 Using the fact that $X_{1}$ and $X_{2}$ are contact stationary 
we have $\Delta_{g_{1}}e^{i\theta_{X_{1}}}=  \Delta_{g_{2}}e^{i\theta_{X_{2}}}=0$, which
together with the fact that $\bw$ satisfies \ref{E:pq:twist:cs} allows us to conclude that 
$\Delta_{g}  e^{i\theta_{X}}=0$ and hence that  $X_{1}*_{\bw}X_{2}$ is contact stationary.

The proof in the case $p=1$ follows in the same way using Proposition \ref{P:leg:twist:p:eq:1} in place of \ref{P:leg:triple}.
\end{proof}

\begin{remark}
\label{R:pq:twist:curve}
\addtocounter{equation}{1}
Clearly, \ref{E:slg:ode} is a special case of \ref{E:pq:twist:cs} where $a=p\pi$ and $b=0$. 
If $\bw$ is a solution of \ref{E:pq:twist:cs} with parameters $(a,b)$ then for any constant $d \in \R$, 
$\bw' = e^{id} \bw$ is another solution of \ref{E:pq:twist:cs} with parameters $(a',b')=(a+(p+q)d,b)$.
Hence if $b=0$ then by choosing $d$ appropriately we can reduce \ref{E:pq:twist:cs} to \ref{E:slg:ode}. 
The analysis of \ref{E:pq:twist:cs} when $b \neq 0$ 
is more complicated than that of \ref{E:slg:ode} because the system \ref{E:pq:twist:cs} is no longer autonomous. 
In this paper we will analyse in great detail solutions of \ref{E:slg:ode} and say almost nothing further about 
solutions of \ref{E:pq:twist:cs} with $b \neq 0$. However, following \cite[eqn. 13] {castro:li:urbano} we note 
that for any  $c\in (0,\pi/2)$ the Legendrian curve $\bw: \R \ra \Sph^{3}$
\begin{equation}
\addtocounter{theorem}{1}
\label{E:twist:cs:curve}
\bw (t) = ( \cos{c} \exp( it \sin^{p}{c}\cos^{q-2}{c}), \sin{c} \exp(-it \sin^{p-2}{c} \cos^{q}{c})), \quad t\in \R
\end{equation}
satisfies \ref{E:pq:twist:cs} with $a=\pi/2$ and $ b = \sin^{p-2}{c}\,\cos^{q-2}{c}\, ( p \sin^{2}{s} - q \cos^{2}{c})$.
Clearly $b=0$ if and only if $\tan^{2}{c}=q/p$.

The $(p,q)$-twisted CS curve \ref{E:twist:cs:curve} is closed if and only if  $\tan^{2}{c} \in \Q$. 
In particular given relatively prime positive integers $m$ and $n$ 
choose the unique value of $c_{m,n} \in (0,\pi/2)$ so that $\tan^{2}{c_{m,n}}=m/n$, and therefore
$\cos{c_{m,n}}=\sqrt{n/(m+n)}$,  $\sin{c_{m,n}} = \sqrt{m/(m+n)}$. Hence for each fixed $(p,q)$ there is a countably infinite family 
of closed $(p,q)$-twisted CS curves $\bw_{m,n}$ of the form \ref{E:twist:cs:curve} parametrised by the pair of relatively prime positive integers $m$ and $n$. 
In the degenerate case when $p=q=1$ these closed curves $\bw_{m,n}$ 
are (up to a unitary transformation) nothing but the closed contact stationary curves $\gamma_{m,n}$ described in \ref{E:hs:cones:2d}.
\end{remark}
\begin{remark}
\label{R:cs:non:min:leg}
\addtocounter{equation}{1}
Combining Lemma \ref{L:twist:cs} and Remark \ref{R:pq:twist:curve} gives us 
two ways to construct contact stationary submanifolds that are not minimal Legendrian using 
the twisted product construction: (i) we take at least one of our initial immersions $X_{i}$ to be contact stationary but not minimal 
Legendrian and $\bw$ to be a $(p,q)$-twisted SL curve or 
(ii) we take the twisting Legendrian curve $\bw$ to be a $(p,q)$-twisted CS curve of the form 
\ref{E:twist:cs:curve} with $\tan^{2}{c} \neq q/p$. 
In the latter case we can allow both $X_{1}$ and $X_{2}$ to be special Legendrian, yielding a very simple method to 
generate higher-dimensional contact stationary immersions from a pair of lower-dimensional special Legendrians.
\end{remark}

To construct special Legendrian or contact stationary immersions of the closed manifold 
$S^{1} \times \Sigma_{1 } \times \Sigma_{2}$ from a pair of immersions of closed manifolds $\Sigma_{1}$ and $\Sigma_{2}$ 
we need $(p,q)$-twisted SL or CS curves that are closed. We call Legendrian immersions which arise this way, 
\emph{closed twisted products}. For each fixed $p$ and $q$ 
Remark \ref{R:pq:twist:curve} exhibited a countably infinite family of closed $(p,q)$-twisted CS curves $\bw_{m,n}$ 
parametrised by relatively prime positive integers $m$ and $n$. Moreover, $\bw_{m,n}$ is congruent to a $(p,q)$-twisted SL curve 
if and only if $m/n= p/q$.
 
We study closed $(p,q)$-twisted SL curves in Section \ref{S:embedded} by analysing
the periodicity conditions for solutions $\bw$ of \ref{E:slg:ode}. We will prove the following result 
(Theorem \ref{T:w:closed})

\vspace{0.3cm}
\noindent
\emph{For each admissible pair $(p,q)$ of positive integers there exists a countably infinite number of distinct closed $(p,q)$-twisted SL curves in $\Sph^{3}$.}

\vspace{0.3cm}

By the SL twisted product construction of Corollary \ref{C:sl:triples}, Theorem \ref{T:w:closed} implies that  
every pair of closed special Legendrian submanifolds
$\Sigma_1$ and $\Sigma_2$ in $\Sph^{2p-1}$ and $\Sph^{2q-1}$ respectively, 
gives rise to a countably infinite family of closed SL twisted products, \textit{i.e.}
special Legendrian immersions of $S^1 \times \Sigma_1 \times \Sigma_2$ in $\Sph^{2p+2q-1}$.
Similarly, by using closed $(1,n-1)$-twisted SL curves every closed special Legendrian submanifold $\Sigma$ in $\Sph^{2n-3}$ 
gives rise to a countably infinite family of closed special Legendrian submanifolds in $\Sph^{2n-1}$ 
with topology $S^{1}\times \Sigma$.

By combining the closed twisted product construction with existing constructions of closed special Legendrian immersions
we generate a plethora of new closed special Legendrian and contact stationary immersions in essentially all dimensions.
For example, we have the following result on topological types of special Lagrangian and Hamiltonian stationary cones 
\begin{mtheorem}[Infinitely many topological types of SL and HS cones in $\C^{n}$ for $n\ge 4$]\hfill
\addtocounter{equation}{1}
\label{T:slg:infinite:top}
\begin{itemize}
\item[(i)]
For any $n\ge 4$ there are infinitely many topological types of special Lagrangian cone in $\C^{n}$, 
each of which is diffeomorphic to  the cone over a product $S^{1}\times \Sigma'$ for some smooth manifold $\Sigma'$, 
and each of which admits infinitely many distinct geometric representatives.
\item[(ii)]
For any $n \ge 4$ there are infinitely many topological types of Hamiltonian stationary cone in $\C^{n}$ 
which are not minimal Lagrangian, each of which is diffeomorphic to the cone over a product $S^{1}\times \Sigma'$ for some smooth manifold $\Sigma'$, 
and each of which admits infinitely many distinct geometric representatives.
\end{itemize}
\end{mtheorem}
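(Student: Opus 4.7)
The strategy is to combine the closed twisted product construction (both the SL and CS versions, Corollary \ref{C:sl:triples} and Lemma \ref{L:twist:cs}) with the existence result Theorem \ref{T:w:closed} and the three-dimensional gluing construction of \cite{haskins:kapouleas:invent}. The proof proceeds by induction on $n$, starting at $n=4$.

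\textbf{Base case, $n=4$, part (i).} First I invoke \cite{haskins:kapouleas:invent} to obtain, for every genus $g\ge 2$, a closed special Legendrian surface $\Sigma_g \subset \Sph^{5}$, which furnishes infinitely many topological types of closed special Legendrian immersions in $\Sph^5$. For each such $\Sigma_g$, I apply the degenerate twisted product construction from Proposition \ref{P:leg:twist:p:eq:1} with the trivial point-factor $X_1: \mathrm{pt}\mapsto (1,0)$, the spherical immersion $X_2 = \Sigma_g \hookrightarrow \Sph^5$, and a closed $(1,3)$-twisted SL curve $\bw:\Sph^1 \ra \Sph^3$; by Theorem \ref{T:w:closed} applied to the admissible pair $(1,3)$, a countably infinite family of such closed twisting curves exists. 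Corollary \ref{C:sl:triples} then produces a closed special Legendrian immersion of $S^1 \times \Sigma_g$ in $\Sph^7$, and hence an SL cone in $\C^4$ diffeomorphic to the cone on $S^1\times \Sigma_g$. Varying $g$ gives infinitely many topological types, and for each fixed $g$ varying the closed $(1,3)$-twisted SL curve gives infinitely many geometric representatives.

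\textbf{Inductive step, part (i).} Assume that for some $n\ge 4$ we have exhibited infinitely many closed SL immersions $Y_\alpha: S^1\times \Sigma'_\alpha \ra \Sph^{2n-1}$ with pairwise distinct topologies $\Sigma'_\alpha$. To pass to dimension $n+1$, I repeat the degenerate twisted product: for each $\alpha$ I pick a closed $(1,n)$-twisted SL curve $\bw$ (available by Theorem \ref{T:w:closed} for the admissible pair $(1,n)$) and form the $\bw$-twisted product of a point with $Y_\alpha$. This produces a closed special Legendrian immersion of $S^1 \times (S^1\times \Sigma'_\alpha)$ in $\Sph^{2n+1}$, and the corresponding SL cone in $\C^{n+1}$ is of the claimed product form with $\Sigma'' = S^1\times \Sigma'_\alpha$. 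Again, the countable family of closed $(1,n)$-twisted SL curves supplies infinitely many geometric representatives for each topological type.

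\textbf{Part (ii), the Hamiltonian stationary case.} I run the same induction, but instead of the SL twisting curves I use the explicit closed $(p,q)$-twisted CS curves $\bw_{m,n}$ from Remark \ref{R:pq:twist:curve}, selecting relatively prime $(m,n)$ with $m/n \neq p/q$, so that the parameter $b$ in \ref{E:pq:twist:cs} is nonzero. By Remark \ref{R:cs:non:min:leg}, the resulting twisted product is contact stationary but not minimal Legendrian, so its cone in $\C^{n+1}$ is Hamiltonian stationary but not special Lagrangian. Since there are infinitely many admissible $(m,n)$, I again get infinitely many geometric representatives per topological type, and the topological types are inherited from the SL input produced in part (i).

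\textbf{Anticipated main difficulty.} The nontrivial point is confirming that for each fixed topological type the countably many choices of closed twisting curve really yield pairwise non-congruent cones, rather than rediscovering the same cone up to a unitary transformation. I expect to resolve this by appealing to a rigidity invariant attached to the twisting curve itself --- for instance, its period or the value of a conserved quantity of \ref{E:slg:ode} / \ref{E:pq:twist:cs}, which will be shown in Section \ref{S:embedded} to take distinct values on distinct members of the countable family. All other ingredients (the twisted product formulas for the phase \ref{E:twist:angle} and metric \ref{E:twist:metric}, the characterisations of SL and CS curves, and the existence statements) are already in place in the excerpt, so once distinctness of geometric representatives is verified, Theorem A follows by the inductive construction above.
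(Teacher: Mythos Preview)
Your proposal is correct and follows essentially the same approach as the paper: start from the higher-genus special Legendrian surfaces in $\Sph^5$ produced in \cite{haskins:kapouleas:invent}, apply the degenerate $(1,q)$-twisted product using closed twisted SL curves from Theorem \ref{T:w:closed} to reach $n=4$, iterate to reach higher $n$, and for part (ii) replace the SL twisting curves by the explicit CS curves of Remark \ref{R:pq:twist:curve} via Lemma \ref{L:twist:cs}. One minor correction: \cite{haskins:kapouleas:invent} produces special Legendrian surfaces of every \emph{odd} genus (and genus $4$), not every genus $g\ge 2$; this still yields infinitely many topological types, so your argument is unaffected.
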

\begin{proof}
In \cite{haskins:kapouleas:invent} we proved the existence of infinitely many special Legendrian surfaces in $\Sph^{5}$ of every odd genus (and also of genus $4$). By Theorem \ref{T:w:closed}
there is a countably infinite family of closed $(1,3)$-twisted SL curves. 
Appealing to  \ref{C:sl:triples} using this infinite family of closed $(1,3)$-twisted SL curves and the infinite number of topological types of 
SL surfaces in $\Sph^{5}$ described above we conclude that there are infinitely many topological types of special Legendrian 3-folds 
in $\Sph^{7}$ of the form $S^{1} \times \Sigma$, where $\Sigma$ is a oriented surface and that each topological type 
is realised by infinitely many distinct geometric representatives. To prove part (i) for any $n> 4$
we can keep iterating the process using the fact that by Theorem \ref{T:w:closed} 
for each $n \ge 3$ there is a countably  infinite family of closed $(1,n-1)$-twisted SL curves.
To prove (ii) we simply substitute Lemma \ref{L:twist:cs} on CS twisted products 
for Corollary \ref{C:sl:triples} and Remark \ref{R:pq:twist:curve} 
for Theorem \ref{T:w:closed} and argue as before using our gluing results for SL surfaces in $\Sph^{5}$ as the starting point
once again.
\end{proof}


We can also combine the twisted product construction with the SL $2$-tori produced by integrable 
systems methods. McIntosh \cite{mcintosh:slg} proved that all SL $2$-tori in $\Sph^5$
can be constructed by integrable systems methods and more specifically by so-called spectral curve methods.
Using these methods Carberry-McIntosh \cite{carberry:mcintosh} produced a very rich variety of
special Legendrian $2$-tori; in particular they proved the existence of appropriate SL spectral data
in which the genus of the spectral curve genus can be any positive even integer.
A simple consequence of their result is the remarkable fact that SL
$2$-tori can come in continuous families of arbitrarily high dimension,
by choosing SL spectral data of higher and higher spectral curve genus.
We can extend Carberry-McIntosh's result to every dimension and also to contact stationary tori of dimension at least $3$
using the closed twisted product construction.

\begin{mtheorem}[SL/CS tori in $\Sph^{2n-1}$ occur in families of arbitrarily high dimension]\hfill
\begin{itemize}
\item[(i)] 
For $n\ge 3$ there exist special Legendrian immersions of $T^{n-1}$ in $\Sph^{2n-1}$ which come in continuous families of arbitrarily high dimension.
\item[(ii)] 
For $n \ge 4$ there exist contact stationary (and not minimal Legendrian) immersions of $T^{n-1}$ in $\Sph^{2n-1}$ which come in continuous families 
of arbitrarily high dimension.
\end{itemize}
\end{mtheorem}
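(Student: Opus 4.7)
The strategy is induction on $n$, using the Carberry--McIntosh families in $\Sph^{5}$ as the seed and the degenerate twisted product of Remark \ref{R:twist:def:p:eq:1}, Proposition \ref{P:leg:twist:p:eq:1} as the inductive step.

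For (i), the base case $n=3$ is Carberry--McIntosh \cite{carberry:mcintosh} directly: for any $D\ge 0$, spectral data of sufficiently large genus produces a continuous family $\{X_{s}:T^{2}\ra\Sph^{5}\}_{s\in U}$ of SL immersions with $\dim U\ge D$. For the inductive step, assume the conclusion holds for $n-1$, fix $D$, take an SL family $\{X_{s}\}_{s\in U}$ of $T^{n-2}\ra\Sph^{2n-3}$ with $\dim U\ge D$, and pick by Theorem \ref{T:w:closed} a closed $(1,n-1)$-twisted SL curve $\bw:\Sph^{1}\ra\Sph^{3}$. By Corollary \ref{C:sl:triples} the degenerate twisted product $(X_{s})_{\bw}$ defined in \ref{E:twist:def:p:eq:1} is a special Legendrian immersion of $\Sph^{1}\times T^{n-2}\cong T^{n-1}$ into $\Sph^{2n-1}$. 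To see that the resulting family $\{(X_{s})_{\bw}\}_{s\in U}$ has dimension at least $D$ modulo ambient isometries, the plan is to recover $X_{s}$ from $(X_{s})_{\bw}$ by a slice argument: pick any $t_{0}\in\Sph^{1}$ with $w_{2}(t_{0})\neq 0$; the slice $(X_{s})_{\bw}(\{t_{0}\}\times T^{n-2})$ lies in the affine subspace $\{w_{1}(t_{0})\}\times\C^{n-1}\subset\C^{n}$, and projecting to the second factor and rescaling by $w_{2}(t_{0})^{-1}$ returns $X_{s}(T^{n-2})\subset\Sph^{2n-3}$, so non-congruent inputs give non-congruent outputs up to ambient isometries of $\Sph^{2n-1}$. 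Iterating yields (i) for every $n\ge 3$.

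For (ii) one runs the same induction but introduces the non-minimal feature at the first step. At $n=4$, combine a Carberry--McIntosh family $\{X_{s}:T^{2}\ra\Sph^{5}\}_{s\in U}$ with a closed $(1,3)$-twisted CS curve $\bw$ of the form \ref{E:twist:cs:curve} with $\tan^{2}c\in\Q\setminus\{3\}$: by Remark \ref{R:pq:twist:curve} such $\bw$ is closed, has $b\neq 0$, and so fails \ref{E:w:slg}, meaning it is not a $(1,3)$-twisted SL curve. By the $p=1$ case of Lemma \ref{L:twist:cs} the degenerate CS twisted product $(X_{s})_{\bw}:T^{3}\ra\Sph^{7}$ is contact stationary; the phase formula in Proposition \ref{P:leg:twist:p:eq:1} together with the failure of \ref{E:w:slg} for $\bw$ forces the Lagrangian angle of $(X_{s})_{\bw}$ to be non-constant, hence $(X_{s})_{\bw}$ is not minimal Legendrian. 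For $n\ge 5$ one iterates using any closed $(1,n-1)$-twisted SL or CS curve: since the inductive input is already CS-but-not-minimal, the output inherits the same property by Lemma \ref{L:twist:cs}. The slice argument above preserves dimension, producing CS (not minimal Legendrian) $T^{n-1}$ families in $\Sph^{2n-1}$ of dimension at least $D$ for every $n\ge 4$.

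The one technical point I would check with care is the injectivity of the assignment $X_{s}\mapsto(X_{s})_{\bw}$ modulo ambient isometries of the target sphere, since this is what preserves the dimension of the continuous family across the inductive step. The slice-recovery argument above gives the substance, but one should also observe that any ambient isometry of $\Sph^{2n-1}$ sending $(X_{s})_{\bw}$ to $(X_{s'})_{\bw}$ must respect the decomposition $\C^{n}=\C\oplus\C^{n-1}$ induced by the image of $\bw$, and hence descends to an ambient isometry of $\Sph^{2n-3}$ carrying $X_{s}$ to $X_{s'}$. Beyond this verification, the proof is a straightforward assembly of Theorem \ref{T:w:closed}, Corollary \ref{C:sl:triples}, Lemma \ref{L:twist:cs}, and the explicit closed CS curves of Remark \ref{R:pq:twist:curve}.
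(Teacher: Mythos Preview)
Your proof is correct and essentially complete. It does take a somewhat different route from the paper's, and the comparison is worth noting.

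For (i) the paper also uses the Carberry--McIntosh families as the seed, but rather than iterating the degenerate $(1,n-1)$ twisted product it uses a single non-degenerate twisted product for each $n\ge5$: for $n=5$ the $(2,3)$-twisted product of the totally real equatorial circle $\Sph^{1}\hookrightarrow\Sph^{3}$ with a Carberry--McIntosh $T^{2}$; for $n\ge6$ the $(n-3,3)$-twisted product of the diagonal-torus-invariant special Legendrian torus in $\Sph^{2n-7}$ with a Carberry--McIntosh $T^{2}$. Your inductive scheme via repeated $(1,\cdot)$ twists is equally valid and has the mild advantage of not invoking the Clifford-type torus as an auxiliary ingredient; the paper's construction has the advantage of producing each $T^{n-1}$ in one step rather than $n-3$.

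For (ii) the approaches are closer: the paper also swaps in a closed $(p,q)$-twisted CS curve from \ref{E:twist:cs:curve} with $\tan^{2}c\neq q/p$ and appeals to Lemma \ref{L:twist:cs}. Your observation that the ``not minimal Legendrian'' property propagates through subsequent SL twists follows, as you say, from the phase formula in Proposition \ref{P:leg:twist:p:eq:1}; this is implicit in the paper's treatment (it is the same mechanism used in the proof of Theorem \ref{T:slg:infinite:top}(ii)).

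On the dimension-preservation point you flag: the paper is quite terse here, saying only that the twisted product depends continuously on the input torus. Your slice-recovery argument is the right idea and is more than the paper provides. If one reads the statement as asserting continuous families of \emph{immersions} (rather than congruence classes), then injectivity of $X_{s}\mapsto(X_{s})_{\bw}$ at the level of maps is immediate from your slice, and no ambient-isometry discussion is needed.
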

\begin{proof}
(i) For $n=3$ we simply appeal to the results of Carberry-McIntosh\cite{carberry:mcintosh}.
For $n=4$ we use the $(1,3)$-twisted SL product of a $2$-torus coming from the Carberry-McIntosh construction 
and any closed $(1,3)$-twisted SL curve. Clearly, the resulting twisted product depends continuously on the input $2$-torus. 
Hence by Carberry-McIntosh's work for any $d \in \N$ we can find a special Legendrian immersion of $S^{1}\times T^{2}$ 
which moves in a continuous family of dimension at least $d$. For $n=5$ we use the $(2,3)$-twisted product 
where $X_{1}: \Sph^{1} \ra \Sph^{3}\subset \C^{2}$ 
is the standard totally real equatorial circle, $X_{2}: T^{2} \ra \Sph^{5}\subset \C^{3}$ is a $2$-torus coming from the 
Carberry-McIntosh construction and $\bw$ is any closed $(2,3)$-twisted SL curve.
For $n\ge 6$ we use the twisted $(n-3,3)$-twisted SL product where 
$X_{1}: T^{n-3} \ra \Sph^{2n-7}$ is the unique SL $n-3$ torus invariant under the diagonal subgroup 
$T^{n-3} \subset \sunit{n-3}$,  $X_{2}: T^{2} \ra \Sph^{5}$ is a $2$-torus coming from the 
Carberry-McIntosh construction and $\bw$ is any closed $(n-3,3)$-twisted SL curve.
Part (ii) is proved in the same way using the twisted CS product construction \ref{L:twist:cs} and  
the closed $(p,q)$-twisted CS curves exhibited in Remark \ref{R:pq:twist:curve}.
\end{proof}

Finally, by combining the twisted product construction with both integrable systems constructions and our 
gluing methods we obtain the following striking result
\begin{mtheorem}\hfill
\addtocounter{equation}{1}
\label{T:cts:fam:top:types}
\begin{itemize}
\item[(i)] 
For any $n\ge 6$ there are infinitely many topological types of special Lagrangian cone in $\C^{n}$ of 
product type which can come in continuous families of arbitrarily high dimension.
\item[(ii)]  
For each $n\ge 6$ there are infinitely many topological types of Hamiltonian stationary cone in $\C^{n}$ of product 
type which are not minimal Lagrangian and which can come in continuous families of arbitrarily high dimension. 
\end{itemize}
\end{mtheorem}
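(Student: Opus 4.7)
The plan is to combine the higher-genus SL surfaces in $\Sph^{5}$ produced by the gluing construction of \cite{haskins:kapouleas:invent} (which supply infinitely many topological types) with the SL $2$-tori produced by the Carberry-McIntosh integrable systems construction \cite{carberry:mcintosh} (which supply continuous families of arbitrarily high dimension), glued together by the special Legendrian twisted product construction of Corollary \ref{C:sl:triples}. Since the twisted product depends continuously on each of $X_{1}$, $X_{2}$, and $\bw$ and produces a space of topological type $S^{1}\times\Sigma_{1}\times\Sigma_{2}$, both features are inherited simultaneously.

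For the base case $n=6$ I would take $X_{1}\colon\Sigma_{g}\to\Sph^{5}$ to be a genus-$g$ SL surface from \cite{haskins:kapouleas:invent} with $g$ an odd positive integer, $X_{2}\colon T^{2}\to\Sph^{5}$ an SL $2$-torus arising from Carberry-McIntosh SL spectral data of spectral curve genus $h$, and $\bw$ one of the countably many closed $(3,3)$-twisted SL curves provided by Theorem \ref{T:w:closed}. By Corollary \ref{C:sl:triples}, $X_{1}*_{\bw}X_{2}$ is a special Legendrian immersion of $S^{1}\times\Sigma_{g}\times T^{2}$ into $\Sph^{11}$. Varying $g$ over the odd positive integers distinguishes the topological types, since the first Betti number is $3+2g$; varying the Carberry-McIntosh spectral curve genus $h$ produces continuous families whose dimension grows without bound. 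For $n\geq 7$ I would iterate: form the $(1,n-1)$-twisted SL product of a closed $(1,n-1)$-twisted SL curve (Theorem \ref{T:w:closed}) with the cone in dimension $n-1$ just constructed. Each iteration appends an $S^{1}$ factor while preserving both the topological distinctness (the genus of the surface factor remains visible in $\pi_{1}$) and the dimension of the continuous families.

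Two nontrivial points need to be verified. The first, topological distinctness, is an immediate consequence of a fundamental-group computation. The second, and more subtle, is that the continuous moduli of $X_{2}$ descend to genuine continuous moduli of the twisted product modulo the ambient $\sunit{n}$-action. For this I would observe that once $X_{1}$ and $\bw$ are fixed, the image of $X_{1}*_{\bw}X_{2}$ inside the $\C^{n-3}$-factor of $\C^{n}$ recovers $X_{2}$ canonically, so two inputs yield congruent twisted products only when they differ by an element of $\sunit{n-3}$. Consequently the moduli count for the twisted product is bounded below by the Carberry-McIntosh count modulo $\sunit{3}$, which already has arbitrarily high dimension.

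Part (ii) follows the same architecture with two substitutions: replace Corollary \ref{C:sl:triples} by Lemma \ref{L:twist:cs}, and replace the closed twisted SL curve by a closed $(p,q)$-twisted CS curve $\bw_{m,n}$ from Remark \ref{R:pq:twist:curve} with $m/n\neq p/q$. This choice forces the twisted product to be contact stationary but not minimal Legendrian even when both $X_{1}$ and $X_{2}$ are special Legendrian. The main technical obstacle I anticipate is the injectivity statement in the previous paragraph, namely making precise that distinct Carberry-McIntosh parameters produce pairwise non-congruent cones in $\C^{n}$ and are not collapsed by hidden symmetries of the full twisted product; once this is granted, Theorem C becomes a direct combination of Theorem \ref{T:w:closed}, Corollary \ref{C:sl:triples} or Lemma \ref{L:twist:cs}, \cite{haskins:kapouleas:invent}, and \cite{carberry:mcintosh}.
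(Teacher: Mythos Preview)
Your proposal is correct and follows essentially the same strategy as the paper: combine the higher-genus SL surfaces of \cite{haskins:kapouleas:invent} (infinitely many topological types) with the Carberry--McIntosh SL $2$-tori (continuous families of arbitrarily high dimension) via the twisted product construction, and for part (ii) replace the SL twisting curve by a CS twisting curve from Remark \ref{R:pq:twist:curve}.

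There is one organisational difference worth noting. For general $n\ge 6$ the paper applies a single $(n-3,3)$-twisted product, taking $X_{1}$ to be any one of the infinitely many topological types of SL $(n-4)$-fold in $\Sph^{2(n-3)-1}$ already furnished by Theorem~\ref{T:slg:infinite:top}(i), and $X_{2}$ a Carberry--McIntosh torus in $\Sph^{5}$. You instead handle $n=6$ directly with a $(3,3)$-twisted product and then bootstrap to higher $n$ by iterating $(1,n-1)$-twisted products. Both routes are valid and equivalent in spirit; the paper's is slightly cleaner because it invokes Theorem~\ref{T:slg:infinite:top} once rather than repeating the iteration.

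Your extra paragraph on injectivity---checking that distinct Carberry--McIntosh parameters yield non-congruent twisted products---is more careful than the paper, which simply asserts the result follows without addressing congruence. The paper treats ``continuous families of arbitrarily high dimension'' at the same informal level as in its Theorem~B, so your concern is legitimate but not something the paper itself resolves.
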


\begin{proof}
(i) Since $n-3 \ge 3$ by the gluing results of \cite{haskins:kapouleas:invent} and Theorem \ref{T:slg:infinite:top}(i) 
there are infinitely many topological types of SL $n-3$ fold in $\Sph^{2(n-3)-1}$. 
The result follows by applying the $(n-3,3)$-twisted SL product construction where $X_{1}$ is any of these SL $n-3$ folds, 
$X_{2}$ is a SL $2$-torus coming from the Carberry-McIntosh construction and $\bw$ is any closed 
$(n-3,3)$-twisted SL curve.

Part (ii) follows in the same way using the twisted CS product construction and the 
closed $(p,q)$-twisted  CS curves exhibited in Remark \ref{R:pq:twist:curve}.
\end{proof}
\noindent
It is difficult to see how integrable systems methods or gluing methods alone could yield a result 
like Theorem \ref{T:cts:fam:top:types}.



\section{$\sorth{p}\times \sorth{q}$-invariant special Legendrian submanifolds}
\label{S:sop:soq}
\nopagebreak

\subsection*{Introduction}
$\phantom{ab}$
\nopagebreak
Given an admissible pair of integers $p$ and $q$ (i.e. satisfying $1\le p \le q$ and $q\ge 2$) we set $n=p+q$ and 
define round cylinders of type $(p,q)$, $\cylpq_I$, by
\addtocounter{theorem}{1}
\begin{equation}
\label{E:cylpq}
{\text{Cyl}^{p,q}_I}
:=
\begin{cases}
I \times \Sph^{p-1} \times \Sph^{q-1}, \quad & \text{if $p>1$;}\\
I \times \Sph^{n-2} , \quad & \text{if $p=1$,}
\end{cases}
\end{equation}
where $I\subset\R$ is an interval which we omit in the notation when $I=\R$.

This section studies $\sorth{p} \times \sorth{q}$-invariant special Legendrian immersions from $\cylpq$  to $\Sph^{2(p+q)-1}$.
When we specialise to $p=1$ and $q=2$ we obtain the $\sorth{2}$-invariant
cylindrical special Legendrian immersions $X_{\tau}: \Sph^1 \times \R \ra \Sph^5$ used as building blocks
in the gluing constructions of \cite{haskins:kapouleas:invent}.
To generalise the gluing methods of \cite{haskins:kapouleas:invent} 
to construct higher dimensional special Legendrian submanifolds,
building blocks analogous to the $\sorth{2}$-invariant special Legendrian cylinders $X_\tau$ are needed
in higher dimensions. 
The most natural such generalisations of the $\sorth{2}$-invariant special Legendrian cylinders
are the $\sorth{p} \times \sorth{q}$-invariant special Legendrian cylinders in $\Sph^{2(p+q)-1}$ first studied
by Castro-Li-Urbano in \cite{castro:li:urbano}.
Many features of the $\sorth{2}$-invariant special Legendrian cylinders
generalise to these $\sorth{p} \times \sorth{q}$-invariant special Legendrian submanifolds.

First, $\sorth{p}\times\sorth{q}$-invariant special Legendrians are governed by 
a first order system of complex ODEs generalising 3.18 in \cite{haskins:kapouleas:invent}; 
we will see that all such special Legendrians arise from the twisted product construction  
and hence are governed by the ODEs \ref{E:slg:ode2} as in Remark \ref{R:sl:twist:sign}.

Second, for fixed $p$ and $q$, the set of $\sorth{p} \times \sorth{q}$-invariant special Legendrian submanifolds
of $\Sph^{2(p+q)-1}$ depends essentially on one real parameter $\tau$. For any admissible value of 
$\tau$ we get a special Legendrian immersion $X_\tau$ of a generalised cylinder
$\cylpq$  in $\Sph^{2n-1}$ (see Proposition \ref{P:X:tau}). 
Moreover, there is one angular period $\pthat$---defined precisely in \ref{E:pthat}---which determines
when $X_\tau$ factors through a special Legendrian embedding of 
$S^1 \times S^{p-1} \times S^{q-1}$ or $S^1 \times S^{n-2}$ for the case $p=1$ .
When $X_{\tau}$ factors through an embedding of a closed manifold it 
gives rise to a SL cone in $\C^n$ with link $S^1\times S^{p-1}\times S^{q-1}$ or $S^1 \times S^{n-2}$ 
(or a $\Z_{2}$ quotient of these).
By studying the behaviour of $\pthat$ as $\tau$ varies we prove that for a dense set of $\tau$, $X_\tau$ factors as above 
(Theorem \ref{T:Xtau:embed}).

Third, the $\tau \ra 0$ limit is singular and geometrically $X_\tau$ degenerates in interesting ways in this limit.
Fully understanding these degenerations is a major part of this paper
and crucial to the applications to gluing constructions.

\smallskip
\noindent 
\textit{Relation with work of other authors.}
$\sorth{p}\times \sorth{q}$-invariant SL submanifolds of $\C^n$ are studied in \cite[\S 3]{castro:urbano:construct}
and $\sorth{p}\times \sorth{q}$-invariant Legendrian submanifolds of $\Sph^{2n-1}$ are studied in 
\cite[\S 3]{castro:li:urbano}. 
%
%
The ODEs for $\sorth{p}\times \sorth{q}$-invariant special Legendrian
submanifolds of $\Sph^{2(p+q)-1}$ appear in \cite[Lemma 2]{castro:urbano:construct} and \cite[Cor 1]{castro:li:urbano}.
However,  Castro-Li-Urbano did not prove results about the behaviour of the angular period $\pthat$. Therefore
almost all of the closed $\sorth{p}\times \sorth{q}$-invariant SL cones over $S^1 \times S^{p-1}\times S^{q-1}$
(or $S^1 \times S^{n-2}$ for $p=1$) described in this paper appear to be new SL cones.

The special case of  $\sorth{n-1}$-invariant special Legendrians (for $n>3$) has  also recently been studied by Anciaux \cite{anciaux} 
from a slightly different point-of-view. Anciaux \cite[Thm 2]{anciaux} 
gives the following nice geometric characterisation of $\sorth{n-1}$-invariant special Legendrians: 
any minimal Legendrian submanifold of $\Sph^{2n-1}$ which  is foliated by round $n-2$ spheres is 
either a totally geodesic $\Sph^{n-1}$ or congruent to an 
 $\sorth{n-1}$-invariant special Legendrian. Anciaux goes on to study $\sorth{n-1}$-invariant special Legendrians 
 in $\Sph^{2n-1}$ noting that they arise from a Legendrian curve $\bw$ in $\Sph^{3}$ satisfying \ref{E:w:slg} with $(p,q)=(1,n-1)$.
Rather than working directly with this first order condition and deriving an equation like \ref{E:slg:ode} from it, Anciaux 
differentiates \ref{E:w:slg} and interprets the resulting second order equation (see \cite[eqn. 3]{anciaux}) 
as an equation on the projected curve $\pi(\bw) \subset \CP^{1}$ where $\pi: \Sph^{3} \ra \CP^{1}$ denotes the Hopf projection. 
Using this approach he can prove the existence of a countable family of  closed integral curves in $\CP^{1}$ and this suffices 
to prove the existence of closed minimal Lagrangian submanifolds of $\CP^{n-1}$ (see \cite[Thm 3]{anciaux}). 
However, the horizontal lift to $\Sph^{3}$ of a closed integral curve in $\CP^{1}$ is not necessarily closed. 
In Anciaux's approach an additional period condition must be satisfied for the spherical lift to be closed 
and because of this his method does not prove the existence of suitable closed curves in $\Sph^{3}$ (see his discussion 
following Theorem 3).

The key to overcoming this period problem 
is to work directly with the first order system \ref{E:odes:p:n} rather than the second order system that Anciaux exploits. 
This approach allows us to prove the existence of countably infinitely many closed $(p,q)$-twisted special Legendrian curves in 
$\Sph^{3}$ for general $p$ and $q$. For our gluing constructions 
\cite{haskins:kapouleas:hd2,haskins:kapouleas:hd3,haskins:kapouleas:survey}
it is crucial that we have closed $\sorth{p} \times \sorth{q}$-invariant special Legendrians at our disposal.


$\sorth{2} \times \sorth{2}$-invariant SL cones in $\C^4$ can be
constructed in a different manner, namely
as a special case of Joyce's work on $T^{n-2}$-invariant SL cones in $\C^n$.
To obtain this $\sorth{2} \times \sorth{2}$ action we should set $n=4$ and take $a_1=a_2=-1,\  a_3=a_4=1$ in
\cite[Prop. 7.6]{joyce:symmetries}. 
Among all $T^2$-actions  allowed in Joyce's constructions, the 
$\sorth{2} \times \sorth{2}$ action is distinguished by having the largest fixed point set.
This is directly related to the fact that the $\tau \ra 0$ limit of $X_\tau$ is singular and geometrically interesting in
this case.

\subsection*{Isotropic orbits of the $\sorth{p}\times\sorth{q}$ action on $\C^{p+q}$}
As previously we assume that $(p,q)$ is an admissible pair of positive integers, i.e. $p\le q$, $q\ge 2$ and $p\ge 1$, 
and we set $n=p+q$. 

$\sorth{p}\times\sorth{q}$ acts via isometries on $\C^{p+q}\cong \C^{p}\times \C^{q}$ via the product 
of the standard complex linear actions of $\sorth{p}$ and $\sorth{q}$ on the $\C^{p}$ and $\C^{q}$ factors respectively.
Since $\sorth{p} \times \sorth{q} \subset \sorth{p+q} \subset \sunit{n}$ it is natural to look for 
$\sorth{p} \times \sorth{q}$-invariant special Lagrangians in $\C^{p+q}$ and in particular for special Lagrangian cones 
or equivalently special Legendrian submanifolds of $\Sph^{2n-1}$ invariant under $\sorth{p} \times \sorth{q}$. 
If a Legendrian submanifold of $\Sph^{2n-1}$ is a union of orbits then each orbit $\mathcal{O}$ must be $\gamma$-isotropic, 
i.e. $\gamma|_{\mathcal{O}}=0$. 
The following simple lemma describes the $\gamma$-isotropic orbits $\mathcal{O}$ of $\sorth{p}\times\sorth{q}$ 
in $\Sph^{2n-1}$.

\begin{lemma}[Isotropic orbits of $\sorth{p}\times\sorth{q}$]\hfill
\addtocounter{equation}{1}
\label{L:iso:orbits}
\begin{itemize}
\item[(i)] If $p\ge 2$,  $q\ge 2$ then any $\gamma$-isotropic $\sorth{p}\times\sorth{q}$ orbit 
$\mathcal{O} \subset \Sph^{2(p+q)-1}$ has the form 
\begin{equation}
\addtocounter{theorem}{1}
\label{E:iso:orbit:p:neq:1}
\mathcal{O}_{\bw} = (w_{1}\cdot \Sph^{p-1},\, w_{2}\cdot \Sph^{q-1})
\end{equation}
for some $\bw=(w_{1},w_{2}) \in \Sph^{3}$. Moreover, if $\bw$ and $\bw' \in \Sph^{3}$ 
then $\mathcal{O}_{\bw}=\mathcal{O}_{\bw'}$ if and only if $\bw'= \rho_{jk} \bw$ 
for some $(j,k)\in \Z_{2}\times \Z_{2}$ where $\rho: \Z_{2} \times \Z_{2} \ra \orth{2} \subset \unit{2}$ 
is the homomorphism defined by 
$$ (j,k) \mapsto
\rho_{jk}:= \left(
\begin{matrix}
(-1)^j & 0 \\
0 & (-1)^k
\end{matrix}
\right).
$$
In particular, spherical isotropic $\sorth{p}\times\sorth{q}$ orbits are in one-to-one correspondence 
with points in $\Sph^{3}/\Z_{2}\times \Z_{2}$.

\item[(ii)]
Similarly, for $n\ge 3$ any $\gamma$-isotropic $\sorth{n-1}$ orbit $\mathcal{O} \subset \Sph^{2n-1}$ has the form 
\begin{equation}
\addtocounter{theorem}{1}
\label{E:iso:orbit:p:eq:1}
\mathcal{O}_{\bw} = (w_{1},\, w_{2}\cdot \Sph^{n-2})
\end{equation}
for some $\bw=(w_{1},w_{2}) \in \Sph^{3}$. Moreover, if $\bw$ and $\bw' \in \Sph^{3}$ 
then $\mathcal{O}_{\bw}=\mathcal{O}_{\bw'}$ if and only if $\bw'= \rho_{jk} \bw$ 
for $(j,k) \in \langle (+-) \rangle \cong \Z_{2} \leqslant \Z_{2}\times \Z_{2}$. 
In particular, isotropic $\sorth{n-1}$ orbits in $\Sph^{2n-1}$ are in one-to-one correspondence 
with points in $\Sph^{3}/\Z_{2}$, where $\Z_{2}=\langle \rho_{+-} \rangle$.
\end{itemize}
\end{lemma}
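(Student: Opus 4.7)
The plan is to reduce the analysis to understanding $\gamma$-isotropic orbits of $\sorth{m}$ on $\C^{m}$ separately for each factor, using the fact that an $\sorth{p}\times\sorth{q}$-orbit on $\C^{p}\times\C^{q}$ factors as a product and that the contact form $\gamma=\sum_{j}(x_{j}dy_{j}-y_{j}dx_{j})$ splits as the sum of the corresponding $1$-forms on the two factors. Consequently, the restriction of $\gamma$ to an orbit vanishes if and only if it vanishes on each factor orbit, decoupling the problem.

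The central computation is the following. For $m\ge 2$, write $z=x+iy\in\C^{m}$ with $x,y\in\R^{m}$; the tangent space to the $\sorth{m}$-orbit of $z$ is spanned by the vectors $V=Mx+iMy$ with $M\in\lsorth{m}$. Using $M^{T}=-M$ and the defining formula for $\gamma$, a one-line calculation gives
\[
\gamma_{z}(V)=\langle x,My\rangle-\langle y,Mx\rangle=2\langle x,My\rangle=\langle M,\,xy^{T}-yx^{T}\rangle_{\mathrm{F}},
\]
the Frobenius pairing of $M$ with the skew-symmetric matrix $xy^{T}-yx^{T}$. This vanishes for every $M\in\lsorth{m}$ if and only if $xy^{T}=yx^{T}$, equivalently $x$ and $y$ are $\R$-linearly dependent. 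When this holds and $z\ne 0$ one writes $x=a\sigma$, $y=b\sigma$ with $\sigma\in\Sph^{m-1}$ and $a,b\in\R$, so $z=w\sigma$ with $w=a+ib\in\C$ and the orbit equals $w\cdot\Sph^{m-1}$. The degenerate case $z=0$ persists under the convention $0\cdot\Sph^{m-1}=\{0\}$.

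Applying this to each factor proves both existence statements: in (i) both $p,q\ge 2$ and one gets orbits $(w_{1}\cdot\Sph^{p-1},w_{2}\cdot\Sph^{q-1})$; in (ii) $\sorth{1}$ is trivial so the first-factor orbit is the single point $\{w_{1}\}$ with $w_{1}\in\C$ arbitrary, while the $\sorth{n-1}$-factor (with $n-1\ge 2$) contributes the genuine constraint $z_{2}=w_{2}\sigma_{2}$. In each case the unit sphere condition $\abs{w_{1}}^{2}+\abs{w_{2}}^{2}=1$ forces $\bw=(w_{1},w_{2})\in\Sph^{3}$.

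It remains to identify when two parameters give the same orbit. For $m\ge 2$, the set $w\cdot\Sph^{m-1}$ lies in the totally real $m$-plane $e^{i\arg w}\R^{m}\subset\C^{m}$ and is a sphere of radius $\abs{w}$ in it, so $w\cdot\Sph^{m-1}=w'\cdot\Sph^{m-1}$ precisely when $w'=\pm w$ (and both sides equal $\{0\}$ if $w=0$). In (i) applying this to each factor yields the $\Z_{2}\times\Z_{2}$ ambiguity $\bw\mapsto\rho_{jk}\bw$; in (ii) the point $w_{1}$ distinguishes $w_{1}$ from $-w_{1}$ so only the second factor contributes a $\Z_{2}$ generated by $\rho_{+-}$. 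The main care required, and essentially the only subtle point, is the bookkeeping for the degenerate configurations where $w_{1}=0$ or $w_{2}=0$ and the corresponding spherical factor collapses; one checks case-by-case that the announced equivalence still exactly identifies all $\bw$ producing the same orbit, yielding the bijections with $\Sph^{3}/(\Z_{2}\times\Z_{2})$ and $\Sph^{3}/\Z_{2}$ respectively.
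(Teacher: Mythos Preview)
Your proof is correct and follows essentially the same approach as the paper: reduce isotropy of the orbit to the vanishing of $\gamma$ on each infinitesimal generator, which amounts to the moment-map condition $\Imag\langle z,Az\rangle=0$ for all $A\in\lsorth{m}$, and then characterize this as $z\in w\cdot\Sph^{m-1}$. The paper phrases the key step in moment-map language while you compute directly via the Frobenius pairing $\langle M, xy^{T}-yx^{T}\rangle_{\mathrm F}$; these are the same computation, and your version has the minor advantage of making the linear-dependence criterion $xy^{T}=yx^{T}$ explicit rather than leaving it as a stated fact.
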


\begin{proof}
We begin with a more general result that applies to isotropic orbits of any connected Lie subgroup of $\sunit{n}$.
Let $G$ be any connected Lie subgroup of $\sunit{n}$,  $\mathfrak{g}$ denote the Lie algebra of $G$ 
and $x$ be any point in $\Sph^{2n-1}$.
Then the orbit $\mathcal{O}_{x}:= G \cdot x$ is contained in $\Sph^{2n-1}$ 
and is $\gamma$-isotropic if and only if $\gamma(v)=0$ for all $v\in T_{y}\mathcal{O}_{x}$ and $y\in \mathcal{O}_{x}$.
By homogeneity it suffices to check this at $x$. But since $\mathcal{O}_{x}$ is a $G$-orbit we have
$T_{x} \mathcal{O}_{x}=\mathfrak{g} \cdot x$. Therefore $\mathcal{O}_{x}$ is isotropic if and only if 
$\gamma_{x}(\mathfrak{g}\cdot x)=0$. Hence using the definition of the standard contact form $\gamma$ on $\Sph^{2n-1}$
we see that $\mathcal{O}_{x}$ is isotropic if and only if 
\begin{equation}
\addtocounter{theorem}{1}
\label{E:moment:map:general}
\Imag{\,\langle x,Ax \rangle} = 0, \quad \text{for any\ }A \in \mathfrak{g}
\end{equation}
where $\langle \cdot, \cdot \rangle$ denotes the standard Hermitian inner product on $\C^{n}$.
In the language of moment maps \ref{E:moment:map:general} 
is equivalent to the condition $x\in \mu^{-1}(\mathbf{0})$ where $\mu: \C^{n} \ra \mathfrak{g}^{*}$
is the moment map associated to the action of $G\subset \sunit{n}$. 
(For the definition and basic properties of 
the moment map we refer the reader to Section 4 of \cite{joyce:symmetries}.)

Specialising to 
$G=\sorth{p}\times\sorth{q}$ and $\mathfrak{g}=\lsorth{p} \times \lsorth{q}$ (with $p\ge 2$ and $q\ge 2$)
we have $\mathcal{O}_{x}$ is isotropic if and only if
\begin{equation}
\addtocounter{theorem}{1}
\label{E:moment:map:sopq}
\Imag{\,\langle x,Ax \rangle} = 0, \quad \text{for any\ }A \in \lsorth{p}\times \lsorth{q}.
\end{equation}
To analyse \ref{E:moment:map:sopq},  decompose $x = (x',x'') \in \C^{p}\times \C^{q}$ and $A=(A',A'') \in \lsorth{p} \times \lsorth{q}$. 
By considering $x=(x',0)$ and $A=(A',0)$ or $x=(0,x'')$ and $A=(0,A'')$ we find it is equivalent to
\begin{equation}
\addtocounter{theorem}{1}
\label{E:moment:map:0}
\Imag{\,\langle x',A'x' \rangle} =  \Imag{\,\langle x'',A''x'' \rangle} = 0, \text{ for all\ } A'\in \lsorth{p}, \,A'' \in \lsorth{q}.
\end{equation}
One can check that $\Imag{\,\langle z,Az \rangle}=0$ for all $A\in \lsorth{m}$ if and only if 
$z\in \C^{m}$ has the form $z\in w\cdot \Sph^{m-1}$ for some $w\in \C$. 
Applying this to \ref{E:moment:map:0} twice (for different values of $m$) 
we obtain $x'\in w_{1} \cdot \Sph^{p-1}$ and $x'' \in w_{2} \cdot \Sph^{q-1}$ for some $\bw=(w_{1},w_{2}) \in \C^{2}$. 
But since $\mathcal{O}_{x}\subset \Sph^{2(p+q)-1}$ we have $\bw \in \Sph^{3}$ and hence 
\ref{E:iso:orbit:p:neq:1} follows. It is straightforward to verify the conditions on $\bw$ and $\bw'$ under which 
the orbits $\mathcal{O}_{\bw}$ and  $\mathcal{O}_{\bw'}$ coincide are as stated.


\smallskip
The proof of Lemma \ref{L:iso:orbits} for $\orth{n-1}$ is a minor modification of the proof above and therefore we omit it. 
The main difference is the condition under which two orbits $\mathcal{O}_{\bw}$ and  $\mathcal{O}_{\bw'}$ coincide. 
\end{proof}
By Lemma \ref{L:iso:orbits} the generic $\gamma$-isotropic orbit of $\sorth{p} \times \sorth{q}$ 
has dimension $n-2$ and therefore we can look for $\sorth{p} \times \sorth{q}$-invariant special Legendrians 
that are curves of $\sorth{p} \times \sorth{q}$ orbits, and these curves will satisfy some first order system of ODEs. 

\subsection*{$\sorth{p}\times\sorth{q}$-invariant special Legendrians and $(p,q)$-twisted SL curves}\phantom{ab}
An immediate consequence of Lemma \ref{L:iso:orbits} is that all $\sorth{p}\times\sorth{q}$-invariant Legendrian 
submanifolds of $\Sph^{2(p+q)-1}$ arise from the twisted product construction of \ref{D:twisted:product}.
\addtocounter{equation}{1}
\begin{corollary}[$\sorth{p}\times\sorth{q}$-invariant Legendrians are twisted products]\hfill
\label{C:so:invariant:leg}
\begin{itemize}
\item[(i)]
For $p\ge 2$, $q\ge 2$ a Legendrian immersion $Y:\Sigma \ra \Sph^{2(p+q)-1}$ is $\sorth{p}\times\sorth{q}$-invariant
if and only if $Y$ is locally congruent to a twisted product $X_1 *_\bw X_2$ where $X_1: \Sph^{p-1} \ra \Sph^{2p-1}$ and
$X_2: \Sph^{q-1} \ra \Sph^{2q-1}$ are the standard totally geodesic special Legendrian embeddings.
\item[(ii)]
If $p=1$ a Legendrian immersion $Y: \Sigma \ra \Sph^{2n-1}$ is $\sorth{n-1}$-invariant if and only if $Y$ 
is locally congruent to  a (degenerate) twisted product $X_{\bw}$ (as defined in \ref{E:twist:def:p:eq:1})
where the immersion $X:\Sph^{n-2} \ra \Sph^{2n-3}$ is the standard totally geodesic special Legendrian embedding.
\end{itemize}
\end{corollary}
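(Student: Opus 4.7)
The plan is to apply Lemma \ref{L:iso:orbits} directly together with a simple dimension count. I describe the argument for part (i); part (ii) follows by exactly the same reasoning with Lemma \ref{L:iso:orbits}(ii) in place of Lemma \ref{L:iso:orbits}(i) and Proposition \ref{P:leg:twist:p:eq:1} in place of Proposition \ref{P:leg:triple}.

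First the easy direction. If $Y = X_1 *_\bw X_2$ with $X_1$ and $X_2$ the standard totally geodesic inclusions of $\Sph^{p-1}$ and $\Sph^{q-1}$, then from the formula \ref{E:X1wX2} the product action of $\sorth{p}\times\sorth{q}$ on $(\sigma_1,\sigma_2)$ commutes with scalar multiplication by $(w_1(t),w_2(t))$, so $Y$ is manifestly $\sorth{p}\times\sorth{q}$-equivariant with the obvious action on the domain $I\times \Sph^{p-1}\times\Sph^{q-1}$ (trivial on the $I$-factor).

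For the converse, suppose $Y:\Sigma \ra \Sph^{2n-1}$ is an $\sorth{p}\times\sorth{q}$-invariant Legendrian immersion. The image $Y(\Sigma)$ is then a union of $\sorth{p}\times\sorth{q}$-orbits, each of which is $\gamma$-isotropic since $Y^{*}\gamma\equiv 0$. By Lemma \ref{L:iso:orbits}(i), every such orbit has the form $\mathcal{O}_\bw = (w_1\cdot\Sph^{p-1},\, w_2\cdot\Sph^{q-1})$ for some $\bw \in \Sph^3$, and the principal orbit has dimension $(p-1)+(q-1)=n-2$, exactly one less than $\dim\Sigma = n-1$. Hence on the open dense locus where $Y$ meets a principal orbit, $Y(\Sigma)$ is locally foliated by such orbits and a transverse slice yields a smooth curve in the orbit space $\Sph^3/(\Z_2\times\Z_2)$. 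Locally lifting this curve produces a smooth curve $\bw:I\ra\Sph^3$ (well defined up to the discrete ambiguity $\rho_{jk}$) whose $\sorth{p}\times\sorth{q}$-sweep recovers $Y(\Sigma)$; in other words, $Y$ locally agrees with $X_1 *_\bw X_2$ up to reparametrisation.

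It remains only to verify that $\bw$ is a Legendrian curve in $\Sph^3$. The condition $(X_1 *_\bw X_2)^*\gamma = 0$ on orbit directions is automatic by isotropy, so the Legendrian condition on $X_1 *_\bw X_2$ reduces to the vanishing of $\gamma$ applied to the $t$-direction, which is precisely the Legendrian condition on $\bw$ itself; equivalently, this can be read off from formula \ref{E:twist:angle} in Proposition \ref{P:leg:triple}. I expect no serious obstacle; the main subtlety is that the local parametrisation $\bw$ is initially defined only on the principal locus, and one must extend it smoothly across the (lower-dimensional) degenerate strata where $w_1 = 0$ or $w_2 = 0$. This extension is provided by the real-analyticity of $Y$, together with the observation that these are precisely the loci where $X_1 *_\bw X_2$ fails to be an immersion in any case, so no genuine information is lost by carrying out the identification locally on the principal part.
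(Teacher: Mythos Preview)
Your argument is correct and is exactly the intended fleshing-out of what the paper records only as ``an immediate consequence of Lemma \ref{L:iso:orbits}'': classify the isotropic orbits, use the codimension-one dimension count to obtain a transverse curve $\bw$ in $\Sph^3$, and check that the residual Legendrian condition in the $t$-direction is precisely the Legendrian condition on $\bw$. Two small remarks: the reference to \ref{E:twist:angle} is misplaced (that is the phase formula; the relevant computation is the Legendrian part of Proposition \ref{P:leg:triple}, namely $\gamma(\partial_t X)=\Imag(\overline{w}_1\dot w_1+\overline{w}_2\dot w_2)$), and your appeal to real-analyticity of $Y$ at the end is both unjustified (a general Legendrian immersion need not be analytic) and unnecessary, since the statement only asserts \emph{local} congruence and the principal locus already suffices.
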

In particular, by combining Corollary \ref{C:so:invariant:leg} with Corollary \ref{C:sl:triples} we obtain
\begin{corollary}[$\sorth{p}\times\sorth{q}$-invariant special Legendrians and $(p,q)$-twisted SL curves]\hfill
\addtocounter{equation}{1}
\label{C:so:invariant:slg}
\begin{itemize}
\item[(i)] For $p$, $q \ge 2$ any $\sorth{p}\times\sorth{q}$-invariant special Legendrian immersion is locally congruent to 
a twisted product with $X_{1}$ and $X_{2}$ as in \ref{C:so:invariant:leg}, where the twisting curve $\bw$ is a $(p,q)$-twisted SL curve in $\Sph^{3}$.
\item[(ii)]
For $p=1$ any $\sorth{n-1}$-invariant special Legendrian immersion is locally congruent to a (degenerate) twisted product with 
$X: \Sph^{n-2} \ra \Sph^{2n-3}$ the standard totally geodesic Legendrian embedding and $\bw$ a $(1,n-1)$-twisted SL curve 
in $\Sph^{3}$.
\end{itemize}
\end{corollary}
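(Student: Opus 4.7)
The plan is to deduce this statement immediately by combining the two previous corollaries stated in the excerpt, namely Corollary \ref{C:so:invariant:leg} (which classifies $\sorth{p}\times\sorth{q}$-invariant Legendrian immersions as twisted products built from the standard totally geodesic equatorial embeddings) with Corollary \ref{C:sl:triples} (which characterises when a twisted product of special Legendrians is itself special Legendrian in terms of a $(p,q)$-twisted SL condition on the twisting curve $\bw$).

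For part (i), suppose $Y: \Sigma \ra \Sph^{2(p+q)-1}$ is an $\sorth{p}\times\sorth{q}$-invariant special Legendrian immersion with $p,q\ge 2$. Since $Y$ is in particular Legendrian and $\sorth{p}\times\sorth{q}$-invariant, Corollary \ref{C:so:invariant:leg}(i) applies and tells us that $Y$ is locally congruent to a twisted product $X_1 *_{\bw} X_2$ where $X_1:\Sph^{p-1} \ra \Sph^{2p-1}$ and $X_2:\Sph^{q-1} \ra \Sph^{2q-1}$ are the standard totally geodesic equatorial embeddings and $\bw:I\ra \Sph^3$ is some Legendrian curve. The key observation is that these standard equatorial embeddings are themselves special Legendrian (they are minimal Legendrians with constant, in fact vanishing, Lagrangian angle with respect to the natural orientations). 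With $X_1$ and $X_2$ both special Legendrian, Corollary \ref{C:sl:triples} now applies directly: the twisted product $X_1 *_{\bw} X_2$ is special Legendrian if and only if $\bw$ is a $(p,q)$-twisted SL curve in $\Sph^3$. Since we assumed $Y$ is special Legendrian and local congruence preserves the special Legendrian property, we conclude that $\bw$ must indeed be a $(p,q)$-twisted SL curve, as required.

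For part (ii), the argument is identical except that we use the degenerate twisted product setup. By Corollary \ref{C:so:invariant:leg}(ii), any $\sorth{n-1}$-invariant Legendrian immersion $Y$ is locally congruent to a degenerate twisted product $X_{\bw}$ (in the sense of Remark \ref{R:twist:def:p:eq:1} and Proposition \ref{P:leg:twist:p:eq:1}) with $X:\Sph^{n-2}\ra \Sph^{2n-3}$ the standard totally geodesic equatorial embedding (which is special Legendrian) and some Legendrian twisting curve $\bw$. Applying the $p=1$ version of Corollary \ref{C:sl:triples}, $X_{\bw}$ is special Legendrian if and only if $\bw$ is a $(1,n-1)$-twisted SL curve in $\Sph^3$, which gives the claim.

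There is no real obstacle here: the work has already been done in the two previous corollaries, and the statement is simply the synthesis of the Legendrian classification with the special Legendrian criterion. The only minor point worth flagging in the write-up is the remark that the standard totally real equatorial embeddings of spheres are special Legendrian (with the standard orientation inherited from $\R^p\subset \C^p$ and $\R^q\subset \C^q$), so that the hypothesis of Corollary \ref{C:sl:triples} is met.
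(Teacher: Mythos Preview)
Your proposal is correct and matches the paper's approach exactly: the paper introduces this corollary with the sentence ``In particular, by combining Corollary \ref{C:so:invariant:leg} with Corollary \ref{C:sl:triples} we obtain'' and gives no further proof. Your write-up spells out the details of that combination, including the (necessary) observation that the standard totally real equatorial embeddings are themselves special Legendrian so that the hypothesis of Corollary \ref{C:sl:triples} is met.
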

\noindent
Corollary \ref{C:so:invariant:leg} appears in Castro-Li-Urbano in the statement of Thm 3.1\cite{castro:li:urbano}. Note, however, the assumption $p, q\ge 3$ made in their statement can be relaxed as in our 
statement.
We could also derive these results about 
$\sorth{p}\times\sorth{q}$-invariant special Legendrians using the methods Joyce developed 
to study cohomogeneity one special Legendrians. 
We describe this approach briefly. The following result is a minor rephrasing of Theorem 6.3 in \cite{joyce:symmetries}.
\begin{prop}
\addtocounter{equation}{1}
\label{P:joyce:orbits}
Let $G$ be a connected Lie subgroup of $\sunit{n}$ with Lie algebra $\mathfrak{g}$ and moment map 
$\mu: \C^{n} \ra \mathfrak{g}^{*}$ (with $\mu(\mathbf{0})=\mathbf{0}$). Let $\mathcal{O}$ be an oriented orbit of $G$ in $\Sph^{2n-1}$ 
of dimension $n-2$, and suppose $\mathcal{O} \subset \mu^{-1}(\mathbf{0})$. 
Then there exists a locally unique, $G$-invariant 
special Legendrian $\Sigma$ containing $\mathcal{O}$. Furthermore, $\Sigma \subset \mu^{-1}(\mathbf{0}) \cap \Sph^{2n-1}$
and near $\mathcal{O}$, $\Sigma$ is fibred by $G$-orbits isomorphic to $\mathcal{O}$. Thus $\Sigma$ is 
locally diffeomorphic to $(-\epsilon,\epsilon) \times \mathcal{O}$ for some $\epsilon >0$ and 
we can view $\Sigma$ as a smooth curve of $G$-orbits. 
\end{prop}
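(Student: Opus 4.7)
The proposition is (as the authors state) a rephrasing of Joyce's Theorem 6.3 \cite{joyce:symmetries}; the plan is to adapt his argument to the spherical/Legendrian setting via the cone construction $\Sigma \leftrightarrow C'(\Sigma)$. The first step is the ambient containment $\Sigma\subset\mu^{-1}(\mathbf{0})\cap\Sph^{2n-1}$: passing to the Lagrangian cone $L:=C'(\Sigma)\subset\C^{n}$, the moment map identity $d\mu_{A}=\iota_{A_{\C}}\omega$ (where $A_{\C}$ is the vector field on $\C^{n}$ generated by $A\in\mathfrak{g}$) together with $G$-invariance of $L$ (so $A_{\C}$ is tangent to $L$) and the Lagrangian condition $\omega|_{L}=0$ forces each $\mu_{A}$ to be locally constant on $L$; connectedness, $\mathbf{0}\in\overline{L}$, and the normalisation $\mu(\mathbf{0})=\mathbf{0}$ then give $\mu|_{L}\equiv\mathbf{0}$.

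Next I would carry out the pointwise symplectic linear algebra at a base point $x_{0}\in\mathcal{O}$. Setting $W:=T_{x_{0}}\mathcal{O}$ inside the symplectic vector space $(\xi_{x_{0}},d\gamma)$ of dimension $2n-2$, integrality of $\mathcal{O}$ makes $W$ isotropic of dimension $n-2$; thus its symplectic complement $W^{\perp}$ inside $\xi_{x_{0}}$ has dimension $n$, and the quotient $W^{\perp}/W$ is a $2$-dimensional symplectic vector space inheriting a compatible complex structure from $J$. Lagrangian extensions of $W$ inside $\xi_{x_{0}}$ correspond (by symplectic reduction) to Lagrangian lines in $W^{\perp}/W$, and oriented ones are parametrised by a circle along which the Lagrangian angle varies monotonically through a full period $2\pi$. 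Hence there is a unique oriented Lagrangian extension $L_{0}\subset\xi_{x_{0}}$ of $W$ whose Lagrangian angle vanishes; uniqueness forces this choice to be $\mathrm{Stab}_{G}(x_{0})$-invariant, and $G$-invariance propagates it $G$-equivariantly along $\mathcal{O}$ to give a candidate $T\Sigma|_{\mathcal{O}}$.

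To globalise this infinitesimal datum I would set up a first order ODE for a transverse curve $\gamma:(-\epsilon,\epsilon)\to\mu^{-1}(\mathbf{0})\cap\Sph^{2n-1}$ with $\gamma(0)=x_{0}$ and $\dot\gamma(0)$ the unit vector in $L_{0}$ orthogonal to $W$ (with respect to the round metric on $\Sph^{2n-1}$). The ODE prescribes $\dot\gamma(t)$ as the unique such orthogonal vector at $\gamma(t)$ whose addition to the tangent space of the $G$-orbit through $\gamma(t)$ produces the special Lagrangian extension determined by the previous paragraph's analysis. Smooth dependence of this direction on the base point yields a smooth vector field in a neighbourhood of $\mathcal{O}$ in $\mu^{-1}(\mathbf{0})\cap\Sph^{2n-1}$, so standard ODE theory produces a unique smooth solution on some $(-\epsilon,\epsilon)$. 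Then $\Sigma:=G\cdot\gamma((-\epsilon,\epsilon))$ is the required $G$-invariant special Legendrian, locally diffeomorphic to $(-\epsilon,\epsilon)\times\mathcal{O}$ by the slice theorem and locally unique by uniqueness of the ODE.

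The main obstacle is verifying the smoothness claims and the persistence of orbit dimension near $x_{0}$. One must confirm that the $G$-orbits through nearby points in $\mu^{-1}(\mathbf{0})\cap\Sph^{2n-1}$ remain of dimension $n-2$ (i.e.\ no stabiliser jumping), which follows from upper semicontinuity of stabilisers together with the principal orbit theorem applied on a tubular neighbourhood, and that the chosen direction $\dot\gamma(t)$ stays tangent to $\mu^{-1}(\mathbf{0})\cap\Sph^{2n-1}$. The latter is automatic because the symplectic complement of a tangent $G$-orbit direction inside $\xi$ already lies in the tangent space of the moment map zero set by the moment map identity, so the special Lagrangian direction constructed in the previous paragraph is admissible as a velocity vector.
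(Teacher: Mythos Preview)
The paper does not supply its own proof of this proposition: it is introduced with the sentence ``The following result is a minor rephrasing of Theorem 6.3 in \cite{joyce:symmetries}'' and is simply quoted from Joyce's paper without argument. So there is nothing in the paper to compare your proposal against.

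That said, your sketch is a faithful reconstruction of the standard cohomogeneity-one argument underlying Joyce's theorem, transported to the Legendrian/contact setting via the cone correspondence. The three ingredients you identify --- the moment-map argument forcing $\Sigma\subset\mu^{-1}(\mathbf{0})$, the symplectic reduction $W^{\perp}/W\cong\C$ picking out a unique special-Lagrangian direction transverse to the orbit, and the ODE on the orbit space integrating this direction --- are exactly what is needed, and your handling of the two technical points (persistence of orbit type, tangency of the transverse direction to $\mu^{-1}(\mathbf{0})\cap\Sph^{2n-1}$) is correct in outline. One small refinement: for the uniqueness of the special-Lagrangian direction you should note that the oriented Lagrangian lines in the $2$-plane $W^{\perp}/W$ are parametrised by $\Sph^{1}$ with the phase $e^{i\theta}$ running once around, so vanishing of $\theta$ singles out exactly one; your phrasing already says this but it is worth being explicit that the orientation on $\mathcal{O}$ is what fixes the sign and hence the direction of $\dot\gamma$.
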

\noindent
Moreover, one can also use Joyce's methods (see \cite[\S 7]{joyce:symmetries} where he treats the case of $T^{n-2}$ actions on $\Sph^{2n-1}$) 
to derive explicit ODEs for  cohomogeneity one $G$-invariant special Legendrians. In this way one always gets a system of $1$st order ODEs 
on the space of (generic) isotropic orbits of $G$. Locally one has existence and uniqueness for these ODEs but problems
may develop if we run into singular $G$-orbits. To study the global geometry of the $G$-invariant special Legendrians 
one needs to understand the possible singular $G$-orbits and how solutions to the ODEs behave on approach to these orbits.

We now explain the link between Joyce's approach applied to $G=\sorth{p}\times\sorth{q}$ 
and the approach via $(p,q)$-twisted SL curves in $\Sph^{3}$ we have discussed.
Recall Lemma \ref{L:iso:orbits} shows that the space of isotropic orbits in $\Sph^{2(p+q)-1}$ for  
$G=\sorth{p}\times\sorth{q}$ is $\Sph^{3}/\Z_{2}$ if $p=1$ or $\Sph^{3}/\Z_{2}\times \Z_{2}$ if $p>1$.
Thus the ODEs for $\sorth{p}\times\sorth{q}$-invariant special Legendrians should be a $1$st order ODE on  
this space of orbits. These ODEs are essentially the fundamental ODEs \ref{E:slg:ode} for $(p,q)$-twisted SL curves in $\Sph^{3}$.
The only difference is that to describe the evolution of the isotropic orbits we should consider the equivalence class of $\bw$ 
in $\Sph^{3}/\Z_{2}$ or $\Sph^{3}/\Z_{2}\times \Z_{2}$ rather than the point $\bw \in \Sph^{3}$. 

Both methods therefore yield local classification results for $\sorth{p}\times\sorth{q}$-invariant special Legendrians in 
$\Sph^{2(p+q)-1}$. The global structure of $\sorth{p}\times\sorth{q}$-invariant special Legendrians is then studied 
as a later step.

\subsection*{The fundamental ODE system for $(p,q)$-twisted SL curves}
$\phantom{ab}$
\nopagebreak

As before we fix $1\le p \le q,\, 2\le q$ and define $n:=p+q$.
We now study the basic properties of the first order system of complex ODEs \ref{E:slg:ode2} which describe 
all (appropriately parametrised)  $(p,q)$-twisted SL curves.  
We begin by discussing symmetries of these ODEs.

For any $p$ and $q$ the $(p,q)$-twisted SL ODEs \ref{E:slg:ode2} have six obvious types  of symmetry:
\begin{enumerate}
\item Time translation invariance $\bw \mapsto \bw \circ \TTT_{t_{0}}$ for any $t_0\in \R$.
\item Multiplication by an $n$th root of unity $\bw \mapsto z \bw$, where $z^n=1.$
\item $\bw \mapsto \tcheck_{x} \circ \bw$ where $\tcheck_x \in \unit{1} \times \unit{1} \subset \unit{2}$ is the $1$-parameter subgroup (depending on $p$ and $q$)
\addtocounter{theorem}{1}
\begin{equation}
\label{E:M:defn}
\tcheck_x =  \left(
\begin{matrix}
e^{ix/p} & 0  \\
0 & e^{-ix/q}\\
\end{matrix}
\right).
\end{equation}
\item Complex conjugation  $\bw \mapsto \overline{\bw}$.
\item The simultaneous time reflection and spatial rotation given by 
$$t \mapsto -t, \quad \bw \mapsto z\bw,$$
where $z$ is any $n$th root of $-1$.
\item The simultaneous time and spatial rescaling given by 
$$t \mapsto \lambda^{1-2/n} t, \quad \bw \mapsto \lambda^{1/n} \bw, \quad \text{for any } \lambda>0.$$ 
More precisely, $\bw$ is a solution of \ref{E:slg:ode2} if and only if $\bw_{\lambda}(t):= \lambda^{1/n} \bw( \lambda^{1-2/n}t)$ is.
\end{enumerate}

Before establishing the basic facts about solutions to the $(p,q)$-twisted SL ODES
we discuss the geometry of the $1$-parameter group of symmetries $\{\tcheck_{x}\}_{x\in\R}$ (which depends on $p$ and $q$) 
appearing in symmetry (3) above. 
As in \ref{L:iso:orbits}, for any $\bw \in \Sph^{3}$ let $\mathcal{O}_{\bw}\subset \Sph^{2(p+q)-1}$ denote the associated isotropic $\sorth{p} \times \sorth{q}$ orbit.

\begin{definition}
\label{D:M:period}
\addtocounter{equation}{1}
For fixed integers $p$ and $q$ define a \emph{period of the $1$-parameter group $\{\tcheck_{x}\}$} by
$$\Per(\{\tcheck_{x}\}):= \{x\in \R\, | \, \tcheck_{x}=\Id\}.$$
Clearly, if $x\in \Per(\{\tcheck_{x}\})$ then $\mathcal{O}_{\tcheck_{x}\bw}=\mathcal{O}_{\bw}$ for any 
$\bw \in \Sph^{3}$. In other words, for any $x \in \Per(\{\tcheck_{x}\})$, $\tcheck_{x}$ leaves invariant all isotropic 
$\sorth{p}\times \sorth{q}$ orbits in $\Sph^{2(p+q)-1}$.

More generally, we call $x\in \R$ a \emph{half-period of $\{\tcheck_{x}\}$} if $\tcheck_{x}$ leaves invariant
all  isotropic $\sorth{p}\times \sorth{q}$ orbits in $\Sph^{2(p+q)-1}$. In other words,  
$$\Perh(\{\tcheck_{x}\}):=\{ x\in \R \, |\, \mathcal{O}_{\tcheck_{x}\bw}=\mathcal{O}_{\bw} \ \forall\  \bw \in \Sph^{3}\}.$$
A half-period of $\{\tcheck_{x}\}$ which is not a period of $\{\tcheck_{x}\}$ we call a \emph{strict half-period of $\{\tcheck_{x}\}$}.
\end{definition}

Define the finite subgroup $\stabpq \subset \unit{2}$ by 
\begin{equation}
\addtocounter{theorem}{1}
\label{E:stabpq}
 \stabpq = 
\begin{cases}
\left(\begin{array}{cc}\pm 1 & 0 \\0 & \pm 1 \end{array}\right) \cong \Z_{2}\times \Z_{2} \quad & \text{if $p>1$};\\
\left(\begin{array}{cc} 1 & 0 \\0 & \pm1\end{array}\right) \cong \Z_{2} \quad & \text{if $p=1$}.
\end{cases}
\end{equation}
It follows from \ref{L:iso:orbits} that 
\begin{equation}
\addtocounter{theorem}{1}
\label{E:M:half:period}
x\in \Perh(\{\tcheck_{x}\}) \iff \tcheck_{x} \in \stabpq.
\end{equation}
An immediate consequence of \ref{E:M:half:period} is that $2\Perh(\{\tcheck_{x}\}) \subset \Per(\{\tcheck_{x}\})$; 
this explains the choice of the terminology half-period.
If $\tcheck_{x}=\rho_{jk}$ for some $(j,k)\in \Z_{2}\times \Z_{2}$ 
with $\rho_{jk}\in \stabpq$ as defined in \ref{L:iso:orbits} we call 
$x$ \emph{a half-period of type $(jk)$}.
If $x$ is a half-period of type $(jk)$ then $e^{ix}=(-1)^{jp}$ and $e^{-ix}=(-1)^{kq}$ and hence 
$jp+kq \equiv 0 \mod{2}$.

The following lemma describes the periods and half-periods of $\{\tcheck_{x}\}$.
\begin{lemma}
\label{L:M:periods}
\addtocounter{equation}{1}
Fix a pair of admissible integers $p$ and $q$ and let $\{\tcheck_{x}\}$ denote the $1$-parameter subgroup defined in \ref{E:M:defn}.
\begin{itemize}
\item[(i)]
The periods of $\{\tcheck_{x}\}$ are given by 
$$\Per(\{\tcheck_{x}\})= 2\pi \lcm(p,q)\Z.$$
\item[(ii)]
If $p>1$ then the half-periods of $\{\tcheck_{x}\}$ are given by 
$$\Perh(\{\tcheck_{x}\})=\tfrac{1}{2}\Per(\{\tcheck_{x}\}) = \pi \lcm(p,q)\Z.$$
Moreover, any strict half-period of $\{\tcheck_{x}\}$ is of type $(jk)$ where $j = q/\hcf(p,q) \mod{2}$ and $k=p/\hcf(p,q) \mod{2}$.
In particular, for any fixed $p$ and $q$ exactly one type of strict half-period occurs.
\item[(iii)]
If $p=1$ then the half-periods of $\{\tcheck_{x}\}$ are given by 
$$\Perh(\{\tcheck_{x}\})=
\begin{cases}
\tfrac{1}{2}\Per(\{\tcheck_{x}\}) = \pi \lcm(p,q)\Z \quad & \text{if\  $n$ is odd;}\\
\Per(\{\tcheck_{x}\})  = 2\pi \lcm(p,q)\Z \quad & \text{if\  $n$ is even.}
\end{cases}
$$
\end{itemize}
\end{lemma}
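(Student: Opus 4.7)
The plan is to read everything off directly from the diagonal form \ref{E:M:defn} of $\tcheck_x$: each assertion reduces to an elementary condition on the two phases $e^{ix/p}$ and $e^{-ix/q}$. Part (i) is immediate, since $\tcheck_x = \Id$ if and only if both phases equal $1$, i.e.\ $x\in 2\pi p\Z \cap 2\pi q\Z = 2\pi\lcm(p,q)\Z$.

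For the half-periods I would use \ref{E:M:half:period} to reformulate $x\in \Perh(\{\tcheck_x\})$ as $\tcheck_x\in \stabpq$, then split along $p>1$ versus $p=1$ because $\stabpq$ takes different forms in the two cases (see \ref{E:stabpq}). When $p>1$, $\stabpq = \{\diag(\pm1,\pm1)\}$, so each phase must be $\pm 1$; this forces $x \in \pi p\Z \cap \pi q\Z = \pi\lcm(p,q)\Z$. To identify the type, I would write $d=\hcf(p,q)$, $p=dp'$, $q=dq'$ with $\hcf(p',q')=1$, and parametrise $x=\pi dp'q'm$; then $e^{ix/p}=(-1)^{q'm}$ and $e^{-ix/q}=(-1)^{p'm}$. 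Strictness forces $m$ odd—otherwise $m$ is even and $\tcheck_x=\Id$, since $\hcf(p',q')=1$ precludes $p'$ and $q'$ being simultaneously even—and once $m$ is odd the type is uniquely $(j,k)=(q'\bmod 2,\,p'\bmod 2)$, proving the remainder of (ii).

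When $p=1$ (case (iii)), $\stabpq=\{\diag(1,\pm1)\}$ demands the first phase be exactly $1$ (so $x\in 2\pi\Z$) while the second may be $\pm 1$ (so $x\in\pi q\Z$). The intersection $2\pi\Z\cap\pi q\Z$ depends only on the parity of $q$: if $q$ is even (so $n$ odd) then $\pi q\Z\subset 2\pi\Z$ and $\Perh=\pi q\Z=\tfrac{1}{2}\Per$, whereas if $q$ is odd (so $n$ even) the equation $\pi qm\in 2\pi\Z$ forces $m$ even, yielding $\Perh=2\pi q\Z=\Per$. There is no genuine obstacle here—the argument is pure elementary number theory—so the only thing to get right is the careful bookkeeping of the two case splits (on whether $p=1$ and, when $p=1$, on the parity of $n$).
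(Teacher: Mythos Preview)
Your proposal is correct and follows essentially the same approach as the paper: both read the conditions directly off the diagonal entries of $\tcheck_x$, use \ref{E:M:half:period} and the definition of $\stabpq$ in \ref{E:stabpq} to split into the cases $p>1$ and $p=1$, and reduce everything to the elementary number theory of $\pi p\Z \cap \pi q\Z$. The only cosmetic point is that your parenthetical ``since $\hcf(p',q')=1$ precludes $p'$ and $q'$ being simultaneously even'' is really justifying why odd $m$ gives a \emph{strict} half-period (not why even $m$ gives $\Id$, which is automatic), but the logic is all there.
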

\begin{proof}
The proof is a straightforward use of the various definitions;  the case $p=1$ being different because 
$\stabpq$ (defined in \ref{E:stabpq}) is defined differently in this case. In either case $p=1$ or $p>1$ 
$$x\in \Per(\{\tcheck_{x}\}) \iff e^{ix/p}=e^{-ix/q}=1 \iff x \in 2\pi p\Z \cap 2\pi q \Z = 2\pi (p\Z \cap q\Z)=2\pi \lcm(p,q)\Z.$$
Suppose now that $p>1$ then 
$$x\in \Perh(\{\tcheck_{x}\}) \iff e^{ix/p}=\pm 1,\, e^{-ix/q}=\pm 1 \iff e^{2ix/p}=e^{-2ix/q}=1\iff 2x \in \Per(\{\tcheck_{x}\}).$$
Since $\{\tcheck_{x}\}$ is a $1$-parameter group, by the definition of $\Per(\{\tcheck_{x}\})=2\pi \lcm(p,q)\Z$ we have 
$$ \tcheck_{k\pi\lcm(p,q)} = 
\begin{cases}
\Id, \quad & \text{if\ } k\in 2\Z;\\
\tcheck_{\pi \lcm(p,q)} \quad & k\in 2\Z +1;
\end{cases}
$$
for any $k\in \Z$.
Let $x_{0}=\lcm(p,q)\pi = pq\pi/\hcf(p,q)$. Then $x_{0}/p=q\pi/\hcf(p,q)  = j\pi$ and $x_{0}/q = p\pi/\hcf(p,q) = k\pi$, 
with $j$ and $k$ as defined in the statement of \ref{L:M:periods}. Hence 
$$\tcheck_{x_{0}} = \left(\begin{array}{cc}e^{ix_{0}/p} & 0 \\0 & e^{-ix_{0}/q}\end{array}\right) = 
\left(\begin{array}{cc}(-1)^{j} & 0 \\0 & (-1)^{k}\end{array}\right)= \rho_{jk}.$$

Similarly the result in the case $p=1$ follows taking into account that the only strict
half-periods  that belong to $\stabpq$ in this case are of type $(+-)$.
\end{proof}
\begin{remark}
\addtocounter{equation}{1}
Notice that $j$ and $k$ defined in \ref{L:M:periods}
$jp+kq = 2pq/\hcf(p,q) \equiv 0 \mod{2}$ as required.
\end{remark}

The following result establishes basic facts about solutions to the $(p,q)$-twisted SL ODEs.
\addtocounter{equation}{1}
\begin{prop}(\textit{cf.} equation \ref{E:slg:ode2} and Remark \ref{R:sl:twist:sign})
\label{P:odes:p:n}
\begin{enumerate}
\item[(i)]
Solutions to the $(p,q)$-twisted SL ODEs
\addtocounter{theorem}{1}
\begin{equation}
\label{E:odes:p:n}
\begin{aligned}
 \dot{w}_1 & = \phantom{-} \overline{w}_1^{p-1}  \overline{w}_2^{q},\\
\dot{w}_2 & = - \overline{w}_1^p \overline{w}_2^{q-1},\\
\end{aligned}
\end{equation}
admit two conserved quantities
$$ \mathcal{I}_1(\bw):=\abs{\boldsymbol{w}}^{2} \quad \text{and} \quad \mathcal{I}_2(\bw):=\Imag(w_1^p w_2^q).$$
The symmetries (1), (2) and (3)  preserve both conserved quantities $\mathcal{I}_1$ and $\mathcal{I}_2$. 
Symmetries (4) and (5) preserve $\mathcal{I}_1$ but send $\mathcal{I}_2 \mapsto -\mathcal{I}_2$.
Symmetry (6) sends $(\mathcal{I}_1,\mathcal{I}_2) \mapsto (\lambda^{2/n} \mathcal{I}_1, \lambda \mathcal{I}_2)$.

Hence if $\bw$ is a solution of \ref{E:odes:p:n} with $\mathcal{I}_1(\bw)\neq 0$ then we may rescale using symmetry $(6)$ to obtain another solution of \ref{E:odes:p:n} with $\mathcal{I}_1(\bw) = 1$. 
For any solution with $\mathcal{I}_1(\boldsymbol{w})=1$, the possible range of  values of 
$\mathcal{I}_2=\Imag(w_1^p w_2^q)$ is $[-2 \taumax\,, 2\taumax]$,
where
\addtocounter{theorem}{1}
\begin{equation}
\label{E:tau:max}
2\tau_{\text{max}} = \sqrt{\frac{p^p q^{q}}{n^n}}.
\end{equation}

\item[(ii)] The stationary points of \ref{E:odes:p:n} are 
$$ \C\times \{0\} \cup \,\{0\} \times\C \quad \text{if\ } p>1 \quad \text{or} \quad \C \times \{0\} \quad \text{if\ }p=1.$$

\item[(iii)] The initial value problem for \ref{E:odes:p:n} with any initial data $\boldsymbol{w}(0) \in \C^2$ 
has a unique real analytic solution $\boldsymbol{w}:\R \ra \C^{2}$ defined for all $t \in \R$, which depends 
real analytically on the initial data.

\item[(iv)] For any solution of \ref{E:odes:p:n} with $\mathcal{I}_{1}(\bw) =1$ and 
$\mathcal{I}_{2}(\bw) = \Imag(w_1^p w_2^q)=-2\tau$ 
(and hence by part (i) $\tau \in [-\taumax,\taumax]$) 
the function $y:=\abs{w_2}^2:\R \ra [0,1]$ satisfies the equation 
\addtocounter{theorem}{1}
\begin{equation}
\label{E:ydot:cx}
\frac{1}{2}\dot{y} + 2i\tau = -w_1^p  w_2^{q}.
\end{equation}
Therefore $y$ satisfies the energy conservation equation
\addtocounter{theorem}{1}
\begin{equation}
\label{E:y:dot}
\dot{y}^2  = 4(f(y)-4\tau^2) = 4y^q (1-y)^{p} - 16 \tau^2,
\end{equation}
and hence also the second-order ODE
\addtocounter{theorem}{1}
\begin{equation}
\label{E:y:ddot}
\ddot{y}  = 2f'(y) = 2y^{q-1}(1-y)^{p-1}(q-ny),
\end{equation}
where we define the function $f:\R \ra \R$ (depending on $p$ and $q$) by 
\addtocounter{theorem}{1}
\begin{equation}
\label{E:f}
f(y) = y^q(1-y)^{p}.
\end{equation}
\end{enumerate}
\end{prop}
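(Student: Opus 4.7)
My plan is to prove the four parts essentially by direct computation, using the specific form of \ref{E:odes:p:n} at each step.

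For (i), I would verify both conservation laws by direct differentiation. For $\mathcal{I}_1$, the relation $\tfrac{d}{dt}|\bw|^2 = 2\Real(\overline{w}_1\dot{w}_1 + \overline{w}_2\dot{w}_2)$ together with \ref{E:odes:p:n} immediately yields $2\Real(\overline{w}_1^p\overline{w}_2^q - \overline{w}_1^p\overline{w}_2^q) = 0$. For $\mathcal{I}_2$, a similar computation gives $\tfrac{d}{dt}(w_1^p w_2^q) = p|w_1|^{2(p-1)}|w_2|^{2q} - q|w_1|^{2p}|w_2|^{2(q-1)} \in \R$. The action of each of the six symmetries on $\mathcal{I}_1$ and $\mathcal{I}_2$ is then read off directly from their definitions. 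Finally, to bound $|\mathcal{I}_2|$ when $\mathcal{I}_1=1$, I observe that $|\mathcal{I}_2| \le |w_1|^p |w_2|^q$ and maximise $a^{p/2}b^{q/2}$ subject to $a+b=1$, $a,b\ge 0$; a standard Lagrange multiplier (or AM-GM) computation yields the maximiser $a=p/n$, $b=q/n$, giving the claimed $2\taumax = \sqrt{p^p q^q/n^n}$.

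For (ii), the stationary points are exactly the zeros of the polynomial vector field on the right-hand side of \ref{E:odes:p:n}, which I read off case by case depending on whether $p=1$ or $p>1$.

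For (iii), the right-hand side of \ref{E:odes:p:n} is a polynomial (hence real-analytic) vector field on $\C^2\cong\R^4$, so standard ODE theory gives local existence of a unique real-analytic solution through any initial point, with real-analytic dependence on initial data. Global existence is where I would lean on part (i): since $\mathcal{I}_1=|\bw|^2$ is conserved, the solution remains on the sphere of radius $|\bw(0)|$, so no finite-time blow-up can occur and the solution extends to all of $\R$.

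For (iv), setting $y=|w_2|^2$ and computing $\dot y = 2\Real(\overline{w}_2\dot{w}_2) = -2\Real(\overline{w_1^p w_2^q})$, together with the assumption $\Imag(w_1^p w_2^q) = -2\tau$, gives \ref{E:ydot:cx}. Taking the modulus squared of \ref{E:ydot:cx} and using $|w_1^p w_2^q|^2 = (1-y)^p y^q = f(y)$ produces the first-order energy equation \ref{E:y:dot}. Differentiating \ref{E:y:dot} in $t$ and dividing by $2\dot y$ then gives $\ddot y = 2f'(y)$, and a one-line computation $f'(y) = y^{q-1}(1-y)^{p-1}(q-ny)$ completes \ref{E:y:ddot}. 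I do not expect any serious obstacle in this Proposition: the entire argument is mechanical, and the only point requiring a small amount of care is computing the constrained maximum in (i) to identify $\taumax$.
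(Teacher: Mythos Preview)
Your proposal is correct and follows essentially the same approach as the paper's own proof: direct differentiation for the conserved quantities, polynomial vector field plus the conserved norm $|\bw|^2$ for global existence and analyticity, and the same modulus-squared computation for the energy equation. The only cosmetic difference is that you derive $\ddot y = 2f'(y)$ by differentiating the real energy equation \ref{E:y:dot} and dividing by $2\dot y$, whereas the paper differentiates the complex identity \ref{E:ydot:cx} directly; both are routine.
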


\begin{remark}
\addtocounter{equation}{1}
\label{R:sing:pts}
The difference between the stationary points of \ref{E:odes:p:n} in the case $p>1$ and the case $p=1$ reflects the 
difference in the geometry of the nongeneric isotropic orbits of $\sorth{p} \times \sorth{q}$ and $\sorth{n-1}$ respectively. 
For $p>1$ the  nongeneric isotropic orbits of $\sorth{p} \times \sorth{q}$ have the form $(w_{1}\cdot \Sph^{p-1},0)$ and 
$(0,w_{2}\cdot \Sph^{q-1})$. For $p=1$ the only nongeneric isotropic orbits are of the form $(w_{1},0)$. In particular, the orbits 
of the form $(0,w_{2}\cdot \Sph^{n-2})$ are generic provided $w_{2}\neq 0$.
\end{remark}

\iffigureson
\begin{figure}[h!t]
\vspace{-0.5in}
\hspace{-1in}
\includegraphics[scale=1.0]{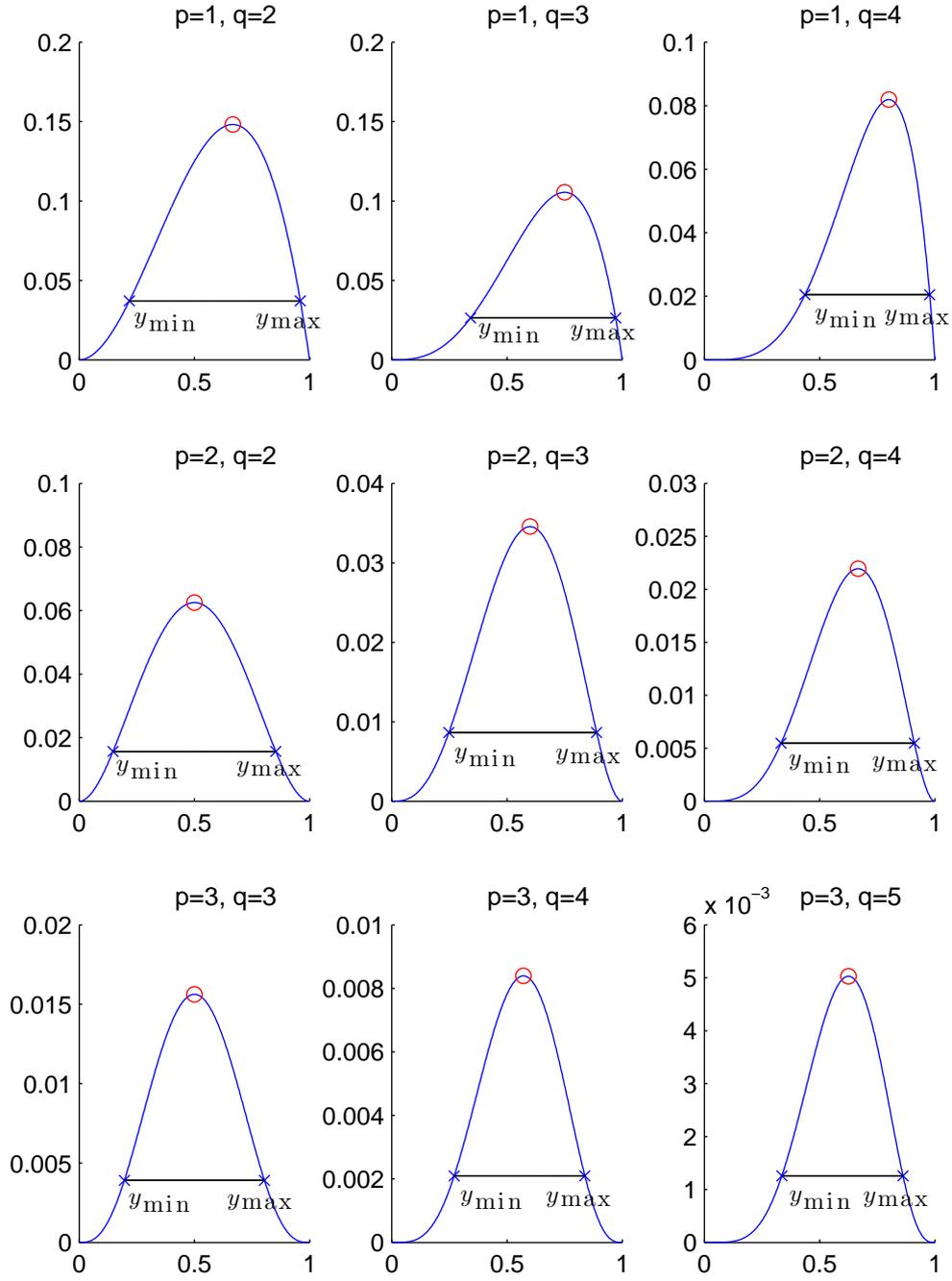}
\hspace{-1in}
\vspace{-0.5in}
\caption{The graph of $f(y)=y^{q}(1-y)^{p}$ on the interval $[0,1]$ for various choices of $(p,q)$. $\ymin$ and $\ymax$ ---
the two solutions of $f(y)=4\tau^{2}$ in the interval $[0,1]$ --- are shown for $\tau=\tfrac{1}{2}\taumax$. The maximum value $f_{\text{max}}=4\taumax^{2}$ which occurs at $y=\tfrac{q}{n}$ is marked by $\circ$.}
\label{fig:fplot}
\end{figure}
\else
\fi

\begin{proof}
\emph{(i) Conserved quantities.} 
We verify $\mathcal{I}_{1}$ and $\mathcal{I}_{2}$ are conserved by direct calculation. Firstly,  
$$ \dot{\mathcal{I}_1} = \tfrac{d}{dt}\abs{\bw}^2= \tfrac{d}{dt}(w_1 \overline{w}_1) + \tfrac{d}{dt}(w_2 \overline{w}_2) =
2\Real{(\dot{w}_1\overline{w}_1 + \dot{w}_2 \overline{w}_2)} = 0,$$ where we have used \ref{E:odes:p:n} in the final equality.
Secondly, since 
$$\tfrac{d}{dt}(w_1^p w_2^{q}) = p w_1^{p-1}w_2^{q}\dot{w}_1 + q w_1^p w_2^{q-1} \dot{w}_2,$$
using
\ref{E:odes:p:n} we obtain
\addtocounter{theorem}{1}
\begin{equation}
\label{E:dt:w1p:w2q}
\frac{d}{dt}(w_1^p w_2^{q}) = \abs{w_1}^{2p-2} \abs{w_2}^{2q-2} (p\abs{w_2}^2 - q \abs{w_1}^2) \in \R^.
\end{equation}
Hence $\tfrac{d}{dt} \mathcal{I}_2 = \tfrac{d}{dt} \Imag{(w_1^pw_2^{q})}=0$.
It is straightforward to check the action of the symmetries on $\mathcal{I}_{1}$ and $\mathcal{I}_{2}$ is as claimed.
Define $y=\abs{w_{2}}^{2}$. When $\mathcal{I}_{1}(\bw)=1$ 
$$\abs{\mathcal{I}_{2}(\bw)} = \abs{\Imag w_{1}^{p}w_{2}^{q}} \le \abs{w_{1}}^{p} \abs{w_{2}}^{q} = \sqrt{y^{q}(1-y)^{p}}=\sqrt{f(y)},$$
for the function $f$ defined in \ref{E:f}.
A short calculation shows that
\addtocounter{theorem}{1}
\begin{equation}
\label{Ef:dot}
f'(y) = y^{q-1}(1-y)^{p-1}(q-ny),
\end{equation}
and therefore the critical points of $f$ are 
\begin{equation}
\addtocounter{theorem}{1}
\label{E:crit:f}
\text{Crit}(f) = 
\begin{cases}
\{0, \, \tfrac{q}{n},\, 1\} & \text{if\ }p>1;\\
\{0, \, \tfrac{q}{n}\} & \text{if\ } p=1.
\end{cases}
\end{equation}
Since $f$ is non-negative on $[0,1]$ and vanishes only at the two endpoints, the maximum value of $f$ for $y\in [0,1]$ occurs when $y=\tfrac{q}{n}$ and
hence
$$f_{\text{max}}=f(\tfrac{q}{n}) = \frac{p^p q^{q}}{n^n} = 4\taumax^2,$$
where $\taumax$ is defined in \ref{E:tau:max}.
Hence $\abs{\mathcal{I}_{2}(\bw)} \le \sqrt{f_{\text{max}}}\le 2\taumax$ as claimed.
See Figure \ref{fig:fplot} for the graph of the function $f$ on the interval $[0,1]$ for various choices of $(p,q)$.

\no \emph{(ii) Stationary points.} Stationary points of \ref{E:odes:p:n} are given by common zeros of the two polynomials
\begin{equation}
\addtocounter{theorem}{1}
\label{E:crit:pts}
w_1^{p-1} w_2^q = 0 = w_1^p w_2^{q-1}.
\end{equation}

\no \emph{(iii) Global existence, uniqueness and analyticity.}
The vector field 
$$V(\bw) = (\overline{w}_{1}^{p-1}\overline{w}_{2}^{q}\, ,\overline{w}_{1}^{p}\overline{w}_{2}^{q-1})$$
on $\C^{2}$ defining \ref{E:odes:p:n} is clearly real algebraic. It follows then from the standard local existence and uniqueness 
results for the initial value problem that locally  \ref{E:odes:p:n} admits a unique real analytic solution for any initial data and this 
solution  depends real analytically on the initial condition. Since $\mathcal{I}_{1}(\bw)=\abs{\bw}^{2}$ is constant, this local 
solution remains in a compact subset of $\C^{2}$ and hence global existence follows immediately. 

\no \emph{(iv) ODEs for $y:=\abs{w_{2}}^{2}$.}
Using \ref{E:odes:p:n}, we have 
\begin{align*}
\dot{y} &= 2\Real{(\dot{w}_2 \overline{w}_2)} = -2\Real{(\overline{w}_1^p \overline{w}_2^{q})} = -2\Real{({w}_1^p {w}_2^{q})},\\
2\tau &= \Imag{(\overline{w}_1 \dot{w}_1)} = \Imag{(\overline{w}_1^p \overline{w}_2^{q})} = -\Imag{(w_1^p w_2^{q})}.
\end{align*}
Hence we obtain \ref{E:ydot:cx}. 
Taking the modulus squared of both sides of \ref{E:ydot:cx} proves that $\dot{y}$ satisfies \ref{E:y:dot}.
Differentiating \ref{E:ydot:cx} with respect to $t$ and using \ref{E:dt:w1p:w2q} we see that $y$ satisfies the second-order equation \ref{E:y:ddot}.
Note that the stationary points of \ref{E:y:ddot} are exactly the critical points of $f$ and hence by \ref{E:crit:f} are 
$0$ and $\tfrac{q}{n}$ when $p=1$ and $0$, $\tfrac{q}{n}$ and $1$ when $p>1$. 
\end{proof}
To understand the space of solutions to \ref{E:odes:p:n} modulo the action of the symmetries (1)--(6) 
we need the following auxiliary result about solutions of \ref{E:y:dot}
\begin{lemma}
\label{P:y}
Let $\bw$ be any solution of \ref{E:odes:p:n} with $\mathcal{I}_{1}(\bw) =1$ and 
$\mathcal{I}_{2}(\bw) = \Imag(w_1^p w_2^q)=-2\tau$ and let $y:=\abs{w_2}^2:\R \ra [0,1]$
be the associated solution of \ref{E:y:dot}.
\begin{itemize} 
\item[(i)] If  $0<\abs{\tau}< \taumax$,  the following holds:
\begin{itemize}
\item[a.] $y$ is periodic of period $2\pt>0$ and hence any
two solutions of \ref{E:y:dot} with the same value of $\tau$ differ only by a time translation.
 Moreover, the period $\pt$ satisfies
\addtocounter{theorem}{1}
\begin{equation}
\label{E:pt:taumax}
\lim_{\tau \ra \taumax}{2\pt} = \frac{\pi}{\taumax} \sqrt{\frac{pq}{2n^3}}.
\end{equation}
\item[b.] The range of $y$ is $[\ymin\, , \ymax]$, 
where $0 < \ymin <\tfrac{q}{n} < \ymax < 1$ are the only two solutions of the degree $n$ polynomial equation
\addtocounter{theorem}{1}
\begin{equation}
\label{E:y:polynomial}
f(y) = y^q (1-y)^{p} = 4\tau^2,
\end{equation}
that lie in the interval $[0,1]$.
\item[c.]
As $\tau \ra 0$ we have
\addtocounter{theorem}{1}
\begin{equation}
\label{E:y:maxmin:asym}
y_{\text{min}} = (2\tau)^{2/q} (1 + O(\tau^{2/q})), \quad y_{\text{max}} = 1- (2\tau)^{2/p}(1+O(\tau^{2/p})).
\end{equation}\
\end{itemize}
\item[(ii)]
If $\abs{\tau}=\taumax$, then $y \equiv \tfrac{q}{n}$.
\item[(iii)]
If $\tau=0$ and $p>1$ then one of the following holds:
\begin{itemize}
\item[a.] $y\equiv 0$
\item[b.] $y \equiv 1$
\item[c.] $y$ is strictly monotone and satisfies
$$ y = 
\begin{cases}
y_{0} \circ \TTT_{t_{0}} \quad & \text{some \ } t_{0} \in \R; \quad \text{if $y$ is decreasing}\\
y_{0} \circ \TTT_{t_{0}} \circ \tbar \quad & \text{some \ } t_{0} \in \R; \quad \text{if $y$ is increasing}
\end{cases}
$$
where $y_{0}: \R \ra (0,1)$ denotes the unique (decreasing) solution to the initial value problem
\begin{equation*}
\dot{y} = - 2\sqrt{f(y)}, \quad y(0)  =\frac{q}{n}.
\end{equation*}
Alternatively, $y_{0}$ can be characterised as the unique solution to \ref{E:y:ddot} with initial conditions
$$ y(0) = \frac{q}{n}, \quad \dot{y}(0) = -4\taumax.$$
Moreover, $y_{0}$ satisfies 
$$\lim_{t\ra -\infty}y_{0}(t)=1\quad \text{and} \quad \lim_{t \ra \infty}y_{0}(t)=0.$$
\end{itemize}

\item[(iv)] If $\tau=0$ and $p=1$ then one of the following holds:
\begin{itemize}
\item[a.] $y\equiv 0$, 
\item[b.] $ y = y_{0} \circ \TTT_{t_{0}}$  for some  $t_{0}\in \R,$ 
where $y_{0}:\R \ra (0,1]$ is the unique solution to \ref{E:y:ddot} with initial conditions 
$$y(0)=1, \quad \dot{y}(0)=0.$$
Moreover, $y_{0}$ is even, increasing on $(-\infty,0)$ and satisfies $\lim_{t \ra \pm \infty}y_{0}(t)=0$.
\end{itemize}
\end{itemize}
\end{lemma}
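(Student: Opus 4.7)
\medskip
\noindent
\textbf{Proof plan.}
The entire analysis is driven by the energy equation $\dot y^2 = 4(f(y)-4\tau^2)$ and its differentiated form $\ddot y = 2f'(y)$ from \ref{E:y:dot}--\ref{E:y:ddot}, together with the qualitative graph of $f(y)=y^q(1-y)^p$ on $[0,1]$ established in \ref{P:odes:p:n}(i): $f\ge 0$, $f(0)=f(1)=0$, strictly increasing on $[0,q/n]$, strictly decreasing on $[q/n,1]$, and with unique maximum $\fmax=4\taumax^2$ at $y=q/n$ (cf.\ Figure \ref{fig:fplot}). Because $f$ is a polynomial, $\ddot y = 2f'(y)$ is smooth, so Picard's theorem gives unique solvability of the initial value problem $(y(0),\dot y(0))$ at every point; this uniqueness is what allows us to recognise all solutions once we have constructed a few distinguished ones.

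For case (i), the bound $\dot y^2\ge 0$ combined with the graph of $f$ forces $y\in [\ymin,\ymax]$, where $\ymin<q/n<\ymax$ are the unique solutions of $f(y)=4\tau^2$ in $[0,1]$. At the two turning points $\dot y=0$ but $\ddot y=2f'(\ymin)>0$ and $\ddot y=2f'(\ymax)<0$, so the solution reverses and is forced to oscillate; uniqueness of the initial value problem at a turning point implies that any two solutions with the same $\tau$ differ by a time translation, and also that the motion is periodic of period $2\pt=2\int_{\ymin}^{\ymax}{dy}/{\sqrt{f(y)-4\tau^2}}$. To obtain \ref{E:pt:taumax} I will Taylor expand $f$ around its maximum and compute $f''(q/n)=-q^{q-1}p^{p-1}/n^{n-3}$; in the limit $\tau\to\taumax$ the orbit shrinks onto $q/n$ and $\ddot y = 2f'(y)$ linearises to a simple harmonic oscillator with angular frequency $\omega=\sqrt{-2f''(q/n)}$, yielding $2\pt\to 2\pi/\omega=(\pi/\taumax)\sqrt{pq/(2n^3)}$. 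The asymptotics \ref{E:y:maxmin:asym} come from solving $f(y)=4\tau^2$ to leading order at each endpoint: near $y=0$ one has $f(y)=y^q(1+O(y))$ giving $\ymin=(2\tau)^{2/q}(1+O(\tau^{2/q}))$, and symmetrically at $y=1$ using the factor $(1-y)^p$. Case (ii) is immediate: $\dot y^2=4(f(y)-\fmax)\le 0$ forces equality, hence $y\equiv q/n$.

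For cases (iii) and (iv), $\tau=0$ and $\dot y^2=4f(y)$. Stationary solutions $\dot y\equiv 0$ satisfy $f(y)=0$, so $y\equiv 0$ always works; $y\equiv 1$ is a solution of the second-order equation only when $f'(1)=0$, which occurs iff $p>1$ (when $p=1$, $f'(1)=q-n=-1\neq 0$, so $y\equiv 1$ fails to be a solution even of $\ddot y=2f'(y)$). For a non-stationary $y$: when $p>1$, Picard uniqueness at $y=0$ and at $y=1$ prevents $y$ from ever reaching either endpoint, so $\dot y$ never vanishes and $y$ is strictly monotone; comparing with the distinguished decreasing solution $y_{0}$ (initial data $y_{0}(0)=q/n$, $\dot y_{0}(0)=-4\taumax$) via a time translation that matches values (and hence derivatives, since both solutions satisfy $\dot y^2=4f(y)$ with the same sign) gives $y=y_{0}\circ\TTT_{t_0}$ in the decreasing case and $y=y_{0}\circ\TTT_{t_0}\circ\tbar$ in the increasing case. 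Monotonicity of $y_{0}$ together with the fact that limits must be fixed points of $\ddot y=2f'(y)$ with $\dot y\to 0$ forces $\lim_{t\to\infty}y_{0}=0$ and $\lim_{t\to-\infty}y_{0}=1$. When $p=1$ the analysis is simpler because $y=1$ is no longer stationary: any non-zero solution must reach $y=1$ in finite time (the integral $\int dy/\sqrt{y^q(1-y)}$ converges at $y=1$), and once it does, uniqueness of the IVP at $(1,0)$ identifies it with a time translate of the unique $y_{0}$ with $y_{0}(0)=1,\ \dot y_{0}(0)=0$; this $y_{0}$ is even by time reversibility of $\ddot y=2f'(y)$ and decays to $0$ as $t\to\pm\infty$ (the integral $\int dy/\sqrt{y^q(1-y)}$ diverges at $y=0$ for $q\ge 2$, so $0$ is only reached asymptotically).

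\medskip
\noindent
\textbf{Main obstacle.} The most delicate step is the limiting period computation \ref{E:pt:taumax}: it requires both the explicit evaluation $f''(q/n)=-q^{q-1}p^{p-1}/n^{n-3}$ and the justification that as $\tau\to\taumax$ the full nonlinear period converges to the linearised one (routine by dominated convergence on the period integral after substituting $y=q/n+\sqrt{\taumax^2-\tau^2}\,u$, but requiring some care). A secondary subtle point is the clean separation of cases (iii) versus (iv) at $y=1$, where $p=1$ breaks the vanishing of $f'(1)$; this must be handled uniformly through the second-order equation rather than the first-order energy equation, which is not Lipschitz at endpoints.
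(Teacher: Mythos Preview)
Your argument is correct and follows essentially the same route as the paper: the energy equation plus the shape of $f$ on $[0,1]$, linearisation at $y=q/n$ for the period limit, and IVP uniqueness for the $\tau=0$ classifications. The only stylistic difference is that for periodicity in (i) the paper packages the argument as a phase-plane statement (the level curve $\{z^2=4(f(y)-4\tau^2)\}$ is a nonsingular compact component, hence an $S^1$), whereas you argue directly via turning points and time-reversibility; the content is the same. One small slip: your period formula should read $2\pt=\int_{\ymin}^{\ymax}\!dy/\sqrt{f(y)-4\tau^2}$ (a half-period is $\int dy/(2\sqrt{\,\cdot\,})$), though this does not affect anything downstream.
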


\begin{remark}\hfill
\addtocounter{equation}{1}
\begin{itemize}
\item[(i)]
Detailed asymptotics for the $\tau \ra 0$ limit of the period $2\pt$ are established in Section \ref{S:asymptotics}.
\item[(ii)]
Since $y$ satisfies an equation of the form $\dot{y}^2=P(y)$ where $P$ is a polynomial of degree $n$, 
any solution of \ref{E:y:dot} can be expressed in terms of hyperelliptic functions.
When $n=3$ or $4$ $y$ can be expressed in terms of Jacobi elliptic functions---see \cite{haskins:slgcones,haskins:kapouleas:invent} for such expressions in the $(p,q)=(1,2)$ case.
Moreover, in the $\tau \ra 0$ limit the modulus $k^{2}$ of the elliptic functions tends to $1$. 
In this limit these elliptic functions become hyperbolic trigonometric functions. 
e.g. $y_{0}=\sech^{2}{t}$  when $p=1$, $q=2$ and $y_{0}=\tfrac{1}{2}(1-\tanh{t})$ when $p=q=2$.
\item[(iii)]
Figure \ref{fig:fplot} shows $\ymin$ and $\ymax$ on the graph of $f(y)$ for various $(p,q)$ 
for  $\tau = \tfrac{1}{2} \taumax$.
\end{itemize}
\end{remark}


\begin{proof}
Motivated by \ref{E:y:dot} we define the $2$-variable polynomial $P_\tau: \R^2 \ra \R$
$$P_\tau(y,z) =z^{2}-4f(y) + 16\tau^{2}= z^2 - 4y^q(1-y)^p + 16 \tau^2.$$
Let $C_{\tau}$ denote the real affine curve in $\R^{2}$ defined by $P_{\tau}=0$.
We can also view $P_\tau$ as a $2$-variable complex polynomial and consider the complex affine curve $C_\tau^{\C}$ in $\C^2$ defined by $P_\tau =0$.
We find
$$ (y,z) \in \text{Sing}(C_{\tau}^{\C}) \Longleftrightarrow f(y)=4\tau^{2},\  f'(y)=0,\ z=0.$$
Hence from \ref{Ef:dot} we have 
\addtocounter{theorem}{1}
\begin{equation}
\label{E:sing:pts}
\text{Sing}(C_\tau^{\C}) = \text{Sing}(C_\tau) = 
\begin{cases}
\phantom{(}\emptyset , &\text{for $0<\abs{\tau} < \taumax$;}\\
(\tfrac{q}{n}\, , 0), &\text{for $\abs{\tau}=\taumax$;}\\
(0\, , 0) , &\text{for $\tau = 0$ and $p=1$;}\\
(0\, , 0) \cup (1\, ,0) &\text{for $\tau=0$ and $p>1$.}
\end{cases}
\end{equation}
Since $P_{zz}=2$, all singular points of $C_\tau^{\C}$ are double point singularities.
Further calculation yields:
\begin{itemize}
\item[]  $(\tfrac{q}{n},\,0)$ is always an ordinary double point,
\item[] $(0\, , 0)$ is an ordinary double point if $q=2$ but a node if $q\ge 3$,
\item[]  $(1\, ,0)$ is an ordinary double point if $p=2$ but a node if $p \ge 3$.
\end{itemize}
See also Figure \ref{fig:ctau}.

\iffigureson
\begin{figure}
\vspace{-0.5in}
\hspace{-1in}
\includegraphics[scale=1.0]{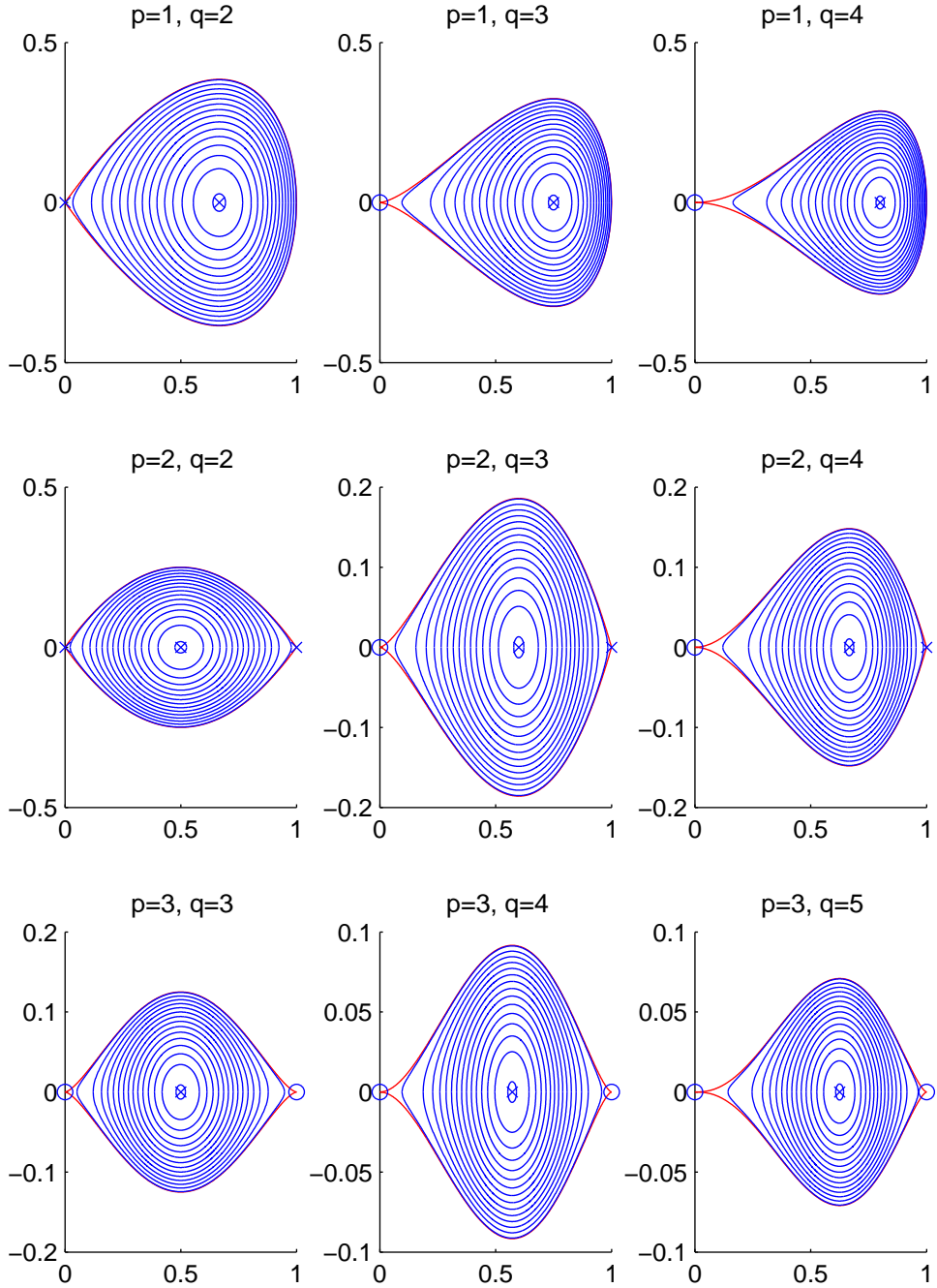}
\hspace{-1in}
\vspace{-0.5in}
\caption{The curves $C_{\tau}^{0}$ for $\tau \in [0,\taumax]$ and various choices of $(p,q)$. 
\newline
Singular points are marked: ordinary double points as $\times$ and nodes as $\circ$. }
\label{fig:ctau}
\end{figure}
\else
\fi

(i) : \emph{$0<\abs{\tau}<\taumax$.} \phantom{a} 
If $0<\abs{\tau}<\taumax$, \ref{E:sing:pts} implies that the real affine curve $C_{\tau}$ is nonsingular. 
$C_{\tau}$ is not necessarily connected, so let $C_{\tau}^{0}$ denote the component containing the point 
$(\tfrac{q}{n}, 4\sqrt{\taumax^{2}-\tau^{2}})$.  $(y,z) \in C_{\tau}$ implies $f(y) \ge 4\tau^{2}$.
The set $f^{-1}([4\tau^{2},\infty)\,) \subset \R$ is not necessarily connected but the component containing 
$\tfrac{q}{n}$ is the closed interval $[\ymin,\ymax] \subset (0,1)$.
Since $f(y) \le 4\taumax^{2}$ for $y\in (0,1)$ any point $(y,z)\in C_{\tau}^{0}$ satisfies 
$(y,z) \in [0,1] \times [-4\taumax,4\taumax]$. In particular, the component $C_{\tau}^{0}$ is a
compact nonsingular curve and hence is diffeomorphic to $S^{1}$. 
Hence all solutions of \ref{E:y:dot} with $0<\abs{\tau}<\taumax$ are non-constant and periodic 
with period $2\pt>0$ depending only on $\tau$. In particular two solutions of \ref{E:y:dot}
with the same values of $\tau$ differ only by time translation. 

The geometry of the curves $C_{\tau}^{0}$ is illustrated for various choices of $(p,q)$ in Figure \ref{fig:ctau}.
The different types of singular points which can occur in the $\tau=0$ energy level are clearly visible in this figure.

a. Asymptotics of $\pt$ as $\tau \ra \taumax$:
we consider the first order corrections to the stationary point $y \equiv q/n$ when $\tau = \taumax$.
If we write $\tilde{y} = y - q/n$, then \ref{E:y:ddot} becomes
$$ \ddot{\tilde{y}} = - \omega^2 \tilde{y} + O(\tilde{y}^2),$$
where
$$ \omega^2 = 2n \left(\frac{q}{n}\right)^{q-1} \left(\frac{p}{n}\right)^{p-1} = \frac{8n^3}{pq}\taumax^2.$$
Hence $\lim_{\tau \ra \taumax}{2\pt} = 2\pi/ \omega$, as claimed.

b. Since $\abs{\bw}=1$ and $y=\abs{w_2}^2$, we have $0\le y\le 1$ for all $t\in \R$.
At any critical point of $y$, \ref{E:y:dot} implies that $y$ satisfies equation
\ref{E:y:polynomial}. It follows from the definitions of $\ymax$ and $\ymin$ in terms of 
roots of the polynomial \ref{E:y:polynomial} that the maximum and minimum values
of $y$ are therefore $\ymax$ and $\ymin$ respectively.

c. The stated asymptotics of $\ymin$ and $\ymax$ as $\tau \ra 0$ follow immediately from the characterisation of 
$\ymin$ and $\ymax$ as the only solutions of \ref{E:y:polynomial} in the range $[0,1]$.

(ii): \emph{ $\abs{\tau}=\taumax$.} \phantom{a} When $\tau^2 = \taumax^2$, from \ref{E:y:dot} 
we have $\dot{y}^2 = 4(f(y) - 4\taumax^2) \le 4(\fmax - 4\taumax^2) \le 0$,
with equality if and only if $f(y)=\fmax$, \textit{i.e.} if and only if $y=q/n$.
Hence we have $\dot{y}=0$ for all $t \in \R$ and $y\equiv q/n$.

(iii): \emph{ $\tau=0$ and $p>1$.} \phantom{a} Recall from \ref{E:crit:f} that for $p>1$ both $y=0$ and $y=1$ 
are critical points of $f$ and hence give rise to constant solutions $y\equiv 0$ and $y \equiv 1$ 
of  \ref{E:y:ddot}.

\ref{E:y:dot} implies $\dot{y}=0$ if and only if $y=0$ or $y=1$. Since $y\in [0,1]$  and $\{0,1\} \subset \text{Crit}(f)$
a non-constant solution $y$ contains no points with $\dot{y}=0$ and is therefore
monotone with $0<y<1$ for all $t$. If $y$ is increasing then $y \circ \tbar$ is decreasing and hence by 
composing with $\tbar$ if necessary we can assume $y$ satisfies the 1st order ODE
\begin{equation}
\addtocounter{theorem}{1}
\label{E:y:dec}
\dot{y} = -2 \sqrt{f(y)}.
\end{equation}
Since $y$ is monotone and bounded it must approach constant values $c_{-}$ and $c_{+}$ as $t \ra \pm \infty$.
Recall the elementary fact that if $\gamma$ is an integral curve of a vector field $V$ and $\lim_{t \ra \infty} \gamma(t)=\gamma_{\infty}$, 
then $\gamma_{\infty}$ must be a zero (or stationary point) of the vector field $V$.
Hence
we see that $c_{\pm}$ must be stationary points of \ref{E:y:ddot} which also belong to the zero energy level.
Therefore  $c_{\pm} \in \text{Crit}(f) \cap f^{-1}(0)= \{0,1\}$.
Since $y$ is strictly decreasing we must have $\lim_{t\ra -\infty}y(t)=1$ and $\lim_{t \ra \infty}y(t)=0$.
In particular, for any such solution of \ref{E:y:dec}  there exists $t_{0}\in \R$ so that $y(t_{0})=q/n$. 
Hence $\hat{y}:=y \circ \TTT_{t_{0}}$ is a solution of \ref{E:y:dec} with $\hat{y}(0)=q/n$, and so 
by uniqueness of the initial value problem $\hat{y} \equiv y_{0}$.

(iv):  \emph{$\tau=0$ and $p=1$.} \phantom{a} Recall from \ref{E:crit:f} that for $p=1$, $y=0$ (but not $y=1$) 
is a critical point of $f$ and so gives rise to the stationary point $y\equiv 0$ of \ref{E:y:ddot}.

Again from \ref{E:y:dot}, $\dot{y}=0$ if and only if $y=0$ or $y=1$. 
For $p=1$, $y=0$ is a stationary point of \ref{E:y:ddot} but $y=1$ is not. If $y$ is non-constant, 
then $y$ cannot attain an interior minimum since $\dot{y}(t)=0$ implies $y(t)=1$. 
Therefore, as $\text{Crit}(f) \cap f^{-1}(0)= \{0\}$ for $p=1$, $y$ must approach $0$ as $t \ra \pm \infty$.
Since $y\in [0,1]$ is non-constant and tends to $0$ as $t\ra \pm \infty$,  $y$ attains an interior maximum at some point $t_{0}\in \R$. 
Hence $\dot{y}(t_{0})=0$ and therefore $y(t_{0})=1$. Then by uniqueness of the initial value problem $y \circ \TTT_{t_{0}}=y_{0}$. 
Evenness of $y_{0}$ follows from the invariance of \ref{E:y:ddot} and the initial conditions $y(0)=1,\, \dot{y}(0)=0$
under $t \mapsto -t$.
\end{proof}

We can use Lemma \ref{P:y} to establish normal forms for solutions of \ref{E:odes:p:n}.
\begin{prop}
Fix a pair of admissible integers $p$ and $q$ and let $\bw$ be any solution of \ref{E:odes:p:n} with 
$\mathcal{I}_{1}(\bw)=1$ and $\mathcal{I}_{2}(\bw)=-2\tau$ with $0\le\abs{\tau}\le \taumax$.
\addtocounter{equation}{1}
\label{P:w:normal:form}
\begin{itemize}
\item[(i)]
If $p>1$  and $0<\abs{\tau}\le\taumax$ then $\bw$ is equivalent under symmetries (1)--(3) to 
$\bw_{\tau}: \R \ra \Sph^{3}$
defined as the unique solution to \ref{E:odes:p:n} with initial value 
\begin{equation*}
\bw_\tau(0)= \left(\sqrt{\tfrac{p}{n}} e^{i\alpha_\tau/2p}, \sqrt{\tfrac{q}{n}} e^{i \alpha_\tau / 2q}\right),
\end{equation*}
where $\alpha_\tau \in [-\tfrac{\pi}{2}, \tfrac{\pi}{2}]$ is defined by
\begin{equation*}
\alpha_\tau := \arcsin{\left(-\frac{\tau}{\taumax}\right)}.
\end{equation*}
\item[(ii)] If $p>1$ and $\tau=0$ then $\bw$ is equivalent under symmetries (1)--(3) to the unique solution 
of \ref{P:odes:p:n} with one of the following four initial conditions
\begin{itemize}
\item[a.] $\bw(0)=(1,0)$,
\item[b.] $\bw(0)=(0,1)$,
\item[c.] $\bw(0)=\left(\sqrt{\tfrac{p}{n}},\sqrt{\tfrac{q}{n}}\right)$,
\item[d.] $\bw(0)=\left(e^{i\pi/2p}\sqrt{\tfrac{p}{n}}, e^{i\pi/2q}\sqrt{\tfrac{q}{n}}\right)$.
\end{itemize}
\item[(iii)]
If $p=1$ and $0<\abs{\tau}\le\taumax$ then $\bw$ is equivalent under symmetries (1)--(3) to $\bw_{\tau}: \R \ra \Sph^{3}$
defined as the unique solution to \ref{E:odes:p:n} with initial value 
\begin{equation*}
\bw_{\tau}(0) = (-i\sgn{\tau}  \sqrt{1-\ymax},\sqrt{y_{max}}).
\end{equation*}
\item[(iv)] If $p=1$ and $\tau=0$ then $\bw$ is equivalent under symmetries (1)--(3) to the unique solution 
of \ref{P:odes:p:n} with one of the following two initial conditions
\begin{itemize}
\item[a.] $\bw(0)=(1,0)$,
\item[b.] $\bw(0)=(0,1)$.
\end{itemize}
\end{itemize}
\end{prop}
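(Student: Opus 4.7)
The strategy in all four parts is uniform: apply time translation (symmetry (1)) to fix a distinguished value of $t=0$, then use the continuous and discrete rotations (2) and (3) to normalize the phases $\beta_1 := \arg w_1$ and $\beta_2 := \arg w_2$. A preliminary calculation records how (2) and (3) act on phases: multiplication by $z = e^{2\pi i k/n}$ shifts $(\beta_1,\beta_2)$ by $(2\pi k/n,\, 2\pi k/n)$, while $\tcheck_x$ shifts by $(x/p,\, -x/q)$. The combined shift $(a,b)$ satisfies $pa+qb = 2\pi k \in 2\pi\Z$, and conversely every such pair arises from some $(x,k)$. Thus (2) and (3) act transitively on phase pairs with any prescribed value of the invariant $p\beta_1 + q\beta_2 \pmod{2\pi}$, consistently with the fact that $\mathcal{I}_2 = \Imag(w_1^p w_2^q)$ is preserved by (2) and (3).

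For parts (i) and (iii), with $0 < |\tau| \le \taumax$, the target phase pair is reached by matching this invariant at the normalization point. In (i) with $p>1$, I use (1) to set $y(0)=q/n$ with $\dot y(0) \le 0$: such a $t$ exists since by Lemma \ref{P:y}(i) $q/n \in (\ymin, \ymax)$ (the case $|\tau|=\taumax$ is trivial as $y \equiv q/n$). Then $|w_1|^2 = p/n$, $|w_2|^2 = q/n$, and equation \ref{E:ydot:cx} gives $w_1^p w_2^q = -\tfrac12 \dot y - 2i\tau$, of modulus $2\taumax$. Hence $\sin(p\beta_1+q\beta_2) = -\tau/\taumax$ with $\cos(p\beta_1+q\beta_2) \ge 0$, which forces $p\beta_1+q\beta_2 \equiv \alpha_\tau \pmod{2\pi}$; this coincides with the invariant $p \cdot \alpha_\tau/(2p) + q \cdot \alpha_\tau/(2q) = \alpha_\tau$ of the proposed target, so (2) and (3) can reach it. Part (iii) is analogous with $y(0)=\ymax$ in place of $q/n$: then $\dot y(0)=0$, so $w_1 w_2^q = -2i\tau$ from \ref{E:ydot:cx}, giving $\beta_1 + q\beta_2 \equiv -\sgn(\tau)\pi/2 \pmod{2\pi}$, which matches the target $(-\sgn(\tau)\pi/2,\, 0)$.

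For part (ii), with $\tau=0$ and $p>1$, Lemma \ref{P:y}(iii) produces four subcases. When $y\equiv 0$ (respectively $y\equiv 1$), the ODEs \ref{E:odes:p:n} force $w_1$ (resp.\ $w_2$) to be a unit constant, and a single $\tcheck_x$ rotates it to $1$, giving (a) and (b). In the two non-constant monotone subcases, time translation sets $y(0) = q/n$, and Lemma \ref{P:y}(iii) then pins down $\dot y(0) = \mp 4\taumax$ according to whether $y$ is decreasing or increasing; substituting into \ref{E:ydot:cx} gives $p\beta_1 + q\beta_2 \equiv 0$ or $\pi \pmod{2\pi}$ respectively, which matches the targets in (c), (d). Part (iv) is the same recipe using Lemma \ref{P:y}(iv): either $y\equiv 0$, handled exactly as in (ii)(a), or $y$ attains the interior maximum $y=1$ at a unique time that we send to $t=0$; there $w_1 = 0$ and $\tcheck_x$ rotates $w_2$ to $1$.

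The main bookkeeping obstacle is matching the sign of $\dot y(0)$ with the branch of $\arcsin$ defining $\alpha_\tau$ in part (i), and verifying in each case that the prescribed target lies on the orbit of the initial data under the full (2)--(3) action --- i.e.\ that the target value of $p\beta_1 + q\beta_2 \pmod{2\pi}$ coincides with the one forced by $\mathcal{I}_2 = -2\tau$ and the sign of $\dot y(0)$. Once these matches are checked, the real-analytic uniqueness of the initial value problem established in Proposition \ref{P:odes:p:n}(iii) promotes the equivalence of initial data to equivalence of entire trajectories.
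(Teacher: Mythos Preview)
Your proof is correct and follows essentially the same route as the paper's: time-translate to a distinguished value of $y$ (namely $y=q/n$ when $p>1$, or $y=\ymax$ when $p=1$), read off the constraint on $p\beta_1+q\beta_2\pmod{2\pi}$ from \ref{E:ydot:cx}, and then use symmetries (2) and (3) to reach the target phase pair. The one cosmetic difference is that the paper applies (2) and (3) sequentially---first putting $p\theta_1+q\theta_2$ into $[-\pi,\pi)$ via an $n$th root of unity, then using $\tcheck_x$ to force $p\theta_1=q\theta_2$ (or $\theta_2=0$ when $p=1$)---whereas you package both moves into the single observation that the combined action of (2) and (3) is transitive on level sets of the invariant $p\beta_1+q\beta_2\pmod{2\pi}$.
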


\begin{proof}
 Let $\bw$ be any solution of \ref{E:odes:p:n} with $\mathcal{I}_{1}(\bw)=\abs{\bw}^{2}=1$ , 
$\mathcal{I}_{2}(\bw)=-2\tau$ and $0<\abs{\tau}\le\taumax$.
Set $y=\abs{w_{2}}^{2}$ and write $\bw(0)=(\sqrt{1-y(0)}\, e^{i\theta_{1}},\sqrt{y(0)}\, e^{i\theta_{2}})$.

(i) \emph{Case $p>1$ and $\tau\neq 0$.}
If $\tau=\pm \taumax$ then by \ref{P:y}(ii) $y\equiv q/n$ and hence $y(0)=q/n$ and $\dot{y}(0)=0$.
If $0<\abs{\tau}<\taumax$ then by \ref{P:y}(i)a and using the time translation invariance of \ref{E:odes:p:n} 
(symmetry 1) we can arrange that 
$y(0)=\abs{w_{2}}^{2}(0)=\tfrac{q}{n}$ and that $\dot{y}(0)<0$. 
In both cases we have 
$$y(0)=\tfrac{q}{n} \quad \text{and}  \quad \dot{y}(0)\le 0.$$
The former together with \ref{E:ydot:cx} implies that 
$$ \sin(p\theta_{1}+q\theta_{2}) = - \tfrac{\tau}{\taumax},$$
while the latter together with \ref{E:ydot:cx} implies 
$$\cos(p\theta_{1}+q\theta_{2})\ge 0.$$
Acting with the $n$th root of unity $z_{k}= e^{2\pi ki/n}$ (symmetry 2) leaves $y(0)$ invariant and sends 
$p\theta_{1}+q\theta_{2} \mapsto p\theta_{1}+q\theta_{2} + 2k\pi$. 
Hence by using symmetry (2) we can arrange that $p\theta_{1}+q\theta_{2} \in [-\pi,\pi).$
Finally by using symmetry (3) we can arrange that $p\theta_{1}=q\theta_{2}$.
Therefore we have 
$$\sin 2p\theta_{1}= \sin 2q\theta_{2}=-\tfrac{\tau}{\taumax} = \sin{\alpha_{\tau}}, \quad  
\cos{2p\theta_{1}}\ge 0\quad \text{and} \quad 2p\theta_{1}\in [-\pi,\pi).$$ 
Hence $2p\theta_{1}= 2q\theta_{2}=\alpha_{\tau}$ as claimed.
Notice that in this case
$$w_1^p w_2^q(0) = \left(\sqrt{\tfrac{p}{n}}\right)^p \left(\sqrt{\tfrac{q}{n}}\right)^q e^{i\alpha_\tau} = 
2\taumax e^{i\alpha_\tau}.$$

(ii) \emph{Case $p>1$ and $\tau=0$.} By \ref{P:y}(iii) $y=\abs{w_{2}}^{2}$ must be one of the following: 
(a) $y \equiv 0$, (b) $y\equiv 1$, (c) $y=y_{0} \circ \TTT_{t_{0}}$, (d) $y=y_{0} \circ \TTT_{t_{0}} \circ \tbar$ 
for some $t_{0} \in \R$, where $y_{0}:\R \ra (0,1)$ is the function defined in \ref{P:y}(iii)c.
 It is easily seen that (a) implies $\bw$ is a stationary point of the form $\bw=(e^{i\theta_{1}},0)$, 
while (b) implies $\bw$ is a stationary point of the form $\bw=(0,e^{i\theta_{2}})$. Hence $\bw$ is equivalent using symmetry 
(3) to the stationary points $(1,0)$ or $(0,1)$ in cases (a) and (b) respectively. 
Suppose we are now in case (c) or (d) and hence $y=y_{0} \circ \TTT_{t_{0}}\circ \tbar^{j}$ for some $t_{0}\in \R$ and 
$j \in \{0,1\}$. By time translation invariance of \ref{E:odes:p:n} we can arrange that $y(0)=\tfrac{q}{n}$ (i.e. that $t_{0}=0$.) Thus we have 
$$y(0) = \tfrac{q}{n}, \quad \text{and} \quad \dot{y}(0)=(-1)^{j+1}4\taumax.$$
Substituting these initial conditions into \ref{E:ydot:cx} and simplifying yields 
$$ e^{i(p\theta_{1}+q\theta_{2})} = (-1)^{j}.$$
As in case (i) by using symmetry (2) we may arrange that $p\theta_{1}+q\theta_{2} \in [-\pi,\pi)$ and then use symmetry 
(3) to arrange that also $p\theta_{1}=q\theta_{2}$. Hence the previous equality reduces to 
$ e^{2ip\theta_{1}}=(-1)^{j}$ with $2p\theta_{1}\in [-\pi,\pi).$
In case (c) $j=0$ and so $2p\theta_{1}=2q\theta_{2}=0$ is the unique solution in the required range, whereas in 
case (d) $j=1$ and so $2p\theta_{1}=2q\theta_{2}=\pi$ as claimed.

(iii) \emph{Case $p=1$ and $\tau \neq 0$.} 
If $0<\abs{\tau}<\taumax$ by using time translation invariance of \ref{E:odes:p:n} (symmetry 1) we may assume that 
$y(0)=\ymax$ and therefore also $\dot{y}(0)=0.$ If $\tau= \pm \taumax$ then by \ref{P:y}(ii) $y \equiv q/n = \ymax$ and $\dot{y}(0)=0$.
Hence in either case from \ref{E:ydot:cx} we have 
$$ 2i\tau=-w_{1}w_{2}^{n-1}(0) = -i\sqrt{f(\ymax)} \sin (\theta_{1}+(n-1)\theta_{2}) = -2i\abs{\tau} \sin (\theta_{1}+(n-1)\theta_{2}),$$
and therefore
$$\sin(\theta_{1}+(n-1)\theta_{2})= -\frac{\tau}{\abs{\tau}} = -\sgn{\tau}.$$
As in the previous cases by acting with an $n$th root of unity we can arrange that $\theta_{1}+(n-1)\theta_{2}\in [-\pi,\pi)$
and by acting with symmetry (3) that $\theta_{2}=0$. 
Therefore we have $\sin{\theta_{1}}= -\sgn{\tau}$ with $\theta_{1}\in [-\pi,\pi)$.
Hence $\theta_{1} = -\sgn{\tau} \cdot \tfrac{1}{2}\pi$ as claimed.


(iv) \emph{Case $p=1$ and $\tau=0$.} By \ref{P:y}(iv) $y=\abs{w_{2}}^{2}$ must be one of the following: (a) $y\equiv 0$ 
or (b) $y=y_{0} \circ \TTT_{t_{0}}$ where $t_{0}\in \R$ and $y_{0}:\R \ra (0,1]$ is the function defined in \ref{P:y}(iv)b.
As in (ii),  case (a) implies that $\bw$ is a stationary point of the form $(e^{i\theta_{1}},0)$ and hence is equivalent using 
symmetry (3) to $(1,0)$ as claimed. Suppose now that we are in case (b). By time translation invariance we can 
arrange that $t_{0}=0$ and hence $y(0)=1$. This implies $w_{1}(0)=0$ and $w_{2}(0)=e^{i\theta_{2}}$
for some $\theta_{2}\in \R$. Using symmetry (3) we can arrange that $\theta_{2}=0$ and hence that $\bw(0)=(0,1)$
as claimed.

\end{proof}

\begin{remark}
\addtocounter{equation}{1}
Note that in cases (ii)a and (ii)b of Proposition \ref{P:w:normal:form} 
the initial conditions are stationary points of \ref{E:odes:p:n} and hence the 
corresponding solutions with this initial data are $\bw \equiv (1,0)$ and $\bw \equiv (0,1)$ respectively. 
Let $\bw_{0}$ denote the unique solution to \ref{E:odes:p:n} with initial condition 
$\bw_{0}(0)=\left(\sqrt{\tfrac{p}{n}},\sqrt{\tfrac{q}{n}}\right)$ as in (ii)c. Then by uniqueness of the initial value problem 
for \ref{E:odes:p:n} we see that 
\begin{equation}
\addtocounter{theorem}{1}
\label{E:w0:pgt1}
\bw_{0}(t) = (\sqrt{1-y_{0}(t)},\sqrt{y_{0}(t)}\,),
\end{equation}
where $y_{0}(t):\R \ra (0,1)$ is the even function defined in \ref{P:y}(iii)c.

Note that $(0,1)$ is not a stationary point of \ref{E:odes:p:n} for $p=1$. Let $\bw_{0}$ denote the unique solution of \ref{E:odes:p:n} with initial condition $\bw_{0}(0)=(0,1)$ as in (iv)b. 
Then by the uniqueness of the initial value problem for \ref{E:odes:p:n} we see that
\begin{equation}
\addtocounter{theorem}{1}
\label{E:w0:peq1}
\bw_{0}(t) = (\sgn{t}\sqrt{1-y_{0}(t)},\sqrt{y_{0}(t)}\,),
\end{equation}
where  $y_{0}:\R \ra (0,1]$ is the even function defined in \ref{P:y}(iv)b. (Since $w_{2}(t)=\sqrt{y_{0}(t)\,}$ is real and positive for all $t$, 
the equation for $\dot{w}_{1}$ in \ref{P:odes:p:n} implies that $\dot{w}_{1}>0$ for all $t$. By \ref{P:y}iv 
$\sqrt{1-y_0(t)}$ is decreasing for $t<0$ and increasing $t>0$, so $\sgn{t} \sqrt{1-y_0(t)}$ is increasing for all $t$ as required.)
\end{remark}

\begin{remark}
\addtocounter{equation}{1}
\label{R:w:normal:form}
The argument from \ref{P:w:normal:form}(iii) applied in the case $p>1$ implies that any solution of \ref{E:odes:p:n} with 
$\mathcal{I}_{1}(\bw)=1$ and $\mathcal{I}_{2}(\bw)=-2\tau$ and $0<\abs{\tau}\le\taumax$ is equivalent under symmetries 
(1) to (3) to 
$$\hat{\bw}_{\tau} = \left(-e^{i\pi/2p}\, \sgn(\tau) \sqrt{1-\ymax},\sqrt{\ymax}\right).$$
Similarly, the argument from \ref{P:w:normal:form}(i) works for $p=1$ as well as $p>1$.  
However,  we will only make use of the normal forms stated in \ref{P:w:normal:form}. The difference in our choice of normal 
form for $p=1$ and $p>1$ reflects differences in the geometry of the resulting special Legendrian immersions in these cases as we will explain later.
\end{remark}

\subsection*{$\bw_{\tau}$ and the $\sorth{p}\times \sorth{q}$-invariant special Legendrian immersions $X_\tau$.}
$\phantom{ab}$
\nopagebreak

We now define the particular $1$-parameter family of $(p,q)$-twisted SL curves we will use throughout the 
rest of the paper by specifying initial data $\bw_{\tau}(0)$ as in the normal form Proposition 
\ref{P:w:normal:form}. Associated to the $1$-parameter family $\bw_{\tau}$ is the $1$-parameter family 
$X_{\tau}$ of $\sorth{p} \times \sorth{q}$-invariant special Legendrians. 
Proposition \ref{P:w:normal:form} implies that any $\sorth{p} \times \sorth{q}$-invariant special Legendrian in $\Sph^{2p+2q-1}$ is congruent to $X_{\tau}$ for some $\tau$.




\begin{prop}
\addtocounter{equation}{1}
\label{P:w:tau}
Fix a pair of admissible integers $p$ and $q$ and choose any $\tau \in [-\taumax,\taumax]$.  Define $\bw_\tau: \R \ra \Sph^3$ 
as the unique solution of \ref{E:odes:p:n} with initial data 
\addtocounter{theorem}{1}
\begin{equation}
\label{E:w:ic:p:neq:1}
\bw_\tau(0)= \left(\sqrt{\tfrac{p}{n}} e^{i\alpha_\tau/2p}, \sqrt{\tfrac{q}{n}} e^{i \alpha_\tau / 2q}\right) \qquad \text{if $p>1$;}
\end{equation}
where $\alpha_\tau \in [-\tfrac{\pi}{2}, \tfrac{\pi}{2}]$ is defined by
\addtocounter{theorem}{1}
\begin{equation}
\label{E:alpha:tau}
\alpha_\tau := \arcsin{\left(-\frac{\tau}{\taumax}\right)}, 
\end{equation}
or
\addtocounter{theorem}{1}
\begin{equation}
\label{E:w:ic:p:eq:1}
\bw_{\tau}(0) = (-i\sgn{\tau}  \sqrt{1-\ymax},\sqrt{y_{max}}) \quad \text{if $p=1$.}
\end{equation}
Then $\bw_\tau$ depends real analytically on $\tau \in (-\taumax,\taumax)$
and satisfies $\bw_{-\tau} = \overline{\bw}_{\tau}.$
In particular, $\bw_0: \R \ra \Sph^3 \subset \C^2$ is contained in $\R^2 \subset \C^2$.
\end{prop}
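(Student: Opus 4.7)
The plan is to reduce the real analytic dependence of $\bw_\tau$ on $\tau$ to analyticity of the initial data $\tau\mapsto\bw_\tau(0)$, then to obtain the conjugation symmetry $\bw_{-\tau}=\overline{\bw}_\tau$ by a uniqueness argument for the initial value problem, and to read off the reality of $\bw_0$ as an immediate consequence of that symmetry.

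For the analyticity step I would invoke the standard analytic dependence on parameters theorem for ODEs, which applies because the vector field in \ref{E:odes:p:n} is polynomial (cf. Proposition \ref{P:odes:p:n}(iii)). It then remains to show that $\tau\mapsto\bw_\tau(0)$ is real analytic on $(-\taumax,\taumax)$. When $p>1$ this is routine from the formulas \ref{E:w:ic:p:neq:1} and \ref{E:alpha:tau}: $\arcsin$ is real analytic on $(-1,1)$ and the remaining operations are entire.

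The main obstacle will be the case $p=1$, where the apparent singularity $\sgn(\tau)\sqrt{1-\ymax}$ appears in \ref{E:w:ic:p:eq:1}. To dispose of it I would set $u:=1-y$ and apply the implicit function theorem to $g(u):=(1-u)^{q}u=4\tau^{2}$ near $u=0$; since $g(0)=0$ and $g'(0)=1$ this yields a real analytic $h$ with $h(0)=0$, $h'(0)=1$ and $1-\ymax=h(4\tau^{2})$. Factoring $h(s)=s\tilde{h}(s)$ with $\tilde{h}(0)=1$ then gives
$$\sgn(\tau)\sqrt{1-\ymax}\;=\;2\tau\sqrt{\tilde{h}(4\tau^{2})},$$
which is real analytic in a neighbourhood of $\tau=0$. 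Away from $\tau=0$ analyticity is straightforward: $f'(\ymax)\neq 0$ so $\ymax$ is real analytic there by the implicit function theorem, and $\sgn$ is locally constant.

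For the symmetry step I would use that complex conjugation is symmetry (4) of \ref{E:odes:p:n}: if $\bw$ solves \ref{E:odes:p:n} then so does $\overline{\bw}$, as one sees on taking complex conjugates of the two equations. Consequently $\overline{\bw}_{\tau}$ is the unique solution of \ref{E:odes:p:n} with initial value $\overline{\bw_{\tau}(0)}$, and by uniqueness it suffices to verify $\overline{\bw_{\tau}(0)}=\bw_{-\tau}(0)$. For $p>1$ this reduces to $\alpha_{-\tau}=-\alpha_{\tau}$, which holds since $\arcsin$ is odd. For $p=1$ it uses that $\ymax$ depends on $\tau$ only through $\tau^{2}$ (as \ref{E:y:polynomial} involves only $4\tau^{2}$) combined with the oddness of $\sgn$. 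Specialising $\bw_{-\tau}=\overline{\bw}_{\tau}$ to $\tau=0$ then yields $\bw_{0}=\overline{\bw}_{0}$, so $\bw_{0}$ is $\R^{2}$-valued.
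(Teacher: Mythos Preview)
Your proposal is correct and follows essentially the same architecture as the paper: reduce to real analyticity of the initial data via Proposition \ref{P:odes:p:n}(iii), dispatch the $p>1$ case by analyticity of $\arcsin$, and obtain $\bw_{-\tau}=\overline{\bw}_\tau$ from the conjugation symmetry of \ref{E:odes:p:n} together with $\overline{\bw_\tau(0)}=\bw_{-\tau}(0)$ and uniqueness.

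The only genuine difference is in how the apparent singularity $\sgn(\tau)\sqrt{1-\ymax}$ is handled when $p=1$. You resolve it by applying the analytic inverse function theorem to $g(u)=(1-u)^{n-1}u$ near $u=0$ and factoring $h(s)=s\,\tilde{h}(s)$ to obtain $\sgn(\tau)\sqrt{1-\ymax}=2\tau\sqrt{\tilde{h}(4\tau^{2})}$, which is visibly analytic near $\tau=0$; away from $\tau=0$ you fall back on the implicit function theorem and local constancy of $\sgn$. The paper instead sidesteps the apparent singularity entirely by rewriting the first component using the conserved quantity $\mathcal{I}_{2}$: writing $\bw_\tau(0)=(ir_\tau,\sqrt{\ymax})$, the relation $\Imag(w_1w_2^{n-1})(0)=-2\tau$ gives $r_\tau=-2\tau/\sqrt{\ymax}^{\,n-1}$, which is manifestly analytic on all of $(-\taumax,\taumax)$ in one stroke. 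Your route is a little more hands-on but equally valid; the paper's route is shorter because it exploits the geometric meaning of the initial condition (namely that it was chosen precisely so that $\mathcal{I}_2=-2\tau$).
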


\begin{proof}
To prove that $\bw_\tau$ depends analytically on $\tau$ it suffices by \ref{E:odes:p:n}.iii to prove that the initial condition 
$\bw_\tau(0)$ given by  \ref{E:w:ic:p:neq:1} or \ref{E:w:ic:p:eq:1} depends analytically on $\tau$ 
for $\tau   \in (-\taumax,\taumax)$.
For $p> 1$ it is clear from \ref{E:alpha:tau} that $\alpha_{\tau}$ depends real analytically 
on $\tau$ for $\abs{\tau}< \taumax$. Hence by \ref{E:w:ic:p:neq:1} $\bw_{\tau}(0)$ depends analytically on $\tau$ for $\abs{\tau}<\taumax$.
For $p=1$ we have $f'(\ymax) = \ymax^{n-2}(q-n \ymax) \neq 0$ for $\abs{\tau}<\taumax$. Hence by
the real analytic Implicit Function Theorem (see e.g. \cite[Thm 2.3.5]{krantz}) 
$\ymax$ is an analytic function of $\tau \in (-\taumax,\taumax)$. 
Therefore $\sqrt{\ymax}$ is also an analytic function of $\tau \in (-\taumax,\taumax)$ (recall $\ymax \ge (n-1)/n$).
Write $\bw_{\tau}(0) = (ir_{\tau}, \sqrt{\ymax})$. 
Because $\mathcal{I}_{2}(\bw_{\tau}(0)) = w_{1}w_{2}^{n-1}(0) = -2i\tau$ 
$$r_{\tau}= -\frac{2\tau}{\sqrt{\ymax}^{\,n-1}}$$ 
and hence is an analytic function of $\tau \in (-\taumax,\taumax)$.
From \ref{E:w:ic:p:neq:1} or \ref{E:w:ic:p:eq:1} we have $\bw_{-\tau}(0)=\overline{\bw}_\tau(0)$ 
and hence $\bw_{-\tau} = \overline{\bw}_\tau$ by uniqueness of the initial value problem for \ref{E:odes:p:n}.
\end{proof}

\subsubsection*{The associated function $y_{\tau}:=\abs{w_{2}}^{2}$ and its initial value characterisation}

For the solution $\bw_{\tau}$ defined in \ref{P:w:tau}, define  $y_\tau:= \abs{w_2}^2$.  
By \ref{P:odes:p:n} $y_{\tau}$ satisfies equations \ref{E:y:dot} and \ref{E:y:ddot}. 
Analytic dependence of $y_{\tau}$ on $\tau\in (-\taumax,\taumax) $ follows immediately from analytic dependence of $\bw_{\tau}$. 

For $p=1$, $y_\tau$ is the unique solution of \ref{E:y:ddot} satisfying the
initial conditions
\begin{equation}
\addtocounter{theorem}{1}
\label{E:y:ic:p:eq:1}
y(0) = \ymax, \quad \dot{y}(0) = 0.
\end{equation}
In particular, $y_0$ is the unique solution of \ref{E:y:ddot} satisfying $ y(0)=1, \ \dot{y}(0)=0$ 
introduced in \ref{P:y}.iv.b.

Similarly, for $p> 1$, $y_\tau$ is the unique solution of \ref{E:y:ddot} satisfying the initial conditions
\begin{equation}
\addtocounter{theorem}{1}
\label{E:y:ic:p:neq:1}
y(0) = \frac{q}{n}, \quad \dot{y}(0) = -4 \taumax \cos{\alpha_\tau} = -4 \sqrt{\taumax^{2}-\tau^{2}}.
\end{equation}
$y_0$ coincides with the solution of \ref{E:y:ddot} satisfying $y(0)= q/n, \ \dot{y}(0) = -4\taumax$ 
introduced in \ref{P:y}.iii.c. 

For both $p=1$ and $p>1$ it follows from these initial value characterisations of $y_\tau$ that
$y_{-\tau}=y_{\tau}$ which is consistent with the fact that $\bw_{-\tau} = \overline{\bw}_\tau$.



\medskip

Earlier we thought about $\sorth{p}\times \sorth{q}$-invariant special Legendrians 
by treating our special Legendrians as (unparametrised) subsets of $\Sph^{2(p+q)-1}$.
From now on it will be more convenient to deal with special Legendrian immersions $X_{\tau}: \cylpq \ra \Sph^{2(p+q)-1}$
and to talk about $\sorth{p} \times \sorth{q}$-equivariant immersions with respect to 
the obvious actions of $\sorth{p} \times \sorth{q}$ on both domain and target.
This will facilitate future discussion of additional discrete symmetries possessed by $X_{\tau}$.

We now define the family of special Legendrian immersions 
$X_\tau: \cylpq  \ra \Sph^{2(p+q)-1}$
using the $(p,q)$-twisted SL curves $\boldsymbol{w}_\tau$ defined in Proposition \ref{P:w:tau},
where $\cylpq$ denotes the round cylinder of type $(p,q)$ defined in \ref{E:cylpq}.

\addtocounter{equation}{1}
\begin{definition}
\label{D:X:tau}
For $\tau \in [-\taumax,\taumax]$ define an immersion 
$X_\tau: \cylpq \ra \Sph^{2(p+q)-1}$
by
\begin{eqnarray*}
X_\tau(t,\sigma_1, \sigma_2)  = & (w_1(t)\cdot \sigma_1,\  w_2(t) \cdot \sigma_2), \quad & \text{for \ } p>1;\\
X_\tau(t,\sigma) = & (w_1(t), w_2(t) \cdot \sigma), \quad  & \text{for \ } p=1,
\end{eqnarray*}
where $t \in \R$, $\sigma_1 \in \Sph^{p-1}$, $\sigma_2\in \Sph^{q-1}$,  $\sigma \in \Sph^{n-2}$
and $\bw_\tau=(w_1,w_2)$ is the unique solution to \ref{E:odes:p:n} specified in Proposition \ref{P:w:tau}.
\end{definition}
We now establish some basic properties of $X_\tau$.
\addtocounter{equation}{1}
\begin{prop}
\label{P:X:tau}
For $\tau \in [-\taumax,\taumax]$ the immersion 
$X_\tau: \cylpq \ra \Sph^{2(p+q)-1}$ defined in \ref{D:X:tau} has the following properties:
\begin{itemize}
\item[(i)] $X_\tau$ is a smooth special Legendrian immersion
depending analytically on $\tau$ for $\tau \in (-\taumax,\taumax),$
and satisfies $X_{-\tau} = \overline{X}_\tau$. 
In particular, $X_0$ is contained in $\Sph^{p+q-1} \subset \R^{p+q} \subset \C^{p+q}$.
\item[(ii)] For $p>1$, the metric $g_{\tau}$ on $\cylpq$ induced by $X_{\tau}$ is 
$$\abs{\dot{\bw}}^2 dt^2 + \abs{w_1}^2 g_{\Sph^{p-1}} + \abs{w_2}^2 g_{\Sph^{q-1}}
= y^{q-1}(1-y)^{p-1}dt^2 + (1-y) g_{\Sph^{p-1}} + y\, g_{\Sph^{q-1}}.$$
For $p=1$,  the induced metric $g_{\tau}$ on $\cylone$ is
$$\abs{\dot{\bw}}^2 dt^2 +  \abs{w_2}^2 g_{\Sph^{n-2}}
= y^{n-2}dt^2 + y\, g_{\Sph^{n-2}}.$$
\item[(iii)]  $X_\tau$ is $\sorth{p} \times \sorth{q}$-equivariant, i.e. 
for any $\MMM=(\MMM_1, \MMM_2) \in \sorth{p} \times \sorth{q}$ 
we have $$ \mtilde \circ X_{\tau} = X_{\tau}\circ \MMM,$$
where $\MMM=(\MMM_1, \MMM_2)$ acts on $\cylpq$ by
$ \MMM \cdot (t,\sigma_1, \sigma_2) = (t, \MMM_1 \sigma_1, \MMM_2 \sigma_2),$
and 
$$\mtilde = \left(\begin{array}{cc}\MMM_{1} & 0  \\0 & \MMM_{2}\end{array}\right) \in \sorth{p}\times \sorth{q}
 \subset \sorth{p+q}.$$
\item[(iv)]
When $\tau=0$ we have
$$X_0(\cylpq) = 
\begin{cases}
\Sph^{p+q-1} \setminus (\Sph^{p-1},0) \cup (0,\Sph^{q-1}), \quad &\text{for $p>1$;}\\
\Sph^{n-1} \setminus (\pm 1, 0) \in \R \oplus \R^{n-1}, \quad &\text{for $p=1$.}
\end{cases}
$$
\item[(v)] When $\tau = \taumax$, we have
\addtocounter{theorem}{1}
\begin{equation}
\label{E:X:taumax}
X_{\taumax} =
\begin{cases}
\left(-i \sqrt{\tfrac{1}{n}} e^{2in\tau t}, \sqrt{\frac{n-1}{n}}e^{-2in\tau t/(n-1)}\,\right), \quad & \text{for\ } p=1;\\
\left( \sqrt{\tfrac{p}{n}} e^{-i\pi/(4p)} e^{2ni\tau t/p}, \sqrt{\tfrac{q}{n}} e^{-i\pi/(4q)} e^{-2ni\tau t/q} \right), \quad & \text{for\ } p>1.
\end{cases}
\end{equation}
\item[(vi)] If $X: \cylpq \ra \Sph^{2(p+q)-1}$ is any non totally geodesic 
$\sorth{p} \times \sorth{q}$-invariant special Legendrian immersion 
then $ X = e^{i\omega}\, \ttilde_{x} \circ X_{\tau} \circ \TTT_{y}$ for some $x$, $y \in \R$,  $0<\abs{\tau}<\taumax$ and $n$th root of unity $\omega \in \Sph^{1}$
where $\ttilde_{x} \in \sunit{n}$ is defined by 
\begin{equation}
\addtocounter{theorem}{1}
\label{E:ttilde}
\ttilde_x = \left(
\begin{matrix}
e^{ix/p} \Id_{p}& 0  \\
0 & e^{-ix/q}\Id_{q} \\
\end{matrix}
\right).
\end{equation}
\end{itemize}
\end{prop}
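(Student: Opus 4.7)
Parts (i)--(iii) are largely unpackings of results established earlier. For (i), the map $X_\tau$ is a composition of the twisted product map of Definition \ref{D:twisted:product} with the standard totally geodesic special Legendrian embeddings $\Sph^{p-1}\ra \Sph^{2p-1}$ and $\Sph^{q-1}\ra \Sph^{2q-1}$ (and analogously in the degenerate case $p=1$ using \ref{E:twist:def:p:eq:1}). Since $\bw_\tau$ depends real-analytically on $\tau\in(-\taumax,\taumax)$ by Proposition \ref{P:w:tau}, and the twisted product construction is manifestly analytic in all inputs, analytic dependence of $X_\tau$ on $\tau$ follows. That $X_\tau$ is an immersion away from zeros of $w_i$ and is Legendrian is Proposition \ref{P:leg:triple}; $\bw_\tau$ is automatically a $(p,q)$-twisted SL curve by Lemma \ref{L:w:slg}, so special Legendrianness follows from Corollary \ref{C:sl:triples}. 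Actually one must rule out vanishing of $w_1$ or $w_2$ along the entire orbit: this follows from $y_\tau=\abs{w_2}^2$ being bounded strictly between $0$ and $1$ for $0<\abs{\tau}\le\taumax$ (Lemma \ref{P:y}(i)b, (ii)), while in the $\tau=0$ case, $y_0$ takes values in $(0,1)$ (for $p>1$) or in $(0,1]$ (for $p=1$), and we simply accept that $X_0$ fails to be an immersion where $y_0\in\{0,1\}$ (cf.\ part (iv)). The relation $X_{-\tau}=\overline{X_\tau}$ follows from $\bw_{-\tau}=\overline{\bw_\tau}$ in Proposition \ref{P:w:tau}, since the sphere factors in Definition \ref{D:X:tau} are real. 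Part (ii) is immediate from equation \ref{E:twist:metric} in Proposition \ref{P:leg:triple} together with the reparametrisation identity \ref{E:w:reparam}, which says $\abs{\dot{\bw}}^2=\abs{w_1}^{2(p-1)}\abs{w_2}^{2(q-1)}=(1-y)^{p-1}y^{q-1}$. Part (iii) is immediate from Definition \ref{D:X:tau}: the $\sorth{p}\times\sorth{q}$-action on the domain acts only on the sphere factors, and composing with the standard linear action on $\C^p\times\C^q$ yields the same image.

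For (iv), when $\tau=0$ the curve $\bw_0$ lies in $\Sph^3\cap\R^2$ by part (i). In the case $p>1$, Lemma \ref{P:y}(iii)c together with \ref{E:w0:pgt1} gives $\bw_0(t)=(\sqrt{1-y_0(t)},\sqrt{y_0(t)})$ where $y_0$ decreases monotonically from $1$ to $0$ as $t$ ranges over $\R$. So $\bw_0(\R)$ is the open quarter-arc of $\Sph^1\subset\R^2$ with both coordinates positive and endpoints $(1,0)$ and $(0,1)$ removed. Then by Remark \ref{R:product:cones}, the image of $X_0$ is precisely the open equatorial $\Sph^{p+q-1}\subset\R^{p+q}$ with the two ``coordinate subspheres'' $(\Sph^{p-1},0)$ and $(0,\Sph^{q-1})$ removed. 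In the case $p=1$, by \ref{E:w0:peq1} the curve $\bw_0(t)=(\sgn(t)\sqrt{1-y_0(t)},\sqrt{y_0(t)})$ traces an open half-circle in $\Sph^1\subset\R^2$ passing through $(0,1)$ at $t=0$ and limiting to the two points $(\pm 1,0)$ as $t\to\pm\infty$; composing with the equatorial $\Sph^{n-2}\subset\Sph^{2n-3}$ yields $\Sph^{n-1}\setminus\{(\pm 1,0)\}$ as claimed.

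For (v), by Lemma \ref{P:y}(ii), $y_\tau\equiv q/n$ when $\tau=\taumax$, so both $\abs{w_1}$ and $\abs{w_2}$ are constant and we may write $w_j(t)=\sqrt{p_j/n}\,e^{i\psi_j(t)}$ with $p_1=p$, $p_2=q$. The conserved quantity $\mathcal{I}_2=-2\taumax$ together with constancy of moduli forces $p\psi_1+q\psi_2\equiv-\pi/2\pmod{2\pi}$. Substituting into the ODE $\overline{w}_1\dot{w}_1=\overline{w}_1^p\overline{w}_2^q$ of \ref{E:slg:ode} gives $(p/n)\cdot i\dot\psi_1=2\taumax\cdot e^{-i(p\psi_1+q\psi_2)}=2i\taumax$, so $\dot\psi_1=2n\taumax/p$, and similarly $\dot\psi_2=-2n\taumax/q$. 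The initial phases are determined by the initial conditions \ref{E:w:ic:p:neq:1} or \ref{E:w:ic:p:eq:1} with $\alpha_{\taumax}=-\pi/2$, yielding $\psi_1(0)=-\pi/(4p)$, $\psi_2(0)=-\pi/(4q)$ when $p>1$, and $\psi_1(0)=-\pi/2$, $\psi_2(0)=0$ when $p=1$; this gives the explicit formula \ref{E:X:taumax}.

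For (vi), which is the main classification step, let $X$ be any non totally geodesic $\sorth{p}\times\sorth{q}$-invariant special Legendrian immersion. By Corollary \ref{C:so:invariant:slg}, $X$ is locally congruent to the $\bw$-twisted product of the standard equatorial special Legendrian embeddings of $\Sph^{p-1}$ and $\Sph^{q-1}$ (or the degenerate analogue when $p=1$), where $\bw$ is some $(p,q)$-twisted SL curve in $\Sph^3$. By Lemma \ref{L:w:slg}, $\bw$ admits a parametrisation satisfying the ODE system \ref{E:odes:p:n} with $\abs{\bw}^2=1$, so Proposition \ref{P:w:normal:form} applies. Since $X$ is not totally geodesic, the degenerate cases (ii)a--d and (iv)a--b of \ref{P:w:normal:form} --- which produce either stationary solutions or the separatrix $\bw_0$ corresponding to $X_0$ (whose image lies in the totally geodesic sphere of part (iv)) --- are excluded, leaving the normal form case with $0<\abs{\tau}\le\taumax$. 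Thus there exist $y\in\R$ (time translation), $\omega\in\Sph^1$ an $n$th root of unity (symmetry (2)), and $x\in\R$ (symmetry (3)) such that $\bw=\ttilde_x\circ(e^{i\omega}\bw_\tau)\circ\TTT_y$ up to these symmetries acting on $\bw_\tau$. The key observation is that each of these three symmetries of the ODE lifts naturally to a congruence of the twisted product immersion: translation acts as $X_\tau\circ\TTT_y$ on the domain; multiplication of $\bw$ by $e^{i\omega}\in\Sph^1\subset\C$ corresponds to scalar multiplication $e^{i\omega}\cdot X_\tau$ in $\C^n$ (here we use that scalar multiplication by an $n$th root of unity preserves special Legendrianness and commutes with the sphere actions); and the $\tcheck_x$-symmetry of $\bw$ corresponds precisely to the action of $\ttilde_x\in\sunit{n}$ on $X_\tau$, since $\ttilde_x$ acts on the $\C^p$ and $\C^q$ factors as scalar multiplication by $e^{ix/p}$ and $e^{-ix/q}$ respectively, which is exactly how $\tcheck_x$ acts on $\bw$ in \ref{E:M:defn}. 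Combining these three lifts gives $X=e^{i\omega}\ttilde_x\circ X_\tau\circ\TTT_y$ as claimed.

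The main potential difficulty is (vi): verifying carefully that the three symmetries of $\bw$ used in the normal form proposition lift to honest ambient congruences of $X_\tau$ compatible with the sphere factors, and that the local congruence from Corollary \ref{C:so:invariant:slg} extends to a global one (which follows from analyticity and connectedness of the domain $\cylpq$).
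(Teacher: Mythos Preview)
Your proposal is correct and follows essentially the same route as the paper, which itself is quite terse (e.g.\ (v) is left as an exercise and (vi) is just a pointer to \ref{C:so:invariant:slg} and \ref{P:w:normal:form}); your added detail in those parts is accurate and helpful. Two very minor quibbles: for $p=1$ the degenerate twisted product only fails to be an immersion where $w_2=0$ (see \ref{P:leg:twist:p:eq:1}), so at $t=0$, $\tau=0$ where $w_1=0$ but $w_2=1$, $X_0$ is still an immersion---your phrasing ``we simply accept that $X_0$ fails to be an immersion where $y_0\in\{0,1\}$'' slightly overstates this; and in (vi) you write $0<|\tau|\le\taumax$ whereas the statement asserts $0<|\tau|<\taumax$, though this is arguably an imprecision in the statement rather than your argument.
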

\begin{proof}
(i) For $\tau\neq 0$ we have $\abs{w_{1}}^{2}\ge \ymin>0$ and $\abs{w_{2}}^{2} \ge 1-\ymax >0$. 
Because there are no points where $w_{1}$ or $w_{2}$ vanish, \ref{P:leg:triple} implies that $X_{\tau}$ is 
a Legendrian immersion. 
Since $\bw_\tau$ is a solution of \ref{E:odes:p:n} \ref{C:sl:triples} and\ref{L:w:slg} imply that $X_\tau$ is special Legendrian.
We deal with the exceptional case $\tau=0$ separately in part (iv).
Analytic dependence of $X_\tau$ on $\tau$ follows from the analytic dependence 
of $\bw_\tau$ on $\tau$ proved in Proposition \ref{P:w:tau}.
The final part follows from the fact that $\bw_{-\tau}  = \overline{\bw}_{\tau}$ (see \ref {P:w:tau}).

\noindent
(ii) follows immediately from equations \ref{E:twist:metric} and \ref{E:w:reparam}.

\noindent
(iii) The $\sorth{p} \times \sorth{q}$-equivariance of $X_\tau$ is clear from the definition of $X_\tau$ in \ref{D:X:tau}.

\noindent
(iv) $\tau=0$ limit. From part (i), $X_0(\cylpq) \subset \Sph^{p+q-1}$. 

Consider first the case where $p>1$. 
From \ref{E:w0:pgt1}  
$$X_{0}(t,\sigma_{1},\sigma_{2}) = (\sqrt{1-y_{0}(t)}\, \sigma_{1}, \sqrt{y_{0}(t)}\, \sigma_{2})$$ 
where 
$y_{0}: \R \surj (0,1)$ is the decreasing function defined in \ref{P:y}.iii.c.
Recall from Remark \ref{R:product:cones} that the 
map $\Pi: [0,\pi/2] \times \Sph^{p-1}\times \Sph^{q-1} \ra \Sph^{p+q-1}$ given by 
$$ \Pi\,(t,\sigma_{1},\sigma_{2}) = (\cos{t}\, \sigma_{1}, \sin{t}\, \sigma_{2}),$$
is surjective and on restriction 
to the interval $(0,\pi/2)$ gives a diffeomorphism between $(0,\pi/2) \times \Sph^{p-1}\times \Sph^{q-1}$ and  
$\Sph^{p+q-1}\setminus (\Sph^{p-1},0) \cup (0,\Sph^{q-1})$.
Since by \ref{P:y}.iii.c $y_{0}$ is decreasing with $\lim_{t\ra -\infty}y(t)=1$ and $\lim_{t \ra \infty}y(t) =0$ 
we see that $X_{0}$ is a reparametrisation of this diffeormorphism. 

Similarly, from \ref{E:w0:peq1} for $p=1$ we have 
$$X_{0}(t,\sigma) = (-\sgn{t}\sqrt{1-y_{0}(t)}, \sqrt{y_{0}(t)}\, \sigma),$$
where $y_{0}: \R \surj (0,1]$ is the even function defined in \ref{P:y}.iv.b.
The map $\Pi: [0,\pi]\times \Sph^{n-2} \surj \Sph^{n-1}$ defined by 
$ \Pi\,(t,\sigma) = (\cos{t}, \,\sin{t}\, \sigma)$
on restriction to the open interval $(0,\pi)$ gives a diffeomorphism between $(0,\pi) \times \Sph^{n-2}$ and 
$\Sph^{n-1}\setminus (\pm 1,0)$. Since  by \ref{P:y}.iv.b $y_{0}$ is even, increasing on $(-\infty,0)$, satisfies $y_{0}(0)=1$ and 
$\lim_{t\ra \pm \infty}y_{0}(t)=0$ we see that 
$X_{0}$ is a reparametrisation of this diffeomorphism.

\noindent
(v) $\tau = \taumax$ limit.  We leave this as an elementary exercise for the reader.

\noindent
(vi) follows from \ref{C:so:invariant:slg} and the normal form for solutions of \ref{E:odes:p:n} established in \ref{P:w:normal:form}.
\end{proof}

\section{Discrete symmetries of $\bw_{\tau}$}
\label{S:w:sym}
In this section we study the discrete symmetries of $\bw_{\tau}$ and the 
conditions under which $\bw_{\tau}$ corresponds to a closed curve of $\sorth{p}\times\sorth{q}$ orbits.
We will use these results in the following section to study the full group of symmetries of $X_{\tau}$.

\subsection*{Symmetries of $y_{\tau}$}
We begin by establishing the 
symmetries of $y_{\tau}:=\abs{w_{2}}^{2}$ in the three cases 
(i) $p=1$, (ii) $p>1$ and $p\neq q$ and (iii) $p>1$ and $p=q$.

To state these results we need to introduce some notation to describe the basic properties of $y_{\tau}$.
For $p>1$, recall from \ref{E:y:ic:p:neq:1} that $y_{\tau}$ satisfies the initial conditions 
$$y(0) = \frac{q}{n}, \quad \dot{y}(0) = -4 \taumax \cos{\alpha_\tau} = -4 \sqrt{\taumax^{2}-\tau^{2}},$$
whereas for $p=1$ from \ref{E:y:ic:p:eq:1} it satisfies 
$$y(0) = \ymax, \quad \dot{y}(0) = 0.$$
The different initial conditions for $y_{\tau}$ affect where the $2\pt$-periodic function $y_{\tau}$ attains its 
maxima and minima in the cases $p=1$ and $p> 1$. 
In the case $p>1$ the choice of initial data for $y_\tau$ implies that there exist unique real numbers $\pt^+, \pt^- \in (0,\pt)$ satisfying
\begin{equation}
\addtocounter{theorem}{1}
\label{E:y:max:min}
y_\tau(-\pt^-) = \ymax, \quad y_\tau(\pt^+) = \ymin,
\end{equation}
and so that $y_\tau$ is strictly decreasing on $(-\pt^-, \pt^+)$. 
We call these two numbers the \emph{partial-periods} of $y_\tau$, since 
\begin{equation}
\addtocounter{theorem}{1}
\label{E:part:periods}
2\pt = 2\pt^+ + 2\pt^-.
\end{equation}
In general, $\pt^+$ and $\pt^-$ are not related except when $p=q$ when we will prove shortly that they are equal.
Illustrative plots of $y_{\tau}$ are shown in Figures \ref{fig:w2:pequals1} and \ref{fig:w2:pneq1} for $p=1$ and $p>1$, $p \neq q$ 
respectively.

\iffigureson
\begin{figure}
\centering
\input{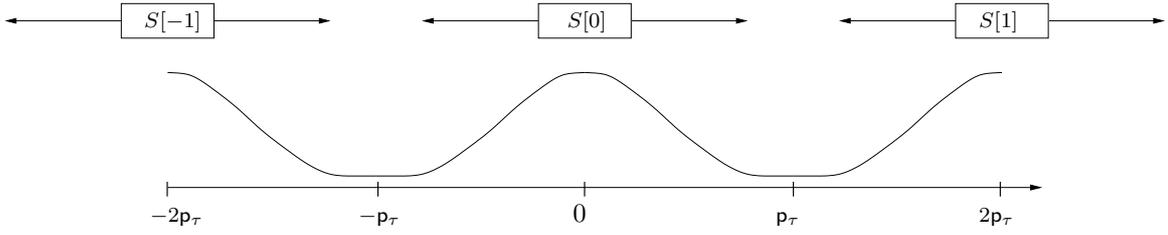}
\caption{Profile of $y_{\tau}:=\abs{w_2}^{2}$ for $p=1$}      
\label{fig:w2:pequals1}
\end{figure}

\begin{figure}
\centering
\input{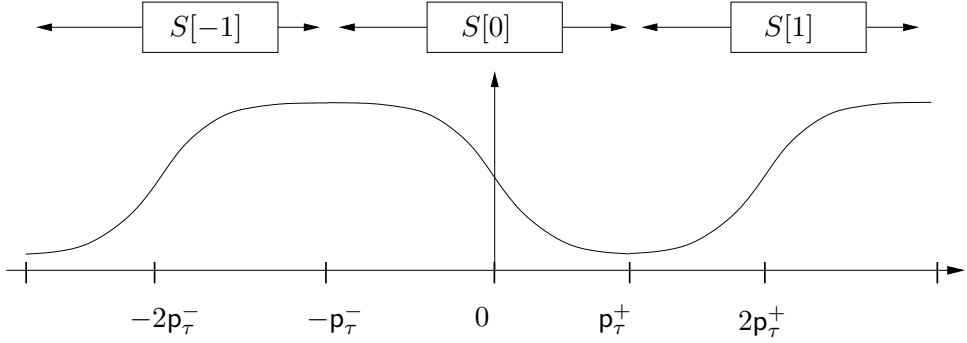}
\caption{Profile of $y_{\tau}=\abs{w_2}^{2}$ for $p>1$}
\label{fig:w2:pneq1}
\end{figure}
\else
\fi

Throughout the following lemma we assume $0<\abs{\tau}<\taumax$ and discuss the exceptional cases $\tau=0$ 
and $\abs{\tau}=\taumax$ in Remark \ref{R:y:sym:ex} below.
Recall, also the notation for elements in $\Isom(\R)$ introduced in Section 1 in Notation and Conventions.
\begin{lemma}[Symmetries of $y_{\tau}$]\hfill
\addtocounter{equation}{1}
\label{L:y:symmetry}
\begin{itemize}
\item[(i)] 
For $p=1$, $q=n-1$ the symmetries of $y_{\tau} =\abs{w_{2}}^{2}$ are generated by 
\begin{equation}
\addtocounter{theorem}{1}
\label{E:y:sym:p:eq:1}
y_{\tau} \circ \TTT_{2\pt} = y_{\tau} \quad \text{and} \quad  y_{\tau} \circ \tbar = y_{\tau}.
\end{equation}
That is, $y_{\tau}$ is an even $2\pt$-periodic function.
Moreover, we have
\begin{equation}
\addtocounter{theorem}{1}
\label{E:y0:p:eq:1}
 y_\tau(0) = \ymax \quad \text{and} \quad y_\tau(\pt) = \ymin.
\end{equation}
\item[(ii)] For $p>1$ and $p\neq q$ the symmetries of $y_{\tau}$ are generated by 
\begin{equation}
\addtocounter{theorem}{1}
\label{E:y:sym:p:neq:1}
y_\tau \circ \TTT_{2\pt}= y_\tau , \quad y_\tau  \circ \tbar_{\pt^+} = y_\tau  \quad 
\text{and} \quad y_\tau  \circ \tbar_{\,-\pt^-}=y_\tau.
\end{equation}
\item[(iii)] 
For $p>1$ and $p=q$ the symmetries of $y_{\tau}$ are generated by 
\begin{equation}
\addtocounter{theorem}{1}
\label{E:y:sym:p:eq:q}
y_\tau \circ \TTT_{2\pt}= y_\tau , \quad y_\tau  \circ \tbar_{\pt/2} = y_\tau,  \quad y_\tau  \circ \tbar_{-\pt/2}=y_\tau \quad \text{and} \quad y_{\tau}\circ \tbar = 1 - y_{\tau},
\end{equation}
and the partial-periods defined in \ref{E:part:periods}  satisfy 
\begin{equation}
\addtocounter{theorem}{1}
\label{E:part:periods:equal}
\pt^+ = \pt^- = \tfrac{1}{2}\pt \quad \text{and} \quad y_\tau(\tfrac{1}{2}\pt) = \ymin,  \quad y_\tau(-\tfrac{1}{2}\pt) = \ymax.
\end{equation}
\end{itemize}
\end{lemma}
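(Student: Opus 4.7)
My plan is to exploit uniqueness of solutions to the second-order ODE \ref{E:y:ddot} satisfied by $y_\tau$, together with the initial data \ref{E:y:ic:p:eq:1} and \ref{E:y:ic:p:neq:1}. The key observation is that for any $\MMM \in \Isom(\R)$, the composition $y_\tau \circ \MMM$ again solves \ref{E:y:ddot} (the ODE is autonomous and of even order in $t$); therefore $y_\tau \circ \MMM = y_\tau$ as soon as the two sides agree at a single point to first order. Throughout we restrict to $0<\abs{\tau}<\taumax$, so that by \ref{P:y}.i the function $y_\tau$ is non-constant and $2\pt$-periodic.

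For case (i), the initial data $y_\tau(0)=\ymax$, $\dot{y}_\tau(0)=0$ is manifestly invariant under $t\mapsto -t$, and hence $y_\tau\circ\tbar$ satisfies the same IVP as $y_\tau$; by uniqueness, $y_\tau$ is even. The $2\pt$-periodicity is immediate from \ref{P:y}.i. To prove that $y_\tau(\pt)=\ymin$, let $t_1\in(0,2\pt)$ be the first zero of $\dot{y}_\tau$ after $t=0$. Reflection in $t_1$ gives another solution of the IVP at $t_1$, so $y_\tau$ is symmetric about $t_1$; in particular $y_\tau(2t_1)=y_\tau(0)=\ymax$. Periodicity then forces $2t_1=2\pt$ (since $2\pt$ is the minimal period), hence $y_\tau(\pt)=\ymin$. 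Finally, any symmetry of $y_\tau$ in $\Isom(\R)$ is either a translation (and hence lies in $2\pt\Z$ by minimality) or a reflection; composing any two reflection symmetries yields a translation, so the group of symmetries is generated by $\tbar$ and $\TTT_{2\pt}$.

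For case (ii), by \ref{E:y:max:min} we have $\dot{y}_\tau(\pt^+)=\dot{y}_\tau(-\pt^-)=0$. Applying the same reflection argument at $t=\pt^+$ and at $t=-\pt^-$ shows that $y_\tau\circ\tbar_{\pt^+}=y_\tau$ and $y_\tau\circ\tbar_{-\pt^-}=y_\tau$. Combined with $\TTT_{2\pt}$ these exhaust the symmetries by the same general group-theoretic argument (the composition $\tbar_{\pt^+}\circ\tbar_{-\pt^-}=\TTT_{2(\pt^++\pt^-)}=\TTT_{2\pt}$ is consistent with the period relation \ref{E:part:periods}).

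For case (iii), the new ingredient is the additional symmetry $y\mapsto 1-y$ of the ODE. When $p=q$ a direct computation gives $f'(1-y)=-f'(y)$, so if $y$ solves $\ddot{y}=2f'(y)$ then so does $1-y$. Set $z(t):=1-y_\tau(-t)$; since $p=q$ forces $q/n=1/2$, the initial data $z(0)=1-\tfrac{1}{2}=\tfrac{q}{n}$ and $\dot{z}(0)=\dot{y}_\tau(0)=-4\sqrt{\taumax^2-\tau^2}$ coincides with that of $y_\tau$, so $z=y_\tau$, i.e. $y_\tau\circ\tbar=1-y_\tau$. This identity forces $\ymax+\ymin=1$, and evaluating at $t=\pt^-$ gives $y_\tau(\pt^-)=1-\ymax=\ymin$; but $\pt^+$ is by definition the smallest positive $t$ with $y_\tau(t)=\ymin$, hence $\pt^-\ge\pt^+$, and the symmetric argument yields $\pt^+=\pt^-=\tfrac{1}{2}\pt$ via \ref{E:part:periods}. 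The remaining reflection symmetries $\tbar_{\pm\pt/2}$ then follow from case (ii). The main (minor) subtlety will be to verify that no further symmetries exist: this reduces as before to the observation that composing any two of the listed reflections yields a translation, which by minimality of the period must lie in $2\pt\Z$, so the displayed generators exhaust the symmetry group.
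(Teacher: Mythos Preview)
Your argument is correct and follows the same route as the paper: exploit the invariance of the autonomous ODE \ref{E:y:ddot} under $\Isom(\R)$ (and, when $p=q$, under $y\mapsto 1-y$) together with uniqueness for the initial-value problem to identify each listed symmetry; in case~(iii) you set $z(t)=1-y_\tau(-t)$ and match initial data exactly as the paper does. Your derivation of $\pt^+=\pt^-$ from $y_\tau(\pt^-)=\ymin$ is also the paper's argument, though note that ``smallest positive $t$ with $y_\tau(t)=\ymin$'' is a consequence of the definition of $\pt^+$ in the paragraph before \ref{E:y:max:min} (unique in $(0,\pt)$, with $y_\tau$ strictly decreasing on $(-\pt^-,\pt^+)$) rather than the definition itself---you might want to spell that out.

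Two points where you are actually more careful than the paper. First, you give an argument for $y_\tau(\pt)=\ymin$ in case~(i) via the first positive critical point $t_1$; the paper simply records \ref{E:y0:p:eq:1} without proof. Your reasoning is fine once you observe that $\tbar_{t_1}\circ\tbar=\TTT_{2t_1}$ exhibits $2t_1$ as a \emph{period} (you only wrote $y_\tau(2t_1)=\ymax$, which by itself does not force $2t_1\in 2\pt\Z$). Second, you address the ``generated by'' part of the statement---that no further symmetries exist---which the paper's proof does not do explicitly. Your observation that any reflection symmetry composed with $\tbar$ (or $\tbar_{\pt^+}$) gives a translation, hence lies in $2\pt\Z$, is the right way to close this.
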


\begin{remark}
\addtocounter{equation}{1}
\label{R:dihedral:cyl}
It follows from the partial-period relation \ref{E:part:periods}
that the reflections $\tbar_{\pt^+}$ and $\tbar_{\,-\pt^-}$
satisfy
\addtocounter{theorem}{1}
\begin{equation}
\label{E:tbar:pm:commute}
\tbar_{\,-\pt^-} \circ \tbar_{\pt^+} = \TTT_{-2\pt}, \quad \tbar_{\pt^+} \circ \tbar_{\,-\pt^-} = \TTT_{2\pt}.
\end{equation}
Hence the first symmetry of $y_\tau$ in \ref{E:y:sym:p:neq:1} is a consequence of the second and third symmetries.

Similarly, it is straightforward to check that $\tbar \circ \tbar_{\pt/2} \circ \tbar = \tbar_{-\pt/2}$. 
It follows that the two symmetries $\tbar$ and $\tbar_{\pt/2}$ are sufficient to generate all four symmetries in 
\ref{E:y:sym:p:eq:q}.
\end{remark}

\begin{remark}
\label{R:y:sym:ex}
\addtocounter{equation}{1}
For $\tau=0$, $y_{\tau}$ is no longer periodic (the period $2\pt \ra \infty$ as $\tau \ra 0$).
For $p=1$ we have already seen in \ref{P:y}.iv.b that $y_{0}$ is still even. 
For $p=q$, $y_{0}(0)$ is invariant under $y \mapsto 1-y$,  and hence $y_{0}$ retains the reflectional symmetry 
$$ y_{0} \circ \tbar = 1 - y_{0}.$$
When $\abs{\tau}=\taumax$, $y_{\tau}$ is the constant function $q/n$, as noted in Proposition \ref{P:y}.
\end{remark}

\begin{proof}[Proof of Lemma \ref{L:y:symmetry}]
Since the ODE \ref{E:y:dot} is autonomous we have time translation symmetry, i.e.  
for any solution $y$ of \ref{E:y:dot} and any $t_{0}\in \R$, $y \circ \TTT_{t_{0}}$ is also a solution of \ref{E:y:dot}.
Moreover, if $y$ is a solution of \ref{E:y:dot} then so is $y \circ \TTT$. 
Hence \ref{E:y:dot} is invariant under the whole of $\Isom(\R)$.
\ref{E:y:dot} is also invariant under $y \mapsto 1-y$ when $p=q$.

\noindent
(i) Proof of \ref{E:y:sym:p:eq:1}: The first equality is immediate since $y_\tau$ has period $2\pt$ 
by Proposition \ref{P:y}(i) and \ref{E:y:ic:p:eq:1}. 
The second symmetry follows from the fact that $y_\tau(0) = \ymax$ as in \ref{E:y:ic:p:eq:1}. 

\noindent
(ii) Proof of \ref{E:y:sym:p:neq:1}: $y_\tau$ is periodic of period $2\pt$ by Proposition \ref{P:y}(i).
Since $y_{\tau}$ has a maximum and a minimum at $-\pt^-$ and $\pt^+ $ respectively
it has the two additional reflection symmetries listed in \ref{E:y:sym:p:neq:1}. \\
(iii) We need to prove that $y_{\tau}$ admits the new symmetry $y_{\tau}\circ \tbar = 1 - y_{\tau}$.
The rest of the claims made will then follow by combining this symmetry with the ones already established in part (ii).
Define $\tilde{y}:= (1-y_{\tau})\circ \tbar$.  $\tilde{y}$ is also a solution of \ref{E:y:dot} and see from 
\ref{E:y:ic:p:neq:1} that $\tilde{y}$ satisfies the same initial conditions 
as $y_{\tau}$. 
Hence by the uniqueness of solutions of the initial value problem $y_{\tau}\equiv (1-y_{\tau}) \circ \tbar$ as required. 
It follows that 
\begin{equation}
\addtocounter{theorem}{1}
\label{E:ymin:ymax}
\ymax+\ymin =1,
\end{equation}
and that 
$ y_\tau(\pt^-) = 1 - y_\tau(-\pt^-) = 1 - \ymax = \ymin = y_\tau(\pt^+).$
Hence $\pt^- = \pt^+ = \tfrac{1}{2}\pt$.
Since $\pt^+ = \tfrac{1}{2}\pt$, the existing reflectional symmetries $y_\tau \circ \tbar_{\pt^+}=y_\tau$ 
and $y_\tau \circ \tbar_{-\pt^-}=y_\tau$ become $y_\tau \circ \tbar_{\pt/2}=y_\tau$ and 
$y_\tau \circ \tbar_{-\pt/2}=y_\tau$ respectively.
\end{proof}

\begin{corollary}
\addtocounter{equation}{1}
\label{C:y:dihedral}
The discrete subgroup $\dihedral{}$ of $\Isom(\R)$ generated by the symmetries of $y_{\tau}$ is
$$\dihedral{}=
\begin{cases}
\langle \tbar \,, \TTT_{2\pt} \rangle \quad & \text{if \ } p=1;\\
\langle \tbar_{\pt^{+}}\,, \tbar_{-\pt^{-}} \rangle \quad & \text{if\ } p>1, \, p \neq q;\\
\langle \tbar \,, \tbar_{\pt/2} \rangle \quad & \text{if\ } p>1, \, p =q.
\end{cases}
$$
In all three cases $\dihedral{} \cong \dihedral{\infty}$ the infinite dihedral group.
\end{corollary}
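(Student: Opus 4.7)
The plan is to recognise, in each of the three cases, that the generators listed in Lemma \ref{L:y:symmetry} satisfy the defining relations of the infinite dihedral group $\dihedral{\infty}$ and generate a discrete subgroup of $\Isom(\R)$. Recall $\dihedral{\infty}$ admits two equivalent presentations: $\langle r, s \mid s^{2} = 1,\ s r s = r^{-1}\rangle$ with $r$ of infinite order, and $\langle s_{1}, s_{2} \mid s_{1}^{2} = s_{2}^{2} = 1\rangle$ as the free product $\Z_{2} * \Z_{2}$, in which case $s_{1} s_{2}$ automatically has infinite order. Consequently, any subgroup of $\Isom(\R)$ generated either by a reflection together with a nontrivial translation that it conjugates to its inverse, or by two distinct reflections, is automatically isomorphic to $\dihedral{\infty}$.

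For case (i) ($p=1$), I take the generators $\tbar$ and $\TTT_{2\pt}$ from \ref{E:y:sym:p:eq:1}. One verifies by direct computation in $\Isom(\R)$ that $\tbar^{2} = \Id$, $\tbar \circ \TTT_{2\pt} \circ \tbar = \TTT_{-2\pt}$, and $\TTT_{2\pt}$ has infinite order (since $\pt > 0$), yielding the first presentation above and hence $\dihedral{} \cong \dihedral{\infty}$.

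For case (ii) ($p > 1$, $p \neq q$), both generators $\tbar_{\pt^{+}}$ and $\tbar_{\,-\pt^{-}}$ are reflections and hence involutions; moreover \ref{E:tbar:pm:commute} in Remark \ref{R:dihedral:cyl} records $\tbar_{\pt^{+}} \circ \tbar_{\,-\pt^{-}} = \TTT_{2\pt}$, which is of infinite order, so the second presentation of $\dihedral{\infty}$ applies. For case (iii) ($p = q$), the generators $\tbar$ and $\tbar_{\pt/2}$ are again involutions, and an elementary calculation gives $\tbar_{\pt/2} \circ \tbar(t) = \pt - (-t) = \pt + t$, i.e.\ $\tbar_{\pt/2} \circ \tbar = \TTT_{\pt}$, again of infinite order. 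The second presentation then identifies $\dihedral{}$ with $\dihedral{\infty}$.

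There is no substantive obstacle here; all the nontrivial content is already contained in Lemma \ref{L:y:symmetry} and the half-period identities \ref{E:tbar:pm:commute}, while the remainder is a routine verification of the dihedral relations in $\Isom(\R)$.
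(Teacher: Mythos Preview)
Your proposal is correct and follows essentially the same approach as the paper: both invoke the two standard presentations of $\dihedral{\infty}$, use $\tbar \circ \TTT_{2\pt} \circ \tbar = \TTT_{-2\pt}$ for case (i), the relation \ref{E:tbar:pm:commute} for case (ii), and observe that the product of the two reflections in case (iii) is a nontrivial translation (you compute $\tbar_{\pt/2}\circ\tbar = \TTT_{\pt}$ explicitly, while the paper simply notes that no relation $(st)^k=1$ holds). The arguments are interchangeable.
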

\begin{proof}
Recall the two standard presentations for the infinite dihedral group $\dihedral{\infty}$
$$ \langle r, f\, | \, f^{2}=1, frf=r^{-1} \rangle,$$
and
$$\langle s, t\, |\, s^2=1,\, t^2=1 \rangle.$$
The commutation relation 
\begin{equation}
\label{E:tx:tbar}
\addtocounter{theorem}{1}
\tbar \circ \TTT_{x} \circ \tbar = \TTT_{-x}
\end{equation}
together with the first presentation of 
$\dihedral{\infty}$ shows that $\dihedral{}\cong \dihedral{\infty}$ in the case $p=1$.
The commutation relations \ref{E:tbar:pm:commute} for the reflection symmetries $\tbar_{\pt^+}$
and $\tbar_{\,-\pt^-}$ together with the second presentation of $\dihedral{\infty}$ yield the result for $p>1$ and $p \neq q$.
Similarly, for $p=q$, $\dihedral{}$ is a group generated by two independent reflections $s$ and $t$ with no relation of the form 
$(st)^{k}=1$,  hence isomorphic to $\dihedral{\infty}$.
\end{proof}

\subsection*{Symmetries of $\bw_{\tau}$}\phantom{ab}
In this section we study the symmetries of $\bw_{\tau}$. Since $X_{\tau}$ is determined by $\bw_{\tau}$ these 
symmetries are intimately connected to the extrinsic geometry of $X_{\tau}$. However, from the point of view of 
more general twisted products the symmetries of the $(p,q)$-twisted SL curves $\bw_{\tau}$ are of their own interest. 
The symmetries of $\bw_{\tau}$ are themselves closely related to the symmetries of $y_{\tau}$ studied in the 
previous section.
Since by Propositions \ref{P:w:tau} and \ref{P:X:tau}.i,  $\bw_{-\tau} = \overline{\bw}_\tau$ and $X_{-\tau} = \overline{X}_\tau$, it suffices to consider the
case where $\tau \ge 0$.

If $\bw_{\tau}=(w_{1},w_{2})$, $y_{\tau}=\abs{w_{2}}^{2}$ and 
$\psi_{1}$ and $\psi_{2}$ denote the arguments of $w_{1}$ and $w_{2}$ respectively then the equations 
$$\Imag(\overline{w}_{1} \dot{w}_{1}) = - \Imag(\overline{w}_{2} \dot{w}_{2}) = 2\tau,$$
are equivalent to 
\begin{equation}
\addtocounter{theorem}{1}
\label{E:y:psi}
(1-y_{\tau}) \dot{\psi}_1 = 2\tau, \qquad y_{\tau} \dot{\psi}_2 = -2\tau.
\end{equation}

To study the symmetries of $\bw_\tau$ it is convenient to write $\bw_\tau$ in the form
\addtocounter{theorem}{1}
\begin{equation}
\label{E:w1}
w_1(t) =
\begin{cases}
\sgn{t}\,\sqrt{1-y_0(t)}, &\text{for $\tau=0$;}\\
-i \sqrt{1-y_\tau(t)} e^{i\psi_1}, &\text{for $\tau >0$;}
\end{cases}
\qquad
w_2(t) =
\begin{cases}
\sqrt{y_0(t)}, &\text{for $\tau=0$;}\\
\sqrt{y_\tau(t)} e^{i\psi_2}, &\text{for $\tau >0$;}
\end{cases}
\end{equation}
if $p=1$ and 
\addtocounter{theorem}{1}
\begin{equation}
\label{E:w1:p:neq:1}
w_1(t) =
\begin{cases}
\sqrt{1-y_0(t)}, &\text{for $\tau=0$;}\\
\sqrt{1-y_\tau(t)} e^{i\alpha_\tau/2p} e^{i\psi_1}, &\text{for $\tau >0$;}
\end{cases}
\quad
w_2(t) =
\begin{cases}
\sqrt{y_0(t)}, &\text{for $\tau=0$;}\\
\sqrt{y_\tau(t)} e^{i \alpha_\tau/2q} e^{i\psi_2}, &\text{for $\tau >0$;}
\end{cases}
\end{equation}
if $p>1$, where $\alpha_{\tau}\in [-\pi/2,\pi/2]$ was defined in \ref{E:alpha:tau} and where in both cases for $0<\tau\le \taumax$, 
$\psi_1,\  \psi_2:\R \ra \R$ are the unique solutions of \ref{E:y:psi} with initial conditions 
\begin{equation}
\addtocounter{theorem}{1}
\label{E:psi:ic}
\psi_1(0)=\psi_2(0)=0.
\end{equation}
The slightly different forms the above $w_{i}$ take in the cases $p=1$ and $p>1$ stem from the fact that we have chosen 
the initial data $\bw(0)$ for $\bw_{\tau}$ differently in these two cases (recall \ref{E:w:ic:p:neq:1} and \ref{E:w:ic:p:eq:1}).

Define the function $\Psi$ by 
\begin{equation}
\addtocounter{theorem}{1}
\label{E:psi:def}
\Psi:= p \psi_{1} + q \psi_{2}.
\end{equation}
$\Psi$ plays at important role at several points later in the paper.
Written in terms of $y$ and $\Psi$ the real and imaginary parts of equation \ref{E:ydot:cx} are equivalent to 
\addtocounter{theorem}{1}
\begin{eqnarray}
\label{E:psi:real}
\dot{y}_{\tau} &=& -2\sqrt{f(y)} \sin \Psi,\\
\addtocounter{theorem}{1}
\label{E:psi:imag}
2\tau & =& \quad \sqrt{f(y)} \cos \Psi,
\end{eqnarray}
for $p=1$ and to 
\addtocounter{theorem}{1}
\begin{eqnarray}
\label{E:psi:real:pneq1}
\dot{y}_{\tau} &=& -2\sqrt{f(y)} \cos(\Psi + \alpha_\tau),\\
\addtocounter{theorem}{1}
\label{E:psi:imag:pneq1}
-2\tau &=&\quad \sqrt{f(y)} \sin(\Psi + \alpha_\tau)
\end{eqnarray}
for $p>1$ with $\alpha_{\tau}$ as defined in \ref{E:alpha:tau}.

\begin{definition}
\addtocounter{equation}{1}
For any $\tau$ with $0<\abs{\tau}<\taumax$ we define the \emph{angular period} $\pthat$ in terms of $\psi_1$ by
\begin{equation}
\addtocounter{theorem}{1}
\label{E:pthat}
2\pthat:= p\psi_1(2\pt).
\end{equation}
\end{definition}
\smallskip

In Section \ref{S:asymptotics} we prove that the angular period $2\pthat$ is an analytic function of $\tau$ for $0<\abs{\tau}<\taumax$ 
that satisfies 
\addtocounter{theorem}{1}
\begin{equation}
\label{E:pt:hat:asymp}
\lim_{\tau \ra 0} \pthat = \frac{\pi}{2}.
\end{equation}
More precise asymptotics for $\pthat$ as $\tau \ra 0$ will be important in our subsequent gluing 
constructions and are also established in Section \ref{S:asymptotics}.

\begin{lemma}[Discrete symmetries of $\bw_{\tau}$ for $p=1$]
\addtocounter{equation}{1}
\label{L:y:w:symmetry}
For $p=1$, $q=n-1$ and $0<\tau<\taumax$ the angular period $\pthat$ defined in \ref{E:pthat} satisfies
\addtocounter{theorem}{1}
\begin{equation}
\label{E:psi:period:p:eq:1}
2\pthat:= \psi_1(2\pt) = 2\,\psi_1(\pt) = -2(n-1)\,\psi_2(\pt) = -(n-1)\,\psi_2(2\pt).
\end{equation}
$\bw_\tau$ has the following symmetries:
\begin{equation}
\addtocounter{theorem}{1}
\label{E:w:sym:p:eq:1}
\bw_\tau \circ \TTT_{2\pt} = \tcheck_{2\pthat} \circ \bw_\tau, \quad 
\bw_\tau \circ \tbar = \tbarcheck \circ \bw_{\tau}, \quad 
\bw_\tau \circ \tbar_{\pt}  = \tcheck_{2\pthat} \circ \tbarcheck  \circ \bw_\tau,
\end{equation}
where $\tcheck_x \in \unit{2}$ was defined in \ref{E:M:defn} and 
$\tbarcheck \in \orth{4}$ is defined by 
$$\tbarcheck(w_{1},w_{2}) = (-\overline{w}_1,\overline{w}_2).$$
\end{lemma}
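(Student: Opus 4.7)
The plan is to reduce everything to statements about the real-valued building blocks already under control: the amplitude $y_\tau = \abs{w_2}^2$ (whose symmetries are given by Lemma \ref{L:y:symmetry}) and the phases $\psi_1, \psi_2$, which satisfy the first-order ODEs $(1-y_\tau)\dot\psi_1 = 2\tau$ and $y_\tau\dot\psi_2 = -2\tau$ of \ref{E:y:psi} with initial data \ref{E:psi:ic}. Since the right-hand sides depend only on $y_\tau$, evenness and $2\pt$-periodicity of $y_\tau$ (by \ref{E:y:sym:p:eq:1}) transfer at once to $\dot\psi_i$, and hence after integrating from $0$ they yield: (a) $\psi_i$ is an odd function of $t$, and (b) $\psi_i(t+2\pt)-\psi_i(t) = \int_0^{2\pt}\dot\psi_i = \psi_i(2\pt)$ for all $t$. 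A change of variables $u = 2\pt - s$ in $\int_\pt^{2\pt}\dot\psi_i\,ds$, combined with evenness and $2\pt$-periodicity of $\dot\psi_i$, gives the doubling identity $\psi_i(2\pt) = 2\psi_i(\pt)$.

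The key nontrivial step is the identity $\psi_1(\pt) = -(n-1)\psi_2(\pt)$, equivalently $\Psi(\pt) = 0$ where $\Psi := \psi_1 + (n-1)\psi_2$. For this I use \ref{E:psi:real} and \ref{E:psi:imag}: at $t=\pt$ we have $y_\tau(\pt) = \ymin$, so $\dot y_\tau(\pt) = 0$ and $\sqrt{f(\ymin)} = 2\abs{\tau}$, which force $\sin\Psi(\pt) = 0$ and $\cos\Psi(\pt) = \sgn(\tau)$; hence $\Psi(\pt) \in 2\pi\Z$ for $\tau > 0$. To pin down the integer I use a connectedness argument: on the open interval $(0,\pt)$, $y_\tau$ is strictly decreasing from $\ymax$ to $\ymin$, so by \ref{E:psi:real} $\sin\Psi > 0$ throughout $(0,\pt)$. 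This confines $\Psi$ to a single component $(2k\pi, (2k+1)\pi)$, and since $\Psi(0) = 0$ and $\Psi$ is continuous, we must have $k=0$; taking $t \to \pt^-$ yields $\Psi(\pt) = 0$. This is the main technical point; everything else is bookkeeping. (For $\tau<0$ the case follows by applying $\bw_{-\tau} = \overline{\bw}_\tau$ from Proposition \ref{P:w:tau}.) Combining (b), the doubling identity, and $\Psi(\pt)=0$ gives the chain \ref{E:psi:period:p:eq:1}.

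With these identities in hand the three symmetries fall out by direct substitution into the formulas \ref{E:w1}. For the first, $y_\tau(t+2\pt) = y_\tau(t)$ and $\psi_i(t+2\pt) = \psi_i(t) + \psi_i(2\pt)$ give $w_j(t+2\pt) = e^{i\psi_j(2\pt)} w_j(t)$; using \ref{E:psi:period:p:eq:1} to identify $\psi_1(2\pt) = 2\pthat$ and $\psi_2(2\pt) = -2\pthat/(n-1)$, this is exactly the action of $\tcheck_{2\pthat}$ from \ref{E:M:defn} (with $p=1$, $q=n-1$). For the second, evenness of $y_\tau$ together with oddness of $\psi_i$ yields $w_1(-t) = -i\sqrt{1-y_\tau(t)}\,e^{-i\psi_1(t)} = -\overline{w_1(t)}$ and $w_2(-t) = \overline{w_2(t)}$, which is the action of $\tbarcheck$. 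For the third, one uses the identity $\tbar_\pt = \TTT_{2\pt}\circ\tbar$ in $\Isom(\R)$ and composes the first two symmetries, noting $\tcheck_{2\pthat}$ commutes in the required sense with $\tbarcheck$ on the image (or equivalently, just reads off the composition). The only genuinely new input beyond Lemma \ref{L:y:symmetry} is the $\Psi(\pt)=0$ observation, so that is where the writeup should be most careful.
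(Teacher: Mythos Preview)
Your proof is correct and follows essentially the same approach as the paper: derive the symmetries of $\psi_i$ from those of $y_\tau$ via the ODEs \ref{E:y:psi}, establish the doubling identity $\psi_i(2\pt)=2\psi_i(\pt)$, prove $\Psi(\pt)=0$ by a continuity argument, and then read off the symmetries of $\bw_\tau$ from the expression \ref{E:w1}. The only minor difference is in the continuity argument for $\Psi(\pt)=0$: the paper uses \ref{E:psi:imag} to get $\cos\Psi>0$ for all $t$ (since $2\tau=\sqrt{f(y)}\cos\Psi$ with both factors positive), confining $\Psi$ to $(-\tfrac{\pi}{2},\tfrac{\pi}{2})$ globally, whereas you use \ref{E:psi:real} to get $\sin\Psi>0$ on $(0,\pt)$; both are valid and equally short.
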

\noindent
Using the fact that $\overline{\bw}_{\tau} = -\bw_{\tau}$ the symmetries of $\bw_{\tau}$ for $\tau<0$ can be inferred 
immediately from the symmetries in the case $\tau>0$.

We have the following analogue of Lemma \ref{L:y:w:symmetry} for $p>1$.
\begin{lemma}[Discrete symmetries of $\bw_{\tau}$ for $p> 1$]
\addtocounter{equation}{1}
\label{L:y:w:symmetry:p:neq:1}
Fix a pair of admissible integers $p$ and $q$ with $p>1$, then for $0<\tau<\taumax$, the angular period $\pthat$ satisfies
\addtocounter{theorem}{1}
\begin{equation}
\label{E:psi:period:p:neq:1}
2 \pthat:= p\psi_1(2\pt)  = 2p (\psi_1(\pt^+) - \psi_1(-\pt^-)) = -2q (\psi_2(\pt^+) - \psi_2(-\pt^-)) =q \psi_2(2\pt).
\end{equation}
$\bw_\tau$ has the following symmetries:
\begin{equation}
\addtocounter{theorem}{1}
\label{E:w:sym:p:neq:1}
\bw_\tau \circ \TTT_{2\pt} = \tcheck_{2\pthat} \circ \bw_\tau, \quad 
\bw_\tau \circ \tbar_{\pt^{+}} = \tbarcheck_{+} \circ \bw_{\tau}, \quad 
\bw_\tau \circ \tbar_{-\pt^{-}}  = \tbarcheck_{-} \circ \bw_{\tau}
\end{equation}
where $\tcheck_x \in \unit{2}$ was defined in \ref{E:M:defn} and $\tbarcheck_{+}, \tbarcheck_{-} \in \orth{4}$
are defined by 
\begin{eqnarray*}
\tbarcheck_{+}(w_{1},w_{2}) &= (e^{i\alpha_{\tau}/p} e^{i\psi_{1}(2\pt^{+})} \overline{w}_{1},
e^{i\alpha_{\tau}/q} e^{i\psi_{2}(2\pt^{+})} \overline{w}_{2}),\\
\tbarcheck_{-}(w_{1},w_{2}) &= (e^{i\alpha_{\tau}/p} e^{i\psi_{1}(-2\pt^{-})} \overline{w}_{1},
e^{i\alpha_{\tau}/q} e^{i\psi_{2}(-2\pt^{-})} \overline{w}_{2}).
\end{eqnarray*}

When $p=q$, $\bw_\tau$ has the following extra symmetry:
\begin{equation}
\addtocounter{theorem}{1}
\label{E:w:ex:sym}
w_1 \circ \tbar = w_2 \quad \text{and} \quad w_2 \circ \tbar = w_1.
\end{equation}
Hence $\psi_1$ and $\psi_2$ have the following additional symmetries:
\begin{equation}
\addtocounter{theorem}{1}
\label{E:psi:ex:sym}
\psi_1 \circ \tbar = \psi_2, \quad \psi_2 \circ \tbar = \psi_1, \quad
\psi_1 \circ \TTT_{\pt} = - \psi_2 + \psi_1(\pt), \quad \psi_2 \circ \TTT_{\pt} = -\psi_1 + \psi_2(\pt).
\end{equation}
The angular period $\pthat$  satisfies
\addtocounter{theorem}{1}
\begin{equation}
\label{E:psi:period:p:eq:q}
2 \pthat:= p\psi_1(2\pt)  = p (\psi_1(\pt) - \psi_1(-\pt)) = -p \psi_2(2\pt).
\end{equation}
\end{lemma}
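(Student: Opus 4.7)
The plan is to build the proof in three stages, working outward from symmetries of the scalar function $y_\tau=|w_2|^2$ (already established in Lemma \ref{L:y:symmetry}) to symmetries of the phase functions $\psi_1,\psi_2$, and finally to symmetries of the curve $\bw_\tau$ itself. First I would integrate the phase equations \ref{E:y:psi}, i.e.\ $\dot\psi_1=2\tau/(1-y_\tau)$ and $\dot\psi_2=-2\tau/y_\tau$, and observe that each reflection symmetry of $y_\tau$ from \ref{E:y:sym:p:neq:1} lifts to an affine reflection identity for $\psi_i$. Concretely, $y_\tau\circ \tbar_{\pt^+}=y_\tau$ forces $\psi_i(2\pt^+-t)+\psi_i(t)=\psi_i(2\pt^+)=2\psi_i(\pt^+)$, and similarly for $\tbar_{-\pt^-}$. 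Combined with the $2\pt$-periodicity of $\dot\psi_i$ (hence $\psi_i(t+2\pt)-\psi_i(t)=\psi_i(2\pt)$ for all $t$), these reflection identities immediately give $\psi_i(2\pt)=2(\psi_i(\pt^+)-\psi_i(-\pt^-))$.

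The next task is to identify $p\psi_1(2\pt)$ with $-q\psi_2(2\pt)$, i.e., to establish $\Psi(2\pt)=0$ where $\Psi:=p\psi_1+q\psi_2$. Here I would read off from \ref{E:psi:real:pneq1}--\ref{E:psi:imag:pneq1} the values of $\Psi+\alpha_\tau$ at the turning points: at $t=\pt^+$ one has $y=\ymin$, $\dot y=0$, and $\sqrt{f(\ymin)}=2|\tau|$, so $\sin(\Psi+\alpha_\tau)=-1$ and $\cos(\Psi+\alpha_\tau)=0$, whence $\Psi(\pt^+)=-\pi/2-\alpha_\tau$. The same analysis at $t=-\pt^-$ (where $y=\ymax$) yields the same value, so $\Psi(\pt^+)-\Psi(-\pt^-)=0$. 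Combined with the reflection identity $\Psi(2\pt^+)=2\Psi(\pt^+)=-\pi-2\alpha_\tau$ and its analogue at $-2\pt^-$, the relation $\Psi(t+2\pt)-\Psi(t)=\Psi(2\pt)$ yields $\Psi(2\pt)=\Psi(2\pt^+)-\Psi(-2\pt^-)=0$. This proves \ref{E:psi:period:p:neq:1}.

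For the three symmetries of $\bw_\tau$ in \ref{E:w:sym:p:neq:1} I would use the uniqueness of the initial value problem for \ref{E:odes:p:n} and its sign-flipped variant. The first symmetry is straightforward: $\bw_\tau\circ\TTT_{2\pt}$ and $\tcheck_{2\pthat}\circ\bw_\tau$ are both solutions of \ref{E:odes:p:n} (by time-translation invariance and symmetry (3) of the ODE), so it suffices to check agreement at $t=0$, which amounts to verifying $\bw_\tau(2\pt)=\tcheck_{2\pthat}\bw_\tau(0)$ using $y_\tau(2\pt)=q/n$ and the angular period relations from Step 2. For the reflection symmetries, I would first establish the following elementary ODE symmetry: the map $\bw(t)\mapsto(A\overline{w_1}(-t),B\overline{w_2}(-t))$ sends solutions of \ref{E:odes:p:n} to solutions of \ref{E:odes:p:n} iff $A^pB^q=-1$ (direct computation). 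Equivalently, the untwisted map $\bw\mapsto(A\overline{w_1},B\overline{w_2})$ sends solutions of \ref{E:odes:p:n} to solutions of the sign-flipped system $\dot w_1=-\overline{w_1}^{p-1}\overline{w_2}^q$, $\dot w_2=\overline{w_1}^p\overline{w_2}^{q-1}$, and this sign-flipped system is exactly what $t\mapsto\bw_\tau(2\pt^+-t)$ satisfies. With $A=e^{i\alpha_\tau/p}e^{i\psi_1(2\pt^+)}$ and $B=e^{i\alpha_\tau/q}e^{i\psi_2(2\pt^+)}$ one computes $A^pB^q=e^{2i\alpha_\tau}e^{i\Psi(2\pt^+)}=e^{2i\alpha_\tau}e^{-i\pi-2i\alpha_\tau}=-1$, so both $\hat\bw(t):=\bw_\tau(2\pt^+-t)$ and $\tilde\bw(t):=\tbarcheck_+\bw_\tau(t)$ solve the sign-flipped system. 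Checking agreement at $t=\pt^+$ reduces via the explicit form \ref{E:w1:p:neq:1} to the reflection identity $2\psi_i(\pt^+)=\psi_i(2\pt^+)$ from Step 1, and uniqueness finishes. The symmetry $\bw_\tau\circ\tbar_{-\pt^-}=\tbarcheck_-\bw_\tau$ is identical with $\pt^+$ replaced by $-\pt^-$.

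For the extra symmetry in the case $p=q$, I would exploit that the ODE \ref{E:odes:p:n} is invariant under the involution $(w_1,w_2)\mapsto(w_2,w_1)$ combined with $t\mapsto-t$, and that $\bw_\tau(0)$ is fixed by the swap $(w_1,w_2)\mapsto(w_2,w_1)$ when $p=q$. By uniqueness $(w_2(-t),w_1(-t))=\bw_\tau(t)$, which gives \ref{E:w:ex:sym}; the identities \ref{E:psi:ex:sym} and the refined angular period formula \ref{E:psi:period:p:eq:q} follow by combining \ref{E:w:ex:sym} with the identities already proved and with $\pt^+=\pt^-=\pt/2$ from \ref{L:y:symmetry}.iii. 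The main technical obstacle is Step 2: tracking $\Psi$ continuously through the turning points so as to identify the correct branch, and thereby obtaining $\Psi(2\pt)=0$ on the nose rather than modulo $2\pi$.
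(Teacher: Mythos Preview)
Your proposal is correct and follows essentially the same architecture as the paper's proof: derive affine reflection and translation identities for $\psi_i$ from those of $y_\tau$ via \ref{E:y:psi}, establish $\Psi(2\pt)=0$ to link $p\psi_1$ and $q\psi_2$, read off the symmetries of $\bw_\tau$, and use ODE uniqueness for the $p=q$ swap. Two minor methodological differences are worth noting. First, for $\Psi(2\pt)=0$ you evaluate $\Psi$ at the turning points $\pt^\pm$ and then difference, whereas the paper evaluates directly at $t=2\pt$ (where $y=q/n$ and $\dot y=-4\taumax\cos\alpha_\tau$); both routes rest on the same branch control, namely that \ref{E:psi:imag:pneq1} forces $\sin(\Psi+\alpha_\tau)<0$ for all $t$, so $\Psi+\alpha_\tau$ is trapped in $(-\pi,0)$ --- this is precisely the resolution of the obstacle you flag. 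Second, for the reflection symmetries in \ref{E:w:sym:p:neq:1} you argue via ODE uniqueness after checking $A^pB^q=-1$, while the paper simply substitutes the explicit form \ref{E:w1:p:neq:1} and invokes the $\psi_i$ identities directly; your route is a bit more conceptual, the paper's a bit more mechanical, but both are straightforward.
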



The proofs of Lemmas \ref{L:y:w:symmetry} and \ref{L:y:w:symmetry:p:neq:1} are very similar. 
First, we establish symmetries of $\psi_{i}$  using the symmetries of $y_{\tau}$ from Lemma \ref{L:y:symmetry}
together with the definitions of $\psi_{i}$ in terms of $y_{\tau}$ (recall \ref{E:y:psi}).
The symmetries of $\bw_{\tau}$ then follow by using the definition of $\bw_{\tau}$ in terms of $y_{\tau}$ 
and $\psi_{i}$ and their symmetries. For completeness, we give details in each case.

\begin{proof}[Proof of Lemma \ref{L:y:w:symmetry}]
Proof of \ref{E:psi:period:p:eq:1} and \ref{E:w:sym:p:eq:1}: 
The discrete symmetries of $y_{\tau}$ given in \ref{E:y:sym:p:eq:1} and the definition of $\psi_i$ in terms
of $y_{\tau}$ given in \ref{E:y:psi} imply the following symmetries for
$\psi_i\  (i=1,2)$
\addtocounter{theorem}{1}
\begin{equation}
\label{E:psi:sym:p:eq:1}
\psi_i \circ \TTT_{2\pt} = \psi_i + \psi_i(2\pt), \quad \psi_i \circ \tbar
= -\psi_i, \quad \psi_i \circ \tbar_{\pt} = -\psi_i + \psi_i (2\pt).
\end{equation}
\no
Proof of \ref{E:psi:period:p:eq:1}:  $\psi_i(2\pt) = 2\psi_i(\pt)$ for $i=1,2$ follows from
the third symmetry of \ref{E:psi:sym:p:eq:1} when $t=\pt$.
It remains to prove that $ \Psi(\pt)=\psi_1(\pt) + (n-1) \psi_2(\pt)=0$.
Since $\Psi(0)=0$ and $\sqrt{f(y)(t)}$ is continuous in $t$ and positive,
\ref{E:psi:imag} implies that $\Psi(t) \in (-\tfrac{\pi}{2},\tfrac{\pi}{2})$ for all $t\in \R$.
Then since $\dot{y}(\pt)=0$, from $\ref{E:y:dot}$ it follows that
$\sqrt{f(y)(\pt)}=2\tau$ and hence from \ref{E:psi:imag} that $\cos(\Psi)(\pt)=1$ as required.

The symmetries of $\psi_i$ given in \ref{E:psi:sym:p:eq:1}, 
together with the fact that $\Psi(2\pt)=2\Psi(\pt)=0$, imply the following
simpler symmetries for $\Psi$
\addtocounter{theorem}{1}
\begin{equation}
\label{E:Psi:sym:p:eq:1}
\Psi \circ \TTT_{2\pt} = \Psi, \quad \Psi \circ \tbar
= -\Psi, \quad \Psi \circ \tbar_{\pt} = -\Psi.
\end{equation}
In other words (unlike $\psi_1$ or $\psi_2$ individually), $\Psi$ is an odd periodic function of
$t$ of period $2\pt$.

\noindent
Proof of \ref{E:w:sym:p:eq:1}: 
The symmetries of $\bw_\tau$ claimed in \ref{E:w:sym:p:eq:1} follow from \ref{E:y:sym:p:eq:1}, \ref{E:psi:period:p:eq:1}
and \ref{E:psi:sym:p:eq:1} and the expression \ref{E:w1} for $\bw_\tau$ in terms of $y_\tau$, $\psi_1$ and $\psi_2$.
\end{proof}

\begin{proof}[Proof of Lemma \ref{L:y:w:symmetry:p:neq:1}]
Symmetries of $\psi_i$: The symmetries of $y_\tau$ given in \ref{E:y:sym:p:neq:1} and the definition of $\psi_i$ in terms
of $y_\tau$ given in \ref{E:y:psi} imply the following symmetries for
$\psi_i\  (i=1,2)$
\addtocounter{theorem}{1}
\begin{equation}
\label{E:psi:sym:p:neq:1}
\psi_i \circ \tbar_{\pt^+} = -\psi_i + \psi_i (2\pt^+), \quad \psi_i \circ \tbar_{\,-\pt^-} = -\psi_i + \psi_i (-2\pt^-), \quad
\psi_i \circ \TTT_{2\pt} = \psi_i + \psi_i(2\pt).
\end{equation}
\noindent
Proof of \ref{E:psi:period:p:neq:1}: the first two symmetries of $\psi_i$ in \ref{E:psi:sym:p:neq:1} imply that $\psi_i(2\pt^+) = 2\psi_i(\pt^+)$ and
$\psi_i(-2\pt^-) = 2\psi_i(-\pt^-)$.
The third symmetry of \ref{E:psi:sym:p:neq:1} with $t=-2\pt^-$ implies that
$$2\pthat = p\psi_1(2\pt) = p(\psi_1(2\pt^+) - \psi_1(-2\pt^-)) = 2p (\psi_1(\pt^+) - \psi_1(-\pt^-)).$$
It remains to prove the last equality of \ref{E:psi:period:p:neq:1}. By the equalities on the previous line 
it suffices to prove that $\Psi(2\pt)=p\psi_{1}(2\pt) + q\psi_{2}(2\pt)=0$.
%
Since $\tau>0$, $\alpha_\tau \in [-\tfrac{\pi}{2},0)$. Now since
$\Psi(0)=0$ and $\sqrt{f(y)(t)}$ is continuous in $t$ and positive,
\ref{E:psi:imag:pneq1} implies that $\Psi(t) + \alpha_\tau \in (-\pi,0)$ holds for all $t\in \R$.
At $t=2\pt$, we have $f(y)=\fmax = 4\taumax^2$ and $\dot{y} = -4\taumax \cos{\alpha_\tau}$.
Hence \ref{E:psi:real:pneq1} and \ref{E:psi:imag:pneq1} imply that $e^{i(\Psi+\alpha_\tau)} = e^{i\alpha_\tau}$ holds
at $t=2\pt$. Hence $\Psi(2\pt)=0$, since $\Psi+\alpha_\tau \in (-\pi,0)$.

\noindent
Proof of \ref{E:w:sym:p:neq:1}: The symmetries of $\bw_\tau$ claimed in \ref{E:w:sym:p:neq:1} follow from \ref{E:y:sym:p:neq:1}, \ref{E:psi:period:p:neq:1}
and \ref{E:psi:sym:p:neq:1} and the expression \ref{E:w1:p:neq:1} for $\bw_\tau$ in terms of $y_\tau$, $\psi_1$ and $\psi_2$.

\smallskip

\noindent
\emph{Extra symmetries for case $p=q$:} \phantom{ab}
Define $\boldsymbol{z}:\R \ra \Sph^3$ by $\boldsymbol{z}=(w_2 \circ \tbar,w_1 \circ \tbar)$.
Since $p=q$, we see that $\boldsymbol{z}$ also satisfies \ref{E:odes:p:n}.
Moreover, since $p=q$ the initial data $\bw_\tau(0)$ (recall \ref{E:w:ic:p:neq:1}) 
is invariant under exchange of $w_1$ and $w_2$, and therefore
$\boldsymbol{z}(0)=\bw_\tau(0)$. Hence by uniqueness 
of the initial value problem
$\boldsymbol{z}$ coincides with $\bw_\tau$ as claimed . 

The first two symmetries of \ref{E:psi:ex:sym} follow from \ref{E:w:ex:sym} and the relation between $w_i$ and $\psi_i$, given in \ref{E:w1:p:neq:1}.
The final two symmetries of \ref{E:psi:ex:sym} follow from the first two and the existing symmetry
$ \psi_i \circ \tbar_{\pt/2} = - \psi_i + \psi_i(\pt)$ for $i=1, 2$ (obtained from \ref{E:psi:sym:p:neq:1} using the fact that $\pt^+ = \tfrac{1}{2}\pt$).
\end{proof}

\subsection*{Periods and half-periods of $\bw_{\tau}$}\phantom{ab}
To understand the extrinsic geometry of $X_{\tau}$ and in particular when $X_{\tau}$ factors through a 
closed embedding we need to understand when the $(p,q)$-twisted SL curves $\bw_{\tau}$ form closed curves in $\Sph^{3}$. 
In fact, as we remarked earlier to understand when $X_{\tau}$ closes up we need to understand when $\bw_{\tau}$
gives rise to a closed curve in the space of isotropic $\sorth{p}\times\sorth{q}$ orbits.  As described in Lemma \ref{L:iso:orbits} this orbit space is $\Sph^{3}/\stabpq$ where $\stabpq \subset \unit{2}$ is the finite subgroup defined in \ref{E:stabpq}.

To this end we define the periods and half-periods of $\bw_{\tau}$.
The periods and half-periods of $\bw_{\tau}$ control when the curve of isotropic orbits $\mathcal{O}_{\bw_{\tau}}$ 
determined by $\bw_{\tau}$ is a closed curve in the space of $\sorth{p}\times \sorth{q}$ orbits.
Recall from \ref{D:M:period} the definitions of the periods  and half-periods  
of the $1$-parameter group $\{\tcheck_x\}$ defined in \ref{E:M:defn}.
The periods and half-periods of $\bw_{\tau}$ and the periods and half-periods of $\{\tcheck_x\}$ are 
intimately connected because of the first discrete symmetry of $\bw_{\tau}$ listed in \ref{E:w:sym:p:eq:1} and 
\ref{E:w:sym:p:neq:1} (for $p=1$ and $p>1$ respectively).
\addtocounter{equation}{1}
\begin{definition}
\label{D:w:period}
Fix a pair of admissible integers $p$ and $q$ and let $\bw_{\tau}$ be any of the $(p,q)$-twisted SL curves defined in \ref{P:w:tau}.
We define \emph{the period lattice of $\bw_\tau$} by
\addtocounter{theorem}{1}
\begin{equation}
\label{E:w:period}
\Per(\bw_\tau):= \{ x\in \R\, | \,\bw_{\tau} \circ \TTT_{x}=\bw_{\tau}\},
\end{equation}
and the \emph{half-period lattice of $\bw_{\tau}$} by 
\addtocounter{theorem}{1}
\begin{equation}
\label{E:w:half:period}
\Perh(\bw_\tau):= \{ x\in \R\, | \,\mathcal{O}_{\bw_{\tau}\circ \TTT_{x}(t)}=\mathcal{O}_{\bw_{\tau}(t)}\ \forall \,t\in \R\},
\end{equation}
where as previously $\mathcal{O}_{\bw} \subset \Sph^{2(p+q)-1}$ denotes the isotropic $\sorth{p} \times \sorth{q}$ orbit 
associated with any point $\bw \in \Sph^{3}$.
In other words, $x$ is a half-period of $\bw_{\tau}$ if $\bw_{\tau} \circ \TTT_{x}$ and $\bw_{\tau}$ 
give rise to the same parametrised curve of isotropic $\sorth{p} \times \sorth{q}$-orbits in $\Sph^{2(p+q)-1}$.
We call elements of $\Perh(\bw_{\tau})$ the \emph{half-periods of $\bw_{\tau}$},  
and elements of $\Per(\bw_{\tau})$ the \emph{periods of $\bw_{\tau}$}. 
A \emph{strict half-period} is  any half-period which is not a period of $\bw_{\tau}$.
\end{definition}
Using \ref{L:iso:orbits} we see that $x$ is a half-period of $\bw_{\tau}$ if and only if 
\begin{equation}
\addtocounter{theorem}{1}
\label{E:w:period:alt}
\bw_{\tau} \circ \TTT_{x} = \rho_{jk} \circ \bw_{\tau} \quad \text{for some\ } \rho_{jk}\in \stabpq,
\end{equation}
where as above $\stabpq$ is the finite subgroup of $\unit{2}$ defined in \ref{E:stabpq}.
More explicitly, we have 
\addtocounter{theorem}{1}
\begin{equation}
\label{E:w:half:period:eq:1}
\Perh(\bw_\tau):= \{ x\in \R\, |\  \exists \ (j,k) \in \langle (+,\pm)\rangle \le\Z_2 \times \Z_2 \ \text{such that}\
\rho_{jk} \circ \bw_\tau = \bw_\tau \circ \TTT_x\,\}, \quad \text{if $p=1$;}
\end{equation}
or 
\addtocounter{theorem}{1}
\begin{equation}
\label{E:w:half:period:gt:1}
\Perh(\bw_\tau):= \{ x\in \R\, |\  \exists \ (j,k) \in  \Z_2 \times \Z_2 \ \text{such that}\
\rho_{jk} \circ \bw_\tau = \bw_\tau \circ \TTT_x\,\}, \quad \text{if $p>1$.}
\end{equation}
If $x$ satisfies \ref{E:w:period:alt} for $(j,k) \in \Z_{2}\times \Z_{2}$ then we call $x$ a \emph{half-period of $\bw_{\tau}$ of type $(jk)$}.
We see immediately from  \ref{E:w:period:alt} that $2 \Perh(\bw_{\tau}) \subset \Per(\bw_{\tau})$; 
this explains the terminology half-period.

The importance of the half-periods of $\bw_{\tau}$ is explained by the following
\addtocounter{equation}{1}
\begin{prop}
\label{P:xtau:embed}
Suppose $0<\abs{\tau}<\taumax$ and let  $X_{\tau}$ be one of the $\sorth{p}\times\sorth{q}$-invariant special 
Legendrian cylinders defined in \ref{D:X:tau}. Suppose there exist triples $(t_1,\sigma_1,\sigma_2), (t_2,\sigma_1',\sigma_2')
\in \cylpq$ such that
\addtocounter{theorem}{1}
\begin{equation}
\label{E:self:intersect}
X_{\tau}(t_1, \sigma_1, \sigma_2) = X_{\tau}(t_2, \sigma_1', \sigma_2').
\end{equation}
Then $t_2 - t_1 \in \Perh(\bw_\tau)$.
Moreover, if $t_2-t_1 \in
\Per(\bw_\tau)$ then $\sigma_1=\sigma_1'$ and $\sigma_2=\sigma_2'$. 
\end{prop}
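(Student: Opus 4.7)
The plan is to reduce the self-intersection equation \ref{E:self:intersect} to a pointwise identity for $\bw_\tau$ at the pair $(t_1,t_2)$, then use conservation of $\mathcal{I}_2$ together with uniqueness of the initial value problem for \ref{E:odes:p:n} to upgrade it to the global half-period identity defining $\Perh(\bw_\tau)$.

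First, since $0<\abs{\tau}<\taumax$, Proposition \ref{P:y} gives $\ymin\le y_\tau(t)\le \ymax$ with $0<\ymin$ and $\ymax<1$, so neither $w_1$ nor $w_2$ vanishes. Comparing the two sides of \ref{E:self:intersect} componentwise via Definition \ref{D:X:tau} yields $w_i(t_1)\,\sigma_i=w_i(t_2)\,\sigma_i'$ (with $\sigma_1=\sigma_1'$ automatic in the case $p=1$). Since $\sigma_i,\sigma_i'$ are real unit vectors and the ratios $\lambda_i:=w_i(t_1)/w_i(t_2)\in\Sph^1$ must send a nonzero real vector to a real vector, we conclude $\lambda_i=\pm1$. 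Writing $\lambda_i=(-1)^{j_i}$ (with $j_1=0$ forced when $p=1$) gives
\[
\bw_\tau(t_1)=\rho_{j_1j_2}\,\bw_\tau(t_2),\qquad \sigma_i'=(-1)^{j_i}\sigma_i,
\]
where $\rho_{jk}$ is as in Lemma \ref{L:iso:orbits}.

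Next, I would apply the conserved quantity $\mathcal{I}_2=\Imag(w_1^p w_2^q)$ from Proposition \ref{P:odes:p:n}(i) to the identity above:
\[
-2\tau=\mathcal{I}_2(\bw_\tau(t_1))=(-1)^{j_1 p+j_2 q}\,\mathcal{I}_2(\bw_\tau(t_2))=(-1)^{j_1 p+j_2 q}(-2\tau).
\]
Since $\tau\neq 0$, this forces the parity condition $j_1 p+j_2 q\equiv 0\pmod 2$. A short direct computation with the vector field in \ref{E:odes:p:n} shows that $\rho_{j_1j_2}\bw_\tau$ is again a solution of \ref{E:odes:p:n} \emph{precisely} when this parity condition holds; this is the same parity condition picked out in Lemma \ref{L:M:periods} for $\rho_{j_1j_2}\in\stabpq$ to arise from a half-period of $\{\tcheck_x\}$, so the conservation law forces the algebraically correct reflection.

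Now set $\bw'(t):=\bw_\tau(t+(t_2-t_1))$ and $\bw''(t):=\rho_{j_1j_2}\bw_\tau(t)$. Both are solutions of \ref{E:odes:p:n}: $\bw'$ by the translation invariance (symmetry (1)) and $\bw''$ by the parity computation above. Using that $\rho_{j_1j_2}$ has order at most two,
\[
\bw'(t_1)=\bw_\tau(t_2)=\rho_{j_1j_2}\bw_\tau(t_1)=\bw''(t_1),
\]
and real-analytic uniqueness from Proposition \ref{P:odes:p:n}(iii) gives $\bw'\equiv\bw''$ on $\R$. This is exactly the condition \ref{E:w:period:alt} that $t_2-t_1\in\Perh(\bw_\tau)$, of type $(j_1j_2)$. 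For the second claim, if in addition $t_2-t_1\in\Per(\bw_\tau)$, then $\bw_\tau(t_2)=\bw_\tau(t_1)$, so $w_i(t_1)=w_i(t_2)\neq 0$, and the componentwise identities $w_i(t_1)\sigma_i=w_i(t_2)\sigma_i'$ immediately yield $\sigma_i=\sigma_i'$.

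The main obstacle is Step 2: one must notice that the conserved quantity $\mathcal{I}_2$ enforces exactly the parity constraint required for $\rho_{j_1j_2}\bw_\tau$ to stay in the solution space of the ODE. Without this coincidence one would only obtain a pointwise identity at $(t_1,t_2)$, insufficient to produce a half-period; with it, the uniqueness of \ref{E:odes:p:n} does the rest essentially for free.
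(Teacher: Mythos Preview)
Your proof is correct and follows essentially the same approach as the paper's own argument. The only difference is cosmetic: the paper phrases the first step in the language of isotropic orbits, deducing $\mathcal{O}_{\bw_\tau(t_1)}=\mathcal{O}_{\bw_\tau(t_2)}$ and then invoking Lemma~\ref{L:iso:orbits} to obtain $\bw_\tau(t_1)=\rho_{jk}\bw_\tau(t_2)$, whereas you extract this directly from the componentwise equations $w_i(t_1)\sigma_i=w_i(t_2)\sigma_i'$; the subsequent use of the conserved quantity $\mathcal{I}_2$ to force the parity $jp+kq\equiv 0\pmod 2$, and the uniqueness argument for the ODE, are identical in both proofs.
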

\begin{proof}
From the definition of $X_{\tau}$ in terms of $\bw_{\tau}$ and the isotropic $\sorth{p} \times \sorth{q}$ 
orbits $\mathcal{O}_{\bw}$ 
we see that \ref{E:self:intersect} implies that $\mathcal{O}_{\bw_{\tau}(t_{1})} \cap \mathcal{O}_{\bw_{\tau}(t_{2})} \neq \emptyset$ 
and therefore  $\mathcal{O}_{\bw_{\tau}(t_{1})}=\mathcal{O}_{\bw_{\tau}(t_{2})}$.
Hence by \ref{L:iso:orbits} we have 
\begin{equation}
\addtocounter{theorem}{1}
\label{E:intersect:w}
\bw_{\tau}(t_{1}) = \rho_{jk} \bw_{\tau}(t_{2}) \quad \text{for some\ } \rho_{jk}\in \stabpq,
\end{equation}
and 
\begin{equation}
\addtocounter{theorem}{1}
\label{E:alpha:beta}
\sigma_1=(-1)^{j}\sigma_2, \quad \sigma_2=(-1)^{k}\sigma_2'.
\end{equation}
Using conservation of $\mathcal{I}_{2}=\Imag(w_{1}^{p}w_{2}^{q})$ and \ref{E:intersect:w} we have
$$\Imag w_{1}^{p}w_{2}^{q}(t_{2}) = \Imag w_{1}^{p}w_{2}^{q}(t_{1})= (-1)^{jp+kq}\Imag w_{1}^{p}w_{2}^{q}(t_{2}).$$
Hence we have 
\begin{equation}
\addtocounter{theorem}{1}
\label{E:jk:admissible}
jp+kq \equiv 0 \mod{2}.
\end{equation}
Now define $\tilde{\bw}$ by 
$$\tilde{\bw}:= \rho_{jk} \circ \bw_{\tau} \circ \TTT_{t_{2}-t_{1}}.$$
Using the definition of $\tilde{\bw}$ and \ref{E:intersect:w} we have
$$\tilde{\bw}(t_{1}) = \rho_{jk} \circ \bw_{\tau}(t_{2})=\bw_{\tau}(t_{1}).$$
Because $j$ and $k$ satisfy \ref{E:jk:admissible} $\tilde{\bw}$ is another solution of \ref{E:odes:p:n} 
and therefore by uniqueness of the initial value problem $\tilde{\bw}\equiv \bw_{\tau}$.
It follows that $t_{2}-t_{1} \in \Perh(\bw_{\tau})$.
The final statement in \ref{P:xtau:embed} follows from \ref{E:alpha:beta}.
\end{proof}

For completeness here is the analogue of \ref{P:xtau:embed} for the case $\tau=\taumax$.
\addtocounter{equation}{1}
\begin{lemma}
\label{P:embedded:taumax}
Let $X_\tau: \cylpq  \ra \Sph^{2(p+q)-1}$
be the $\sorth{p}\times \sorth{q}$-equivariant special Legendrian immersion
defined in \ref{D:X:tau}, with $\tau=\taumax$.
Then there exist a pair of triples $(t_1,\sigma_1,\sigma_2), (t_2,\sigma_1',\sigma_2')
\in \R \times \Sph^{p-1} \times \Sph^{q-1}$ such that
\addtocounter{theorem}{1}
\begin{equation}
\label{E:self:intersect:taumax}
X_{\tau}(t_1, \sigma_1, \sigma_2) = X_{\tau}(t_2, \sigma_1', \sigma_2').
\end{equation}
if and only if
$$ t_2-t_1 = \frac{\lcm(p,q)\pi}{n\taumax}l, \quad \sigma_1 = (-1)^{jl}\sigma_1',
\quad \sigma_2 = (-1)^{kl}\sigma_2', \quad \text{for any}\ l\in \Z,$$
where $j=q/\hcf(p,q)$ and $k=p/\hcf(p,q)$.
$$\Per(X_\tau) = \langle \TTT_x \circ  ((-1)^j\Id_{\Sph^{p-1}}, (-1)^k\Id_{\Sph^{q-1}}) \rangle
\quad \text{where} \ x={\frac{\lcm(p,q) \pi}{n\taumax}}.$$
\end{lemma}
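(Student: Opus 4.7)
Since $\tau=\taumax$, the plan is to exploit the explicit closed-form expression for $X_{\taumax}$ given in \ref{E:X:taumax}, which is simply a pair of uniformly rotating complex exponentials multiplying $\sigma_1$ and $\sigma_2$. First I would substitute into the self-intersection equation $X_{\taumax}(t_1,\sigma_1,\sigma_2)=X_{\taumax}(t_2,\sigma_1',\sigma_2')$ and cancel the $t$-independent prefactors $\sqrt{p/n}\,e^{-i\pi/(4p)}$ and $\sqrt{q/n}\,e^{-i\pi/(4q)}$, reducing the problem to the pair of scalar equations
\begin{equation*}
\sigma_1 = e^{2ni\taumax\, x/p}\,\sigma_1', \qquad \sigma_2 = e^{-2ni\taumax\, x/q}\,\sigma_2',
\end{equation*}
where $x := t_2-t_1$.

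The key observation is that the $\sigma_i,\sigma_i'$ are nonzero real unit vectors in $\R^p$ and $\R^q$, so each complex exponential scalar must itself be real, hence equal to $\pm 1$. This forces the pair of conditions $2n\taumax\, x/p\in\pi\Z$ and $2n\taumax\, x/q\in\pi\Z$ simultaneously, and the accompanying signs $\pm 1$ determine the relations between $\sigma_i$ and $\sigma_i'$. The second step is then routine number-theoretic bookkeeping: intersecting the two arithmetic progressions yields a single cyclic lattice of admissible shifts generated by $\pi\lcm(p,q)/(2n\taumax)$, and the parity patterns of the signs are read off using the identity $\lcm(p,q)=jp=kq$ with $j=q/\hcf(p,q)$ and $k=p/\hcf(p,q)$. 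The admissibility parity condition $jp+kq\equiv 0\pmod 2$ demanded by Proposition \ref{P:xtau:embed} is automatic here since $jp+kq=2\lcm(p,q)$.

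The period statement is then a pure cataloguing step: all self-intersections are orbits of the single generator $\TTT_x\circ \bigl((-1)^j\Id_{\Sph^{p-1}},\,(-1)^k\Id_{\Sph^{q-1}}\bigr)$ acting on $\cylpq$, where $x$ is the smallest positive admissible shift produced above. The only mild obstacle is the gcd/lcm parity bookkeeping; there is no analytic difficulty, because $w_1, w_2$ are both everywhere nonvanishing for $\tau=\taumax$ (by Proposition \ref{P:y}(ii), both $\abs{w_1}^2$ and $\abs{w_2}^2$ are constant and positive), so the passage between $\bw$-data and $X$-data involves no singular orbits and no appeal to the more general framework of Proposition \ref{P:xtau:embed} is needed.
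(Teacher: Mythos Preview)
Your approach is exactly the paper's: the authors simply say the result is ``a straightforward computation using the explicit expression for $\bw_{\tau}$ (see \ref{E:X:taumax})'', and you carry out precisely that computation by reducing to the two scalar equations and using that the $\sigma_i$ are real unit vectors to force the exponentials to be $\pm1$. One remark worth making: the generator you obtain, $\pi\lcm(p,q)/(2n\taumax)$, is the correct one---plugging it into $e^{2ni\taumax x/p}$ and $e^{-2ni\taumax x/q}$ gives $(-1)^j$ and $(-1)^k$ with $j=q/\hcf(p,q)$, $k=p/\hcf(p,q)$, matching both the sign pattern in the statement and Lemma~\ref{L:M:periods}(ii); the displayed value $\lcm(p,q)\pi/(n\taumax)$ in the lemma appears to be missing a factor of $2$ in the denominator.
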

\begin{proof}
This is a straightforward computation using the
explicit expression for $\bw_{\tau}$ (see \ref{E:X:taumax}.v).
\end{proof}


As a simple corollary of \ref{P:xtau:embed} we have
\addtocounter{equation}{1}
\begin{corollary}
\label{C:wtau:closed}
Suppose there exist $t_{0} \in \R$ and $x_{0}\in \R^{+}$ such that $\bw_{\tau}(t_{0}+x_{0})=\bw_{\tau}(t_{0})$, \textit{i.e.} the curve $\bw_{\tau}$ has a point of self-intersection, then $x_0 \in \Per(\bw_{\tau})$.
Hence either 
\begin{itemize}
\item[(i)] $\Per(\bw_{\tau})=(0)$ in which case $\bw_{\tau}:\R \ra \Sph^3$ is an injective immersion, or
\item[(ii)] there exists $T>0$, such that $T\in \Per(\bw_{\tau})$ is the smallest nontrivial
period of $\bw_{\tau}$ and the restriction $\bw_{\tau}:[0,T] \ra \Sph^3$ is a closed embedded curve.
\end{itemize}
In particular, $\bw_{\tau}$ forms a closed curve in $\Sph^{3}$ if and only if $\Per(\bw_{\tau})=0$.
\end{corollary}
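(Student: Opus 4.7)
My plan is to deduce the first assertion from the uniqueness part of Proposition \ref{P:odes:p:n}(iii), and then to deduce the dichotomy from standard facts about closed subgroups of $\R$. First, suppose $\bw_\tau(t_0+x_0)=\bw_\tau(t_0)$ for some $t_0\in\R$ and $x_0>0$. The translated curve $\bw_\tau\circ\TTT_{x_0}$ is evidently another solution of the autonomous system \ref{E:odes:p:n}, and by hypothesis it agrees with $\bw_\tau$ at $t=t_0$. Applying the uniqueness statement in Proposition \ref{P:odes:p:n}(iii) to the common initial data $\bw_\tau(t_0)\in\C^2$ (viewed as initial data at time $t_0$ rather than $0$, which is harmless by time-translation invariance), we conclude $\bw_\tau\circ\TTT_{x_0}\equiv\bw_\tau$ on all of $\R$; that is, $x_0\in\Per(\bw_\tau)$.

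Next I want to analyse the structure of $\Per(\bw_\tau)\subseteq\R$. By definition $\Per(\bw_\tau)$ is a subgroup of $\R$, and it is closed because continuity of $\bw_\tau$ makes $\{x\in\R:\bw_\tau(t)=\bw_\tau(t+x)\text{ for all }t\}$ an intersection of closed sets. Hence $\Per(\bw_\tau)$ is either $(0)$, a discrete subgroup $T\Z$ for some $T>0$, or all of $\R$. The last possibility is excluded because $\bw_\tau$ is a non-constant smooth immersion: inspection of \ref{E:w:ic:p:neq:1} and \ref{E:w:ic:p:eq:1} together with the explicit form \ref{E:X:taumax} in the degenerate case $\tau=\taumax$ shows $\bw_\tau$ is never constant, and a dense period set would force constancy by continuity.

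In case $\Per(\bw_\tau)=(0)$, the first part of the proof shows $\bw_\tau$ has no self-intersection whatsoever, so $\bw_\tau:\R\ra\Sph^3$ is injective, giving alternative (i). In case $\Per(\bw_\tau)=T\Z$ with $T>0$, the restriction $\bw_\tau\colon[0,T]\ra\Sph^3$ descends to a smooth map $\widetilde{\bw}_\tau\colon\R/T\Z\ra\Sph^3$. If $\widetilde{\bw}_\tau(t_1)=\widetilde{\bw}_\tau(t_2)$ for $0\le t_1<t_2<T$, the first part of the proof applied to $x_0=t_2-t_1$ would place a positive element of $\Per(\bw_\tau)$ strictly below $T$, contradicting minimality; hence $\widetilde{\bw}_\tau$ is injective, and since it is also an immersion of a compact manifold it is an embedding, yielding alternative (ii). The final ``if and only if'' is then immediate: a closed curve requires a nontrivial period, and conversely any nontrivial period delivers case (ii).

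I do not anticipate a serious obstacle here; the entire argument rests on the uniqueness half of Proposition \ref{P:odes:p:n}(iii) plus the elementary classification of closed subgroups of $\R$. The only point requiring any care is the exclusion of a dense period set, which reduces to verifying that $\bw_\tau$ is genuinely non-constant for every admissible $\tau$, and this is visible directly from the initial data.
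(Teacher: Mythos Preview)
Your proof is correct and follows essentially the same approach as the paper. The paper states the corollary as a simple consequence of Proposition \ref{P:xtau:embed} without further details; your argument bypasses the intermediary of $X_\tau$ and goes directly to the uniqueness part of Proposition \ref{P:odes:p:n}(iii), which is exactly the tool used inside the proof of Proposition \ref{P:xtau:embed} itself, and you spell out the standard closed-subgroup-of-$\R$ dichotomy that the paper leaves implicit.
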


We now completely determine the periods and half-periods of $\bw_{\tau}$.
We see from \ref{E:w:sym:p:eq:1} and \ref{E:w:sym:p:neq:1} that 
$\tcheck_{2\pthat} \in \unit{2}$ ($\tcheck_{x}\in \unit{2}$ is defined in \ref{E:M:defn} and 
$\pthat$ is defined by \ref{E:pthat})
plays a fundamental role in the geometry of $\bw_{\tau}$.
We call $\tcheck_{2\pthat}$ the \emph{rotational period of $\bw_{\tau}$}, since by \ref{E:w:sym:p:eq:1} and \ref{E:w:sym:p:neq:1} $\tcheck_{2\pthat}$
 controls how $\bw_{\tau}$ gets rotated as we move from one domain of periodicity of $y_{\tau}$ to the next.
This motivates the following definition.

\begin{definition}
\label{D:rp:order}
\addtocounter{equation}{1}
Define $k_{0}\in \N \cup \{+\infty\}$ to be the order of the rotational period $\tcheck_{2\pthat} \in \unit{2}$. 
We set $k_{0}= +\infty$ if the rotational period has infinite order.
\end{definition}
\noindent
We have the following simple result relating the periods and half-periods of $\bw_{\tau}$ 
to the periods and half-periods of $\{\tcheck_x\}$.
\begin{lemma}
\label{L:w:half:period:a}
\addtocounter{equation}{1}
For $0<\abs{\tau}<\taumax$, we have 
\begin{itemize}
\item[(i)]
$x\in \Perh(\bw_{\tau}) \iff x=2k\pt$ for some $k \in \Z$ and $2k\pthat \in \Perh(\{\tcheck_x\})$.
\item[(ii)]
$x\in \Per(\bw_{\tau}) \iff x=2k\pt$ for some $k \in \Z$ and $2k\pthat \in \Per(\{\tcheck_x\}) = 2\pi\lcm(p,q)$.
\end{itemize}
If the rotational period $k_{0}$ of $\tcheck_{2\pthat}$ defined in \ref{D:rp:order} has infinite order then
 $\Per(\bw_{\tau})=\Perh(\bw_{\tau})=(0)$, otherwise
\begin{equation}
\addtocounter{theorem}{1}
\label{E:w:period:lattice}
\Per(\bw_{\tau}) = 2k_{0}\pt \Z.
\end{equation}
$k_{0}$ the order of the rotational period $\tcheck_{2\pthat}$ defined in \ref{D:rp:order} can also be characterised as 
\begin{equation}
\addtocounter{theorem}{1}
\label{E:k0:alt}
k_0= \min\{k \in \Z^+ | \ k\pthat \in \pi \lcm(p,q) \Z\}.
\end{equation}

\end{lemma}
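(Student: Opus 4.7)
The plan is to exploit the fundamental twisted-product symmetry
\[ \bw_\tau \circ \TTT_{2\pt} = \tcheck_{2\pthat} \circ \bw_\tau \]
established in Lemmas \ref{L:y:w:symmetry} and \ref{L:y:w:symmetry:p:neq:1}. Iterating $k$ times one obtains
\[ \bw_\tau \circ \TTT_{2k\pt} = \tcheck_{2k\pthat} \circ \bw_\tau, \qquad k \in \Z, \]
and this single identity drives the entire proof. Both (i) and (ii) will reduce to understanding when $\tcheck_{2k\pthat}$ lands in $\stabpq$, respectively is the identity.

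For the backward direction of (i), assume $x = 2k\pt$ and $2k\pthat \in \Perh(\{\tcheck_x\})$. By \ref{E:M:half:period} we have $\tcheck_{2k\pthat} = \rho_{jk}$ for some $\rho_{jk} \in \stabpq$, so the iterated identity gives $\bw_\tau \circ \TTT_x = \rho_{jk} \circ \bw_\tau$ and hence $x \in \Perh(\bw_\tau)$ by \ref{E:w:period:alt}. For the forward direction, suppose $x \in \Perh(\bw_\tau)$, so that $\bw_\tau \circ \TTT_x = \rho_{jk} \circ \bw_\tau$ for some $\rho_{jk} \in \stabpq$. Taking $|\cdot|^2$ of the second component kills $\rho_{jk}$ and yields $y_\tau \circ \TTT_x = y_\tau$. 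Since for $0<\abs{\tau}<\taumax$ the function $y_\tau$ attains its \emph{distinct} extrema $\ymax$ and $\ymin$ exactly once in $[0,2\pt)$ (see Lemma \ref{P:y}.i.b together with \ref{L:y:symmetry}), $2\pt$ is its fundamental period and therefore $x = 2k\pt$ for some $k \in \Z$. Combining with the iterated identity gives
\[ \tcheck_{2k\pthat}\circ \bw_\tau = \rho_{jk} \circ \bw_\tau \quad \text{pointwise in } t. \]
Because $1-\ymax>0$ and $\ymin>0$ both components $w_1(t), w_2(t)$ are nowhere zero, and since both $\tcheck_{2k\pthat}$ and $\rho_{jk}$ are diagonal in $\unit{2}$, comparing entries forces $\tcheck_{2k\pthat} = \rho_{jk}$, i.e.\ $2k\pthat \in \Perh(\{\tcheck_x\})$.

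Part (ii) is the specialisation of (i) obtained by restricting to $\rho_{jk} = \Id$: this is equivalent to $\tcheck_{2k\pthat} = \Id$, equivalently $2k\pthat \in \Per(\{\tcheck_x\}) = 2\pi \lcm(p,q)\Z$ by Lemma \ref{L:M:periods}(i). To identify $\Per(\bw_\tau)$ explicitly, note that by Definition \ref{D:rp:order} $k_0$ is the order of $\tcheck_{2\pthat}$ in $\unit{2}$, so $\tcheck_{2k\pthat} = \tcheck_{2\pthat}^{\,k} = \Id$ if and only if $k_0 \mid k$ (vacuous if $k_0 = \infty$). Hence $\Per(\bw_\tau) = 2k_0 \pt \Z$ when $k_0 < \infty$, and $\Per(\bw_\tau) = (0)$ when $k_0 = \infty$; in the latter case trivially $\Perh(\bw_\tau) = (0)$ as well, since any half-period would force $\tcheck_{2k\pthat} \in \stabpq$ and thus $\tcheck_{4k\pthat} = \Id$, contradicting $k_0 = \infty$ unless $k=0$. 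Finally, characterisation \ref{E:k0:alt} is immediate: $\tcheck_{2k\pthat} = \Id$ iff $2k\pthat \in 2\pi\lcm(p,q)\Z$ iff $k\pthat \in \pi\lcm(p,q)\Z$, and $k_0$ is by definition the smallest positive such $k$.

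There is really no serious obstacle here; the only delicate step is the forward direction of (i), where one must pass from the pointwise equation $\tcheck_{2k\pthat}\bw_\tau(t) = \rho_{jk}\bw_\tau(t)$ to the genuine matrix equality $\tcheck_{2k\pthat} = \rho_{jk}$. The diagonal form of both matrices together with the non-vanishing of $w_1$ and $w_2$ along the curve (which uses precisely the assumption $0<\abs{\tau}<\taumax$) makes this conclusion automatic, and this same non-degeneracy is what ensures $2\pt$ is the fundamental period of $y_\tau$.
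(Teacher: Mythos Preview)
Your proof is correct and follows essentially the same approach as the paper's: reduce to the period lattice of $y_\tau$ via $|w_2|^2$, then invoke the iterated symmetry $\bw_\tau\circ\TTT_{2k\pt}=\tcheck_{2k\pthat}\circ\bw_\tau$ to translate half-periods of $\bw_\tau$ into half-periods of $\{\tcheck_x\}$. You are slightly more explicit than the paper in two places---justifying that $2\pt$ is the \emph{fundamental} period of $y_\tau$, and arguing the passage from the pointwise identity $\tcheck_{2k\pthat}\bw_\tau(t)=\rho_{jk}\bw_\tau(t)$ to the matrix equality via diagonality and non-vanishing of $w_1,w_2$---both of which the paper leaves implicit.
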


\begin{proof}
\emph{(i)}. Suppose $x\in \Perh(\bw_{\tau})$.
From the definition of $\Perh(\bw_{\tau})$, $w_{2} \circ \TTT_{x}= \pm w_{2}$.
Since $y_{\tau}=\abs{w_{2}}^{2}$ this implies $y_{\tau} \circ \TTT_{x}=y_{\tau}$ 
and hence $x \in \Per(y_{\tau})=2\pt\Z$.
Then from \ref{E:w:sym:p:eq:1} or \ref{E:w:sym:p:neq:1} (according to whether $p=1$ or $p\neq 1$) 
we have 
$$ \bw_{\tau} \circ \TTT_{2k\pt} = \tcheck_{2k\pthat} \circ \bw_{\tau}.$$
Hence $2k\pt$ is a half-period of $\bw_{\tau}$ of type $(jk)$ if and only if $\tcheck_{2k\pthat} = \rho_{jk}$.
This is equivalent to $2k\pthat$ being a half-period of $\{\tcheck_x\}$ of type $(jk)$ and (i) now follows using \ref{L:M:periods}.

(ii) follows from (i) by looking only at half-periods of type $(++)$ and using \ref{L:M:periods}. 
The structure of $\Per(\bw_{\tau})$ claimed follows from 
(ii).
\end{proof}
\begin{remark}
\addtocounter{equation}{1}
For $\tau=0$, $X_{0}$ is an embedding whose image is contained in the standard totally real equatorial sphere 
$\Sph^{n-1}\subset \R^{n} \subset \C^{n}$. In this case $\Per(\bw_{\tau}) = \Perh(\bw_{\tau}) = (0).$
For $\tau = \taumax$, we leave it as an elementary exercise for the reader to use the explicit expression 
given in \ref{E:X:taumax} to write down the period and half-period lattices of $\bw_{\tau}$ in this case (see also Proposition \ref{P:embedded:taumax}).
\end{remark}
For the rest of this section we always assume $0<\abs{\tau}<\taumax$ unless stated otherwise.
We can completely describe the half-period lattice $\Perh(\bw_\tau)$ as follows:
\addtocounter{equation}{1}
\begin{lemma}
Fix a pair of admissible integers $p$ and $q$ and let $n=p+q$. Then 
\label{L:w:half:period}
$$\pthat \notin \pi \Q \iff k_{0}= \infty \iff \Perh(\bw_{\tau})=\Per(\bw_{\tau})=(0).$$
If $\pthat \in \pi \Q$, then
\begin{itemize}
\item[(i)] If $k_0$ is odd then $\Perh(\bw_{\tau}) = \Per(\bw_{\tau}) = 2k_0 \pt \Z$, \textit{i.e.} $\bw_{\tau}$ has no strict half-periods.
\item[(ii)] If $k_0$ is even and $p>1$ then  $\Perh(\bw_{\tau}) = \frac{1}{2} \Per(\bw_{\tau}) = k_0 \pt\Z$. Moreover, 
for fixed $p$ and $q$ every strict half-period of $\bw_{\tau}$ is of type $(jk)$ where  $j = q/\hcf(p,q) \mod{2}$ and $k=p/\hcf(p,q) \mod{2}$.
\item[(iii)] a. If $k_{0}$ is even,  $p=1$ and $n$ is even then $\Perh(\bw_{\tau})=\Per(\bw_{\tau}) = 2k_0 \pt \Z$, \textit{i.e.} $\bw_{\tau}$ has no strict half-periods.\\
b. If $k_{0}$ is even,  $p=1$ and $n$ is odd then $\Perh(\bw_{\tau}) = \frac{1}{2} \Per(\bw_{\tau}) = k_0 \pt\Z$ (and 
 every strict-half period is necessarily of type $(+-)$.)
\end{itemize}
\end{lemma}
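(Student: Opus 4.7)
\medskip

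\noindent\textbf{Proof proposal.} The plan is to reduce everything to the arithmetic characterisation of periods and half-periods of the one-parameter group $\{\tcheck_x\}$ established in Lemma \ref{L:M:periods}, using the bridge provided by Lemma \ref{L:w:half:period:a}. First I would dispose of the case $k_0 = \infty$. By definition $\tcheck_{2\pthat}$ has finite order if and only if both $e^{2i\pthat/p}$ and $e^{-2i\pthat/q}$ are roots of unity, which (since $p,q$ are fixed integers) is equivalent to $\pthat \in \pi\Q$. If $k_0=\infty$ then no positive integer multiple of $2\pthat$ lies in $\pi\lcm(p,q)\Z \supseteq \Perh(\{\tcheck_x\})$, so by Lemma \ref{L:w:half:period:a} both $\Perh(\bw_\tau)$ and $\Per(\bw_\tau)$ are trivial.

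Assume from now on $\pthat \in \pi\Q$, so $k_0$ is finite. By the characterisation \ref{E:k0:alt} of $k_0$ we may write $k_0 \pthat = m_0 \pi\lcm(p,q)$ for some $m_0 \in \Z$, and by minimality of $k_0$ the integers $k_0$ and $m_0$ are coprime (otherwise $k_0/\gcd(k_0,m_0)$ would contradict \ref{E:k0:alt}). For $\Per(\bw_\tau)$, Lemma \ref{L:w:half:period:a}(ii) combined with \ref{L:M:periods}(i) gives $2k\pt \in \Per(\bw_\tau) \iff k\pthat \in \pi\lcm(p,q)\Z \iff k_0 \mid k$, and hence $\Per(\bw_\tau) = 2k_0\pt\Z$.

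For $\Perh(\bw_\tau)$, I would invoke Lemma \ref{L:w:half:period:a}(i) and split on the three cases of \ref{L:M:periods}(ii)--(iii). In the cases where $\Perh(\{\tcheck_x\}) = \pi\lcm(p,q)\Z$ (namely $p>1$ or $p=1,n$ odd) the condition $2k\pthat \in \Perh(\{\tcheck_x\})$ unwinds via $k_0\pthat = m_0\pi\lcm(p,q)$ and the coprimality of $(k_0,m_0)$ to $k_0 \mid 2k$. When $k_0$ is odd this forces $k_0 \mid k$, giving $\Perh(\bw_\tau)=2k_0\pt\Z = \Per(\bw_\tau)$, which is case (i). When $k_0$ is even this forces $(k_0/2)\mid k$, giving $\Perh(\bw_\tau) = k_0\pt\Z = \tfrac12\Per(\bw_\tau)$, which is cases (ii) and (iii)b. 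Finally, in the remaining case $p=1$, $n$ even, we have $\Perh(\{\tcheck_x\}) = \Per(\{\tcheck_x\}) = 2\pi\lcm(p,q)\Z$, so $\Perh(\bw_\tau) = \Per(\bw_\tau)$ automatically, proving (iii)a.

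It remains to identify the type of any strict half-period. A strict half-period of $\bw_\tau$ of the form $k_0\pt$ satisfies $\bw_\tau \circ \TTT_{k_0\pt} = \tcheck_{k_0\pthat}\circ\bw_\tau$, so its type is the type of $k_0\pthat$ as a strict half-period of $\{\tcheck_x\}$. For $p>1$, Lemma \ref{L:M:periods}(ii) states that the unique type of strict half-period of $\{\tcheck_x\}$ is $(jk)$ with $j = q/\hcf(p,q) \bmod 2$ and $k = p/\hcf(p,q) \bmod 2$, proving the type claim in (ii); for $p=1$ the subgroup $\stabpq = \langle \rho_{+-}\rangle$ forces type $(+-)$ in (iii)b. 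The only place any real care is needed is the coprimality argument ensuring $k_0 \mid 2k \iff (k_0/2) \mid k$ in the even case, and the clean bookkeeping of the three sub-cases of \ref{L:M:periods}; beyond that the proof is essentially an unpacking of the arithmetic already set up in Lemmas \ref{L:M:periods} and \ref{L:w:half:period:a}.
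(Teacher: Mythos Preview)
Your proof is correct and follows essentially the same route as the paper: both reduce the computation of $\Perh(\bw_\tau)$ to the structure of $\Perh(\{\tcheck_x\})$ via Lemmas \ref{L:M:periods} and \ref{L:w:half:period:a}, with your version making the arithmetic slightly more explicit (writing $k_0\pthat = m_0\pi\lcm(p,q)$ with $\gcd(k_0,m_0)=1$) where the paper phrases the same step as the lattice intersection $2\pt\Z \cap k_0\pt\Z = \lcm(2,k_0)\pt\Z$.
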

\begin{proof}
The equivalences in the first line of the statement follow from the characterisation of $k_{0}$ given in \ref{E:k0:alt} 
together with \ref{E:w:period}.
Now suppose $x\in \Perh(\bw_{\tau})$ and $\pthat \in \pi \Q$, so that the rotational period $k_{0}$ is finite.
Then from \ref{L:w:half:period:a} we have
\begin{equation}
\addtocounter{theorem}{1}
\label{E:perh}
x\in 2\pt \Z \cap \tfrac{1}{2}\Per(\bw_{\tau}) = 2\pt \Z \cap k_{0}\pt\Z = \lcm(2,k_{0})\pt \Z = 
\begin{cases}
2k_{0}\pt \Z & k_{0} \text{\ odd};\\
\ k_{0}\pt \Z & k_{0} \text{\ even}.
\end{cases}
\end{equation}
(i) If $k_0$ is odd then from \ref{E:perh} $x\in 2k_0 \pt \Z = \Per(\bw_{\tau})$ and hence $\Perh(\bw_{\tau}) = \Per(\bw_{\tau})$ as required. 

If $k_0$ is even, then from \ref{E:perh} $x \in k_0 \pt \Z$. Furthermore, if $x$ is a strict half-period of $\bw_{\tau}$ then 
$x\in k_0 \pt (2\Z+1)$.

\noindent
(ii) Suppose now that $p>1$ and hence by \ref{E:w:half:period:gt:1} we should consider all types of half-period.
Given any $x\in  k_0 \pt (2\Z+1)$ notice that $\bw_{\tau} \circ \TTT_{x} = \bw_{\tau} \circ \TTT_{k_0\pt}$ 
since $2k_0 \pt \Z = \Per(\bw_{\tau})$. Since $k_{0}$ is assumed even, $k_{0}\pt \in \Per(y_{\tau})$
and hence  $\bw_{\tau} \circ \TTT_{k_0\pt} = \tcheck_{k_{0}\pthat} \circ \bw_{\tau}  
$.
By \ref{L:w:half:period} and the definition of $k_{0}$,  
$\tcheck_{k_{0}\pthat}\neq \Id$ but $\tcheck_{2k_{0}\pthat}=\Id$. 
Hence from the diagonal form of $\tcheck_{x}$
 we must have $\tcheck_{k_{0}\pthat} = \rho_{jk} \neq \Id$ for some 
$(jk)\neq (++)$. Hence $x$ is a strict half-period as claimed. Moreover, since 
$k_{0} \pthat$ is a strict half-period of $\{\tcheck_x\}$ then by \ref{L:M:periods} it must 
be a half-period of type $(jk)$ with $j$ and $k$ as in  \ref{L:w:half:period}.ii.

\noindent
(iii) If $p=1$ the result follows using the structure of $\Perh(\{\tcheck_x\})$ established 
in \ref{L:M:periods}.iii.
%
\end{proof}
By combining Proposition \ref{P:xtau:embed} with our results on $\Per(\bw_{\tau})$ and $\Perh(\bw_{\tau})$ in 
\ref{L:w:half:period:a} and \ref{L:w:half:period} we have a complete understanding of the self-intersection points 
of $X_{\tau}$.

\section{Discrete symmetries of $X_{\tau}$}
\label{S:sym:xtau}
In addition to its intrinsic interest, 
the full group of symmetries of our $\sorth{p} \times \sorth{q}$-equivariant building blocks $X_\tau$
plays a fundamental role in our subsequent gluing constructions \cite{haskins:kapouleas:hd2,
haskins:kapouleas:hd3}. These additional discrete symmetries that the $X_\tau$ possess
allow us to impose certain symmetries throughout our entire gluing construction (see the 
discussion in our survey paper \cite{haskins:kapouleas:survey}.)
The imposition of these symmetries simplifies some aspects of the gluing construction. 

\subsection*{General features of the symmetries of $X_\tau$} \phantom{ab}
Fix a pair of admissible integers $p$ and $q$ and set $n=p+q$.
We define a \emph{symmetry of $X_{\tau}$} to be a pair $(\mtilde,\MMM)\in \orth{2n} \times \Diff(\cylpq)$ 
such that 
\begin{equation}
\addtocounter{theorem}{1}
\label{E:xtau:sym:def}
\mtilde \circ X_{\tau} = X_{\tau} \circ \MMM,
\end{equation}
where $\Diff(\cylpq)$ denotes the group of diffeomorphisms of the domain of $X_\tau$.
The set of all symmetries of $X_{\tau}$ forms a group with the obvious multiplication. 

Recall from Appendix \ref{A:isom:sl} the definition and structure 
of the groups $\Isomsl \subset \orth{2n}$ and $\Isomslpm \subset \orth{2n}$, the groups of all special 
Lagrangian and  $\pm$-special Lagrangian isometries of $\C^{n}$ respectively. 
$\pm$-special Lagrangian isometries are the natural class of symmetries of special Legendrian immersions in $\Sph^{2n-1}$ 
in the following sense: if $(\mtilde,\MMM)$ is a symmetry of a special Legendrian immersion $X$ then we expect 
$\mtilde$ to be a special Lagrangian isometry if $\MMM$ is orientation-preserving and $\mtilde$ to be an 
anti-special Lagrangian isometry if $\MMM$ is orientation-reversing. More precisely, we will see later in this section that for
any symmetry $(\mtilde,\MMM)$ of $X_{\tau}$ then $\mtilde \in \Isomslpm$ and moreover $\mtilde \in \Isomsl$ if and only 
if $\MMM$ is orientation preserving.

Rather than thinking of the symmetries of $X_{\tau}$ as a subgroup of $\orth{2n} \times \Diff(\cylpq)$ we prefer to work with
 subgroups of the domain or target separately. 
To this end we define a subgroup of $\Diff(\cylpq)$
\addtocounter{theorem}{1}
\begin{equation}
\label{E:sym:xtau:def}
\Sym(X_\tau):= \{\MMM \in \Diff(\cylpq)\, |\
\exists\, \mtilde\in \orth{2n} \text{\ s.t.\ } \mtilde \circ X_\tau = X_\tau \circ \MMM\}.
\end{equation}

We define the subgroup $\Per(X_\tau) \subset \Sym(X_\tau)$ by
\begin{equation}
\addtocounter{theorem}{1}
\label{E:per:xtau:def}
\Per(X_\tau):= \{ \MMM \in \Diff(\cylpq) \, |\ X_\tau \circ \MMM=X_\tau\}.
\end{equation}
It follows that if $\MMM \in \Per(X_{\tau})$ then $\MMM$ must be orientation-preserving. 
We also define a subgroup $\widetilde{\Sym}(X_\tau) \subset \Isom(\Sph^{2n-1})=\orth{2n}$ by
\begin{equation}
\addtocounter{theorem}{1}
\label{E:symtilde:xtau:def}
\Symtilde(X_\tau):= \{ \mtilde \in \orth{2n}\ | \ \mtilde \circ X_\tau =
X_\tau \circ \MMM \text{\ for some\ } \MMM \in \Sym(X_\tau)\}.
\end{equation}
As already mentioned above we will see later in the section that $\Symtilde(X_{\tau}) \subset \Isomslpm$. 
In particular, since $\Isomslpm$ has four connected components then we can consider various subgroups 
of $\Symtilde(X_{\tau})$ namely: (i) $\Symtilde(X_{\tau}) \cap \sunit{n}$, (ii) 
$\Symtilde(X_{\tau}) \cap \IsomslpmJ = \Symtilde(X_{\tau}) \cap \unit{n}$ or 
(iii) $\Symtilde(X_{\tau}) \cap \Isomsl$. 

The three groups $\Sym(X_\tau)$, $\Per(X_\tau)$ and $\Symtilde(X_\tau)$ are related by the following
\begin{lemma}
\addtocounter{equation}{1}
\label{L:sym:subgroups}
For $\tau \neq 0$ there exists a canonical surjective homomorphism 
$\rho: \Sym(X_\tau) \ra \Symtilde(X_\tau)$ with 
$\ker \rho = \Per(X_\tau)$. Hence $\Per(X_\tau)$ is a normal subgroup of $\Sym(X_\tau)$ and 
$$\Symtilde(X_\tau) \cong \Sym(X_\tau)/\Per(X_\tau).$$
\end{lemma}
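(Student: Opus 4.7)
The plan is to define $\rho$ directly by $\rho(\MMM) = \mtilde$, where $\mtilde \in \orth{2n}$ is the (unique) isometry satisfying $\mtilde \circ X_\tau = X_\tau \circ \MMM$. The key point is to establish that $\mtilde$ is uniquely determined by $\MMM$. Suppose $\mtilde_1$ and $\mtilde_2$ both satisfy this relation; then $\mtilde_1$ and $\mtilde_2$ agree on the image of $X_\tau$, so they agree on the real linear span of this image. Hence it suffices to show that for $\tau \neq 0$ the image $X_\tau(\cylpq) \subset \C^n$ is not contained in any proper real linear subspace of $\C^n \cong \R^{2n}$.

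For $\tau\neq 0$, conservation of $\mathcal{I}_2 = \Imag(w_1^p w_2^q) = -2\tau$ forces both $w_1(t)$ and $w_2(t)$ to be non-zero for every $t\in\R$. Fix any $t$. Since $X_\tau(t,\sigma_1,\sigma_2) = (w_1(t)\sigma_1, w_2(t)\sigma_2)$ and $\Sph^{p-1}\subset \R^p$ spans $\R^p$ (and likewise for $\Sph^{q-1}$ when $p>1$; in the degenerate case $p=1$ one uses just the $\Sph^{n-2}$ factor together with the one-dimensional first factor), the image at fixed $t$ contains the totally real subspace $w_1(t)\R^p \oplus w_2(t)\R^q$. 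To pick up the remaining $\C$-directions, vary $t$: by Lemmas \ref{L:y:w:symmetry} and \ref{L:y:w:symmetry:p:neq:1} the arguments $\psi_1$ and $\psi_2$ are non-constant for $0<\abs{\tau}<\taumax$, so there exist $t_1, t_2$ with $\psi_i(t_1) \not\equiv \psi_i(t_2) \bmod \pi$ for $i=1,2$, and the pair of real $p$-planes $w_1(t_j)\R^p$ (resp. $q$-planes $w_2(t_j)\R^q$) together span all of $\C^p$ (resp.\ $\C^q$). The case $\tau = \taumax$ is handled separately using the explicit formula \ref{E:X:taumax}. In all $\tau\neq 0$ cases the real span of the image is all of $\R^{2n}$, so $\mtilde$ is uniquely determined.

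With $\rho$ well-defined as a map, the homomorphism property is immediate: if $\mtilde_i \circ X_\tau = X_\tau \circ \MMM_i$ for $i=1,2$ then
\[
(\mtilde_1 \mtilde_2) \circ X_\tau = \mtilde_1 \circ (\mtilde_2 \circ X_\tau) = \mtilde_1 \circ X_\tau \circ \MMM_2 = X_\tau \circ \MMM_1 \circ \MMM_2,
\]
so $\rho(\MMM_1 \MMM_2) = \mtilde_1 \mtilde_2 = \rho(\MMM_1)\rho(\MMM_2)$. Surjectivity of $\rho$ is immediate from the definition of $\Symtilde(X_\tau)$ in \ref{E:symtilde:xtau:def}: every $\mtilde \in \Symtilde(X_\tau)$ arises from some $\MMM \in \Sym(X_\tau)$ by definition.

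Finally, identifying the kernel: $\rho(\MMM) = \Id$ if and only if $X_\tau \circ \MMM = \Id \circ X_\tau = X_\tau$, i.e.\ if and only if $\MMM \in \Per(X_\tau)$ as defined in \ref{E:per:xtau:def}. The normality of $\Per(X_\tau)$ in $\Sym(X_\tau)$ and the isomorphism $\Symtilde(X_\tau) \cong \Sym(X_\tau)/\Per(X_\tau)$ then follow from the first isomorphism theorem. The main (indeed only non-routine) obstacle is the full-span statement establishing well-definedness of $\rho$; everything else is formal.
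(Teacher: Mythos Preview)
Your proof is correct and follows the same overall structure as the paper's: define $\rho(\MMM)=\mtilde$, establish uniqueness of $\mtilde$ via linear fullness of the image, then read off the homomorphism, surjectivity, and kernel properties formally. The only difference is in how linear fullness is justified: the paper simply invokes the general fact that any Legendrian submanifold of $\Sph^{2n-1}$ that is not totally geodesic is linearly full (citing \cite[Lemma 3.13]{haskins:complexity}), whereas you prove it by hand for this particular family using the explicit form of $X_\tau$ and the monotonicity of $\psi_1,\psi_2$. Your direct argument is self-contained and works fine (the set of $t_2$ where $\psi_i(t_2)-\psi_i(t_1)\in\pi\Z$ is discrete for each $i$, so a generic choice avoids both), while the paper's citation is shorter and highlights that this is a general phenomenon rather than a special feature of the $\sorth{p}\times\sorth{q}$-invariant examples.
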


\begin{proof}
Using the fact that any Legendrian submanifold of $\Sph^{2n-1}$ that is not totally geodesic is linearly
full \cite[Lemma 3.13]{haskins:complexity}, one can see that if $\mtilde_1, \mtilde_2 \in \orth{2n}$
and $\mtilde_1 \circ X_\tau = \mtilde_2 \circ X_\tau$ for some $\tau \neq 0$, then $\mtilde_1 = \mtilde_2$.
Hence, by the definitions of $\Sym(X_\tau)$ and $\Symtilde(X_\tau)$, 
given any $\MMM \in \Sym(X_\tau)$ there exists a unique element $\mtilde \in \orth{2n}$ such that
$\mtilde \circ X_\tau =  X_\tau \circ \MMM$. 
We define the map $\rho: \Sym(X_\tau) \ra \Symtilde(X_\tau)$
by $\MMM \mapsto \mtilde$. $\rho$ is readily seen to be a homomorphism which by the definitions of 
$\Sym(X_\tau)$ and $\Symtilde(X_\tau)$ is surjective. It follows immediately from the definition of 
$\Per(X_\tau)$ that $\ker \rho = \Per(X_\tau)$.
\end{proof}

\begin{remark}
\addtocounter{equation}{1}
\label{R:sym:pullback}
For any $\MMM\in \Sym(X_{\tau})$ we observe that 
$$ \MMM^{*}(X_{\tau}^{\tiny *}\,g_{\tiny \,\Sph^{2n-1}}) = (X_{\tau} \circ \MMM)^{*} g_{\tiny \,\Sph^{2n-1}}= (\mtilde \circ X_{\tau})^{*}g_{\tiny \,\Sph^{2n-1}} = X_{\tau}^{*} \circ \mtilde^{*} g_{\tiny \,\Sph^{2n-1}} = X_{\tau}^{*}g_{\tiny \,\Sph^{2n-1}}.$$
Therefore  any $\MMM\in \Sym(X_{\tau})$ 
must be an isometry of the pullback metric $g_{\tau}:=X_{\tau}^{\tiny *}\,g_{\tiny \,\Sph^{2n-1}}$ on $\cylpq$.
For this reason we want to determine the 
isometry group of $\cylpq$ endowed with the pullback metric $g_{\tau}$.
In fact,  we will below see that $\Sym(X_{\tau})=\Isom(\cylpq,g_{\tau})$.
\end{remark}

\subsection*{Symmetries of the pullback metric $g_{\tau}$} \phantom{ab}
In this section we study $\Isom(\cylpq,g_{\tau})$ the group of 
 isometries of the pullback metric $g_{\tau}:=X_{\tau}^{\tiny *}\,g_{\tiny \,\Sph^{2n-1}}$ on the cylinder $\cylpq$.
In other words we study the symmetries of the intrinsic geometry of $X_{\tau}$. 
We will study the extrinsic 
geometry of $X_{\tau}$ and the related isometries of $\Sph^{2n-1}$ beginning in \ref{P:xtau:sym:p:eq:1}.
Recall from \ref{P:X:tau}(ii) that the pullback metric $g_{\tau}$ on $\cylpq$ depends only on the function $y_{\tau}$;
isometries of $g_{\tau}$ are thus  intimately connected with the symmetries of $y_{\tau}$ studied in \ref{L:y:symmetry}.

We begin by establishing notation.
If $p>1$ any $\MMM=(\MMM_{1},\MMM_{2})\in \orth{p}\times\orth{q}$ acts as an element of 
$\Diff(\cylpq)$ by 
\begin{equation}
\addtocounter{theorem}{1}
\label{E:orth:diff:p:neq:1}
(t,\sigma_{1},\sigma_{2}) \mapsto (t, \MMM_{1}\sigma_{1},\MMM_{2}\sigma_{2}).
\end{equation}
Similarly, any element of $\MMM\in \orth{n-1}$ acts as an element of $\Diff(\cylone)$ by 
\begin{equation}
\addtocounter{theorem}{1}
\label{E:orth:diff:p:eq:1}
(t,\sigma) \mapsto (t,\MMM \sigma).
\end{equation}
We also define the exchange map $\EEE \in \Diff(\cylpp)$ by
\begin{equation}
\addtocounter{theorem}{1}
\label{E:exchange}
\EEE(t,\sigma_1, \sigma_2) = (t,\sigma_2, \sigma_1).
\end{equation}
Finally, any element $\TTT \in \Isom(\R)$ acts as an element of $\Diff(\cylpq)$ by 
\begin{equation}
\addtocounter{theorem}{1}
\label{E:isomr:ext}
(t,\sigma_{1},\sigma_{2}) \mapsto (\TTT t,\sigma_{1},\sigma_{2}).
\end{equation}
Finally the reader should also consult Appendix \ref{A:groups} for a review of some elementary group theory 
assumed throughout the rest of this section and in Section \ref{S:xtau:limit}.

\medskip

The main result of this section is the following

\begin{prop}[Isometries of the pullback metric on $\cylpq$]
\addtocounter{equation}{1}
\label{P:isom:pb}
Let $g_{\tau}:=X_{\tau}^{\tiny *}\,g_{\tiny \,\Sph^{2n-1}}$ denote the metric induced on $\cylpq$ by the immersion $X_{\tau}: \cylpq \ra \Sph^{2(p+q)-1}$.
For $0<\abs{\tau}<\taumax$, $$\Isom(\cylpq,g_{\tau}) = \dihedral{} \cdot \ort$$ where 
\begin{itemize}
\item[(i)] 
for $p=1$, $\dihedral{}=\langle \tbar, \,\TTT_{2\pt}\rangle$ and $\ort = \orth{n-1}$.
\item[(ii)] for $p>1$ and $p \neq q$, $\dihedral{} = \langle \tbar_{\pt^{+}},\, \tbar_{-\pt^{-}} \rangle$ and 
$\ort = \orth{p}\times\orth{q}$. 
\item[(iii)] 
for $p>1$ and $p=q$, $\dihedral{}=\langle \tbar \circ \EEE,\, \tbar_{\pt/2} \rangle$ and 
$\ort = \orth{p}\times\orth{p}$.
\end{itemize}
\end{prop}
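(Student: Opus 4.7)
The plan is to establish $\Isom(\cylpq, g_\tau) = \dihedral{} \cdot \ort$ by verifying both inclusions, treating the three cases $p=1$, $p>1$ with $p\ne q$, and $p=q>1$ in parallel. For the inclusion $\dihedral{}\cdot\ort \subseteq \Isom(\cylpq, g_\tau)$, the action of $\ort$ fixes the $t$-coordinate and acts on the spherical factor(s) by standard isometries, hence preserves $g_\tau$ since all warping factors in the formula from Proposition \ref{P:X:tau}(ii) depend only on $y_\tau(t)$. The generators of $\dihedral{}$ that act purely on $t$ preserve $g_\tau$ because they preserve $y_\tau$, by Lemma \ref{L:y:symmetry}. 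The remaining generator $\tbar\circ\EEE$ (in the $p=q$ case) is an isometry because $y_\tau\circ\tbar = 1-y_\tau$ from Lemma \ref{L:y:symmetry}(iii) exchanges the warping functions of the two equi-dimensional sphere factors, while $\EEE$ simultaneously exchanges the factors themselves, so the two effects cancel.

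For the reverse inclusion I would first pin down the identity component $\Isom_0(\cylpq, g_\tau)$ via a direct Killing-field computation. Writing a Killing field as $V = a(t,\sigma_1,\sigma_2)\partial_t + X_1 + X_2$ with $X_i$ tangent to the $i$-th spherical factor, the Killing equation $\mathcal{L}_V g_\tau = 0$ splits by tensor type. The $dt^2$ component, together with the non-constancy of $y_\tau^{q-1}(1-y_\tau)^{p-1}$ for $0<|\tau|<\taumax$ (Proposition \ref{P:odes:p:n}), forces $a$ to be independent of the spherical variables and then to vanish; the mixed $dt\,d\sigma_i$ components force each $X_i$ to depend only on $\sigma_i$; and the pure spherical components reduce to the Killing equation on each round sphere. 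Thus $\Isom_0 = \sorth{p}\times\sorth{q}$ (or $\sorth{n-1}$ for $p=1$).

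Given an arbitrary isometry $F$, conjugation by $F$ preserves $\Isom_0$, so $F$ maps $\Isom_0$-orbits to $\Isom_0$-orbits. Since these orbits are exactly the slices $\{t\}\times\Sph^{p-1}\times\Sph^{q-1}$ (or $\{t\}\times\Sph^{n-2}$), the map $F$ descends to a diffeomorphism $\phi$ of $\R$, and $F(t,\sigma_1,\sigma_2) = (\phi(t), h_t(\sigma_1,\sigma_2))$ where $h_t$ is a fiberwise isometry between the product-of-spheres fibers at $t$ and $\phi(t)$. When $p\ne q$ the de Rham decomposition of each fiber is canonical (the two factors are distinguished by dimension), so $h_t\in\orth{p}\times\orth{q}$ and matching radii forces $y_\tau\circ\phi = y_\tau$; combined with the $dt^2$ identity $|\phi'|^2(y\circ\phi)^{q-1}(1-y\circ\phi)^{p-1} = y^{q-1}(1-y)^{p-1}$, Corollary \ref{C:y:dihedral} identifies $\phi\in\dihedral{}$. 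When $p=q$, $h_t$ may additionally swap the two factors, leading to the alternative $y_\tau\circ\phi = 1-y_\tau$, accommodated precisely by the presence of $\tbar\circ\EEE\in\dihedral{}$.

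The main obstacle will be the Killing-algebra computation identifying $\Isom_0$: one must rule out any ``exotic'' Killing fields arising from accidental relations among the three warping functions $\sqrt{1-y_\tau}$, $\sqrt{y_\tau}$, and $\sqrt{y_\tau^{q-1}(1-y_\tau)^{p-1}}$. This reduces to the non-constancy and non-vanishing of $\dot y_\tau$ on each open sub-interval of $(-\pt^-,\pt^+)$, which is immediate from the ODE \ref{E:y:dot} for $0<|\tau|<\taumax$. Once the identity component is identified, the rest is a standard warped-product isometry classification, with the only subtle point being careful bookkeeping of the extra $\Z_2$ exchange symmetry in the $p=q$ case.
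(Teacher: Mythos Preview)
Your approach is correct but takes a genuinely different route from the paper. You pin down the identity component of $\Isom(\cylpq,g_\tau)$ by a Killing-field computation and then use that any isometry permutes the $\Isom_0$-orbits, which are precisely the meridians $\{t\}\times\merpq$. The paper instead reaches the same key fact---that isometries preserve meridians---by showing that every \emph{minimising} geodesic of $(\cylpq,g_\tau)$ is a generator $t\mapsto(t,\sigma)$. It does this by exploiting the ``point reflections'' $\tbar_{t_c}\circ\RRR_\sigma$ at each critical meridian (these act as $-\Id$ on the tangent space at $(t_c,\sigma)$), which impose a quasi-periodicity on any minimising geodesic; a crude diameter bound for a union of $k$ consecutive periodicity domains then forces the geodesic to realise the distance between critical meridians, hence to be a generator. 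Once meridians are known to be preserved, both arguments conclude in the same way, by composing with an element of $\dihedral{}$ and then of $\ort$ to reduce to an isometry fixing a critical meridian pointwise.

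Your route is the standard warped-product toolkit and generalises cleanly; the paper's argument avoids any infinitesimal analysis but is more bespoke. Two places in your sketch deserve a line of justification. First, in the Killing-field step the $dt^2$-equation alone gives only that $af$ is $t$-independent, not $a=0$; you must feed this back into the spherical block to see that the would-be conformal factor $-a(h_i^2)'/h_i^2$ is $t$-dependent, contradicting its being determined by the $t$-independent conformal Killing field $X_i$. Second, after reducing to $\phi=\Id$ you still have an a priori $t$-dependent fiberwise isometry $h_t$; note that $|F_*\partial_t|^2_{g_\tau} = f(t)^2 + |\partial_t h_t|^2_{\text{fiber}}$ together with $|F_*\partial_t|^2=|\partial_t|^2=f(t)^2$ forces $\partial_t h_t=0$.
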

\begin{proof}
Recall from \ref{P:X:tau} that $g_{\tau}$ can be written in terms of $y_{\tau}$ as
\begin{equation}
\addtocounter{theorem}{1}
\label{E:gtau}
g_{\tau} = 
\begin{cases}
y_{\tau}^{q-1}(1-y_{\tau})^{p-1}dt^2 + (1-y_{\tau}) g_{\Sph^{p-1}} + y_{\tau}\, g_{\Sph^{q-1}},\quad & \text{for\ }p>1;\\
y_{\tau}^{n-2} dt^{2} + y_{\tau} g_{\Sph^{n-2}}; \quad & \text{for \ } p=1.
\end{cases}
\end{equation}
It follows immediately from \ref{E:gtau} that for $p>1$ any element of $\orth{p} \times \orth{q}$ 
acting as in \ref{E:orth:diff:p:neq:1} is an isometry of $g_{\tau}$. Similarly, for $p=1$  we have 
$\orth{n-1} \subset \Isom(\cylpq, g_{\tau})$. 
For any $\SSS\in \Isom(\R)$ satisfying $y_{\tau} \circ \SSS= y_{\tau}$, 
extend $\SSS$ to a diffeomorphism of $\cylpq$ as described in \ref{E:isomr:ext}.
Since $\SSS$ preserves $y_{\tau}$ it follows from \ref{E:gtau} that $\SSS \in \Isom(\cylpq, g_{\tau})$. 
Recall from \ref{E:y:sym:p:eq:q} that in the special case $p=q$, $y_{\tau}$ possesses an additional symmetry $\tbar$ 
sending $y_{\tau} \mapsto 1-y_{\tau}$.
Because of this symmetry and the form of \ref{E:gtau} the map $\tbar \circ \EEE \in \Diff(\cylpp)$ defined by  
$(t,\sigma_{1},\sigma_{2}) \mapsto (-t,\sigma_{2},\sigma_{1})$ also belongs to $\Isom(\cylpp, g_{\tau})$.
By using the symmetries of $y_{\tau}$ established in \ref{L:y:symmetry} it follows that
$\dihedral{}$ forms a subgroup of $\Isom(\cylpq, g_{\tau})$ where $\dihedral{}$ is the discrete group defined 
for the three cases (i) $p=1$, (ii) $p>1$, $p\neq q$, (iii) $p>1$, $p=q$ in the statements \ref{P:isom:pb}(i)-(iii) respectively.
Hence we have established that $\dihedral{} \cdot \ort \subseteq \Isom(\cylpq)$ where $\ort = \orth{p} \times \orth{q}$ 
if $p>1$ and $\ort = \orth{n-1}$ if $p=1$. 

It remains to prove that any element in $\Isom(\cylpq, g_{\tau})$ belongs to $\dihedral{} \cdot \ort$.
We begin by introducing some useful terminology. 
Define $\merpq$ by
\addtocounter{theorem}{1}
\begin{equation}
\label{E:merpq}
\merpq
:=
\begin{cases}
\Sph^{p-1} \times \Sph^{q-1}, \quad & \text{if $p>1$;}\\
\Sph^{n-2} , \quad & \text{if $p=1$,}
\end{cases}
\end{equation}
so that $\cylpq = \R \times \merpq$, i.e. $\merpq$ is the cross section of $\cylpq$.
A \emph{meridian} of $\cylpq$ is any hypersurface of the form $\{t\} \times \merpq$ for any fixed $t\in \R$.
Let $\Pi: \cylpq \ra \merpq$ denote projection $(t,\sigma) \mapsto \sigma$.
The \emph{generator} of $\cylpq$ through the point $\sigma \in \merpq$ is the curve $\gamma_{\sigma}: \R \ra \cylpq$ 
given by $$ t\mapsto (t,\sigma),$$
i.e. a generator is a curve $\gamma_{\sigma}$ whose projection $\Pi \circ \,\gamma_{\sigma}$ to the cross section $\merpq$
 is the constant map $\sigma: \R \ra \merpq$.
Suitably parametrised any generator is a minimising geodesic, i.e. 
it minimises the $g_{\tau}$-distance between any two points on its image.
Note that the meridians can be characterised as the integral manifolds of the distribution 
$\mathcal{D}=\langle \partial_{t} \rangle^{\perp}$  of hyperplanes normal to the tangent lines to the generators of $\cylpq$.

The key to proving $\Isom(\cylpq, g_{\tau}) = \dihedral{} \cdot \ort$ is to establish that any  
$\mathsf{I} \in \Isom(\cylpq, g_{\tau})$ maps meridians to meridians.
It suffices to prove that any minimising geodesic of $g_{\tau}$ must be a generator, 
since then any isometry must map generators to generators, preserve the hyperplane distribution $\mathcal{D}$ and therefore map meridians to meridians. To prove that any minimising geodesic is a generator we will make use of some special isometries of $\Isom(\cylpq,g_{\tau})$ 
which we now describe. 

If $t_{c}$ is any critical point of $y_{\tau}$ then reflection $\tbar_{t_{c}}\in \Diff(\cylpq)$ 
across the meridian $\{t_{c}\}\times \merpq$  is contained in the group $\dihedral{}$ 
and hence by the first part of the proposition is an isometry of  $\Isom(\cylpq,g_{\tau})$. 
For $p=1$ and any $\sigma \in \Sph^{n-2}$ we denote by $\RRR_{\sigma} \in \orth{n-1}$ reflection 
with respect to the line through $\sigma$ in $\R^{n-1}$.
For $p>1$ and $\sigma = (\sigma', \sigma'') \in \Sph^{p-1}\times \Sph^{q-1}$ 
we define $\RRR_{\sigma} := (\RRR_{\sigma'},\RRR_{\sigma''}) \in \orth{p} \times \orth{q}$ 
where $\RRR_{\sigma'} \in \orth{p}$ and $\RRR_{\sigma''} \in \orth{q}$ denote 
reflections in the line through $\sigma'$ in $\R^{p}$ and the line through $\sigma''$ in $\R^{q}$ respectively. 
By the first part of the proposition $\tbar_{t_{c}} \circ \RRR_{\sigma} \in \dihedral{} \cdot \ort \subset \Isom(\cylpq,g_{\tau})$.

The key properties of the isometry $\tbar_{t_{c}} \circ \RRR_{\sigma}$ are that it fixes the 
point $(t_c,\sigma) \in \{t_{c}\} \times \merpq$ and acts by $-\Id$ on the tangent space $T_{(t_{c},\sigma)}\cylpq$.
Therefore $\tbar_{t_{c}} \circ \RRR_{\sigma}$ sends any geodesic $\gamma$ passing through $(t_{c},\sigma)$ 
to another geodesic passing through $(t_{c},\sigma)$ whose tangent vector at this point is the negative of the tangent vector 
of the initial geodesic. Hence  uniqueness of the initial value problem for geodesics implies the following symmetry of $\gamma$
\begin{equation}
\addtocounter{theorem}{1}
\label{E:geod:reflect}
\gamma \circ \tbar_{s} = \tbar_{t_{c}} \circ \RRR_{\sigma} \circ \gamma, \quad\quad \text{where } \gamma(s) = (t_{c},\sigma).
\end{equation}

Let $2d$ denote the distance between the boundary meridians of any domain of periodicity of $g_{\tau}$. 
Equivalently, $d$ is the distance between two consecutive critical meridians (a meridian of the form $\{t_{k}\}\times \merpq$ for some critical point $t_{k}$ of $y_{\tau}$.)
$d$ is realised along any generator and any other curve connecting two such meridians has strictly greater length.
(Also $2d$ depends smoothly on $\tau$ and tends to $\pi$, the diameter of the unit sphere $\Sph^{p+q-1}$ 
as $\tau \ra  0$).

Suppose $\gamma: \R \ra \cylpq$ is a geodesic parametrised by arc-length which is minimising. By using the 
 obvious piecewise smooth comparison curve, we see that the diameter of $k$ consecutive domains of periodicity of $g_{\tau}$ 
 is bounded above by $2kd + D$ where 
$D$ is the largest diameter of any meridian in a domain of periodicity of $g_{\tau}$. 
Since $\gamma$ is a minimising geodesic the diameter of its image is infinite, and therefore 
$\gamma$ intersects every meridian $\{t\}\times \merpq$.
$y_{\tau}$ is non-constant and $2\pt$-periodic and therefore has countably infinitely many critical points $t_{c}$
that we label by the strictly increasing sequence $(t_{k})_{k\in \Z}$. By \ref{L:y:symmetry} and \ref{E:tbar:pm:commute}
the sequence $(t_{k})$  satisfies 
$$t_{k}-t_{l}=(k-l)\pt, \quad \text{for any\ } k>l.$$
(If $p=1$ we could normalise so that $t_{0}=0$ and hence $t_{k}=k\pt$. 
If $p>1$ we could normalise so that $t_{0}=\pt^{+}$ and therefore $t_{-1}=-\pt^{-}$.)
Since $\gamma$ intersects every meridian, 
 there exists an increasing sequence $(s_{k})_{k\in \Z}$ and a unique sequence of points $\sigma_{k} \in \merpq$ 
so that
\begin{equation}
\addtocounter{theorem}{1}
\label{E:geodesic:sk}
\gamma(s_{k}) = (t_{k},\sigma_{k}).
\end{equation}
In other words, $s_{k}$ is the arc-length parameter at which the minimising geodesic $\gamma$ intersects the $k$th critical meridian $\{t_{k}\} \times \merpq$.
By time-translation invariance of geodesics without loss of generality we may assume that $s_{0}=0$.
Applying the isometry $\tbar_{t_{k}} \circ \RRR_{\sigma_{k}}$ as in 
\ref{E:geod:reflect} we deduce that the minimising geodesic $\gamma$ has the symmetries
\begin{equation}
\addtocounter{theorem}{1}
\label{E:geod:reflect:k}
\gamma \circ \tbar_{s_{k}} = \tbar_{t_{k}} \circ \RRR_{\sigma_{k}} \circ \gamma, \quad 
\text{for any}  \ k \in \Z,
\end{equation}
for the sequence $(s_{k})$ defined above in \ref{E:geodesic:sk}.
Composing the two reflectional symmetries arising from \ref{E:geod:reflect:k} by setting 
$k=0$ and $k=1$  and using \ref{E:tx:tbar} together with the fact that $\tbar, \TTT_{x} \in \Diff(\cylpq)$ commute with $\orth{p} \times \orth{q}$ for $p>1$ (respectively with $\orth{n-1}$ for $p=1$) we obtain
\begin{equation}
\addtocounter{theorem}{1}
\label{E:geod:trans}
\gamma \circ \TTT_{2s_{1}} = \TTT_{2(t_{1}-t_{0})}\circ \RRR_{\sigma_{1}} \circ \RRR_{\sigma_{0}} \circ 
\gamma = \TTT_{2\pt}\circ \RRR_{\sigma_{1}} \circ \RRR_{\sigma_{0}} \circ 
\gamma.
\end{equation}
By iteration of \ref{E:geod:trans} we have  
$$
\gamma \circ \TTT_{2k s_{1}} = (\TTT_{2\pt}\circ \RRR_{\sigma_{1}} \circ \RRR_{\sigma_{0}})^{k} \circ 
\gamma = \TTT_{2k\pt} \circ (\RRR_{\sigma_{1}} \circ \RRR_{\sigma_{0}})^{k} \circ \gamma,
\quad \text{for any\ }  k \in \N$$ 
and hence that 
$$\gamma(2ks_{1}) \in \{2k\pt +t_{0}\} \times \merpq = \{t_{2k}\} \times \merpq.$$
It follows from the definition of $s_{k}$ given in \ref{E:geodesic:sk} 
that $s_{2k}=2ks_{1}$. Therefore since $\gamma$ is a minimising geodesic parametrised by arc-length we have 
$$ dist(\gamma(s_{2k}),\gamma(0)) = s_{2k} = 2ks_{1}, \quad \text{for all\ }k \in \N.$$
On the other hand, by our previous (crude) diameter bound for the union of any $k$ consecutive domains of periodicity of $g_{\tau}$
we have 
$$2ks_{1} \le 2kd +D, \quad \text{for any\ }k \in \N,$$
where as previously $D$ denotes the largest diameter of any meridian and $d$ is the distance between two consecutive 
critical meridians (which as we have already stated is attained only by generators). 
Dividing by $k$ and taking $k\ra \infty$ we conclude that $s_{1}\le d$ and hence 
that $\gamma$ is a generator.

It remains to use the fact that any $\III \in \Isom(\cylpq,g_{\tau})$ maps meridians to meridians to prove that 
$\III \in \dihedral{} \cdot  \ort$. 
For the case $p>1$ we will need the following standard facts about the geometry of the product of two spheres of radii 
$r_{1}$ and $r_{2}$
\begin{equation}
\addtocounter{theorem}{1}
\label{E:isom:prod:sph}
\Isom (\,\Sph^{p-1}_{r_{1}} \times \Sph^{q-1}_{r_{2}}\,) = 
\begin{cases}
\orth{p} \times \orth{q} \quad & \text{if $p\neq q$ or $r_{1} \neq r_{2}$};\\
\orth{p} \times \orth{p} \rtimes_{\rho}\langle E \rangle \quad & \text{if $p=q$ and $r_{1}=r_{2}$},
\end{cases}
\end{equation}
(the semidirect product structure in the latter case is discussed in more detail in \ref{E:exchange'})
and that for  $p \neq q$,  $\Sph^{p-1}_{r_{1}} \times \Sph^{q-1}_{r_{2}}$ and  $\Sph^{p-1}_{r'_{1}} \times \Sph^{q-1}_{r'_{2}}$ 
are isometric if and only if $r_{1}=r'_{1}$ and $r_{2}=r'_{2}$ and for $p=q$ are isometric if and only if 
the sets $\{r_{1},r_{2}\}$ and $\{r'_{1},r'_{2}\}$ are the same.

Let $\III$ be any element in $\Isom(\cylpq,g_{\tau})$. 
Choose any meridian $M=\{t_{k}\}\times \merpq$ so that $k\in \Z$ satisfies $y_{\tau}(t_{k})=\ymin$, i.e. 
so that $y_{\tau}$ is minimal on $M$.
We established above that $\III$ maps any meridian of $\cylpq$ to another (isometric) meridian. 
In particular, when $p=1$ or when $p>1$ and $p \neq q$ this implies 
(using the standard facts about when products of two spheres are isometric) that $\III$ maps $M$
to another meridian where $y_{\tau}$ is minimal. If $p>1$ and $p=q$ then  $\III$ maps $M$
to another meridian where $y_{\tau}$ is either minimal or maximal (recall \ref{E:ymin:ymax}).
In any case of the three cases (i)--(iii), it follows that by composing with a suitable isometry $\DDD \in \dihedral{}$ we can arrange that 
$\DDD \circ \III$ fixes $M$ as a set. Hence $\DDD \circ \III$ restricted to $M$ yields an isometry of $M$. 
Since $M$ is a meridian with 
$y_{\tau}$ minimal  by \ref{E:isom:prod:sph} we have $\Isom(M) = \ort$ with $\ort$ as in \ref{P:isom:pb}(i)--(iii). 
Hence there exists $\MMM \in \ort$ such that $\MMM \circ \DDD\circ \III$ fixes $M$ 
pointwise. Therefore $\MMM \circ \DDD\circ \III$ sends any generator $\gamma_{\sigma}$ to itself
and hence we have 
$$ \MMM \circ \DDD \circ \III = 
\begin{cases}
\Id;\\
\tbar_{t_{k}},
\end{cases}
$$
according to whether $\MMM \circ \DDD \circ \III$ fixes or reflects all generators.
In either case it follows that $\III \in \dihedral{} \cdot \ort$ for any $\III \in \Isom(\cylpq,g_{\tau})$ as claimed.
\end{proof}

\begin{remark}
\label{R:isom:pb:ex}
\addtocounter{equation}{1}
When $\abs{\tau}=\taumax$ by \ref{P:y}.ii $y_{\tau} \equiv \tfrac{q}{n}$ and therefore $y_{\tau}$ is invariant under 
the whole of $\Isom(\R)$. In particular all meridians $\{t\}\times \merpq$ of $\cylpq$ are isometric. 
When $p\neq q$ the isometry group of each meridian is the group $\ort$ (defined in \ref{P:isom:pb}). 
If $p=q$ then each meridian is a product of two $p-1$ spheres of the same radius and hence the isometry group of each 
meridian is the extension of $\orth{p}\times \orth{p}$ given in case two of \ref{E:isom:prod:sph}. 
For $p\neq q$ the isometry group of $g_{\tau}$ for $\abs{\tau}=\taumax$ is $\Isom(\R) \cdot \ort$, 
whereas for $p=q$ it is $\Isom(\R) \cdot \Isom(\Sph^{p-1}_{r}\times \Sph^{p-1}_{r})$.
In all cases the action of the isometry group is transitive on $\cylpq$  thus making it into a Riemannian 
homogeneous space.

When $\tau=0$ we have from \ref{P:X:tau}.iv that $g_{0}$ is isometric to the restriction of the round metric to 
$\Sph^{p+q-1}\setminus (\Sph^{p-1},0) \cup (0,\Sph^{q-1})$ if $p>1$ or to $\Sph^{n-1}\setminus (\pm 1,0)$ 
if $p=1$. Hence for $p=1$ we have $\Isom(\cylone,g_{0}) \cong \orth{1} \times \orth{n-1}$ the subgroup 
of $\orth{n}$ leaving invariant the line through $e_{1}$ (the $\orth{1}$ factor being generated by $\tbar \in \Diff(\cylone)$).
Similarly, for $p>1$ and $p\neq q$  we have $\Isom(\cylpq,g_{0})= \orth{p} \times \orth{q}$, 
the subgroup of $\orth{n}$ leaving invariant the subset $(\Sph^{p-1},0) \cup (0,\Sph^{q-1}) \subset \Sph^{p+q-1}$.
Finally, for $p>1$ and $p=q$ we have  $\Isom(\cylpp,g_{0})= \langle \tbar \circ \EEE \rangle \cdot \orth{p} \times \orth{p}$,  which is isomorphic to the subgroup  of $\orth{n}$ leaving invariant the subset $(\Sph^{p-1},0) \cup (0,\Sph^{p-1}) \subset \Sph^{2p-1}$.
\end{remark}

\begin{lemma}[Structure of the discrete part $\dihedral{}$ of $\Isom(\cylpq,g_{\tau})$\,]
\addtocounter{equation}{1}
\label{L:discrete:str}
All three discrete groups $\dihedral{}$ defined in Proposition \ref{P:isom:pb}(i)-(iii) are isomorphic to the infinite dihedral group
$\dihedral{\infty}$.
\end{lemma}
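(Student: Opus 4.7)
The plan is to verify directly in each of the three cases that $\dihedral{}$ admits one of the two standard presentations
\[
\dihedral{\infty} \cong \langle r,f \mid f^{2}=1,\ frf=r^{-1}\rangle \cong \langle s,t \mid s^{2}=t^{2}=1\rangle,
\]
and hence is isomorphic to the infinite dihedral group. In essence this extends the argument of Corollary \ref{C:y:dihedral} from $\Isom(\R)$ to the full diffeomorphism group of $\cylpq$; the only subtle point is that in case (iii) one of our generators involves the exchange diffeomorphism $\EEE$ on the sphere factors, so we must check that $\EEE$ interacts well with the reflections on the interval factor.

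For case (i), $p=1$, set $r=\TTT_{2\pt}$ and $f=\tbar$; these act only on the $t$ coordinate of $\cylone$, so the commutation relation $\tbar\circ\TTT_{2\pt}\circ\tbar=\TTT_{-2\pt}$ proved in \ref{E:tx:tbar} and \ref{C:y:dihedral} lifts trivially, giving the first presentation of $\dihedral{\infty}$. For case (ii), $p>1$, $p\neq q$, take $s=\tbar_{\pt^{+}}$ and $t=\tbar_{-\pt^{-}}$; these are both involutions, and by \ref{E:tbar:pm:commute} their product is the nontrivial translation $\TTT_{2\pt}$ (up to sign), which has infinite order in $\Diff(\cylpq)$. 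No further relation among $s$ and $t$ can hold, so $\dihedral{}$ is presented by $\langle s,t\mid s^{2}=t^{2}=1\rangle$ and thus isomorphic to $\dihedral{\infty}$.

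The only case requiring genuine verification is case (iii), $p=q>1$, with generators $s=\tbar\circ\EEE$ and $t=\tbar_{\pt/2}$. First I would observe that as elements of $\Diff(\cylpp)$ the diffeomorphisms $\tbar$ and $\EEE$ commute, since $\tbar$ acts only on the $\R$-factor and $\EEE$ only on the $\Sph^{p-1}\times\Sph^{p-1}$-factor; since both are involutions, so is their product $s=\tbar\circ\EEE$. Clearly $t^{2}=\Id$ as well. It remains to verify that $st$ has infinite order. A direct computation gives
\[
st\,(t,\sigma_{1},\sigma_{2}) = \tbar\circ\EEE\circ\tbar_{\pt/2}(t,\sigma_{1},\sigma_{2}) = (t-\pt,\,\sigma_{2},\,\sigma_{1}) = (\TTT_{-\pt}\circ\EEE)(t,\sigma_{1},\sigma_{2}),
\]
and since $\EEE$ and $\TTT_{-\pt}$ commute and $\EEE^{2}=\Id$, we find $(st)^{2}=\TTT_{-2\pt}$, which has infinite order in $\Diff(\cylpp)$. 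Therefore $s$ and $t$ are involutions whose product has infinite order and satisfy no other relation, so the second presentation of $\dihedral{\infty}$ applies and $\dihedral{}\cong\dihedral{\infty}$.

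No step of this argument is hard; the only conceptual point worth highlighting is that in case (iii) the relation $\EEE\circ\tbar=\tbar\circ\EEE$ (both as elements of $\Diff(\cylpp)$) is what makes $s=\tbar\circ\EEE$ an honest involution rather than something of higher order, and hence what makes $\dihedral{}$ dihedral rather than some larger group.
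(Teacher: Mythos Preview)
Your proof is correct and follows essentially the same approach as the paper's: both refer back to Corollary \ref{C:y:dihedral} for cases (i) and (ii), and both observe that in case (iii) the generator $\tbar\circ\EEE$ behaves as an involution commuting appropriately with the translation part, so the two-involution presentation of $\dihedral{\infty}$ applies. Your version simply fills in the explicit computation $(st)^2=\TTT_{-2\pt}$ that the paper leaves implicit with the phrase ``the same argument applies.''
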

\begin{proof}
This is essentially already proved in \ref{C:y:dihedral}, although when $p=q$ we are considering the subgroup 
of $\Isom(\cylpp)$ generated by $\tbar \circ \EEE$ and $\tbar_{\pt/2}$, rather than the subgroup of $\Isom(\R)$ 
generated by $\tbar$ and $\tbar_{\pt/2}$. Nevertheless, the same argument applies. 
\end{proof}
\begin{remark}
\addtocounter{equation}{1}
\label{R:dprime}
When $p=q$ the subgroup $\dihedral{}' \subset \dihedral{}$ 
generated by the two elements
$\tbar_{\pt/2}$ and
$\tbar_{\,-\pt/2} = (\tbar \circ \EEE) \circ \tbar_{\pt/2} \circ (\tbar \circ \EEE) $
is also isomorphic to the infinite dihedral group $\dihedral{\infty}$, 
and  corresponds to the symmetries that are shared with the case $p\neq q$.
Alternatively, $\dihedral{}' \subset \dihedral{}$ is the subgroup consisting of all words containing an even 
number of copies of $\tbar \circ \EEE$.
\end{remark}

Conjugation by the exchange map $\EEE$ defined in \ref{E:exchange} defines an involution 
$\EEE' \in \Aut{\Diff(\cylpp)}$. $\EEE'$ leaves the subgroup $\orth{p} \times \orth{p} \subset \Diff(\cylpq)$ 
invariant and on it acts by 
\begin{equation}
\addtocounter{theorem}{1}
\label{E:exchange'}
\EEE'(A,B)=(B,A).
\end{equation}
There is an obvious isomorphism
$\rho_E : \langle E \rangle \ra \langle \EEE' \rangle \subset \Aut \orth{p}\times \orth{p} \subset \Aut \Diff(\cylpp)$ 
given by
$$\EEE \mapsto \EEE'.$$
\ref{E:exchange'} implies that  $\langle \EEE \rangle \cdot (\orth{p}\times\orth{p})  = (\orth{p}\times\orth{p}) \cdot \langle \EEE \rangle$
and hence by the discussion in Appendix \ref{A:groups} the set
$\langle \EEE \rangle \cdot (\orth{p}\times\orth{p}) \subset \Isom(\Sph^{p-1}\times\Sph^{p-1}) \subset \Diff(\cylpq)$
forms a group $G$. Moreover $\orth{p}\times\orth{p}$ is a normal subgroup of $G$ and clearly
$\orth{p} \times \orth{p} \cap \langle \EEE \rangle = (\Id)$.
Hence $G$ is the semidirect product of $\orth{p} \times \orth{p}$ by $\langle \EEE \rangle \cong \Z_2$ where
the twisting homomorphism $\rho: \langle \EEE \rangle \ra \Aut\orth{p} \times \orth{p}$ is $\rho_E$ defined above, i.e.
$G = (\orth{p} \times \orth{p}) \rtimes_{\rho_E} \langle \EEE \rangle$.
In fact, $G \cong \Isom(\Sph^{p-1}_{r}\hspace{-0.05in}\times\Sph^{p-1}_{r})$ as in \ref{E:isom:prod:sph}.

An easy consequence of Proposition \ref{P:isom:pb} is the following structure result for $\Isom(\cylpq, g_{\tau})$
\begin{prop}[Structure of $\Isom(\cylpq,g_{\tau})$ for $0<\abs{\tau}<\taumax$]\hfill
\addtocounter{equation}{1}
\label{P:isom:pb:str}
\begin{itemize}
\item[(i)]
For $p=1$,  the isometry group $\Isom(\cylone, g_{\tau}) \cong \dihedral{\infty}\times \orth{n-1}$.
\item[(ii)]
For $p>1$ and $p\neq q$, the isometry group $\Isom(\cylpq, g_{\tau}) \cong \dihedral{\infty}\times 
\orth{p}\times \orth{q}$.
\item[(iii)] For $p>1$ and $p=q$, the isometry group is a semidirect product
$$\Isom(\cylpp, g_{\tau}) \cong (\orth{p} \times \orth{p}) \rtimes_{\rho} \dihedral{}\, $$
where the twisting homomorphism 
$$\rho: \langle \tbar \circ \EEE, \,\tbar_{\pt/2} \rangle \cong \dihedral{} \ra \Aut \orth{p} \times \orth{p}$$
is defined by
$$ \rho(\gamma) =
\begin{cases}
\Id & \text{if $\gamma$ is a word containing an even number of copies of $\tbar \circ \EEE$},\\
\EEE' & \text{if $\gamma$ is a word containing an odd number of copies of $\tbar \circ \EEE$},
\end{cases}
$$
where $\EEE'$ is the involution defined in \ref{E:exchange'}.
\end{itemize}

\end{prop}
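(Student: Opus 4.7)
The plan is to leverage Proposition \ref{P:isom:pb} (which already gives $\Isom(\cylpq,g_\tau)=\dihedral{}\cdot\ort$) and Lemma \ref{L:discrete:str} (which identifies $\dihedral{}\cong\dihedral{\infty}$), and then identify the extension structure in each of the three cases by computing how $\dihedral{}$ and $\ort$ interact inside $\Diff(\cylpq)$. The key observation is that the action of any element of $\dihedral{}$ decomposes cleanly into a part acting on the interval coordinate $t$ and, in the exceptional case $p=q$, a possible factor-swap on the spherical cross section $\merpq$, whereas $\ort$ acts purely on the spherical factor.

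For cases (i) and (ii), I would first check that the generators $\tbar,\TTT_{2\pt}$ (when $p=1$) or $\tbar_{\pt^+},\tbar_{-\pt^-}$ (when $p>1,\,p\neq q$) all have the form $(t,\sigma)\mapsto(\SSS t,\sigma)$ for some $\SSS\in\Isom(\R)$, and therefore commute pointwise with every element of $\ort$, which acts only on $\sigma$. Hence $\dihedral{}$ and $\ort$ commute as subgroups of $\Diff(\cylpq)$. Next, the intersection $\dihedral{}\cap\ort$ is trivial: a nonidentity element of $\dihedral{}\cong\dihedral{\infty}$ moves the $t$-coordinate (no nonidentity isometry of $\R$ fixes all of $\R$ pointwise), while $\ort$ fixes $t$ pointwise. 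Together with $\Isom(\cylpq,g_\tau)=\dihedral{}\cdot\ort$ from Proposition \ref{P:isom:pb}, this yields an internal direct product decomposition, giving the result.

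For case (iii), the only generator of $\dihedral{}$ that fails to commute with $\orth{p}\times\orth{p}$ is $\tbar\circ\EEE$, since $\tbar_{\pt/2}$ still acts only on $t$. A direct calculation shows that $(\tbar\circ\EEE)\circ(A,B)\circ(\tbar\circ\EEE)^{-1}=(B,A)=\EEE'(A,B)$, so conjugation by $\tbar\circ\EEE$ realises the involution $\EEE'\in\Aut(\orth{p}\times\orth{p})$. This implies $\orth{p}\times\orth{p}$ is normalised by all of $\dihedral{}$, and the intersection $\dihedral{}\cap(\orth{p}\times\orth{p})$ is again trivial (any nonidentity word in $\tbar\circ\EEE$ and $\tbar_{\pt/2}$ either moves $t$, or, if a pure product of an even number of $\tbar\circ\EEE$'s and no $\tbar_{\pt/2}$, reduces to the identity by $(\tbar\circ\EEE)^2=\Id$). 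Combined with $\Isom(\cylpq,g_\tau)=\dihedral{}\cdot\ort$ and $\orth{p}\times\orth{p}\trianglelefteq\Isom$, this yields the internal semidirect product $(\orth{p}\times\orth{p})\rtimes_\rho\dihedral{}$. The twisting homomorphism $\rho$ is determined by the conjugation formula above: each occurrence of $\tbar\circ\EEE$ contributes one factor of $\EEE'$, while $\tbar_{\pt/2}$ acts trivially, so $\rho(\gamma)=(\EEE')^{k(\gamma)}$ where $k(\gamma)$ is the number of $\tbar\circ\EEE$ letters in any expression for $\gamma$; since $(\EEE')^2=\Id$, $\rho(\gamma)$ depends only on the parity of $k(\gamma)$, matching the formula in the statement.

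I expect the main (though still modest) technical point to be the verification that the description of $\rho$ is well-defined, i.e.\ independent of the chosen word representing $\gamma\in\dihedral{}$. This reduces to checking that the single defining relation of $\dihedral{\infty}=\langle\tbar\circ\EEE,\tbar_{\pt/2}\mid(\tbar\circ\EEE)^2=\tbar_{\pt/2}^{\,2}=1\rangle$ (as established in Lemma \ref{L:discrete:str}) is consistent with parity of $k$, which is immediate since both relations add an even number of $\tbar\circ\EEE$'s (zero in each case).
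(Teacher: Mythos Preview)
Your proposal is correct and follows essentially the same approach as the paper: in cases (i) and (ii) you observe that $\dihedral{}$ acts only on the $\R$-factor and $\ort$ only on the spherical cross section, giving trivial intersection and a direct product; in case (iii) you compute the conjugation action of $\tbar\circ\EEE$ on $\orth{p}\times\orth{p}$ to identify the twisting homomorphism. Your additional remarks on the well-definedness of $\rho$ (via the defining relations of $\dihedral{\infty}$) are a nice touch that the paper leaves implicit.
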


\begin{proof}
(i) By Proposition \ref{P:isom:pb}.i $\Isom(\cylone, g_{\tau}) = \dihedral{} \cdot \orth{n-1}$ where
$\dihedral{}= \langle \tbar,\, \TTT_{2\pt}\rangle$. Since $\dihedral{}$ acts only on the $\R$ factor of $\cylone$ 
and $\orth{n-1}$ acts only on the $\Sph^{n-2}$ factor it is clear that $\dihedral{}$ centralises $\orth{n-1}$
and also that $\dihedral{} \cap \orth{n-1} = (\Id)$. Hence 
$\dihedral{} \cdot \orth{n-1} \cong \dihedral{} \times \orth{n-1}$.\\
(ii) By Proposition \ref{P:isom:pb}.ii $\Isom(\cylpq, g_{\tau}) = \dihedral{} \cdot \orth{p}\times \orth{q}$ where
$\dihedral{}= \langle \tbar_{\pt^{+}},\, \tbar_{-\pt^{-}} \rangle$. By the same argument as in part (i) 
$\dihedral{} \cdot \orth{p}\times \orth{q} \cong \dihedral{} \times \orth{p}\times \orth{q}$.\\
(iii) By Proposition \ref{P:isom:pb}.ii $\Isom(\cylpp, g_{\tau}) = \dihedral{} \cdot \orth{p}\times \orth{p}$ where
$\dihedral{}= \langle \tbar \circ \EEE,\, \tbar_{\pt/2} \rangle$. 
$\dihedral{}$ does not centralise $\orth{p}\times \orth{p}$, 
since $\dihedral{}$ no longer acts only on the $\R$ factor of $\cylpp$.
However, conjugation by any element of
$\dihedral{}$ does preserve the subgroup $\orth{p} \times \orth{p} \subset \Diff(\cylpp)$.
More precisely, if $\gamma \in \langle \tbar \circ \EEE, \,\tbar_{\pt/2} \rangle$ then
$$\gamma (\MMM_1, \MMM_2) \gamma^{-1} =
\begin{cases}
(\MMM_1,\MMM_2) & \text{if $\gamma$ is a word containing an even number of copies of $\tbar \circ \EEE$},\\
(\MMM_2,\MMM_1) & \text{if $\gamma$ is a word containing an odd number of copies of $\tbar \circ \EEE$}.
\end{cases}
$$
It follows that the set $\dihedral{} \cdot \orth{p} \times \orth{p}$
coincides with the set $\orth{p} \times \orth{p}\cdot \dihedral{}$ and hence
that $\dihedral{} \cdot \orth{p} \times \orth{p}$ is a group, containing 
the subgroup $\orth{p} \times \orth{p}$ as a normal subgroup of this group. 
Since clearly $\orth{p} \times \orth{p} \cap \dihedral{} = (\Id)$, we have the semidirect product structure claimed.
\end{proof}

\begin{remark}
\addtocounter{equation}{1}
\label{R:isom:pb:p:eq:q}
The kernel $\ker{\rho}$ of the twisting homomorphism 
$\rho: \dihedral{} \ra \Aut \orth{p}\times \orth{p}$
is precisely the (normal) subgroup $\dihedral{}' \subset \dihedral{}$ introduced in \ref{R:dprime}.
Hence $\Isom(\cylpp)$ contains a subgroup isomorphic to $(\orth{p} \times \orth{p}) \times \dihedral{}'$.
This subgroup is exactly the subgroup of isometries of $g_{\tau}$ we obtained in the case $p\neq q$.
\end{remark}

\subsection*{Discrete symmetries of $X_{\tau}$}\phantom{ab}
In this section we exhibit all the discrete symmetries enjoyed by $X_{\tau}$ building on the work of the previous section 
and the symmetries of $\bw_{\tau}$ established in Section \ref{S:w:sym}. In particular we establish that
$\Sym(X_{\tau})=\Isom(\cylpq,g_{\tau})$.

\subsubsection*{Discrete symmetries of $X_{\tau}$ for $p=1$}
\begin{prop}[Discrete symmetries of $X_{\tau}$ for $p=1$]
\addtocounter{equation}{1}
\label{P:xtau:sym:p:eq:1}
For $p=1$ and $0 < \abs{\tau} < \taumax$,  $X_\tau$ admits the following symmetries
\addtocounter{theorem}{1}
\begin{subequations}
\label{E:sym:p:eq:1}
\begin{align}
\label{E:orth:p:eq:1}
\MMM \circ X_{\tau} &= X_{\tau} \circ \MMM, \quad  \quad \text{for all\ } \ \MMM \in \orth{n-1},\\
\label{E:ttilde:sym:p:eq:1}
\ttilde_{2\pthat} \circ X_\tau &= X_\tau \circ \TTT_{2\pt},\\
\label{E:tbar:sym:p:eq:1}
\tbartilde \circ X_\tau &= X_\tau \circ \tbar,\\
\label{E:tbarkp:sym:p:eq:1}
\tbartilde_{\pthat} \circ X_\tau & = X_\tau \circ \tbar_{\pt},
\end{align}
\end{subequations}
where  $\ttilde_x \in \sunit{n}$ is defined in \ref{E:ttilde}, 
$\tbartilde \in \orth{2n}$ is defined by
\begin{equation}
\addtocounter{theorem}{1}
\label{E:tbartilde:p:eq:1}
\tbartilde(z_1, \ldots, z_n) = (-\overline{z}_1, \overline{z}_2, \ldots , \overline{z}_n),
\end{equation}
and $\tbartilde_x \in \orth{2n}$ is defined by
\begin{equation}
\addtocounter{theorem}{1}
\label{E:tbartilde_x}
\tbartilde_x = \ttilde_{2x} \circ \tbartilde.
\end{equation}
\end{prop}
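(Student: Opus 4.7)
The plan is to verify each of the four symmetries \eqref{E:orth:p:eq:1}--\eqref{E:tbarkp:sym:p:eq:1} by direct computation, combining the defining formula $X_\tau(t,\sigma)=(w_1(t),w_2(t)\sigma)$ from Definition~\ref{D:X:tau} with the symmetries of the underlying $(1,n-1)$-twisted SL curve $\bw_\tau$ already established in Lemma~\ref{L:y:w:symmetry}. None of the four assertions should require any new analytic input; they are algebraic consequences of \eqref{E:w:sym:p:eq:1}.

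First I would dispose of \eqref{E:orth:p:eq:1}. Proposition~\ref{P:X:tau}(iii) already gives $\sorth{n-1}$-equivariance, but here we need the full orthogonal group $\orth{n-1}$. Given $\MMM\in\orth{n-1}$, view it inside $\orth{2n}$ as $\Id_\C\oplus\MMM$ acting on $\C\oplus\C^{n-1}$; since $\sigma\in\Sph^{n-2}\subset\R^{n-1}$ and $\MMM$ commutes with scalar multiplication by $w_2(t)\in\C$, one has $\MMM(w_2(t)\sigma)=w_2(t)\MMM\sigma$, so $\MMM\circ X_\tau(t,\sigma)=(w_1(t),w_2(t)\MMM\sigma)=X_\tau(t,\MMM\sigma)$. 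Next, for \eqref{E:ttilde:sym:p:eq:1} I would compute
\[
X_\tau\circ\TTT_{2\pt}(t,\sigma)=(w_1(t+2\pt),w_2(t+2\pt)\sigma),
\]
and apply the first relation in \eqref{E:w:sym:p:eq:1}, namely $\bw_\tau\circ\TTT_{2\pt}=\tcheck_{2\pthat}\circ\bw_\tau$, which by the definition \eqref{E:M:defn} of $\tcheck_x$ with $p=1$, $q=n-1$ reads $w_1(t+2\pt)=e^{2i\pthat}w_1(t)$ and $w_2(t+2\pt)=e^{-2i\pthat/(n-1)}w_2(t)$; comparing with the definition of $\ttilde_x$ in \eqref{E:ttilde} for $p=1$ gives exactly $\ttilde_{2\pthat}\circ X_\tau$.

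For \eqref{E:tbar:sym:p:eq:1} I would proceed analogously: the second identity in \eqref{E:w:sym:p:eq:1} gives $w_1(-t)=-\overline{w_1(t)}$ and $w_2(-t)=\overline{w_2(t)}$, and since $\sigma\in\R^{n-1}$ one has $\overline{w_2(t)\sigma}=\overline{w_2(t)}\sigma$, so
\[
X_\tau\circ\tbar(t,\sigma)=(-\overline{w_1(t)},\overline{w_2(t)}\sigma)=\tbartilde(w_1(t),w_2(t)\sigma)=\tbartilde\circ X_\tau(t,\sigma)
\]
by the definition \eqref{E:tbartilde:p:eq:1} of $\tbartilde$. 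Finally, \eqref{E:tbarkp:sym:p:eq:1} is a formal consequence of the previous two: because $\tbar_\pt=\TTT_{2\pt}\circ\tbar$ in $\Isom(\R)$, one has $X_\tau\circ\tbar_\pt=X_\tau\circ\TTT_{2\pt}\circ\tbar=\ttilde_{2\pthat}\circ\tbartilde\circ X_\tau$, and the right-hand side is $\tbartilde_{\pthat}\circ X_\tau$ by the definition \eqref{E:tbartilde_x}. Alternatively, one could read this off directly from the third relation in \eqref{E:w:sym:p:eq:1}.

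There is no real obstacle: once the symmetries of $\bw_\tau$ are in hand, each assertion reduces to unwinding the definitions and checking that the prescribed unitary (or antiunitary) matrix on $\C^n$ acts block-diagonally in a way compatible with the factorization $X_\tau(t,\sigma)=(w_1(t),w_2(t)\sigma)$. The only point requiring mild care is the book-keeping between the two-dimensional operator $\tcheck_x$ on $\C^2$ used for $\bw_\tau$ and the higher-dimensional operator $\ttilde_x$ on $\C^n$ used for $X_\tau$, and similarly between $\tbarcheck$ and $\tbartilde$; this is transparent thanks to the reality of $\sigma$ and the scalar nature of the $w_i$-factors.
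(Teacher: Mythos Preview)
Your proposal is correct and follows exactly the approach of the paper, which simply states that \eqref{E:orth:p:eq:1} follows from the definition of $X_\tau$ and that \eqref{E:ttilde:sym:p:eq:1}--\eqref{E:tbarkp:sym:p:eq:1} are equivalent to the three symmetries of $\bw_\tau$ in \eqref{E:w:sym:p:eq:1}. You have merely spelled out the bookkeeping between $\tcheck_x,\tbarcheck$ on $\C^2$ and $\ttilde_x,\tbartilde$ on $\C^n$ in detail, which the paper leaves implicit.
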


\begin{proof}
The $\orth{n-1}$-equivariance expressed by \ref{E:orth:p:eq:1} follows immediately from the definition of $X_{\tau}$ 
(and extends the $\sorth{n-1}$-invariance used to construct $X_{\tau}$ in the first place).
The symmetries \ref{E:ttilde:sym:p:eq:1}, \ref{E:tbar:sym:p:eq:1} and \ref{E:tbarkp:sym:p:eq:1} of $X_{\tau}$ 
are equivalent to the three symmetries of $\bw_{\tau}$ established in \ref{E:w:sym:p:eq:1}.

\end{proof}

\begin{remark}
\addtocounter{equation}{1}
\label{R:extreme:tau}
Symmetries when $\tau=0$: from \ref{P:X:tau}.iv $X_0:\cylone \ra \Sph^{2n-1}$ is an embedding whose image is the totally real equatorial 
sphere $\Sph^{n-1} \subset \R^n \subset \C^n$ minus the two antipodal points $\pm e_1$. 
Clearly the subgroup $\orth{n} \subset \orth{2n}$ leaves this equatorial $n-1$ sphere invariant. 
$\orth{n-1} \subset \orth{n}$ is the subgroup of $\orth{n}$ fixing the line spanned by $e_1$.
There is no analogue of the symmetries in \ref{E:ttilde:sym:p:eq:1} and \ref{E:tbarkp:sym:p:eq:1}
in this case since the period $2\pt \ra \infty$ as $\tau \ra 0$ (see Section \ref{S:asymptotics}).
However, the isometry $\tbartilde \in \orth{2n}$ leaves $\Sph^{n-1}$ invariant and sends $e_1$ to $-e_1$ 
(cf. Remark \ref{R:isom:pb:ex}).
Hence the symmetry \ref{E:tbar:sym:p:eq:1} still holds in the case $\tau=0$. This symmetry is equivalent 
to the fact that $y_{0}$ is even in the case $p=1$ (recall \ref{P:y}.iv).

Symmetries when $\abs{\tau} = \taumax$: 
In this case $y_\tau$ is constant and hence $X_\tau$ has the additional continuous symmetries
$$ \ttilde_x \circ X_\tau = X_\tau \circ \TTT_x, \quad \text{for all $x\in \R$}.$$
The discrete symmetry \ref{E:tbar:sym:p:eq:1} still holds in this case and so  
the analogue of \ref{E:tbarkp:sym:p:eq:1} holds for all $x\in \R$. 
\end{remark}

\begin{corollary}[Structure of $\Sym(X_{\tau})$ for $p=1$]
\addtocounter{equation}{1}\label{C:sym:isom:p:eq:1}
For $p=1$ and $0<\abs{\tau}<\taumax$, 
$$\Sym(X_{\tau})= \Isom(\cylone,g_{\tau}) \cong \dihedral{} \times \orth{n-1},$$ 
where $\dihedral{}=\langle \TTT_{2\pt},\,\tbar \rangle \cong \dihedral{\infty}$.
\end{corollary}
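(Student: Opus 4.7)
My plan is to establish the corollary by sandwiching $\Sym(X_\tau)$ between two groups that turn out to coincide, using results already at hand. The upper bound comes essentially for free: Remark~\ref{R:sym:pullback} shows that any $\MMM \in \Sym(X_\tau)$ must preserve the pullback metric $g_\tau$ on $\cylone$, giving the inclusion $\Sym(X_\tau) \subseteq \Isom(\cylone, g_\tau)$. Proposition~\ref{P:isom:pb:str}(i) then identifies this ambient group as $\dihedral{\infty} \times \orth{n-1}$, where the dihedral factor is $\langle \TTT_{2\pt},\tbar\rangle$ acting on the $\R$ factor and $\orth{n-1}$ acts on the spherical factor.

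For the reverse inclusion I would simply check that every generator of $\Isom(\cylone,g_\tau)$ is in fact realized as an element of $\Sym(X_\tau)$ by producing the corresponding ambient isometry in $\orth{2n}$. The $\orth{n-1}$ factor is supplied by the equivariance \ref{E:orth:p:eq:1}, where the required $\mtilde \in \orth{2n}$ is block-diagonal, acting as the identity on the first $\C$ factor and as $\MMM$ on $\C^{n-1}$. The two generators $\TTT_{2\pt}$ and $\tbar$ of the dihedral factor are realized by $\ttilde_{2\pthat}$ and $\tbartilde$ respectively, as recorded in \ref{E:ttilde:sym:p:eq:1} and \ref{E:tbar:sym:p:eq:1} of Proposition~\ref{P:xtau:sym:p:eq:1}. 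Since $\Sym(X_\tau)$ is a subgroup of $\Diff(\cylone)$, it therefore contains the entire subgroup generated by these elements, which is all of $\dihedral{\infty} \times \orth{n-1}$.

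Combining the two inclusions gives $\Sym(X_\tau) = \Isom(\cylone,g_\tau)$, and the isomorphism with $\dihedral{\infty}\times \orth{n-1}$ is then just the content of Proposition~\ref{P:isom:pb:str}(i). I expect no real obstacle here; the argument is essentially bookkeeping that collects together the symmetry computations of Section~\ref{S:w:sym} (re-expressed for $X_\tau$ in Proposition~\ref{P:xtau:sym:p:eq:1}) and matches them against the intrinsic isometry classification of the previous subsection. The only point worth noting is why the product structure is a direct rather than semidirect product: $\dihedral{}$ acts only on the $\R$-coordinate of $\cylone = \R \times \Sph^{n-2}$ while $\orth{n-1}$ acts only on the spherical factor, so the two subgroups commute inside $\Diff(\cylone)$ and intersect trivially — exactly the observation used in the proof of Proposition~\ref{P:isom:pb:str}(i).
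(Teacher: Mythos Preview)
Your proposal is correct and follows essentially the same approach as the paper: both argue by double inclusion, using Remark~\ref{R:sym:pullback} for $\Sym(X_\tau)\le\Isom(\cylone,g_\tau)$, then invoking the explicit symmetries \ref{E:sym:p:eq:1} to show the generators $\TTT_{2\pt}$, $\tbar$, and $\orth{n-1}$ all lie in $\Sym(X_\tau)$, and finally appealing to Proposition~\ref{P:isom:pb:str}(i) for the direct product structure.
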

\begin{proof}
If $(\mtilde,\MMM)$ is a symmetry of $X_{\tau}$ then by \ref{R:sym:pullback} and \ref{P:isom:pb}
$\MMM \in \Isom(\cylone,g_{\tau}) = \dihedral{} \cdot \orth{n-1}$. 
Hence $\Sym(X_{\tau}) \le \Isom(\cylone,g_{\tau})$. But since $\tbar$ and $\TTT_{2\pt}$ generate $\dihedral{}$, 
\ref{E:sym:p:eq:1} shows that also $\dihedral{} \cdot \orth{n-1} \le \Sym(X_{\tau})$.
Hence by \ref{P:isom:pb:str}.i, $\Sym(X_{\tau}) = \Isom(\cylone,g_{\tau}) = \dihedral{} \cdot \orth{n-1} \cong \dihedral{} \times \orth{n-1}$ 
as claimed.
\end{proof}

Using the definitions \ref{E:ttilde}, \ref{E:tbartilde:p:eq:1} and \ref{E:tbartilde_x}
 it is easy to check the following
\begin{prop}[Properties of  discrete symmetries of target for $p=1$]\hfill
\addtocounter{equation}{1}
\label{P:commute:p:eq:1}
\begin{itemize}
\item[(i)] $\tbartilde \circ \ttilde_x \circ \tbartilde = \ttilde_{-x}$. 
\item[(ii)] The $\orth{2}$ subgroup 
generated by $\tbartilde$ and $\{\ttilde_x\}$ 
centralises $\orth{n-1} \subset \orth{n} \subset \orth{2n}$.
\item[(iii)] $\ttilde_x$ commutes with $J$, while $\tbartilde$ and $\tbartilde_x$ anticommute with $J$.
\item[(iv)] $\ttilde_x$ preserves both $\Omega$ and $\omega$.
\item[(v)] $\tbartilde^* \Omega = -\overline{\Omega}, \quad \tbartilde^* \omega = -\omega.$
\end{itemize}
\end{prop}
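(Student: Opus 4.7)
The plan is to verify each of (i)--(v) by direct matrix/differential-form computation from the explicit definitions \ref{E:ttilde}, \ref{E:tbartilde:p:eq:1} and \ref{E:tbartilde_x}. Throughout I will exploit the block decomposition $\C^{n}=\C\oplus \C^{n-1}$ induced by the factor $p=1$, $q=n-1$: in these coordinates $\ttilde_{x}=\diag(e^{ix},e^{-ix/(n-1)}\Id_{n-1})$ is $\C$-linear and diagonal, while $\tbartilde$ is $\C$-antilinear and acts blockwise as $z_{1}\mapsto -\overline{z}_{1}$ and $(z_{2},\dots,z_{n})\mapsto (\overline{z}_{2},\dots,\overline{z}_{n})$.

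First I would handle (i) and (ii). For (i) the observation that $\tbartilde$ intertwines any $\C$-linear diagonal map $D=\diag(\lambda_{1},\dots,\lambda_{n})$ with its complex conjugate $\overline{D}$ is immediate (the sign $-1$ in the first slot of $\tbartilde$ appears twice and cancels). Applied to $\ttilde_{x}$ this gives $\tbartilde \circ \ttilde_{x}\circ \tbartilde = \overline{\ttilde_{x}}=\ttilde_{-x}$. For (ii), since $\orth{n-1}$ acts only on the last $n-1$ coordinates and acts there as a real-linear (hence $\C$-linear, scalar-commuting, and complex-conjugation-commuting) transformation of $\R^{n-1}\subset \C^{n-1}$, both $\ttilde_{x}|_{\C^{n-1}}=e^{-ix/(n-1)}\Id$ and $\tbartilde|_{\C^{n-1}}=\bar{\,\cdot\,}$ commute with every $\MMM\in \orth{n-1}$; combining with the trivial action of $\orth{n-1}$ on the first coordinate gives the centralising property, and together with (i) this shows the subgroup $\langle \tbartilde,\{\ttilde_{x}\}\rangle$ is an $\orth{2}$.

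Next, (iii) is immediate from the structural observation above: $\C$-linearity of $\ttilde_{x}$ says $\ttilde_{x}\circ J=J\circ \ttilde_{x}$, while $\C$-antilinearity of $\tbartilde$ gives $\tbartilde\circ J=-J\circ \tbartilde$; the statement for $\tbartilde_{x}=\ttilde_{2x}\circ \tbartilde$ follows at once from these two facts. For (iv), since $\ttilde_{x}\in \unit{n}$ it preserves $\omega$ automatically, and $\det \ttilde_{x}=e^{ix}\cdot e^{-ix/(n-1)\cdot (n-1)}=1$, so $\ttilde_{x}\in \sunit{n}$, which is exactly the subgroup of $\unit{n}$ preserving $\Omega$.

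Finally, (v) is a one-line pullback calculation: writing $\tbartilde^{*}dz_{1}=-d\overline{z}_{1}$ and $\tbartilde^{*}dz_{j}=d\overline{z}_{j}$ for $j\ge 2$ (and complex conjugates thereof),
\begin{equation*}
\tbartilde^{*}\Omega \;=\; (-d\overline{z}_{1})\wedge d\overline{z}_{2}\wedge\cdots\wedge d\overline{z}_{n} \;=\; -\overline{\Omega},
\end{equation*}
and in each term of $\omega=\tfrac{i}{2}\sum dz_{j}\wedge d\overline{z}_{j}$ the pullback picks up one sign (from the $-$ on $dz_{1}$ and its conjugate in the $j=1$ term, or from swapping $d\overline{z}_{j}\wedge dz_{j}$ back to $dz_{j}\wedge d\overline{z}_{j}$ for $j\ge 2$), yielding $\tbartilde^{*}\omega=-\omega$. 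There is no real obstacle: the whole proposition is a set of bookkeeping verifications, and the only mild care needed is tracking the sign in the first coordinate of $\tbartilde$ when computing (i) and (v); these cancel in (i) but survive in (v) because only one copy of $\tbartilde$ appears.
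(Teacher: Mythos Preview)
Your proof is correct and follows precisely the approach the paper indicates: the paper simply states that the proposition is ``easy to check'' from the definitions \ref{E:ttilde}, \ref{E:tbartilde:p:eq:1} and \ref{E:tbartilde_x}, and you have supplied the routine verifications. Your organisation---first noting the block structure $\C^{n}=\C\oplus\C^{n-1}$ and that $\ttilde_{x}$ is $\C$-linear diagonal while $\tbartilde$ is $\C$-antilinear---makes the checks of (i)--(iv) essentially immediate, and your pullback computation for (v) is accurate (the slightly compressed parenthetical about signs in the $j=1$ term of $\omega$ could be expanded for clarity, but the accounting is right: two $-1$'s from $\tbartilde$ cancel, then one $-1$ from reordering the wedge).
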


\begin{remark}
\addtocounter{equation}{1}
\label{R:cf:isom:r}
We see from \ref{P:commute:p:eq:1}.v that $\tbartilde$ is both an anti-special Lagrangian and anti-holomorphic isometry. 
While for any $x\in \R$, $\ttilde_{x} \in \sunit{n} \subset \Isomsl \subset \Isomslpm$. 

Since $\tbar$ is reflection in the origin in $\R$, we have the commutation relation
$\tbar \circ \TTT_x \circ \tbar = \TTT_{-x}$. Part (i) above says that the 
same relations also hold for $\tbartilde$ and $\ttilde_x$ 
and that $\ttilde_x$ and $\tbartilde$ generate a subgroup of $\orth{2n}$ isomorphic to 
$\orth{2} \cong \Sph^1 \rtimes \Z_2$, where $\tbartilde$ generates
the $\Z_2$ factor and acts by inversion (thinking of the group generated by $\TTT_{x}$ as an abelian group) 
on the $\Sph^1$ factor.

Also, since they act on different factors of $\R \times \Sph^{n-2}$, 
every element in $\Isom(\R) \subset \Isom(\cylone)$ 
commutes with every element in $\orth{n-1} \subset \Isom(\cylone)$.
Part (ii) above is the analogue of this result for 
the group $\orth{2}$ generated by $\tbartilde$ and $\ttilde_x$.
\end{remark}

\subsubsection*{Discrete symmetries of $X_{\tau}$ for $p>1$ and $p \neq q$} \phantom{ab}
\begin{prop}[Discrete symmetries of $X_{\tau}$ for $p> 1$; cf. Prop. \ref{P:xtau:sym:p:eq:1}]
\addtocounter{equation}{1}
\label{P:xtau:sym:p:neq:q}
For $p>1$ and $0 < \abs{\tau} < \taumax$,  $X_\tau$ admits the following symmetries
\addtocounter{theorem}{1}
\begin{subequations}
\label{E:sym:p:neq:q}
\begin{align}
\label{E:orth:p:neq:q}
\MMM \circ X_{\tau} &= X_{\tau} \circ \MMM, \quad  \quad \text{for all\ \,} \MMM \in \orth{p} \times \orth{q},\\
\label{E:ttilde:sym:p:neq:q}
\ttilde_{2\pthat} \circ X_\tau &= X_\tau \circ \TTT_{2\pt},\\
\label{E:tbar:ptplus}
\tbartildeplus \circ X_\tau & = X_\tau \circ \tbar_{\pt^+},\\
\label{E:tbar:ptminus}
\tbartildeminus \circ X_\tau & = X_\tau \circ \tbar_{\,-\pt^-}
\end{align}
\end{subequations}
where $\ttilde_x \in \sunit{n}$ is defined by \ref{E:ttilde}
and $\tbartildeplus$, $\tbartildeminus \in \orth{2n}$ (which depend on $\tau$) are defined by
\begin{equation}
\addtocounter{theorem}{1}
\label{E:tbartildeplus}
\tbartildeplus (z,w) = (e^{i\alpha_\tau/p} e^{i\psi_1(2\pt^+)} \overline{z}, e^{i\alpha_\tau/q} e^{i\psi_2(2\pt^+)}
\overline{w}),
\end{equation}
and
\begin{equation}
\addtocounter{theorem}{1}
\label{E:tbartildeminus}
\tbartildeminus (z,w) = (e^{i\alpha_\tau/p} e^{i\psi_1(-2\pt^-)} \overline{z}, e^{i\alpha_\tau/q} e^{i\psi_2(-2\pt^-)}
\overline{w}),
\end{equation}
where $z\in \C^p$ and $w\in \C^q$.

\end{prop}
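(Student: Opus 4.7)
The plan is to deduce each of the four symmetry relations in \ref{E:sym:p:neq:q} directly from the defining formula for $X_{\tau}$ in Definition \ref{D:X:tau}, combined with the corresponding discrete symmetries of $\bw_{\tau}$ already established in Lemma \ref{L:y:w:symmetry:p:neq:1}. The guiding principle is that every symmetry of $X_{\tau}$ of the form $(\mtilde, \MMM)$ either comes from the product of spherical symmetries acting on $(\sigma_1, \sigma_2)$ (yielding the $\orth{p}\times\orth{q}$ part), or else comes from a domain symmetry $\MMM \in \Isom(\R)$ acting on the $t$-variable together with the target-space transformation induced on $\bw_{\tau}$.

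The first symmetry \ref{E:orth:p:neq:q} is immediate from Definition \ref{D:X:tau}: for $\MMM = (\MMM_1, \MMM_2) \in \orth{p}\times\orth{q}$, writing $\mtilde = \diag(\MMM_1, \MMM_2) \in \orth{p+q}\subset \orth{2n}$, we have $\mtilde \circ X_\tau(t, \sigma_1, \sigma_2) = (w_1(t) \MMM_1 \sigma_1, w_2(t) \MMM_2 \sigma_2) = X_\tau \circ \MMM(t, \sigma_1, \sigma_2)$, extending the $\sorth{p}\times\sorth{q}$-equivariance of Proposition \ref{P:X:tau}(iii) to the full orthogonal product group.

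For \ref{E:ttilde:sym:p:neq:q}, I would apply the first symmetry of $\bw_\tau$ in \ref{E:w:sym:p:neq:1}: $\bw_\tau \circ \TTT_{2\pt} = \tcheck_{2\pthat} \circ \bw_\tau$, which means $w_1(t + 2\pt) = e^{i2\pthat/p}w_1(t)$ and $w_2(t+2\pt) = e^{-i2\pthat/q}w_2(t)$ by the definition \ref{E:M:defn} of $\tcheck_x$. Substituting into $X_\tau$ then gives $X_\tau \circ \TTT_{2\pt}(t,\sigma_1,\sigma_2) = (e^{i2\pthat/p}w_1(t)\sigma_1, e^{-i2\pthat/q}w_2(t)\sigma_2) = \ttilde_{2\pthat} \circ X_\tau(t,\sigma_1,\sigma_2)$, using the definition \ref{E:ttilde} of $\ttilde_x$. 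The symmetries \ref{E:tbar:ptplus} and \ref{E:tbar:ptminus} are handled identically, starting from the remaining two symmetries in \ref{E:w:sym:p:neq:1}; the only additional ingredient is that since $\sigma_1 \in \Sph^{p-1}\subset \R^p$ and $\sigma_2\in \Sph^{q-1}\subset \R^q$ are real, complex conjugation commutes with multiplication by $\sigma_i$, i.e.\ $\overline{w_i(t)}\,\sigma_i = \overline{w_i(t)\sigma_i}$. Thus the action of $\tbarcheck_{\pm}$ on $\bw_\tau$ extends componentwise to the action of $\tbartilde_\pm$ on $X_\tau$, and the formulas \ref{E:tbartildeplus}, \ref{E:tbartildeminus} are exactly what one reads off.

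There is no serious obstacle here; the only point requiring a brief verification is that $\tbartildeplus$ and $\tbartildeminus$ as defined by \ref{E:tbartildeplus} and \ref{E:tbartildeminus} actually lie in $\orth{2n}$. This is clear since each is the composition of complex conjugation on each of the two factors $\C^p$ and $\C^q$ (which is in $\orth{2p}\times \orth{2q}$) with the unitary scalars $e^{i\alpha_\tau/p}e^{i\psi_1(\pm 2\pt^{\pm})}$ and $e^{i\alpha_\tau/q}e^{i\psi_2(\pm 2\pt^{\pm})}$ on the two factors, both of which preserve the Euclidean inner product on $\R^{2n}$. I would conclude by noting, as preparation for \ref{C:sym:isom:p:neq:1}, that the three discrete target isometries $\ttilde_{2\pthat}, \tbartildeplus, \tbartildeminus$ together with $\orth{p}\times\orth{q}\subset \orth{2n}$ realise in the target all the isometries of $g_\tau$ generated by the dihedral factor in Proposition \ref{P:isom:pb}(ii), thereby showing $\Sym(X_\tau) \supseteq \Isom(\cylpq, g_\tau)$; the reverse containment has already been noted in Remark \ref{R:sym:pullback}.
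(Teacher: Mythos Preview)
Your proposal is correct and follows essentially the same approach as the paper: the paper's proof simply notes that \ref{E:orth:p:neq:q} is immediate from the definition of $X_\tau$, and that \ref{E:ttilde:sym:p:neq:q}, \ref{E:tbar:ptplus}, \ref{E:tbar:ptminus} are equivalent to the three symmetries of $\bw_\tau$ established in \ref{E:w:sym:p:neq:1}. Your version is just a more explicit unpacking of the same argument, with the added (harmless) verification that $\tbartildeplus, \tbartildeminus \in \orth{2n}$.
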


\begin{proof}
\ref{E:orth:p:neq:q} follows immediately from the definition of $X_{\tau}$ 
(and extends the $\sorth{p}\times\sorth{q}$-invariance used to construct $X_{\tau}$ in the first place).
The  symmetries \ref{E:ttilde:sym:p:neq:q}, \ref{E:tbar:ptplus} and \ref{E:tbar:ptminus} of $X_{\tau}$ 
are equivalent to the symmetries of $\bw_{\tau}$ established in \ref{E:w:sym:p:neq:1}.
\end{proof}
\begin{remark}
\addtocounter{equation}{1}
\label{R:extreme:tau:p:neq:q}
Symmetries when $\tau=0$: by \ref{P:X:tau}.iv $X_0:\cylpq \ra \Sph^{2n-1}$ is an embedding whose 
image is the totally real equatorial sphere $\Sph^{n-1} \subset \R^n \subset \C^n$ 
minus the two equatorial subspheres $\Sph^{n-1}\cap (\R^{p},0)$ and $\Sph^{n-1}\cap (0,\R^{q})$.
Clearly the subgroup $\orth{n} \subset \orth{2n}$ leaves this equatorial $n-1$ sphere invariant. 
$\orth{p}\times \orth{q} \subset \orth{n}$ is the subgroup of $\orth{n}$ fixing 
this distinguished pair of orthogonal equatorial subspheres. 
Therefore $X_{0}$ is still $\orth{p} \times \orth{q}$-equivariant as in \ref{E:orth:p:neq:q}.
However, there is no analogue of any of the other symmetries in \ref{E:sym:p:neq:q} in this case.
This is consistent with the fact that from \ref{R:isom:pb:ex} we have 
$\Isom(\cylpq,g_{0})= \orth{p} \times \orth{q}$ when $p>1$ and $p\neq q$.

Symmetries when $\abs{\tau} = \taumax$: 
As in the $p=1$ case discussed in Remark \ref{R:extreme:tau} 
$y_\tau$ is constant and hence $X_\tau$ has the additional continuous symmetries
$$ \ttilde_x \circ X_\tau = X_\tau \circ \TTT_x, \quad \text{for all $x\in \R$},$$
and $X_{\tau}$ is therefore homogeneous rather than cohomogeneity one as for other values of $\tau$.
\end{remark}

\begin{corollary}[Structure of $\Sym(X_{\tau})$ for $p>1$ and $p \neq q$]
\addtocounter{equation}{1}
\label{C:sym:isom:p:neq:1}
For $p>1$, $p\neq q$ and $0<\abs{\tau}<\taumax$, 
$$\Sym(X_{\tau})= \Isom(\cylpq,g_{\tau}) \cong \dihedral{} \times \orth{p}\times\orth{q},$$ 
where $\dihedral{}=\langle \tbar_{\pt^{+}}\,,\tbar_{-\pt^{-}} \rangle \cong \dihedral{\infty}$.
\end{corollary}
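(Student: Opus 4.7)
The plan is to imitate the proof of the analogous Corollary \ref{C:sym:isom:p:eq:1} for the case $p=1$, adapting it to the present setting. First I would establish the inclusion $\Sym(X_{\tau}) \le \Isom(\cylpq, g_{\tau})$: given any symmetry $(\mtilde,\MMM)$ of $X_{\tau}$, Remark \ref{R:sym:pullback} shows $\MMM \in \Isom(\cylpq,g_{\tau})$, and Proposition \ref{P:isom:pb}(ii) identifies this isometry group as $\dihedral{} \cdot (\orth{p}\times\orth{q})$ with $\dihedral{}=\langle \tbar_{\pt^{+}},\tbar_{-\pt^{-}}\rangle$.

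For the reverse inclusion I would check that every generator of $\Isom(\cylpq,g_{\tau})$ is realised as a domain symmetry of $X_{\tau}$. Proposition \ref{P:xtau:sym:p:neq:q} already exhibits explicit target isometries $\ttilde_{2\pthat}, \tbartildeplus, \tbartildeminus \in \orth{2n}$, together with the obvious action of $\orth{p}\times\orth{q}\subset \orth{2n}$, realising the domain maps $\TTT_{2\pt}$, $\tbar_{\pt^{+}}$, $\tbar_{-\pt^{-}}$, and every $\MMM \in \orth{p}\times\orth{q}$. Since $\tbar_{\pt^+}$ and $\tbar_{-\pt^-}$ generate $\dihedral{}$ (and by \ref{E:tbar:pm:commute} their product yields $\TTT_{2\pt}$, showing consistency), these symmetries give $\dihedral{}\cdot(\orth{p}\times\orth{q}) \le \Sym(X_{\tau})$.

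Combining the two inclusions yields $\Sym(X_{\tau}) = \Isom(\cylpq,g_{\tau})$, and the direct product structure $\dihedral{}\times \orth{p}\times \orth{q}$ follows immediately from Proposition \ref{P:isom:pb:str}(ii), which shows that the $\dihedral{\infty}$ factor and the $\orth{p}\times \orth{q}$ factor centralise each other (they act on disjoint factors of $\cylpq = \R \times \Sph^{p-1}\times \Sph^{q-1}$) and intersect trivially. I expect no real obstacle in this case: the argument is essentially bookkeeping once one has Proposition \ref{P:isom:pb}, Proposition \ref{P:isom:pb:str}(ii), and the explicit list of symmetries from Proposition \ref{P:xtau:sym:p:neq:q}. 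The only point requiring care is checking that the isometries $\tbartildeplus, \tbartildeminus$ defined in \ref{E:tbartildeplus}, \ref{E:tbartildeminus} do in fact lie in $\orth{2n}$ (which follows from their diagonal-plus-conjugation form) so that they provide legitimate target symmetries, but this is immediate from their definitions. The genuinely delicate case $p=q$ is treated separately in Corollary \ref{C:sym:isom:p:eq:q}, where the exchange symmetry $\tbar\circ \EEE$ interacts nontrivially with $\orth{p}\times\orth{p}$ and forces a semidirect rather than direct product; that complication does not arise here.
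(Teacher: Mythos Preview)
Your proposal is correct and follows essentially the same approach as the paper's own proof: establish $\Sym(X_{\tau}) \le \Isom(\cylpq,g_{\tau})$ via Remark \ref{R:sym:pullback} and Proposition \ref{P:isom:pb}(ii), obtain the reverse inclusion from the explicit symmetries in \ref{E:sym:p:neq:q} (since $\tbar_{\pt^{+}}$ and $\tbar_{-\pt^{-}}$ generate $\dihedral{}$), and then invoke Proposition \ref{P:isom:pb:str}(ii) for the direct product structure. The paper's proof is slightly terser but logically identical.
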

\begin{proof}
We use the same argument as in the proof of \ref{C:sym:isom:p:eq:1}.
If $(\mtilde,\MMM)$ is a symmetry of $X_{\tau}$ then $\MMM \in \Isom(\cylpq,g_{\tau}) = \dihedral{} \cdot\orth{p}\times \orth{q}$. 
Hence $\Sym(X_{\tau}) \le \Isom(\cylpq,g_{\tau})$. But since $\tbar_{\pt^{+}}$ and $\tbar_{-\pt^{-}}$ generate $\dihedral{}$, 
\ref{E:sym:p:neq:q} shows that also $\dihedral{} \cdot \orth{p} \times \orth{q} \le \Sym(X_{\tau})$.
Hence by \ref{P:isom:pb:str}(ii), $\Sym(X_{\tau}) = \Isom(\cylpq,g_{\tau}) = \dihedral{} \cdot \orth{p}\times\orth{q} \cong \dihedral{} \times \orth{p}\times\orth{q}$ as claimed.
\end{proof}

Using definitions \ref{E:ttilde}, \ref{E:tbartildeplus} and \ref{E:tbartildeminus} one can check the following:
\begin{prop} [Properties of  target discrete symmetries for $p>1,\,p\neq q$, cf. \ref{P:commute:p:eq:1}]\hfill
\addtocounter{equation}{1}
\label{P:commute:p:neq:q}
\begin{itemize}
\item[(i)] $\tbartildeplus \circ \ttilde_x \circ \tbartildeplus = \tbartildeminus \circ \ttilde_x \circ \tbartildeminus = \ttilde_{-x}$.
\item[(ii)] $\tbartildeminus \circ \tbartildeplus = \ttilde_{-2\pthat}, \quad 
\tbartildeplus \circ \tbartildeminus = \ttilde_{2\pthat}$.
\item[(iii)] the dihedral subgroup $\widetilde{\dihedral{}}:= \langle\tbartildeplus, \tbartildeminus \rangle$ 
 centralises $\orth{p} \times \orth{q} \subset \orth{n} \subset \orth{2n}$.
\item[(iv)] $\ttilde_x$ commutes with $J$, while  
$\tbartildeplus$ and $\tbartildeminus$ anticommute with $J$. 
\item[(v)] $\ttilde_x$ preserves both $\Omega$ and $\omega$.
\item[(vi)] ${\tbartildeplus}^* \Omega = {\tbartildeminus}^* \Omega = -\overline{\Omega}, 
\quad {\tbartildeplus}^* \omega = {\tbartildeminus}^* \omega = -\omega$.
\end{itemize}
\end{prop}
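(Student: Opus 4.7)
The plan is to verify each of the six assertions as a direct computation using the explicit formulae \ref{E:ttilde}, \ref{E:tbartildeplus} and \ref{E:tbartildeminus}, together with the period/symmetry information about $\psi_1,\psi_2,\Psi$ established in Section \ref{S:w:sym}. The main thread is that $\ttilde_x$ is a complex-linear diagonal unitary of the form $\mathrm{diag}(e^{ix/p}\Id_p,e^{-ix/q}\Id_q)$, while each $\tbartildepm$ factors as $A_\pm\circ\mathsf{C}$, where $\mathsf{C}$ is the standard complex conjugation on $\C^n$ and $A_\pm\in\unit{1}\times\unit{1}\subset\unit{p}\times\unit{q}$ is a diagonal unitary of the form $\mathrm{diag}(a^\pm_1\Id_p,a^\pm_2\Id_q)$ with $|a^\pm_i|=1$.

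For (i) I would simply expand $\tbartildepm(\ttilde_x(\tbartildepm(z,w)))$ and observe that the two applications of $\mathsf{C}$ compose to the identity while the phases $a^\pm_i$ cancel against their conjugates, leaving the factor $e^{-ix/p}$ in the first block and $e^{ix/q}$ in the second; this is $\ttilde_{-x}$. For (ii) the same expansion computes $\tbartildeminus\circ\tbartildeplus(z,w)=(a^-_1\overline{a^+_1}z,\, a^-_2\overline{a^+_2}w)$, and the relevant exponents are $\psi_1(-2\pt^-)-\psi_1(2\pt^+)$ and $\psi_2(-2\pt^-)-\psi_2(2\pt^+)$; using the reflection identities $\psi_i(\pm 2\pt^\pm)=2\psi_i(\pm\pt^\pm)$ from \ref{E:psi:sym:p:neq:1} and the partial-period formula \ref{E:psi:period:p:neq:1}, these differences equal $-2\pthat/p$ and $+2\pthat/q$, giving $\ttilde_{-2\pthat}$; the other composition is the inverse. (iii) is the observation that any $(A,B)\in\orth{p}\times\orth{q}$ is given by real matrices, so complex conjugation commutes with it, and the scalar factors $a^\pm_i$ are central in $\mathrm{GL}$, hence $\tbartildepm$ and the $\orth{p}\times\orth{q}$-block-diagonal elements commute.

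Parts (iv) and (v) are immediate from the factorisation above: $\ttilde_x$ is complex-linear so it commutes with $J$, while $\tbartildepm$ is complex anti-linear (because of the $\mathsf{C}$ factor, and $A_\pm$ is complex-linear), so $\tbartildepm\circ J=-J\circ\tbartildepm$. For (v) I simply compute $\det\ttilde_x=(e^{ix/p})^p(e^{-ix/q})^q=1$, placing $\ttilde_x\in\sunit{n}$ and hence preserving $\Omega$ and $\omega$.

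The only nontrivial point is (vi), which I expect to be the main obstacle. Since $\mathsf{C}^*\Omega=\overline\Omega$ and $\mathsf{C}^*\omega=-\omega$, and since $A_\pm^*\Omega=(\det A_\pm)\Omega$ while $A_\pm^*\omega=\omega$ (because $|a^\pm_i|=1$), one gets
\begin{equation*}
\tbartildepm^{\,*}\Omega=\overline{\det A_\pm}\;\overline\Omega,\qquad \tbartildepm^{\,*}\omega=-\omega,
\end{equation*}
so the $\omega$ assertions are automatic and the $\Omega$ assertions reduce to verifying $\det A_\pm=-1$. From the definitions
\begin{equation*}
\det A_+=e^{2i\alpha_\tau}e^{i\Psi(2\pt^+)},\qquad \det A_-=e^{2i\alpha_\tau}e^{i\Psi(-2\pt^-)},
\end{equation*}
where $\Psi=p\psi_1+q\psi_2$. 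The hard part is evaluating $\Psi$ at $\pm 2\pt^\pm$ modulo $2\pi$. I would do so by first using the reflection symmetry $\psi_i\circ\tbar_{\pt^+}=-\psi_i+\psi_i(2\pt^+)$ (and its counterpart for $\tbar_{-\pt^-}$) to get $\Psi(2\pt^+)=2\Psi(\pt^+)$ and $\Psi(-2\pt^-)=2\Psi(-\pt^-)$. At the points $t=\pt^+$ and $t=-\pt^-$ one has $y=\ymin$ or $\ymax$, hence $\dot y=0$ and $\sqrt{f(y)}=2|\tau|$; substituting into \ref{E:psi:imag:pneq1} gives $\sin(\Psi+\alpha_\tau)=-\sgn\tau$ at both instants. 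Combined with the global confinement $\Psi+\alpha_\tau\in(-\pi,0)$ (for $\tau>0$) that was established in the proof of \ref{E:psi:period:p:neq:1}, this forces $\Psi(\pt^+)=\Psi(-\pt^-)=-\pi/2-\alpha_\tau$, so $\Psi(\pm 2\pt^\pm)=-\pi-2\alpha_\tau$ and therefore $\det A_\pm=e^{2i\alpha_\tau}e^{-i\pi-2i\alpha_\tau}=-1$, as required. The case $\tau<0$ is handled either by the analogous confinement interval or by the symmetry $\bw_{-\tau}=\overline{\bw}_\tau$.
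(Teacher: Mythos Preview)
Your proposal is correct and follows exactly the route the paper indicates: the paper states only that (i)--(v) are checked directly from the definitions \ref{E:ttilde}, \ref{E:tbartildeplus}, \ref{E:tbartildeminus}, and that the first two equalities of (vi) additionally require \ref{E:psi:real:pneq1}, \ref{E:psi:imag:pneq1} and \ref{E:psi:sym:p:neq:1}; you have supplied precisely those computations, including the key reduction of (vi) to $\det A_\pm=-1$ and its verification via $\Psi(\pm\pt^\pm)=-\pi/2-\alpha_\tau$.

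One cosmetic point: in your displayed formula for $\tbartildepm^{\,*}\Omega$ the factor should be $\det A_\pm$ rather than $\overline{\det A_\pm}$ (pulling back the constant $\det A_\pm$ through $\mathsf{C}$ does not conjugate it, since pullback of a scalar multiple of a form is $f^*(c\alpha)=c\,f^*\alpha$); this is harmless here because $\det A_\pm=-1$ is real.
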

\noindent
To prove the first two equalities of \ref{P:commute:p:neq:q}.vi one also needs to use \ref{E:psi:real:pneq1}, 
\ref{E:psi:imag:pneq1} and \ref{E:psi:sym:p:neq:1}. \ref{P:commute:p:neq:q}.vi implies that 
both $\tbartildeplus$ and $\tbartildeminus$ are anti-special Lagrangian,  anti-holomorphic isometries in $\Isomslpm$.

\begin{remark}
\addtocounter{equation}{1}
\label{R:dihedral:sphere}
Remark \ref{R:dihedral:cyl} showed that the group generated by 
$\tbar_{\pt^+}$, $\tbar_{\pt^-} \in \Isom(\R) \subset \Isom(\cylpq)$
is isomorphic to the infinite dihedral group $\dihedral{\infty}$. 
Part (ii) above gives the analogous result for the subgroup $\widetilde{\dihedral{}}$ of $\orth{2n}$ 
generated by $\tbartildeplus$ and $\tbartildeminus$. 
See Lemma \ref{L:symtilde:p:neq:q} for the precise structure of $\widetilde{\dihedral{}}$.

Part (iii) is the analogue 
of the fact that $\dihedral{}$ and $\orth{p} \times \orth{q} \subset \Isom(\cylpq)$ 
centralise each other.
\end{remark}

\subsubsection*{Discrete symmetries of $X_{\tau}$ for $p>1$ and $p=q$} \phantom{ab}
\begin{prop}[Discrete symmetries of $X_{\tau}$ for $p=q$; cf. Props. \ref{P:xtau:sym:p:eq:1} 
and \ref{P:xtau:sym:p:neq:q}]
\addtocounter{equation}{1}
\label{P:xtau:sym:p:eq:q}
For $p>1$, $p=q$ and $0 < \abs{\tau} < \taumax$,  $X_\tau$ admits the following symmetries
\addtocounter{theorem}{1}
\begin{subequations}
\label{E:sym:p:eq:q}
\begin{align}
\label{E:orth:p:eq:q}
\MMM \circ X_{\tau} &= X_{\tau} \circ \MMM, \quad  \quad \text{for all\ } \MMM \in \orth{p} \times \orth{p},\\
\label{E:ttilde:sym:p:eq:q}
\ttilde_{2\pthat} \circ X_\tau &= X_\tau \circ \TTT_{2\pt},\\
\label{E:tbar:ptplus:p:eq:q}
\tbartildeplus \circ X_\tau & = X_\tau \circ \tbar_{\pt/2},\\
\label{E:tbar:ptminus:p:eq:q}
\tbartildeminus \circ X_\tau & = X_\tau \circ \tbar_{\,-\pt/2},\\
\label{E:tbar:sym:p:eq:q}
\tbartilde \circ X_\tau &= X_\tau \circ \tbar \circ \EEE,\\
\label{E:sbar}
\ttilde_{\pthat} \circ \sbartilde \circ X_\tau &= X_\tau \circ \TTT_{\pt} \circ \EEE,
\end{align}
\end{subequations}
where $\ttilde_{x}$, $\tbartildeplus$ and $\tbartildeminus$ are defined as in \ref{P:xtau:sym:p:neq:q}, 
$\tbartilde \in \orth{2p} \subset \unit{2p}$ is defined by
\begin{equation}
\addtocounter{theorem}{1}
\label{E:tbartilde:p:eq:q}
\tbartilde\,(z,w)= (w,z) \quad \text{where} \ w,z\in \C^p,
\end{equation}
and $\sbartilde \in \orth{4p}$ is defined by
\begin{equation}
\addtocounter{theorem}{1}
\label{E:sbartilde}
\sbartilde\, (z,w) = e^{-i\pi/2p} (\overline{w},\overline{z}), \quad \text{where} \ w,z\in \C^p.
\end{equation}
Furthermore, the reflections $\tbartildeplus$ and $\tbartildeminus$ can also be expressed as 
\addtocounter{theorem}{1}
\begin{equation}
\label{E:tbartildeplus:p:eq:q}
\tbartildeplus = \ttilde_{\pthat} \circ \sbartilde \circ \tbartilde,
\end{equation}
and
\addtocounter{theorem}{1}
\begin{equation}
\label{E:tbartildeminus:p:eq:q}
\tbartildeminus = \ttilde_{-\pthat} \circ \sbartilde \circ \tbartilde.
\end{equation}
\end{prop}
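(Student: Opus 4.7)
The first four assertions \ref{E:orth:p:eq:q}--\ref{E:tbar:ptminus:p:eq:q} are inherited directly from Proposition \ref{P:xtau:sym:p:neq:q}. Indeed, the argument there makes no use of the hypothesis $p\neq q$, so the $\orth{p}\times\orth{p}$-equivariance and the translation symmetry $\ttilde_{2\pthat}\circ X_\tau=X_\tau\circ \TTT_{2\pt}$ carry over verbatim. For the reflection symmetries it suffices to note that when $p=q$ the equal-partial-periods identity $\pt^+=\pt^-=\tfrac12\pt$ from \ref{E:part:periods:equal} specialises $\tbar_{\pt^+}$ and $\tbar_{-\pt^-}$ to $\tbar_{\pt/2}$ and $\tbar_{-\pt/2}$ respectively, so \ref{E:tbar:ptplus} and \ref{E:tbar:ptminus} become \ref{E:tbar:ptplus:p:eq:q} and \ref{E:tbar:ptminus:p:eq:q}. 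Thus only the two genuinely new symmetries \ref{E:tbar:sym:p:eq:q} and \ref{E:sbar} require proof, and these should follow from the extra symmetry of $\bw_\tau$ recorded in \ref{E:w:ex:sym} and its consequences \ref{E:psi:ex:sym} for $\psi_1,\psi_2$.

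To establish \ref{E:tbar:sym:p:eq:q}, I would compute both sides using the definition of $X_\tau$:
\begin{equation*}
X_\tau\circ(\tbar\circ\EEE)(t,\sigma_1,\sigma_2)=\bigl(w_1(-t)\,\sigma_2,\;w_2(-t)\,\sigma_1\bigr),
\end{equation*}
and then invoke \ref{E:w:ex:sym}, which gives $w_1\circ\tbar=w_2$ and $w_2\circ\tbar=w_1$, to rewrite this as $(w_2(t)\,\sigma_2,w_1(t)\,\sigma_1)$. By the definition \ref{E:tbartilde:p:eq:q} of $\tbartilde$ (which swaps the two $\C^p$ factors) this coincides with $\tbartilde\circ X_\tau(t,\sigma_1,\sigma_2)$, giving \ref{E:tbar:sym:p:eq:q}.

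The main computational step is \ref{E:sbar}. I would write $\bw_\tau$ in the form \ref{E:w1:p:neq:1}, so that both sides of the claimed identity reduce to identities about $y_\tau$, $\psi_1$, $\psi_2$. Applying $\TTT_\pt\circ\EEE$ on the right yields
\begin{equation*}
X_\tau\circ(\TTT_\pt\circ\EEE)(t,\sigma_1,\sigma_2)=\bigl(w_1(t+\pt)\sigma_2,\,w_2(t+\pt)\sigma_1\bigr);
\end{equation*}
using $y_\tau\circ\TTT_\pt=1-y_\tau\circ\tbar$ (which follows from \ref{E:y:sym:p:eq:q} together with $\tbar_{\pt/2}=\TTT_\pt\circ\tbar$) together with the translation formulas for $\psi_1,\psi_2$ in \ref{E:psi:ex:sym}, namely $\psi_1\circ\TTT_\pt=-\psi_2+\psi_1(\pt)$ and $\psi_2\circ\TTT_\pt=-\psi_1+\psi_2(\pt)$, this can be rewritten as $e^{-i\pi/2p}(\overline{w_2(t)\sigma_2},\overline{w_1(t)\sigma_1})$ multiplied by an overall phase $e^{i\pthat}$ distributed as $e^{i\pthat/p}$ on each factor; the phase arises from $\psi_1(\pt)$ and $\psi_2(\pt)$, using $p\psi_1(\pt)+p\psi_2(\pt)=\Psi(\pt)=0$ (see the proof of Lemma \ref{L:y:w:symmetry:p:neq:1}) to conclude $p\psi_1(\pt)=-p\psi_2(\pt)=\pthat$. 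By the definitions \ref{E:ttilde} and \ref{E:sbartilde} this equals $\ttilde_\pthat\circ\sbartilde\circ X_\tau(t,\sigma_1,\sigma_2)$, proving \ref{E:sbar}. The phase bookkeeping here is the main obstacle: one has to track the factors $e^{i\alpha_\tau/2p}$ built into the initial data and verify that they combine with the $e^{i\psi_j(\pt)}$ contributions to produce exactly the phase $e^{-i\pi/2p}$ on both complex factors.

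Finally, the identities \ref{E:tbartildeplus:p:eq:q} and \ref{E:tbartildeminus:p:eq:q} follow formally from the symmetries already proved. Since $\EEE$ and $\tbar$ commute in $\Diff(\cylpp)$ and $\EEE^2=\Id$, we have $\tbar_{\pt/2}=\TTT_\pt\circ\tbar=(\TTT_\pt\circ\EEE)\circ(\tbar\circ\EEE)$. Applying \ref{E:sbar} and \ref{E:tbar:sym:p:eq:q} in sequence gives
\begin{equation*}
X_\tau\circ\tbar_{\pt/2}=(\ttilde_\pthat\circ\sbartilde)\circ X_\tau\circ(\tbar\circ\EEE)=(\ttilde_\pthat\circ\sbartilde\circ\tbartilde)\circ X_\tau,
\end{equation*}
and comparison with \ref{E:tbar:ptplus:p:eq:q} together with the linear fullness argument used in Lemma \ref{L:sym:subgroups} forces $\tbartildeplus=\ttilde_\pthat\circ\sbartilde\circ\tbartilde$. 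The identity \ref{E:tbartildeminus:p:eq:q} follows by the same argument after writing $\tbar_{-\pt/2}=\TTT_{-\pt}\circ\tbar=(\TTT_{-\pt}\circ\EEE)\circ(\tbar\circ\EEE)$ and using $X_\tau\circ(\TTT_{-\pt}\circ\EEE)=\ttilde_{-\pthat}\circ\sbartilde\circ X_\tau$, which is the inverse of \ref{E:sbar}.
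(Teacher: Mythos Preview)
Your treatment of \ref{E:orth:p:eq:q}--\ref{E:tbar:ptminus:p:eq:q} and \ref{E:tbar:sym:p:eq:q} is correct and matches the paper exactly. The interesting point is that you reverse the paper's logic for the remaining two items: the paper takes \ref{E:tbartildeplus:p:eq:q} as something to be checked by direct computation and then derives \ref{E:sbar} from it together with \ref{E:tbar:ptplus:p:eq:q} and \ref{E:tbar:sym:p:eq:q}, whereas you compute \ref{E:sbar} directly and then deduce \ref{E:tbartildeplus:p:eq:q}, \ref{E:tbartildeminus:p:eq:q} from it. Either order works, and your derivation of \ref{E:tbartildeplus:p:eq:q} from \ref{E:sbar} via $\tbar_{\pt/2}=(\TTT_\pt\circ\EEE)\circ(\tbar\circ\EEE)$ is clean. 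In fact both routes reduce to the same pair of phase identities, namely
\[
e^{i\alpha_\tau/p}\,e^{i\psi_1(\pt)}=e^{i\pthat/p}\,e^{-i\pi/2p},
\qquad
e^{i\alpha_\tau/p}\,e^{i\psi_2(\pt)}=e^{-i\pthat/p}\,e^{-i\pi/2p}.
\]

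However, your phase bookkeeping for \ref{E:sbar} contains a genuine error. You assert $\Psi(\pt)=p\psi_1(\pt)+p\psi_2(\pt)=0$, citing the proof of Lemma \ref{L:y:w:symmetry:p:neq:1}, but that proof establishes $\Psi(2\pt)=0$, not $\Psi(\pt)=0$. In fact, from $\psi_i(\pt)=2\psi_i(\pt/2)$ (which follows from the reflection symmetry in \ref{E:psi:sym:p:neq:1} with $\pt^+=\pt/2$) and the value $\Psi(\pt/2)=-\tfrac{\pi}{2}-\alpha_\tau$ obtained in that proof, one gets $\Psi(\pt)=-\pi-2\alpha_\tau$. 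Consequently your conclusion $p\psi_1(\pt)=-p\psi_2(\pt)=\pthat$ is also wrong. The correct values are
\[
p\psi_1(\pt)=\pthat-\tfrac{\pi}{2}-\alpha_\tau,
\qquad
p\psi_2(\pt)=-\pthat-\tfrac{\pi}{2}-\alpha_\tau,
\]
using $p(\psi_1(\pt)-\psi_2(\pt))=2\pthat$ from \ref{E:psi:period:p:eq:q} together with $\psi_1(-\pt)=\psi_2(\pt)$. Substituting these into the phase identities above, the $\alpha_\tau$ contributions cancel and the $-\pi/2$ produces exactly the factor $e^{-i\pi/2p}$ in $\sbartilde$, so the argument does go through once this is corrected. (A minor slip: your formula $y_\tau\circ\TTT_\pt=1-y_\tau\circ\tbar$ should read $y_\tau\circ\TTT_\pt=1-y_\tau$; what you wrote equals $y_\tau$ itself.)
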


\begin{proof}
The $\orth{p}\times\orth{p}$-equivariance expressed by \ref{E:orth:p:eq:q} follows as a special case of \ref{E:orth:p:neq:q}.
Similarly, since $\pt^+ = \pt^- = \tfrac{1}{2}\pt$,
\ref{E:ttilde:sym:p:eq:q}, \ref{E:tbar:ptplus:p:eq:q} and \ref{E:tbar:ptminus:p:eq:q} are each special cases of
\ref{E:ttilde:sym:p:neq:q}, \ref{E:tbar:ptplus} and \ref{E:tbar:ptminus} respectively.
\ref{E:tbar:sym:p:eq:q} is equivalent to the symmetry of $\bw_\tau$ with respect to $\tbar$ given in \ref{E:w:ex:sym}.
\ref{E:sbar} follows from the symmetries \ref{E:tbar:ptplus:p:eq:q} and \ref{E:tbar:sym:p:eq:q}, using
\ref{E:tbartildeplus:p:eq:q}.
\end{proof}

\begin{remark}
\addtocounter{equation}{1}
\label{R:extra:sym}
\ref{E:tbar:sym:p:eq:q} and \ref{E:sbar} express the two additional symmetries that $X_\tau$ possesses when $p=q$
and both utilise the additional exchange isometry $\EEE \in \Isom(\cylpp)$.
\ref{E:tbar:sym:p:eq:q} expresses an additional reflectional symmetry of
$X_\tau$ about the $\sorth{p}\times \sorth{p}$-orbit for which the radii of both $p-1$ spheres are equal.
\end{remark}

\begin{corollary}[Structure of $\Sym(X_{\tau})$ for $p>1$ and $p=q$]
\addtocounter{equation}{1}
\label{C:sym:isom:p:eq:q}
For $p>1$, $p=q$ and $0<\abs{\tau}<\taumax$, 
$$\Sym(X_{\tau})= \Isom(\cylpp,g_{\tau}) \cong  (\orth{p} \times \orth{p}) \rtimes_{\rho} \dihedral{},$$ 
where $\dihedral{}=\langle \tbar \circ \EEE\,,\tbar_{\pt/2} \rangle \cong \dihedral{\infty}$ and $\rho$ is the 
homomorphism defined in \ref{P:isom:pb:str}(iii).
\end{corollary}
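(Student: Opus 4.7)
The plan is to mirror the strategy used for Corollaries \ref{C:sym:isom:p:eq:1} and \ref{C:sym:isom:p:neq:1}, bracketing $\Sym(X_{\tau})$ between $\Isom(\cylpp,g_{\tau})$ from above and from below, and then importing the group-theoretic structure from Proposition \ref{P:isom:pb:str}(iii).

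First I would establish the upper bound $\Sym(X_{\tau})\le\Isom(\cylpp,g_{\tau})$. By Remark \ref{R:sym:pullback}, every $\MMM\in\Sym(X_{\tau})$ preserves the pullback metric $g_{\tau}=X_{\tau}^{*}g_{\Sph^{2n-1}}$ on $\cylpp$, so $\MMM\in\Isom(\cylpp,g_{\tau})$. By Proposition \ref{P:isom:pb}(iii), this isometry group equals $\dihedral{}\cdot(\orth{p}\times\orth{p})$ with $\dihedral{}=\langle\tbar\circ\EEE,\tbar_{\pt/2}\rangle$.

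Next I would prove the reverse inclusion $\Isom(\cylpp,g_{\tau})\le\Sym(X_{\tau})$ by exhibiting each generator of $\Isom(\cylpp,g_{\tau})$ as an element of $\Sym(X_{\tau})$. The $\orth{p}\times\orth{p}$ factor is in $\Sym(X_{\tau})$ by the equivariance \ref{E:orth:p:eq:q}. The generator $\tbar_{\pt/2}$ lies in $\Sym(X_{\tau})$ because of the symmetry \ref{E:tbar:ptplus:p:eq:q}, which provides the target isometry $\tbartildeplus\in\orth{2n}$ realising it. The exchange-reflection generator $\tbar\circ\EEE$ lies in $\Sym(X_{\tau})$ thanks to the symmetry \ref{E:tbar:sym:p:eq:q}, where the target isometry is $\tbartilde$. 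Since both generators of $\dihedral{}$ and all of $\orth{p}\times\orth{p}$ belong to $\Sym(X_{\tau})$, the whole product $\dihedral{}\cdot(\orth{p}\times\orth{p})$ does too.

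Combining the two inclusions gives $\Sym(X_{\tau})=\Isom(\cylpp,g_{\tau})$, and the semidirect product description $(\orth{p}\times\orth{p})\rtimes_{\rho}\dihedral{}$ with the twisting homomorphism $\rho$ is then a direct quotation of Proposition \ref{P:isom:pb:str}(iii). There is no real obstacle here beyond bookkeeping: the essential work has already been done in Proposition \ref{P:isom:pb} (classification of intrinsic isometries) and Proposition \ref{P:xtau:sym:p:eq:q} (production of extrinsic realisers for each intrinsic isometry). The only subtlety worth a sentence of emphasis is why, in the $p=q$ case, the new symmetries \ref{E:tbar:sym:p:eq:q} and \ref{E:sbar} involving the exchange map $\EEE$ are in fact available, and this is exactly what forces the semidirect (rather than direct) product structure relative to the cases $p=1$ and $p\neq q$ treated in \ref{C:sym:isom:p:eq:1} and \ref{C:sym:isom:p:neq:1}.
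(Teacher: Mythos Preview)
Your proposal is correct and follows essentially the same two-inclusion argument as the paper: the upper bound $\Sym(X_{\tau})\le\Isom(\cylpp,g_{\tau})$ via Remark \ref{R:sym:pullback} and Proposition \ref{P:isom:pb}(iii), the lower bound by checking that the generators $\tbar\circ\EEE$, $\tbar_{\pt/2}$, and $\orth{p}\times\orth{p}$ all lie in $\Sym(X_{\tau})$ using the explicit symmetries in \ref{E:sym:p:eq:q}, and then importing the semidirect product structure from Proposition \ref{P:isom:pb:str}(iii). Your closing remark about why the exchange symmetries force a genuine semidirect product is accurate extra commentary, but the proof itself is a straightforward transcription of the pattern already established in Corollaries \ref{C:sym:isom:p:eq:1} and \ref{C:sym:isom:p:neq:1}.
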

\begin{proof}
We use the argument of \ref{C:sym:isom:p:eq:1} again. 
If $(\mtilde,\MMM)$ is a symmetry of $X_{\tau}$ then $\MMM \in \Isom(\cylpp,g_{\tau}) = \dihedral{} \cdot \orth{p}\times \orth{p}$. 
Hence $\Sym(X_{\tau}) \le \Isom(\cylpp,g_{\tau})$. But since $\tbar \circ \EEE$ and $\tbar_{\pt/2}$ generate $\dihedral{}$, 
\ref{E:sym:p:eq:q} shows that also $\dihedral{} \cdot \orth{p}\times\orth{p}  \le \Sym(X_{\tau})$.
Hence by \ref{P:isom:pb:str}(iii), $\Sym(X_{\tau}) = \Isom(\cylpp,g_{\tau}) = \dihedral{} \cdot \orth{p}\times\orth{p} \cong \ (\orth{p} \times \orth{p}) \rtimes_{\rho} \dihedral{}$ as claimed.
\end{proof}

Using \ref{E:tbartilde:p:eq:q}--\ref{E:tbartildeminus:p:eq:q} one can check the following:
\begin{prop}[Properties of  target discrete symmetries for $p=q$, cf. \ref{P:commute:p:eq:1},
\ref{P:commute:p:neq:q}]\hfill\\
\addtocounter{equation}{1}
\label{P:commute:p:eq:q}
\noindent
$\tbartildeplus$, $\tbartildeminus$, $\ttilde_x$ have all the properties detailed in Proposition \ref{P:commute:p:neq:q}. 
Additionally the new isometries $\sbartilde$ and $\tbartilde$ satisfy
\begin{itemize}
\item[(i)] $\tbartilde \circ \ttilde_x \circ \tbartilde = \ttilde_{-x}$ and 
$\tbartilde \circ \tbartildeplus \circ \tbartilde = \tbartildeminus$.
\item[(ii)] $\sbartilde$ commutes with $\tbartildeplus$, $\tbartildeminus$, $\tbartilde$ and with $\ttilde_x$. 
\item[(iii)]$\tbartilde$ commutes with $J$ while $\sbartilde$ anticommutes with $J$.
\item[(iv)] $\sbartilde^* \Omega = (-1)^{p-1}\, \overline{\Omega}, \quad \sbartilde^* \omega = -\omega, \quad
\tbartilde^* \Omega = (-1)^p\, \Omega, \quad \tbartilde^* \omega = \omega.$
\end{itemize}
\end{prop}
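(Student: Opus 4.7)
\medskip
\noindent
\textit{Proof plan.} All four claims reduce to direct computations from the explicit definitions of $\ttilde_x$, $\tbartildeplus$, $\tbartildeminus$, $\tbartilde$ and $\sbartilde$ in \ref{E:ttilde}, \ref{E:tbartildeplus}, \ref{E:tbartildeminus}, \ref{E:tbartilde:p:eq:q} and \ref{E:sbartilde}. The opening sentence about $\ttilde_x$, $\tbartildeplus$ and $\tbartildeminus$ requires no new work: their defining formulas for $p=q$ are literally the specialisations of those used in \ref{P:commute:p:neq:q}, so each of items (i)--(vi) of \ref{P:commute:p:neq:q} transfers verbatim. So I focus on the four new items involving $\tbartilde$ and $\sbartilde$.

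\medskip
\noindent
\textbf{Step 1 (Part (i)).} For any $(z,w) \in \C^p \times \C^p$ I would compute $\tbartilde \circ \ttilde_x \circ \tbartilde (z,w)$ by applying the three maps in sequence. Since $\tbartilde$ simply swaps the two $\C^p$ factors while $\ttilde_x$ (with $p=q$) acts by $e^{ix/p}$ on the first factor and $e^{-ix/p}$ on the second, the composition sends $(z,w) \mapsto (e^{-ix/p}z, e^{ix/p}w)$, which is $\ttilde_{-x}(z,w)$. For the second assertion I would compute $\tbartilde \circ \tbartildeplus \circ \tbartilde(z,w)$ directly from \ref{E:tbartildeplus} and then match the result with \ref{E:tbartildeminus} using the additional symmetries $\psi_1 \circ \tbar = \psi_2$ and $\psi_2 \circ \tbar = \psi_1$ from \ref{E:psi:ex:sym}, which give $\psi_1(-2\pt^-) = \psi_2(2\pt^+)$ and $\psi_2(-2\pt^-) = \psi_1(2\pt^+)$ (recall $\pt^+ = \pt^- = \pt/2$).

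\medskip
\noindent
\textbf{Step 2 (Part (ii)).} I would verify in turn that $\sbartilde$ commutes with each of $\tbartilde$, $\ttilde_x$, $\tbartildeplus$, $\tbartildeminus$. For $\sbartilde \circ \tbartilde$ and $\tbartilde \circ \sbartilde$ both expressions reduce to $e^{-i\pi/2p}(\bar z, \bar w)$. For $\sbartilde$ and $\ttilde_x$, the phases $e^{\pm ix/p}$ from $\ttilde_x$ are swapped by the exchange-and-conjugate structure of $\sbartilde$ in exactly the same way regardless of order, so the two compositions agree. For commutation with $\tbartildeplus$ and $\tbartildeminus$, the key point is that conjugation and the exchange of factors commute, and the two phases $e^{i\alpha_\tau/p}$, $e^{i\psi_i(\pm 2\pt^{\pm})}$ are swapped in the same manner by $\sbartilde$ regardless of order; the $p=q$ identities $\psi_1(-2\pt^-) = \psi_2(2\pt^+)$ (etc.) close the computation cleanly.

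\medskip
\noindent
\textbf{Step 3 (Parts (iii) and (iv)).} Since $J$ is multiplication by $i$, $\tbartilde(iz,iw)=(iw,iz) = i\tbartilde(z,w)$, so $\tbartilde$ commutes with $J$; conversely $\sbartilde$ involves complex conjugation and thus anticommutes with $J$. For the pullbacks I would use $\Omega = dz_1 \wedge \cdots \wedge dz_p \wedge dw_1 \wedge \cdots \wedge dw_p$. The exchange $\tbartilde$ gives $\tbartilde^*\Omega = (-1)^{p^2}\Omega = (-1)^p\Omega$ (since $p^2 \equiv p \pmod 2$), and $\tbartilde^*\omega=\omega$ since $\omega$ is symmetric in the two $\C^p$ summands. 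For $\sbartilde$ I would pick up three factors: a scalar $(e^{-i\pi/2p})^{2p}=e^{-i\pi}=-1$ from the prefactor acting on $2p$ one-forms, an overall complex conjugation converting $\Omega$ to $\overline\Omega$, and the sign $(-1)^{p^2}=(-1)^p$ from reordering the wedges; combining gives $\sbartilde^*\Omega=-(-1)^p\overline\Omega = (-1)^{p-1}\overline\Omega$, and $\sbartilde^*\omega = -\omega$ since conjugation reverses the sign of the K\"ahler form while the exchange preserves it.

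\medskip
\noindent
None of these steps involves a substantive obstacle: the computations are elementary and the only subtlety is bookkeeping the phase factors consistently, most notably in Step 1 where the equality $\tbartilde \circ \tbartildeplus \circ \tbartilde = \tbartildeminus$ relies crucially on the additional exchange symmetry of $\bw_\tau$ available only in the case $p=q$ (equation \ref{E:psi:ex:sym}).
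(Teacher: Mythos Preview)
Your approach is essentially what the paper does: it states the proposition as a direct check using the defining formulas (the paper's one-line ``proof'' is the sentence preceding the proposition, ``Using \ref{E:tbartilde:p:eq:q}--\ref{E:tbartildeminus:p:eq:q} one can check the following''). Your computations for (i), (iii) and (iv) are correct as written.

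There is one imprecision in Step~2. When you check that $\sbartilde$ commutes with $\tbartildeplus$ directly from the original definition \ref{E:tbartildeplus}, writing $\tbartildeplus(z,w)=(A\bar z,B\bar w)$ with $A=e^{i\alpha_\tau/p}e^{i\psi_1(2\pt^+)}$ and $B=e^{i\alpha_\tau/p}e^{i\psi_2(2\pt^+)}$, the two compositions agree if and only if $\bar A=Be^{i\pi/p}$, i.e.\ $p\bigl(\psi_1(2\pt^+)+\psi_2(2\pt^+)\bigr)=-\pi-2\alpha_\tau$. This is the value of $\Psi$ at the critical point (it follows from \ref{E:psi:real:pneq1}--\ref{E:psi:imag:pneq1} at $t=\pt^+$, cf.\ the argument around \ref{E:psi:period:p:neq:1}), \emph{not} the exchange identity $\psi_1(-2\pt^-)=\psi_2(2\pt^+)$ you cite. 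So the step closes, but not for the reason you give.

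The paper sidesteps this by including in its list of ingredients the factored forms \ref{E:tbartildeplus:p:eq:q}--\ref{E:tbartildeminus:p:eq:q}, namely $\tbartildeplus=\ttilde_{\pthat}\circ\sbartilde\circ\tbartilde$ and $\tbartildeminus=\ttilde_{-\pthat}\circ\sbartilde\circ\tbartilde$. Once you have verified that $\sbartilde$ commutes with $\ttilde_x$ and with $\tbartilde$ (your first two checks in Step~2, which are clean), commutation with $\tbartildeplus$ and $\tbartildeminus$ is immediate from these factorisations, and the second half of (i) is likewise a one-liner: $\tbartilde\circ\tbartildeplus\circ\tbartilde=\tbartilde\circ\ttilde_{\pthat}\circ\sbartilde=\ttilde_{-\pthat}\circ\tbartilde\circ\sbartilde=\ttilde_{-\pthat}\circ\sbartilde\circ\tbartilde=\tbartildeminus$. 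This is the cleaner route and avoids having to track the phase identity above.
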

\noindent
\ref{P:commute:p:eq:q}.iv implies that $\tbartilde \in \sunit{2p}^{\pm} = \IsomslpmJ$ and that $\tbartilde \in \sunit{2p}$ 
if and only if $p$ is even.

\subsubsection*{The rotational period of $X_{\tau}$} \phantom{ab}
\ref{P:isom:pb} implies that for any admissible $p$ and $q$ and $0<\abs{\tau}<\taumax$ the translation $\TTT_{2\pt} \in \Diff(\cylpq)$
 belongs to $\Sym(X_{\tau})=\Isom(\cylpq,g_{\tau})$.  Therefore for any such $\tau$ we call the immersion 
 $X_{\tau}:\cylpq \ra \Sph^{2(p+q)-1}$  \emph{$2\pt$-periodic}; the corresponding element $\ttilde_{2\pthat} = \rho(T_{2\pt}) \in 
 \Symtilde(X_{\tau})$ we call the \emph{rotational period of $X_{\tau}$}. ($\ttilde_{x} \in \sunit{p+q}$ is defined in  \ref{E:ttilde} 
and $p>1$ respectively and $2\pthat$ is the angular period defined in \ref{E:pthat}).
In \ref{D:M:period} we defined the rotational period $\tcheck_{2\pthat}$ of $\bw_{\tau}$ and also its order $k_{0} \in \N \cup \{\infty\}$. Using the definition of $X_{\tau}$ in terms of the $(p,q)$-twisted SL curve $\bw_{\tau}$ 
we see that symmetries of $X_{\tau}$ corresponding to $\TTT_{2\pt}$, 
(\ref{E:ttilde:sym:p:eq:1}, \ref{E:ttilde:sym:p:neq:q}, and \ref{E:ttilde:sym:p:eq:q}---in the three cases (i) $p=1$, (ii) $p>1$ and $p \neq q$, and (iii) $p>1$ and $p=q$) are equivalent to the symmetry of $\bw_{\tau}$
$$ \tcheck_{2\pthat} \circ \bw_{\tau} = \bw_{\tau} \circ \TTT_{2\pt},$$
described earlier.
Similarly, it follows directly from the definitions that $k_{0}$ the order of the rotational period $\tcheck_{2\pthat} \in \unit{2}$ of $\bw_{\tau}$ is equal to the order of the rotational period $\ttilde_{2\pthat} \in \sunit{p+q}$ of $X_{\tau}$ just defined.
Hence we will simply refer to $k_{0}$ as the order of the rotational period.
\medskip
\subsection*{The structure of $\Symtilde(X_{\tau})$}
In this section we determine the structure of $\Symtilde(X_{\tau})\subset \orth{2n}$ as an abstract group in the three cases 
(i) $p=1$, (ii) $p>1$ and $p \neq q$ and (iii) $p>1$ and $p=q$.

\subsubsection*{The structure of $\Symtilde(X_{\tau})$ for $p=1$}\phantom{ab}
\begin{lemma}[Structure of $\Symtilde(X_{\tau})$ for $p=1,q=n-1$]
\addtocounter{equation}{1}
\label{L:symtilde:p:eq:1}Let $\widetilde{\mathbf{D}}$ be the subgroup of $\Symtilde(X_{\tau}) \subset \orth{2n}$ 
generated by $\tbartilde$ and $\ttilde_{2\pthat}$. 
For $0<\abs{\tau}<\taumax$ , we have 
$$ \widetilde{\mathbf{D}} \cong 
\begin{cases}
\dihedral{\infty} \quad \text{if\ } k_{0}=\infty;\\
\dihedral{k_{0}} \quad \textit{if\ } k_{0} \text{\ is finite;}
\end{cases}
$$
and 
$$\widetilde{\dihedral{}} \cap \IsomslpmJ = \widetilde{\dihedral{}} \cap \unit{n} = \langle \ttilde_{2\pthat}\rangle \subset \sunit{n}.$$
The structure of $\Symtilde(X_{\tau})$ is given as follows:
\begin{itemize}
\item[(i)] If $k_{0}$, the order of the rotational period, is infinite or odd or the dimension $n$ is even
then 
$$ \Symtilde(X_{\tau}) \cong \widetilde{\mathbf{D}} \times \orth{n-1}.$$
\item[(ii)] If $k_{0}$ is even and $n$ is odd then 
$$\widetilde{\dihedral{}} \cap \orth{n-1} = 
\langle \ttilde_{k_{0}\pthat} \rangle = \left\langle \left(\begin{array}{cc}1 & 0 \\0 & -\Id_{n-1}\end{array}\right)\right\rangle \cong \Z_{2},$$
and this $\Z_{2}$ subgroup belongs to the centre of $\Symtilde(X_{\tau})$. 
$ \Symtilde(X_{\tau}) $ is the internal central product of $\widetilde{\dihedral{}}$ and 
$\orth{n-1}$ identifying the central subgroup $\widetilde{\dihedral{}} \cap \orth{n-1} \cong \Z_{2}$ \cite[p. 29]{gorenstein}.
\end{itemize}
\end{lemma}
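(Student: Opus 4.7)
The plan is to identify $\Symtilde(X_\tau) \subset \orth{2n}$ as the subgroup $\widetilde{\dihedral{}} \cdot \orth{n-1}$ and compute its structure via $\widetilde{\dihedral{}}$ itself together with the intersection $\widetilde{\dihedral{}} \cap \orth{n-1}$. By Corollary \ref{C:sym:isom:p:eq:1}, $\Sym(X_\tau) = \dihedral{} \times \orth{n-1}$ with $\dihedral{} = \langle \TTT_{2\pt},\tbar\rangle$, and by Proposition \ref{P:xtau:sym:p:eq:1} together with the surjection $\rho$ of Lemma \ref{L:sym:subgroups} we have $\rho(\TTT_{2\pt})=\ttilde_{2\pthat}$, $\rho(\tbar)=\tbartilde$, while $\rho$ restricts to the standard inclusion $\orth{n-1} \hookrightarrow \orth{2n}$. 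Hence $\Symtilde(X_\tau) = \widetilde{\dihedral{}} \cdot \orth{n-1}$, and since $\widetilde{\dihedral{}}$ centralises $\orth{n-1}$ by Proposition \ref{P:commute:p:eq:1}(ii), this product is determined up to isomorphism by $\widetilde{\dihedral{}}$, $\orth{n-1}$, and the intersection in $\orth{2n}$.

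For the structure of $\widetilde{\dihedral{}}$, the involution $\tbartilde^2 = \Id$ follows directly from \ref{E:tbartilde:p:eq:1} and the relation $\tbartilde \circ \ttilde_x \circ \tbartilde = \ttilde_{-x}$ is Proposition \ref{P:commute:p:eq:1}(i); combined with the fact that $\ttilde_{2\pthat}$ has order $k_0$ by Definition \ref{D:rp:order}, the standard presentations of dihedral groups yield $\widetilde{\dihedral{}} \cong \dihedral{\infty}$ when $k_0 = \infty$ and $\widetilde{\dihedral{}} \cong \dihedral{k_0}$ when $k_0$ is finite. Every element of $\widetilde{\dihedral{}}$ is uniquely of the form $\ttilde_{2k\pthat}$ or $\tbartilde\, \ttilde_{2k\pthat}$; the former is complex-linear with determinant $e^{i\cdot 2k\pthat} \cdot (e^{-i\cdot 2k\pthat/(n-1)})^{n-1} = 1$, hence in $\sunit{n}$, while the latter is complex-antilinear by Proposition \ref{P:commute:p:eq:1}(iii) and thus not in $\unit{n} = \IsomslpmJ$. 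This yields the claimed equalities $\widetilde{\dihedral{}} \cap \unit{n} = \widetilde{\dihedral{}} \cap \IsomslpmJ = \langle \ttilde_{2\pthat}\rangle \subset \sunit{n}$.

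The key step is computing $\widetilde{\dihedral{}} \cap \orth{n-1}$ in $\orth{2n}$, where $\orth{n-1}$ is embedded by its natural complex-linear action on the last $n-1$ complex coordinates while fixing the first. An element of the intersection must be complex-linear and therefore of the form $\ttilde_x = \diag(e^{ix}, e^{-ix/(n-1)}\Id_{n-1})$ with $x = 2k\pthat$; fixing $e_1$ forces $e^{ix}=1$, and the requirement $e^{-ix/(n-1)}\Id_{n-1} \in \orth{n-1}$ forces $e^{-ix/(n-1)} \in \{\pm 1\}$. Combined, $2k\pthat \in 2\pi\Z \cap \pi(n-1)\Z = \pi\lcm(2,n-1)\Z$. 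Using the characterisation of $k_0$ in \ref{E:k0:alt}, the smallest positive $k$ with this property is $k_0$ when $n$ is even (for then $\lcm(2,n-1)=2(n-1)$ and the condition reduces to $k_0 \mid k$) and also when $k_0$ is odd (for then $k_0 \mid 2k$ together with $\gcd(k_0,2)=1$ forces $k_0 \mid k$); when $n$ is odd and $k_0$ is even, however, the smallest such $k$ is $k_0/2$, and the corresponding element $\ttilde_{k_0\pthat}$ has first entry $e^{ik_0\pthat}=1$ with second block forced to equal $-\Id_{n-1}$ (the minus sign coming from minimality of $k_0$), giving the matrix $\diag(1,-\Id_{n-1})$ which is identified with $-\Id_{n-1} \in \orth{n-1}$ under the embedding.

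Assembling the pieces gives the stated structure: in case (i) the factors $\widetilde{\dihedral{}}$ and $\orth{n-1}$ intersect trivially, so the product $\widetilde{\dihedral{}} \cdot \orth{n-1}$ is internal direct and $\Symtilde(X_\tau) \cong \widetilde{\dihedral{}} \times \orth{n-1}$; in case (ii) the non-trivial central intersection $\langle \ttilde_{k_0\pthat}\rangle\cong\Z_2$ lies in the centre of both factors (since $\ttilde_{k_0\pthat}$ is central in $\widetilde{\dihedral{}}$ as a square of its generator, and $-\Id_{n-1}$ is central in $\orth{n-1}$), exhibiting $\Symtilde(X_\tau)$ as the internal central product identifying this $\Z_2$. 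I expect the main technical point to be the arithmetic casework relating $k_0$, $n-1$, and the period condition $2k\pthat\in\pi\lcm(2,n-1)\Z$; once this is settled, assembly from Corollary \ref{C:sym:isom:p:eq:1}, Propositions \ref{P:xtau:sym:p:eq:1} and \ref{P:commute:p:eq:1}, and Definition \ref{D:rp:order} is routine.
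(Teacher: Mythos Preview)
Your proof is correct and follows essentially the same architecture as the paper's: identify $\Symtilde(X_\tau)=\widetilde{\dihedral{}}\cdot\orth{n-1}$, use Proposition \ref{P:commute:p:eq:1} to determine the dihedral structure and the centralising, then analyse the intersection $\widetilde{\dihedral{}}\cap\orth{n-1}$.

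The one genuine difference is in how you handle the intersection. The paper invokes the half-period machinery already set up in Lemmas \ref{L:w:half:period:a} and \ref{L:w:half:period}(iii): it observes that $\ttilde_{2k\pthat}\in\orth{n-1}$ is equivalent to $\tcheck_{2k\pthat}=\diag(1,\pm1)$, hence to $2k\pt$ being a half-period of $\bw_\tau$ of type $(+-)$, and then quotes the earlier classification of such half-periods. You instead carry out the arithmetic directly from the characterisation \ref{E:k0:alt} of $k_0$, reducing the question to $2k\pthat\in\pi\lcm(2,n-1)\Z$ and doing the casework on the parity of $n$ and $k_0$. Both routes encode the same computation; the paper's is more economical given what has already been established, while yours is more self-contained.

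One small imprecision: you justify that $\ttilde_{k_0\pthat}$ is central in $\widetilde{\dihedral{}}$ by calling it ``a square of its generator''. What you mean is that it is the $(k_0/2)$-th power of the rotation $\ttilde_{2\pthat}$, which is the unique nontrivial central element of $\dihedral{k_0}$ when $k_0$ is even (cf.\ Remark \ref{R:dihedral}). Also, you should make explicit why $e^{ik_0\pthat}=1$ in the case $k_0$ even, $n$ odd: since $k_0\pthat\in\pi(n-1)\Z$ by \ref{E:k0:alt} and $n-1$ is even, automatically $k_0\pthat\in 2\pi\Z$; the minimality of $k_0$ then forces the second block to be $-\Id_{n-1}$ rather than $+\Id_{n-1}$, as you note.
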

\begin{proof}
Recall the presentation $\langle r,\, f\, |\, r^{k}=1,\, f^{2}=1,\, frf=f^{-1} \rangle$ 
for the finite dihedral group $\dihedral{k}$ 
and the presentation $\langle r,\, f\, |\, f^{2}=1,\, frf=f^{-1} \rangle$ 
for the infinite dihedral group $\dihedral{\infty}$. 
The structure of $\widetilde{\dihedral{}}$ claimed follows from \ref{P:commute:p:eq:1}.i and 
the definition of $k_{0}$.
Any element in $\widetilde{\mathbf{D}} = \langle \ttilde_{2\pthat}, \tbartilde \rangle$ 
is of the form $\tbartilde_{k\pthat}$ (recall \ref{E:tbartilde_x}) or $\ttilde_{2k\pthat}$ for some $k\in \Z$. 
By \ref{P:commute:p:eq:1}.iii $\tbartilde_{k\pthat}$ acts antiholomorphically while $\ttilde_{2k\pthat}$ 
acts holomorphically. Hence $\widetilde{\dihedral{}} \cap \IsomslpmJ = \langle \ttilde_{2\pthat}\rangle \subset \sunit{n}$ 
as claimed.

By \ref{P:commute:p:eq:1}.ii $\widetilde{\dihedral{}}$ centralises $\orth{n-1}$, 
and hence the set $\widetilde{\dihedral{}}\cdot \orth{n-1}$ forms 
a group  in which both $\widetilde{\dihedral{}}$ and $\orth{n-1}$ are normal subgroups. 
In general  $\widetilde{\mathbf{D}}  \cap \orth{n-1} \neq  (\Id)$, 
and hence in general the isomorphism $\widetilde{\mathbf{D}} \cdot \orth{n-1}
 \cong \widetilde{\mathbf{D}} \times \orth{n-1}$ fails. 
We analyse the intersection $\widetilde{\mathbf{D}}  \cap \orth{n-1}$ as follows.
Clearly, $\widetilde{\mathbf{D}}  \cap \orth{n-1} \subset \widetilde{\mathbf{D}}  \cap \unit{n} = \langle \ttilde_{2\pthat}\rangle.$
Using the definitions of $\ttilde_{x}$, $\tcheck_{x}$ and $\orth{n-1}\subset \orth{n} \subset \unit{n}$ we have 
$$\ttilde_{2k\pthat} \in \orth{n-1} \iff \tcheck_{2k\pthat} = 
\left(\begin{array}{cc}  1& 0 \\0 & \pm 1\end{array}\right).$$
Hence by Lemma \ref{L:w:half:period:a} $\widetilde{\mathbf{D}} \cap \orth{n-1}\neq (\Id)$ 
if and only if $\bw_{\tau}$ admits half-periods of type $(+-)$. 
By Lemma \ref{L:w:half:period}.iii $\bw_{\tau}$ admits half-periods of type $(+-)$ if and only if 
$k_{0}$, the order of the rotational period, is even and the dimension $n$ is odd. 
In this case $\widetilde{\mathbf{D}}  \cap \orth{n-1} \cong \Z_{2}$ 
where $\Z_{2}$ is the group generated by the involution $\ttilde_{k_{0}\pthat}$. 
Since $k_{0}\pthat$ is a half-period of type $(+-)$ then we have
\begin{equation}
\addtocounter{theorem}{1}
\label{E:centre:p:eq:1}
\ttilde_{k_{0}\pthat}=\left(\begin{array}{cc}1 & 0 \\0 & -\Id_{n-1}\end{array}\right).
\end{equation}
One can verify that the $\Z_{2}$ subgroup generated by $\ttilde_{k_{0}\pthat}$ belongs to the 
centre of $\Symtilde(X_{\tau})$.
\end{proof}
\noindent
\begin{remark}
\addtocounter{equation}{1}
\label{R:dihedral}
With reference to the central product structure discussed above for $\Symtilde(X_{\tau})$ we remark that
the centre of the finite dihedral group $\dihedral{k}$ is trivial if $k$ is odd and isomorphic to $\Z_{2}$ 
if $k$ is even. In the presentation $\dihedral{k} \cong \langle r,\, f\,|\, f^{2}=1,\,r^{k}=1, \, frf=r^{-1} \rangle$ 
the centre of $\dihedral{k}$ is generated by $r^{k/2}$ when $k$ is even.
Applied to the group $\widetilde{\dihedral{}}=\langle \ttilde_{2\pthat}, \tbartilde \rangle \cong \dihedral{k_{0}}$,  this is consistent with \ref{E:centre:p:eq:1}. 
\end{remark}

\begin{remark}
\addtocounter{equation}{1}
\label{R:pm:holo}
Lemma \ref{L:symtilde:p:eq:1} implies that
every element $\mtilde \in \Symtilde(X_\tau)$ can be written (not uniquely) in the form
$$ \mtilde = {\tbartilde}\phantom{}^i \circ \ttilde_{2k\pthat} \circ \MMM, \quad
\text{where} \quad i \in \Z_2, \ k\in \Z \ \text{and} \ \MMM \in \orth{n-1}.$$
In particular, every element of $\Symtilde(X_{\tau})$ belongs to $\Isomslpm$ and hence acts 
on $\C^n$ either by
holomorphic (if $i=0$) or antiholomorphic (if $i=1$) isometries.
Also, $\mtilde \in \Symtilde(X_\tau)$ is special unitary if and only if $i=0$ and $\MMM \in \sorth{n-1}\subset \orth{n-1}$.
\end{remark}

\begin{corollary}[Structure of $\Per(X_{\tau})$ for $p=1$]
\addtocounter{equation}{1}
\label{C:xtau:period:p:eq:1}
For $0<\abs{\tau}<\taumax$, we have
\begin{equation*}
\Per(X_{\tau}) = 
\begin{cases}
(\Id) & \text{if\ }k_{0}=\infty;\\
\langle \TTT_{2k_{0}\pt} \rangle & \text{if\ } k_{0} \text{\ is odd or $k_{0}$ is even and $n$ is even;}\\
\langle \TTT_{k_{0}\pt} \circ -\Id_{\Sph^{n-1}}\rangle & \text{if\ } k_{0} \text{\ is even and $n$ is odd.}
\end{cases}
\end{equation*}
\end{corollary}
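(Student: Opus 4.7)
The plan is to compute $\Per(X_\tau)$ as $\ker\rho$, where $\rho\colon \Sym(X_\tau)\surj \Symtilde(X_\tau)$ is the canonical surjection from Lemma \ref{L:sym:subgroups}. By Corollary \ref{C:sym:isom:p:eq:1}, $\Sym(X_\tau)\cong \dihedral{\infty}\times \orth{n-1}$ with $\dihedral{\infty}=\langle \TTT_{2\pt},\tbar\rangle$, and the action of $\rho$ on generators is read off from \ref{E:sym:p:eq:1}: $\TTT_{2\pt}\mapsto \ttilde_{2\pthat}$, $\tbar\mapsto \tbartilde$, and $\MMM\mapsto \MMM$ for $\MMM\in \orth{n-1}$.

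First I would exclude all reflection-type elements. If $\tbar_{k\pt}\cdot\MMM\in \Per(X_\tau)$, then $X_\tau(2k\pt-t,\MMM\sigma)=X_\tau(t,\sigma)$ identically in $(t,\sigma)$, and Proposition \ref{P:xtau:embed} would force $2k\pt-2t\in\Perh(\bw_\tau)$ for every $t\in\R$, contradicting the discreteness of $\Perh(\bw_\tau)$. (Alternatively, $\rho(\tbar_{k\pt}\cdot\MMM)$ acts antiholomorphically on $\C^n$ by Proposition \ref{P:commute:p:eq:1}(iii) and hence cannot be trivial.) Therefore every element of $\Per(X_\tau)$ has the form $\TTT_{2k\pt}\cdot\MMM$ with $k\in \Z$ and $\MMM\in \orth{n-1}$.

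Next I would translate the kernel condition into a condition on the stabilizer. Since $\ttilde_{2k\pthat}$ centralizes $\orth{n-1}$ by Proposition \ref{P:commute:p:eq:1}(ii), we have $\rho(\TTT_{2k\pt}\cdot\MMM)=\ttilde_{2k\pthat}\cdot\MMM$, which equals $\Id$ iff $\MMM=\ttilde_{2k\pthat}^{-1}\in \orth{n-1}$. Unpacking \ref{E:ttilde} with $p=1$, the element $\ttilde_{2k\pthat}=\diag(e^{2ik\pthat},e^{-2ik\pthat/(n-1)}\Id_{n-1})$ lies in $\orth{n-1}\subset \orth{n}\subset \unit{n}$ iff $\tcheck_{2k\pthat}\in \stabpq=\langle\rho_{+-}\rangle$, which by \ref{E:M:half:period} is equivalent to $2k\pthat\in \Perh(\{\tcheck_x\})$.

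Finally I would conclude by combining this with Lemma \ref{L:w:half:period:a} and the case analysis of Lemmas \ref{L:M:periods} and \ref{L:w:half:period}. If $k_0=\infty$, only $k=0$ satisfies the condition, so $\Per(X_\tau)=(\Id)$. If $k_0$ is finite, $\tcheck_{2k\pthat}=\Id$ holds exactly for $k\in k_0\Z$, yielding the translation $\TTT_{2k_0\pt}$. A \emph{strict} half-period $\tcheck_{2k\pthat}=\rho_{+-}$ requires $k_0$ even and $n$ odd by Lemma \ref{L:M:periods}(iii) (using that $\gcd(m_0,k_0)=1$ in the expression $k_0\pthat=\pi(n-1)m_0$ automatically forces $m_0$ odd once $k_0$ is even), and the smallest positive such $k$ is $k_0/2$; this produces the generator $\TTT_{k_0\pt}\circ \diag(1,-\Id_{n-1})=\TTT_{k_0\pt}\circ(-\Id_{\Sph^{n-1}})$, whose square is $\TTT_{2k_0\pt}$. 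The main technical obstacle is the careful bookkeeping of parities, relating the order $k_0$ of the rotational period, the parity of $n$, and the type of strict half-period of $\{\tcheck_x\}$ classified in Lemma \ref{L:M:periods}(iii).
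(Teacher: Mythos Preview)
Your proof is correct and follows essentially the same route as the paper: compute $\Per(X_\tau)=\ker\rho$, exclude reflection-type elements via antiholomorphicity, and reduce the remaining translation-type kernel condition to $\ttilde_{2k\pthat}\in\orth{n-1}$, i.e.\ to half-periods of $\{\tcheck_x\}$ of type $(+-)$. The paper's proof is slightly more compressed because it invokes Lemma~\ref{L:symtilde:p:eq:1} directly for the intersection $\widetilde{\dihedral{}}\cap\orth{n-1}$, whereas you re-derive that intersection from Lemmas~\ref{L:M:periods} and~\ref{L:w:half:period}; but this is exactly how Lemma~\ref{L:symtilde:p:eq:1} itself was proved, so the two arguments are logically identical.
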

\begin{proof}
This follows from the results for the structures of $\Sym(X_{\tau})$ and $\Symtilde(X_{\tau})$ proved 
in \ref{C:sym:isom:p:eq:1} and \ref{L:symtilde:p:eq:1} 
together with the fact that $\Per(X_{\tau}) = \ker{\rho}$ where $\rho: \Sym(X_{\tau}) \ra \Symtilde(X_{\tau})$ is the homomorphism described in \ref{L:sym:subgroups}.

Alternatively, we can see more directly that determining $\Per(X_{\tau})$ is essentially 
equivalent to determining the intersection $\widetilde{\dihedral{}}\cap \orth{n-1}$ 
and therefore also equivalent to finding half-periods of $\bw_{\tau}$ of type $(+-)$.
From  \ref{C:sym:isom:p:eq:1} any element in $\Sym(X_{\tau})$ can be written in the form 
$ \tbar^{i} \circ \TTT_{2k\pt} \circ \MMM$,
for some $i\in \Z_{2}$, $k\in \Z$ and $\MMM \in \orth{n-1}$.
From \ref{E:sym:p:eq:1} we obtain
$$ X_{\tau} \circ  \tbar^{i} \circ \TTT_{2k\pt} \circ \MMM = \tbartilde\phantom{}^{i} \circ \ttilde_{2k\pthat} \circ \MMM \circ X_{\tau},$$
and hence $\tbar^{i} \circ \TTT_{2k\pt} \circ \MMM \in \Per(X_{\tau})$ if and only if 
$\tbartilde\phantom{}^{i} \circ \ttilde_{2k\pthat} \circ \MMM = \Id \in \orth{2n}$.
Hence $\tbar^{i}\circ\TTT_{2k\pt} \circ \MMM \in \Per(X_{\tau})$ if and only if $\tbartilde\phantom{}^{i} \circ \ttilde_{2k\pthat} = \MMM^{-1}$ for some $k\in \Z$ and 
$\MMM \in \orth{n-1} \subset \orth{n} \subset \orth{2n}$. But this is equivalent to finding all points in the intersection $\widetilde{\dihedral{}} \cap \orth{n-1}$.
Hence the result follows from \ref{L:symtilde:p:eq:1}.
\end{proof}

\subsubsection*{The structure of $\Symtilde(X_{\tau})$ for $p>1$ and $p\neq q$}\phantom{ab}
The analogous results for $p>1$ and $p \neq q$ follow. First for any $(j,k)\in \Z_{2}\times \Z_{2}$ define 
the involution $\widetilde{\rho_{jk}} \in \orth{p} \times \orth{q} \subset \orth{n} \subset \unit{n}$ by 
$$\widetilde{\rho}_{jk} := 
\left(\begin{array}{cc}(-1)^{j}\Id_{p} & 0 \\0 & (-1)^{k}\Id_{q}\end{array}\right).$$
\begin{lemma}[Structure of $\Symtilde(X_{\tau})$ for $p>1,p\neq q$]
\addtocounter{equation}{1}
\label{L:symtilde:p:neq:q}
Let $\widetilde{\dihedral{}}$ be the group generated by $\tbartildeplus$ and $\tbartildeminus$ (defined in \ref{E:tbartildeplus} and \ref{E:tbartildeminus}).
For $0<\abs{\tau}<\taumax$ , we have 
$$ \widetilde{\mathbf{D}} \cong 
\begin{cases}
\dihedral{\infty} \quad \text{if\ } k_{0}=\infty;\\
\dihedral{k_{0}} \quad \textit{if\ } k_{0} \text{\ is finite;}
\end{cases}
$$
and 
$$\widetilde{\dihedral{}} \cap \IsomslpmJ = \widetilde{\dihedral{}} \cap \unit{n} = \langle \ttilde_{2\pthat}\rangle \subset \sunit{n}.$$

The structure of $\Symtilde(X_{\tau})$ is given as follows:
\begin{itemize}
\item[(i)] If $k_{0}$, the order of the rotational period, is infinite or odd then
$$ \Symtilde(X_{\tau}) \cong \widetilde{\mathbf{D}} \times \orth{p}\times \orth{q}.$$
\item[(ii)] If $k_{0}$ is even then 
$$\widetilde{\dihedral{}} \cap \orth{p} \times \orth{q} = \langle \ttilde_{k_{0}\pthat} \rangle = 
\langle \widetilde{\rho}_{jk}\rangle \cong \Z_{2},$$
where $\widetilde{\rho}_{jk}$ is the involution defined above 
and $i=q/\hcf(p,q)$ and $j=p/\hcf(p,q)$.
Furthermore, $\langle \widetilde{\rho}_{jk}\rangle$ belongs to the centre of $\Symtilde(X_{\tau})$.
Hence $ \Symtilde(X_{\tau}) $ is an internal central product of $\widetilde{\dihedral{}}$ and 
$\orth{p}\times \orth{q}$ identifying the central subgroup $\widetilde{\dihedral{}} \cap \orth{p}\times \orth{q}  =\langle \widetilde{\rho}_{jk}\rangle \cong \Z_{2}$.
\end{itemize}
\end{lemma}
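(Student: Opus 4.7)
The plan is to adapt the strategy of Lemma \ref{L:symtilde:p:eq:1} to the present setting, with the main change being a more careful analysis of the intersection $\widetilde{\dihedral{}} \cap \orth{p}\times\orth{q}$, which now involves both factors of the rotation $\ttilde_{x}$ rather than just one. First I would verify that $\tbartildeplus$ is an involution (immediate from \ref{E:tbartildeplus}) and then use the commutation relations in Proposition \ref{P:commute:p:neq:q} — namely $\tbartildeplus \circ \ttilde_{x} \circ \tbartildeplus = \ttilde_{-x}$ together with $\tbartildeplus \circ \tbartildeminus = \ttilde_{2\pthat}$ — to recognise the standard presentation $\langle r,f \mid f^{2}=1,\, r^{k_{0}}=1,\, frf=r^{-1}\rangle$ of $\dihedral{k_{0}}$ (or the corresponding presentation of $\dihedral{\infty}$ when $k_{0}=\infty$), with $r=\ttilde_{2\pthat}$ and $f=\tbartildeplus$. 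Hence $\widetilde{\dihedral{}} \cong \dihedral{k_{0}}$ or $\dihedral{\infty}$ exactly as stated.

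Next I would identify $\widetilde{\dihedral{}} \cap \unit{n}$. Every word in the two generators $\tbartildeplus$, $\tbartildeminus$ can be normalised, using $\tbartildeminus = \tbartildeplus \circ \ttilde_{-2\pthat}$ and the commutation above, to one of the two forms $\ttilde_{2k\pthat}$ or $\tbartildeplus \circ \ttilde_{2k\pthat}$. By \ref{P:commute:p:neq:q}(iv) the first commutes with $J$ and the second anticommutes with $J$; hence only the elements of the first form lie in $\unit{n}$. Since $\ttilde_{x} \in \sunit{n}$ (its determinant is $e^{ix}\cdot e^{-ix}=1$), this yields $\widetilde{\dihedral{}} \cap \unit{n} = \langle \ttilde_{2\pthat}\rangle \subset \sunit{n}$.

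The main step is then to compute $\Symtilde(X_{\tau})$. By Corollary \ref{C:sym:isom:p:neq:1}, $\Sym(X_{\tau}) = \dihedral{}\cdot\orth{p}\times\orth{q}$, and applying the surjection $\rho:\Sym(X_{\tau})\surj\Symtilde(X_{\tau})$ from Lemma \ref{L:sym:subgroups} together with the symmetries \ref{E:orth:p:neq:q}, \ref{E:tbar:ptplus} and \ref{E:tbar:ptminus} gives $\Symtilde(X_{\tau}) = \widetilde{\dihedral{}}\cdot\orth{p}\times\orth{q}$. By \ref{P:commute:p:neq:q}(iii) the two subgroups centralise each other, so this is already a group in which both factors are normal, and all that remains is to determine the intersection. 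The inclusion $\widetilde{\dihedral{}} \cap \orth{p}\times\orth{q} \subset \widetilde{\dihedral{}} \cap \unit{n} = \langle \ttilde_{2\pthat}\rangle$ reduces the question to asking which powers $\ttilde_{2k\pthat}$ lie in $\orth{p}\times\orth{q}$; by the block-diagonal form \ref{E:ttilde} this happens precisely when $\tcheck_{2k\pthat}\in\stabpq$, i.e. when $2k\pthat$ is a half-period of $\{\tcheck_{x}\}$ in the sense of Definition \ref{D:M:period}.

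The hard part is therefore to match this condition with the parity of $k_{0}$, and this is exactly what Lemmas \ref{L:M:periods} and \ref{L:w:half:period} were designed to do. For $k_{0}$ odd or infinite, Lemma \ref{L:w:half:period}(i) and the analogue of Lemma \ref{L:w:half:period:a} for $\{\tcheck_{x}\}$ give no nontrivial half-periods inside $\langle\ttilde_{2\pthat}\rangle$, so the intersection is trivial and $\Symtilde(X_{\tau}) \cong \widetilde{\dihedral{}} \times \orth{p}\times\orth{q}$. For $k_{0}$ even, the smallest nontrivial element of the intersection is $\ttilde_{k_{0}\pthat}$, and Lemma \ref{L:M:periods}(ii) identifies it as the single strict half-period $\rho_{jk}$ with $j\equiv q/\hcf(p,q)\pmod 2$ and $k\equiv p/\hcf(p,q)\pmod 2$, giving exactly $\langle\widetilde{\rho}_{jk}\rangle\cong\Z_{2}$. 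This $\Z_{2}$ is automatically central in $\widetilde{\dihedral{}}$ (it is generated by a power of $\ttilde_{2\pthat}$ whose square is the identity, and $\tbartildeplus$ conjugates it to its inverse which equals itself), and central in $\orth{p}\times\orth{q}$ because $\widetilde{\rho}_{jk}=\pm\Id$ on each factor; this is precisely the central product structure asserted.
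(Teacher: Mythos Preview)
Your proposal is correct and follows essentially the same approach as the paper: identify $\widetilde{\dihedral{}}$ via the relations in Proposition \ref{P:commute:p:neq:q}, use the (anti)holomorphicity dichotomy to compute $\widetilde{\dihedral{}}\cap\unit{n}$, appeal to \ref{P:commute:p:neq:q}(iii) for centrality, and then reduce the intersection $\widetilde{\dihedral{}}\cap\orth{p}\times\orth{q}$ to the half-period analysis of Lemmas \ref{L:w:half:period:a}, \ref{L:w:half:period} and \ref{L:M:periods}. The only cosmetic differences are your choice of the $\langle r,f\rangle$ presentation of $\dihedral{k_0}$ rather than the paper's $\langle x,y\mid x^2=y^2=(xy)^{k_0}=1\rangle$ presentation, and your direct invocation of Lemma \ref{L:M:periods}(ii) where the paper routes through Lemma \ref{L:w:half:period}; both lead to the same conclusion.
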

\begin{proof}
This is very similar to the proof of Lemma \ref{L:symtilde:p:eq:1}.

Recall the presentation $\langle x, y \, |\, x^{2}=y^{2}=(xy)^{k}=1\rangle$ for the finite dihedral group 
$\dihedral{k}$ and the presentation $\langle x, y \, |\, x^{2}=y^{2}=1\rangle$ for the infinite dihedral group 
$\dihedral{\infty}$. Since by \ref{P:commute:p:neq:q}.ii 
$\tbartildeplus \circ \tbartildeminus = \ttilde_{2\pthat}$ we have $\widetilde{\dihedral{}} \cong \dihedral{k_{0}}$
if the rotational period has finite order $k_{0 }$ 
and  $\widetilde{\dihedral{}} \cong \dihedral{\infty}$ otherwise.
Any nontrivial element in $\widetilde{\dihedral{}}$ can be written as an alternating word in 
its two generators $\tbartildeplus$ and $\tbartildeminus$. By \ref{P:commute:p:neq:q}.iv
both generators act antiholomorphically, and hence so does any word with an odd number of letters. 
By \ref{P:commute:p:neq:q}.ii any word in $\widetilde{\dihedral{}}$ with an even number of letters lies in the cyclic subgroup 
$\widetilde{\cyclic{}} = \langle \ttilde_{2\pthat} \rangle \subset \sunit{n}$ and hence 
$\widetilde{\dihedral{}} \cap \IsomslpmJ = \widetilde{\cyclic{}} = \langle \ttilde_{2\pthat} \rangle$ as claimed.

By \ref{P:commute:p:neq:q}.iii $\widetilde{\dihedral{}} = \langle \tbartilde^{+},\tbartilde^{-} \rangle$ 
centralises $\orth{p}\times \orth{q}$ and therefore 
$\widetilde{\dihedral{}} \cdot \orth{p} \times \orth{q}$ forms a group in which 
both factors are normal subgroups. As in the $p=1$ case
in general  $\widetilde{\dihedral{}} \cap \orth{p} \times \orth{q} \neq  (\Id)$.
Clearly, $\widetilde{\dihedral{}} \cap \orth{p} \times \orth{q} \subset \widetilde{\dihedral{}} \cap \sunit{n}^{\pm} = 
\langle \ttilde_{2\pthat} \rangle$.
Using the definitions of $\ttilde_{x}$, $\tcheck_{x}$ and $\orth{p} \times \orth{q} \subset \orth{n} \subset \sunit{n}^{\pm}$ we find 
$$\ttilde_{2k\pthat} \in \orth{p} \times \orth{q} \iff \tcheck_{2k\pthat} = 
\left(\begin{array}{cc}  \pm 1& 0 \\0 & \pm 1\end{array}\right) = \rho_{jk}, \quad \text{for some\ }
(j,k)\in \Z_{2}\times \Z_{2}.$$
Hence by \ref{L:w:half:period:a}, 
$\widetilde{\dihedral{}} \cap \orth{p} \times \orth{q} \neq (\Id)$ if and only if 
$\bw_{\tau}$ admits strict half-periods.
By \ref{L:w:half:period} $\bw_{\tau}$ admits strict half-periods if and only if $k_{0}$ is even. 
Moreover, for fixed $p$ and $q$ only strict half-periods of type $(jk)$ occur where $j=q/\hcf(p,q)$ and $k=p/\hcf(p,q)$. Finally, it easy to check that $\langle \widetilde{\rho}_{jk}\rangle$ belongs to the centre 
of $\Symtilde(X_{\tau}) = \widetilde{\dihedral{}}\cdot \orth{p}\times \orth{q}$. 
\end{proof}

\begin{remark}
\addtocounter{equation}{1}
\label{R:pm:holo:p:gt:1}
Lemma \ref{L:symtilde:p:neq:q} implies that
every element $\mtilde \in \Symtilde(X_\tau)$ can be written (non-uniquely) in the form
$$ \mtilde = \DDD \circ \MMM,$$ 
where $\DDD \in \widetilde{\dihedral{}}$ is an alternating word in $\tbartildeplus$ and $\tbartildeminus$ 
and $\MMM \in \orth{p} \times \orth{q} \subset \sunit{n}^{\pm}$.
In particular, every element of $\Symtilde(X_{\tau})$ belongs to $\Isomslpm$ and hence acts on $\C^{n}$ 
either holomorphically (if and only if $\DDD$ is a word with an even number of letters) or anti-holomorphically. 
Moreover, the subgroup of holomorphic symmetries of $X_{\tau}$, 
$\Symtilde(X_{\tau}) \cap \IsomslpmJ = \langle \ttilde_{2\pthat} \rangle \cdot \orth{p} \times \orth{q}$. 
Finally $\mtilde=(\mtilde_{1},\mtilde_{2}) \in \orth{p} \times \orth{q}$ belongs to $\sunit{n}$ if and only if 
$\det(\mtilde_{1}) \,\det(\mtilde_{2}) = +1$.
\end{remark}

\begin{corollary}[Structure of $\Per(X_{\tau})$ for $p>1$, $p\neq  q$]
\addtocounter{equation}{1}
\label{C:xtau:period:p:neq:1}
For $0<\abs{\tau}<\taumax$, we have  
\begin{equation*}
\Per(X_{\tau}) = 
\begin{cases}
(\Id) & \text{if\ }k_{0}=\infty;\\
\langle \TTT_{2k_{0}\pt} \rangle & \text{if\ } k_{0} \text{\ is odd;}\\
\langle \TTT_{k_{0}\pt} \circ (-1)^{j}\Id_{\Sph^{p-1}} \circ (-1)^{k} \Id_{\Sph^{q-1}}\rangle & \text{if\ } k_{0} \text{\ is even,}
\end{cases}
\end{equation*}
where $j=q/\hcf(p,q)$ and $k=p/\hcf(p,q)$.
\end{corollary}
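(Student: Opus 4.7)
The plan is to mimic the proof of Corollary \ref{C:xtau:period:p:eq:1} in this slightly more complicated setting. By Lemma \ref{L:sym:subgroups}, $\Per(X_{\tau})=\ker\rho$ where $\rho:\Sym(X_{\tau})\twoheadrightarrow\Symtilde(X_{\tau})$ is the canonical surjection, so I only need to decide which elements of $\Sym(X_{\tau})$ are sent to the identity. By Corollary \ref{C:sym:isom:p:neq:1}, $\Sym(X_{\tau})\cong \dihedral{}\times (\orth{p}\times \orth{q})$ with $\dihedral{}=\langle \tbar_{\pt^{+}},\tbar_{-\pt^{-}}\rangle$, and the commutation relation \ref{E:tbar:pm:commute} shows that every element of $\dihedral{}$ is either an even-length word in the two generators (hence of the form $\TTT_{2l\pt}$ for some $l\in \Z$) or an odd-length word of the form $\TTT_{2l\pt}\circ \tbar_{\pt^{+}}$ or $\TTT_{2l\pt}\circ \tbar_{-\pt^{-}}$.

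First I would rule out the odd-length case. Using the symmetries \ref{E:ttilde:sym:p:neq:q}, \ref{E:tbar:ptplus}, \ref{E:tbar:ptminus}, an odd-length word $\gamma$ maps to $\ttilde_{2l\pthat}\circ\tbartildeplus$ or $\ttilde_{2l\pthat}\circ\tbartildeminus$, which by Proposition \ref{P:commute:p:neq:q}(iv) anticommutes with $J$. Since every $\MMM\in\orth{p}\times\orth{q}\subset \unit{n}$ commutes with $J$, the image $\rho(\gamma\circ \MMM)$ remains antiholomorphic and cannot equal $\Id$. Hence every element of $\Per(X_{\tau})$ has the form $\TTT_{2l\pt}\circ\MMM$, and \ref{E:ttilde:sym:p:neq:q} shows that it lies in $\ker\rho$ precisely when $\MMM=\ttilde_{-2l\pthat}$ happens to lie in $\orth{p}\times\orth{q}$. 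Using the diagonal form \ref{E:ttilde} of $\ttilde_{x}$ this condition reads $\tcheck_{2l\pthat}\in\stabpq$, i.e.\ $2l\pthat\in\Perh(\{\tcheck_{x}\})$; by Lemma \ref{L:w:half:period:a}(i) this is equivalent to $2l\pt\in\Perh(\bw_{\tau})$.

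The three cases of the statement are then read off from Lemma \ref{L:w:half:period}. If $k_{0}=\infty$ there are no nontrivial half-periods and $\Per(X_{\tau})=(\Id)$. If $k_{0}$ is odd then $\Perh(\bw_{\tau})=\Per(\bw_{\tau})=2k_{0}\pt\Z$, so the smallest nontrivial $l$ is $l=k_{0}$, at which $\ttilde_{2k_{0}\pthat}=\Id$, producing the generator $\TTT_{2k_{0}\pt}$. If $k_{0}$ is even then $\Perh(\bw_{\tau})=k_{0}\pt\Z$, the smallest relevant $l$ is $k_{0}/2$, and by Lemma \ref{L:w:half:period}(ii) $k_{0}\pthat$ is a strict half-period of the uniquely determined type $(jk)$ with $j=q/\hcf(p,q)\bmod 2$ and $k=p/\hcf(p,q)\bmod 2$, so $\ttilde_{k_{0}\pthat}=\widetilde{\rho}_{jk}$ and the compensating $\MMM\in \orth{p}\times\orth{q}$ acts on $\cylpq$ by $(-1)^{j}\Id_{\Sph^{p-1}}\circ (-1)^{k}\Id_{\Sph^{q-1}}$, yielding the stated generator. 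The hardest step is making sure the half-period type is correctly identified in the even case, but this is an immediate application of Lemma \ref{L:w:half:period}(ii); apart from this, the argument is bookkeeping that transports the structure of $\Perh(\bw_{\tau})$ through the surjection $\rho$.
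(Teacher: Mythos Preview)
Your argument is correct and follows essentially the same route as the paper: the paper simply remarks that the proof follows from Lemma~\ref{L:symtilde:p:neq:q} in the same way that Corollary~\ref{C:xtau:period:p:eq:1} followed from Lemma~\ref{L:symtilde:p:eq:1}, and what you have written is precisely that transported argument, reducing $\ker\rho$ to the intersection $\widetilde{\dihedral{}}\cap(\orth{p}\times\orth{q})$ and reading off the cases from the half-period structure.
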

\noindent
The proof follows from  Lemma \ref{L:symtilde:p:neq:q} in the same way that Corollary \ref{C:xtau:period:p:eq:1} followed from \ref{L:symtilde:p:eq:1}.

\smallskip
\subsubsection*{The structure of $\Symtilde(X_{\tau})$ for $p>1$ and $p=q$}\phantom{ab}
Finally we determine the structure of $\Symtilde(X_{\tau})$ in the case $p=q$.
Recall from \ref{P:commute:p:eq:q}.i  that 
$$\tbartildeminus = \tbartilde \circ \tbartildeplus \circ \tbartilde.$$
Hence the involutions $\tbartildeplus$ and $\tbartilde$ generate a subgroup
$\widetilde{\dihedral{}} = \langle \tbartilde, \tbartildeplus \rangle \subset \Symtilde(X_\tau)$.

\begin{lemma}[Structure of $\Symtilde(X_{\tau})$ for $p=q$]
\addtocounter{equation}{1}
\label{L:symtilde:p:eq:q}
Let $\widetilde{\dihedral{}} = \langle \tbartilde, \tbartildeplus \rangle \subset \Symtilde(X_{\tau})$. 
For $0<\abs{\tau}<\taumax,$  we have 
$$ \widetilde{\mathbf{D}} \cong 
\begin{cases}
\dihedral{\infty} \quad & \text{if\ } k_{0}=\infty;\\
\dihedral{2k_{0}} \quad & \textit{if\ } k_{0} \text{\ is finite;}
\end{cases}
$$
and 
$$ 
\widetilde{\dihedral{}} \cap \IsomslpmJ = \widetilde{\dihedral{}} \cap \sunit{n}^{\pm} = \langle \tbartilde, \ttilde_{2\pthat} \rangle.$$
The structure of $\Symtilde(X_{\tau})$ is given as follows:
\begin{itemize}
\item[(i)] If $k_{0}$, the order of the rotational period, is infinite or odd then 
$$\widetilde{\dihedral{}} \cap \orth{p} \times \orth{p} = (\Id)$$
and 
\begin{equation}
\addtocounter{theorem}{1}
\label{E:symtilde:xtau:p:eq:q}
\Symtilde(X_\tau) \cong \orth{p} \times \orth{p} \rtimes_{\widetilde{\rho}} \widetilde{\dihedral{}},
\end{equation}
where the twisting homomorphism $\widetilde{\rho}: \widetilde{\dihedral{}} \ra \Aut{\orth{p}\times\orth{p}}$ is given by
$$\widetilde{\rho}(\gamma) =
\begin{cases}
\Id & \text{if $\gamma \in \widetilde{\dihedral{}}$ is a word containing an even number of copies of $\tbartilde$},\\
\EEE'  & \text{if $\gamma \in \widetilde{\dihedral{}}$ is a word containing an odd number of copies of $\tbartilde$},
\end{cases}
$$
where $\EEE'$ is the involution defined in \ref{E:exchange'}.
\item[(ii)] If $k_{0}$ is even then 
$$\widetilde{\dihedral{}} \cap \orth{p} \times \orth{p} = \langle \ttilde_{k_{0}\pthat} \rangle = \langle -\Id\rangle \cong \Z_{2}.$$
Furthermore, $\langle \ttilde_{k_{0}\pthat} \rangle = \langle -\Id\rangle \cong \Z_{2}$ belongs to the centre of $\Symtilde(X_{\tau})$.
\end{itemize}
\end{lemma}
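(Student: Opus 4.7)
The plan is to follow the strategy of Lemma~\ref{L:symtilde:p:neq:q}, while incorporating the additional symmetries $\tbartilde$ and $\sbartilde$ that appear when $p=q$. First I would pin down the abstract structure of $\widetilde{\dihedral{}}=\langle \tbartilde,\tbartildeplus\rangle$. Using \ref{E:tbartildeplus:p:eq:q} and the commutation relations in Proposition~\ref{P:commute:p:eq:q}, the product $A:=\tbartilde\circ\tbartildeplus$ simplifies to $\ttilde_{-\pthat}\circ\sbartilde$. Since $\sbartilde^2=\Id$ and $\sbartilde$ commutes with $\ttilde_x$, one gets $A^{2k}=\ttilde_{-2k\pthat}$, while $A^{2k+1}=\ttilde_{-(2k+1)\pthat}\circ\sbartilde\neq\Id$ because $\sbartilde$ is antiholomorphic but $\ttilde_x$ is holomorphic. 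Hence $A$ has order exactly $2k_0$ when $k_0$ is finite (and infinite order otherwise), and combined with $\tbartilde^2=\tbartildeplus^2=\Id$ this yields the claimed dihedral structure.

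Second, I would determine the two intersections stated in the lemma. By Proposition~\ref{P:commute:p:eq:q}(iii), $\tbartilde$ is holomorphic while $\tbartildeplus$ is antiholomorphic, so an element of $\widetilde{\dihedral{}}$ lies in $\IsomslpmJ$ iff its reduced expression contains an even number of $\tbartildeplus$'s; such elements are generated by $\tbartilde$ and $\tbartildeplus\circ\tbartildeminus=\ttilde_{2\pthat}$, yielding the first claim. For $\widetilde{\dihedral{}}\cap\orth{p}\times\orth{p}$, note that elements there are real and block-diagonal, whereas $\tbartilde$ swaps the two $\C^p$-blocks; so block-diagonal words in $\{\tbartilde,\ttilde_{2\pthat}\}$ contain an even number of $\tbartilde$'s and thus reduce (using $\tbartilde\ttilde_x\tbartilde=\ttilde_{-x}$) to elements of $\langle\ttilde_{2\pthat}\rangle$. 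Now $\ttilde_{2k\pthat}$ is real precisely when $\tcheck_{2k\pthat}\in\stabpq$, i.e. when $2k\pthat\in\Perh(\{\tcheck_x\})$; by Lemmas~\ref{L:M:periods}(ii) and~\ref{L:w:half:period}(ii) this happens nontrivially only when $k_0$ is even, and then $\ttilde_{k_0\pthat}=-\Id$ because $p=q$ forces the half-period type $(j,k)=(1,1)$, i.e. $\tcheck_{k_0\pthat}=-\Id_2$.

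Third, I would assemble the global structure. Corollary~\ref{C:sym:isom:p:eq:q}, the quotient homomorphism of Lemma~\ref{L:sym:subgroups}, and the symmetry relations \ref{E:sym:p:eq:q} together give $\Symtilde(X_\tau)=\widetilde{\dihedral{}}\cdot\orth{p}\times\orth{p}$. Conjugation by $\tbartildeplus$ and $\tbartildeminus$ acts trivially on $\orth{p}\times\orth{p}$ (they centralise $\orth{p}\times\orth{q}$ by \ref{P:commute:p:neq:q}(iii)), while a direct computation from \ref{E:tbartilde:p:eq:q} gives $\tbartilde(\MMM_1,\MMM_2)\tbartilde=(\MMM_2,\MMM_1)=\EEE'(\MMM_1,\MMM_2)$; hence the conjugation action factors through the homomorphism $\widetilde{\rho}$ of the statement. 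In case~(i) the intersection is trivial and the structure is genuinely semidirect as in~\ref{E:symtilde:xtau:p:eq:q}. In case~(ii), $\ttilde_{k_0\pthat}=-\Id$ manifestly commutes with $\orth{p}\times\orth{p}$, and it commutes with $\tbartilde$ because $k_0$ even implies $\ttilde_{-k_0\pthat}=\ttilde_{k_0\pthat}$; thus the $\Z_2$ intersection is central in $\Symtilde(X_\tau)$.

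The main obstacle I foresee is the correct computation of the order of $A=\tbartilde\circ\tbartildeplus$: a naive reading suggests it might coincide with $k_0$, but the persistent $\sbartilde$ factor doubles it to $2k_0$. Ruling out odd powers of $A$ equalling the identity, via the holomorphic/antiholomorphic dichotomy carried by $\sbartilde$, is precisely what distinguishes the $p=q$ case from the $p\neq q$ case treated in Lemma~\ref{L:symtilde:p:neq:q} and what ultimately forces the nontrivial exchange piece of the twisting homomorphism $\widetilde{\rho}$.
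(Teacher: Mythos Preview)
Your proposal is correct and follows essentially the same strategy as the paper's proof. The only notable variation is that you compute the order of $\tbartilde\circ\tbartildeplus$ by writing it explicitly as $\ttilde_{-\pthat}\circ\sbartilde$ via \ref{E:tbartildeplus:p:eq:q}, whereas the paper instead uses the relation $\tbartilde\circ\tbartildeplus\circ\tbartilde=\tbartildeminus$ from \ref{P:commute:p:eq:q}(i) to obtain $(\tbartildeplus\circ\tbartilde)^2=\tbartildeplus\circ\tbartildeminus=\ttilde_{2\pthat}$; both routes are equivalent and use the holomorphic/antiholomorphic dichotomy in exactly the same way to rule out odd powers being the identity.
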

\noindent
N.B.  for $k_{0}$ finite, $\widetilde{\dihedral{}} \cong \dihedral{2k_{0}}$ not $\dihedral{k_{0}}$ as in the cases $p=1$ and $p \neq q$.
\begin{proof}
Recalling again the presentations $\langle x, y \, |\, x^{2}=y^{2}=(xy)^{k}=1\rangle$ and
$\langle x, y \, |\, x^{2}=y^{2}=1\rangle$ for the finite dihedral group $\dihedral{k}$ and infinite dihedral group $\dihedral{\infty}$ respectively, 
we see that $\widetilde{\dihedral{}}$ is isomorphic to a finite or infinite dihedral group depending on whether or not $(\tbartildeplus \circ \tbartilde)^{k}=\Id$ 
for some $k\in \Z$. From \ref{P:commute:p:eq:q}.i and \ref{P:commute:p:neq:q}.ii we have 
\begin{equation}
\addtocounter{theorem}{1}
\label{E:tbplus:tb}
 (\tbartildeplus \circ \tbartilde)^{2} = \tbartildeplus \circ \tbartildeminus = \ttilde_{2\pthat}.
\end{equation}
Hence if $k_{0}$ is finite then $ (\tbartildeplus \circ \tbartilde)^{2k_{0}}=\Id$ by the definition of $k_{0}$. If $k$ is any even natural number less than $k_{0}$ then 
$ (\tbartildeplus \circ \tbartilde)^{k}\neq \Id$ (again by the definition of $k_{0}$). 
By \ref{P:commute:p:neq:q}.iv and \ref{P:commute:p:eq:q}.iii $\tbartilde$ and $\tbartildeplus$ are 
holomorphic and antiholomorphic respectively. Hence 
$ (\tbartildeplus \circ \tbartilde)^{k}$  is antiholomorphic for any odd integer $k$ and so cannot be the identity. 
Therefore $(\tbartildeplus \circ \tbartilde)$ has order exactly $2k_{0}$ as claimed. 
If $k_{0}$ is infinite then $\tbartildeplus \circ \tbartilde$ cannot have finite order (the order cannot be odd by the antiholomorphic argument above and 
by \ref{E:tbplus:tb} an even order would imply $k_{0}$ is finite) and hence $\widetilde{\dihedral{}} \cong \dihedral{\infty}$ as claimed.

Any nontrivial element in $\widetilde{\dihedral{}}$ 
can be written as an alternating word in its two generators $\tbartilde$ and $\tbartildeplus$.
An element of $\widetilde{\dihedral{}}$ acts holomorphically if and only if it contains the antiholomorphic isometry 
$\tbartildeplus$ an even number of times. 
Hence a holomorphic isometry in $\widetilde{\dihedral{}}$ has 
either (a) an even number of both generators or (b) an even number of $\tbartildeplus$s and an odd number 
of $\tbartilde$s. In case (b) any such element is equal to $\ttilde_{2k\pthat}\circ \tbartilde$ for some $k\in \Z$.   
In case (a), by \ref{E:tbplus:tb} any such word is of the form $\ttilde_{2k\pthat}$ for some $k\in \Z$. 
Hence $\widetilde{\dihedral{}} \cap \IsomslpmJ  = \langle \tbartilde, \ttilde_{2\pthat} \rangle$ as claimed. 
  
By considering the conjugation action of $\widetilde{\dihedral{}}$ on 
$\orth{p} \times \orth{p} \subset \orth{2p}$ we see that 
the set $\widetilde{\dihedral{}} \cdot \orth{p} \times \orth{p}$ coincides with the set $\orth{p} \times \orth{p} \cdot \widetilde{\dihedral{}}$, and
hence forms a group, which coincides with $\Symtilde(X_\tau)$.
It is easy to see that $\orth{p} \times \orth{p}$ is a normal subgroup of $\Symtilde(X_\tau)=\widetilde{\dihedral{}} \cdot \orth{p} \times \orth{p}$.
It remains only to analyse the intersection $\widetilde{\dihedral{}}\cap \orth{p} \times \orth{p}$. 
As in the previous cases $\widetilde{\dihedral{}}\cap \orth{p} \times \orth{p} \subset \widetilde{\dihedral{}}\cap \IsomslpmJ = 
\langle \tbartilde, \ttilde_{2\pthat} \rangle$.  
Written in block diagonal form (using \ref{E:ttilde} and \ref{E:tbartilde:p:eq:q}) $\ttilde_{2k\pthat}\circ \tbartilde$ is purely off-diagonal 
and hence not contained in $\orth{p}\times \orth{p}$ for any $k\in \Z$. 
Arguing as in the $p=1$ and $p\neq q$ cases 
we find that $\ttilde_{2k\pthat} \in \orth{p} \times \orth{p}$ if and only if $\tcheck_{2k\pthat} = \pm \Id$ and
hence by \ref{L:w:half:period:a} $\widetilde{\dihedral{}} \cap \orth{p}\times \orth{p} \neq (\Id)$ if and only if 
$\bw_{\tau}$ admits strict half-periods of type $(--)$. Thus by \ref{L:w:half:period} 
$\widetilde{\dihedral{}} \cap \orth{p}\times \orth{p} =(\Id)$ if $k_{0}$ is infinite or odd. 
If $k_{0}$ is even then any odd multiple of $k_{0}\pt$ is a strict half-period of type $(--)$.
Therefore in this case $\widetilde{\dihedral{}} \cap \orth{p}\times \orth{p} = \langle \ttilde_{k_{0}\pthat} \rangle = \langle -\Id\,
\rangle \cong \Z_{2}$. It is easy to see that $\langle \ttilde_{k_{0}\pthat} \rangle = \langle -\Id\, \rangle$ belongs to the centre of 
both $\widetilde{\dihedral{}}$ and $\orth{p}\times \orth{p}$ and hence also to the centre of $\Symtilde(X_{\tau}) = \widetilde{\dihedral{}}\cdot \orth{p}\times \orth{p}$.
\end{proof}

\begin{remark}
\addtocounter{equation}{1}
\label{R:pm:holo:p:eq:q}
Lemma \ref{L:symtilde:p:eq:q} implies that
every element $\mtilde \in \Symtilde(X_\tau)$ can be written (non-uniquely) in the form
$$ \mtilde = \DDD \circ \MMM,$$ 
where $\DDD \in \widetilde{\dihedral{}}$ is an alternating word in $\tbartilde$ and $\tbartildeplus$ 
and $\MMM \in \orth{p} \times \orth{p} \subset \sunit{n}^{\pm}$.
In particular, every element of $\Symtilde(X_{\tau})$ belongs to $\Isomslpm$ and hence acts on $\C^{n}$ 
$\pm$-holomorphically. The subgroup of holomorphic symmetries of $X_{\tau}$ is  $\Symtilde(X_{\tau}) \cap 
\IsomslpmJ = \langle \tbartilde, \ttilde_{2\pthat} \rangle \cdot \orth{p} \times \orth{p}$.   
\end{remark}

\begin{corollary}[Structure of $\Per(X_{\tau})$ for $p=q$]
\addtocounter{equation}{1}
\label{C:xtau:period:p:eq:q}
For $0<\abs{\tau}<\taumax$, we have
\begin{equation*}
\Per(X_{\tau}) = 
\begin{cases}
(\Id) & \text{if\ }k_{0}=\infty;\\
\langle \TTT_{2k_{0}\pt} \rangle & \text{if\ } k_{0} \text{\ is odd;}\\
\langle \TTT_{k_{0}\pt} \circ (-\Id_{\Sph^{p-1}}, -\Id_{\Sph^{p-1}})\rangle & \text{if\ } k_{0} \text{\ is even,}
\end{cases}
\end{equation*}
\end{corollary}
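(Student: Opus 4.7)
The plan is to follow exactly the strategy used for Corollaries \ref{C:xtau:period:p:eq:1} and \ref{C:xtau:period:p:neq:1}, exploiting the fact that $\Per(X_\tau)$ is the kernel of the canonical surjective homomorphism $\rho : \Sym(X_\tau) \to \Symtilde(X_\tau)$ of Lemma \ref{L:sym:subgroups}, and combining this with the semidirect product structure of $\Sym(X_\tau)$ from Corollary \ref{C:sym:isom:p:eq:q} and the intersection $\widetilde{\dihedral{}} \cap \orth{p} \times \orth{p}$ computed in Lemma \ref{L:symtilde:p:eq:q}.

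First I would use Corollary \ref{C:sym:isom:p:eq:q} to write any element of $\Sym(X_\tau)$ uniquely as $\gamma \cdot \MMM$ with $\gamma \in \dihedral{} = \langle \tbar \circ \EEE, \tbar_{\pt/2}\rangle$ and $\MMM \in \orth{p}\times\orth{p}$. Applying $\rho$ and using the explicit symmetries listed in Proposition \ref{P:xtau:sym:p:eq:q}, such an element lies in $\Per(X_\tau)$ if and only if $\rho(\gamma) = \MMM^{-1}$, i.e.\ if and only if $\rho(\gamma) \in \widetilde{\dihedral{}} \cap \orth{p}\times\orth{p}$. Thus $\Per(X_\tau)$ is controlled by this intersection, which by Lemma \ref{L:symtilde:p:eq:q} is trivial when $k_0$ is infinite or odd and equals $\langle \ttilde_{k_0 \pthat}\rangle = \langle -\Id\rangle$ when $k_0$ is even.

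Next I would identify the kernel of $\rho$ restricted to $\dihedral{}$ by a direct computation: setting $x = \tbar \circ \EEE$ and $y = \tbar_{\pt/2}$, one verifies that $(yx)^2 = \TTT_{2\pt}$ in $\Diff(\cylpp)$, while under $\rho$ these go to $\tbartilde$ and $\tbartildeplus$ respectively and $(\tbartildeplus \circ \tbartilde)^2 = \ttilde_{2\pthat}$ by \ref{E:tbplus:tb}. Combined with the definition of $k_0$ as the order of $\ttilde_{2\pthat}$ and the fact that $\rho\bigr|_{\dihedral{}}$ realises the quotient $\dihedral{\infty} \twoheadrightarrow \dihedral{2k_0}$ whose kernel is generated by $(yx)^{2k_0} = \TTT_{2k_0 \pt}$, I can conclude that $\ker(\rho|_{\dihedral{}}) = \langle \TTT_{2k_0 \pt} \rangle$ (trivial if $k_0 = \infty$).

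Finally I would assemble the three cases. If $k_0 = \infty$ or $k_0$ is odd, the constraint forces $\rho(\gamma) = \Id$ and $\MMM = \Id$, yielding $\Per(X_\tau) = \ker(\rho|_\dihedral{}) = \langle \TTT_{2k_0 \pt}\rangle$ (trivial in the first case). If $k_0$ is even, in addition to this subgroup we gain the element $\TTT_{k_0 \pt} \cdot (-\Id_{\Sph^{p-1}},-\Id_{\Sph^{p-1}})$ coming from the nontrivial intersection, noting that $(yx)^{k_0} = \TTT_{k_0\pt}$ lies in $\dihedral{}$ precisely because $k_0$ is even and satisfies $\rho(\TTT_{k_0\pt}) = \ttilde_{k_0\pthat} = -\Id$; this element generates $\Per(X_\tau)$ since it squares to $\TTT_{2k_0 \pt}$. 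The only real subtlety, and the one point requiring care compared with the $p \neq q$ case, is correctly tracking that $\widetilde{\dihedral{}} \cong \dihedral{2k_0}$ rather than $\dihedral{k_0}$ here, so that the generator $\TTT_{k_0 \pt}$ of $\dihedral{} \subset \Sym(X_\tau)$ really does project to $-\Id \in \orth{p}\times\orth{p}$ as claimed.
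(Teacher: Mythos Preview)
Your proof is correct and follows essentially the same approach as the paper. The paper's own proof is a one-line reference (``follows from Lemma \ref{L:symtilde:p:eq:q} as in the cases $p=1$ and $p>1$, $p\neq q$''), and you have correctly spelled out those details: identify $\Per(X_\tau)$ with $\ker\rho$, write elements of $\Sym(X_\tau)$ as $\gamma\cdot\MMM$, reduce to the intersection $\widetilde{\dihedral{}}\cap(\orth{p}\times\orth{p})$ from Lemma \ref{L:symtilde:p:eq:q}, and track the preimages in $\dihedral{}$ via the explicit relation $(yx)^2=\TTT_{2\pt}$, $\rho((yx)^2)=\ttilde_{2\pthat}$. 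Your remark that $\widetilde{\dihedral{}}\cong\dihedral{2k_0}$ (not $\dihedral{k_0}$) in this case, and hence that $\TTT_{k_0\pt}=(yx)^{k_0}$ lies in $\dihedral{}$ only when $k_0$ is even, is exactly the subtlety the paper has in mind.
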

\noindent
The proof follows from  Lemma \ref{L:symtilde:p:eq:q} as in the cases $p=1$ and $p>1$, $p \neq q$.

\section{Geometric features of $X_{\tau}$}
\label{S:xtau:limit}
This section describes various geometric features of $X_{\tau}$ with particular emphasis on 
its geometry as $\tau \ra 0$, the action of $\Sym(X_{\tau})$ on various subdomains of $\cylpq$ 
and the action of $\Symtilde(X_{\tau})$ on various equatorial spheres associated with $X_{\tau}$.

\subsection*{Waists, bulges and approximating spheres} \phantom{ab}
In this section we describe distinguished subsets of $\cylpq$ called the \emph{waists} and \emph{bulges} of $X_{\tau}$ 
and describe the action of $\Sym(X_{\tau})$ on these subsets. We also attach to each bulge a $p+q-1$ dimensional 
 equatorial subsphere 
of $\Sph^{2(p+q)-1}$, called the \emph{approximating sphere} of the bulge and describe symmetries associated with 
these approximating spheres.
The terminology approximating sphere is justified by \ref{E:bw:Xtau} where we show  
that for $\tau$ sufficiently close to $0$ the image of each bulge under $X_{\tau}$ is close to its 
approximating sphere.

Fix admissible integers $p$ and $q$ and let $X_{\tau}: \cylpq \ra \Sph^{2(p+q)-1}$ be the $1$-parameter family of $\sorth{p}\times \sorth{q}$-equivariant special Legendrian immersions defined in \ref{D:X:tau} and $g_{\tau}$ denote the pullback 
metric on $\cylpq$ induced by $X_{\tau}$. 
Throughout this section we assume that $\abs{\tau}<\taumax$.
\begin{definition}
\addtocounter{equation}{1}
\label{D:waist}
A \emph{waist} of  $(\cylpq, g_{\tau})$ is a meridian $\{t\} \times \merpq$ of $\cylpq$ on which the radius of one spherical factor of the meridian is minimal. 
\end{definition}
\subsubsection*{Waists for $p=1$}\phantom{ab}
If $p=1$ then a waist is any meridian $\{t\} \times \Sph^{n-2}$ such that 
$y_{\tau}(t)= y_{min}$. Recall from \ref{E:y0:p:eq:1} that our choice of initial conditions for $\bw_\tau$ in the case $p=1$ forces $y_\tau$ to have 
a maximum at $t=0$ and a minimum at $t=\pt$. Hence using the symmetries of $y_\tau$ described in \ref{E:y:sym:p:eq:1}
 $y_\tau$ has maxima at precisely $2k\pt$ and minima at precisely $(2k+1)\pt$ for each $k\in \Z$.
See Figure \ref{fig:w2:pequals1} for an illustration.
Therefore the meridian $\{t\} \times \Sph^{n-2}$ is a waist of $X_{\tau}$ if and only if $t \in (2\Z+1) \pt$. 
For any $k\in \Z$ we define the $k$th waist $W[k]$ of $\cylone$ to be 
\begin{equation}
\addtocounter{theorem}{1}
\label{E:waist:k:p:eq:1}
W[k] = \{ (2k-1)\pt \} \times \Sph^{n-2}.
\end{equation}

\subsubsection*{Waists for $p>1$}\phantom{ab}
If $p>1$ then a waist is any meridian $\{t\} \times \merpq$ such that 
either $y_{\tau}(t)= y_{min}$ or $y_{\tau}(t)=y_{max}$; we call a waist on which $y(t)=\ymax$ a \emph{waist of type $1$}, 
since it is the radius of the first spherical factor $\Sph^{p-1}$ which is minimal on such a waist. 
Similarly, a waist on which $y_{\tau}(t)=\ymin$ is called a \emph{waist of type $2$}, since the radius of the second spherical 
factor $\Sph^{q-1}$ is minimal on such a waist. 
Recall from \ref{E:y:max:min} 
that $y_\tau$ attains a maximum at $t=-\pt^-$, a minimum at $t=\pt^+$,  
is decreasing on $(-\pt^-,\pt^+)$ and increasing on $(\pt^+, \pt + \pt^+)$---see Figure \ref{fig:w2:pneq1}.
Hence $\{t\} \times \merpq$ is a waist of type $1$ if and only if 
$t +\pt^{-}  \in 2\pt \Z$ and a waist of type $2$ if and only if $t - \pt^{+} \in 2\pt \Z$.
For any $k\in \Z$ we define the $k$th waist $W[k]$ of $\cylpq$ by
\begin{subequations}
\addtocounter{theorem}{1}
\label{E:waist:k:p:neq:1}
\begin{align}
W[2l+1] &:= \{2l\pt + \pt^{+}\} \times \merpq \quad \text{if $k=2l+1$ some $l\in \Z$};\\
W[2l] &:= \{2l\pt -\pt^{-}\} \times \merpq \quad \text{if $k=2l$ \quad \ \ some $l\in \Z$}.
\end{align}
\end{subequations}

\subsubsection*{The action of $\Isom(\cylpq,g_{\tau})$ on waists}\phantom{ab}
Any element of $\Isom(\cylpq,g_{\tau})$  permutes the waists of $\cylpq$. If $p=1$ then all waists are isometric and 
$\Isom(\cylone,g_{\tau})$ acts transitively on the set of waists. 
If $p>1$ and $p \neq q$ then waists of type $1$ and type $2$ are not isometric and therefore $\Isom(\cylpq, g_{\tau})$ 
cannot act transitively on the set of all waists. However, all waists of fixed type  are isometric 
and $\Isom(\cylpq, g_{\tau})$ acts transitively on the set of waists of fixed type.

If $p>1$ and $p=q$, waists of type $1$ and type $2$ are isometric (recall \ref{E:ymin:ymax}). 
Recall from \ref{P:isom:pb}.iii that $\Isom(\cylpp,g_{\tau}) = \dihedral{} \cdot \orth{p} \times \orth{p}$ 
where $\dihedral{} = \langle \tbar \circ \EEE, \, \tbar_{\pt/2} \rangle$.
Isometries containing an even number of copies of $\tbar \circ \EEE$ preserve the type of any waist, while 
isometries containing an odd number of copies of $\tbar \circ \EEE$ exchange the two types of waist. 
The full group $\Isom(\cylpp,g_{\tau})$ acts transitively on the set of all waists. 

\subsubsection*{Bulges}\phantom{ab}
The set of all waists $W$ of $(\cylpq,g_{\tau})$ is a hypersurface with countably many components $W[k]$ ($k \in \Z$) 
and the complement of $W$ in $\cylpq$ has countably many components. 
\begin{definition}
\addtocounter{equation}{1}
\label{D:bulges}
A \emph{bulge} of $(\cylpq,g_{\tau})$ is a connected component of $(\cylpq \setminus W)$. For any $k\in \Z$ 
the $k$th bulge $\hat{S}[k]$ of $(\cylpq,g_{\tau})$ is the unique connected component of $(\cylpq \setminus W)$ whose 
closure contains the two consecutive waists $W[k]$ and $W[k+1]$. We call $W[k]$ and $W[k+1]$ the \emph{boundary waists} 
of the bulge $\hat{S}[k]$.
\end{definition}
\noindent
More concretely, for $p=1$ the $k$th bulge of $\cylone$ is
\begin{equation}
\addtocounter{theorem}{1}
\label{E:bulge:k:p:eq:1}
\hat{S}[k] = (\,(2k-1)\pt, (2k+1) \pt\, ) \times \Sph^{n-2} = \TTT_{2k\pt} \hat{S}[0],
\end{equation}
while for $p>1$ the $k$th bulge of $\cylpq$ is 
\begin{subequations}
\addtocounter{theorem}{1}
\label{E:bulge:k:p:neq:1}
\begin{align}
\hat{S}[2l] &= (-\pt^{-}+2l \pt, -\pt^{-}+(2l+1)\pt) \times \merpq = \TTT_{2l \pt} \hat{S}[0] \quad 
\text{if $k=2l$};\\ 
\hat{S}[2l+1] &= (\pt^{+}+2l \pt, \pt^{+}+(2l+1)\pt) \times \merpq = \TTT_{2l \pt} \hat{S}[1] \quad 
\text{if $k=2l+1$}.
\end{align}
\end{subequations}
Since any isometry in $\Isom(\cylpq, g_{\tau}) = \Sym(X_{\tau})$ permutes the waists of $(\cylpq,g_{\tau})$ 
it also permutes the bulges of $\cylpq$.  Moreover $\Isom(\cylpq, g_{\tau})$ acts transitively on the set of all bulges.

\begin{definition}
\addtocounter{equation}{1}
\label{D:sym:k}
For any $k\in \Z$ we define $\Sym_{k}(X_{\tau})$ to be the subgroup of $\Sym(X_{\tau})=\Isom(\cylpq, g_{\tau})$ 
leaving the $k$th bulge $\hat{S}[k]$ invariant.
\end{definition}
Since $\Isom(\cylpq,g_{\tau})=\Sym(X_{\tau})$ acts transitively on the set of all bulges the subgroups $\Sym_{k}(X_{\tau})$ 
are all conjugate in $\Sym(X_{\tau})$ . In particular they are all isomorphic as groups.
\begin{lemma}[Structure of $\Sym_{k}(X_{\tau})$; cf. Lemma \ref{L:symtilde:app:sph}]
\addtocounter{equation}{1}
\label{L:sym:k}
For any fixed $k\in \Z$ we have
$$
\Sym_{k}(X_{\tau}) = 
\begin{cases}
\langle \tbar_{2k\pt} \rangle \cdot \orth{n-1} \cong \orth{1} \times \orth{n-1} \quad & \text{if $p=1$;}\\
\orth{p} \times \orth{q} \quad & \text{if $p>1$ and $p \neq q$;}\\
\langle \tbar_{k\pt} \circ \EEE \rangle \cdot \orth{p} \times \orth{p} \cong \orth{p} \times \orth{p} \rtimes \Z_{2} 
\quad & \text{if $p>1$ and $p=q$.}
\end{cases}
$$
\end{lemma}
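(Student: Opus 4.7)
Since $\Sym(X_{\tau}) = \Isom(\cylpq, g_{\tau})$ acts transitively on the set of bulges, the subgroups $\Sym_{k}(X_{\tau})$ are all conjugate and hence isomorphic, so it suffices in each case to identify the elements of $\Sym(X_{\tau})$ that preserve a \emph{specific} bulge $\hat{S}[k]$; the fact that the reflection $\tbar_{k\pt}$ (respectively $\tbar_{2k\pt}$) appearing in the statement is the one centred at the midpoint of $\hat{S}[k]$ will then be built into the argument rather than deduced separately for each $k$. My approach is to write a general element of $\Sym(X_{\tau})$ using the structure theorems \ref{P:isom:pb:str}(i)--(iii) in the form $\DDD \cdot \MMM$ with $\DDD \in \dihedral{}$ and $\MMM$ in the relevant product of orthogonal groups, note that $\MMM$ always preserves every bulge, and then classify those $\DDD \in \dihedral{}$ that preserve $\hat{S}[k]$ as a set.

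In the case $p=1$ I would argue as follows. Any element of $\dihedral{} = \langle \tbar, \TTT_{2\pt}\rangle$ acts on the factor $\R$ of $\cylone$ as an isometry of the line, and the bulge $\hat{S}[k] = ((2k-1)\pt, (2k+1)\pt) \times \Sph^{n-2}$ is preserved by such an isometry exactly when it fixes or reflects the open interval $((2k-1)\pt, (2k+1)\pt)$. This forces the isometry to be either the identity or reflection in the midpoint $2k\pt$. The reflection $\tbar_{2k\pt} = \TTT_{4k\pt}\circ \tbar$ lies in $\dihedral{}$, so the contribution from $\dihedral{}$ is exactly $\langle \tbar_{2k\pt}\rangle \cong \Z_{2}$; since the $\orth{n-1}$ factor commutes with $\dihedral{}$ by \ref{P:isom:pb:str}(i), we get the direct product claimed.

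For $p>1$ and $p\ne q$ I would use the fact that in $\dihedral{} = \langle \tbar_{\pt^{+}}, \tbar_{-\pt^{-}}\rangle$ every reflection is centred at a waist, while every translation is of the form $\TTT_{2l\pt}$ (from \ref{E:tbar:pm:commute}). A translation $\TTT_{2l\pt}$ preserves $\hat{S}[k]$ only for $l=0$; a reflection centred at a waist maps $\hat{S}[k]$ to an adjacent (hence different) bulge, so preserves no bulge. Alternatively, any element of $\dihedral{}$ preserving $\hat{S}[k]$ must permute the two boundary waists of $\hat{S}[k]$, but consecutive waists carry the two distinct values $\ymin$ and $\ymax$ of $y_{\tau}$; since every element of $\dihedral{}$ preserves $y_{\tau}$ by \ref{L:y:symmetry}(ii), no such exchange is possible. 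Hence $\Sym_{k}(X_{\tau}) \cap \dihedral{} = (\Id)$ and $\Sym_{k}(X_{\tau}) = \orth{p} \times \orth{q}$ as claimed.

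For $p=q > 1$ the key extra ingredient is that $\dihedral{} = \langle \tbar\circ \EEE, \tbar_{\pt/2}\rangle$ now contains the elements $\tbar_{l\pt}\circ \EEE = \TTT_{2l\pt}\circ (\tbar\circ\EEE)$ for every $l\in\Z$, which reflect in the \emph{midpoint} $l\pt$ of the bulge $\hat{S}[l]$ and simultaneously exchange the two $\Sph^{p-1}$ factors. Enumerating the elements of $\dihedral{}$ (translations $\TTT_{2l\pt}$; odd-$\pt$ translations composed with $\EEE$; reflections $\tbar_{(2j+1)\pt/2}$ at waists; and reflections $\tbar_{m\pt}\circ \EEE$ at bulge midpoints with exchange) and checking which preserve the interval $(k\pt - \pt/2, k\pt+\pt/2)$, exactly one nontrivial element survives, namely $\tbar_{k\pt}\circ \EEE$. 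Since $\orth{p}\times\orth{p}$ is normal in $\Sym(X_{\tau})$ by \ref{P:isom:pb:str}(iii) and $\tbar_{k\pt}\circ\EEE$ acts on it by the exchange involution $\EEE'$ (using the twisting homomorphism $\rho$ and \ref{E:exchange'}), we obtain the semidirect product $\orth{p}\times\orth{p} \rtimes \langle \tbar_{k\pt}\circ\EEE\rangle \cong \orth{p}\times\orth{p}\rtimes \Z_{2}$. The main (but routine) obstacle is simply keeping track of which reflections lie in $\dihedral{}$ and at which centres; no new geometric input beyond \ref{P:isom:pb:str} and \ref{L:y:symmetry} is needed.
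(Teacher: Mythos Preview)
Your proposal is correct and follows essentially the same approach as the paper's own proof: characterise elements of $\Sym_{k}(X_{\tau})$ via their action on the waists/bulge, reduce to identifying the elements of $\dihedral{}$ that preserve $\hat{S}[k]$, and invoke the structure results \ref{P:isom:pb} and \ref{P:isom:pb:str}. The paper's proof is considerably terser (it simply observes that preserving $\hat{S}[k]$ is equivalent to preserving the pair of boundary waists $W[k]\cup W[k+1]$ and refers forward to Lemma~\ref{L:symtilde:app:sph} for the detailed enumeration), while you explicitly enumerate the elements of $\dihedral{}$ and check each one; your version is more self-contained, but both rest on the same idea.
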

\begin{proof}
An element of $\Sym(X_{\tau})$ belongs to $\Sym_{k}(X_{\tau})$ if and only if leaves invariant 
the union of the two boundary waists $W[k]$ and $W[k+1]$.
The lemma now follows using the structure of $\Isom(\cylpq, g_{\tau})= \Sym(X_{\tau})$ established in \ref{P:isom:pb} 
to determine its action on the set of waists $W$. See also the proof of Lemma \ref{L:symtilde:app:sph} for a more detailed 
proof of the analogous result in the case of the action of $\Symtilde(X_{\tau})$ on the approximating spheres.
\end{proof}


\subsubsection*{$(p,q)$-marked special Legendrian spheres and approximating spheres}\phantom{ab}
Fix admissible integers $p$ and $q$. We now define the important concept of a \emph{$(p,q)$-marked special Legendrian sphere}.
We will see shortly that we can associate a $(p,q)$-marked SL sphere $\Sph[k]$ 
to every bulge of $X_{\tau}: \cylpq \ra \Sph^{2(p+q)-1}$.
Moreover, for $\tau$ sufficiently small the image of the $k$th bulge $\hat{S}[k]$ under $X_{\tau}$ 
is close to the marked SL sphere $\Sph[k]$.

\begin{definition}
\addtocounter{equation}{1}
\label{D:marked:sphere}
If $(p,q)=(1,n-1)$ then  a \emph{$(p,q)$-marked SL sphere} is a pair $\{\pm e, \Sph\}$ 
consisting of an equatorial $n-1$ sphere $\Sph$ of $\Sph^{2n-1}$ which is special Legendrian (for the correct orientation)
and a pair of antipodal points $\pm e \in \Sph$.
We call $\pm e \subset \Sph$ the \emph{attachment set} (or alternatively the \emph{marked set}) 
of the $(1,n-1)$-marked SL sphere $(\pm e, \Sph)$.

If $p>1$ then a \emph{$(p,q)$-marked SL sphere} is a triple consisting of an equatorial special Legendrian 
(for the correct orientation) 
 $p+q-1$ sphere $\Sph$ 
of $\Sph^{2(p+q)-1}$, an equatorial subsphere $\Sph_{p-1} \subset \Sph$ of dimension $p-1$ 
and the orthogonal equatorial subsphere $\Sph_{q-1} \subset \Sph$. 
We call $\Sph_{p-1}\cup \Sph_{q-1} \subset \Sph$ the \emph{attachment set} or \emph{marked set} 
of the $(p,q)$-marked SL sphere $(\Sph_{p-1},\Sph_{q-1},\Sph)$. 
\end{definition}

A $(1,n-1)$-marked SL sphere is equivalent to a pair $\{l ,\Pi_{n}\}$ where $\Pi_{n} \subset \C^{n}$ is a special Lagrangian $n$-plane 
and $l = \langle \pm e \rangle \subset \Pi_{n}$ is an unoriented real line in the $n$-plane $\Pi_{n}$.
We call $\Sph^{n-1} \subset \R^{n} \subset \C^{n}$ the \emph{standard special Legendrian sphere} 
and $\{\pm e_{1}, \Sph^{n-1}\}$ \emph{the standard $(1,n-1)$-marked special Legendrian sphere}, 
where $e_{1}, \ldots , e_{n}$ is the standard oriented orthonormal basis of $\R^{n}$.

For $p>1$ a $(p,q)$-marked SL sphere is equivalent to a triple $\{\Pi_{p},\Pi_{q}=\Pi_{p}^{\perp},\Pi_{p+q}\}$ 
where $\Pi_{p+q} \subset \C^{p+q}$ 
is a special Lagrangian $p+q$-plane and $\Pi_{p} \subset \Pi_{p+q}$ is a real $p$-plane in $\Pi_{p+q}$ 
and $\Pi_{q}= \Pi_{p}^{\perp} \subset \Pi_{p+q}$. 
The $(p,q)$-marked SL sphere with $\Pi_{p+q} = \R^{p+q} \subset \C^{p+q}$,  
$\Pi_{p} = \R^{p} \times \{0\} \subset \Pi_{p+q}$ 
and $\Pi_{q} = \{0\} \times \R^{q} = \Pi_{p}^{\perp} \subset \Pi_{p+q}$ 
we call  the \emph{standard $(p,q)$-marked SL sphere}. 
The choice of a real $p$-plane $\Pi_{p} \subset \Pi_{p+q}$ determines the $q$-plane $\Pi_{q}$ 
as the orthogonal complement of $\Pi_{p}$ inside $\Pi_{p+q}$.

\medskip

Fix admissible integers $p$ and $q$ and $\tau$ satisfying $\abs{\tau} < \taumax$. 
To each bulge $\hat{S}[k]$ of $X_{\tau}$ we now associate a $(p,q)$-marked SL sphere 
called its approximating (marked) sphere $\Sph[k]$. 
\begin{definition}
\addtocounter{equation}{1}
\label{D:approx:sphere}
For each $k\in \Z$ we define a $(p,q)$-marked sphere $\Sph[k]$ as follows.
For $k=0$ we define $\Sph[0]$ to be the standard $(p,q)$-marked SL sphere defined following \ref{D:marked:sphere}.\\
For $p=1$ we define
\begin{equation}
\addtocounter{theorem}{1}
\label{E:app:sph:k:p1}
\Sph[k]: =\ttilde_{2k\pthat} \Sph[0] \qquad \quad \text{if $k \neq 0$}.
\end{equation}
For $p>1$ we define
\begin{equation}
\addtocounter{theorem}{1}
\label{E:app:sph:k:pgt1}
\Sph[k]: = 
\begin{cases}
\ttilde_{2l\pthat} \,\Sph[0] \quad & \text{if $k=2l$};\\
\ttilde_{2l \pthat} \circ \tbartildeplus \,\Sph[0]  \quad & \text{if $k=2l+1$}.
\end{cases}
\end{equation}
$\Sph[k]$ is called the \emph{approximating $(p,q)$-marked sphere} (or more simply the approximating sphere) 
associated with the $k$th bulge $\hat{S}[k]$ of $X_{\tau}$. 
\end{definition}

Note that since $\ttilde_{x} \in \sunit{n} \subset \Isomsl$ for all $x$, for $p=1$ if we orient 
the central marked sphere $\Sph[0]$ so that it is special Legendrian then the orientation 
$\Sph[k] = \ttilde_{2k\pthat} \Sph[0]$ 
inherits from $\Sph[0]$  via $\ttilde_{2k\pthat}$ also makes $\Sph[0]$ special Legendrian.
However, for $p>1$ recall from \ref{P:commute:p:neq:q} that $\tbartilde_{+} \in \Isomslpm \setminus \Isomsl$;  
this occurs because the corresponding symmetry $\tbar_{\pt^{+}} \in \Sym(X_{\tau}) \subset \Diff(\cylpq)$ 
reverses orientation on $\cylpq$. Hence if we orient the central marked sphere $\Sph[0]$ so that it is 
special Legendrian then the orientation inherited on any odd approximating sphere 
$\Sph[2l+1]= \ttilde_{2l\pthat} \circ \tbartilde_{+}\Sph[0]$ from $\Sph[0]$ makes it anti-special Legendrian.

\begin{lemma}[Action of $\Symtilde(X_{\tau})$ on the approximating spheres; cf. Lemma \ref{L:sym:k}]
\addtocounter{equation}{1}
\label{L:symtilde:app:sph}
\phantom{ab} \hfill
\begin{enumerate}
\item[(i)] $\Symtilde(X_{\tau})$ acts transitively on the approximating marked spheres of $X_{\tau}$
\item[(ii)] $\Symtilde_{k}(X_{\tau})$, the subgroup of $\Symtilde(X_{\tau})$ leaving the $k$th approximating sphere 
$\Sph[k]$ invariant is
\begin{equation}
\addtocounter{theorem}{1}
\label{E:symtilde:k}
\Symtilde_{k}(X_{\tau}) = 
\begin{cases}
\langle \tbartilde_{2k\pt} \rangle \cdot \orth{n-1} \quad &\text{if $p=1$};\\
\orth{p} \times \orth{q} \quad & \text{if $p>1$ and $p\neq q$};\\
\langle \tbartilde_{k\pt} \rangle \cdot \orth{p} \times \orth{p} \quad & \text{if $p>1$ and $p=q$}.
\end{cases}
\end{equation}
\end{enumerate}
\end{lemma}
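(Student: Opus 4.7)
The plan is to reduce both assertions to the corresponding statements for $\Sym(X_\tau)$ acting on bulges, already established in Lemma \ref{L:sym:k}, by transport under the surjective homomorphism $\rho: \Sym(X_\tau) \to \Symtilde(X_\tau)$ of Lemma \ref{L:sym:subgroups}. By construction, the approximating spheres of Definition \ref{D:approx:sphere} are built from the specific target elements $\ttilde_{2\pthat}$ and $\tbartildeplus$, which are the $\rho$-images of the bulge-permuting generators $\TTT_{2\pt}$ and $\tbar_{\pt^+}$ of $\Sym(X_\tau)$; consequently the $\Symtilde(X_\tau)$-action on $\{\Sph[k]\}_{k\in\Z}$ should be intertwined by $\rho$ with the $\Sym(X_\tau)$-action on $\{\hat{S}[k]\}_{k\in\Z}$, and parts (i) and (ii) then become the $\rho$-images of the transitivity of $\Sym(X_\tau)$ on bulges and of the stabiliser formula of Lemma \ref{L:sym:k} respectively.

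For (i), I verify transitivity directly from \ref{E:app:sph:k:p1} and \ref{E:app:sph:k:pgt1}. When $p=1$, $\ttilde_{2\pthat}\Sph[k]=\Sph[k+1]$ on the nose, so $\langle\ttilde_{2\pthat}\rangle$ alone acts transitively. When $p>1$, the commutation relation $\tbartildeplus\circ\ttilde_x=\ttilde_{-x}\circ\tbartildeplus$ from \ref{P:commute:p:neq:q}(i) gives, after a one-line calculation, $\ttilde_{2\pthat}\Sph[k]=\Sph[k+2]$ and $\tbartildeplus\Sph[k]=\Sph[1-k]$; the subgroup of $\Symtilde(X_\tau)$ they generate acts transitively on $\Z$.

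For (ii), the core step is the identity
\begin{equation*}
\Symtilde_k(X_\tau)=\rho\bigl(\Sym_k(X_\tau)\bigr).
\end{equation*}
The inclusion $\rho(\Sym_k)\subseteq\Symtilde_k$ is checked generator by generator: the orthogonal factor $\orth{p}\times\orth{q}$ (respectively $\orth{n-1}$) commutes block-wise with every element of $\widetilde{\dihedral{}}$ (complex conjugation and block-scalar multiplication commute with real block-linear maps) and preserves $\Sph[0]$, hence preserves each $\Sph[k]$; for the discrete reflection generator one uses the commutation relations of Propositions \ref{P:commute:p:eq:1}, \ref{P:commute:p:neq:q}, \ref{P:commute:p:eq:q} together with the elementary observation $\tbartilde\,\Sph[0]=\Sph[0]$ to show $\tbartilde_{2k\pthat}\Sph[k]=\Sph[k]$ in the $p=1$ case and $(\ttilde_{2k\pthat}\circ\tbartilde)\Sph[k]=\Sph[k]$ in the $p=q$ case. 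The reverse inclusion rests on the observation that the orthogonal factor acts trivially on both $\{\hat{S}[k]\}$ and $\{\Sph[k]\}$, so both permutation actions descend to the common discrete quotient $\dihedral{}$ and are intertwined there by the induced isomorphism; lifting any $\mtilde\in\Symtilde_k$ to some $\MMM\in\Sym(X_\tau)$ via the surjection $\rho$ and comparing the two induced permutations of $\Z$ forces $\MMM\in\Sym_k$.

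The main technical point is precisely this intertwining of permutation actions, for which it suffices to verify compatibility on a chosen set of generators of $\dihedral{}$: $\TTT_{2\pt}\leftrightarrow\ttilde_{2\pthat}$ (shift by $1$ when $p=1$, by $2$ when $p>1$); the reflections $\tbar,\tbar_{\pt^+},\tbar_{-\pt^-}$ against their $\rho$-images $\tbartilde,\tbartildeplus,\tbartildeminus$; and additionally $\tbar\circ\EEE\leftrightarrow\tbartilde$ in the $p=q$ case. Each check is a one-line commutation calculation (for instance $\tbar_{\pt^+}\hat{S}[k]=\hat{S}[1-k]$ matches $\tbartildeplus\Sph[k]=\Sph[1-k]$). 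Once compatibility is granted, the three formulas in \ref{E:symtilde:k} drop out by evaluating $\rho$ on the generators listed in Lemma \ref{L:sym:k}: $\rho$ is the identity on $\orth{p}\times\orth{q}$ and on $\orth{n-1}$ by \ref{E:orth:p:eq:1}, \ref{E:orth:p:neq:q}, \ref{E:orth:p:eq:q}; $\rho(\tbar_{2k\pt})=\tbartilde_{2k\pthat}$ by \ref{E:tbarkp:sym:p:eq:1}; and $\rho(\tbar_{k\pt}\circ\EEE)=\ttilde_{2k\pthat}\circ\tbartilde$ by combining \ref{E:ttilde:sym:p:eq:q} with \ref{E:tbar:sym:p:eq:q}.
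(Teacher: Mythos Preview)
Your approach via the surjection $\rho:\Sym(X_\tau)\to\Symtilde(X_\tau)$ is different from the paper's, which computes the action of each generator of $\Symtilde(X_\tau)$ on the $\Sph[k]$ directly and reads off the stabiliser from the resulting index formula (e.g.\ $\ttilde_{2\pthat}^{\,i}\tbartilde^{\,j}\MMM\cdot\Sph[k]=\Sph[i+(-1)^jk]$ for $p=1$). Your transport idea is natural and part~(i) and the inclusion $\rho(\Sym_k)\subseteq\Symtilde_k$ are fine.

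The gap is in the reverse inclusion. You assert that both permutation actions ``descend to the common discrete quotient $\dihedral{}$ and are intertwined there by the induced isomorphism'', and that lifting any $\mtilde\in\Symtilde_k$ to some $\MMM$ then ``forces $\MMM\in\Sym_k$''. But the induced map on the discrete parts is $\dihedral{}\to\widetilde{\dihedral{}}$, and by Lemmas~\ref{L:symtilde:p:eq:1}--\ref{L:symtilde:p:eq:q} this is only an isomorphism when the rotational period has infinite order $k_0=\infty$; for $k_0<\infty$ (a dense set of $\tau$) one has $\dihedral{}\cong\dihedral{\infty}$ while $\widetilde{\dihedral{}}\cong\dihedral{k_0}$ (or $\dihedral{2k_0}$ when $p=q$). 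Correspondingly the bulges $\hat S[k]$ are genuinely indexed by $\Z$, but the approximating spheres are not: $\Sph[j]=\Sph[k]$ can hold with $j\ne k$. Concretely, for $p=1$ with $k_0$ even and $n$ odd one has $\ttilde_{k_0\pthat}\in\orth{n-1}$ (Lemma~\ref{L:symtilde:p:eq:1}(ii)), hence $\Sph[k_0/2]=\Sph[0]$; the lift $\TTT_{k_0\pt}$ of the identity satisfies $\rho(\TTT_{k_0\pt})\in\Symtilde_0$ yet $\TTT_{k_0\pt}\hat S[0]=\hat S[k_0/2]\ne\hat S[0]$. So an arbitrary lift need not land in $\Sym_k$, and comparing permutations of $\Z$ does not suffice.

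What is needed to close the argument is precisely the computation of $\widetilde{\dihedral{}}\cap\ort$ carried out in Lemmas~\ref{L:symtilde:p:eq:1}--\ref{L:symtilde:p:eq:q}: one must show that whenever $\Sph[j]=\Sph[k]$ the ``extra'' element $\ttilde_{2(j-k)\pthat}$ already lies in the orthogonal factor $\ort$ (equivalently, that $j-k$ lies in the image of $\Per(X_\tau)$ under its index-action on bulges), so that the lift can be corrected by an element of $\ker\rho$. Once you invoke those lemmas the transport argument goes through, and in fact the paper records $\Symtilde_k=\rho(\Sym_k)$ as a corollary \emph{after} proving \ref{L:symtilde:app:sph} directly.
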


\begin{proof}
The main point of (i) is to verify that $\Symtilde(X_{\tau})$ maps every approximating sphere to another approximating sphere. 
Transitivity of the action then follows immediately from the definitions \ref{E:app:sph:k:p1} and \ref{E:app:sph:k:pgt1} 
and the structure of $\Symtilde(X_{\tau})$ already established. The proof that $\Symtilde(X_{\tau})$ permutes the
approximating spheres is straightforward using the definitions \ref{E:app:sph:k:p1} and \ref{E:app:sph:k:pgt1}, 
the structure results for $\Symtilde(X_{\tau})$ and the properties of the generators of $\widetilde{\dihedral{}}$ 
described in \ref{P:commute:p:eq:1}, \ref{P:commute:p:neq:q} and \ref{P:commute:p:eq:q} (for the three cases 
$p=1$, $p>1$ and $p \neq q$ and $p>1$ and $p=q$ respectively). For completeness, we give the details below 
since we use these results in the proof of part (ii).

(i) Case $p=1$: Recall from \ref{L:symtilde:p:eq:1} that $\Symtilde(X_{\tau}) = \widetilde{\dihedral{}} \cdot \orth{n-1}$ 
where $\widetilde{\dihedral{}} = \langle \tbartilde, \ttilde_{2\pthat} \rangle$.
First we show that $\MMM \,\Sph[k] = \Sph[k]$ for any $k \in \Z$ and any $\MMM \in \orth{n-1}$.
Clearly, $\MMM \in \orth{n-1} \subset \orth{n}$ preserves the standard special Legendrian sphere $\Sph^{n-1}\subset \R^{n}\subset \C^{n}$ and the vector $e_{1}\in \Sph^{n-1}$ and hence the approximating  marked sphere
$\Sph[0]$. Hence  using definition \ref{E:app:sph:k:p1} and \ref{P:commute:p:eq:1}.ii
we have
\begin{equation}
\addtocounter{theorem}{1}
\label{E:app:sph:orth:p1}
\MMM\, \Sph[k]  = \MMM \circ \ttilde_{2k\pthat}\, \Sph[0] = \ttilde_{2k\pthat} \circ \MMM \,\Sph[0] = \ttilde_{2k\pthat}\, \Sph[0] = \Sph[k], \quad \forall k \in \Z, \ \MMM \in \orth{n-1}.
\end{equation}
Using the structure of  $\Symtilde(X_{\tau})$ recalled above it suffices to show that $\tbartilde$ and 
$\ttilde_{2\pthat}$, the generators of $\widetilde{\dihedral{}}$, permute the approximating spheres.
From the definition of $\Sph[k]$  we have 
\begin{equation}
\addtocounter{theorem}{1}
\label{E:app:sph:act:per:p1}
\ttilde_{2\pthat}\, \Sph[k] = \ttilde_{2\pthat} \circ \ttilde_{2k\pthat}\, \Sph[0] = \Sph[k+1] 
\quad \text{for any $k \in \Z$.}
\end{equation}
From the definition of $\tbartilde$ (in \ref{E:tbartilde:p:eq:1}) we see immediately that it leaves the
approximating marked sphere $\Sph[0]$ invariant (it exchanges the two points in the marked set $\pm e_{1}$). 
Hence we have 
\begin{equation}
\addtocounter{theorem}{1}
\label{E:app:sph:tb:p1}
\tbartilde \, \Sph[k] = \tbartilde \circ \ttilde_{2k\pthat} \, \Sph[0] = \ttilde_{-2k\pthat} \circ \tbartilde \, \Sph[0] = 
\ttilde_{-2k\pthat} \, \Sph[0] = \Sph[-k], \quad \text{ for any $k\in \Z$},
\end{equation}
where we have used \ref{P:commute:p:eq:1}(i) to obtain the second equality.

Case $p>1$ and $p \neq q$: Recall from \ref{L:symtilde:p:neq:q} that $\Symtilde(X_{\tau}) = \widetilde{\dihedral{}} \cdot \orth{p} \times \orth{q}$ where $\widetilde{\dihedral{}} = \langle \tbartildeplus, \tbartildeminus \rangle$ (defined in 
\ref{E:tbartildeplus} and \ref{E:tbartildeminus} respectively).
Clearly, $\orth{p} \times \orth{q} \subset \orth{p+q}$ preserves the standard $(p,q)$-marked sphere and hence 
the approximating  marked sphere $\Sph[0]$. 
Using definition \ref{E:app:sph:act:per:pgt1} and \ref{P:commute:p:neq:q}.iii 
it follows that any $\MMM \in \orth{p} \times \orth{q}$ leaves every approximating sphere  invariant. 
As in the case $p=1$ it suffices now to exhibit the action of the generators 
$\tbartildeplus$ and $\tbartildeminus$ of $\widetilde{\dihedral{}}$ on the approximating spheres.

We claim that for any $k\in \Z$ we have
\begin{subequations}
\addtocounter{theorem}{1}
\label{E:app:sph:act:pneq}
\begin{align}
\label{E:app:sph:tbp}
\tbartildeplus \, \Sph[k] &= \Sph[1-k], \\
\label{E:app:sph:tbm}
\tbartildeminus \, \Sph[k] &= \Sph[-1-k].
\end{align}
\end{subequations}

To prove \ref{E:app:sph:tbp} we consider the cases $k$ even and odd separately. 
For even $k=2l$  using the definition of $\Sph[k]$ given in \ref{E:app:sph:k:pgt1} we have
$$\tbartildeplus \, \Sph[2l]= \tbartildeplus \circ \ttilde_{2l\pthat}\, \Sph[0] = \ttilde_{-2l\pthat} \circ \tbartildeplus\, \Sph[0] = \ttilde_{-2l\pthat} \, \Sph[1] = \Sph[1-2l],$$
where we used \ref{P:commute:p:neq:q}(i) in the second equality.
In a similar way for odd $k=2l+1$ we obtain
$$ \tbartildeplus \, \Sph[2l+1] = \tbartildeplus \circ \ttilde_{2l\pthat} \, \Sph[1] = \ttilde_{-2l\pthat} \circ \tbartildeplus
\, \Sph[1] = \ttilde_{-2l\pthat}\, \Sph[0] = \Sph[-2l],$$
where we used \ref{P:commute:p:neq:q}(i) in the second equality and the fact that $\tbartildeplus \, \Sph[1] = \Sph[0]$ 
(because by definition $\Sph[1] = \tbartildeplus \Sph[0]$ and $\tbartildeplus$ is an involution).
Using the fact that by \ref{P:commute:p:neq:q}(i) $\tbartildeminus \circ \ttilde_{x} = \ttilde_{-x} \circ \tbartildeminus$ 
the proof of \ref{E:app:sph:tbm} is almost identical to the proof of \ref{E:app:sph:tbp} given above.
\begin{remark}
\addtocounter{equation}{1}
\label{R:app:sph:per:pgt1}
It follows immediately from \ref{E:app:sph:act:pneq} and \ref{P:commute:p:neq:q}(ii) that 
\begin{equation}
\addtocounter{theorem}{1}
\label{E:app:sph:act:per:pgt1}
\ttilde_{2\pthat}\, \Sph[k] = \tbartildeplus \circ \tbartildeminus \, \Sph[k] = \Sph[k+2] \quad \text{for any $k\in \Z$},
\end{equation}
i.e. the rotational period $\ttilde_{2\pthat}$ maps $\Sph[k] \mapsto \Sph[k+2]$
unlike the case $p=1$ (recall \ref{E:app:sph:act:per:p1}) where it maps $\Sph[k] \mapsto \Sph[k+1]$. This reflects a fundamental 
difference in the geometry of $X_{\tau}$ in the cases $p=1$ and $p>1$.
\end{remark}

Case $p>1$ and $p=q$: recall from \ref{L:symtilde:p:eq:q} that $\Symtilde(X_{\tau}) = \widetilde{\dihedral{}} \cdot \orth{p} \times \orth{p}$ where $\widetilde{\dihedral{}} = \langle \tbartildeplus, \tbartilde \rangle$ with 
$\tbartilde$ as defined in \ref{E:tbartilde:p:eq:q}. Given the results of the previous part it suffices to exhibit the action of 
$\tbartilde$ on the approximating spheres. 
We want to prove that
\begin{equation}
\addtocounter{theorem}{1}
\label{E:app:sph:tb:peq}
\tbartilde \, \Sph[k] = \Sph[-k] \quad \text{ for any $k\in \Z$}.
\end{equation}
It follows immediately from \ref{E:tbartilde:p:eq:q} that $\tbartilde$ 
preserves the standard $(p,p)$-marked sphere (but exchanges the two components of the marked set) 
and therefore the approximating marked sphere $\Sph[0]$. 
Using the invariance of $\Sph[0]$ under  $\tbartilde$ we see that for even  $k=2l$  then by \ref{E:app:sph:k:pgt1} we have 
$$ \tbartilde \, \Sph[2l] = \tbartilde \circ \ttilde_{2l\pthat} \, \Sph[0] = \ttilde_{-2l\pthat} \circ \tbartilde \, \Sph[0] = 
\ttilde_{-2l\pthat}\, \Sph[0] = \Sph[-2l],$$
where we used \ref{P:commute:p:eq:q}(i) to obtain the second equality.
For $k=1$ we have
$$ \tbartilde \, \Sph[1] = \tbartilde \circ \tbartildeplus \, \Sph[0] = \tbartildeminus \circ \tbartilde \, \Sph[0] = \tbartildeminus \Sph[0] = \ttilde_{-2\pthat} \circ \tbartildeplus \, \Sph[0] = \Sph[-1],$$
where we have used \ref{P:commute:p:eq:q}(i) and \ref{P:commute:p:neq:q}(ii) to obtain the second and fourth inequalities
respectively. From this we see more generally that for odd $k=2l+1$ 
$$ \tbartilde\, \Sph[2l+1] = \tbartilde \circ \ttilde_{2l\pthat} \,\Sph[1] = \ttilde_{-2l\pthat} \circ \tbartilde \,\Sph[1] = 
\ttilde_{-2l\pthat} \circ \Sph[-1] = \Sph[-1-2l].$$

(ii) Case $p=1$: any element in $\Symtilde(X_{\tau})$ can be written in the form 
$ \ttilde_{2\pt}^{i} \circ \tbartilde^{j} \circ \MMM$ for some $i\in \Z$, $j\in \Z_{2}$ and $\MMM \in \orth{n-1}$.
Using \ref{E:app:sph:orth:p1}, \ref{E:app:sph:act:per:p1} and \ref{E:app:sph:tb:p1} we have 
$$
\ttilde_{2\pthat}^{i} \circ \tbartilde\phantom{}^{j} \circ \MMM \, \Sph[k] = \Sph[i+(-1)^{j}k] \quad \text{for any $k\in \Z$}.
$$
It follows that for any fixed $k\in \Z$, $\Symtilde_{k}(X_{\tau}) =\langle \tbartilde_{2k\pthat} \rangle  \cdot\orth{n-1}$ as claimed.

Case $p>1$ and $p \neq q$: any element in $\Symtilde(X_{\tau})$ can be written in the form
$ \ttilde_{2\pt}^{i} \circ \tbartildeplus^{j} \circ \MMM$ for some $i\in \Z$,  $j \in \Z_{2}$ and 
$\MMM \in \orth{p} \times \orth{q}$.
Using \ref{E:app:sph:tbp} and \ref{E:app:sph:act:per:pgt1} we have
$$
\ttilde_{2\pthat}^{i} \circ \tbartildeplus\phantom{}^{j} \circ \MMM \, \Sph[k] = 
\begin{cases}
\Sph[k+2i] \quad & \text{if $j=0$};\\
\Sph[1-k+2i] \quad & \text{if $j=1$}.
\end{cases}
$$
It follows that for any $k\in \Z$, $\Symtilde_{k}(X_{\tau}) = \orth{p} \times \orth{q}$ as claimed.

Case $p>1$ and $p=q$: 
Since any element $\mtilde \in \Symtilde(X_{\tau})$ acts on the approximating spheres it therefore defines a map 
$\widetilde{m}: \Z \ra \Z$ (in fact $\widetilde{m} \in \Isom{\Z}$) by 
$$ \mtilde\, \Sph[k] = \Sph[\widetilde{m}(k)].$$
We say that $\mtilde \in \Symtilde(X_{\tau})$ is a \emph{parity preserving} symmetry if 
$\widetilde{m}(k) -k \equiv 0 \mod{2}$  for all $k\in \Z$ and a \emph{parity reversing} symmetry 
if $\widetilde{m}(k)-k \equiv 1 \mod{2}$ for all $k\in \Z$.
Since any element  $\MMM \in \orth{p} \times \orth{p}$ leaves each $\Sph[k]$ invariant it is 
parity preserving, and from \ref{E:app:sph:tb:peq} and \ref{E:app:sph:tbp} respectively we see 
that $\tbartilde$ is parity preserving while 
$\tbartildeplus$ is parity reversing. The composition of parity preserving and reversing symmetries satisfies the obvious 
properties: the composition of two symmetries of the same parity is parity preserving, while 
composition of two symmetries of the opposite parity is parity reversing.
Since $\Symtilde(X_{\tau}) = \widetilde{\dihedral{}} \cdot \orth{p} \times \orth{p}$ where 
$\widetilde{\dihedral{}} = \langle \tbartildeplus, \tbartilde \rangle$ it follows that any $\mtilde \in \Symtilde(X_{\tau})$
has a definite parity. 
The set of all parity preserving symmetries in $\Symtilde(X_{\tau})$ forms a subgroup $\Symtilde_{+}(X_{\tau})$.
Clearly, if $\mtilde \in \Symtilde_{k}(X_{\tau})$ for some $k\in \Z$ then $\mtilde \in \Symtilde_{+}(X_{\tau})$.
One can verify that the subgroup $\Symtilde_{+}(X_{\tau})$ 
coincides with the subgroup of holomorphic isometries of $\Symtilde(X_{\tau})$ and 
hence by \ref{L:symtilde:p:eq:q} 
any element in $\Symtilde_{+}(X_{\tau})$ can be written in the form 
$ \ttilde_{2\pthat}^{i} \circ \tbartilde\phantom{}^{j} \circ \MMM$ for some $i\in \Z$, $j \in \Z_{2}$ and $\MMM \in \orth{p} \times 
\orth{p}$. Using \ref{E:app:sph:act:per:pgt1} and \ref{E:app:sph:tb:peq} we have 
$$\ttilde_{2\pthat}^{i} \circ \tbartilde\phantom{}^{j} \circ \MMM \, \Sph[k] = \Sph[2i + (-1)^{j}k] \quad \text{for all $k \in \Z$}.$$
It follows that for any fixed $k \in \Z$, $\Symtilde_{k}(X_{\tau}) = \langle \tbartilde_{k\pthat} \rangle \cdot \orth{p} 
\times \orth{p}$ as claimed.
\end{proof}

\noindent
It follows from \ref{L:sym:k} and \ref{L:symtilde:app:sph} together with \ref{P:xtau:sym:p:eq:1}, \ref{P:xtau:sym:p:neq:q}  and \ref{P:xtau:sym:p:eq:q} that $\Symtilde_{k}(X_{\tau}) = \rho(\Sym_{k}(X_{\tau}))$ 
where $\rho:  \Sym(X_{\tau}) \ra \Symtilde(X_{\tau})$ is the homomorphism defined in \ref{L:sym:subgroups} 
and $\Sym_{k}(X_{\tau})$ is defined in \ref{D:sym:k}.

\subsection*{Repositioning $X_{\tau}$ and marked spheres}
Recall from Appendix \ref{A:isom:sl} the groups $\Isomsl$ and $\Isomslpm$ defined in \ref{D:isomsl}. 
Fix $0<\abs{\tau }<\taumax$ and let $X_{\tau}$ be the corresponding $\sorth{p} \times \sorth{q}$-invariant 
special Legendrian immersion. We can use elements of $\Isomslpm$ to reposition the image 
$X_{\tau} (\cylpq) \subset \Sph^{2(p+q)-1}$ to another subset of $\Sph^{2(p+q)-1}$ that is also 
special Legendrian (with the correct choice of orientation). 
As we reposition $X_{\tau}$ in this way, the approximating spheres of $X_{\tau}$ are also repositioned. 
In our gluing constructions we will ``fuse'' a finite number of repositioned copies of $X_{\tau}$ 
at one shared approximating sphere. To achieve this 
we need to be able to reposition $X_{\tau}$ keeping some particular approximating 
sphere fixed, but changing its marking. 

 To this end we study the action of $\Isomslpm$ on  $(p,q)$-marked approximating spheres. 
\begin{definition}
\addtocounter{equation}{1}
\label{D:stab}
Let $\Sph$ be a $(p,q)$-marked special Legendrian sphere. We define 
$$\Stab(\Sph):= \{ A \in \Isomslpm \,|\, A\, \Sph = \Sph\},$$
where $A\,\Sph = \Sph$ means  $A\,\Sph$ and $\Sph$ are equal as $(p,q)$-marked spheres.
\end{definition}

\begin{lemma}[Action of $\Isomslpm$ on $(p,q)$-marked spheres]
\addtocounter{equation}{1}
\label{L:repos}
\hfill
\begin{itemize}
\item[(i)]
For the standard $(p,q)$-marked special Legendrian sphere $\Sph$ (defined following \ref{D:marked:sphere}) 
we have  
\begin{equation}
\addtocounter{theorem}{1}
\label{E:stab:marked}
\Stab(\Sph)
= 
\begin{cases}
\orth{1} \times \orth{n-1} \cdot \langle \ccong \rangle \cong \orth{1} \times \orth{n-1} \times \Z_{2}
\quad & \quad \text{if $(p,q)=(1,n-1)$;}\\
\orth{p} \times \orth{q} \cdot \langle \ccong \rangle \cong \orth{p} \times \orth{q} \times \Z_{2}
\quad & \quad \text{if $p>1$ and $p\neq q$};\\
(\orth{p} \times \orth{p} \cdot \langle \tbartilde \rangle) \cdot \langle \ccong \rangle 
\cong (\orth{p} \times \orth{p} \rtimes \Z_{2}) \times \Z_{2}
\quad & \quad \text{if $p>1$ and $p=q$}
\end{cases}
\end{equation}
where $\ccong \in \orth{2n}$ is defined in \ref{E:ctilde} and $\tbartilde \in \orth{2p}$ is the restriction 
to  $\R^{2p}\subset \C^{2p}$ of the $\orth{4p}$ transformation defined in \ref{E:tbartilde:p:eq:q}. 
\item[(ii)]
For any $(p,q)$-marked special Legendrian sphere $\Sph$, $\Stab(\Sph)$ is conjugate to the group given in 
\ref{E:stab:marked}.
\item[(iii)]
The set of all $(p,q)$-markings of the  standard (unmarked) special Legendrian sphere $\Sph$
is parametrised by the homogeneous space 
$$\orth{n} \cdot \langle \ccong \rangle /\Stab(\Sph).$$
\end{itemize}
\end{lemma}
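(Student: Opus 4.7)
The plan is to reduce everything to a computation of the subgroup of $\Isomslpm$ that preserves the $n$-plane $\R^n \subset \C^n$ (equivalently, the unmarked equatorial sphere $\Sph^{n-1} \subset \R^n \subset \C^n$), and then to impose the extra marking constraints.

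First I would establish the following preliminary characterisation: an element $A \in \Isomslpm$ satisfies $A(\R^n)=\R^n$ if and only if $A$ is block-diagonal with respect to the splitting $\C^n = \R^n \oplus i\R^n$ and of the form $A = A_1 \oplus \epsilon A_1$ for some $A_1 \in \orth{n}$ and $\epsilon \in \{\pm 1\}$. The block-diagonal form follows directly from preservation of $\R^n$ (hence of its orthogonal complement $i\R^n$) inside $\C^n$. The constraint that the two blocks are $\pm$-coupled comes from the condition $A^*\omega = \pm \omega$ characterising $\Isomslpm$, which relates the action on $\R^n$ to the action on $i\R^n$; this is precisely the content of the structural description of $\Isomslpm$ recalled in Appendix \ref{A:isom:sl}. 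Since $\ccong$ acts as $+1$ on $\R^n$ and $-1$ on $i\R^n$, the case $\epsilon = +1$ gives the real orthogonal subgroup $\orth{n}\subset \orth{2n}$ acting diagonally, while the case $\epsilon = -1$ gives $\orth{n}\cdot \ccong$. Thus the subgroup of $\Isomslpm$ preserving $\Sph^{n-1}$ is $\orth{n}\cdot \langle \ccong \rangle$, and since $\ccong$ commutes with the diagonal embedding of $\orth{n}$ and is not contained in it, this is isomorphic to $\orth{n}\times \Z_2$.

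Part (i) then follows by intersecting this subgroup with the stabiliser of the marking in each of the three cases. For $(p,q)=(1,n-1)$, the elements of $\orth{n}$ preserving the set $\{\pm e_1\}$ form $\orth{1}\times \orth{n-1}$, where $\orth{1}=\{\pm 1\}$ acts on $\langle e_1\rangle$ and $\orth{n-1}$ acts on $e_1^\perp$; $\ccong$ commutes with all of these since they are real-linear, yielding the claimed direct product structure. For $p>1$ and $p\neq q$, the elements of $\orth{n}$ preserving the pair $(\R^p,\R^q)\subset \R^n$ form $\orth{p}\times \orth{q}$; no swap is possible because the two factors have different dimensions. For $p>1$ and $p=q$, one additionally obtains the involution $\tbartilde$ exchanging the two $\R^p$ factors, and $\tbartilde$ normalises but does not centralise $\orth{p}\times\orth{p}$, giving the semidirect product $\orth{p}\times\orth{p} \rtimes \langle \tbartilde\rangle$; once again, $\ccong$ is central and not contained in this subgroup, contributing the outer $\Z_2$ factor.

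For part (ii), I would argue that $\Isomslpm$ acts transitively on the set of all $(p,q)$-marked special Legendrian spheres. Transitivity on the underlying unmarked SL spheres follows from the homogeneity of the special Lagrangian Grassmannian under $\sunit{n}\subset \Isomslpm$ (and its $\pm$-variant), and within the stabiliser of a fixed SL $n$-plane $\Pi_{p+q}$ the group $\orth{n}$ acts transitively on real $p$-planes $\Pi_p\subset \Pi_{p+q}$. Hence any two $(p,q)$-markings of any two SL spheres are related by some $A\in \Isomslpm$, and $\Stab(A\,\Sph)=A\,\Stab(\Sph)\,A^{-1}$, establishing the conjugacy claim.

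For part (iii), I would apply the orbit-stabiliser theorem directly: the action of the subgroup $\orth{n}\cdot \langle \ccong \rangle$ (the full stabiliser of the unmarked sphere, by the preliminary result above) on the set of $(p,q)$-markings of the fixed standard sphere is transitive by the same homogeneity argument from part (ii) restricted to markings of a fixed $\Sph$, and the stabiliser of the standard marking is exactly $\Stab(\Sph)$ from part (i). The main obstacle is the preliminary step characterising block-diagonal elements of $\Isomslpm$; the rest is routine group theory once this is in hand.
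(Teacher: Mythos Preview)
Your proposal is correct and takes essentially the same approach as the paper: both first identify the stabiliser in $\Isomslpm$ of the unmarked sphere $\Sph^{n-1}$ as $\orth{n}\cdot\langle\ccong\rangle$ via the structure results of Appendix~\ref{A:isom:sl}, and then cut down by the marking inside $\orth{n}$ in the three cases, with (ii) and (iii) following from transitivity and orbit--stabiliser. One minor wording point: the condition $A^*\omega=\pm\omega$ characterises $\Isoml$ rather than $\Isomslpm$, but you only use the forward implication (valid by Corollary~\ref{C:sl:isom}) and then invoke the structural description for the converse, so your argument is unaffected.
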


\begin{proof}
(i) Since $n \ge 3$, from \ref{L:isom:sl}.ii we have $\Isomslpm = \sunit{n}^{\pm} \cdot \langle \ccong \rangle$ 
where 
$$\sunit{n}^{\pm} = \{ U \in \unit{n}\, | \, \det{}_{\C}U = \pm 1\}.$$
Clearly, the subgroup of $\sunit{n}^{\pm}$ which leaves $\R^{n}\subset \C^{n}$ invariant is $\orth{n}$ 
and from its definition $\ccong$ also leaves $\R^{n}$ invariant. 
Hence the subgroup of $\Isomslpm$ that sends the (unmarked) standard special Legendrian sphere $\Sph^{n-1}$ 
to itself is $\orth{n} \cdot \langle \ccong \rangle \cong \orth{n} \times \Z_{2}$ (since $\ccong$ centralises 
$\orth{n}$ we have a direct rather than semidirect product structure).
Hence for any $(p,q)$, the group $\Stab(\Sph)$ for the standard marked $(p,q)$-sphere $\Sph$ is a subgroup of 
$\orth{n} \cdot \langle \ccong \rangle \subset \Isomslpm$. 

For any $(p,q)$ $\ccong$ fixes the standard $(p,q)$-marked SL sphere $\Sph$ 
(since it fixes $\R^{n}$ pointwise). 
For $p=1$ the subgroup of $\orth{n}$ fixing $e_{1}\in \Sph^{n-1}$ is clearly $\orth{1} \times \orth{n-1}$. 
Similarly, for $p>1$ and $p \neq q$ the subgroup of $\orth{n}$ fixing 
$(\R^{p}\times \{0\} \cup \{0\} \times \R^{q}) \subset \R^{p} \times \R^{q}$
is clearly $\orth{p} \times \orth{q}$. 
If $p>1$ and $p=q$ then the subgroup of $\orth{n}$ fixing 
$(\R^{p}\times \{0\} \cup \{0\} \times \R^{p}) \subset \R^{p} \times \R^{p}$
is the semidirect product group $\orth{p} \times \orth{p} \cdot \langle \tbartilde \rangle$. 
This proves \ref{E:stab:marked}. 
(ii) follows from the fact that $\Isomslpm$ acts transitively on marked $(p,q)$-spheres. 
(iii) follows immediately.
\end{proof}

One should compare the results of Lemmas \ref{L:symtilde:app:sph} and \ref{L:repos}. 
Specialising to $k=0$ in \ref{E:symtilde:k} and comparing with \ref{E:stab:marked} 
we see that for any admissible $(p,q)$  we have 
$$\Stab(\Sph[0]) = \Symtilde_{0}(X_{\tau}) \cdot \langle \ccong \rangle,$$ 
and moreover, we have
$$
\frac{\Stab(\Sph[0])}{\Symtilde_{0}(X_{\tau})} 
\cong 
\begin{cases}
\langle \ccong \rangle \cong \Z_{2}\quad & \text{if $p>1$;}\\
\langle \RRR_{1} \rangle \cdot \langle\ccong \rangle / \langle \RRR_{1}\cdot \ccong \rangle \cong 
(\Z_{2} \times \Z_{2})/\Z_{2}
\quad & \text{if $p=1$;}
\end{cases}
$$
where $\RRR_{1} \in \orth{n} \subset \unit{n}$ denotes reflection in the complex hyperplane $z_{1}=0$.

\subsection*{The limiting geometry of $X_{\tau}$ as $\tau \ra 0$}
\phantom{ab}
This section describes the geometry of $X_{\tau}$ as $\tau \ra 0$ concentrating on the almost spherical 
regions of $X_{\tau}$ that asymptotically resemble equatorial spheres and on the necks which 
asymptotically resemble small Lagrangian catenoids or the product of a unit sphere with a small Lagrangian catenoid. 
The fact that $X_{\tau}$ degenerates to a union of very simple geometric objects is fundamental 
to our gluing constructions in \cite{haskins:kapouleas:invent,haskins:kapouleas:survey,haskins:kapouleas:hd2,haskins:kapouleas:hd3}.

\subsection*{Almost spherical regions of $X_{\tau}$ and approximating spheres}
Recall that by \ref{P:w:tau}  $\bw_\tau$ depends analytically on $\tau \in (-\taumax,\taumax)$
and the image of $\bw_{0}$ is contained in $\Sph^{1} \subset \R^2 \subset \C^2$.
This implies that $X_\tau$ depends analytically on $\tau$ and that  
$X_0$ gives a parametrisation of $\Sph[0] \setminus \M[0]$ 
where $\Sph[0]$ denotes the standard $(p,q)$-marked special Legendrian sphere
(recall \ref{D:approx:sphere}) and $\M[0]$ is its marked set  
(two orthogonal equators of dimension $p-1$ and $q-1$ if $p>1$ 
or two antipodal points if $p=1$).

Because of the analytic dependence on $\tau$,  $X_{\tau}$ smoothly converges to $X_{0}$ as $\tau \ra 0$ 
on any compact subset $K \subset \subset \cylpq$. 
Define 
$$S[0]:=[-b,b] \times \merpq,$$ then by choosing 
$b\in \R^{+}$ sufficiently large we can ensure the image $X_{0}(S[0])$ contains any given compact 
subset of $\Sph[0]\setminus \M[0]$.
If $|\tau|$ is small enough in terms of $b$,
then $\bw_\tau$ and therefore $X_\tau$ satisfy
\addtocounter{theorem}{1}
\begin{equation}
\label{E:bw:Xtau}
\|\bw_\tau-\bw_0:C^k([-b,b])\|\le C(b,k)|\tau|,
\qquad
\|X_\tau-X_0:C^k(S[0])\|\le C(b,k)|\tau|,
\end{equation}
where we can use the standard metric of the cylinder to define the $C^k$ norm and the constant
$C(b,k)$ depends only on $b$ and $k$.
This motivates us to call $S[0]$ an \emph{almost spherical region} of $X_{\tau}$. 
Note that the definition of $S[0]$ depends on a choice of $b$ which we do not make precise here, 
but which is supposed to be chosen large enough as needed.  The freedom to choose an appropriate $b$ 
to define the almost spherical regions is needed in our gluing constructions \cite{haskins:kapouleas:invent,haskins:kapouleas:survey,haskins:kapouleas:hd2,haskins:kapouleas:hd3}. 
\ref{E:bw:Xtau} implies that for $\tau$ sufficiently small the image under $X_{\tau}$ of 
the almost spherical region $S[0] \subset \subset \hat{S}[0]$ is close to its approximating sphere $\Sph[0]$ 
and converges as $\tau \ra 0$ to a fixed compact subset of  $\Sph[0]\setminus \M[0]$ (depending on the choice of $b$).
This explains the origin of the terminology approximating sphere and also one of the roles played by the marked set.


If we have fixed the almost spherical region $S[0]$ as above, then we can mimic the definition 
of $\hat{S}[k]$ in terms of $\hat{S}[0]$ (recall \ref{E:bulge:k:p:eq:1}, \ref{E:bulge:k:p:neq:1}) to define the \emph{$k$th almost spherical region} $S[k] \subset \subset \hat{S}[k] \subset \cylpq$ of $X_{\tau}$ in terms of $S[0]$ by 
$$
S[k]:=
\begin{cases}
\TTT_{2k\pt}S[0] \quad & \text{if $p=1$;}\\
\TTT_{2l\pt}S[0] \quad & \text{if $p>1$ and $k=2l$;}\\
\TTT_{2l\pt} \circ \tbar_{\pt^{+}}S[0] \quad & \text{if $p>1$ and $k=2l+1$.}
\end{cases}
$$
Because $\pt \ra \infty$ when $p=1$ and $\pt^{+},  \pt^{-} \ra \infty$ as $\tau \ra 0$ 
(see \ref{P:asymptotics-ptau} for a precise statement), every almost spherical region $S[k]$ with $k\neq 0$ 
``slides off the end'' of $\cylpq$ as $\tau \ra 0$. 
However, by using an element of $\Isom(\cylpq,g_{\tau})$ to bring $S[k]$ back to $S[0]$ we can also infer 
the small $\tau$ behaviour of $X_{\tau}$ on the other almost spherical regions $S[k]$. 
Using the relevant symmetries of $X_{\tau}$ (from \ref{P:xtau:sym:p:eq:1} and \ref{P:xtau:sym:p:neq:q})
we see that on the $k$th almost spherical region $S[k] \subset \subset \hat{S}[k]$,   
$X_{\tau}$ satisfies the analogue of \ref{E:bw:Xtau} with $X_{0}$ replaced by the embedding 
$X[k]: \cylpq \ra \Sph^{2(p+q)-1}$ defined as
$$ 
X[k] :=  
\begin{cases}
\ttilde_{2k\pthat} \circ X_{0} \circ \TTT_{-2k\pt} \quad & \text{if $p=1$;}\\
\ttilde_{2l\pthat} \circ X_{0} \circ \TTT_{-2l\pt} \quad & \text{if $p>1$ and $k=2l$;}\\
\ttilde_{2l\pthat}\circ \tbartildeplus \circ X_{0} \circ \tbar_{\pt^{+}} \circ \TTT_{-2l\pt} \quad & \text{if $p>1$ and $k=2l+1$.}
\end{cases}
$$
For $k\neq 0$ $X[k]$ itself depends on $\tau$. 
The image of $\cylpq$ under $X[k]$ is $\Sph[k]\setminus \M[k]$, 
where $\Sph[k]$ denotes the $k$th $(p,q)$-marked approximating sphere and $\M[k]$ is its marked set, 
both of which also depend on $\tau$ for $k \neq 0$. 
Nevertheless, we have that for $\tau$ sufficiently small the image of the almost spherical region $S[k]\subset \subset \hat{S}[k]$ under $X_{\tau}$ is close 
to its approximating sphere $\Sph[k]$. 

Each almost spherical region $S[k]$ of $\cylpq$ connects to its neighbouring almost spherical regions $S[k-1]$ and $S[k+1]$ in the 
two adjacent bulges $\hat{S}[k-1]$ and $\hat{S}[k+1]$ 
via a pair of transition regions whose images under $X_{\tau}$ for $\tau$ sufficiently small 
localise near the two components of the marked set $\M[k]$ of the $k$th approximating marked sphere $\Sph[k]$. 
This pair of transition regions is centred on the two boundary waists $W[k]$ and $W[k+1]$ of $\hat{S}[k]$ (recall \ref{D:bulges}). 
In the next section we study the geometry of $X_{\tau}$ in the vicinity of the waists as $\tau \ra 0$.

\subsection*{Waists, necks and Lagrangian catenoids}
Recall (\ref{D:waist}) 
that a waist $W[k]$ is a meridian of $\cylpq$ on which the radius of one spherical factor of the meridian is minimal. 
The vicinity of any meridian we call a \emph{neck}. The necks are the regions of $(\cylpq,g_{\tau})$ where the magnitude of the curvature is largest and where as $\tau \ra 0$ the curvature becomes unbounded. We will show below that 
for $\tau$ sufficiently small any neck of $X_{\tau}$---appropriately scaled and repositioned---is a small perturbation
of a (truncated) Lagrangian catenoid (recall \ref{D:lag:cat:std}) in the case $p=1$,
or  the product of a large round sphere with a Lagrangian catenoid in the case $p>1$.
This is reminiscent of the Delaunay surfaces \cite[Lemma A.2.1]{kapouleas:annals}
whose highly curved regions approximate a scaled repositioned catenoid.

We first study in detail the case $p=1$.
Recall that for $p=1$ all waists are isometric and $\Isom(\cylpq,g_{\tau})$ acts transitively on the waists. 
Hence without loss of generality we can concentrate on the neck containing the first waist $W[1]:=\{\pt\}\times \Sph^{n-2}$.
We first magnify the immersion $X_{\tau}$ while simultaneously repositioning and scaling the $t$ variable,
by taking
\addtocounter{theorem}{1}
\begin{equation}
\label{E:Xtilde:p1}
\widetilde{X}_\tau(\tilde{t},\sigma)=
\frac1\beta\, W\, X_\tau(\beta^{3-n} \, \tilde{t}+\pt,\sigma),
\end{equation}
where $W\in \unit{n}$ and $\beta>0$ are defined by
\addtocounter{theorem}{1}
\begin{equation}
\label{E:bW:p1}
\beta:= |w_2(\pt)|=\sqrt{\ymin},
\qquad
W = \left(
\begin{matrix}
\frac{|w_1(\pt)|}{w_1(\pt)}& 0  \\
0 & e^{i\pi/2(n-1)}\, \frac{|w_2(\pt)|}{w_2(\pt)}\Id_{n-1} \\
\end{matrix}
\right).
\end{equation}
Geometrically, $\beta$ is the radius of the sphere $\Sph^{n-2}$ on the waist.
Note that using \ref{E:ydot:cx} and that $\dot{y}(\pt)=0$ we have
$\det W=-1$.
We can write (recall \ref{D:X:tau})
\addtocounter{theorem}{1}
\begin{equation}
\label{E:Xtilde:z1z2:p1}
\widetilde{X}_\tau(\tilde{t},\sigma)  =
\left(z_1(\,\tilde{t}\,) + \frac{\sqrt{1-\beta^2}}{\beta},
\  z_2(\,\tilde{t}\,) \cdot \sigma\right),
\end{equation}
where
\addtocounter{theorem}{1}
\begin{equation}
\label{E:z1z2:p1}
z_1(\,\tilde{t}\,)=
\frac{\sqrt{1-\beta^2}}{\beta}
\,\left(\frac{w_1(\beta^{3-n} \, \tilde{t}+\pt)} {w_1(\pt)} -1\right),
\qquad
z_2(\,\tilde{t}\,)= 
e^{i\pi/2(n-1)}\,
\frac{w_2(\beta^{3-n} \, \tilde{t}+\pt)} {w_2(\pt)}.
\end{equation}
In terms of the new coordinates $z_{1}$, $z_{2}$ and the rescaled time parameter $\tilde{t}$, 
\ref{E:odes:p:n} is equivalent to 
\addtocounter{theorem}{1}
\begin{equation}
\label{E:odes:z1z2:p1}
\begin{aligned}
 \frac{dz_1}{d\tilde{t}} & = 
\beta\,\overline{z}_2^{n-1},
\\
 \frac{dz_2}{d\tilde{t}} & =
(\sqrt{1-\beta^2} + \beta\overline{z}_1)\,\overline{z}_2^{n-2},
\end{aligned}
\quad \text{with initial data $z_1(0)=0, \ \  z_2(0)= e^{i\pi/2(n-1)}$.}
\end{equation}
By standard ODE theory \ref{E:odes:z1z2:p1} has a unique (real analytic) maximal solution for each $\beta\in\R$
which we denote by
$\bz_\beta=(z_{1,\beta},z_{2,\beta})$, and
which depends analytically on $\beta\in (-1,1)$.
When $\beta=0$ the system simplifies: we obtain $z_{1,0}\equiv0$
and $z_{2,0}$ satisfies equation \ref{E:n:twist:sl:ode} with $n$ replaced by $n-1$ and 
with initial condition $z_{2,0}(0)=e^{i\pi/2(n-1)}$. 
Hence (recall \ref{D:lag:cat:std}) $X(\tilde{t},\sigma):=z_{2,0}(\,\tilde{t}\,) \sigma$ 
is the standard embedding of the unit Lagrangian catenoid in $\C^{n-1}$. 
By modifying \ref{E:Xtilde:z1z2:p1} we define a new $\sorth{n-1}$-invariant embedding 
$\widehat{X}_\tau: (-T,T) \times \Sph^{n-2} \ra \C^{n}$ by 
\addtocounter{theorem}{1}
\begin{equation}
\label{E:Xhat:p1}
\widehat{X}_\tau(\tilde{t},\sigma)  =
\left( \frac{\sqrt{1-\beta^2}}{\beta},
  z_{2,0}(\,\tilde{t}\,) \cdot \sigma\right),
\end{equation}
where $2T$ is the lifetime of the standard embedding of the unit Lagrangian catenoid in $\C^{n-1}$; 
as discussed in \ref{D:lag:cat:std} the lifetime $T$ is finite when $n-1>2$ and infinite when $n-1=2$. 
$\widehat{X}_{\tau}$ is independent of $\tau$ except for the translation by $\sqrt{1-\beta^{2}}/\beta$ 
in the first factor and its image is the standard unit $n-1$ dimensional Lagrangian catenoid in 
$\{0\} \times \C^{n-1} \subset \C \times \C^{n-1}$ 
translated in the $x$ direction of the extra $\C$ factor.  
As $\tau \ra 0$ the translation makes the Lagrangian catenoid drift to infinity.

If we take  $b$ large if $n=3$ or $b<T$ but close to $T$ if $n>3$,
and we restrict $|\tilde{t}|\le b$, the image under $\widehat{X}_{\tau}$ is
a truncated Lagrangian catenoid which exhausts the whole Lagrangian catenoid
as $b\to T-$ ($b\to\infty$ when $n=3$).
By the smooth dependence of $\bz_\beta$ on $\beta$,
we conclude that if $\tau$ is small enough depending on $b$,
and $\beta$ is defined as in \ref{E:bW:p1},
we have
\addtocounter{theorem}{1}
\begin{equation}
\label{E:Xtilde:Xhat:p1}
\begin{gathered}
\|\bz_\beta-\bz_0:C^k([-b,b])\|\le C(b,k)\beta,
\\
\|\widetilde{X}_\tau-\widehat{X}_\tau:C^k([-b,b]\times\merone)\|\le C(b,k)\beta,
\end{gathered}
\end{equation}
where we can use the standard metric of the cylinder or alternatively the pullback of the Euclidean metric
by $\widehat{X}_\tau$ to define the $C^k$ norm, and the constant $C(b,k)$ depends only on $b$ and $k$.

Now we study the limiting geometry of the necks when $p>1$. 
Recall that for $p>1$ waists (and hence necks) come in two types: 
type $1$ waists where the radius of the first spherical factor $\Sph^{p-1}$
is minimal and type $2$ waists where the radius of the second spherical factor $\Sph^{q-1}$ is minimal.
We concentrate now on the case of a type 2 neck.
 
Since all type $2$ waists are isometric and $\Isom(\cylpq,g_{\tau})$ acts transitively on them 
we can without loss of generality deal with the waist $W[1] = \{\pt^{+}\}\times \merpq$ (recall \ref{E:waist:k:p:neq:1}).
As in the case $p=1$ we magnify the immersion $X_{\tau}$ while repositioning and scaling the $t$ variable,
by taking
\addtocounter{theorem}{1}
\begin{equation}
\label{E:Xtilde}
\widetilde{X}_\tau(\tilde{t},\sigma_1,\sigma_2)=
\frac1\beta\, W\, X_\tau(\beta^{2-q} \, \tilde{t}+\pt^+,\sigma_1,\sigma_2),
\end{equation}
where $W\in \unit{n}$ and $\beta>0$ are defined by
\addtocounter{theorem}{1}
\begin{equation}
\label{E:bW}
\beta:= |w_2(\pt^+)|=\sqrt{\ymin},
\qquad
W = \left(
\begin{matrix}
\frac{|w_1(\pt^+)|}{w_1(\pt^+)}\Id_{p}& 0  \\
0 & e^{i\pi/2q}\, \frac{|w_2(\pt^+)|}{w_2(\pt^+)}\Id_{q} \\
\end{matrix}
\right).
\end{equation}
Geometrically, $\beta$ is the radius of the second spherical factor $\Sph^{q-1}$ on the waist.
As in the previous case using \ref{E:ydot:cx} and that $\dot{y}(\pt^+)=0$ we have
$\det W=-1$.
We can write (recall \ref{D:X:tau})
\addtocounter{theorem}{1}
\begin{equation}
\label{E:Xtilde:z1z2}
\widetilde{X}_\tau(\tilde{t},\sigma_1, \sigma_2)  =
\left(\left(z_1(\,\tilde{t}\,) + \frac{\sqrt{1-\beta^2}}{\beta}\right)\cdot \sigma_1,
\  z_2(\,\tilde{t}\,) \cdot \sigma_2\right),
\end{equation}
where
\addtocounter{theorem}{1}
\begin{equation}
\label{E:z1z2}
z_1(\,\tilde{t}\,)=
\frac{\sqrt{1-\beta^2}}{\beta}
\,\left(\frac{w_1(\beta^{2-q} \, \tilde{t}+\pt^+)} {w_1(\pt^+)} -1\right),
\qquad
z_2(\,\tilde{t}\,)= 
e^{i\pi/2q}\,
\frac{w_2(\beta^{2-q} \, \tilde{t}+\pt^+)} {w_2(\pt^+)}.
\end{equation}
In terms of the new coordinates $z_{1}$, $z_{2}$ and the rescaled time parameter $\tilde{t}$, 
\ref{E:odes:p:n} is equivalent to 
\addtocounter{theorem}{1}
\begin{equation}
\label{E:odes:z1z2}
\begin{aligned}
 \frac{dz_1}{d\tilde{t}} & = 
\beta\,(\sqrt{1-\beta^2} + \beta\overline{z}_1)^{p-1}\,\overline{z}_2^q,
\\
 \frac{dz_2}{d\tilde{t}} & =
(\sqrt{1-\beta^2} + \beta\overline{z}_1)^{p}\,\overline{z}_2^{q-1},
\end{aligned}
\quad \ \ 
\text{with initial data $z_1(0)=0,\ \ z_2(0)= e^{i\pi/2q}$.}
\end{equation}
As in the case $p=1$ \ref{E:odes:z1z2} has a unique maximal solution $\bz_\beta=(z_{1,\beta},z_{2,\beta})$,
for any $\beta\in\R$ which depends analytically on $\beta \in (-1,1)$. 
When $\beta=0$ \ref{E:odes:z1z2} again simplifies: $z_{1,0}\equiv 0$ and $z_{2,0}$ 
satisfies the equation for the standard embedding of the unit Lagrangian catenoid in $\C^{q}$.
Therefore following \ref{E:Xhat:p1} we define a new $\sorth{p} \times \sorth{q}$-invariant 
embedding $\widehat{X}_{\tau}: (-T,T) \times \merpq \ra \C^{p+q}$
\addtocounter{theorem}{1}
\begin{equation}
\label{E:Xhat}
\widehat{X}_\tau(\tilde{t},\sigma_1,\sigma_2)  =
\left( \frac{\sqrt{1-\beta^2}}{\beta}\cdot\sigma_1,\,
  z_{2,0}(\,\tilde{t}\,) \cdot \sigma_2\right),
\end{equation}
where $2T$ denotes the lifetime of the standard embedding of the unit Lagrangian catenoid in $\C^{q}$
(which as we already discussed is finite if $q>2$ and infinite if $q=2$). 
The image of $\widehat{X}_{\tau}$ is the product of a $p-1$ sphere $r\cdot \Sph^{p-1}$ 
of large radius $r=\sqrt{1-\beta^{2}}/\beta \simeq \beta^{-1}$ with a unit $q$-dimensional standard Lagrangian catenoid.
Arguing as before we conclude
that if $\tau$ is small enough depending on $b$,
and $\beta$ is defined as in \ref{E:bW},
we have
\addtocounter{theorem}{1}
\begin{equation}
\label{E:Xtilde:Xhat}
\begin{gathered}
\|\bz_\beta-\bz_0:C^k([-b,b])\|\le C(b,k)\beta,
\\
\|\widetilde{X}_\tau-\widehat{X}_\tau:C^k([-b,b]\times\merpq)\|\le C(b,k)\beta,
\end{gathered}
\end{equation}
where we use the pullback of the Euclidean metric
by $\widehat{X}_\tau$ to define the $C^k$ norm, the constant $C(b,k)$ depends only on $b$ and $k$,
and $b<T$ where $T=T_{1}$ is the lifetime for the standard unit Lagrangian catenoid in $\C^{q}$ defined in 
 \ref{D:lag:cat:std}. 
Finally the case of any type 1 neck is similar except that $\widehat{X}_\tau$ now becomes an $\sorth{p} \times \sorth{q}$-
invariant embedding
of the product of the  unit standard Lagrangian catenoid in $\C^{p}$ with a large round $q-1$ sphere in $\C^{q}$. 
We omit the details.

\subsection*{Synthesis}
We combine the results from the previous two sections to describe qualitatively the geometry of $X_{\tau}$ for small $\tau$.

We begin our discussion 
with the case $p=1$. In this case each domain of periodicity of $g_{\tau}$ contains a single bulge $\hat{S}[k] \subset \cylpq$. 
Inside each bulge we fixed a compact subset $S[k] \subset \hat{S}[k]$ (depending on the choice of a sufficiently large 
real number $b$) which we called an almost spherical region of $X_{\tau}$. 
As $\tau \ra 0$ the image of the $k$th almost spherical region $S[k]$ under $X_{\tau}$ tends to the complement 
of a small tubular neighbourhood of the marked set $\M[k] \subset \Sph[k]$ inside the $k$th approximating sphere 
$\Sph[k] = \ttilde_{2k\pthat} \Sph[0]$. $S[k]$ connects to its neighbouring almost spherical regions $S[k-1]$ and 
$S[k+1]$ via two small transition regions which are localised near the 
marked set $\M[k] = \pm \ttilde_{2k\pthat}(e_{1})$ of $\Sph[k]$. 
The core of each transition region---the necks---are the vicinity of the two boundary waists $W[k]$ and $W[k+1]$ of $\hat{S}[k]$. 
As $\tau \ra 0$ on each neck $X_{\tau}$ approaches an embedding of the $n-1$ dimensional Lagrangian catenoid
of size $\sqrt{\ymin} \simeq (2\tau)^{1/n-1}$ located close to one of the two points $\pm \ttilde_{2k\pthat}(e_{1})$.

In the limit as $\tau \ra 0$ almost spherical regions tend to (subsets) of the approximating spheres, 
while a transition region connecting neighbouring almost spherical regions tends to a point of intersection of the 
corresponding approximating spheres. It follows from \ref{E:pt:hat:asymp} and \ref{P:asymptotics-pthat} that as $\tau \ra 0$
the rotational period of $X_{\tau}$ satisfies
\begin{equation}
\addtocounter{theorem}{1}
\ttilde_{2\pthat} \ra \left(
\begin{matrix}
-1 & 0  \\
0 & e^{-i\pi/(n-1)}\Id_{n-1} \\
\end{matrix}
\right).
\end{equation}
Hence in the $\tau \ra 0$ limit, the real $n$-planes in $\C^n$ associated to the almost spherical region
$S[0]$ and the almost spherical region $S[1]$ are
$\R \oplus \R^{n-1}$ and $\R \oplus e^{-i\pi/(n-1)} \R^{n-1}$ respectively.
This is consistent with the fact that the Lagrangian catenoid in $\C^{n-1}$ is asymptotic
to the union of two $n-1$ planes (which up to rotation we can take to be) $\R^{n-1}$ and $e^{-i\pi/(n-1)} \R^{n-1}$.
See Figure \ref{fig:as:cylinder} for a schematic illustration of the intrinsic geometry of $X_\tau$ 
in the case $p=1$.
\iffigureson
\begin{figure}
\centering
\input{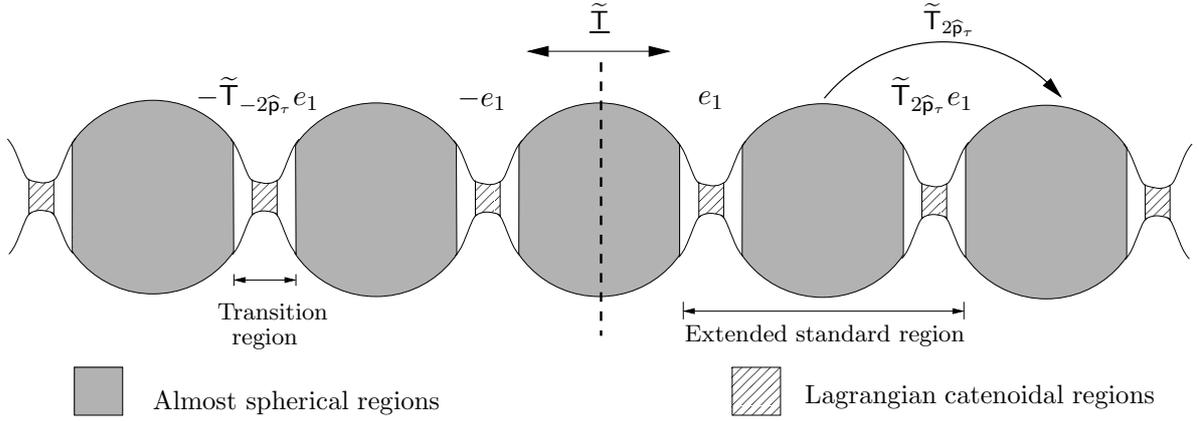}
\caption{Schematic presentation of the intrinsic geometry
of a special Legendrian cylinder $X_\tau$ with small $\tau$ and $p=1$}
\label{fig:as:cylinder}
\end{figure}
\else
\fi

We now discuss the limiting geometry of $X_{\tau}$ when $p>1$. 
In this case each domain of periodicity of $g_{\tau}$ contains not one but two bulges $\hat{S}[k]$ and $\hat{S}[k+1]$. 
For each bulge one of its two boundary waists is a waist of type $1$ (where the radius 
of the first spherical factor $\Sph^{p-1}$ is minimal) and the other is a waist of type $2$
(where the radius of the second spherical factor $\Sph^{q-1}$ is minimal). 
Moreover, since waists of type $1$ and $2$ alternate along $\cylpq$ one of the two bulges in a domain of periodicity 
will have a type $1$ waist at its left-hand boundary and a type $2$ waist at its right-hand boundary, 
while the other bulge will have a type $2$ waist at its LH boundary and a type $1$ waist at its RH boundary.
Hence while the reflectional symmetry $\TTT_{2k\pt} \circ \tbar_{\pt^{+}}$ exchanges the adjacent almost spherical regions 
$S[k]$ and $S[k+1]$ there is no purely translational symmetry that achieves this (unlike the case $p=1$). 
Instead the basic rotational period $\ttilde_{2\pthat}$ of $X_{\tau}$ sends $S[k]$ to $S[k+2]$. 
This reflects a fundamental difference in the geometry of $X_{\tau}$ between the cases $p=1$ and $p>1$.

Inside each bulge we fixed a compact subset $S[k] \subset \hat{S}[k]$ (depending on the choice of a sufficiently large real number
$b$) which we called the $k$th almost spherical region of $X_{\tau}$. 
As $\tau \ra 0$ the image of the $k$th almost spherical region $S[k]$ under $X_{\tau}$ tends to the complement 
of a small tubular neighbourhood of the marked set $\M[k] \subset \Sph[k]$ inside the $k$th approximating sphere. 
The marked set $\M[k]$ is a generalised $(p,q)$-Hopf link, i.e. two orthogonal equatorial subspheres in $\Sph^{p+q-1}$
of dimensions $p-1$ and $q-1$. $S[k]$ connects to its neighbouring almost spherical regions $S[k-1]$ and $S[k+1]$ via two 
transition regions which are localised near the two components of the marked set $\M[k]$. The core of each transition 
region---the necks---is the vicinity of one of the waists $W[k]$ and $W[k+1]$: one of type $1$ and one of type $2$. 
On the neck containing the type $1$ waist $X_{\tau}$ approaches an embedding of the product of a
Lagrangian catenoid in $\C^{p}$ of size $\sqrt{1-\ymax} \simeq (2\tau)^{1/p}$ with a round sphere $\Sph^{q-1}$ of radius $1$
as $\tau \ra 0$. This type $1$ neck localises to the equatorial $q-1$ sphere of the marked set $\M[k]$.
On the type $2$ neck $X_{\tau}$ approaches an embedding of the product of a round sphere $\Sph^{p-1}$ of radius $1$ 
with a Lagrangian catenoid in $\C^{q}$ of size $\sqrt{\ymin} \simeq (2\tau)^{1/q}$ which localises to the 
equatorial $p-1$ sphere of the marked set $\M[k]$. 
In particular, when $p \neq q$ necks of type $1$ and necks of type $2$ are not isometric and hence no 
symmetry can take a type $1$ neck to a type $2$ neck. 

However, when $p=q$ type $1$ and type $2$ necks are isometric and extra symmetries exist that exchange the two neck types;
the symmetry $\tbar_{k\pt} \circ \EEE$ (recall \ref{L:sym:k}) sends the bulge $\hat{S}[k]$ to itself 
but exchanges its two boundary waists $W[k]$ and $W[k+1]$. 

In the limit as $\tau \ra 0$ almost spherical regions tend to (subsets) of the approximating spheres, while a transition region 
connecting neighbouring almost spherical regions tends to the equatorial subsphere formed by the intersection 
of the corresponding approximate spheres. It follows from \ref{E:pt:hat:asymp} and \ref{P:asymptotics-pthat}
that as $\tau \ra 0$ the reflection $\tbartildeplus$ that sends $\Sph[0]$ to $\Sph[1]$ converges to the reflection
$$ (z,w) \mapsto (\overline{z}, e^{-i\pi/q}\overline{w}), \qquad \qquad \text{for\ } (z,w)\in \C^{p}\times \C^{q}.$$
Hence in the $\tau \ra 0$ limit, the real $n$-planes in $\C^{n}$ associated to the almost spherical regions $S[0]$ and $S[1]$ 
are $\R^{p}\oplus \R^{q}$ and $\R^{p} \oplus e^{-i\pi/q}\R^{q}$. This is consistent both with the asymptotic geometry of the Lagrangian catenoid in $\C^{q}$ and the fact that the neck concentrates on a round $\Sph^{p-1}$. 
Similarly, the reflection $\tbartildeminus$ that sends $S[0]$ to $S[-1]$ converges as $\tau \ra 0$ to the reflection
$$ (z,w) \mapsto (e^{-i\pi/p}\overline{z}, \overline{w}),  \qquad \qquad \text{for\ } (z,w)\in \C^{p}\times \C^{q}.$$
Hence in the $\tau \ra 0$ limit the real $n$-planes in $\C^{n}$ associated to the almost spherical regions $S[0]$ and $S[-1]$ 
are $\R^{p}\oplus \R^{q}$ and $e^{-i\pi/p}\R^{p} \oplus \R^{q}$ respectively, which is again consistent 
with the asymptotic geometry of the $p$ dimensional catenoid and concentration of the neck on a round $\Sph^{q-1}$.

\section{Torques}
\label{S:torques}
Suppose $M$ is an oriented $m$-dimensional submanifold of the ambient manifold $(\overline{M},\overline{g})$ 
and $\ki\in \iso(\overline{M},\gbar) $ is a Killing field on $(\overline{M},\overline{g})$. Given any oriented hypersurface 
$\Sigma \subset M$ we define the $\ki$-flux through $\Sigma$ by 
\begin{equation}
\addtocounter{theorem}{1}
\label{E:kflux}
\mathcal{F}_{\ki}(\Sigma):= \int_{\Sigma}{\gbar(\ki,\eta)} \dvol_{\Sigma}\, ,
\end{equation}
where $\eta$ is the unit conormal to $\Sigma$, chosen so that the orientation defined by $\Sigma$ and $\eta$ agrees with that 
of $M$. An immediate consequence of the First Variation of Volume formula \cite[7.6]{simon:book} is
\begin{lemma}
\addtocounter{equation}{1}
\label{L:kflux}
If $M$ is an oriented $m$-dimensional minimal submanifold of $(\overline{M},\gbar)$, $\Sigma$ is an oriented hypersurface 
of $M$ and $\ki \in \iso(\overline{M},\gbar)$ then the $\ki$-flux through $\Sigma$, $\mathcal{F}_{\ki}(\Sigma)$, 
depends only on the homology class $[\Sigma] \in H_{m-1}(M,\R)$.
\end{lemma}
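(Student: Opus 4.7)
The plan is to realise $\mathcal{F}_{\ki}$ as the integral over $\Sigma$ of a \emph{closed} $(m-1)$-form on $M$, and then invoke Stokes' theorem in the standard way: if $\Sigma_1$ and $\Sigma_2$ are homologous oriented hypersurfaces of $M$ with $\Sigma_1-\Sigma_2 = \partial U$ for some oriented $m$-chain $U \subset M$, then for any closed $(m-1)$-form $\omega$ on $M$ we have $\int_{\Sigma_1}\omega - \int_{\Sigma_2}\omega = \int_U d\omega = 0$.

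The natural candidate for such a form is $\omega_{\ki}:=\iota_{\ki^{\top}}\dvol_{M}$, where $\ki^{\top}$ denotes the pointwise orthogonal projection of $\ki|_{M}$ onto $TM$. First I would verify that $\omega_{\ki}|_{\Sigma}=\gbar(\ki,\eta)\,\dvol_{\Sigma}$: writing $\ki^{\top}=\gbar(\ki,\eta)\,\eta + \ki^{\top,\Sigma}$ where $\ki^{\top,\Sigma}$ is tangent to $\Sigma$, only the component along $\eta$ contributes to the contraction against $\dvol_{M}$ restricted to $\Sigma$, and with the sign convention fixed by the statement $\mathcal{F}_{\ki}(\Sigma)=\int_{\Sigma}\omega_{\ki}$.

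The central computation is then to show $d\omega_{\ki}=0$ whenever $M$ is minimal. Using Cartan's formula and the fact that $\dvol_{M}$ is a top-dimensional form on $M$, one has
\begin{equation*}
d\omega_{\ki}=d(\iota_{\ki^{\top}}\dvol_{M})=\mathcal{L}_{\ki^{\top}}\dvol_{M}=(\mathrm{div}_{M}\ki^{\top})\,\dvol_{M},
\end{equation*}
where $\mathrm{div}_{M}$ denotes the intrinsic divergence on $(M,g_{M})$. To evaluate $\mathrm{div}_{M}\ki^{\top}$, pick local orthonormal frames $\{e_{i}\}$ for $TM$ and $\{\nu_{\alpha}\}$ for the normal bundle of $M$ in $\overline{M}$, and split $\ki=\ki^{\top}+\ki^{\perp}$ with $\ki^{\perp}=\sum_{\alpha}\gbar(\ki,\nu_{\alpha})\nu_{\alpha}$. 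Since the ambient and intrinsic connections differ only in a normal direction when differentiating tangent fields, $\mathrm{div}_{M}\ki^{\top}=\sum_{i}\gbar(\overline{\nabla}_{e_{i}}\ki^{\top},e_{i})$. Subtracting $\sum_{i}\gbar(\overline{\nabla}_{e_{i}}\ki,e_{i})$, which vanishes by the Killing equation $\gbar(\overline{\nabla}_{X}\ki,X)=0$, and using $\gbar(\nu_{\alpha},e_{i})=0$ to differentiate, a short calculation gives $\mathrm{div}_{M}\ki^{\top}=\gbar(\ki^{\perp},\vec{H})=\gbar(\ki,\vec{H})$, where $\vec{H}$ is the mean curvature vector of $M$ in $\overline{M}$.

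Consequently $d\omega_{\ki}=\gbar(\ki,\vec{H})\,\dvol_{M}$, which vanishes identically when $M$ is minimal. The homological invariance of $\mathcal{F}_{\ki}$ then follows from Stokes' theorem as outlined above. The only step requiring real care is the frame computation identifying $\mathrm{div}_{M}\ki^{\top}$ with $\gbar(\ki,\vec{H})$; this is essentially the First Variation Formula reread as a pointwise identity, and constitutes the one place where both the Killing equation and the definition of $\vec{H}$ must be used in conjunction.
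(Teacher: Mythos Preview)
Your proposal is correct and coincides with the paper's approach: the paper does not give a detailed proof but simply states that the lemma is an immediate consequence of the First Variation of Volume formula, and your computation $\mathrm{div}_{M}\ki^{\top}=\gbar(\ki,\vec{H})$ is precisely the pointwise identity underlying that formula (as you yourself observe). So you have unpacked exactly what the paper cites.
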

In other words, when $M$ is a minimal submanifold of  $(\overline{M},\gbar)$ the $\ki$-flux map defined in \ref{E:kflux} 
induces a linear map $\mathcal{F}: H_{m-1}(M,\R) \ra \iso(\overline{M},\gbar)^{*}$.
In particular, if $(\overline{M},\gbar) = (\Sph^{2n-1},g_{\text{std}})$ then $\iso(\overline{M},\gbar) = \lorth{2n}$ 
and we call the map $\mathcal{F}: H_{m-1}(M,\R) \ra \lorth{2n}^{*}$ the \emph{torque of $M$}.
For special Legendrian submanifolds of $\Sph^{2n-1}$ it is also convenient to define the \emph{restricted torque of $M$}, 
which is the restriction of the torque to the subalgebra $\lsunit{n} \subset \lorth{2n}$.

\begin{prop}
\addtocounter{equation}{1}
\label{P:torques}
For $p>1$ the $\lsunit{n}$ restricted torque of the $\sorth{p}\times\sorth{q}$-invariant special Legendrian immersion
$X_{\tau}: \cylpq \ra \Sph^{2(p+q)-1}$ 
is given by 
\begin{equation}
\addtocounter{theorem}{1}
\label{E:torques}
\mathcal{F}_{\ki}(X_{\tau}) = 
\begin{cases}
2\tau \left(\frac{1}{p} \sum_{i=1}^{p}{\lambda_{i}} - \frac{1}{q} \sum_{j=1}^{q} {\mu_{j}}\right) \vol(\Sph^{p-1}) \vol(\Sph^{q-1}) \quad & \ki = i \diag(\lambda_{1}, \ldots ,\lambda_{p},\mu_{1},\ldots ,\mu_{q});\\
0 \quad & \text{if\ } \ki \in \lsunit{n}\ \text{is off-diagonal},
\end{cases}
\end{equation}
where we implicitly use the homology class of any meridian in $\cylpq$.

For $p=1$ the $\lsunit{n}$ restricted torque of the $\sorth{n-1}$-invariant special Legendrian immersion 
$X_{\tau}: \cylone \ra \Sph^{2n-1}$ is given by 
\begin{equation}
\addtocounter{theorem}{1}
\label{E:torques:p:eq:1}
\mathcal{F}_{\ki}(X_{\tau}) = 
\begin{cases}
2\tau \left(\lambda - \frac{1}{n-1} \sum_{j=1}^{n-1}{\mu_{j}}\right)  \vol(\Sph^{n-2}) \quad & \ki = i \diag(\lambda,\mu_{1},\ldots ,\mu_{n-1});\\
0 \quad & \ki \in \lsunit{n}\ \text{is off-diagonal}.
\end{cases}
\end{equation}
\end{prop}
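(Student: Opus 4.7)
The plan is to compute the restricted torque directly by integrating over a meridian, exploiting the explicit form of $X_\tau$ from Definition \ref{D:X:tau} together with the conservation laws from the proof of Proposition \ref{P:odes:p:n}(iv), and then to use the $\sorth{p}\times\sorth{q}$-equivariance of $X_\tau$ to handle the off-diagonal case by a symmetry argument. Since $X_\tau$ is minimal and the meridians $\Sigma_t := \{t\}\times\merpq$ are all homologous in $\cylpq$, Lemma \ref{L:kflux} reduces the problem to a single direct calculation over $\Sigma_0$.

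First I would observe a crucial cancellation: the unit conormal to a meridian in $X_\tau(\cylpq)$ is $\partial_t X_\tau/|\dot{\bw}|$, and from \ref{E:w:reparam} the factor $|\dot{\bw}|=|w_1|^{p-1}|w_2|^{q-1}$ is exactly the meridian volume density read off from \ref{P:X:tau}(ii). Hence
\[
\mathcal{F}_\ki(X_\tau)=\int_{\merpq}\gbar\bigl(\ki(X_\tau),\partial_t X_\tau\bigr)\,\dvol_{\Sph^{p-1}}\dvol_{\Sph^{q-1}}.
\]
For the diagonal case with $\ki=iA$, $A=\diag(\lambda_1,\ldots,\lambda_p,\mu_1,\ldots,\mu_q)$, using $X_\tau=(w_1\sigma_1,w_2\sigma_2)$ and $\partial_t X_\tau=(\dot{w}_1\sigma_1,\dot{w}_2\sigma_2)$ the pointwise integrand becomes
\[
\Real\,\langle iAX_\tau,\partial_t X_\tau\rangle=\Real(iw_1\overline{\dot{w}}_1)\sum_{k=1}^p\lambda_k(\sigma_1^k)^2+\Real(iw_2\overline{\dot{w}}_2)\sum_{j=1}^q\mu_j(\sigma_2^j)^2.
\]
The identities $\Imag(\overline{w}_1\dot{w}_1)=2\tau$ and $\Imag(\overline{w}_2\dot{w}_2)=-2\tau$ established during the proof of \ref{P:odes:p:n}(iv) give $\Real(iw_1\overline{\dot{w}}_1)=2\tau$ and $\Real(iw_2\overline{\dot{w}}_2)=-2\tau$. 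Combining this with the standard spherical integral $\int_{\Sph^{m-1}}(\sigma^k)^2\dvol=\vol(\Sph^{m-1})/m$ yields formula \ref{E:torques} for $p>1$, and the $p=1$ variant \ref{E:torques:p:eq:1} follows in exactly the same way, noting that for $p=1$ only a single $\lambda$ remains and no spherical averaging is required for the first coordinate.

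For the off-diagonal case the strategy is a symmetry/averaging argument. The $\sorth{p}\times\sorth{q}$-equivariance of $X_\tau$ (Proposition \ref{P:X:tau}(iii)) combined with the fact that each meridian $\Sigma_t$ is preserved setwise by this group yields, via change of variables in the defining integral \ref{E:kflux}, the invariance $\mathcal{F}_\ki(X_\tau)=\mathcal{F}_{\mathrm{Ad}_{\tilde M}\ki}(X_\tau)$ for every $\tilde M\in\sorth{p}\times\sorth{q}\subset\sunit{n}$. Averaging over this compact subgroup gives $\mathcal{F}_\ki(X_\tau)=\mathcal{F}_{\overline{\ki}}(X_\tau)$, where $\overline{\ki}$ is the $\sorth{p}\times\sorth{q}$-invariant projection of $\ki$. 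The decomposition $\C^n=\C^p\oplus\C^q$ is preserved by the group, and on each summand the standard real representation of $\sorth{m}$ on $\C^m$ ($m\ge 2$) is irreducible, so by Schur's lemma the invariant subspace of $\lsunit{n}$ consists precisely of the block-scalar Hermitian matrices $i(\lambda I_p\oplus\mu I_q)$ with $p\lambda+q\mu=0$ (and $i(\lambda\oplus\mu I_{n-1})$ with $\lambda+(n-1)\mu=0$ when $p=1$). Any purely off-diagonal Hermitian matrix has zero projection onto this subspace (its block traces vanish), hence $\mathcal{F}_\ki(X_\tau)=0$.

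The main obstacle is the bookkeeping in the symmetry step: one must verify that the averaging identity is the coadjoint invariance under $\sorth{p}\times\sorth{q}$ realised concretely as $\widetilde M=\diag(M_1,M_2)\in\sorth{n}\subset\sunit{n}$ (rather than some larger or smaller group), and then confirm that off-diagonal entries of a Hermitian matrix really do average to zero under this subgroup while the diagonal blocks average to scalar multiples of the identity. Once this is checked, the two halves of \ref{P:torques} fit together to give the stated formulas.
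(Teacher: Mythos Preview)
Your diagonal computation is essentially the paper's: the same cancellation between the conormal normalisation $|\dot{\bw}|^{-1}$ and the meridian volume density, the same use of $\Imag(\overline{w}_1\dot{w}_1)=2\tau=-\Imag(\overline{w}_2\dot{w}_2)$, and the same spherical average $\int_{\Sph^{m-1}}(\sigma^k)^2\dvol=\vol(\Sph^{m-1})/m$.

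For the off-diagonal case you take a genuinely different route. The paper argues by direct computation: it decomposes off-diagonal $\lsunit{n}$ as $\lsorth{n}\oplus i\,\symoff{n}$, writes down $\mathsf{R}_{ij}X_\tau\cdot\partial_tX_\tau$ and $\sqrt{-1}\,\mathsf{S}_{ij}X_\tau\cdot\partial_tX_\tau$ explicitly for each basis element, and checks that the resulting spherical integrand is either identically zero or a product like $\sigma_1^i\sigma_2^{j'}$ or $\sigma_1^i\sigma_1^j$ ($i\neq j$) whose integral vanishes. Your averaging argument is more conceptual (and is exactly what Remark \ref{R:flux:sym} hints at), but as written it has a gap when $p=2$ or $q=2$. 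The complexified standard representation of $\sorth{m}$ on $\C^m$ is irreducible only for $m\ge3$; for $m=2$ it splits into the two rotation eigenlines, and the Hermitian commutant of $\sorth{2}$ in $\mathrm{End}(\C^2)$ is two-dimensional, spanned by $I_2$ and $iJ$ with $J=\left(\begin{smallmatrix}0&-1\\1&0\end{smallmatrix}\right)$. Consequently when $p=2$ the $\sorth{p}\times\sorth{q}$-invariant part of $\lsunit{n}$ strictly contains the block-scalar matrices: it also contains $\mathsf{R}_{12}$, which is off-diagonal and already invariant, so its average is itself and your implication ``off-diagonal $\Rightarrow\overline{\ki}=0$'' fails there.

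The repair is immediate: average over $\orth{p}\times\orth{q}$ rather than $\sorth{p}\times\sorth{q}$. This larger group is still a symmetry of $X_\tau$ by \ref{E:orth:p:neq:q} (and preserves each meridian), so the coadjoint invariance $\mathcal{F}_\ki=\mathcal{F}_{\mathrm{Ad}_{\tilde M}\ki}$ holds for all $\tilde M\in\orth{p}\times\orth{q}$. A reflection in $\orth{2}$ anticommutes with $J$, so the $\orth{m}$-commutant in Hermitian matrices is exactly $\R\cdot I_m$ for every $m\ge2$, and your block-scalar description of the invariants---hence the vanishing of $\overline{\ki}$ for off-diagonal $\ki$---now holds without exception. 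Alternatively, one line of the paper's direct calculation (the $i,j\le p$ case of \ref{E:flux:sorth}, where the integrand is identically zero) disposes of the residual $\mathsf{R}_{12}$.
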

In particular, if we take $\ki=\mathsf{t}$ to be the generator of the $1$-parameter subgroup $\{\ttilde_{x}\}_{x\in \R}$ 
(defined in \ref{E:ttilde}) 
associated to the rotational period $\ttilde_{2\pthat}$ of $X_{\tau}$ then we obtain
$$ \mathcal{F}_{\mathsf{t}}(X_{\tau}) =
\begin{cases}
2\tau \frac{n}{pq} \vol(\Sph^{p-1}) \vol(\Sph^{q-1}), & \quad \text{if\ } p>1;\\
2\tau \frac{n}{n-1} \vol(\Sph^{n-2}), & \quad \text{if\ } p=1.
\end{cases}
$$
\begin{proof}
We give the proof in the case $p>1$. The result in the case $p=1$ follows by making the obvious adjustments to the argument below.

\emph{Case $p>1$:} 
By the homological invariance of $\mathcal{F}_{\ki}(\Sigma)$ we may evaluate the $\ki$-flux on any meridian 
$\{t_{0}\} \times \merpq$ of $\cylpq$.
From \ref{P:X:tau}.iii the vector field $\partial_{t}$ is orthogonal to any meridian $\{t_{0}\} \times \merpq$. 
Hence the unit conormal is given by $\eta = \partial_{t}X_{\tau}/\abs{\partial_{t}X_{\tau}}$. By the definition of $X_{\tau}$ in terms of $\bw_{\tau}$ we have $\abs{\partial_{t}X_{\tau}} = \abs{\dot{\bw}}$.
Using \ref{E:w:reparam} and \ref{P:X:tau}.ii the volume form induced on the meridian  $\{t_{0}\} \times \merpq$  is 
$$ \abs{w_{1}}^{p-1}\abs{w_{2}}^{q-1} \dvol_{\Sph^{p-1}} \wedge \dvol_{\Sph^{q-1}} =
\abs{\partial_{t}X_{\tau}} \dvol_{\Sph^{p-1}} \wedge \dvol_{\Sph^{q-1}}.$$
Therefore 
\begin{equation}
\addtocounter{theorem}{1}
\label{E:kflux:simple}
\mathcal{F}_{\ki} = \int_{t=t_{0}}{\ki \cdot \frac{\partial_{t}X_{\tau}}{\abs{\partial_{t}X_{\tau}}} \abs{\partial_{t}X_{\tau}} \dvol_{\Sph^{p-1}} \wedge \dvol_{\Sph^{q-1}}} = \int_{t=t_{0}}{\ki \cdot \partial_{t}X_{\tau} \dvol_{\Sph^{p-1}} \wedge \dvol_{\Sph^{q-1}}},
\end{equation}
where $t=t_{0}$ is a shorthand for the meridian $\{t_{0}\}\times \merpq$ on which the $\R$ coordinate $t$ equals $t_{0}$.

\emph{$\ki \in \lsunit{n}$ is diagonal:}
If $\ki =  i \diag(\lambda_{1}, \ldots ,\lambda_{p},\mu_{1},\ldots ,\mu_{q}) \in \lsunit{n}$ a short computation shows that
$$ \ki \cdot \partial_{t}X_{\tau} = \Imag(\overline{w}_{1}\dot{w}_{1}) \sum_{i=1}^{p}{\lambda_{i}(\sigma_{1}^{i})^{2}} + \Imag(\overline{w}_{2}\dot{w}_{2}) \sum_{i=1}^{q}{\mu_{j}(\sigma_{2}^{j})^{2}},
$$
where $\sigma_{1}=(\sigma_{1}^{1},\ldots ,\sigma_{1}^{p})\in \Sph^{p-1}\subset \R^{p}$ and $\sigma_{2} = (\sigma_{2}^{1},\ldots ,\sigma_{2}^{q}) \in \Sph^{q-1}\subset \R^{q}$.
Hence using \ref{E:odes:p:n} and the definition of $\tau$ we have 
\begin{equation}
\addtocounter{theorem}{1}
\label{E:k:dot:xt}
\ki \cdot \partial_{t}X_{\tau} = 2\tau \left(\sum_{i=1}^{p}{\lambda_{i}(\sigma_{1}^{i})^{2}} -  \sum_{j=1}^{q}{\mu_{j}(\sigma_{2}^{j})^{2}}\right).
\end{equation}
By symmetry we have 
\begin{equation}
\addtocounter{theorem}{1}
\label{E:sphere:ints}
\int_{\Sph^{p-1}}{(\sigma_{1}^{i})^{2}}\dvol_{\Sph^{p-1}} = \frac{1}{p} \vol{\Sph^{p-1}} \quad \text{and} \quad 
\int_{\Sph^{q-1}}{(\sigma_{2}^{j})^{2}}\dvol_{\Sph^{q-1}} = \frac{1}{q} \vol{\Sph^{q-1}}, 
\end{equation}
for $1\le i \le p$ and $1\le j \le q$.
Combining \ref{E:kflux:simple}, \ref{E:k:dot:xt} and \ref{E:sphere:ints} we obtain 
$$ \mathcal{F}_{\ki} = 
2\tau \left(\frac{1}{p} \sum_{i=1}^{p}{\lambda_{i}} - \frac{1}{q} \sum_{j=1}^{q} {\mu_{j}}\right) \vol(\Sph^{p-1}) \vol(\Sph^{q-1}). $$
\emph{$\ki \in \lsunit{n}$ is  off-diagonal:}  Off-diagonal elements $\ki \in \lsunit{n}$ can be decomposed as $\lsorth{n} \oplus \,i \,\symoff{n}$ where $\symoff{n}$ denotes the off-diagonal real symmetric  $n \times n$ matrices. By linearity it suffices to prove 
$\mathcal{F}_{\ki}=0$ for any $\ki \in \lsorth{n}$ and $\ki\in \,i \,\symoff{n}$.

First we show that $\mathcal{F}_{\ki}$ vanishes for any $\ki \in \lsorth{n} \subset \lsunit{n}$.
Let $e_{1}, \ldots , e_{n}$ denote the standard unitary basis of $\C^{n}$. 
For $i\neq j \in \{1,\ldots ,n\}$ define $\mathsf{R}_{ij}\in \lsorth{n}$ by 
$$ \mathsf{R}_{ij}(v) = (e_{i}\cdot v) \, e_{j} -( e_{j}\cdot v) \, e_{i}, \quad \text{for any\ } v\in \R^{n}.$$
$\{\mathsf{R}_{ij}\}$ for $i<j \in \{1,\ldots ,n\}$ forms a basis for $\lsorth{n} \subset \lsunit{n}$.
Using the definition of $X_{\tau}$ and $\mathsf{R}_{ij}$ we find
$$
\mathsf{R}_{ij} X_{\tau}= 
\begin{cases}
w_{1} (\sigma_{1}^{i}e_{j} - \sigma_{1}^{j}e_{i}) & \text{for \ } i,j \in \{1, \ldots ,p\};\\
w_{2}  (\sigma_{2}^{i'}e_{j} - \sigma_{2}^{j'}e_{i}) & \text{for \ } i',j' \in \{1, \ldots ,q\};\\
w_{1} \sigma_{1}^{i}e_{j} - w_{2} \sigma_{2}^{j'}e_{i}  & \text{for \ } i\in \{1,\ldots , p\}, \  j' \in \{1, \ldots ,q\},
\end{cases}
$$
where $i'=i-p$ and $j'=j-p$.
Taking the inner product with $\partial_{t}X_{\tau}$ we obtain
\begin{equation}
\addtocounter{theorem}{1}
\label{E:flux:sorth}
\mathsf{R}_{ij} X_{\tau} \cdot \partial_{t}X_{\tau} = 
\begin{cases}
0 &  \text{for \ } i,j \in \{1, \ldots ,p\};\\
0 & \text{for \ } i',j' \ \in \{1, \ldots ,q\};\\
\Real(\overline{w}_{1}\dot{w}_{2} - \overline{w}_{2}\dot{w}_{1}) \sigma_{1}^{i}\sigma_{2}^{j'} & \text{for \ } i\in \{1,\ldots , p\}, \  j' \in \{1, \ldots ,q\}.
\end{cases}
\end{equation}
Clearly we have
\begin{equation}
\addtocounter{theorem}{1}
\label{E:integrand:sorth}
\int_{\Sph^{p-1}\times \Sph^{q-1}}{\sigma_{1}^{i} \, \sigma_{2}^{j'} \dvol_{\Sph^{p-1}} \wedge \dvol_{\Sph^{q-1}}} = \int_{\Sph^{p-1}}{\sigma_{1}^{i} \dvol_{\Sph^{p-1}}} \, \int_{\Sph^{q-1}}{\sigma_{2}^{j'} \dvol_{\Sph^{q-1}}}=0.
\end{equation}
Combining \ref{E:kflux:simple}, \ref{E:flux:sorth} and \ref{E:integrand:sorth} we conclude $\mathcal{F}_{\ki}=0$ for $\ki = \mathsf{R}_{ij}$ 
and hence by linearity $\mathcal{F}_{\ki}=0$ for all $\ki\in \lsorth{n} \subset \lsunit{n}$.

Now we show that $\mathcal{F}_{\ki}=0$ for any $\ki \in \,i \,\symoff{n}$.
For $i<j \in \{1,\ldots ,n\}$ define $\mathsf{S}_{ij}\in \symoff{n}$ by 
$$ \mathsf{S}_{ij}(v) = (e_{i}\cdot v) \, e_{j} + (e_{j}\cdot v) \, e_{i}, \quad \text{for any\ } v\in \R^{n}.$$
$\{\sqrt{-1}\,\mathsf{S}_{ij}\}$ for $i<j \in \{1,\ldots ,n\}$ forms a basis for $i\, \symoff{n} \subset \lsunit{n}$.
Using the definition of $X_{\tau}$ and $\mathsf{S}_{ij}$ we find
$$
\sqrt{-1}\,\mathsf{S}_{ij} X_{\tau}= \sqrt{-1}
\begin{cases}
w_{1} (\sigma_{1}^{i}e_{j} + \sigma_{1}^{j}e_{i}) & \text{for \ } i,j \in \{1, \ldots ,p\};\\
w_{2}  (\sigma_{2}^{i'}e_{j} + \sigma_{2}^{j'}e_{i}) & \text{for \ } i',j' \in \{1, \ldots ,q\};\\
w_{1} \sigma_{1}^{i}e_{j} + w_{2} \sigma_{2}^{j'}e_{i}  & \text{for \ } i\in \{1,\ldots , p\}, \  j' \in \{1, \ldots ,q\},
\end{cases}
$$
where as above $i'=i-p$ and $j'=j-p$.
Taking the inner product with $\partial_{t}X_{\tau}$ we obtain
\begin{equation}
\addtocounter{theorem}{1}
\label{E:flux:symoff}
\mathsf{S}_{ij} X_{\tau} \cdot \partial_{t}X_{\tau} = 
\begin{cases}
2 \Imag(\overline{w}_{1} \dot{w}_{1}) \sigma_{1}^{i}\sigma_{1}^{j} &  \text{for \ } i,j \in \{1, \ldots ,p\};\\
2 \Imag(\overline{w}_{2} \dot{w}_{2}) \sigma_{2}^{i'}\sigma_{2}^{j'} & \text{for \ } i',j' \in \{1, \ldots ,q\};\\
\Imag(\overline{w}_{1}\dot{w}_{2} + \overline{w}_{2}\dot{w}_{1}) \sigma_{1}^{i}\sigma_{2}^{j'} & \text{for \ } i\in \{1,\ldots , p\}, \  j' \in \{1, \ldots ,q\}.
\end{cases}
\end{equation}
For any $i\neq j$ we have 
\begin{equation}
\addtocounter{theorem}{1}
\label{E:integrand:symoff1}
\int_{\Sph^{p-1}\times \Sph^{q-1}}{\sigma_{1}^{i}\sigma_{1}^{j} \dvol_{\Sph^{p-1}}\wedge \dvol_{\Sph^{q-1}}} 
= \vol(\Sph^{q-1}) \int_{\Sph^{p-1}}{\sigma_{1}^{i}\sigma_{1}^{j} \dvol_{\Sph^{p-1}}}=0,
\end{equation}
since for any $i \neq j$, $\sigma_{1}^{i}\sigma_{1}^{j}$ is an eigenvalue of the Laplacian on $\Sph^{p-1}$ 
with eigenvalue $\lambda=2p$, and hence is $L^{2}$-orthogonal to the constant functions.
(Alternatively, one can consider the involution mapping 
$\sigma_{1}^{i} \mapsto -\sigma_{1}^{i}$ and
$\sigma_{1}^{k} \mapsto -\sigma_{1}^{k}$ for any $k \notin \{i,j\}$ and fixing all other components of $\sigma_{1}$.
Clearly this symmetry preserves $\dvol_{\Sph^{p-1}}$ but sends $\sigma_{1}^{i}\sigma_{1}^{j} \mapsto -\sigma_{1}^{i}\sigma_{1}^{j}$. Hence the integral in \ref{E:integrand:symoff1} is odd under this symmetry and therefore vanishes.) 
Similarly, we have 
\begin{equation}
\addtocounter{theorem}{1}
\label{E:integrand:symoff2}
 \int_{\Sph^{p-1}\times \Sph^{q-1}}{\sigma_{2}^{i'}\sigma_{2}^{j'} \dvol_{\Sph^{p-1}}\wedge \dvol_{\Sph^{q-1}}} 
= \vol(\Sph^{p-1}) \int_{\Sph^{q-1}}{\sigma_{2}^{i'}\sigma_{2}^{j'} \dvol_{\Sph^{q-1}}}=0,
\end{equation}
for any $i\neq j$.
For any $i\neq j$, combining \ref{E:kflux:simple}, \ref{E:integrand:sorth}--\ref{E:integrand:symoff2} implies that $\mathcal{F}_{\ki}=0$ for $\ki = \sqrt{-1}\, \mathsf{S}_{ij}$ 
and hence by linearity $\mathcal{F}_{\ki}=0$ for all $\ki\in i \, \symoff{n} \subset \lsunit{n}$.
\end{proof}

\begin{remark}
\addtocounter{equation}{1}
\label{R:flux:reposn}
If $\mtilde \in \sunit{n}$ then $\mtilde \circ X_{\tau}: \cylpq \ra \Sph^{2(p+q)-1}$ is also a $1$-parameter family 
of special Legendrian immersions and hence we may consider its torque or restricted torque. 
A simple computation shows that 
the torque or restricted torque of $X_{\tau}$ determines the torque of $\mtilde \circ X_{\tau}$ by
\begin{equation}
\addtocounter{theorem}{1}
\label{E:flux:reposn}
\mathcal{F}_{k}(\mtilde \circ X_{\tau}) = \mathcal{F}_{\widetilde{\ki}} (X_{\tau}), \quad \text{where} \ 
\widetilde{\ki} = \mtilde^{t}\, \ki \,\mtilde \quad \text{and} \ \ki \in \lsunit{n}.
\end{equation}
\end{remark}

\begin{remark}
\addtocounter{equation}{1}
\label{R:flux:sym}
If $G$ is a Lie subgroup of $\orth{m}$ then one can obtain restrictions on the possible torques $\mathcal{F}$ 
of any $G$-invariant minimal submanifold of $\Sph^{m-1}$, in terms of the coadjoint action of $G \subset \orth{m}$ 
on $\lorth{m}^{*}$. We will describe this in detail elsewhere.
\end{remark}

\section{Precise asymptotics as $\tau \ra 0$}
\label{S:asymptotics}

In order to describe the asymptotics it helps to introduce the following notation:
We define functions of $\tau$ by
\addtocounter{theorem}{1}
\begin{equation}
\label{E:Tk}
T_k(\tau):=
\begin{cases}
\tau^{-1+2/k}  , \qquad &\text{for $k>2$;}\\
\log{\tau^{-1}} , \qquad &\text{for $k=2$},
\end{cases}
\end{equation}
and introduce the notation $f_1 \sim f_2$ for functions $f_1$ and $f_2$ of $\tau$ to mean that
\addtocounter{theorem}{1}
\begin{equation}
\label{E:sim}
\frac{f_2(\tau)}{f_1(\tau)}\to\,1
\quad\text{as}\quad
\tau\to0.
\end{equation}
Using this notation we have the following :

\begin{prop}[Small $\tau$ asymptotics of the period and partial-periods]\hfill
\addtocounter{equation}{1}
\label{P:asymptotics-ptau}
\begin{enumerate}
\item[(i)]
For $p>1$, $\pt^{+}$ and $\pt^{-}$ are analytic functions of $\tau$ for $0<\abs{\tau}<\taumax$. 
For $p=1$, $\pt$ is an analytic function of $\tau$ for $0<\abs{\tau}<\taumax$. 
\item[(ii)]
In the case $p>1$
we have
\begin{equation}
\addtocounter{theorem}{1}
\label{E:pt:pm}
\pt^+\sim 
b_q\,T_q(\tau),
\qquad
\quad
\pt^- \sim
b_p\,T_p(\tau),
\end{equation}
where 
\begin{equation}
\addtocounter{theorem}{1}
\label{E:b:q}
b_2:=1,\qquad
b_k:=4^{-1+\frac1k}\int_1^\infty\frac{dz}{\sqrt{z^k-1}\,}=
4^{-1+\frac1k}\frac{\sqrt{\pi}\, \Gamma(\frac12-\frac1k)} {\Gamma(-\frac1k)} \quad \text{for $k\ge 2$},
\end{equation}
where $\Gamma$ is the gamma function. 
We also have
\begin{equation}
\addtocounter{theorem}{1}
\label{E:pt:tau:0:p:eq:1}
\pt \sim
b_q\,T_{q}(\tau)
\quad\text{when}\quad
1\le p< q,
\qquad
\pt \sim
2b_q\,T_q(\tau)
\quad\text{when}\quad
2\le p= q .
\end{equation}
\end{enumerate}
\end{prop}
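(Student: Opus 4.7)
The starting point is the energy conservation equation \ref{E:y:dot}, which gives $|\dot y_\tau| = 2\sqrt{f(y_\tau)-4\tau^2}$ with $f(y)=y^q(1-y)^p$. Combined with the initial data \ref{E:y:ic:p:neq:1} for $p>1$ (respectively \ref{E:y:ic:p:eq:1} for $p=1$) and the monotonicity described in Lemma \ref{P:y}, this yields the explicit integral representations
\begin{equation*}
\pt^{+}(\tau) = \int_{\ymin(\tau)}^{q/n}\!\!\frac{dy}{2\sqrt{f(y)-4\tau^2}},\quad
\pt^{-}(\tau) = \int_{q/n}^{\ymax(\tau)}\!\!\frac{dy}{2\sqrt{f(y)-4\tau^2}},
\end{equation*}
for $p>1$, and an analogous formula for $\pt(\tau) = \int_{\ymin}^{\ymax}$ when $p=1$. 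Both $\ymin$ and $\ymax$ depend analytically on $\tau\in(-\taumax,\taumax)$ by the implicit function theorem, since $f'(\ymin)\neq 0$ and $f'(\ymax)\neq 0$ for $0<|\tau|<\taumax$ (these are simple roots of $f-4\tau^2$). To establish analyticity of $\pt^{\pm}$ in part (i), I would make the standard substitution $y=\ymin+(q/n-\ymin)v^2$, which gives $f(y)-4\tau^2=(q/n-\ymin)v^2\,g(y,\tau)$ for some analytic $g$ with $g(\ymin,\tau)=f'(\ymin)>0$; the resulting expression $\pt^{+}(\tau)=\int_0^1\sqrt{q/n-\ymin}/\sqrt{g(y(v,\tau),\tau)}\,dv$ is manifestly an analytic function of $\tau$ with fixed limits, and $\pt^{-}$ is handled symmetrically. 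The $p=1$ case reduces to an integral from $\ymin$ to $\ymax$ treated by a similar double substitution near each endpoint.

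For the asymptotics in part (ii), I would perform a blow-up analysis near the endpoints. For $\pt^{+}$, fix a small $a>0$ independent of $\tau$ and split $\pt^{+}=I_1(\tau)+I_2(\tau)$ with $I_2=\int_a^{q/n}\cdots$ bounded as $\tau\to 0$ (integrand is bounded on $[a,q/n]$). In $I_1=\int_{\ymin}^a\cdots$ substitute $y=(2\tau)^{2/q}z$; using the asymptotic $\ymin\sim(2\tau)^{2/q}$ from \ref{E:y:maxmin:asym} and the Taylor expansion $(1-(2\tau)^{2/q}z)^p=1+O(\tau^{2/q})$ uniform on bounded $z$, the integrand becomes
\begin{equation*}
\frac{(2\tau)^{2/q}}{4\tau\sqrt{z^q(1+O(\tau^{2/q}))-1}}\,dz = 4^{-1+1/q}\tau^{-1+2/q}\Big(1+O(\tau^{2/q})\Big)\frac{dz}{\sqrt{z^q-1}}.
\end{equation*}
For $q>2$ the integral $\int_1^\infty(z^q-1)^{-1/2}\,dz$ converges, the upper limit $a/(2\tau)^{2/q}\to\infty$, and we obtain $I_1\sim b_q T_q(\tau)$; for $q=2$ the truncated integral $\int_1^{a/(2\tau)}dz/\sqrt{z^2-1}=\cosh^{-1}(a/(2\tau))\sim\log\tau^{-1}$, again yielding $I_1\sim b_q T_q(\tau)$ with the convention $b_2=1$. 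In either case the dominant term is $b_qT_q(\tau)\to\infty$, so the bounded piece $I_2$ is absorbed in the $\sim$ relation. The analysis of $\pt^{-}$ is symmetric: substituting $w=1-y=(2\tau)^{2/p}v$ and using $\ymax\sim 1-(2\tau)^{2/p}$ produces $\pt^{-}\sim b_p T_p(\tau)$.

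Assembling these gives part (ii). For $p=q\ge 2$, both $\pt^{+}$ and $\pt^{-}$ contribute $b_qT_q(\tau)=b_pT_p(\tau)$ at leading order, so $2\pt=2\pt^{+}+2\pt^{-}\sim 4b_qT_q(\tau)$, i.e.\ $\pt\sim 2b_qT_q(\tau)$. For $1<p<q$, comparing exponents shows $T_p(\tau)=o(T_q(\tau))$ as $\tau\to 0$, so $\pt^{-}$ is subleading and $\pt\sim\pt^{+}\sim b_qT_q(\tau)$. For $p=1$, $\pt=\int_{\ymin}^{\ymax}$; the contribution near $y=\ymin$ gives $b_qT_q(\tau)$ by the same blow-up argument, while near $y=\ymax\to 1$ the function $f'$ has a \emph{bounded} simple zero (not a degenerate one, since $p=1$), producing an $O(1)$ contribution that is absorbed in $\sim$. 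The main technical obstacle lies in justifying the uniform control of the $O(\tau^{2/q})$ remainders in the blow-up substitution across the full range of integration, and particularly in handling the marginal case $q=2$ where the leading behavior is only logarithmic and the cutoff must be chosen with care so that the constant-order error from the cutoff itself does not contribute at the stated order; however, since we are only asserting the $\sim$ relation rather than a sharp remainder, these errors are controlled by elementary bounds on the integrand obtained from the Taylor expansion of $(1-y)^p$ near $y=0$.
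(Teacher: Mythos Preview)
Your proposal is correct and, for part~(ii), follows essentially the same blow-up strategy as the paper: both split off a $\delta$-neighbourhood of the endpoint, rescale via $z=y(4\tau^2)^{-1/q}$ (equivalently your $y=(2\tau)^{2/q}z$), and control the deviation of $(1-y)^p$ from $1$ on that neighbourhood. The paper does this last step by a two-sided sandwich inequality on the integrand, whereas you do it by Taylor expanding $(1-y)^p$; these are equivalent ways to get the same $\sim$ conclusion, and your treatment of the $q=2$ logarithmic case and of the $p=1$ endpoint near $\ymax$ is fine.

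For part~(i), however, your route is genuinely different from the paper's and rather more laborious. The paper does not touch the integral representation at all: it simply observes that $\pt^{+}$, $\pt^{-}$ (resp.\ $\pt$) are defined locally by the equation $\dot y_\tau(t)=0$ in the two variables $(t,\tau)$, that $y_\tau$ depends real-analytically on $(t,\tau)$ by Proposition~\ref{P:w:tau}, and that the nondegeneracy condition $\partial_t\dot y_\tau=\ddot y_\tau=2f'(y_\tau)\neq 0$ holds at the critical points for $0<|\tau|<\taumax$ since $f'(\ymin),f'(\ymax)\neq 0$. The analytic Implicit Function Theorem then gives analyticity of $\pt^{\pm}$ in one line. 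Your regularising substitution $y=\ymin+(q/n-\ymin)v^2$ also works and is a standard way to desingularise such integrals, but it requires verifying that the quotient $g(y,\tau)=(f(y)-4\tau^2)/(y-\ymin)$ stays positive and analytic on the full interval and that $\ymin$ itself is analytic in $\tau$ --- all true, but more bookkeeping than the paper's direct IFT argument.
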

\begin{remark}
\addtocounter{equation}{1}
\label{R:period:cat:vs:pt}
For $k>2$ we note that by \ref{L:lagn:cat:ode}.vii, the expression $b_{k}T_{k}(\tau)$ appearing in \ref{E:pt:pm} 
is exactly half the lifetime of a Lagrangian catenoid in $\C^{k}$ of size $2\tau$ parametrised by \ref{E:n:twist:sl:ode}. 
In light of the geometry of the high curvature regions of $X_{\tau}$ described in Section \ref{S:xtau:limit} 
this does not come as a surprise. 
\end{remark}

\begin{proof}
For $p=1$, $\dot{y}_{\tau}(\pt)=0$ and for $p>1$, $\dot{y}_{\tau}(\pt^{+})=\dot{y}_{\tau}(-\pt^{-})=0$ 
and locally the vanishing of $\dot{y}$ determines $\pt$ and $\pt^{+}$ and $\pt^{-}$. 
Moreover,  $\ddot{y}=2f'(y(t))$ is nonzero at $t=\pt$ if $p=1$ or at either $\pt^{+}$ and $-\pt^{-}$ if $p>1$
for all $\tau \in (0,\taumax)$.
Analyticity of $\pt^{+}$, $\pt^{-}$ in the case $p>1$ (and hence $\pt=\pt^{+}+\pt^{-}$) and $\pt$ in the case $p=1$ 
now follows from the real analytic Implicit Function Theorem. 

Assume now that $p>1$.
By using \ref{E:y:dot},      \ref{E:y:ic:p:neq:1}, and \ref{E:y:max:min}, we have that 
$$
\pt^+ =
\int_{\ymin}^{q/n} \frac{dy}{\,2\sqrt{y^q(1-y)^p-4\tau^2\,}\,},
\qquad
\pt^- =
\int^{\ymax}_{q/n} \frac{dy}{\,2\sqrt{y^q(1-y)^p-4\tau^2\,}\,}.
$$
Clearly if we substitute the limits
$\ymin$ and $\ymax$
in the above integrals by
$\ymin+\delta$ and $\ymax-\delta$
where $\delta$ is a small positive number,
the integrals we get converge as $\tau\to0$ to constants which depend only on $\delta$.
Moreover since for $y\in[\ymin,\ymin+\delta]$
we have 
$$
(1-\ymin-\delta)^{p/2}
\sqrt{\max(0,y^q-4(\tau')^2)\,}
\le
\sqrt{y^q(1-y)^p-4\tau^2\,}
\le 
\sqrt{y^q-4\tau^2\,},
$$
where $\tau':=\tau(1-\ymin-\delta)^{-p/2}$,
and for $y\in[\ymax-\delta,\ymax]$ we have 
$$
(\ymax-\delta)^{q/2}
\sqrt{\max(0,(1-y)^p-4(\tau'')^2)\,}
\le
\sqrt{y^q(1-y)^p-4\tau^2\,}
\le 
\sqrt{(1-y)^p-4\tau^2\,},
$$
where $\tau''=\tau(\ymax-\delta)^{-q/2}$,
it is enough to prove 
$$
\int^{\ymin+\delta}_{\ymin} \frac{dy}{\,\sqrt{y^q-4\tau^2\,}\,}\sim 2b_q \, T_q(\tau),
\qquad
\int_{\ymax-\delta}^{\ymax} \frac{dy}{\,\sqrt{(1-y)^p-4\tau^2\,}\,}\sim 2b_p \, T_p(\tau).
$$
This follows easily by using \ref{E:y:maxmin:asym}
and integration by substitution
(substituting $z=y(4\tau^2)^{-1/q}$ or $z=(1-y)(4\tau^2)^{-1/p}$ respectively),
and concludes the proof when $p>1$ (recall also \ref{E:part:periods}).

When $p=1$ by using 
\ref{E:y:dot},      \ref{E:y:ic:p:eq:1}, and \ref{E:y0:p:eq:1},  we have
$$
\pt =
\int^{\ymax}_{\ymin} \frac{dy}{\,2\sqrt{y^{n-1}(1-y)-4\tau^2\,}\,}
$$
and as before the proof reduces to 
$$
\int^{\ymin+\delta}_{\ymin} \frac{dy}{\,\sqrt{y^{n-1}-4\tau^2\,}\,}\sim 2b_{n-1}\,T_{n-1}(\tau).
$$
\end{proof}

We introduce now some convenient notation.
Note that the definition of $\Phi_v$ is motivated by the fact that if $\Phi$ is Legendrian
then $\Phi_v$ is also Legendrian 
(see e.g. \cite[Lemma 2.4]{le:min:legn}).

\begin{definition}
\addtocounter{equation}{1}
\label{D:Phi:v}
If $\Phi:\Sigma\to\Sph^{2n-1}\subset\C^n$ is an immersion and $V$ is a normal
(to $\Phi$ in $\Sph^{2n-1}$)
small vector field, we define 
$\Phi_V:\Sigma\to\Sph^{2n-1}$ 
by
$$
\Phi_V=\frac{\Phi+V}{|\Phi+V|},
$$
where we consider $\Phi$ and $V$ as $\C^n$-valued and $|.|$ is the standard length.
If $\Phi$ is Legendrian and $v:\Sigma\to\R$ a function
with locally small enough $C^1$ norm, we also write $\Phi_v$
for $\Phi_
{2v\,J\frac{\partial}{\partial r}+J\nabla v}
$.
\end{definition}

In order to understand the asymptotics of $\pthat$ 
we prove first the following lemma:

\begin{lemma}
\addtocounter{equation}{1}
\label{L:infinitesimal-force}
Let $\mathsf{t} = \left.\frac{d\ttilde_{x}}{dx}\right|_{x=0}\in \lsunit{n}$ be the generator of the $1$-parameter subgroup $\{\ttilde_{x}\}$.
Suppose that $(X_\tau)_\phi$ is special Legendrian where $\phi:\cylpq\to\R$
is a smooth function which depends only on $t$.
The $\mathsf{t}$-flux through the meridian $(\{t\}\times\merpq)$ of $(X_\tau)_\phi$ is given by
$$ \mathcal{F}_{\mathsf{t}} =
\frac{n}{pq} \vol(\merpq)\,
\left(2\tau + (q-ny)\dot{\phi}+n\dot{y}\phi\right)+\hot,
$$
where $\vol(\merpq)=\vol(\Sph^{p-1}) \vol(\Sph^{q-1})$ if $p>1$
or $\vol(\merpq)=\vol(\Sph^{n-2})$ if $p=1$,
and ``$\hot$'' stands for terms which are quadratic or higher order in $\phi$
and its derivatives.
\end{lemma}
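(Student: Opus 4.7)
The plan is to exploit the $\sorth{p}\times\sorth{q}$-invariance of $\phi$ to reduce the first-order flux calculation to a one-dimensional identity along the twisted curve on $\Sph^3$, and then to use the linearization of the $(p,q)$-twisted SL ODEs \ref{E:odes:p:n}.

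First I would observe that since $\phi$ depends only on $t$, its tangential gradient in $g_\tau$ is $\nabla\phi = (\dot\phi/|\dot\bw|^2)\partial_t$ with $|\dot\bw|^2 = y^{q-1}(1-y)^{p-1}$ by \ref{E:w:reparam}. Consequently the perturbation vector field takes the form $V = i(\mu_1\sigma_1,\mu_2\sigma_2)$ where $\mu_j := 2\phi w_j + (\dot\phi/|\dot\bw|^2)\dot w_j$, and therefore $(X_\tau)_\phi = (w'_1\sigma_1, w'_2\sigma_2) + \hot$ with $w'_j := w_j + i\mu_j$; the identity $\bar w_1\dot w_1 + \bar w_2\dot w_2 = 0$ shows $|\bw'|^2 = 1 + \hot$, and the preservation of the twisted-product form makes $\partial_t(X_\tau)_\phi$ orthogonal to the meridian tangent space to first order. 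Hence the clean reduction used in the proof of Proposition \ref{P:torques} still applies, and since $\mathsf{t} = i\,\mathrm{diag}(\tfrac{1}{p}\Id_p,-\tfrac{1}{q}\Id_q)$ it gives
\[
\mathcal{F}_{\mathsf{t}}((X_\tau)_\phi) = \vol(\merpq)\Big[\tfrac{1}{p}\Imag(\bar w'_1\dot w'_1) - \tfrac{1}{q}\Imag(\bar w'_2\dot w'_2)\Big] + \hot.
\]
Writing $R_j := \Real(\bar w_j\dot\mu_j - \bar\mu_j\dot w_j)$ and expanding yields $\Imag(\bar w'_j\dot w'_j) = \pm 2\tau + R_j + \hot$, while the Legendrian condition on $\bw'$ at first order forces $R_1 + R_2 = 0$; combining produces $\mathcal{F}_{\mathsf{t}} = \tfrac{n}{pq}\vol(\merpq)(2\tau + R_1) + \hot$.

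The remaining task is to show $R_1 = (q-ny)\dot\phi + n\dot y\,\phi$. For this I would invoke the special Legendrian hypothesis on $(X_\tau)_\phi$: by Corollary \ref{C:so:invariant:slg} the perturbed curve $\bw'$ is to first order a $(p,q)$-twisted SL curve, so linearizing the first equation in \ref{E:odes:p:n} around $\bw_\tau$ produces
\[
\dot\mu_1 = -(p-1)\bar w_1^{p-2}\bar\mu_1\bar w_2^q - q\bar w_1^{p-1}\bar\mu_2\bar w_2^{q-1}.
\]
Multiplying by $\bar w_1$ and using the unperturbed identities $\bar w_1^{p-1}\bar w_2^q = \dot w_1$ and $\bar w_1^p\bar w_2^{q-1} = -\dot w_2$ collapses this to $\bar w_1\dot\mu_1 = -(p-1)\dot w_1\bar\mu_1 + q\dot w_2\bar\mu_2$, and hence $R_1 = \Real(-p\dot w_1\bar\mu_1 + q\dot w_2\bar\mu_2)$. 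Substituting $\bar\mu_j = 2\phi\bar w_j + (\dot\phi/|\dot\bw|^2)\bar{\dot w_j}$ and using $\Real(\bar w_1\dot w_1) = -\dot y/2 = -\Real(\bar w_2\dot w_2)$ together with $|\dot w_1|^2/|\dot\bw|^2 = y$ and $|\dot w_2|^2/|\dot\bw|^2 = 1-y$ then gives $R_1 = (q-ny)\dot\phi + n\dot y\,\phi$ after an elementary cancellation; the same argument covers the degenerate case $p=1$ since the coefficient $(p-1)$ naturally vanishes.

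The main obstacle I anticipate is conceptual rather than computational: a naive term-by-term differentiation of $\mu_1$ produces an expression for $R_1$ involving $\ddot\phi$, whose elimination in favor of the clean formula would require both \ref{E:y:ddot} and the linearized SL ODE for $\phi$ in a cancellation that is easy to mismanage. Packaging the SL constraint at the level of the complex linearization for $\dot\mu_1$ eliminates $\ddot\phi$ ab initio via the identity $\bar w_1\dot\mu_1 = -(p-1)\dot w_1\bar\mu_1 + q\dot w_2\bar\mu_2$, and this observation is what makes the calculation tractable.
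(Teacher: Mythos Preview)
Your overall strategy---reduce to the twisting curve and exploit the special Legendrian hypothesis to avoid brute-force computation of the pullback metric---is sound and considerably cleaner than the paper's direct calculation. But there is a genuine gap in the middle of the argument, arising from a conflation of \emph{being a $(p,q)$-twisted SL curve} (a reparametrisation-invariant condition, Definition \ref{D:twist:slg}) with \emph{satisfying the ODEs \ref{E:odes:p:n}} (which fixes the parametrisation via $|\dot\bw|=|w_1|^{p-1}|w_2|^{q-1}$). The perturbed curve $\bw'(t)=\bw_\tau(t)+i\mu(t)$ is indeed a $(p,q)$-twisted SL curve to first order, but it does \emph{not} satisfy the linearised ODE $\dot\mu_1=-(p-1)\bar w_1^{p-2}\bar\mu_1\bar w_2^q-q\bar w_1^{p-1}\bar\mu_2\bar w_2^{q-1}$ in the given $t$-parametrisation. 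A direct check shows the reparametrisation defect is first-order in $\phi$: one finds $|\dot\bw'|^2=|\dot\bw|^2+4\tau\dot\phi\,A$ while $|w'_1|^{2(p-1)}|w'_2|^{2(q-1)}=|\dot\bw|^2-4\tau\dot\phi\,A$, where $A=(p-1)/|w_1|^2-(q-1)/|w_2|^2$. Consequently your two intermediate claims are both off by $\pm 8\tau^2|\dot\bw|^{-2}A\,\dot\phi$: the ``clean reduction'' of Proposition \ref{P:torques} acquires a prefactor $1-4\tau|\dot\bw|^{-2}A\,\dot\phi$, and the true $R_1$ equals $(q-ny)\dot\phi+n\dot y\,\phi+8\tau^2|\dot\bw|^{-2}A\,\dot\phi$ rather than what you wrote. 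The errors cancel, so your final answer is correct, but the argument as written is not.

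The fix is short and keeps your approach intact. The content of Proposition \ref{P:torques} for a $(p,q)$-twisted SL curve $\bw$ is the reparametrisation-invariant identity
\[
\mathcal{F}_{\mathsf{t}}=-\tfrac{n}{pq}\vol(\merpq)\,\mathcal{I}_2(\bw),\qquad \mathcal{I}_2(\bw)=\Imag(w_1^p w_2^q),
\]
so there is no need to assume the ODE parametrisation for $\bw'$. Linearising $\mathcal{I}_2$ at $\bw'=\bw_\tau+i\mu$ gives, using $w_1^{p-1}w_2^q=\overline{\dot w_1}$ and $w_1^p w_2^{q-1}=-\overline{\dot w_2}$,
\[
-\mathcal{I}_2(\bw')=2\tau+\Real(-p\dot w_1\bar\mu_1+q\dot w_2\bar\mu_2)+\hot,
\]
and then your final substitution (which is correct) yields $(q-ny)\dot\phi+n\dot y\,\phi$ exactly. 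This bypasses both $\ddot\phi$ and the parametrisation issue. By contrast the paper computes $Y$, $\dot Y$, the full pullback metric $Y^*g_{\Sph^{2n-1}}$, and $\dot Y\cdot\mathsf{t}$ term by term (see \ref{E:Y}--\ref{E:phi:dot:coeff2}); the two $8\tau^2|\dot\bw|^{-2}A\,\dot\phi$ terms appear explicitly there and are seen to cancel by hand.
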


\begin{remark}
\addtocounter{equation}{1}
\label{R:linop:integral}
Since $\phi$ depends only on $t$ the linearised equation
$$\Delta_{X^*g_{\Sph^{2n-1}}}\phi+2n\phi=0$$
reduces to  
\addtocounter{theorem}{1}
\begin{equation}
\label{E:linear:phi}
\ddot{\phi}=-2n|\dot{\bw}|^{2}\phi.
\end{equation}
By \ref{E:w:reparam} and \ref{E:y:ddot} we have
$$
(\,(q-ny) \dot{\phi}+n\dot{y}\phi\,) \dot{\phantom{1} }
=
(q-ny) \, |\dot{\bw}|^{2} \, (\Delta_{X^*g_{\Sph^{2n-1}}}\phi+2n\phi),
$$
which shows that the first order linear ODE 
\begin{equation}
\addtocounter{theorem}{1}
\label{E:linop:integral}
(q-ny)\dot{\phi}+n\dot{y}\phi=A
\end{equation}
for any constant $A$, is a first integral of the second order linearised equation
\ref{E:linear:phi}.
Clearly, $\phi=q-ny$ satisfies \ref{E:linop:integral} with $A=0$ and hence is a solution 
of the linearised equation \ref{E:linear:phi}. One can also establish this directly by taking its 
second derivative and using \ref{E:y:ddot} and \ref{E:w:reparam}.

There is a simple geometric explanation for the solution $q-ny$ to the rotationally invariant 
linearised equation and for the fact that this solution 
satisfies \ref{E:linop:integral} with constant $A=0$. 
For any special Legendrian $X$ the variation vector field $V$ 
associated with the $1$-parameter variation $\ttilde_{x} \circ X$ arises from a function $\varphi$
which solves the linearised equation. The function $\varphi$ is  $\varphit \circ X$, where 
\begin{equation}
\addtocounter{theorem}{1}
\label{E:ft}
\varphit(z_1, ... , z_n)  =  
\frac1{2p}\sum_{i=1}^p|z_i|^2
-
\frac1{2q}\sum_{i=p+1}^n|z_i|^2, 
\end{equation}
which satisfies 
\begin{equation}
\addtocounter{theorem}{1}
\label{E:J:ft}
 J\nabla \varphit = \mathsf{t} = \left.\frac{d\ttilde_{x}}{dx}\right|_{x=0},
\end{equation}
 i.e. it is the function whose associated Hamiltonian vector field is $\mathsf{t}$, the infinitesimal generator of $\{\ttilde_{x}\}$.
Recall that the $1$-parameter subgroup $\{\ttilde_{x}\}$ commutes with every $\MMM \in \sorth{p} \times \sorth{q}$. 
Hence for any $x$, $\ttilde_{x} \circ X_{\tau}$ is also an $\sorth{p} \times \sorth{q}$-invariant special Legendrian 
congruent to $X_{\tau}$. Thus $\varphit \circ X_{\tau}$ should be a rotationally invariant function which is just
$$
\varphit \circ X_{\tau}=\frac{q-ny}{2pq}.
$$
$q-ny$ satisfies \ref{E:linop:integral} with $A=0$ 
because $\ttilde_{x} \circ X_{\tau}$ is just a repositioning of $X_{\tau}$ and does not correspond 
to changing the value of $\tau$ to some nearby $\tau'$. The variation vector field corresponding to varying $\tau$ in $X_{\tau}$
also arises from a rotationally invariant solution $\varphi$ of the linearised equation, but by \ref{L:infinitesimal-force}
it must satisfy \ref{E:linear:phi} with a nonzero constant $A$ because it corresponds to changing the value of $\tau$.

Finally, recall (see e.g. \cite[\S 27]{arnold:ode}) that if $\psi$ and $\phi$ are solutions of the second order linear equation \ref{E:linear:phi} then the Wronskian 
$$W_{\psi,\phi}(t):= \psi \dot{\phi} - \dot{\psi} \phi,$$
is constant and $\psi$, $\phi$ span the space of solutions of \ref{E:linear:phi} if and only if this constant is nonzero.
The expression on the LHS of \ref{E:linop:integral} is precisely the Wronskian where $\psi=q - ny$.
Hence to find a basis of solutions for \ref{E:linear:phi} it suffices to find a function $\phi$ satisfying 
\ref{E:linop:integral} with say $A=1$. 
\end{remark}

\begin{proof}
The proof is a rather long calculation making full use of the expression for $X_{\tau}$ in terms of $\bw_{\tau}$, the definition 
of the perturbation of a Legendrian submanifold by a function and repeated use of the 
equations satisfied by $\bw_{\tau}$, in particular \ref{E:w:reparam}, \ref{E:odes:p:n}, \ref{E:ydot:cx} and \ref{E:y:dot}.
Because of the importance of the lemma for calculating the asymptotics of $\pthat$ as $\tau \ra 0$ we give the main steps 
in the calculation. 

To simplify the notation we write $X=X_\tau$ and $Y=(X_\tau)_\phi$.
To compute the $\mathsf{t}$-flux through the meridian $(\{t\}\times\merpq)$ of $Y$ using \ref{E:kflux}
we need to compute the following:
the pullback metric $Y^*g_{\tiny \,\Sph^{2n-1}}$, 
the unit conormal $\eta$ to the meridian, the volume form induced on the meridian by $Y^*g_{\tiny \,\Sph^{2n-1}}$ 
and the inner product $\eta \cdot \mathsf{t}$.

We proceed to calculate $Y^*g_{\tiny \,\Sph^{2n-1}}$.
We first assume that $p>1$.
The definition of $(X)_\phi$ yields
\begin{equation}
\addtocounter{theorem}{1}
\label{E:Y}
Y=X+i|\dot{\bw}|^{-2}\dot{\phi}\,\dot{X}+2i\phi \,X + \hot,
\end{equation}
and therefore
\begin{equation}
\addtocounter{theorem}{1}
\label{E:Y:dot}
\dot{Y}=\dot{X}+i\ddot{X}\,(|\dot{\bw}|^{-2}\dot{\phi})
+i\dot{X}\, (\,\dot{\phi}\, (|\dot{\bw}|^{-2})\!\!\dot{\phantom{X}} \hspace{-0.4em}
+ |\dot{\bw}|^{-2}\ddot{\phi}+2\phi\,)
+2iX\,\dot{\phi}+\hot
\end{equation}
We compute that 
$$ \abs{\dot{Y}}^{2} = \abs{\dot{X}}^{2} + 2\abs{\dot{\bw}}^{-2}\dot{\phi} \,\dot{X} \cdot i\ddot{X} + \hot$$
(many terms vanish using the fact that $X$ is Legendrian in $\Sph^{2n-1}$ and because we only keep terms linear in 
$\phi$ and its derivatives). From the definition of $X_{\tau}$ in terms of $\bw_{\tau}$ we find
$$ \dot{X} \cdot i\ddot{X} = \Imag( \dot{w}_{1}\ddot{\overline{w}}_{1} + \dot{w}_{2}\ddot{\overline{w}}_{2} ). $$ 
Differentiation of \ref{E:odes:p:n} to compute the second derivatives of $\bw$ 
and subsequent persistent use of \ref{E:odes:p:n} to replace all first derivatives of $\bw$ eventually yields 
$$ \dot{w}_{1}\ddot{\overline{w}}_{1} + \dot{w}_{2}\ddot{\overline{w}}_{2} = 
\overline{w}_{1}^{p}\overline{w}_{2}^{q}\, (1-y)^{p-1}y^{q-1}\, \left( (p-1) \frac{\abs{w_{2}}^{2}}{\abs{w_{1}}^{2}} + (p-q) - (q-1) \frac{\abs{w_{1}}^{2}}{\abs{w_{2}}^{2}}\right).$$
Using \ref{E:w:reparam}, \ref{E:ydot:cx} and some algebraic manipulation we obtain 
$$ \dot{X} \cdot i\ddot{X} = 2\tau\abs{\dot{\bw}}^{2} \left(\frac{p-1}{|w_1|^2}-\frac{q-1}{|w_2|^2}\right),$$
and hence 
\begin{equation}
\addtocounter{theorem}{1}
\label{E:norm:Ydot}
\abs{\dot{Y}}^{2} = \abs{\dot{\bw}}^{2}+ 4\tau \left(\frac{p-1}{|w_1|^2}-\frac{q-1}{|w_2|^2}\right) \dot{\phi} + \hot
\end{equation}

We denote by $x$ and $y$ parametrisations of $\Sph^{p-1}$ and $\Sph^{q-1}$
with coordinates $\{x^j\}$ and $\{y^k\}$ respectively and calculate 
$$
\frac{\partial Y}{\partial x^j}
=
\frac{\partial X}{\partial x^j}
+
i \frac{\partial^2 X}{\partial t \partial x^j}
|\,\dot{\bw}|^{-2}\dot{\phi}
+
i\frac{\partial X}{\partial x^j}\, 2\phi + \hot
$$
and similarly for $\partial Y/\partial y^k$.
Using $X=w_{1} x + w_{2}y$ one can verify that 
$$ \dot{X} \cdot \tfrac{\partial{X}}{\partial{x^{j}}} = \dot{X} \cdot i\tfrac{\partial{X}}{\partial{x^{j}}} =
\dot{X} \circ i \tfrac{\partial^{2}X}{\partial{t}\partial{x^{j}}} = \tfrac{\partial{X}}{\partial{x^{j}}} \cdot i\ddot{X} = 
\tfrac{\partial{X}}{\partial{x^{j}}} \cdot iX =0,$$
for all $j$. The same vanishing is also true if we use the coordinates $y^{k}$ in place of $x^{j}$.
Since the only terms in $\dot{Y} \cdot \tfrac{\partial{Y}}{\partial{x}^{j}}$ or 
$\dot{Y} \cdot \tfrac{\partial{Y}}{\partial{y}^{k}}$
that are linear in $\phi$ and its derivatives are linear combinations of these vanishing terms above we conclude that 
\begin{equation}
\addtocounter{theorem}{1}
\label{E:dotY:Yx}
\dot{Y} \cdot \frac{\partial{Y}}{\partial{x}^{j}} = \dot{Y} \cdot \frac{\partial{Y}}{\partial{y}^{k}} = 0 + \hot
\quad \text{for all $j$ and $k$.}
\end{equation}
One also has 
\begin{equation}
\addtocounter{theorem}{1}
\label{E:Yx:Yy}
\frac{\partial{Y}}{\partial{x^{j}}} \cdot \frac{\partial{Y}}{\partial{y^{k}}} =0 + \hot \quad 
\text{for any $j$ and $k$}.
\end{equation}

We compute 
$$ \frac{\partial{X}}{\partial{x^{j}}} \cdot i\frac{\partial^{2}X}{\partial{t}\partial{x^{j'}}} = 
\Imag(w_{1} \dot{\overline{w}}_{1}) \frac{\partial{x}}{\partial{x^{j}}} \cdot \frac{\partial{x}}{\partial{x^{j'}}} 
=-2\tau \frac{\partial{x}}{\partial{x^{j}}} \cdot \frac{\partial{x}}{\partial{x^{j'}}},$$
and 
$$ \frac{\partial{X}}{\partial{y^{k}}} \cdot i\frac{\partial^{2}X}{\partial{t}\partial{y^{k'}}} = 
\Imag(w_{2} \dot{\overline{w}}_{2}) \frac{\partial{y}}{\partial{y^{k}}} \cdot \frac{\partial{y}}{\partial{y^{k'}}} 
=2\tau \frac{\partial{y}}{\partial{y^{k}}} \cdot \frac{\partial{y}}{\partial{y^{k'}}}.$$
Hence using the fact that 
$$\frac{\partial{X}}{\partial{x^{j}}} \cdot i  \frac{\partial{X}}{\partial{x^{j'}}} = 
\frac{\partial{X}}{\partial{y^{k}}} \cdot    i\frac{\partial{X}}{\partial{y^{k'}}} =0,$$
for all $j,j'$ and $k,k'$ we obtain 
\begin{equation}
\addtocounter{theorem}{1}
\label{E:Yx:Yx}
 \frac{\partial{Y}}{\partial{x^{j}}} \cdot  \frac{\partial{Y}}{\partial{x^{j'}}} = 
(\abs{w_{1}}^{2} - 4\tau \abs{\dot{\bw}}^{-2} \dot{\phi}) \frac{\partial{x}}{\partial{x^{j}}} \cdot   \frac{\partial{x}}{\partial{x^{j'}}}+ \hot,
\end{equation}
and 
\begin{equation}
\addtocounter{theorem}{1}
\label{E:Yy:Yy}
 \frac{\partial{Y}}{\partial{y^{k}}} \cdot   \frac{\partial{Y}}{\partial{y^{k'}}} = 
(\abs{w_{2}}^{2} + 4\tau \abs{\dot{\bw}}^{-2} \dot{\phi}) \frac{\partial{y}}{\partial{y^{k}}} \cdot   \frac{\partial{y}}{\partial{y^{k'}}}+\hot
\end{equation}
Combining \ref{E:norm:Ydot}, \ref{E:dotY:Yx}, \ref{E:Yx:Yy}, \ref{E:Yx:Yx} and \ref{E:Yy:Yy} we have 
\begin{multline}
\addtocounter{theorem}{1}
\label{E:pb:y:pgt1}
Y^*g_{\tiny \,\Sph^{2n-1}}=
\left(|\dot{\bw}|^{2}+4\tau\left(\frac{p-1}{|w_1|^2}-\frac{q-1}{|w_2|^2}\right)\dot{\phi}\right)\,dt^2
+
\\
(|w_1|^2-4\tau|\dot{\bw}|^{-2}\dot{\phi})\,g_{\tiny \,\Sph^{p-1}}
+
(|w_2|^2+4\tau|\dot{\bw}|^{-2}\dot{\phi})\,g_{\tiny \,\Sph^{q-1}} + \hot 
\end{multline}
for $p>1$.
By the same methods for $p=1$ we obtain 
\begin{equation}
\addtocounter{theorem}{1}
\label{E:pb:y:peq1}
Y^*g_{\tiny \,\Sph^{2n-1}}=
\left(|\dot{\bw}|^{2}-4\tau(n-2)|w_2|^{-2}\dot{\phi}\right)\,dt^2
+
(|w_2|^2+4\tau|\dot{\bw}|^{-2}\dot{\phi})\,g_{\tiny \,\Sph^{n-2}} + \hot
\end{equation}
Hence in both cases the unit conormal $\eta$ to the meridian $\{t\} \times \merpq$ is 
$$ \eta = \frac{\dot{Y}}{\abs{\dot{Y}}} + \hot$$
Combining this with \ref{E:pb:y:pgt1} and \ref{E:pb:y:peq1} we have  
\begin{equation*}
\eta \cdot \mathsf{t} \dvol = 
\begin{cases}
 \left( {\abs{\dot{Y}}^{-2}}\,(|w_1|^2-4\tau|\dot{\bw}|^{-2}\dot{\phi})^{p-1}\, 
(|w_2|^2+4\tau|\dot{\bw}|^{-2}\dot{\phi})^{q-1} \right)^{1/2} \dot{Y} \cdot {\mathsf{t}} \dvol_{\Sph^{p-1}} \dvol_{\Sph^{q-1}} \quad &\text{if $p>1$;}\\
\left( {\abs{\dot{Y}}^{-2}}\,(|w_2|^2+4\tau|\dot{\bw}|^{-2}\dot{\phi})^{n-2} \right)^{1/2} \dot{Y} \cdot {\mathsf{t}} \dvol_{\Sph^{n-2}} \quad &\text{if $p=1;$}
\end{cases}
\end{equation*}
up to higher order terms.
Expanding and keeping only the lowest order terms we find
$$ \left( {\abs{\dot{Y}}^{-2}}\,(|w_1|^2-4\tau|\dot{\bw}|^{-2}\dot{\phi})^{p-1}\, 
(|w_2|^2+4\tau|\dot{\bw}|^{-2}\dot{\phi})^{q-1} \right)^{1/2} = 1 - 4\tau \abs{\dot{\bw}}^{-2} 
\left( \frac{p-1}{|w_1|^2}-\frac{q-1}{|w_2|^2}\right) \dot{\phi}$$
and 
$$\left( {\abs{\dot{Y}}^{-2}}\,(|w_2|^2+4\tau|\dot{\bw}|^{-2}\dot{\phi})^{n-2} \right)^{1/2} = 
1 + 4\tau (n-2) \abs{\dot{\bw}}^{-2}{|w_2|^{-2}}\dot{\phi},$$
up to higher order terms. 
Hence we have 
\begin{equation}
\addtocounter{theorem}{1}
\label{E:eta:dot:t}
\eta \cdot \mathsf{t} \dvol = 
\begin{cases}
\left(1 - 4\tau \abs{\dot{\bw}}^{-2}  \left( \frac{p-1}{|w_1|^2}-\frac{q-1}{|w_2|^2}\right) \dot{\phi}\right)
 \dot{Y} \cdot {\mathsf{t}} \dvol_{\Sph^{p-1}} \dvol_{\Sph^{q-1}} + \ \hot \quad &\text{if $p>1$;}\\
\left(1 + 4\tau (n-2)\abs{\dot{\bw}}^{-2}{|w_2|^{-2}}\dot{\phi}\right) \dot{Y} \cdot {\mathsf{t}} \dvol_{\Sph^{n-2}} +\ \hot \quad &\text{if $p=1$}.
\end{cases}
\end{equation}

It remains to calculate $\dot{Y} \cdot \mathsf{t}$.
Recall that $\mathsf{t}$ at the point $(z_1, ... , z_n)\in\C^n$ is given by
$$
\mathsf{t}(z_1, ... , z_n)  =  i\,(z_1/p, ... ,z_p/p,-z_{p+1}/q, ... ,-z_n/q).
$$
Since $Y$ is special Legendrian, and $\phi$ depends only on $t$ 
it satisfies the linearized equation \ref{E:linear:phi}.
Substituting \ref{E:linear:phi} into  \ref{E:Y:dot}
and also using the expression for $Y$ from \ref{E:Y} we calculate that
\begin{multline*}
\dot{Y} \cdot {\mathsf{t}}=\\
\frac1p\Real\left(
-i\dot{w}_1\overline{w}_1+
\ddot{w}_1\overline{w}_1  |\dot{\bw}|^{-2}\dot{\phi}+
\dot{w}_1\overline{w}_1
(|\dot{\bw}|^{-2})\!\!\dot{\phantom{X}} \dot{\phi}
-
\dot{w}_1\overline{w}_1
2n\phi
+|w_1|^2 2\dot{\phi}
-|\dot{w}_1|^2
|\dot{\bw}|^{-2} \dot{\phi}
\right)
\\
-\frac1q\Real\left(
-i\dot{w}_2\overline{w}_2+
\ddot{w}_2\overline{w}_2  |\dot{\bw}|^{-2}\dot{\phi}+
\dot{w}_2\overline{w}_2
(|\dot{\bw}|^{-2})\!\!\dot{\phantom{X}} \dot{\phi}
-
\dot{w}_2\overline{w}_2
2n\phi
+|w_2|^2 2\dot{\phi}
-|\dot{w}_2|^2
|\dot{\bw}|^{-2} \dot{\phi}
\right).
\end{multline*}
We claim that this expression for $\dot{Y} \cdot {\mathsf{t}}$ can be simplified to 
\begin{equation}
\addtocounter{theorem}{1}
\label{E:t:ydot}
\dot{Y} \cdot {\mathsf{t}} =
\frac n {pq}
\left( 2\tau
+\left(q|w_1|^2-p|w_2|^2+8\tau^2|\dot{\bw}|^{-2}\left( \frac{p-1}{|w_1|^2}-\frac{q-1}{|w_2|^2}\right)\right)\dot{\phi}
+n\dot{y}\phi\right).
\end{equation}
Granted this claim the Lemma follows by using \ref{E:eta:dot:t} and \ref{E:t:ydot} to evaluate the $\mathsf{t}$-flux 
integral \ref{E:kflux} up to higher order terms.

For completeness we indicate how to obtain \ref{E:t:ydot}. The zero order terms and the terms involving only $\phi$ are easily 
computed using \ref{E:odes:p:n}, \ref{E:ydot:cx} and \ref{E:w:reparam}. Combining the eight terms involving $\dot{\phi}$ 
in the expression above \ref{E:t:ydot} to yield the coefficient of $\dot{\phi}$ in \ref{E:t:ydot} is more involved.
First we rewrite the eight terms appearing as the coefficient of $\dot{\phi}$ in the form 
\begin{multline}
\addtocounter{theorem}{1}
\label{E:phi:dot:coeff}
\frac{1}{p}\left(\phantom{|} \partial_{t}(\Real(\overline{w}_{1}\dot{w}_{1})\abs{\dot{\bw}}^{-2}) + 2\abs{w_{1}}^{2}-2\abs{w_{2}^{2}}\right) - \frac{1}{q}\left( \partial_{t}(\Real(\overline{w}_{2}\dot{w}_{2})\abs{\dot{\bw}}^{-2}) + 2\abs{w_{2}}^{2}-2\abs{w_{1}^{2}}\right)  =\\
\frac{n}{pq} \left( \partial_{t}(\Real(\overline{w}_{1}\dot{w}_{1})\abs{\dot{\bw}}^{-2}) +2\abs{w_{1}}^{2}-2\abs{w_{2}}^{2}\right).
\end{multline}
Rewrite $\Real(\overline{w}_{1}\dot{w}_{1})\abs{\dot{\bw}}^{-2} $ as $\Real(\overline{w}_{1}\overline{w}_{2}w_{1}^{1-p} w_{2}^{1-q})$ using \ref{E:odes:p:n} and \ref{E:w:reparam}. Repeated use of \ref{E:odes:p:n} yields
$$ \partial_{t}(\overline{w}_{1}\overline{w}_{2}w_{1}^{1-p} w_{2}^{1-q}) = \abs{w_{2}}^{2}-\abs{w_{1}}^{2} + 
\frac{\overline{w}_{1}^{p}\overline{w}_{2}^{q}}{w_{1}^{p}w_{2}^{q}}
\left( (1-p) \abs{w_{2}}^{2} - (1-q)\abs{w_{1}}^{2}\right),
$$
while \ref{E:w:reparam}, \ref{E:ydot:cx} and \ref{E:y:dot} imply that 
$$\Real\left(\frac{\overline{w}_{1}^{p}\overline{w}_{2}^{q}}{w_{1}^{p}w_{2}^{q}}\right) = 1 - \frac{8\tau^{2}}{f(y)} = 1 - \frac{8\tau^{2}}{\abs{\dot{\bw}}^{2}\abs{w_{1}}^{2}\abs{w_{2}}^{2}}.$$
Hence 
\begin{equation}
\addtocounter{theorem}{1}
\label{E:phi:dot:coeff2}
\partial_{t}(\Real(\overline{w}_{1}\dot{w}_{1})\abs{\dot{\bw}}^{-2})  = (2-p) \abs{w_{2}}^{2} - (2-q) \abs{w_{1}}^{2} + 8 \tau^{2} \abs{\dot{\bw}}^{-2} \left( \frac{p-1}{\abs{w_{1}}^{2}} - \frac{q-1}{w_{2}^{2}}\right).
\end{equation}
Combining \ref{E:phi:dot:coeff} with \ref{E:phi:dot:coeff2} gives us the coefficient of $\dot{\phi}$ as it appears in  
\ref{E:t:ydot}.
\end{proof}

The asymptotics of the angular period $\pthat$ as $\tau \ra 0$ will follow from the following result which expresses 
the derivative of the angular period for all values of $\tau$ in terms of the behaviour of a particular (rotationally invariant) 
solution $Q$ (depending on $\tau$) of the linearised equation \ref{E:linear:phi}.
\begin{lemma}
\addtocounter{equation}{1}
\label{L:phat:dt}
The angular period $\pthat$ is an analytic function of $\tau$ for $\tau \in (0,\taumax)$.
For any $0<\tau<\taumax$ the derivative of the angular period $\pthat$ satisfies 
\begin{equation}
\addtocounter{theorem}{1}
\label{E:pthat:dt:peq1}
\frac {d\pthat}{d\tau}  =
4(n-1)\left(\frac{Q(\pt)}{\,q-ny(\pt)}-\frac{Q(0)}{\,q-ny(0)}\right), \qquad \text{when $p=1$;}\\
\end{equation}
\begin{equation}
\addtocounter{theorem}{1}
\label{E:pthat:dt:pgt1}
\frac {d\pthat}{d\tau}  =
4pq\left( \frac{Q(\pt^+)}{\,q-ny(\pt^+)} - \frac{Q(-\pt^-)}{\,q-ny(-\pt^-)}\right),
\qquad \text{when $p>1$;}
\end{equation}
where $Q(t)$ is the unique solution to the rotationally-invariant linearised equation \ref{E:linear:phi}
with initial data
\begin{eqnarray}
\addtocounter{theorem}{1}
\label{E:Q:peq1}
n\,\dot{y}(\pt^{*})\,Q(\pt^*) &= 1,
\quad
\dot{Q}(\pt^*) & =0,
\qquad\text{when $p=1$};\\
\addtocounter{theorem}{1}
\label{E:Q:pgt1}
n\,\dot{y}(0)\,Q(0) &= 1,
\quad
\dot{Q}(0) & =0,
\qquad\text{when $p>1$;}
\end{eqnarray}
where for $p=1$  $\pt^*$  is the unique $t\in (0,\pt)$ such that
$y(\pt^*) = \tfrac{n-1}{n}=\tfrac{q}{n}$.
\end{lemma}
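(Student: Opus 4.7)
The plan is to combine the torque identity from Proposition \ref{P:torques} with the infinitesimal torque formula of Lemma \ref{L:infinitesimal-force}, viewing $\partial_\tau X_\tau$ as an infinitesimal rotationally invariant perturbation of $X_\tau$. Analyticity of $\pthat$ will follow from $2\pthat = p\,\psi_1(2\pt)$: by Proposition \ref{P:w:tau} the solution $\bw_\tau$ is real analytic in $\tau$, hence so is $\psi_1$ via its defining ODE \ref{E:y:psi}, and by Proposition \ref{P:asymptotics-ptau}(i) the (partial-)periods $\pt, \pt^\pm$ are analytic on $(0,\taumax)$.

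First I would identify the variation $V_\tau := \partial_\tau X_\tau$ with a scalar perturbation. Applying the Legendrian neighbourhood theorem to $X_\tau$, the normal component of $V_\tau$ equals $2\phi_*\, J\partial_r X_\tau + J\nabla\phi_*$ for a unique function $\phi_* : \cylpq \to \R$, modulo a tangential reparametrisation. Since $\{X_\tau\}$ is an $\sorth{p}\times\sorth{q}$-equivariant family, $\phi_*$ is rotationally invariant and depends only on $t$; and since $V_\tau$ is a variation through special Legendrians, $\phi_*$ satisfies the linearised equation \ref{E:linear:phi}. The space of rotationally invariant solutions is two-dimensional, and by Remark \ref{R:linop:integral} is spanned by the geometric solution $q - ny_\tau$ (generating the $\{\ttilde_x\}$ action) and the distinguished $Q$ from the statement. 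Hence $\phi_* = a(\tau)(q - ny_\tau) + b(\tau)Q$.

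Next I would pin down $b(\tau)$ via the torque. By Proposition \ref{P:torques}, $\mathcal{F}_{\mathsf{t}}(X_\tau) = \frac{2n}{pq}\tau\vol(\merpq)$, so $\frac{d}{d\tau}\mathcal{F}_{\mathsf{t}} = \frac{2n}{pq}\vol(\merpq)$. On the other hand, Lemma \ref{L:infinitesimal-force} shows that an infinitesimal rotationally invariant perturbation $\phi_*$ changes the $\mathsf{t}$-flux by $\frac{n}{pq}\vol(\merpq)\bigl((q-ny)\dot\phi_* + n\dot y\phi_*\bigr)$, which is constant in $t$ (being the first integral \ref{E:linop:integral} of \ref{E:linear:phi}). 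Since $q - ny_\tau$ contributes zero to this first integral and $Q$ is normalised by the initial conditions \ref{E:Q:peq1}--\ref{E:Q:pgt1} so that its first integral equals $1$, comparison forces $b(\tau) = 2$ for all admissible $\tau$.

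Finally, to extract $d\pthat/d\tau$ I would differentiate the symmetry relation $\ttilde_{2\pthat(\tau)}\circ X_\tau = X_\tau\circ \TTT_{2\pt(\tau)}$ in $\tau$ and project onto the $\mathsf{t}$-direction via the function $\varphit$ of \ref{E:ft}, using $\varphit \circ X_\tau = (q-ny_\tau)/(2pq)$ to identify the $\mathsf{t}$-component of $\phi_*$ at any point with $\phi_*/(q-ny_\tau)$ up to the factor $2pq$. Evaluating at two consecutive critical points of $y_\tau$ (i.e.\ $t = \pt^+$ and $t = -\pt^-$ when $p > 1$, or $t = \pt$ and $t = 0$ when $p = 1$) causes the tangential reparametrisation and the constant $a(\tau)$ to cancel, because $\dot y$ vanishes there while $q - ny$ takes the nonzero values $q - n\ymin$ and $q - n\ymax$. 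The remaining contribution collapses to
\[
\frac{d\pthat}{d\tau} \;=\; 2pq\cdot b(\tau)\left[\frac{Q(t_2)}{q - ny(t_2)} - \frac{Q(t_1)}{q - ny(t_1)}\right],
\]
which with $b(\tau) = 2$ and the appropriate endpoints is exactly \ref{E:pthat:dt:pgt1} (and \ref{E:pthat:dt:peq1} when $p = 1$, where $4pq$ specialises to $4(n-1)$). The main obstacle will be the careful execution of this last step: one must track the tangential reparametrisation absorbed into the definition of $\phi_*$, verify it drops out at the critical meridians (where $\dot X_\tau$ has no $\mathsf{t}$-component), and check that the normalisations of $\mathsf{t}$, $\varphit$, $Q$, and the torque combine to yield precisely the coefficient $4pq$.
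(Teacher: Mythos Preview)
Your strategy matches the paper's through the identification $\phi_* = a(q-ny_\tau) + bQ$ and the determination $b=2$ via the torque: differentiating $\mathcal{F}_{\mathsf{t}}(X_\tau)=\tfrac{2n}{pq}\tau\vol(\merpq)$ and comparing with Lemma \ref{L:infinitesimal-force} gives $(q-ny)\dot\phi_*+n\dot y\,\phi_*=2$, which forces $b=2$ since $Q$ is normalised to have first integral $1$. This is exactly what the paper does.

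The gap is in your final step. Differentiating the periodicity relation $\ttilde_{2\pthat(\tau)}\circ X_\tau = X_\tau\circ\TTT_{2\pt(\tau)}$ in $\tau$ and projecting to the normal bundle (the $\dot X_\tau$ term is tangential and drops out) gives
\[
\phi_*(t+2\pt)-\phi_*(t)=\frac{\pthat'(\tau)}{pq}\bigl(q-ny(t)\bigr),
\]
relating $\phi_*$ at points a \emph{full} period apart. Since $q-ny$ is $2\pt$-periodic this yields $\pthat'=2pq\,c$ where $Q(t+2\pt)-Q(t)=c(q-ny(t))$. But the endpoints $\pt^+$ and $-\pt^-$ (or $\pt$ and $0$ when $p=1$) are only \emph{half} a period apart; subtracting $\phi_*/(q-ny)$ at these two points does kill the constant $a$, but you have given no reason why the remaining quantity $2[Q(\pt^+)/(q-n\ymin)-Q(-\pt^-)/(q-n\ymax)]$ should equal $c$.

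The paper fills this gap with the discrete reflection symmetries, which you do not invoke. For $p=1$ the paper notes that $X_\tau$ and $Y:=X_\sigma$ both satisfy $\tbartilde\circ\,\cdot\,=\,\cdot\,\circ\tbar$, while $X_\tau$ and the repositioned $Z:=\ttilde_{\pthat-\psighat}\circ X_\sigma\circ\TTT_{\psig-\pt}$ both satisfy $\tbartilde_{\pthat}\circ\,\cdot\,=\,\cdot\,\circ\tbar_{\pt}$. Uniqueness in the Legendrian neighbourhood theorem then forces the perturbation functions to be \emph{odd} under the respective reflections, giving $\phi(0)=0$ and $\varphi(\pt)=0$ after linearising in $\sigma$. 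Since $\varphi=\phi-\pthat'(\tau)\tfrac{q-ny}{2pq}$ (linearisation of the relation between $Z$ and $Y$), these two vanishing conditions immediately determine both $a$ and $\pthat'$, yielding \ref{E:pthat:dt:peq1}. The case $p>1$ is identical using $\tbartildeminus,\tbartildeplus$ at $-\pt^-,\pt^+$. One can equally rescue your route by applying these reflections directly to $Q$ to compute $Q(t+2\pt)-Q(t)$ via $\TTT_{2\pt}=\tbar_{\pt^+}\circ\tbar_{-\pt^-}$; either way, the reflection symmetries of $X_\tau$ are the missing ingredient.
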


\begin{remark}
\addtocounter{equation}{1}
\label{R:Q:smooth}
For $p=1$ $y(\pt^{*})=q/n$ and $\pt^{*}$ is locally characterised by this property. 
Also $\dot{y}(\pt^{*}) = -4 \sqrt{\taumax^{2}-\tau^{2}} \neq 0$ for $\tau \in (-\taumax,\taumax)$. 
Hence by the real analytic Implicit Function Theorem $\pt^{*}$ is an analytic function of $\tau$ in $(-\taumax,\taumax)$. 
In particular it is bounded independent of $\tau$ as $\tau \ra 0$. 
For $p>1$ $y(0)=q/n$  and $\dot{y}(0) = - 4 \sqrt{\taumax^{2}-\tau^{2}}$. 

Hence in both cases the initial conditions for $Q$  vary analytically with $\abs{\tau}<\taumax$. 
Also by \ref{P:X:tau}.i the coefficients of the linearised equation \ref{E:linear:phi} depend analytically on $\tau \in (-\taumax,\taumax)$.
Combining all these facts we see that the solution $Q$ to \ref{E:linear:phi} defined above depends analytically on $\tau \in (-\taumax,\taumax)$. Therefore, if $t$ stays in a bounded interval $I\subset \R$ then $\sup_{t\in I}\abs{Q(t)}$ is bounded independent of $\tau$ as $\tau \ra 0$. In particular, the term $Q(0)$ appearing in \ref{E:pthat:dt:peq1} is bounded as $\tau \ra 0$.
\end{remark}

\begin{proof}
Real analyticity of $\pthat$ for $\tau \in (0,\taumax)$ follows from real analyticity of $\bw_{\tau}$, $\pt^{+}$, $\pt^{-}$ 
and $\pt$ and the definition of $\pthat$ (\ref{E:pthat}).
We fix any $\tau\in (0,\taumax)$ and consider $\sigma$ sufficiently close to $\tau$ which we will allow to vary.

Consider first the case $p=1$. By \ref{P:xtau:sym:p:eq:1} $X:=X_{\tau}$ has the symmetries
$$ \tbartilde \circ X = X \circ \tbar, \qquad \tbartilde_{\pthat} \circ X = X \circ \tbar_{\pt}.$$
$Y:= X_{\sigma}$ shares the $\tbartilde$ symmetry 
\begin{equation}
\addtocounter{theorem}{1}
\label{E:Y:tbar}
\tbartilde \circ Y = Y \circ \tbar,
\end{equation}
but not the symmetry with respect to $\tbartilde_{\pthat}$ (because we have changed from $\tau$ to $\sigma$).
However,  the following repositioned and reparametrised version of $X_{\sigma}$
$$Z:=\ttilde_{\pthat-\psighat} \circ X_\sigma \circ \TTT_{\psig-\pt},$$
does share the other ($\sigma$-independent) symmetry of $X$, i.e.
\begin{equation}
\addtocounter{theorem}{1}
\label{E:Z:tbarpt}
\tbartilde_{\pthat} \circ Z  = Z \circ \tbar_{\pt}.
\end{equation}
Since $\{\ttilde_{x}\}$  commutes with $\orth{n-1}$ by \ref{P:commute:p:eq:1}ii the 
immersion $Z$ is $\orth{n-1}$-invariant like $X$ and $Y$.

When $p>1$ we write 
$$X:=X_\tau, \quad 
Y:=\ttilde_{x^-} \circ X_\sigma \circ \TTT_{\pt^--\psig^-}, \quad 
Z:=\ttilde_{\pthat-\psighat} \circ Y \circ \TTT_{\psig-\pt},$$
where $x^-$ is defined to be the small number
which ensures that the symmetries of $X$ in
\ref{E:tbar:ptminus} and \ref{E:tbar:ptplus} 
(or \ref{E:tbar:ptminus:p:eq:q} and \ref{E:tbar:ptplus:p:eq:q})
apply to $Y$ and $Z$ respectively as
\begin{equation}
\addtocounter{theorem}{1}
\label{E:Y:tbarminus}
\tbartildeminus \circ Y  = Y \circ \tbar_{\,-\pt^-}
\end{equation}¥
and
\begin{equation}
\addtocounter{theorem}{1}
\label{E:Y:tbarplus}
\tbartildeplus \circ Z  = Z \circ \tbar_{\pt^+},
\end{equation}
where 
$\tbartildeminus$ and $\tbartildeplus$ are defined in \ref{E:tbartildeminus}
and \ref{E:tbartildeplus} respectively (and are independent of $\sigma$).


Provided $\sigma$ is sufficiently close to $\tau$
we clearly have 
unique small
vector fields $V$ and $W$
normal to $\left.X\right|_{(-2\pt,2\pt)\times\merpq}$,
and diffeomorphisms close to the identity $D_\sigma, E_\sigma:(-2\pt,2\pt)\times\merpq\to\cylpq$,
such that on $(-2\pt,2\pt)\times\merpq$
$$Y=X_V\circ D_\sigma, \quad \text{and} \quad Z=X_W\circ E_\sigma.$$
Clearly $V, W, D_\sigma, E_\sigma$ are smooth and depend smoothly on $\sigma$.
Moreover by the appropriate version of the
Legendrian neighbourhood theorem
(see e.g. \cite[Lemma 2.4]{le:min:legn})
there are unique small smooth functions $\phitilde_{\sigma},\varphitilde_{\sigma}:(-2\pt,2\pt)\times\merpq\to\R$
depending smoothly on $\sigma$ 
such that 
$$
V=2\phitilde_{\sigma}\,J\frac{\partial}{\partial r}+J\nabla\phitilde_{\sigma},
\qquad
W=2\varphitilde_{\sigma}\,J\frac{\partial}{\partial r}+J\nabla\varphitilde_{\sigma},
$$
and therefore by \ref{D:Phi:v} 
$X_V=X_{\phitilde_{\sigma}}$ and $X_W=X_{\varphitilde_{\sigma}}$.


We want to show that $\phitilde_{\sigma}$ and $\varphitilde_{\sigma}$ inherit certain symmetries from the 
symmetries of $X$, $Y$ and $Z$ given above. We claim that $\phitilde_{\sigma}$ and $\varphitilde_{\sigma}$ depend only on $t$
and that \begin{eqnarray}
\addtocounter{theorem}{1}
\label{E:phitilde:sym:peq1}
-\phitilde_{\sigma}\circ \tbar=\phitilde_{\sigma}, \quad -\varphitilde_{\sigma} \circ \tbar_{\pt} = \varphitilde_{\sigma} 
\qquad & \text{if $p=1$;}\\
\addtocounter{theorem}{1}
\label{E:phitilde:sym:pgt1}
-\phitilde_{\sigma}\circ \tbar_{-\pt^{-}}=\phitilde_{\sigma}, \quad -\varphitilde_{\sigma} \circ \tbar_{\pt^{+}} = \varphitilde_{\sigma} \qquad & \text{if $p>1$.}
\end{eqnarray}

To see the first symmetry of \ref{E:phitilde:sym:peq1} we combine \ref{P:commute:p:eq:1}.iii with the $\tbartilde$ symmetry of $X$ to obtain
$$\tbartilde\circ X_{\phitilde_{\sigma}}=(\tbartilde\circ X)_{-\phitilde_{\sigma}}=(X\circ\tbar)_{-\phitilde_{\sigma}}=X_{-\phitilde_{\sigma}\circ\tbar}\circ\tbar.$$
Combining this with the symmetry \ref{E:Y:tbar} of $Y$  we conclude
$$
X_{-\phitilde_{\sigma}\circ\tbar}\circ\tbar \circ D_\sigma =X_{\phitilde_{\sigma}}\circ D_\sigma \circ \tbar.
$$
The uniqueness statement in the Legendrian neighbourhood theorem now implies that
$-\phitilde_{\sigma}\circ \tbar=\phitilde_{\sigma}$ as required. 
Arguing in the same way using the symmetry of $X$ and  $Z$ (\ref{E:Z:tbarpt}) with respect to $\tbartilde_{\pthat}$ 
 we conclude that $\varphitilde$ satisfies the second symmetry of \ref{E:phitilde:sym:peq1}.
The analogous argument using the symmetry of $X$, $Y$ and $Z$ under any 
$\MMM \in \orth{n-1}$ (recall \ref{E:orth:p:eq:1})
implies that 
$$\phitilde_{\sigma}\circ \MMM=\phitilde_{\sigma}, \quad \varphitilde_{\sigma}\circ \MMM=\varphitilde_{\sigma},
\quad \text{for any\ }  \MMM \in \orth{n-1},$$
and therefore $\phitilde_{\sigma}$ and $\varphitilde_{\sigma}$ depend only on $t$.
For $p>1$ the same sort of arguments establish the symmetries in \ref{E:phitilde:sym:pgt1} and the rotational symmetry
of $\phitilde_{\sigma}$ and $\varphitilde_{\sigma}$.

Linearising $Z:=\ttilde_{\pthat-\psighat} \circ Y \circ \TTT_{\psig-\pt}$ around $\sigma=\tau$, using 
\ref{E:J:ft} and comparing normal components we obtain the following important equality
\addtocounter{theorem}{1}
\begin{equation}
\label{E:varphi-phi}
\varphi=\phi\,-\left(\left. \frac { d\pthat } {d\tau}\right|_\tau \right)\varphit\circ X, 
\end{equation}
with $\varphit$ as defined in \ref{E:ft} and
$$
\varphi=\left.\frac{d\varphitilde_{\sigma}}{d\sigma}\right|_{\sigma=\tau}
\quad \text{and} \quad 
\phi=\left.\frac{d\phitilde_{\sigma}}{d\sigma}\right|_{\sigma=\tau}.
$$
Recall that 
$$ \varphit \circ X = \frac{q-ny}{2pq}.$$
Differentiating the expression for the linearised $\mathsf{t}$-flux from lemma 
\ref{L:infinitesimal-force} we find that 
$\phi$ satisfies
\begin{equation}
\addtocounter{theorem}{1}
\label{E:phi:integral}
(q-ny)\dot{\phi}+n\dot{y}\phi =2.
\end{equation}
Using the initial conditions for $Q$ given in \ref{E:Q:peq1} and \ref{E:Q:pgt1} we see that the Wronskian $W(t)$
of $q-ny$ with $Q$ satisfies
\begin{equation}
\addtocounter{theorem}{1}
\label{E:q:integral}
 W(t) := (q-ny)\dot{Q} + n\dot{y} \,Q \equiv 1,
\end{equation}
and hence by Remark \ref{R:linop:integral} $Q$ and $q-ny$ span the solution space of the linearised equation 
\ref{E:linear:phi}.
Hence from \ref{E:phi:integral} there is a unique constant $b$ such that 
\begin{equation}
\addtocounter{theorem}{1}
\label{E:phi}
\phi=2\,\left(\,b(q-ny) + Q \,\right).
\end{equation}
$\phi$ and $\varphi$ inherit the symmetries of $\phitilde_{\sigma}$ and $\varphitilde_{\sigma}$ 
\ref{E:phitilde:sym:peq1} and \ref{E:phitilde:sym:pgt1} (for the case $p=1$ and $p>1$ respectively).
In particular we have
$$ \phi(0)=0, \quad \text{and} \quad \varphi(\pt)=0 \quad \text{when $p=1$;}$$
and 
$$ \phi(-\pt^{-}) = 0, \quad \text{and} \quad \varphi(\pt^{+}) = 0 \quad \text{when $p>1$.}$$
We determine $b$ by using the values of $\phi$ given above
\begin{equation}
\addtocounter{theorem}{1}
\label{E:b}
 b = -\frac{Q(0)}{q-ny(0)} \quad \text{if $p=1$,} \quad \text{or}  \quad 
b= - \frac{Q(-\pt^{-})}{q-ny(-\pt^{-})} \quad \text{if $p>1$.}
\end{equation}
Similarly \ref{E:varphi-phi} together with the above values of $\varphi$  implies
$$ \frac{d \pthat}{d\tau} = \frac{2(n-1)}{q-ny(\pt)} \phi(\pt) \quad \text{if $p=1$},$$
and 
$$ \frac{d \pthat}{d\tau} = \frac{2pq}{q-ny(\pt^{+})} \phi(\pt^{+}) \quad \text{if $p>1$}.$$
Combining these expressions with \ref{E:phi} and \ref{E:b} yields \ref{E:pthat:dt:peq1} and \ref{E:pthat:dt:pgt1}.
\end{proof}

\begin{prop}
\addtocounter{equation}{1}
\label{P:asymptotics-pthat}
For $\tau>0$ the angular period satisfies
\addtocounter{theorem}{1}
\begin{equation}
\label{E:phat}
\frac{d \pthat}{d \tau} \sim 
\frac{4p}q\pt
\qquad\quad
\pthat- \frac{\pi}{2} \sim
\frac{4p}q\tau\pt.
\end{equation}
\end{prop}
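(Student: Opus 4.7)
The plan is to apply Lemma \ref{L:phat:dt} — which expresses $d\pthat/d\tau$ exactly in terms of values of the distinguished solution $Q$ of the rotationally invariant linearised equation at a few special points — and then to carry out a careful asymptotic analysis of $Q$ at those points as $\tau \to 0$. Once the derivative asymptotic is established, the claim for $\pthat - \pi/2$ will follow by integration from $0$ using the boundary value $\lim_{\tau \to 0}\pthat = \pi/2$ (equation \ref{E:pt:hat:asymp}) together with the explicit scaling of $\pt$ given in Proposition \ref{P:asymptotics-ptau}.

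The central structural observation is that the limiting solution $Q_0$ at $\tau = 0$ is asymptotically linear at $\pm\infty$. Indeed $Q_0$ satisfies $\ddot Q_0 + 2n\,y_0^{q-1}(1-y_0)^{p-1}\,Q_0 = 0$, and by Lemma \ref{P:y} the coefficient $|\dot\bw_0|^2 = y_0^{q-1}(1-y_0)^{p-1}$ decays rapidly enough at $\pm\infty$ (exponentially when $p=2$ or $q=2$, polynomially otherwise) to be integrable; hence $\ddot Q_0$ is integrable and $Q_0$ is asymptotically linear. The asymptotic slopes can be read directly from the conserved Wronskian $(q-ny_0)\dot Q_0 + n\dot y_0 Q_0 \equiv 1$, using $\dot y_0\cdot t \to 0$ at $\pm\infty$: one obtains $\dot Q_0(t)\to 1/q$ as $t\to+\infty$ and $\dot Q_0(t)\to -1/p$ as $t\to-\infty$, so $Q_0(t)\sim t/q$ and $-t/p$ respectively. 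Combining these asymptotics for $Q_\tau$ at the diverging endpoints $\pt^\pm(\tau)$ with the limits $q - n\ymin \to q$ and $q - n\ymax \to -p$, substitution into Lemma \ref{L:phat:dt} gives for $p>1$
\begin{equation*}
\frac{d\pthat}{d\tau} \;=\; \frac{4p}{q}\,\pt^+ \;+\; \frac{4q}{p}\,\pt^- \;+\; O(1).
\end{equation*}
By Proposition \ref{P:asymptotics-ptau}, $\pt^+ \sim b_q T_q(\tau)$ and $\pt^- \sim b_p T_p(\tau)$, and $T_q(\tau)/T_p(\tau) \to \infty$ as $\tau \to 0$ whenever $p<q$; so the first term dominates and equals $(4p/q)\pt$ to leading order, while for $p=q$ the two terms coincide and their sum is $(4p/q)\pt$ directly. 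The $p=1$ case is parallel, using that $Q(0)$ remains bounded as $\tau\to 0$ (Remark \ref{R:Q:smooth}) and that the analogous linear growth $Q_\tau(\pt)\sim \pt/q$ drives the leading order.

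The main obstacle is transferring the asymptotics of $Q_0$ to $Q_\tau$ at the $\tau$-dependent endpoints $\pt^\pm(\tau)$, which diverge as $\tau\to 0$: analytic dependence of $Q_\tau$ on $\tau$ provides uniform convergence only on compact $t$-intervals, so additional control is needed across the neck regions where $y_\tau^{q-1}(1-y_\tau)^{p-1}$ degenerates. A clean way to handle this is to integrate the first-integral identity $\frac{d}{dt}\bigl(Q/(q-ny)\bigr) = 1/(q-ny)^2$ (immediate from the Wronskian), avoiding $Q$ itself and reducing the problem to asymptotic evaluation of $\int 1/(q-ny)^2\,dt$ on appropriate subintervals; the dominant contribution comes from the regions $y\approx 0$ (driving $\pt^+$) or $y\approx 1$ (driving $\pt^-$) and can be estimated using the $\tau\to 0$ description of $y_\tau$ and the Lagrangian catenoid neck models of Section \ref{S:xtau:limit}. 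The final integration step yielding $\pthat - \pi/2$ then reduces to the elementary evaluation of $\int_0^\tau s^{-1+2/k}\,ds$ (respectively $\int_0^\tau \log(1/s)\,ds$ when $k=2$) for the relevant exponent $k\in\{p,q\}$, combined with the leading behaviour of $\pt(s)$ from Proposition \ref{P:asymptotics-ptau}.
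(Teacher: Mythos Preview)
Your approach for $d\pthat/d\tau$ is essentially the paper's: both reduce via Lemma \ref{L:phat:dt} to the asymptotics of $Q$ at the diverging endpoints, and both control those endpoints by integrating the first-order identity $\bigl(Q/(q-ny)\bigr)' = 1/(q-ny)^2$. You first analyse the limiting solution $Q_0$, note the transfer obstacle, and then propose exactly this first-integral computation across the neck; the paper skips the $Q_0$ framing and works directly with $Q_\tau$, bounding it on a fixed compact interval via Remark \ref{R:Q:smooth} and then integrating $1/(q-ny)^2$ out to the endpoint. The mechanism and the resulting estimates $Q(\pt^+)\sim \pt^+/q$, $Q(-\pt^-)\sim \pt^-/p$ (resp.\ $Q(\pt)\sim \pt/(n-1)$ when $p=1$) are the same.

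There is, however, a genuine gap in your treatment of the second asymptotic. You invoke equation \ref{E:pt:hat:asymp} for the boundary value $\lim_{\tau\to 0}\pthat = \pi/2$, but that equation is stated in Section \ref{S:w:sym} explicitly as a forward reference to Section \ref{S:asymptotics}; its proof is part of this very proposition. So your integration argument is circular: it computes $\pthat(\tau)-\pthat(0^+)$ correctly (the integrability of $\pp_\sigma$ near $0$ follows from Proposition \ref{P:asymptotics-ptau}), but the identification $\pthat(0^+)=\pi/2$ is exactly what still needs proof. The paper supplies this separately, \emph{before} analysing $Q$, via elementary estimates on $\psi_1,\psi_2$: from \ref{E:psi:real}--\ref{E:psi:imag:pneq1} one evaluates $\Psi$ at a suitable point to get $\pi/2+\alpha_\tau$, and then bounds the residual terms (e.g.\ $p\psi_1(\pt^+)$, $q\psi_2(-\pt^-)$ for $p>1$, or $\psi_2(\pt^*)$ and $\psi_1(\pt)-\psi_1(\pt^*)$ for $p=1$) by $O(\tau\pt)$ using \ref{E:y:psi} and Proposition \ref{P:asymptotics-ptau}. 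You need an equivalent independent argument for the limit value.
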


\begin{proof}
This will follow easily from Lemma \ref{L:phat:dt} by estimating the appropriate values of $Q$ when $\tau$ is sufficiently small, 
once we have proved that $\pthat\to \frac{\pi}{2}$ as $\tau\to 0$.

We deal first with the case $p=1$.
Recall that $\pt^*$ is the unique $t\in (0,\pt)$ such that $y(\pt^*) = \tfrac{n-1}{n}$ and that it 
approaches a finite limit as $\tau \ra 0$. 
Recall the function $\Psi$ defined in \ref{E:psi:def}. 
The initial condition for $y$ together with
\ref{E:psi:real} and \ref{E:psi:imag} implies that $\Psi \in (0,\tfrac{\pi}{2})$
for $t\in (0,\pt)$. 
From \ref{E:psi:real} we have $\cos{\Psi(\pt^{*})} = \frac{\tau}{\taumax}$ and therefore 
\addtocounter{theorem}{1}
\begin{equation}
\label{E:Psi:max}
\Psi(\pt^*) = \tfrac{\pi}{2}  - \arcsin(\tfrac{\tau}{\taumax}) = \tfrac{\pi}{2} + \alpha_\tau,
\end{equation}
where $\alpha_\tau = -\arcsin(\tau/\taumax)$ as in \ref{E:alpha:tau}.
\ref{E:Psi:max} implies that 
$$ \lim_{\tau \ra 0} \Psi(\pt^{*}) := \psi_{1}(\pt^{*})+(n-1)\psi_{2}(\pt^{*}) = \frac{\pi}{2}.$$

%
Using \ref{E:y:psi} we calculate
$$
\frac{d\psi_2}{dy} =
\frac{\dot{\psi}_2}{\dot{y}} =
-\frac{2\tau}{y\dot{y}}
\qquad
\text{ and }
\qquad
\frac{d\psi_1}{dy} =
 \frac{\dot{\psi}_1}{\dot{y}} = \frac{2\tau}{(1-y)\dot{y}} ,
$$
and therefore
$$
 \psi_2(\pt^*) = 
\int^{\ymax}_{(n-1)/n}
\frac{2\tau}{y\dot{y}} dy
\qquad
\text{and}
\qquad
\psi_1(\pt) - \psi_1(\pt^*) =
\int_{(n-1)/n}^{\ymin}\frac{2\tau}{(1-y)\dot{y}} dy.
$$
We claim that both these integrals converge to zero and hence $\pthat=  \psi_1(\pt)$  (recall \ref{E:psi:period:p:eq:1})
converges to $\pi/2$ as desired. 
We can see that these integrals converge to zero as follows. Since in the integral for $\psi_{2}(\pt^{*})$,
$y$ belongs to the interval $(\tfrac{n-1}{n},y_{\text{max}}) \subset (\tfrac{n-1}{n},1)$ we have
\begin{equation*}
2\tau \pt^* < -\psi_2(\pt^*) < \frac{2n\tau}{n-1} \pt^*.
\end{equation*}
Since $\pt^{*}$ is bounded as $\tau \ra 0$, $\psi_{2}(\pt^{*})$ converges to zero as $\tau \ra 0$.
Similarly, using the obvious upper and lower bounds for $1-y$ in the integral for $\psi_{1}(\pt) - \psi_{1}(\pt^{*})$ we obtain
\begin{equation*}
2\tau (\pt- \pt^*) < \psi_1(\pt) - \psi_1(\pt^*) < 2\tau n (\pt - \pt^*).
\end{equation*}
Hence by the asymptotics for $\pt$ established in \ref{P:asymptotics-ptau} we see
$\psi_1(\pt) - \psi_1(\pt^*) \ra 0$.

The argument in the case $p>1$ is very similar. 
At $t= \pt^+$ (or $t=-\pt^-$) we have $\dot{y}=0$ and $y=\ymin$ (or $y=\ymax$).
Hence \ref{E:psi:real:pneq1} and \ref{E:psi:imag:pneq1} imply that $e^{i(\Psi + \alpha_\tau)} = e^{-i\pi/2}$ and therefore 
$\Psi = -\tfrac{\pi}{2} - \alpha_\tau$ at $t=\pt^+$ (or $t=-\pt^-$).
It follows using \ref{E:psi:period:p:neq:1} that
\begin{equation}
\addtocounter{theorem}{1}
\label{E:pthat:pgt1}
\pthat = \tfrac{\pi}{2} + \alpha_\tau + p\,\psi_1(\pt^+) + q\, \psi_2(-\pt^-).
\end{equation}
By analysing the functions $\psi_1$ on $(0,\pt^+)$ and
$\psi_2$ on $(-\pt^-,0)$ as above we find 
\begin{equation*}
2p\tau \pt^+ < p\psi_1(\pt^+) < 2n\tau \pt^+,
\quad \text{and} \quad 
2q\tau \pt^- < q\psi_2(-\pt^-) < 2n\tau \pt^-.
\end{equation*}
Hence by \ref{P:asymptotics-ptau} and the definition of $\alpha_{\tau}$ 
all three nonconstant terms on the RHS of \ref{E:pthat:pgt1} converge to zero as $\tau \ra 0$.


It remains only to understand the small $\tau$ asymptotics of the values of $Q$
which appear in \ref{E:Q:peq1} and \ref{E:Q:pgt1}. 
To achieve this we subdivide the interval $(0,\pt)$ when $p=1$
or $(-\pt^-,\pt^+)$ when $p>1$ as in the proof of
\ref{P:asymptotics-ptau}.
By Remark \ref{R:Q:smooth} we obtain bounds on $Q(t)$ independent of $\tau$ except when $y(t)$ is close to $\ymin$
and in the case $p>1$ also when $y(t)$ is close to $\ymax$.
To deal with these regions we notice that 
away from zeros of $q-ny$ the first order ODE for $Q$ \ref{E:q:integral} can be rewritten as 
\begin{equation}
\addtocounter{theorem}{1}
\label{E:Q:integral2}
\left(
\frac Q {q-ny}
\right)
\dot{\!\!\!\!\!\phantom{\frac11}  }
=\frac1{\,(q-ny)^2\,}.
\end{equation}
Using \ref{E:y:dot} and \ref{E:Q:integral2} 
we see that in the vicinity of $\ymax$ or $\ymin$, 
$\dot{Q}$ is close to $(q-ny)^{-1}$,
which is close to either $-1/p$ or $1/q$ respectively.
Using the asymptotics from \ref{P:asymptotics-ptau} we conclude
$$Q(\pt)\sim\frac{1}{n-1}\pt \quad \text{when $p=1$},$$
or 
$$Q(-\pt^-)\sim \frac{1}{p}\pt^- \quad \text{and} \quad Q(\pt^+)\sim\frac{1}{q}\pt^+ \quad \text{when $p>1$},$$
which together with Lemma \ref{L:phat:dt} implies the result for $\frac{d\pthat}{d\tau}$ claimed.
\end{proof}

We also need the limiting behaviour of $\pthat$ as $\tau \ra \taumax$
\begin{lemma}
\addtocounter{equation}{1}
\label{L:ptau:taumax}
In the limit as $\tau \ra \taumax$ we have
\addtocounter{theorem}{1}
\begin{equation}
\label{E:pthat:taumax:p:eq:1}
\lim_{\tau \ra \taumax}{\pthat} = \pi \sqrt{\frac{2pq}{n}}.
\end{equation}
\end{lemma}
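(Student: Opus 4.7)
The plan is to obtain the limit by combining the explicit formula \ref{E:X:taumax} for the degenerate building block $X_{\taumax}$ with the already established asymptotic \ref{E:pt:taumax} for the period $2\pt$. The idea is that although $y_\tau$ degenerates to a constant as $\tau\to\taumax$, the curve $\bw_\tau$ itself still admits a continuous (in fact analytic) extension to $\tau=\taumax$, and on this extension the argument $\psi_1$ of $w_1$ is a \emph{linear} function of $t$ that can simply be read off. Composing this linear function with the limiting value of $2\pt$ then produces the stated limit by a short arithmetic computation.

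First I would justify that $\bw_\tau\to\bw_{\taumax}$ uniformly on any compact $t$-interval. The initial conditions \ref{E:w:ic:p:neq:1} and \ref{E:w:ic:p:eq:1} depend continuously on $\tau$ up to $\tau=\taumax$ (via $\alpha_\tau=\arcsin(-\tau/\taumax)\to -\pi/2$), the vector field in \ref{E:odes:p:n} is smooth on $\Sph^{3}$, and $\mathcal{I}_1=1$ keeps orbits bounded; continuous dependence on initial data for ODEs then gives the desired convergence. (Equivalently, one may verify directly that the explicit formulas in \ref{E:X:taumax} satisfy both \ref{E:odes:p:n} and the correct initial conditions.) Since $\pt$ itself has the finite limit $\pt^{*}=\tfrac{\pi}{2\taumax}\sqrt{pq/(2n^{3})}$ from \ref{E:pt:taumax}, the value $\psi_1(2\pt;\tau)$ is being evaluated on a family of compact intervals that remain uniformly bounded, so uniform convergence of $\bw_\tau$ implies pointwise convergence of $\psi_1(2\pt;\tau)$ to $\psi_1^{(\taumax)}(2\pt^{*})$.

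Next I would identify $\psi_1^{(\taumax)}$ explicitly. Using the representation $w_1(t)=\sqrt{1-y_\tau(t)}\,e^{i\alpha_\tau/2p}e^{i\psi_1(t)}$ from \ref{E:w1:p:neq:1} (respectively \ref{E:w1} in the $p=1$ case), at $\tau=\taumax$ we have $\alpha_{\taumax}=-\pi/2$ and $y\equiv q/n$, so $\sqrt{1-y}=\sqrt{p/n}$ and $e^{i\alpha_\tau/2p}=e^{-i\pi/4p}$. Comparing with the first component of \ref{E:X:taumax} yields
\[
\psi_1^{(\taumax)}(t)=\frac{2n\taumax}{p}\,t,
\]
for any admissible $(p,q)$ (the case $p=1$ is the specialisation). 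Substituting into the definition $2\pthat=p\psi_1(2\pt)$ and passing to the limit,
\[
\lim_{\tau\to\taumax}2\pthat
=p\cdot\frac{2n\taumax}{p}\cdot 2\pt^{*}
=4n\taumax\,\pt^{*}
=2n\taumax\cdot\frac{\pi}{\taumax}\sqrt{\frac{pq}{2n^{3}}}
=2\pi\sqrt{\frac{pq}{2n}},
\]
which, after cleaning up the constant under the square root, gives the claimed value.

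The only real obstacle is the continuity/limit interchange in passing from ``$\psi_1$ evaluated at the moving point $2\pt$'' to ``$\psi_1^{(\taumax)}$ evaluated at the limit point $2\pt^{*}$'', but since both $\pt\to\pt^{*}$ and $\psi_1(\,\cdot\,;\tau)\to\psi_1^{(\taumax)}$ in $C^{1}$ on the bounded interval $[0,2\pt^{*}+1]$, this is a standard application of uniform convergence. No further analysis is needed; the result reduces entirely to the two inputs \ref{E:X:taumax} and \ref{E:pt:taumax}.
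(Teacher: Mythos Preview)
Your proposal is correct and follows essentially the same route as the paper's proof. Both reduce to the identity $\lim_{\tau\to\taumax}2\pthat = 4n\taumax\lim_{\tau\to\taumax}\pt$ and then invoke \ref{E:pt:taumax}. The only cosmetic difference is that the paper extracts the linear rate $p\dot\psi_1\equiv 2n\taumax$ directly from the ODE $(1-y)\dot\psi_1=2\tau$ evaluated at $y\equiv q/n$, whereas you read off the same linear function $\psi_1^{(\taumax)}(t)=\tfrac{2n\taumax}{p}t$ from the explicit formula \ref{E:X:taumax}; your added paragraph on uniform convergence and the limit interchange makes explicit a step the paper leaves implicit.
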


\begin{proof}
When $\tau = \taumax$, we have $y \equiv q/n$ and $p\,\dot{\psi}_1 \equiv 2n \taumax$.
Hence we have
$$ \lim_{\tau \ra \taumax}{2\pthat} = \lim_{\tau \ra \taumax}{p\, \psi_1(2\pt)} = 4n\taumax \lim_{\tau \ra \taumax}{\pt}.$$
The asymptotics for $\pthat$ now follow from the asymptotics for $\pt$ established in \ref{E:pt:taumax}.
\end{proof}

\section{Closed twisted {SL} curves and closed embedded special Legendrians.}
\label{S:embedded}

In this section we combine our results about the behaviour of $\pthat$  with our 
earlier results about periods and half-periods of 
$\bw_{\tau}$ to prove the existence of infinitely many closed $(p,q)$-twisted SL curves. 
Whether the curve $\bw_{\tau}$ closes is determined by the period lattice $\Per(\bw_{\tau})$, 
since by Corollary \ref{C:wtau:closed} $\bw_{\tau}$ is a closed curve if and only if $\Per(\bw_{\tau}) \neq (0)$. 
We are also interested in $\sorth{p} \times \sorth{q}$-invariant special Legendrian embeddings of closed manifolds arising from $X_{\tau}$; this is equivalent to looking for curves $\bw_{\tau}$ for which the associated 
curve of isotropic $\sorth{p} \times \sorth{q}$-orbits $\mathcal{O}_{\bw}$ 
is closed and this is determined by the half-period lattice 
$\Perh(\bw_{\tau})$ (recall \ref{D:w:period} and \ref{P:xtau:embed}). 


\begin{theorem}
\addtocounter{equation}{1}
\label{T:w:closed}
Fix admissible integers $p$ and $q$. There exists a countably infinite dense subset $N \subset (0,\taumax)$ such that 
$\tau \in N$ if and only if the $(p,q)$-twisted SL curve $\bw_{\tau}$ is closed. 
\end{theorem}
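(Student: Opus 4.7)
\bigskip
\noindent
\emph{Proof proposal.} The plan is to reduce the question of closedness of $\bw_{\tau}$ to a rationality condition on the angular period $\pthat(\tau)$, and then to exploit the real analyticity of $\pthat$ together with its endpoint asymptotics to produce a countably infinite dense subset of rationality values.

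First, by Corollary \ref{C:wtau:closed} the curve $\bw_{\tau}$ closes up in $\Sph^{3}$ if and only if $\Per(\bw_{\tau}) \neq (0)$. Combining Lemma \ref{L:w:half:period:a} with Lemma \ref{L:w:half:period}, $\Per(\bw_{\tau}) \neq (0)$ if and only if the rotational period $\tcheck_{2\pthat}$ has finite order $k_{0}$, which by the characterisation \ref{E:k0:alt} is equivalent to $\pthat(\tau) \in \pi \mathbb{Q}$. Thus the set $N$ in the statement equals $\{\tau \in (0,\taumax) \mid \pthat(\tau)/\pi \in \mathbb{Q}\}$.

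Next, by Lemma \ref{L:phat:dt} the function $\tau \mapsto \pthat(\tau)$ is real analytic on the open interval $(0,\taumax)$. I would then show that $\pthat$ is non-constant on this interval by comparing its two boundary limits. From \ref{P:asymptotics-pthat} we have $\lim_{\tau \to 0^{+}} \pthat(\tau) = \pi/2$, while from Lemma \ref{L:ptau:taumax} we have $\lim_{\tau \to \taumax^{-}} \pthat(\tau) = \pi \sqrt{2pq/n}$. These two limits are distinct: equality would force $8pq = n = p+q$, but for every admissible pair $(p,q)$ (so $p\ge 1$, $q\ge 2$) one checks at once that $8pq \ge 16 > p+q$. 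Hence $\pthat$ extends continuously to the closed interval $[0,\taumax]$ with distinct endpoint values, and in particular $\pthat$ is non-constant on $(0,\taumax)$.

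Once non-constancy is in hand, the conclusion follows by a standard analytic-continuation argument. On any subinterval $(a,b) \subset (0,\taumax)$, non-constancy of the real analytic function $\pthat$ (analytic on the connected interval $(0,\taumax)$ implies no non-trivial zero set for $\pthat - c$ for generic $c$) together with the intermediate value theorem shows that $\pthat((a,b))$ is a non-degenerate interval, hence contains points of $\pi\mathbb{Q}$. This gives density of $N$ in $(0,\taumax)$. For countability, observe that $N = \bigcup_{r \in \mathbb{Q}} \pthat^{-1}(\pi r)$, and each preimage is a discrete, hence at most countable, subset of $(0,\taumax)$ because $\pthat - \pi r$ is a non-constant real analytic function on a connected interval. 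A countable union of countable sets is countable, and density forces $N$ to be infinite. The only delicate step is the non-constancy of $\pthat$, and that is handled cleanly by the endpoint asymptotics; no quantitative information about the derivative $d\pthat/d\tau$ beyond Lemma \ref{L:phat:dt}  is required here, although the more refined small-$\tau$ asymptotics in \ref{P:asymptotics-pthat} will be important for the later gluing applications.
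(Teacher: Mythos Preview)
Your proof is correct and follows essentially the same route as the paper: reduce closedness to $\pthat(\tau)\in\pi\Q$ via \ref{C:wtau:closed} and \ref{L:w:half:period}, then use real analyticity and non-constancy of $\pthat$ to conclude. The paper establishes non-constancy by invoking the small-$\tau$ asymptotics of $d\pthat/d\tau$ from \ref{P:asymptotics-pthat} (to get strict monotonicity near $0$), whereas you compare the two endpoint limits directly; both are valid, and your countability argument via level sets of a non-constant real analytic function is arguably cleaner than the paper's compact-interval reduction.

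One small slip: your inequality chain ``$8pq \ge 16 > p+q$'' fails for large admissible pairs (e.g.\ $p=8$, $q=9$ gives $p+q=17$). The conclusion $8pq\neq p+q$ is still correct, but the justification should instead use $p\le q$: then $p+q\le 2q$, while $8pq\ge 8q>2q$, so $8pq>p+q$ for every admissible $(p,q)$.
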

\begin{proof}
Define 
$$N:= \{ \tau \in (0,\taumax) \, | \, \pthat \in \pi \Q\}.$$
By Corollary \ref{C:wtau:closed} $\bw_{\tau}$ is a closed curve if and only if $\Per(\bw_{\tau}) \neq (0)$ and 
by Lemma \ref{L:w:half:period} $\Per(\bw_{\tau}) \neq 0$ if and only if $\pthat \in \pi \Q$.
Hence $\bw_{\tau}$ is closed if and only if $\tau \in N$. 
\ref{P:asymptotics-pthat} and \ref{L:ptau:taumax} imply that $\pthat$ is a nonconstant analytic function of $\tau$ on the interval $(0,\taumax)$. 
For any $\delta>0$ sufficiently small $\pthat|_{[\delta,\taumax-\delta]}$ is also nonconstant analytic
and hence the closed interval $[\delta,\taumax-\delta]$ 
contains only finitely many points at which $\tfrac{d\pthat}{d\tau}$ vanishes. 
In fact, it follows from the small $\tau$ asymptotics of $\tfrac{d\pthat}{d\tau}$  given in \ref{P:asymptotics-pthat} 
that $\frac{d\pthat}{d\tau}>0$ 
on $(0,\delta]$ for any $\delta$ sufficiently small. 
Hence for any $\delta>0$ sufficiently small 
there exists a countable dense subset of $(0,\taumax-\delta]$ for which $\pthat \in \pi \Q$. 
\end{proof}

From \ref{L:w:half:period} the condition $\tau \in N$ is equivalent to the condition that the rotational period $\tcheck_{2\pthat}$ (recall \ref{E:M:defn}) of $\bw_{\tau}$ 
is of finite order $k_{0}$  (recall Definition \ref{D:rp:order}).

\begin{theorem}
\addtocounter{equation}{1}
\label{T:Xtau:embed}
Choose $\tau \in N$ and let $k_{0}$ be the order of the rotational period $\tcheck_{2\pthat}$. 
The $\sorth{p}\times \sorth{q}$-invariant special Legendrian immersion 
$X_{\tau}: \cylpq \ra \Sph^{2(p+q)-1}$ factors through a special Legendrian embedding of the 
closed manifold $\cylpq/\Per(X_{\tau})$ where 
$\Per(X_{\tau}) \cong \Z \subset \Sym(X_{\tau}) \subset \Diff(\cylpq)$ is the 
following infinite cyclic subgroup
$$
\Per(X_{\tau}) = 
\begin{cases}
\langle (\TTT_{k_{0}\pt},-\Id_{\Sph^{n-1}}) \rangle \quad & \text{if $p=1$ and $k_{0}$ is even and $n$ is odd;}\\
\langle (\TTT_{k_{0}\pt},(-1)^{j}\Id_{\Sph^{p-1}},(-1)^{k}\Id_{\Sph^{q-1}}) \rangle \quad & \text{if $p>1$ and $k_{0}$ is even and $n$ is odd;}\\
\langle \TTT_{2k_{0}\pt} \rangle \quad & \text{otherwise;}
\end{cases}
$$
where $j=q/\hcf(p,q)$ and $k=p/\hcf(p,q)$.
\end{theorem}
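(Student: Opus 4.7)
The plan is to deduce this theorem essentially as a repackaging of Proposition \ref{P:xtau:embed} together with the three structure results for $\Per(X_\tau)$ already established in Corollaries \ref{C:xtau:period:p:eq:1}, \ref{C:xtau:period:p:neq:1}, and \ref{C:xtau:period:p:eq:q}. First I would observe that since $\tau \in N$, Theorem \ref{T:w:closed} and Lemma \ref{L:w:half:period} imply that the order $k_0$ of the rotational period $\tcheck_{2\pthat}$ is finite, and that $\Per(\bw_\tau) = 2k_0 \pt \Z$, with possibly a strict half-period lattice $\Perh(\bw_\tau) = k_0 \pt \Z$ according to the parity of $k_0$ and the values of $p$, $q$, $n$. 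The explicit generator of $\Per(X_\tau)$ stated in the theorem is then read off directly from whichever of the three corollaries above applies, and in each case one sees immediately that $\Per(X_\tau) \cong \Z$.

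Next I would verify that $X_\tau$ descends to an injective immersion on the quotient $\cylpq / \Per(X_\tau)$. By definition of $\Per(X_\tau)$, the map $X_\tau$ is invariant under this group and so descends to a smooth map $\overline{X}_\tau: \cylpq / \Per(X_\tau) \to \Sph^{2n-1}$, which is an immersion because $X_\tau$ is. For injectivity, suppose $X_\tau(t_1, \sigma_1, \sigma_2) = X_\tau(t_2, \sigma_1', \sigma_2')$. Proposition \ref{P:xtau:embed} forces $t_2 - t_1 \in \Perh(\bw_\tau)$, and the explicit structure of $\Perh(\bw_\tau)$ leaves two subcases. If $t_2 - t_1 \in \Per(\bw_\tau) = 2k_0 \pt \Z$, then the second statement of Proposition \ref{P:xtau:embed} gives $\sigma_i = \sigma_i'$, and the two preimages differ by translation by an element of $\langle \TTT_{2k_0 \pt}\rangle \subseteq \Per(X_\tau)$. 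If $t_2 - t_1$ is a strict half-period, then the relation $\bw_\tau(t_1) = \rho_{jk} \bw_\tau(t_2)$ used inside the proof of Proposition \ref{P:xtau:embed} together with equation \ref{E:alpha:beta} gives $\sigma_1 = (-1)^{j} \sigma_1'$ and $\sigma_2 = (-1)^{k} \sigma_2'$, where $(j,k)$ is precisely the type of strict half-period identified in Lemma \ref{L:w:half:period}. This matches the involution factor in the generator of $\Per(X_\tau)$, so again the two preimages lie in one and the same $\Per(X_\tau)$-orbit.

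Finally I would check that $\cylpq / \Per(X_\tau)$ is a smooth closed manifold. Since $\Per(X_\tau) \subseteq \Sym(X_\tau) = \Isom(\cylpq, g_\tau)$ by Corollaries \ref{C:sym:isom:p:eq:1}--\ref{C:sym:isom:p:eq:q}, the action is by isometries of $g_\tau$. A generator of $\Per(X_\tau)$ translates the $t$-coordinate by $2k_0\pt$ or $k_0 \pt$ and thus acts freely and properly discontinuously. A compact fundamental domain is given by $[0, 2k_0 \pt] \times \merpq$ (or $[0, k_0 \pt] \times \merpq$ in the involution case) with the boundary meridians glued via the corresponding element of $\stabpq$; consequently the quotient is a closed manifold. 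Pulling back the orientation so that $\overline{X}_\tau$ is special Legendrian rather than anti-special Legendrian is possible exactly because every generator of $\Per(X_\tau)$ corresponds to an orientation-preserving diffeomorphism of $\cylpq$.

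The main obstacle is the bookkeeping in the second step: one has to check that in the strict half-period subcase the exponents $(j,k)$ extracted from the self-intersection analysis of Proposition \ref{P:xtau:embed} are the very same pair $j = q/\hcf(p,q)$, $k = p/\hcf(p,q)$ appearing in the generator of $\Per(X_\tau)$ given by the three corollaries. This is a direct consequence of Lemma \ref{L:M:periods}(ii), which pins down the unique type of strict half-period occurring for fixed $p$ and $q$, so one needs only to trace through the definitions and confirm that the same $(j,k)$ governs both sides of the identification.
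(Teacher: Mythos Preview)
Your proposal is correct and follows essentially the same route as the paper's own proof, which simply cites the structure results for $\Per(X_\tau)$ (the three corollaries you invoke) and then appeals to Proposition \ref{P:xtau:embed}. You have filled in the details the paper leaves implicit---the injectivity check via the period/strict half-period dichotomy, the free and proper action, compactness of the quotient---and your bookkeeping on the type $(j,k)$ of the strict half-period via Lemma \ref{L:M:periods}(ii) is exactly the right way to close that loop.
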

\noindent
In the third case above the closed manifold is diffeomorphic to  $S^{1}\times S^{p-1}\times S^{q-1}$ if $p>1$ and to
$S^{1}\times S^{n-2}$ if $p=1$. In the first case the manifold is diffeomorphic to a $\Z_{2}$ quotient of 
$S^{1}\times S^{n-1}$ and in the second case to a $\Z_{2}$ quotient of $S^{1} \times S^{p-1} \times S^{q-1}$.
\begin{proof}
\ref{P:xtau:sym:p:eq:1}, \ref{P:xtau:sym:p:neq:q} and \ref{P:xtau:sym:p:eq:q} give us the structure of $\Per(X_{\tau})$ 
in the cases $p=1$, $p>1$, $p \neq q$ and $p>1$, $p=q$ respectively. 
The result follows by combining the structure of $\Per(X_{\tau})$ with \ref{P:xtau:embed}.
\end{proof}

For our gluing applications it will be convenient to use special Legendrian ``necklaces'' with 
topology $S^{1}\times S^{n-2}$ if $p=1$ or $S^{1}\times S^{p-1}\times S^{p-1}$ if $p>1$ and $p=q$ 
as in the third case of Theorem \ref{T:Xtau:embed}.  
By \ref{T:Xtau:embed} it suffices to find $\tau \in N$ so that the rotational period has odd order $k_{0}$. 
By using the asymptotics of $\pthat$ as $\tau \ra 0$ we can prove that there are infinitely many such special Legendrian necklaces
by using a well-chosen sequence of values of $\tau$ going to $0$.

\begin{lemma}[Existence of special Legendrian necklaces for $p=1$ and $p=q$, $p\ge2$]\hfill
\addtocounter{equation}{1}
\label{L:no:half:periods}
\begin{enumerate}
\item[(i)]
Suppose $p=1$, $q=n-1$. 
For any $\mb \in \N$ sufficiently large, there exists a unique small positive number $\tb$
satisfying
\addtocounter{theorem}{1}
\begin{equation}
\label{E:tb:p1}
\ptbhat = \left( \frac{(n-1)\mb}{2(n-1)\mb -1}\right) \pi > \frac{\pi}{2}.
\end{equation}
By choosing $\mb$ sufficiently large, $\tb$ can be chosen
as close to $0$ as desired (see \ref{E:tb:mb}).
Let $k_0$ be the (large) positive odd integer defined in terms of $\mb$ by
\addtocounter{theorem}{1}
\begin{equation}
\label{E:k0:p1}
k_0 = 2(n-1)\mb -1.
\end{equation}
Then $\Per(X_\tb) = \langle\TTT_{2k_0 \ptb}\rangle $, $\ttilde_{k_0 \ptbhat}= \ttilde_{-k_0\ptbhat}$,
$\tbartilde_{k_0\ptbhat} = \tbartilde_{-k_0\ptbhat} = \tbartilde$, and $X_\tb$ factors through an embedding of
$(\R/2k_0 \ptb \Z ) \times \Sph^{n-2}$.
\item[(ii)]
Suppose $p=q \ge 2$. 
For any $\mb \in \N$ sufficiently large, there exists a unique small positive number $\tb$
satisfying
\addtocounter{theorem}{1}
\begin{equation}
\label{E:tb:pq}
\ptbhat = \left( \frac{p\mb}{2p\mb -1}\right) \pi > \frac{\pi}{2}.
\end{equation}
By choosing $\mb$ sufficiently large, $\tb$ can be chosen
as close to $0$ as desired (see \ref{E:tb:mb}).
Let $k_0$ be the (large) positive odd integer defined in terms of $\mb$ by
\addtocounter{theorem}{1}
\begin{equation}
\label{E:k0:pp}
k_0 = 2p\mb -1.
\end{equation}
Then $\Per(X_\tb) = \langle \TTT_{2k_0 \ptb}^l  \rangle$, $\ttilde_{k_0 \ptbhat}= \ttilde_{-k_0\ptbhat}$,
$\tbartilde_{k_0\ptbhat} = \tbartilde_{-k_0\ptbhat} = \tbartilde$, and $X_\tb$ factors through an embedding of
$(\R/2k_0 \ptb \Z ) \times \Sph^{p-1} \times \Sph^{p-1}$.
\end{enumerate}
\end{lemma}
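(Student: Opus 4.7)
The plan is to combine the small-$\tau$ asymptotics of the angular period $\pthat$ from \ref{P:asymptotics-pthat} with the algebraic characterisation of $\Per(X_{\tau})$ from Section \ref{S:sym:xtau} and the description of $\Per(\{\tcheck_x\})$ from \ref{L:M:periods}. Cases (i) and (ii) unify if we observe that $\lcm(p,q) = q = n-1$ in case (i) and $\lcm(p,q) = p$ in case (ii), so the target values become $\ptbhat = \frac{\lcm(p,q)\mb}{2\lcm(p,q)\mb - 1}\pi$ and $k_0 = 2\lcm(p,q)\mb - 1$ uniformly.

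The construction of $\tb$ proceeds via the intermediate value theorem. By \ref{P:asymptotics-pthat} we have $\pthat \to \pi/2$ as $\tau \to 0^+$, with $\pthat - \pi/2 \sim \frac{4p}{q}\tau\pt > 0$ and $\frac{d\pthat}{d\tau} \sim \frac{4p}{q}\pt > 0$, so $\pthat$ is strictly monotone increasing and lies above $\pi/2$ on some interval $(0,\tau_*)$. For every $\mb$ sufficiently large the target value $\frac{\lcm(p,q)\mb}{2\lcm(p,q)\mb - 1}\pi > \pi/2$ is arbitrarily close to $\pi/2$, so monotonicity and continuity produce a unique small $\tb \in (0,\tau_*)$ satisfying \ref{E:tb:p1} (resp.\ \ref{E:tb:pq}), with $\tb \to 0$ as $\mb \to \infty$ at a rate obtained by substituting the leading asymptotics into the defining equation (this is the estimate referenced in \ref{E:tb:mb}). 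Next I verify that $k_0$ equals the order of $\tcheck_{2\ptbhat}$: a direct computation gives $k_0 \ptbhat = \lcm(p,q)\mb \pi$, and by \ref{L:M:periods}(i) the order in question is the smallest positive integer $k$ with $k\ptbhat \in \pi\lcm(p,q)\Z$, which reduces to the smallest $k$ with $k_0 \mid k\mb$. The coprimality $\gcd(\mb, k_0) = \gcd(\mb, 2\lcm(p,q)\mb - 1) = 1$ forces this minimum to be $k_0$ itself.

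Since $k_0$ is odd, Lemma \ref{L:w:half:period}(i) yields $\Perh(\bw_\tb) = \Per(\bw_\tb) = 2k_0\ptb\Z$, so $\bw_\tb$ has no strict half-periods. Corollary \ref{C:xtau:period:p:eq:1} in case (i) and Corollary \ref{C:xtau:period:p:eq:q} in case (ii) then give $\Per(X_\tb) = \langle\TTT_{2k_0\ptb}\rangle$, and \ref{P:xtau:embed} implies that $X_\tb$ factors through an embedding of the compact quotient $\cylpq/\Per(X_\tb)$, which is diffeomorphic to $(\R/2k_0\ptb\Z) \times \Sph^{n-2}$ in case (i) and to $(\R/2k_0\ptb\Z) \times \Sph^{p-1} \times \Sph^{p-1}$ in case (ii). The identities $\ttilde_{k_0\ptbhat} = \ttilde_{-k_0\ptbhat}$ and $\tbartilde_{\pm k_0\ptbhat} = \tbartilde$ then follow mechanically from $\ttilde_{2k_0\ptbhat} = \Id$ (the defining order-$k_0$ relation) combined with $\tbartilde_x = \ttilde_{2x}\circ\tbartilde$ from \ref{E:tbartilde_x}. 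The main obstacle here is quite modest: once $\tb$ has been produced from the IVT and the asymptotics, every remaining assertion is an immediate consequence of the structural results of Sections \ref{S:w:sym}--\ref{S:sym:xtau}; the key substantive ingredient is the arithmetic coprimality $\gcd(\mb, 2\lcm(p,q)\mb - 1) = 1$, which forces $k_0$ to be precisely (not merely a divisor of) the order of the rotational period and thereby rules out the appearance of strict half-periods.
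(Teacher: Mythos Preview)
Your proof is correct and follows essentially the same approach as the paper: existence and uniqueness of $\tb$ from the small-$\tau$ asymptotics and monotonicity of $\pthat$, verification that $k_0$ equals the order of the rotational period via the coprimality $\gcd(\mb, 2\lcm(p,q)\mb-1)=1$, and then reading off the structure of $\Per(X_\tb)$ and the embedding from the oddness of $k_0$. The paper invokes Theorem~\ref{T:Xtau:embed} directly rather than the finer-grained Corollaries~\ref{C:xtau:period:p:eq:1}, \ref{C:xtau:period:p:eq:q} and Proposition~\ref{P:xtau:embed} that you cite, but these are precisely the ingredients of that theorem, so there is no substantive difference.
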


When $n=3$, \ref{E:tb:p1} specialises to
$$ \ptbhat = \frac{\pi}{2}\left(\frac{4\mb}{4\mb-1}\right) = \frac{\pi}{2} \left(1 + \frac{1}{m}\right),
\quad \text{where\ } m=4\mb -1.$$
This agrees with equation 4.3 in \cite{haskins:kapouleas:invent} and
hence Lemma \ref{L:no:half:periods} generalises \cite[Lemma 4.5]{haskins:kapouleas:invent}.

\begin{proof}
Existence and uniqueness of small $\tb>0$ satisfying \ref{E:tb:p1} or \ref{E:tb:pq} follow from \ref{E:phat}. 
Moreover, using \ref{P:asymptotics-ptau} we conclude also that 
\addtocounter{theorem}{1}
\begin{equation}
\label{E:tb:mb}
\mb \sim 
\begin{cases}
c_{n-1}/(\tb T_{n-1}(\tb)\,) \quad \text{where} \quad c_{n-1} = \pi /16(n-1)\,b_{n-1} & \text{if $p=1$;}\\
d_{p}/(\tb T_{p}(\tb)\,) \quad \quad \quad\text{where} \quad  d_{p} = \pi/32p \,b_{p} & \text{if $p>1$ and $p=q$};
\end{cases}
\end{equation}
and $b_{k}$ are the constants defined in \ref{E:b:q}. The rest of the lemma follows from 
Theorem \ref{T:Xtau:embed} once we show \ref{E:k0:p1} and \ref{E:k0:pp}, since in both cases $k_{0}$ is odd. 
From \ref{E:k0:alt} the order of the rotational period, $k_{0}$, can be found as 
$$k_{0}= \min\{k\in \Z^{+}|\,k\pthat \in \lcm(p,q)\pi \Z\}.$$
(i) Using the form of $\ptbhat$ assumed in \ref{E:tb:p1} we see 
$$ k\ptbhat \in (n-1)\pi\Z \iff k\mb \in (2(n-1)\mb -1)\Z \iff  k\mb \in (2(n-1)\mb -1)\Z \cap \mb \Z.$$
It is easily check that $\hcf(\mb,2(n-1)\mb-1)=1$ and hence $(2(n-1)\mb -1)\Z \cap \mb \Z = \mb (2(n-1)\mb -1)\Z$.
Therefore $k_{0 }=2(n-1)\mb-1$ as claimed.\\
(ii) Similarly from \ref{E:tb:pq} we have 
$$ k\ptbhat \in p\pi\Z \iff k\mb \in (2\mb p-1)\Z \iff  k\mb \in (2\mb p-1)\Z \cap \mb \Z
\iff k\mb \in \mb (2\mb p-1)\Z$$
since  $\hcf(\mb,2\mb p-1)=1$. Hence $k_{0}=2\mb p-1$ as claimed. 
\end{proof}

\appendix

\section{Direct, central and semidirect products of groups}
\label{A:groups}
We recall some elementary group theory needed in several parts of the paper. 
Let $K, N \subset G$ be any two subsets of a group $G$. We write
$KN = \{ kn\, |\, k\in K, n\in N\} \subset G$. Sometimes we will also 
use the notation $K\cdot N$.  If both $K$ and $N$
are subgroups of $G$, then $KN$ is a subgroup if and only if
$KN=NK$ \cite[p.22]{isaacs}. Moreover, $N$ is a normal subgroup of
$KN$ if and only if $kNk^{-1} =N$ for all $k\in K$ and similarly
for $K$. 

In particular, if $N$ centralises $K$, i.e. every element
of $N$ commutes with every element of $K$, then clearly $KN=NK$
and both $K$ and $N$ are normal subgroups of the group $H=KN$. In this case we have
$K \cap N \subseteq Z(H)$, where $Z(H)$ denotes the centre of $H$,
and we say that $KN$ is an  \textit{(internal) central product} of $K$ and
$N$ identifying $K\cap N$ \cite[p. 29]{gorenstein}. If in fact $K\cap N=1$, then
$KN$ is the \textit{(internal) direct product of $K$ and $N$}, and
$KN$ is isomorphic to the (external) direct product $K \times N$.

If $K$ is a normal subgroup of $KN$ then conjugation by any element of $N$ gives a homomorphism
$\rho: N \ra \Aut{K}$ and the kernel of $\rho$ is the centraliser of $K$ in $KN$.
If $K$ is a normal subgroup of $KN$ and $K\cap N=1$, then $KN$ is the
\textit{semidirect product of $K$ by $N$}, $K \rtimes N$.
To make explicit the conjugation action of $N$ on $K$ we often write, $K\rtimes_{\rho}N$ 
and write down the twisting homomorphism $\rho: N \ra \Aut{K}$.

\section{Lagrangian and special Lagrangian isometries of $\C^{n}$}
\label{A:isom:sl}
Let $\Lag$ denote the Grassmannian of unoriented Lagrangian $n$-planes in $\C^{n}$, 
$\Slag$ denote the Grassmannian of (necessarily oriented) special Lagrangian $n$-planes in $\C^{n}$
and define $\pm \Slag:=\{ \Pi\, | \, \pm \Pi \in \Slag\}$.

\addtocounter{equation}{1}
\begin{definition}[Lagrangian and special Lagrangian isometries]\hfill
\label{D:isomsl}	
\begin{enumerate}
\item[(i)]
Define $\Isoml:= \{ A \in \orth{2n} \, |\, A (\Pi) \in \Lag \,\text{for all\ } \Pi \in \Lag\}$.
Elements of $\Isoml$ we call \emph{Lagrangian isometries}.
\item[(ii)]
Define $\Isomsl:= \{ A \in \orth{2n} \, |\, A (\Pi) \in \Slag \, \text{for all\ } \Pi \in \Slag\}$.
Elements of $\Isomsl$ we call \emph{special Lagrangian isometries}.
\item[(iii)]
Define $\Isomslpm:= \{ A \in \orth{2n} \, |\, A (\Pi) \in \pm \Slag \,\text{for all\ } \Pi \in \Slag\}$.
Elements of $\Isomslpm$ we call \emph{$\pm$-special Lagrangian isometries} and elements of 
$\Isomslpm\setminus \Isomsl$ we call \emph{anti-special Lagrangian isometries}.
\end{enumerate}
\end{definition}
\noindent
Note we do not assume a priori that $\Isomsl \subset \Isoml$ (see also Lemma \ref{L:isom:sl}), but from our definitions we do have 
$\Isomsl \subset \Isomslpm$. 

Define $\ccong \in \orth{2n}$ by 
\begin{equation}
\addtocounter{theorem}{1}
\label{E:ctilde}
\ccong(\bz) = \overline{\bz}, \quad  \quad \quad \text{where $\bz \in \C^{n}$}.
\end{equation}
Since $\ccong$ satisfies
$$ \ccong^{*}J = -J, \quad \ccong^{*}\omega = - \omega, \quad \ccong^{*}\Omega = \overline{\Omega},$$
we see in particular that $\ccong$ belongs to both $\Isoml$ and $\Isomsl$.

The following result on the structure of $\Isoml$ and $\Isomsl$ 
is presumably well-known but since it is important for our study of the discrete symmetries 
of $\sorth{p} \times \sorth{q}$-invariant special Legendrians and we are not aware of a suitable reference we give its proof.
For completeness, we state the result also for dimension $2$ although it is not used in this paper. 
\begin{lemma}[Structure of $\Isoml$,  $\Isomsl$ and $\Isomslpm$]\hfill
\addtocounter{equation}{1}
\label{L:isom:sl}
\begin{enumerate}
\item[(i)]
$\Isoml = \unit{n} \cdot \langle \ccong \rangle \simeq \unit{n} \rtimes_{\rho} \Z_{2}$ where the twisting 
homomorphism $\rho: \Z_{2} \ra \Aut{\unit{n}}$ is determined by 
$\rho(1)\, U = \overline{U}$ for any $U\in \unit{n}$.
\item[(ii)]
For $n>2$, we have 
\begin{eqnarray*} 
\Isomsl &=& \sunit{n} \cdot \langle \ccong \rangle \simeq \sunit{n} \rtimes_{\rho} \Z_{2},\\
\Isomslpm &=& \sunit{n}^{\pm} \cdot \langle \ccong \rangle \simeq \sunit{n}^{\pm} \rtimes_{\rho} \Z_{2},
\end{eqnarray*}
where  
$$\sunit{n}^{\pm}:= \{ U \in \unit{n} \, | \, \det{}_{\C}{U}= \pm 1 \} \simeq \sunit{n} \rtimes \Z_{2},$$ 
and $\rho$ is the restriction of the twisting homomorphism defined in (i) to $\sunit{n}^{\pm}$.
In particular, $\Isomsl$ and $\Isomslpm$ are subgroups of $\Isoml$. 
\item[(iii)]
$\Isomsl(2) =\text{U}_{I}(2)$  and $\Isomslpm = \text{U}_{I}(2) \cdot \langle \RRR_{1} \rangle \simeq 
\text{U}_{I}(2) \rtimes_{\rho} \Z_{2}$ 
where 
$\text{U}_{I}(2)$ denotes the unitary group of $\C^{2}$ with respect to the complex structure $I$ on $\C^{2}$ 
defined by right multiplication by the imaginary quaternion $I \in \Imag{\HH}$, 
$\RRR_{1}(z_{1},z_{2}) := (-z_{1},z_{2})$ and $\rho: \Z_{2} \ra \text{U}_{I}(2)$ 
is the homomorphism defined by $\rho(1)\, U = \RRR_{1} U \RRR_{1}$. 
$\text{U}_{I}(2)$ satisfies 
$$\text{U}_{I}(2) \cap \unit{2} = \sunit{2},$$ 
where $\unit{2}$ and $\sunit{2}$ 
are the unitary and special unitary groups with respect to the standard complex structure $J$ on $\C^{2}$ (defined by 
right multiplication by $J \in \Imag{\HH}$).
In particular, there exist special Lagrangian isometries of $\C^{2}$ which are not Lagrangian isometries. 
\end{enumerate}
\end{lemma}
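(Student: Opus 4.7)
The plan is to reduce each case to a condition on how $A\in\orth{2n}$ transforms the distinguished forms $\omega$ and $\Omega$. For (i), the key observation is that $A\in\orth{2n}$ preserves the Lagrangian Grassmannian if and only if $A^*\omega=\pm\omega$. The forward direction uses that a non-degenerate antisymmetric $2$-form on $\R^{2n}$ is determined up to a real scalar by its set of maximal isotropic subspaces, so $A^*\omega=\lambda\omega$ for some $\lambda\in\R$; then $(A^*\omega)^n=\lambda^n\omega^n$ combined with $A^*\vol=\pm\vol$ (since $A\in\orth{2n}$ and $\omega^n$ is a constant multiple of the Euclidean volume form) forces $\lambda^n=\pm 1$, hence $\lambda=\pm 1$. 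The reverse direction is immediate. A direct computation from $\omega=\sum dx_i\wedge dy_i$ and $\ccong(x,y)=(x,-y)$ gives $\ccong^*\omega=-\omega$, so composing with $\ccong$ toggles the sign, and combining with the standard identification $\unit{n}=\orth{2n}\cap\mathrm{Sp}(2n,\R)=\{A\in\orth{2n}:A^*\omega=\omega\}$ yields $\Isoml=\unit{n}\cup\unit{n}\cdot\ccong$. The trivial intersection $\unit{n}\cap\langle\ccong\rangle=\{\Id\}$ (elements of $\unit{n}$ commute with $J$ while $\ccong$ anticommutes with it) and the conjugation rule $\ccong\, U\,\ccong^{-1}=\overline{U}$ (immediate on matrix entries) complete the semidirect product structure.

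For (ii), since $\Slag\subset\Lag$, any element of $\Isomsl$ or $\Isomslpm$ already lies in $\Isoml$; write it as $U$ or $U\ccong$ with $U\in\unit{n}$. Expanding $\Omega=dz_1\wedge\cdots\wedge dz_n$ yields $U^*\Omega=(\det{}_{\C} U)\,\Omega$ for $U\in\unit{n}$, while $\ccong^*\Omega=\overline{\Omega}$. Using the SL characterisation $\Omega|_\Pi=\vol_\Pi$ on an oriented Lagrangian $n$-plane, one shows that $U\in\unit{n}$ preserves oriented SL planes iff $\det_\C U=1$, and preserves $\pm\Slag$ iff $\det_\C U=\pm 1$; the same analysis for $U\ccong$, combining $\ccong^*\Omega=\overline\Omega$ with the previous computation, yields the same constraint on $U$. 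The stated decompositions and semidirect structures then follow from (i). The main obstacle here is orientation bookkeeping: the orientation of $U(\R^n)$ induced by $U$ depends on $U$ only modulo $\orth{n}$, not modulo $\sorth{n}$, so one must show that the sign of $\det_\C U$ exactly measures the discrepancy between the intrinsic SL orientation on $U(\R^n)$ and the push-forward orientation. A clean route is to fix $\R^n\subset\C^n$ with its standard orientation, note that $\sunit{n}$ acts transitively on $\Slag$ with stabiliser $\sorth{n}$, propagate consistency along this $\sunit{n}$-orbit, then handle the discrete case $\det_\C U=-1$ separately. The hypothesis $n>2$ is what makes this description faithful; for $n=2$ the $\sunit{n}/\sorth{n}$ description admits accidental hyperk\"ahler symmetries.

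For (iii), exactly these accidental symmetries are captured via the identification $\C^2\cong\HH$ with its hyperk\"ahler structure: alongside $J$ there is a second compatible complex structure $I$ (right-multiplication by $\mathbf{i}\in\Imag\HH$), whose associated K\"ahler form coincides, up to normalisation, with $\Real\Omega$. Consequently the SL $2$-planes for the data $(J,\Omega)$ are precisely the $I$-complex lines of $\C^2$, so an isometry preserves $\Slag(2)$ iff it preserves the $I$-complex structure, i.e.\ $\Isomsl(2)=\unit{I}(2)$. The anti-SL extension is produced by any isometry sending $\Real\Omega\mapsto-\Real\Omega$, for which $\RRR_1$ works directly, giving the semidirect product $\unit{I}(2)\rtimes_{\rho}\Z_2$. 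Finally, $\unit{I}(2)\cap\unit{2}=\sunit{2}$ follows because a real-linear map commuting with both $I$ and $J$ commutes with the full quaternion algebra $\HH$, whose centraliser in $\mathrm{End}_\R(\HH)$ is $\mathrm{Sp}(1)\cong\sunit{2}$.
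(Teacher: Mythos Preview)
Your argument for (i) is correct but takes a different route from the paper's: you characterise $\Isoml$ via the condition $A^*\omega=\pm\omega$, which you justify by the fact that a symplectic form is determined up to scalar by its Lagrangian Grassmannian, whereas the paper reduces (via $\unit{n}$-transitivity on $\Lag$) to the case $A|_{\R^n}=\Id$ and then, for each $v\in\Sph^{n-1}$, uses the pencil of Lagrangians containing $\langle v\rangle^\perp\subset\R^n$ to force $A(Jv)=\pm Jv$. Both work; yours is more concise.

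For (ii) there is a genuine gap. You assert ``since $\Slag\subset\Lag$, any element of $\Isomsl$ or $\Isomslpm$ already lies in $\Isoml$'', but this is a non-sequitur: the inclusion $\Slag\subset\Lag$ says nothing about how an isometry preserving $\Slag$ acts on Lagrangian planes outside $\Slag$. Indeed the implication is \emph{false} for $n=2$, as part (iii) itself shows --- $\text{U}_I(2)\setminus\sunit{2}$ consists of special Lagrangian isometries that are not Lagrangian isometries. Since your justification does not invoke $n>2$ at this step, it cannot be valid. You then misplace the hypothesis $n>2$ in ``orientation bookkeeping'', but the phase computation $U^*\Omega=(\det_{\C} U)\,\Omega$ and the resulting constraint on $\det_{\C} U$ work identically in every dimension once one already knows $A\in\unit{n}\cdot\langle\ccong\rangle$. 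The paper (which explicitly remarks that $\Isomsl\subset\Isoml$ is \emph{not} assumed a priori) proceeds differently: after reducing via $\sunit{n}$-transitivity on $\Slag$ to an $\LLL'$ fixing $\R^n$ pointwise, it considers for each pair $j<k$ the isotropic $(n{-}2)$-plane $\Pi_{j,k}=\langle e_j,e_k\rangle^\perp\subset\R^n$ --- this is exactly where $n>2$ enters --- and notes that SL $n$-planes containing $\Pi_{j,k}$ correspond to SL $2$-planes in the orthogonal $\C^2$. Preservation of these pencils forces $\LLL'|_{i\R^n}$ to be diagonal with entries $\pm 1$, and a final orientation argument shows the signs agree.

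Your treatment of (iii) is essentially the same as the paper's.
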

In fact, the following stronger version of Lemma \ref{L:isom:sl} holds: 
\emph{if $n>2$ any diffeomorphism of $\C^{n}$ which preserves the special Lagrangian 
differential ideal $\mathcal{I}$ generated by $\omega$ and $\Imag{\Omega}$ is a product of 
a dilation with some element of $\Isomslpm$} \cite{bryant:personal}.
\begin{proof}[Proof of \ref{L:isom:sl}]
(i) We first prove that $\unit{n} \cdot \langle \ccong \rangle$ forms a subgroup of $\orth{2n}$ 
isomorphic to the semidirect product claimed above. 
To prove that $\unit{n} \cdot \langle \ccong \rangle$ forms a subgroup of $\orth{2n}$
it suffices, by the group theory discussion in Appendix \ref{A:groups},  to prove that 
$\unit{n} \cdot \langle \ccong \rangle =  \langle \ccong \rangle \cdot \unit{n}$.
Since conjugation by $\ccong$ acts on $\glc{n}$ by $ \ccong \MMM \ccong = \overline{\MMM}$,
conjugation by $\ccong$ leaves $\unit{n}$ invariant 
and hence $\unit{n} \cdot \langle \ccong \rangle$ forms a group in which $\unit{n}$ is a normal subgroup. 
Since $\ccong \notin \unit{n}$ we have $\langle \ccong \rangle \cap \unit{n}=1$ and therefore 
 $\unit{n} \cdot \langle \ccong \rangle$ has the semidirect product structure claimed.
 
Since both $\unit{n}$ and $\ccong$ belong to $\Isoml$, the subgroup generated by them 
$\unit{n} \cdot \langle \ccong \rangle$ is clearly a subgroup of $\Isoml$. We want to show that 
any element in $\Isoml$ belongs to $\unit{n} \cdot \langle \ccong \rangle$.
Recall that $\unit{n}$ acts transitively on the set of all (unoriented) Lagrangian planes in $\C^{n}$ with stabiliser conjugate to 
$\orth{n} \subset \unit{n}$. Hence given any $\LLL \in \Isoml$ there exists $U \in \unit{n}$ so that 
$U^{-1}\LLL$ maps the standard Lagrangian plane $\R^{n} \subset \C^{n}$ 
to itself and moreover fixes $\R^{n}$ pointwise.
Given any $v \in \Sph^{n-1} \in \R^{n}$, consider the $n-1$ dimensional isotropic plane 
$I_{v}:= \langle v \rangle^{\perp}\subset \R^{n} \subset \C^{n}$.
The pencil $LP_{v}$ of all Lagrangian $n$-planes containing the isotropic plane $I_{v}$ is given by 
$$LP_{v}=\bigcup_{\theta \in \Sph^{1}}L_{v,\theta} = \bigcup_{\theta \in \Sph^{1}}\, I_{v} \oplus \langle \cos{\theta} \,v + \sin{\theta}\, Jv\rangle.$$
Since $U^{-1}\LLL$ fixes $I_{v}$ pointwise it maps the pencil of Lagrangian planes $LP_{v}$ to itself
and therefore leaves the complex line $l_{v}$ spanned by $v$ and $Jv$ invariant.
Moreover, since $U^{-1}\LLL|_{{l_{v}}}$  is an isometry of the complex line $l_{v}$ which fixes $v \in l_{v}$ we must have   
$$
U^{-1} \LLL(Jv) = \pm Jv.
$$
By continuity the same sign must occur for any 
$v\in \Sph^{n-1} \subset \R^{n}$ and hence we have either $U^{-1}\LLL = \Id$ or $U^{-1} \LLL = \ccong$ 
as required.

(ii) \emph{Structure of $\Isomsl$:} 
Since $\sunit{n}$ and $\ccong$ are contained in $\Isomsl$, 
$\sunit{n} \cdot \langle \ccong \rangle$ is a subgroup 
of $\Isomsl$. We want to show that every element in $\Isomsl$ belongs to $\sunit{n} \cdot \langle \ccong \rangle$. 
Recall that $\sunit{n}$ acts transitively on the set of all special Lagrangian $n$-planes in $\C^{n}$ with stabiliser conjugate 
to $\sorth{n}$. Hence given any $\LLL \in \Isomsl$ there exists $S \in \sunit{n}$ so that $S^{-1}\LLL$ 
maps the standard special Lagrangian $n$-plane $\R^{n} \subset \C^{n}$ to itself preserving its orientation.
Therefore $S^{-1}\LLL$ restricted to $\R^{n}$ is some element of $\sorth{n}$. Hence by using the freedom to change $S$ by 
an element of the stabiliser $\sorth{n}$ we can arrange that $S^{-1}\LLL= \Id$ on $\R^{n}$.
Therefore given any $\LLL \in \Isomsl$ there exists $S \in \sunit{n}$ such that $\LLL':=S^{-1}\LLL \in \Isomsl$ 
restricted to $\R^{n} \subset \C^{n}$ is the identity. The result follows if we can prove that this implies that 
$\LLL' = \Id$ or $\LLL'= \ccong$. To prove this we need to assume that $n>2$.

Given any isotropic $n-2$ plane $\Pi \subset \C^{n}$ we consider the space of all Lagrangian $n$-planes containing 
$\Pi$. Given any $n$-plane extension $\Pi''$ of $\Pi$ we can choose orthonormal vectors $v_{n-1}$ and $v_{n}$ 
which are orthogonal to $\Pi$ and so that $\Pi'' = \Pi \oplus \langle v_{n-1},v_{n}\rangle$. 
Since $\Pi$ is isotropic,  
$\Pi'' = \Pi \oplus \langle v_{n-1},v_{n}\rangle $ is Lagrangian if and only if $v_{n-1}$ and $v_{n}$ are also 
orthogonal to $J\Pi$ and $\omega(v_{n-1},v_{n})=0$. In other words, Lagrangian $n$-planes containing 
$\Pi$ are in one-to-one correspondence with $2$-planes in the complex $2$-plane $(\Pi\oplus J\Pi)^{\perp}$
which are Lagrangian with respect to the symplectic structure given by restriction of the symplectic form $\omega$ on $\C^{n}$ 
to $(\Pi \oplus J\Pi)^{\perp}$
(the restriction of $\omega$ to any complex subspace is nondegenerate). 
A refinement of this correspondence shows that special Lagrangian extensions of the oriented isotropic $n-2$ plane $\Pi$ 
are in one-to-one correspondence with $2$-planes in the complex $2$-plane $(\Pi \oplus J\Pi)^{\perp}$
that are special Lagrangian with respect to the holomorphic $(2,0)$-form $\Omega_{2} = \iota_{v_{1}\wedge \cdots \wedge v_{n-2}} \Omega$ where $v_{1}, \ldots ,v_{n-2}$ is an oriented orthonormal basis of $\Pi$.

Let $e_{1}, \ldots ,e_{n}$ denote the standard oriented orthonormal basis of $\R^{n}$.
Since $\LLL'$ leaves invariant $\R^{n}$ it also leaves invariant the perpendicular $n$-plane 
$i\R^{n} \subset \C^{n}$. Hence $\LLL'|_{i\R^{n}} \in \orth{n}$. Let $(l_{jk}')$ denote the matrix of 
$\LLL '|_{i\R^{n}}$ with respect to the basis $ie_{1}, \ldots , ie_{n}$.
Choose any pair of integers $1 \le j<k \le n$ and consider the oriented codimension two isotropic subspace of 
$\R^{n} \subset \C^{n}$ defined by $\Pi_{j,k}:= \langle e_{j},e_{k} \rangle^{\perp} \subset \R^{n}$. 
Since $\LLL'$ fixes $\R^{n}$ pointwise it fixes $\Pi_{j,k}\subset \R^{n}$ and hence takes any 
special Lagrangian $n$-plane containing $\Pi_{j,k}$ into another special Lagrangian $n$-plane containing $\Pi_{j,k}$.
Hence by the previous paragraph $\LLL'$  sends the complex $2$-plane $\C^{2}_{e_{j},e_{k}}$ spanned by $e_{j}$ and $e_{k}$ 
to itself and also it 
fixes $\R^{2}_{{e_{j},e_{k}}} \subset \C^{2}_{{e_{j},e_{k}}}$ pointwise. Hence 
$\LLL'|_{\C^{2}_{e_{j},e_{k}}} \in (\Id )\times \orth{2} \subset \orth{2} \times \orth{2} \subset \orth{4}$ 
(with this splitting thought of with respect to the basis $e_{j}, e_{k}, ie_{j}, ie_{k}$) 
and therefore $l_{jm}'=l_{km}'=0$ for any $m\notin \{j,k\}$
(consider the norm of the vectors formed by the $j$th and $k$th rows or columns of $\LLL'|_{i\R^{n}}\in \orth{n}$).
Since we are free to choose any $1\le j<k \le n$  this forces all off-diagonal terms of $(l_{jk}')$ to vanish and hence 
$l_{jj}' = \pm 1$ for all $j=1, \ldots ,n$. 

Finally we have to show that either $l_{jj}'=1$ for all $j$ or $l_{jj}'=-1$ for all $j$.
Suppose $\LLL'|_{i\R^{n}} \neq \pm \Id$, then without loss of generality we may suppose that $l_{11}'=1$ and $l_{22}'=-1$ 
and $l_{jj} = \pm 1$ for $j>2$. The oriented $n$-plane $\xi= -Je_{1} \wedge Je_{2} \wedge e_{3} \wedge \ldots \wedge e_{n}$ 
is a special Lagrangian $n$-plane which as an unoriented $n$-plane is invariant under $\LLL'$; but by our choice 
of $l_{11}'$ and $l_{22}'$ $\LLL'$ reverses the orientation of $\xi$ and hence $\LLL' \notin \Isomsl$. 
Therefore $\LLL = \Id$ or $\LLL= \ccong$ as claimed. 

\emph{Structure of $\Isomslpm$:}
It follows directly from the definition that $\sunit{n}^{\pm}$ forms a subgroup of $\unit{n}$ which is 
invariant under complex conjugation. Hence by the first part of (i)  $\sunit{n}^{\pm} \cdot \langle \ccong \rangle$ 
has the semidirect product structure claimed. The semidirect product structure of $\sunit{n}^{\pm}$ itself 
can be seen as follows. Consider the subgroup of $\unit{n}$ generated by $\sunit{n}$ and reflection in the complex hyperplane 
$z_{1}=0$,  
$\RRR_{1} \in \orth{n} \subset \unit{n}$ defined by 
$$ \RRR _{1}(e_{i}) = 
\begin{cases}
-e_{i} \quad & \text{if $i=1$;}\\
\phantom{-}e_{i} \quad & \text{otherwise.}
\end{cases}
$$
Since $\det_{\C}(\RRR_{1})=-1$ 
any element in $\sunit{n}^{\pm}$ can be written in the form $\langle \RRR_{1} \rangle \cdot \sunit{n}$, 
conjugation by $\RRR_{i}$ defines an involution of $\sunit{n}$ and $\RRR_{1} \notin \sunit{n}$. 
Hence $\langle \RRR_{1} \rangle \cdot \sunit{n} 
\simeq  \sunit{n} \rtimes \Z_{2}$ where the twisting homomorphism $\rho: \Z_{2} \ra \Aut{\sunit{n}}$ is determined 
by $\rho(1)\, U = \RRR_{1}\, U \,\RRR_{1}$. It is straightforward to check that $\RRR_{1}$ 
satisfies $\RRR^{*}_{1}\Real{\Omega} = - \Real{\Omega}$ and hence belongs to 
$\Isomslpm \setminus \Isomsl$.
Moreover, we see that $\LLL \in \Isomslpm \setminus \Isomsl$ if and only if 
$\RRR_{1} \cdot \LLL \in \Isomsl$. 
Hence the structure of $\Isomslpm$ now follows from the structure of $\Isomsl$ already established.

(iii) Using the standard basis $e_{1}=1$, $e_{2}=I$, $e_{3}=J$ and $e_{4}=K$ for the quaternions $\HH \cong \C^{2}$, 
the standard complex structure $J$ on $\C^{2}$ 
can be represented by the action of right multiplication by the unit imaginary quaternion $J \in \Imag(\HH)$. 
With respect to the complex structure $I$ defined by right multiplication by the unit imaginary quaternion $I$ 
we have 
$$ \omega_{I}:= g( \cdot, I \cdot ) = \Real{\Omega_{J}}.$$
Hence the special Lagrangian $2$-planes of $(\C^{2}, J,\omega_{J},\Omega_{J})$ are exactly the $I$-complex lines in $\C^{2}$.
The rest of (iii) follows from this well-known fact. We omit the details since we do not use the result in this paper.  
However, for concreteness we exhibit special Lagrangian isometries which are not Lagrangian isometeries.  
The $1$-parameter subgroup $\{M_{\theta}\} \cong \sorth{2} \subset \text{U}_{I}(2)\subset  \orth{4}$ defined by 
\begin{equation}
\addtocounter{theorem}{1}
\label{E:sl:isom:ex}
M_{\theta} = 
\left(
\begin{array}{rrrr}
1 & 0 & 0 & 0\\
0 & 1 & 0 & 0\\
0 & 0 & \cos{\theta} & \sin{\theta}\\
0 & 0 & -\sin{\theta} & \cos{\theta}
\end{array}
\right)
\end{equation}
acts on the standard symplectic form $\omega=\omega_{J}$ and standard holomorphic $(2,0)$-form $\Omega=\Omega_{J}$ 
on $\C^{2}$ by 
\begin{equation}
\addtocounter{theorem}{1}
\label{E:mtheta:act}
M_{\theta}^{*}\omega = \cos{\theta} \,\omega - \sin{\theta} \Imag{\Omega}, \qquad 
M_{\theta}^{*}\Omega = \Real{\Omega} + i (\cos{\theta} \Imag{\Omega} + \sin{\theta}\,\omega).
\end{equation}
In particular $\{M_{\theta}\}$ is not a subgroup of $\Isoml$ but is a subgroup of $\Isomsl$.
When $\theta =\pi$ , \ref{E:mtheta:act} reduces to $\omega \mapsto -\omega$ and 
$\Omega \ra \overline{\Omega}$, which is consistent with the fact that $M_{\pi} = \ccong$ 
where $\ccong$ denotes the complex conjugation on $\C^{2}$ defined with respect to the standard
complex structure $J$.
\end{proof}
\begin{corollary}
\addtocounter{equation}{1}
\label{C:sl:isom}
If $n>2$ then any special Lagrangian isometry $\LLL \in \Isomsl$ satisfies 
$$ \LLL^{*}\omega = \omega, \quad \LLL^{*}\Omega = \Omega, \qquad \text{or} \qquad 
\LLL^{*}\omega = -\omega, \quad \LLL^{*}\Omega = \overline{\Omega},$$
while any anti-special Lagrangian isometry $\LLL \in \Isomslpm\setminus \Isomsl$ 
satisfies 
$$ \LLL^{*}\omega = \omega, \quad \LLL^{*}\Omega = -\Omega, \qquad \text{or} \qquad 
\LLL^{*}\omega = -\omega, \quad \LLL^{*}\Omega = -\overline{\Omega}.$$
\end{corollary}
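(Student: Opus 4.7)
The plan is to deduce the corollary directly from the structural result Lemma \ref{L:isom:sl}(ii), which for $n>2$ identifies
\[
\Isomsl = \sunit{n}\cdot \langle \ccong \rangle, \qquad \Isomslpm = \sunit{n}^{\pm}\cdot \langle \ccong \rangle,
\]
together with the action of $\ccong$ on $\omega$ and $\Omega$ already recorded just before the lemma, namely $\ccong^{*}\omega=-\omega$ and $\ccong^{*}\Omega=\overline{\Omega}$.

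First I would record the action of an arbitrary $V\in\unit{n}$ on $\omega$ and $\Omega$: since $V$ is complex linear, $V^{*}\omega=\omega$, while $V^{*}\Omega=(\det_{\C}V)\,\Omega$. In particular $V^{*}\Omega=\Omega$ when $V\in\sunit{n}$ and $V^{*}\Omega=-\Omega$ when $V\in\sunit{n}^{\pm}\setminus\sunit{n}$ (e.g.\ when $V=\RRR_{1}U$ with $U\in\sunit{n}$, using the notation of the proof of \ref{L:isom:sl}).

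Next, by Lemma \ref{L:isom:sl}(ii), every $\LLL\in\Isomsl$ has the form $\LLL=U$ or $\LLL=U\circ\ccong$ for some $U\in\sunit{n}$. In the first case $\LLL^{*}\omega=\omega$ and $\LLL^{*}\Omega=\Omega$. In the second case
\[
\LLL^{*}\omega=\ccong^{*}U^{*}\omega=\ccong^{*}\omega=-\omega,\qquad \LLL^{*}\Omega=\ccong^{*}U^{*}\Omega=\ccong^{*}\Omega=\overline{\Omega},
\]
giving the two cases of the first claim. Similarly, any $\LLL\in\Isomslpm\setminus\Isomsl$ has the form $\LLL=V$ or $\LLL=V\circ\ccong$ with $V\in\sunit{n}^{\pm}\setminus\sunit{n}$, and the same computation now yields $(\LLL^{*}\omega,\LLL^{*}\Omega)=(\omega,-\Omega)$ or $(-\omega,-\overline{\Omega})$, as required.

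There is no real obstacle here: the whole content lies in Lemma \ref{L:isom:sl}(ii), whose proof occupies most of the appendix; the corollary is purely a bookkeeping exercise once that structural decomposition is available. The only mild subtlety is remembering that the determinant character of $\unit{n}$ on $\Omega$ reads off the holomorphic $n$-form but acts trivially on $\omega$, which is why the four cases split cleanly into the pattern stated.
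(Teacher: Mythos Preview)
Your argument is correct and is exactly the intended one: the paper gives no explicit proof of this corollary, treating it as an immediate consequence of the decomposition in Lemma~\ref{L:isom:sl}(ii) together with the recorded actions of $\sunit{n}$, $\sunit{n}^{\pm}$, and $\ccong$ on $\omega$ and $\Omega$. Your case analysis matches this precisely.
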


Corollary \ref{C:sl:isom} implies that every $\pm$-special Lagrangian 
isometry of $\C^{n}$ sends the complex structure 
$J$ to $\pm J$. In other words, every $\pm$-special Lagrangian isometry of $\C^{n}$ is either 
a holomorphic or anti-holomorphic isometry of $\C^{n}$. Hence we may also define another subgroup 
of $\Isomslpm$ by  
$ \IsomslpmJ:= \{ A\in \Isomslpm\, | \, AJ=JA \}$ where $J$ denotes the standard complex structure on $\C^{n}$.
\ref{L:isom:sl} implies that 
\begin{equation}
\addtocounter{theorem}{1}
\label{E:isomslpm:j}
\IsomslpmJ = \Isomslpm \cap \,\unit{n} = \sunit{n}^{\pm}.
\end{equation}

Corollary \ref{C:sl:isom} also implies that every special Lagrangian isometry of $\C^{n}$ preserves the calibration $\Real{\Omega}$ 
and that every anti-special Lagrangian isometry of $\C^{n}$ sends $\Real{\Omega}$ to $-\Real{\Omega}$. 
Clearly, an isometry of $\C^{n}$ that preserves the calibration $\Real{\Omega}$ defines a special 
Lagrangian isometry. However, in general it is not always the case that an isometry sending calibrated planes to 
calibrated planes must preserve the calibration. For example, for any $c\in (0,1)$ the calibration on $\R^{4}$ defined by 
$$ \phi = dx_{1}\wedge dy_{1} + c \,dx_{2}\wedge dy_{2},$$
calibrates only the $2$-plane $\xi=e_{1}\wedge J e_{1}$. The isometry
$$ (x_{1},x_{2},y_{1},y_{2}) \mapsto (x_{1},x_{2},y_{1},-y_{2}),$$
leaves $\xi$ invariant but does not preserve $\phi$.

\bibliographystyle{amsplain}
\bibliography{paper}

\def\cprime{$'$} \def\cprime{$'$}
\providecommand{\bysame}{\leavevmode\hbox to3em{\hrulefill}\thinspace}
\providecommand{\MR}{\relax\ifhmode\unskip\space\fi MR }
\providecommand{\MRhref}[2]{%
  \href{http://www.ams.org/mathscinet-getitem?mr=#1}{#2}
}
\providecommand{\href}[2]{#2}
\begin{thebibliography}{10}

\bibitem{anciaux}
Henri Anciaux, \emph{Legendrian submanifolds foliated by {$(n-1)$}-spheres in
  {$\Bbb S\sp {2n+1}$}}, Mat. Contemp. \textbf{30} (2006), 41--61, XIV School
  on Differential Geometry (Portuguese). \MR{MR2373502 (2009f:53084)}

\bibitem{arnold}
V.~I. Arnol{\cprime}d and A.~B. Givental{\cprime}, \emph{Symplectic geometry},
  Dynamical systems, IV, Encyclopaedia Math. Sci., vol.~4, Springer, Berlin,
  2001, pp.~1--138. \MR{1 866 631}

\bibitem{arnold:ode}
Vladimir~I. Arnold, \emph{Ordinary differential equations}, Universitext,
  Springer-Verlag, Berlin, 2006, Translated from the Russian by Roger Cooke,
  Second printing of the 1992 edition. \MR{MR2242407 (2007b:34001)}

\bibitem{becker}
Katrin Becker, Melanie Becker, and Andrew Strominger, \emph{Fivebranes,
  membranes and non-perturbative string theory}, Nuclear Phys. B \textbf{456}
  (1995), no.~1-2, 130--152. \MR{97k:81112}

\bibitem{bryant:personal}
Robert Bryant, \emph{Personal communication}, 2010.

\bibitem{butscher}
Adrian Butscher, \emph{Regularizing a singular special {L}agrangian variety},
  Comm. Anal. Geom. \textbf{12} (2004), no.~4, 733--791. \MR{2104075}

\bibitem{carberry:mcintosh}
Emma Carberry and Ian McIntosh, \emph{Minimal {L}agrangian 2-tori in
  {$\mathbb{CP}\sp 2$} come in real families of every dimension}, J. London
  Math. Soc. (2) \textbf{69} (2004), no.~2, 531--544. \MR{2040620}

\bibitem{castro:li:urbano}
Ildefonso Castro, Haizhong Li, and Francisco Urbano, \emph{Hamiltonian-minimal
  {L}agrangian submanifolds in complex space forms}, Pacific J. Math.
  \textbf{227} (2006), no.~1, 43--63. \MR{MR2247872 (2007k:53092)}

\bibitem{castro:urbano:foliated}
Ildefonso Castro and Francisco Urbano, \emph{On a minimal {L}agrangian
  submanifold of {$\bold C\sp n$} foliated by spheres}, Michigan Math. J.
  \textbf{46} (1999), no.~1, 71--82. \MR{MR1682888 (2000b:53079)}

\bibitem{castro:urbano:construct}
\bysame, \emph{On a new construction of special {L}agrangian immersions in
  complex {E}uclidean space}, Q. J. Math. \textbf{55} (2004), no.~3, 253--265.
  \MR{MR2082092 (2005g:53087)}

\bibitem{geiges}
Hansj{\"o}rg Geiges, \emph{Contact geometry}, Handbook of differential
  geometry. {V}ol. {II}, Elsevier/North-Holland, Amsterdam, 2006, pp.~315--382.
  \MR{MR2194671 (2007c:53123)}

\bibitem{gorenstein}
Daniel Gorenstein, \emph{Finite groups}, Harper \& Row Publishers, New York,
  1968. \MR{MR0231903 (38 \#229)}

\bibitem{gross:slgfib1}
Mark Gross, \emph{Special {L}agrangian fibrations. {I}. {T}opology}, Integrable
  systems and algebraic geometry (Kobe/Kyoto, 1997), World Sci. Publishing,
  River Edge, NJ, 1998, pp.~156--193. \MR{2000e:14066}

\bibitem{gross:slgfib2}
\bysame, \emph{Special {L}agrangian fibrations. {II}. {G}eometry. {A} survey of
  techniques in the study of special {L}agrangian fibrations}, Surveys in
  differential geometry: differential geometry inspired by string theory, Surv.
  Differ. Geom., vol.~5, Int. Press, Boston, MA, 1999, pp.~341--403.
  \MR{2001j:53065}

\bibitem{gross:slg:examples}
\bysame, \emph{Examples of special {L}agrangian fibrations}, Symplectic
  geometry and mirror symmetry (Seoul, 2000), World Sci. Publishing, River
  Edge, NJ, 2001, pp.~81--109. \MR{2003f:53085}

\bibitem{harvey:lawson}
Reese Harvey and H.~Blaine Lawson, Jr., \emph{Calibrated geometries}, Acta
  Math. \textbf{148} (1982), 47--157. \MR{85i:53058}

\bibitem{haskins:thesis}
Mark Haskins, \emph{Constructing {S}pecial {L}agrangian {C}ones}, Ph.D. thesis,
  University of Texas at Austin, 2000.

\bibitem{haskins:slgcones}
\bysame, \emph{Special {L}agrangian cones}, Amer. J. Math. \textbf{126} (2004),
  no.~4, 845--871. \MR{2075484}

\bibitem{haskins:complexity}
\bysame, \emph{{The geometric complexity of special Lagrangian $T^2$-cones}},
  Invent. Math. \textbf{157} (2004), 11--70.

\bibitem{haskins:kapouleas:hd2}
Mark Haskins and Nikolaos Kapouleas, \emph{{H}igher dimensional special
  {L}agrangian cones by gluing $\text{SO}(p)\times \text{SO}(p)$-invariant
  cones}, In preparation.

\bibitem{haskins:kapouleas:hd3}
\bysame, \emph{{S}pecial {L}agrangian cones by gluing ${SO}(n-1)$-invariant
  cones}, In preparation.

\bibitem{haskins:kapouleas:invent}
\bysame, \emph{Special {L}agrangian cones with higher genus links}, Invent.
  Math. \textbf{167} (2007), no.~2, 223--294. \MR{MR2270454}

\bibitem{haskins:kapouleas:survey}
\bysame, \emph{Gluing constructions of special {L}agrangian cones}, Handbook of
  geometric analysis. {N}o. 1, Adv. Lect. Math. (ALM), vol.~7, Int. Press,
  Somerville, MA, 2008, pp.~77--145. \MR{MR2483363}

\bibitem{hitchin:moduli}
Nigel Hitchin, \emph{The moduli space of special {L}agrangian submanifolds},
  Ann. Scuola Norm. Sup. Pisa Cl. Sci. (4) \textbf{25} (1997), no.~3-4,
  503--515 (1998), Dedicated to Ennio De Giorgi. \MR{2000c:32075}

\bibitem{hitchin:slg:lectures}
\bysame, \emph{Lectures on special {L}agrangian submanifolds}, Winter School on
  Mirror Symmetry, Vector Bundles and Lagrangian Submanifolds (Cambridge, MA,
  1999), AMS/IP Stud. Adv. Math., vol.~23, Amer. Math. Soc., Providence, RI,
  2001, pp.~151--182. \MR{2003f:53086}

\bibitem{isaacs}
I.~Martin Isaacs, \emph{Algebra}, Brooks/Cole Publishing Co., Pacific Grove,
  CA, 1994, A graduate course. \MR{MR1276273 (95k:00003)}

\bibitem{joyce:symmetries}
Dominic Joyce, \emph{Special {L}agrangian {$m$}-folds in {$\mathbb{C}\sp m$}
  with symmetries}, Duke Math. J. \textbf{115} (2002), no.~1, 1--51. \MR{1 932
  324}

\bibitem{joyce:syz}
\bysame, \emph{Singularities of special {L}agrangian fibrations and the {SYZ}
  conjecture}, Comm. Anal. Geom. \textbf{11} (2003), no.~5, 859--907.
  \MR{2004m:53094}

\bibitem{joyce:sing:survey}
\bysame, \emph{Special {L}agrangian submanifolds with isolated conical
  singularities. {V}. {S}urvey and applications}, J. Differential Geom.
  \textbf{63} (2003), no.~2, 279--347. \MR{2 015 549}

\bibitem{joyce:u1:i}
\bysame, \emph{{$\rm U(1)$}-invariant special {L}agrangian 3-folds. {I}.
  {N}onsingular solutions}, Adv. Math. \textbf{192} (2005), no.~1, 35--71.
  \MR{MR2122280 (2006e:53094)}

\bibitem{joyce:u1:ii}
\bysame, \emph{{$\rm U(1)$}-invariant special {L}agrangian 3-folds. {II}.
  {E}xistence of singular solutions}, Adv. Math. \textbf{192} (2005), no.~1,
  72--134. \MR{MR2122281 (2006h:53047)}

\bibitem{joyce:u1:iii}
\bysame, \emph{{$\rm U(1)$}-invariant special {L}agrangian 3-folds. {III}.
  {P}roperties of singular solutions}, Adv. Math. \textbf{192} (2005), no.~1,
  135--182. \MR{MR2122282 (2006j:53078)}

\bibitem{kapouleas:bulletin}
Nikolaos Kapouleas, \emph{Constant mean curvature surfaces in {E}uclidean
  three-space}, Bull. Amer. Math. Soc. (N.S.) \textbf{17} (1987), no.~2,
  318--320. \MR{88g:53013}

\bibitem{kapouleas:annals}
\bysame, \emph{Complete constant mean curvature surfaces in {E}uclidean
  three-space}, Ann. of Math. (2) \textbf{131} (1990), no.~2, 239--330.
  \MR{93a:53007a}

\bibitem{kapouleas:cmp}
\bysame, \emph{Slowly rotating drops}, Comm. Math. Phys. \textbf{129} (1990),
  no.~1, 139--159. \MR{91c:76024}

\bibitem{kapouleas:jdg}
\bysame, \emph{Compact constant mean curvature surfaces in {E}uclidean
  three-space}, J. Differential Geom. \textbf{33} (1991), no.~3, 683--715.
  \MR{93a:53007b}

\bibitem{krantz}
Steven~G. Krantz and Harold~R. Parks, \emph{A primer of real analytic
  functions}, second ed., Birkh\"auser Advanced Texts: Basler Lehrb\"ucher.
  [Birkh\"auser Advanced Texts: Basel Textbooks], Birkh\"auser Boston Inc.,
  Boston, MA, 2002. \MR{MR1916029 (2003f:26045)}

\bibitem{le:min:legn}
H{\^o}ng-V{\^a}n L{\^e}, \emph{A minimizing deformation of {L}egendrian
  submanifolds in the standard sphere}, Differential Geom. Appl. \textbf{21}
  (2004), no.~3, 297--316. \MR{MR2091366 (2005g:53171)}

\bibitem{y:lee}
Yng-Ing Lee, \emph{Embedded special {L}agrangian submanifolds in {C}alabi-{Y}au
  manifolds}, Comm. Anal. Geom. \textbf{11} (2003), no.~3, 391--423. \MR{2 015
  752}

\bibitem{marle}
Paulette Libermann and Charles-Michel Marle, \emph{Symplectic geometry and
  analytical mechanics}, Mathematics and its Applications, vol.~35, D. Reidel
  Publishing Co., Dordrecht, 1987. \MR{88c:58016}

\bibitem{mcduff:salamon}
Dusa McDuff and Dietmar Salamon, \emph{Introduction to symplectic topology},
  second ed., Oxford Mathematical Monographs, The Clarendon Press Oxford
  University Press, New York, 1998. \MR{2000g:53098}

\bibitem{mcintosh:slg}
Ian McIntosh, \emph{Special {L}agrangian cones in {$\mathbb{C}^3$} and
  primitive harmonic maps}, J. London Math. Soc. (2) \textbf{67} (2003), no.~3,
  769--789. \MR{1 967 705}

\bibitem{schoen}
R.~Schoen, \emph{The existence of weak solutions with prescribed singular
  behavior for a conformally invariant scalar equation}, Comm. Pure Appl. Math.
  \textbf{41} (1988), no.~3, 317--392. \MR{89e:58119}

\bibitem{schoen:wolfson:1996}
R.~Schoen and J.~Wolfson, \emph{Minimizing volume among {L}agrangian
  submanifolds}, Differential equations: La Pietra 1996 (Florence), Proc.
  Sympos. Pure Math., vol.~65, Amer. Math. Soc., Providence, RI, 1999,
  pp.~181--199. \MR{99k:53130}

\bibitem{schoen:wolfson}
\bysame, \emph{Minimizing area among {L}agrangian surfaces: the mapping
  problem}, J. Differential Geom. \textbf{58} (2001), no.~1, 1--86.
  \MR{2003c:53119}

\bibitem{simon:book}
Leon Simon, \emph{Lectures on geometric measure theory}, Proceedings of the
  Centre for Mathematical Analysis, Australian National University, vol.~3,
  Australian National University Centre for Mathematical Analysis, Canberra,
  1983. \MR{MR756417 (87a:49001)}

\bibitem{syz}
Andrew Strominger, Shing-Tung Yau, and Eric Zaslow, \emph{Mirror symmetry is
  {$T$}-duality}, Nuclear Phys. B \textbf{479} (1996), no.~1-2, 243--259.
  \MR{97j:32022}

\bibitem{thomas:yau}
R.~P. Thomas and S.-T. Yau, \emph{Special {L}agrangians, stable bundles and
  mean curvature flow}, Comm. Anal. Geom. \textbf{10} (2002), no.~5,
  1075--1113. \MR{1 957 663}

\end{thebibliography}
\end{document}

\section{Perturbation of Legendrian submanifolds in $\Sph^{2n-1}$}
\label{A:legn:perturb}
The proof of Lemma \ref{L:phat:dt} on the derivative of the angular period $\pthat$ requires the use of some 
standard facts on small Legendrian perturbations of a Legendrian submanifold $\Sigma$ of $\Sph^{2n-1}$ in terms of small
functions on $\Sigma$. This is the only place in the current paper where we use these facts, although they underpin all our gluing 
constructions of special Legendrians 
\cite{haskins:kapouleas:invent,haskins:kapouleas:survey,haskins:kapouleas:hd2,haskins:kapouleas:hd3}.
For convenience we recall these basic facts in this Appendix (see also \cite[Appendix C]{haskins:kapouleas:invent}).

Given a Legendrian submanifold $\Sigma$ of $\Sph^{2n-1}$ we want to exhibit a one-to-one correspondence between 
Legendrian submanifolds $C^{1}$ close to $\Sigma$ and sufficiently small  functions $f: \Sigma \ra \R$. 
Abstractly the existence of such a correspondence follows from 

\begin{theorem}[The Legendrian Neighbourhood Theorem, see e.g. \cite{eliashberg}]\hfill
\addtocounter{equation}{1}
\label{T:legn:nhd:thm}
\begin{enumerate}
\item[(i)]
Let $L$ be a smooth $n$-manifold and let 
$J^{1}(L) = T^{*}L \times \R$ be the first jet space of $L$ with its standard contact 
$1$-form $\hat{\alpha}$, i.e.  in  local coordinates $(x_{1},\ldots , x_{n},y_{1}, \ldots ,y_{n},z)$ 
on $T^{*}L \times \R$ we have $ \hat{\alpha}= dz - \sum_{i=1}^{n}{y_{i} dx_{i}}$.  
Let $z(L)$ denote the zero section of $J^{1}(L)$.
The graph $\Gamma_{\beta,f} = \{ (p, \beta(p),f(p))\, | \, p\in L\} \subset J^{1}(L)$ of a section $(\beta,f)$ of $J^{1}(L)$ 
gives a submanifold of $J^{1}(L)$ diffeomorphic to $L$. 
$\Gamma_{\beta,f}$ is a Legendrian submanifold of $J^{1}(L)$ if and only if 
$\beta=df$. Hence Legendrian submanifolds of $J^{1}(L)$  $C^{1}$ close to the zero section $z(L)$ are graphs of $1$-jets of  
real functions. Moreover, $\Gamma_{df,f}=\Gamma_{dg,g}$ if and only if $f=g$.
\item[(ii)]
Let $L$ be any Legendrian submanifold of any contact manifold $(M^{2n+1},\xi)$. 
There exist open neighbourhoods $U$ of $L$ in $M$ and $V$ of $z(L)$ in $J^{1}(L)$ 
and a contactomorphism $\phi: (U,\xi) \ra (J^{1}(L),\ker{\hat{\alpha}})$ with $\phi|_{L} = \Id$.
\end{enumerate}
\end{theorem}
In particular any choice of contactomorphism $\phi$ as in Theorem \ref{T:legn:nhd:thm}(ii) determines a one-to-one
 correspondence between smooth Legendrian submanifolds of $M$ $C^{1}$ close to $L$ and (small) smooth functions on $L$. 
For Legendrian submanifolds of $\Sph^{2n-1}$ we can give a more explicit version of the Legendrian Neighbourhood Theorem 
by exploiting the one-to-one correspondence between Legendrian submanifolds of $\Sph^{2n-1}$ and 
Lagrangian cones in $\C^{n}$. 

Given a Legendrian submanifold $\Sigma$ of $\Sph^{2n-1}$ endowed with its standard contact $1$-form $\gamma$ we have a 
decomposition of the normal bundle $N\Sigma$  
$$
N\Sigma = \langle J \partial_{r} \rangle \oplus J(T\Sigma),
$$
since in this case the Reeb field $R_{\gamma}$ is just the vector field $J \partial_{r}$.
A normal vector field $V$ on $\Sigma$ is an infinitesimal Legendrian deformation of $\Sigma$ if and only if 
$ \mathcal{L}_{V}(\gamma) = 0$.
By Cartan's formula this is equivalent to 
$$ d(\gamma(V)) + \iota_{V}d\gamma = d (\gamma(V)) +  2 \iota_{V}\omega=0.$$
Hence the normal vector field $V$ is an infinitesimal Legendrian deformation of $\Sigma$ if and only if 
\begin{equation}
\addtocounter{theorem}{1}
\label{E:inf:leg:defm}
V=V_{f} := 2f J\partial_{r} + J\nabla_{\Sigma}f,
\end{equation}
for some function $f$ on $\Sigma$. We note that each Legendrian normal vector field is determined uniquely by 
its Reeb component.

Given a Legendrian submanifold $\Sigma$ of $\Sph^{2n-1}$ and a smooth real-valued function $f$ on $\Sigma$, we 
extend $f$ to a tubular neighbourhood $\Omegasf$ of $\Sigma$ by requiring that for any normal vector  $v$ to 
$\Sigma$ at $\sigma$ 
$$
f( \exp (\sigma,v; \Sph^{2n-1})) = f(\sigma) ,\qquad \quad \text{for all $\sigma \in \Sigma$}.
$$
We extend $f$ to the cone $C(\Omegasf)$ over $\Omegasf$ by requiring that $f$ be homogeneous of degree $2$. 
The Hamiltonian vector field on $C(\Omegasf)$ defined by 
$$
V_{f}= J \nabla f,$$
is homogeneous of degree $1$. The restriction of $V_{f}$ to $\Sigma$ agrees with the Legendrian normal vector field 
$V_{f}$ defined in \ref{E:inf:leg:defm}. 
Assuming $f$ is small enough, we can flow $\Sigma$ by the homogeneous 
Hamiltonian vector field $V_{f}$ for unit time to obtain a new 
submanifold $\Sigma_{f}$. Since $f$ is homogeneous of degree $2$ and the flow of $V_{f}$ preserves the symplectic structure on 
$\C^{n}$ it follows that $\Sigma_{f}$ is a Legendrian submanifold of $\Sph^{2n-1}$.

More generally, for any normal vector field $V_{\Sigma}$ on $\Sigma$ we extend it smoothly to a vector field $V$, defined 
in a tubular neighbourhood $\Omegasf$ of $\Sigma$, by parallel transport using the normal exponential map. 
We extend $V$ to the cone $C(\Omegasf)$ by requiring that $V$ be homogeneous, i.e. $V(r,\sigma)=rV(\sigma)$. 
If the normal vector field $V_{\Sigma}$ is sufficiently small then we can flow 
$\Sigma$ for unit time by $V$ to obtain a new submanifold $\Sigma_{V} \subset \Sph^{2n-1}$. By considering the cone over $\Sigma_{V}$
one sees that $\Sigma_{V}$ is Legendrian if and only if
$$
V_{\Sigma} = 2f J\partial_{r} + J \nabla_{\Sigma}f = V_{f}|_{\Sigma},$$
for some function $f$ on $\Sigma$ (see e.g. \cite[Lemma 2.4]{le:min:legn}), i.e. 
if and only if the normal vector field $V_{\Sigma}$ is an infinitesimal Legendrian variation as in \ref{E:inf:leg:defm}.
Hence the map $C^{\infty}(\Sigma) \ra \text{Leg}(\Sigma,\Sph^{2n-1})$ given by 
$$ f \mapsto \Sigma_{f}$$
defined on a neighbourhood of $0 \in C^{\infty}(\Sigma)$ 
determines a one-to-one correspondence between sufficiently small smooth functions on $\Sigma$ 
and Legendrian submanifolds sufficiently $C^{1}$ close to $\Sigma$. In particular, 
for any Legendrian submanifold $\Sigma'$ sufficiently close in $C^{1}$ to $\Sigma$ there is a unique 
function $f$ on $\Sigma$ so that $\Sigma' = \Sigma_{f}$.

One can easily deduce statements about perturbations of parameterised Legendrians, i.e. about Legendrian immersions or
embeddings of a fixed manifold $\Sigma$  in $\Sph^{2n-1}$, from the above versions of the Legendrian Neighbourhood Theorem 
for unparameterised Legendrian submanifolds of $\Sph^{2n-1}$. The only complication is the possibility of reparametrisation of the 
domain $\Sigma$, and thus in the parameterised versions of the Legendrian Neighbourhood Theorem one must allow for 
diffeomorphisms of the domain.

\subsection*{Lagrangian catenoids and a heuristic explanation for behaviour of $X_\tau$ as $\tau \ra 0$. }
In this section we give a heuristic explanation why one expects to see
Lagrangian catenoids appearing in the transition regions of $X_\tau$ as $\tau \ra 0$.
In fact, if we consider the standard complex linear action of $\sorth{n}$ on $\C^n$, then any
isotropic $\sorth{n}$-orbit can be written in the form $z .\Sph^{n-1}(1)$ for some $z \in \C$.
Given an immersed curve $z: \R  \ra \C$, the immersion $X_z: \R \times \Sph^{n-1} \ra \C^n$ given by
$(t,\sigma) \mapsto z(t) \sigma$ is an $\sorth{n}$-invariant Lagrangian immersion.
Moreover, the immersion $X_z$ is special Lagrangian when $z$ satisfies the ODE \ref{E:n:twist:sl:ode}.

The above remarks are useful for understanding heuristically the behaviour of $X_\tau$ as $\tau \ra 0$.
When either $w_1=0$ or $w_2=0$ the $\sorth{p} \times \sorth{q}$ orbit $(w_1\cdot \Sph^{p-1},w_2 \cdot \Sph^{q-1})$
collapses from a $\Sph^{p-1} \times \Sph^{q-1}$
to $\Sph^{p-1}$ or $\Sph^{q-1}$ respectively.
Hence the interesting singular behaviour of $X_\tau$ happens either when $\abs{w_1}$ or $\abs{w_2}$ gets close to
zero.
If $\abs{w_1}$ is small (and hence $\abs{w_2}$ is close to $1$) the corresponding $\sorth{p} \times \sorth{q}$ orbit is
metrically close to $\Sph^{p-1}(\epsilon) \times \Sph^{q-1}(1)$
for some small $\epsilon$.
Similarly, if $\abs{w_2}$ is small then the orbit is close to $\Sph^{p-1}(1) \times \Sph^{q-1}(\epsilon)$
for some small $\epsilon$.

Suppose we want to understand the behaviour of $\bw=(w_1,w_2)$ near a point where $\abs{w_1}$ is small.
So let us suppose that at some point $w_1(0)$ is small and real and $w_2(0)$ is real and close to $1$.
The most interesting behaviour is captured by how $w_1$ changes close to $w_1(0)$ since this determines how the
radius of the collapsing sphere $\Sph^{p-1}$ is changing. In fact from \ref{E:odes:p:n} it is easy to see
that close to such a point $w_2$ changes much more slowly than 
$w_1$ (e.g. $\dot{w}_1(0) \sim \overline{w}_1^{p-1}$ whereas $\dot{w}_2(0) \sim - \overline{w}_1^p$.) 
So in a neighbourhood of such a point the first approximation to $\bw$ is that $w_2$ is almost constant at $1$
while $w_1$ almost satisfies the ODE
$$\overline{w}_1 \dot{w}_1 = \overline{w}_1^p,$$
and hence $\Imag{w_1^p} \sim d$ for some constant $d$. In fact, since we have $\Imag{w_1^p w_2^{q}} = -2\tau$
then we must have $d\sim -2\tau$.
So close to such a point metrically the curve of $\sorth{p}\times \sorth{q}$-orbits 
approaches a small Lagrangian catenoid in $\C^p$
times $\Sph^{q-1}(1) \subset \R^{q} \subset \C^{q}$.
In the limiting picture what we see is two totally geodesic $\Sph^{n-1}$s intersecting along
a common $\Sph^{q-1}$. In $\C^n$ we see the SL cone 
$(\R^p \cup e^{i\pi/p} \R^p) \times \R^{q} \subset \C^p \times \C^{q}$,
which is singular along the whole $q$-plane $\R^{q}$.
In particular, the above small highly curved region
will cause the angle $\psi_1$ to pick up a twist by angle $\frac{\pi}{p}$.

Repeating the analysis close to a point where $\abs{w_2}$ is small, we conclude that close to such a point
the curve of $\sorth{p}\times \sorth{q}$-orbits approaches $\Sph^{p-1}(1) \subset \R^p \subset \C^p$ times a
small Lagrangian catenoid in $\C^{q}$. In particular, this small highly curved region will cause the angle
$\psi_2$ to pick up a twist by angle $\frac{\pi}{q}$.
In the limiting picture what we see is two totally geodesic $\Sph^{n-1}$s intersecting along a common
$\Sph^{p-1}$.

\subsection*{The $\tau\ra 0$ geometry of  $X_{\tau}$ for $p=1$: almost spherical regions, 
approximating spheres and discrete symmetries} \phantom{ab}
\nopagebreak
In this section we discuss the geometry of $X_\tau$ when $p=1$ focusing on the 
limit as $\tau \ra 0$ and the geometric interpretation of the symmetries described in \ref{E:sym:p:eq:1}.

\medskip

The immersion $X_\tau$ induces a metric $g$ on the cylinder $\R \times \Sph^{n-2}$ given by
\addtocounter{theorem}{1}
\begin{equation}
\label{E:g_p1}
 g = \abs{w_2}^{2(n-2)}dt^2 + \abs{w_2}^2 g_{\Sph^{n-2}} = y_\tau^{n-2} dt^2 + y_\tau \,g_{\Sph^{n-2}},
\end{equation}
where $\mathbf{w}_\tau = (w_1, w_2)$.
If we reparametrise the curve $\mathbf{w}_\tau(t)$ by a new time parameter $\tilde{t}$ chosen so that
$d \tilde{t}^2 = \abs{w_2}^{2(n-2)}dt^2$, then $g$ becomes a warped product metric on $\R \times \Sph^{n-2}$.

$X_0:\R \times \Sph^{n-2} \ra \Sph^{2n-1}$ is an embedding whose image is the totally real equatorial sphere 
$\Sph^{n-1} \subset \R^n \subset \C^n$ minus the two antipodal points $\pm e_1$.
However, by \ref{P:y}(i), for $0<\abs{\tau}<\taumax$, $y_\tau$ and therefore the metric $g$ induced by $X_\tau$, is periodic of period $2\pt>0$.
The period $2\pt$ is a smooth function of $\tau$ for $0<\abs{\tau}< \taumax$
that tends to $\infty$ as $\tau \ra 0$ (see Section \ref{S:asymptotics}).
Due to the symmetry \ref{E:ttilde:sym:p:eq:1} we say the immersion $X_\tau$ is $2\pt$-periodic  
and call the isometry $\ttilde_{2\pthat} \in \sunit{n}$ appearing on the LHS of \ref{E:ttilde:sym:p:eq:1}
the \emph{rotational period of $X_\tau$}.

Geometrically, \ref{E:tbarkp:sym:p:eq:1} expresses the reflection symmetry that $X_\tau$ has about the
$\sorth{n-1}$ orbits of extremal radius---maximal radius when $k$ is even and minimal radius when $k$ is odd---
in $X_\tau$.


Since by \ref{P:X:tau}(i) $X_\tau$ depends smoothly on $\tau$, as $\tau \ra 0$ the immersion $X_\tau$ approaches
the equatorial embedding $X_0$ uniformly on compact subsets of $\R \times \Sph^{n-2}$.
In particular, for $\tau$ small, $X_\tau$ is close to $X_0$ on a neighbourhood $S[0]$ of $\{0\} \times \Sph^{n-2}$ in
the fundamental domain $(-\pt,\pt) \times \Sph^{n-2}$. Hence the image of $S[0]$ under $X_\tau$ is close to
the image of $S[0]$ under $X_0$, and the latter is the totally real equatorial sphere $\Sph^{n-1} \subset \R^n \subset \C^n$ minus
the union of two small antipodal disks centred at $\pm e_1$.

More generally for $k\in \Z$ we define the neighbourhood $S[k]$ of $\{2k\pt\} \times \Sph^{n-2}$ in the
fundamental domain $((2k-1)\pt,(2k+1)\pt) \times \Sph^{n-2}$ by 
\begin{equation}
\addtocounter{theorem}{1}
\label{E:asr:k:p1}
S[k] := \TTT_{2k\pt} (S[0]).
\end{equation}
From the $2\pt$-periodicity of $X_\tau$ expressed by \ref{E:ttilde:sym:p:eq:1} it follows that $X_\tau$ is close to $\ttilde_{2k\pthat} \circ X_0$
on $S[k]$ and its image is close to the equatorial sphere $\ttilde_{2k\pthat}(\Sph^{n-1})$ (again minus
two small antipodal disks). We call the regions $S[k]$, \textit{almost spherical regions} of $X_\tau$
and the equatorial sphere 
\begin{equation}
\addtocounter{theorem}{1}
\Sph[k]:= \ttilde_{2k\pthat}(\Sph^{n-1})
\end{equation}
the \textit{approximating sphere} to $S[k]$.

Each almost spherical region $S[k]$ connects to the almost spherical regions $S[k-1]$ and $S[k+1]$
in the two adjacent fundamental domains via
\textit{transition regions} whose image under $X_\tau$ is
localised near the antipodal points $\pm \ttilde_{2k\pthat}(e_1)$.
As $\tau \ra 0$, each transition region contains a subregion approaching a
Lagrangian catenoid of dimension $n-1$ and size $\tau^{1/(n-1)}$ (see the discussion following \ref{P:acslg}
for a description of the Lagrangian catenoid in $\C^n$).
In particular, the transition regions that arise when $p=1$ and $n>3$
are the obvious generalisations of the transition regions that occurred in the $\sorth{2}$-invariant case.
See Figure \ref{fig:as:cylinder} for a schematic illustration of the intrinsic geometry of $X_\tau$ 
in the case $p=1$.

\iffigureson
\begin{figure}
\centering
\input{as_cylinder.pstex_t}
\caption{Schematic presentation of the intrinsic geometry
of a special Legendrian cylinder $X_\tau$ with small $\tau$ and $p=1$}
\label{fig:as:cylinder}
\end{figure}
\else
\fi

As $\tau \ra 0$, almost spherical regions tend to equatorial $n-1$ spheres, while a transition region
connecting neighbouring almost spherical regions tends to a point of intersection of the
corresponding equatorial $n-1$ spheres.
It follows from \ref{E:pt:hat:asymp} and \ref{E:ttilde:x:p:eq:1} that as $\tau \ra 0$
\begin{equation}
\addtocounter{theorem}{1}
\ttilde_{2\pthat} \ra \left(
\begin{matrix}
-1 & 0  \\
0 & e^{-i\pi/(n-1)}\Id_{n-1} \\
\end{matrix}
\right).
\end{equation}
Hence in the $\tau \ra 0$ limit, the real $n$-planes in $\C^n$ associated to the almost spherical region
$S[0]$ and the almost spherical region $S[1]$ are
$\R \oplus \R^{n-1}$ and $\R \oplus e^{-i\pi/(n-1)} \R^{n-1}$ respectively.
This is consistent with the fact that the Lagrangian catenoid in $\C^{n-1}$ is asymptotic
to the union of two $n-1$ planes (which up to rotation we can take to be) $\R^{n-1}$ and $e^{-i\pi/(n-1)} \R^{n-1}$.

\subsection*{The $\tau\ra 0$ geometry of  $X_{\tau}$ for $p\neq 1, \,p\neq q$: almost spherical regions, 
approximating spheres and discrete symmetries} \phantom{ab}
\nopagebreak
In this section we discuss the geometry of $X_\tau$ when $p>1$ and $p\neq q$ focusing on the 
limit as $\tau \ra 0$ and the geometric interpretation of the symmetries described in \ref{E:sym:p:neq:q}.

\medskip
The metric induced by $X_\tau$ on $\cylpq$ for $p>1$ is
\addtocounter{theorem}{1}
\begin{equation}
\label{E:g_pnot1}
g= \abs{w_1}^{2(p-1)}\abs{w_2}^{2(q-1)}dt^2 + \abs{w_1}^2\, g_{\Sph^{p-1}} + \abs{w_2}^2\, g_{\Sph^{q-1}},
\end{equation}
where $\mathbf{w}_\tau = (w_1,w_2)$.
If we reparametrise the curve $\mathbf{w}(t)$ by a new time parameter $\tilde{t}$ such that
$d\tilde{t}^2 = \abs{w_1}^{2(p-1)}\abs{w_2}^{2(q-1)}dt^2$ then the metric $g$ takes the form
of a doubly warped product metric \cite[\S 1.4, \S 3.2.4]{petersen}.

For $p>1$ $X_0$ is an embedding whose image is the totally real equatorial sphere
$\Sph^{p+q-1} \subset \R^{p+q} \subset \C^{p+q}$ minus a set $S$ which is the union of two orthogonal equatorial subspheres
$\mathcal{P}:=\Sph^{p-1} \times \{0\} \subset \R^p \times \R^q \subset \C^{p+q}$ and
$\mathcal{Q}:= \{0\} \times \Sph^{q-1} \subset \R^p \times \R^q \subset \C^{p+q}$.
As in the case $p=1$, for $0<\abs{\tau}<\taumax$ the real-valued function
$y_\tau=\abs{w_2}^2$ (and hence the metric $g$ induced by $X_\tau$) is periodic of period $2\pt>0$.
Moreover, the symmetry \ref{E:ttilde:sym:p:neq:q} says that 
the immersion $X_\tau$ is again $2\pt$-periodic with \emph{rotational period} $\ttilde_{2\pthat} \in \sunit{n}$ (recall \ref{E:ttilde:x:p:neq:1} and \ref{E:pthat}).


However, the geometry of a domain of periodicity of $X_\tau$ is more complicated than in the case $p=1$.
Recall that when $p=1$ and $\tau$ is small, each domain of periodicity of $X_\tau$
contains just one almost spherical region.
We will see below that for $p>1$, for small $\tau$ each  domain of periodicity of $X_\tau$ contains
\textit{two} almost spherical regions.

For $p>1$, there are two radii $r_1=\abs{w_1}$ and $r_2=\abs{w_2}$ associated
with each $\sorth{p} \times \sorth{q}$-orbit in $X_\tau$, namely the radii of the spheres
of dimensions $p-1$ and $q-1$ respectively. 
By comparison,  for $p=1$, there is only one radius $r_2 = \abs{w_2} = \sqrt{y_\tau}$
associated with each $\sorth{n-1}$-orbit that measures the radius of $\Sph^{n-2}(r_2)$.

Since $X_\tau$ depends smoothly on $\tau$, as $\tau \ra 0$ the immersion $X_\tau$ approaches
the equatorial embedding $X_0$ on compact subsets of $\cylpq$.
In particular, for $\tau$ small, $X_\tau$ is close to $X_0$ on a neighbourhood $S[0]$ of
$\{0\} \times \Sph^{p-1} \times \Sph^{q-1}$ in the domain $(-\pt^-,\pt^+) \times \Sph^{p-1} \times \Sph^{q-1}$.
Hence for $\tau$ small the image of $S[0]$ under $X_\tau$ is close to the image of $S[0]$ under $X_0$, and the latter
is the totally real equatorial sphere $\Sph^{p+q-1} \subset \R^{p+q} \subset \C^{p+q}$ minus
the union of small tubular neighbourhoods of the equatorial subspheres $\mathcal{P}$ and $\mathcal{Q}$.
$S[0]$ is an \textit{almost spherical region} of $X_\tau$ and $\Sph[0]= \Sph^{p+q-1} \subset \R^{p+q} \subset \C^{p+q}$ is the \textit{approximating sphere} to $S[0]$.

Hence we see that any domain of periodicity of $X_\tau$ contains \textit{two} almost spherical regions.
For example, $\cylpq_{(-2\pt^-,2\pt^+)}$ contains all of the almost spherical region $S[0]$,
as well as half of both of the almost spherical regions $S[-1]$ and $S[1]$.
Moreover, while there is a translational symmetry relating $S[k]$ and $S[k+2]$ there is no such
translational symmetry relating the adjacent almost spherical regions $S[k]$ and $S[k+1]$.

Each almost spherical region $S[k]$ of $X_\tau$ connects to the two neighbouring almost spherical regions
$S[k-1]$ and $S[k+1]$ via \textit{transition regions} whose images under $X_\tau$ are located close to
orthogonal equatorial subspheres of dimension $p-1$ and $q-1$.
The geometry of the transition regions is also more complicated than in the case $p=1$.
When $p \neq q$, there are two fundamentally different types of transition regions.

To see this note that $\{-\pt^-\}\times \Sph^{p-1} \times \Sph^{q-1}$
corresponds to one of the $\sorth{p} \times \sorth{q}$-orbits in $X_\tau$
for which the radius $r_2$ of the $q-1$ sphere is maximal and therefore the radius $r_1$ of the $p-1$ sphere is minimal.
On the other hand, $\{\pt^+\} \times \Sph^{p-1} \times \Sph^{q-1}$
corresponds to an $\sorth{p} \times \sorth{q}$-orbit in $X_\tau$
for which the radius $r_2$ of the $q-1$ sphere is minimal and therefore the radius $r_1$ of the $p-1$ sphere is maximal.
Hence for $\tau$ small, the regions where $t$ is close to $-\pt^-$
or to $\pt^+$ are both highly curved regions of $X_\tau$.

However, the geometry of the regions close
to $t=-\pt^-$ and $t=\pt^+$ is different when $p\neq q$.
The transition region containing $\{-\pt^-\} \times \Sph^{p-1} \times \Sph^{q-1}$
has a subregion that for small $\tau$ approaches the product of
$\Sph^{q-1}(1)$ with a $p$-dimensional Lagrangian catenoid of size $\tau^{1/p}$.
We call this a \textit{transition region of type $1$}, since in this region the first radius $r_1$ is close to zero.
The transition region containing $\{\pt^+\}\times \Sph^{p-1} \times \Sph^{q-1}$, however,
has a subregion that for small $\tau$ approaches the product of
$\Sph^{p-1}(1)$ with a $q$-dimensional Lagrangian catenoid of size $\tau^{1/q}$.
We call this a \textit{transition region of type $2$}, since in this transition region
the second radius $r_2$ is close to zero.

We summarise our discussion of the geometry of $X_\tau$ for $p>1$ as follows.
For $\tau$ small, $\cylpq_{(-2\pt^-,2\pt^+)}$,
a domain of periodicity of $X_\tau$,
contains:
\begin{itemize}
\item  one full almost spherical region
$S[0]$---contained in
$\cylpq_{(-\pt^-, \pt^+)}$, 
\item  half of the almost spherical regions $S[-1]$ and $S[1]$---contained in
$\cylpq_{(-2\pt^-,-\pt^-)}$ and $\cylpq_{(\pt^+,2\pt^+)}$ respectively,
\item two transition regions, one of type $1$ and
one of type $2$---located close to $\{-\pt^-\} \times \Sph^{p-1} \times \Sph^{q-1}$ and $\{\pt^+\} \times \Sph^{p-1} \times \Sph^{q-1}$ respectively.
\end{itemize}
The two half almost spherical regions are different because the one contained in
$\cylpq_{(-2\pt^-,-\pt^-)}$ 
connects to the full almost spherical region $S[0]$
via a transition region of type $1$,
while the one contained in
$\cylpq_{(\pt^+,2\pt^+)}$ connects to $S[0]$
via a transition region of type $2$.

\bigskip

It is enough now to establish the estimate for the derivative of $\pthat$ in \ref{E:phat}.
We fix a small $\tau$ and let $\sigma$ be a number close to $\tau$.
To simplify the notation we write 
when $p=1$
$X:=X_\tau$, $Y:=X_\sigma$, and 
$Z:=\ttilde_{\pthat-\psighat} \circ X_\sigma \circ \TTT_{\psig-\pt}$.
By \ref{P:xtau:sym:p:eq:1} we have then that $Y$ and $Z$ satisfy the symmetries (recall $p=1$)
$$
\tbartilde \circ Y = Y \circ \tbar,
\qquad
\tbartilde_{\pthat} \circ Z  = Z \circ \tbar_{\pt}.
$$
which are also satisfied by $X$.

When $p>1$ we write 
$X:=X_\tau$, 
$Y:=\ttilde_{x^-} \circ X_\sigma \circ \TTT_{\pt^--\psig^-}$,
and
$Z:=\ttilde_{\pthat-\psighat} \circ Y \circ \TTT_{\psig-\pt}$,
where $x^-$ is defined to be the small number
which ensures that the symmetries of $X$ in
\ref{E:tbar:ptminus} and \ref{E:tbar:ptplus} 
(or \ref{E:tbar:ptminus:p:eq:q} and \ref{E:tbar:ptplus:p:eq:q})
apply to $Y$ and $Z$ respectively as
$$
\tbartildeminus \circ Y  = Y \circ \tbar_{\,-\pt^-},
\qquad
\tbartildeplus \circ Z  = Z \circ \tbar_{\pt^+},
$$
where 
$\tbartildeminus$ and $\tbartildeplus$ are defined in \ref{E:tbartildeminus}
and \ref{E:tbartildeplus} respectively independently of $\sigma$.

Assuming $\sigma$ close enough to $\tau$ we conclude in all cases by the Legendrian neighborhood theorem
that there are small functions $\phitilde,\varphitilde:(-2\pt,2\pt)\times\merpq\to\R$
and diffeomorphisms close to the identity $D_\sigma, E_\sigma:(-2\pt,2\pt)\times\merpq\to\cylpq$,
such that $Y=X_\phitilde\circ D_\sigma$ and $Z=X_\varphitilde\circ E_\sigma$
on $(-2\pt,2\pt)\times\merpq$.
Assuming $p=1$ we have by \ref{P:commute:p:eq:1}.iii and the symmetry of $X$ above, that 
$\tbartilde\circ X_\phitilde=(\tbartilde\circ X)_{-\phitilde}=(X\circ\tbar)_{-\phitilde}=X_{-\phitilde\circ\tbar}\circ\tbar$.
By the symmetry of $Y$ above we conclude
$$
X_{-\phitilde\circ\tbar}\circ\tbar \circ D_\sigma =X_\phitilde\circ D_\sigma \circ \tbar,
$$
which implies by uniqueness that
$-\phitilde\circ \tbar=\phitilde$ and $\tbar\circ D_\sigma=D_\sigma\circ\tbar$.
A similar argument using the rotational symmetry of $X$ and $Y$
under $\MMM \in \orth{n-1}$ (recall \ref{E:orth:p:eq:1}) instead of $\tbar$ and $\tbartilde$
implies $\phitilde\circ \MMM=\phitilde$ and $\MMM \circ D_\sigma =D_\sigma \circ \MMM$.
Arguing in a similar way in the other cases we conclude that
$\phitilde$ and $\varphitilde$ depend only on $t$ and moreover they satisfy odd symmetry conditions which imply
$$
\phitilde(0)=0,
\quad
\varphitilde(\pt)=0,
\quad\text{when $p=1$};
\qquad
\phitilde(-\pt^-)=0,
\quad
\varphitilde(\pt^+)=0,
\quad\text{when $p>1$}.
$$


Since the one-parameter group $\ttilde_x$ is generated by the function $\varphit:\C^n\to\R$
given by
$$
\varphit(z_1, ... , z_n)  =  
-\frac1{2p}\sum_{i=1}^p|z_i|^2
+
\frac1{2q}\sum_{i=p+1}^n|z_i|^2,
$$
and in all cases we have 
$Z:=\ttilde_{\pthat-\psighat} \circ Y \circ \TTT_{\psig-\pt}$,
we conclude by linearizing around $\sigma=\tau$ that
\addtocounter{theorem}{1}
\begin{equation}
\label{E:varphi-phi}
\varphi=\phi\,-\left. \frac { d\pthat } {d\tau}\right|_\tau \varphit\circ X, \qquad
\text{where}\qquad
\phi=\left.\frac d{d\sigma}\right|_{\sigma=\tau}\phitilde,
\quad
\varphi=\left.\frac d{d\sigma}\right|_{\sigma=\tau}\varphitilde.
\end{equation}
By the previous lemma
\ref{L:infinitesimal-force}
$\phi$ satisfies 
$$
(q-ny)\dot{\phi}+n\dot{y}\phi =2.
$$
By \ref{E:w:reparam} and \ref{E:y:ddot} we have
$$
(\,(q-ny) \dot{\phi}+n\dot{y}\phi\,) \dot{\phantom{1} }
=
(q-ny) \, |\dot{\bw}|^{2} \, (\Delta_{X^*g_{\Sph^{2n-1}}}\phi+2n\phi),
$$
which shows that the first order linear ODE 
$$(q-ny)\dot{\phi}+n\dot{y}\phi=A$$
for $A$ constant,
is a first integral of the second order linearized equation
\ref{E:linear:phi}.

We conclude that there is a constant $b$ such that
\begin{equation}
\label{E:phi}
\phi=2\,\left(\,b(q-ny) + Q \,\right),
\end{equation}
where $Q(t)$ is the unique solution of 
\ref{E:linear:phi}
with initial data 
$$
Q(\pt^*)=\frac1{n\dot{y}(\pt^*)},
\quad
\dot{Q}(\pt^*)=0,
\quad\text{when $p=1$};
\qquad
Q(0)=\frac1{n\dot{y}(0)},
\quad
\dot{Q}(0)=0,
\quad\text{when $p>1$}.
$$
$b$ is specified by the initial condition $\phi(0)=0$ when $p=1$
or $\phi(-\pt^-)=0$ when $p>1$.
Since we also have 
$\varphi(\pt)=0$ when $p=1$ or $\varphi(\pt^+)=0$ when $p>1$,
and $\varphit\circ X= \frac{q-ny}{2pq}$, 
we conclude by applying \ref{E:varphi-phi} and \ref{E:phi},
that when $p=1$
$$
\frac {d\pthat}{d\tau}=
4(n-1)\left(
-\frac{Q(0)}{\,q-ny(0)}
+
\frac{Q(\pt)}{\,q-ny(\pt)}
\right),
$$
and when $p>1$
$$
\frac {d\pthat}{d\tau}=
4pq\left(
-\frac{Q(-\pt^-)}{\,q-ny(-\pt^-)}
+
\frac{Q(\pt^+)}{\,q-ny(\pt^+)}\right).
$$

\bigskip
\textbf{MOVED STUFF}

Recall from \ref{E:y0:p:eq:1} that our choice of initial conditions for $\bw_\tau$ in the case $p=1$ forces $y_\tau$ to have 
a maximum at $t=0$ and a minimum at $t=\pt$. Hence using the symmetries of $y_\tau$ described in \ref{E:y:sym:p:eq:1}, 
$y_\tau$ has the following properties
\begin{itemize}
\item  $y_\tau$ has maxima at precisely $2k\pt$ and minima at precisely $(2k+1)\pt$
\item $y_\tau$ is decreasing on $(2k\pt,(2k+1)\pt)$ and increasing on $((2k+1)\pt,(2k+2)\pt)$
\end{itemize}
for each $k\in \Z$. See Figure \ref{fig:w2:pequals1} for an illustration.



For $p>1$, recall from \ref{E:y:max:min} (cf. \ref{E:y0:p:eq:1} for the case $p=1$) 
that $y_\tau$ attains a maximum at $t=-\pt^-$, a minimum at $t=\pt^+$,  
is decreasing on $(-\pt^-,\pt^+)$ and increasing on $(\pt^+, \pt + \pt^+)$---see Figure \ref{fig:w2:pneq1} 
(cf. Figure \ref{fig:w2:pequals1} for the case $p=1$).
As $\tau \ra 0$, both partial-periods $\pt^+$ and $\pt^-$ tend to $\infty$.
\textbf{Is this true in all cases? Is there one case where one partial period is finite?}

It is straightforward to verify that the subgroup $\Sym_k(X_\tau) \subset  \Sym(X_\tau)$ 
leaving the $k$th almost spherical region, $S[k]$,
invariant is isomorphic to $\Z_2 \times \orth{n-1}$, where $\Z_2$ is generated by $\tbar_{2k\pt}$.
The subgroup of $\Sym(X_\tau)$ leaving \textit{every} almost spherical region $S[k]$ invariant is 
equal to $\orth{n-1} = \bigcap_{k\in \Z} \Sym_k(X_\tau)$.

It will be important for our subsequent gluing constructions \cite{haskins:kapouleas:hd2,haskins:kapouleas:hd3}
that the approximating sphere $\Sph[k]$ associated 
to the almost spherical region $S[k]$ inherits the symmetries of $S[k]$. 

To make this precise, let 
$\Sym_k(X_\tau)$ denote the subgroup of the group of all domain symmetries that 
preserves the $k$th almost spherical region and let $\Symtilde_k(X_\tau)$ denote the corresponding subgroup of 
the group of all target symmetries $\Symtilde(X_\tau)$. We claim that the subgroup $\Symtilde_k(X_\tau) \subset \orth{2n}$ 
leaves the set $\Sph[k]$ invariant. We can see this as follows.

We have that $\Symtilde_k(X_\tau) = \langle \tbartilde_{2k\pthat} \rangle \times \orth{n-1}$. 
In particular, $\Symtilde_0(X_\tau) = \langle \tbartilde \rangle \times \orth{n-1}$.
Also recall the definition 
$$\Sph[k] := \ttilde_{2k\pthat} \Sph[0],$$ where $\Sph[0] = \Sph^{n-1} \subset \R^n \subset \C^n$. 

For $p>1$, the approximating sphere $\Sph[k]$ associated to the almost spherical 
region $S[k]$ of an $\sorth{p} \times \sorth{q}$-invariant SL cylinder $X_\tau$ 
inherits the symmetries of $S[k]$, as was the case when $p=1$.  As previously, what we mean by this is that the subgroup 
$\Symtilde_k(X_\tau) \subset \orth{2n}$ leaves the set $\Sph[k]$ invariant. 
We can see this as follows.

First recall that 
$$ \Sph[k] = 
\begin{cases}
\ttilde_{2l\pthat} \, \Sph[0], & \quad \text{if $k=2l$, $l\in \Z$;}\\
\ttilde_{2l\pthat} \circ \tbartildeplus \, \Sph[0], & \quad \text{if $k=2l+1$, $l\in \Z$;}
\end{cases}
$$
where $\Sph[0] = \Sph^{p+q-1} \subset \R^{p+q} \subset \C^{p+q}$.

Using \ref{E:tbar:ptplus} and \ref{E:tbar:ptminus} we see that 
$$S[1]:= \tbar_{\pt^+}\,(S[0]) \quad \text{and} \quad S[-1]:= \tbar_{\,-\pt^-}\,(S[0])$$ 
are also almost spherical regions of $X_\tau$ whose approximating spheres are 
$$\Sph[1] := \tbartildeplus\,(\Sph[0]), \quad  \text{and} \quad \Sph[-1]:= \tbartildeminus\,(\Sph[0])$$ 
respectively.
Similarly, using the $2\pt$-periodicity of $X_\tau$, \ref{E:ttilde:sym:p:neq:q}, we see that
$$S[2]:= \TTT_{2\pt}\,(S[0])$$ 
is another almost spherical region of $X_\tau$ whose approximating sphere
is $$\Sph[2]= \ttilde_{2\pthat}\, (\Sph[0]).$$ 
Using the commutation relations \ref{E:tbar:pm:commute} we see that
we could also have defined $S[2]$ by $\TTT_{2\pt}\,(S[0]) = \tbar_{\pt^+} \circ \tbar_{\,-\pt^-}\,(S[0])
=\tbar_{\pt^+}\,(S[-1])$.

In general for any $k \in \Z$, we define
the \textit{almost spherical region} 
\begin{alignat}{3}
\addtocounter{theorem}{1}
\label{E:asr:k}
S[2k]&:= \TTT_{2k\pt}\,(S[0]) \quad \text{and} \quad S[2k+1]&:= \TTT_{2k\pt} \circ \tbar_{\pt^+}\,(S[0]) \\
\intertext{whose \textit{approximating spheres} are}
\addtocounter{theorem}{1}
\label{E:app:sphere:k}
\Sph[2k]&:= \ttilde_{2k\pthat}\,(\Sph[0]) \quad \text{and} \quad  \Sph[2k+1]&:=\ttilde_{2k\pthat} \circ \tbartildeplus \,(\Sph[0])
\end{alignat}
respectively.

\begin{remark}
\addtocounter{equation}{1}
The subgroup $\Sym_k(X_\tau) \subset \Sym(X_\tau)$ preserving the $k$th almost spherical region $S[k]$ is the group
generated by $\tbar_{k\pt} \circ \EEE$ and $\orth{p} \times \orth{p}$. It is straightforward to check that
$$ \Sym_k(X_\tau) \cong (\orth{p}\times\orth{p}) \rtimes_\rho \Z_2,$$
where the homomorphism $\rho: \Z_2 \cong \langle \tbar_{k\pt} \circ \EEE \rangle \ra \Aut \orth{p} \times \orth{p}$
is the restriction of the homomorphism $\rho: \dihedral{\infty} \ra \Aut \orth{p} \times \orth{p}$ defined above, i.e.
$\rho(\tbar_{k\pt} \circ \EEE) = \EEE'$.
It follows that the subgroup of $\Sym(X_\tau)$ preserving every almost spherical region is $\orth{p} \times \orth{p}$.

Notice that the subgroup $\Symtilde_k(X_\tau) \subset \Symtilde(X_\tau)$,
corresponding to the subgroup $\Sym_k(X_\tau) \subset \Sym(X_\tau)$
of domain symmetries which preserve the $k$th almost spherical region $S[k]$,
consists entirely of holomorphic isometries.

\end{remark}

\bigskip

\newpage

\appendix

\section{Energy of twisted SL curves and volumes of SL twisted products}
$\phantom{ab}$
\nopagebreak
Recall from \ref{E:sl:twist:vol} that the volume of a special Legendrian twisted product, $X_1 *_{\bw} X_2$ is
determined by the volumes of its two special Legendrian factors
$X_1$, $X_2$ and the energy of the $(p,q)$-twisted SL curve $\bw$ with respect to the parametrisation induced by \ref{E:slg:ode}.
Hence in this section we study the energy of the (parametrized) curves $\bw_\tau$ defined previously.

It is convenient to study the energy of $\bw_\tau$ over a suitable half-period of $y_\tau= \abs{\bw_\tau}^2$.
If $p=1$, then define the interval $I_\tau = (0,\pt)$, otherwise define $I_\tau = (-\pt^-,\pt^+)$ (recall \ref{E:pt:pm}).
We define
\addtocounter{theorem}{1}
\begin{equation}
\label{E:energy:defn}
e_{p,q,\tau}:= \int_{I_\tau}{\abs{\dot{\bw}_\tau}^2 dt} = \int_{I_\tau}{y_\tau^{q-1} (1-y_\tau)^{p-1} dt}.
\end{equation}
If $\tau$ is chosen so that $\bw_\tau$ is a closed curve, then
the energy of $\bw_\tau$ over a full period of $\bw$ is $2k_0 e_{p,q,\tau}$, where
$k_0$ is the integer defined in \ref{E:k0:defn}.

If we change independent variables in this integral from $t$ to $y$ then using \ref{E:y:dot} we obtain
\addtocounter{theorem}{1}
\begin{equation}
\label{E:energy:defn:y}
e_{p,q,\tau} = \int_{\ymin}^{\ymax}{\frac{y^{q-1} (1-y)^{p-1}}{2\sqrt{y^q(1-y)^p - 4\tau^2}}dy}.
\end{equation}
It is straightforward to see that $e_{p,q,\tau}$ depends analytically on $\tau$ for $0<\abs{\tau}<\taumax$.

\textit{Limit of the energy as $\tau \ra 0$:} We can give a geometric interpretation of the limiting value of the energy, $e_{p,q,\tau}$, as $\tau \ra 0$
and also evaluate this limit explicitly. This limiting energy is a positive rational number if $pq$ is even, whereas it is a
positive rational multiple of $\pi$ if $pq$ is odd.

Taking the limit of \ref{E:energy:defn:y} as $\tau \ra 0$ gives
\addtocounter{theorem}{1}
\begin{equation}
\label{E:energy:tau:zero}
e_{p,q}:= \lim_{\tau \ra 0} e_{p,q,\tau} = \frac{1}{2}\int_{0}^{1}{ y^{\frac{q}{2} - 1} (1-y)^{\frac{p}{2}-1} dy}.
\end{equation}
To see the geometric interpretation of the limiting energy, first note that $e_{p,q,\tau}$ is obviously independent of the special Legendrian immersions $X_1$ and $X_2$ used.
If we take $X_1$ and $X_2$ to be the standard totally geodesic Legendrian immersions $\Sph^{p-1} \subset \R^p \subset \C^p$
and $\Sph^{q-1} \subset \R^q \subset \C^q$, then of course the $\bw_\tau$-twisted product immersion $X_1 *_{\bw_\tau} X_2$
that we obtain is exactly the special Legendrian immersion $X_\tau$ defined in \ref{D:X:tau}.
It follows from Proposition \ref{P:X:tau} that on compact subsets of $I_\tau$, $X_\tau$ converges as $\tau \ra 0$ to the standard
totally geodesic Legendrian immersion $X_0: \Sph^{p+q-1} \subset \R^{p+q} \subset \C^{p+q}$.
Hence using \ref{E:sl:twist:vol} with the above choice of $X_1$ and $X_2$ implies that
\addtocounter{theorem}{1}
\begin{equation}
\label{E:energy:tau:zero:geom}
e_{p,q}:= \lim_{\tau\ra 0} e_{p,q,\tau} = \frac{\vol{\Sph^{p+q-1}}}{\vol{\Sph^{p-1}}\vol{\Sph^{q-1}}}.
\end{equation}

To evaluate \ref{E:energy:tau:zero:geom} explicitly, we recall some standard facts about
volumes of spheres and Euler's gamma function, $\Gamma$.
The volume of the unit sphere in $\R^n$ is  \cite[p. 304]{chavel}
\addtocounter{theorem}{1}
\begin{equation}
\label{E:vol:sphere}
\vol(\Sph^{n-1})  = \frac{2\pi^{n/2}}{\Gamma(\tfrac{1}{2}n)},
\end{equation}
where $\Gamma$ is the Euler gamma function defined by
\addtocounter{theorem}{1}
\begin{equation}
\label{E:gamma:fn}
\Gamma(x) = \int_0^\infty{e^{-t} t^{x-1} dt}, \quad \text{for}\ \Real{x}>0.
\end{equation}
To find volumes of spheres explicitly we only need the values of the gamma function for $x\in \tfrac{1}{2}\N$. These values are determined by the functional relation
$\Gamma(x+1) = x \Gamma(x)$, together with the values $\Gamma(1)=1$ and $\Gamma(\tfrac{1}{2}) = \sqrt{\pi}$.
It follows that
\addtocounter{theorem}{1}
\begin{equation}
\label{E:gamma:half:integer}
\Gamma(n) = (n-1)!, \quad \Gamma(n+\tfrac{1}{2}) = \frac{(2n-1)!!}{2^n}\sqrt{\pi}, \quad \text{for $n\in \N$};
\end{equation}
where the double factorial $n!!$ is defined by
$$ n!! =
\begin{cases}
n.(n-2) \ldots 3.1 & \text{for $n>0$ odd;}\\
n.(n-2) \ldots 4.2 & \text{for $n>0$ even;}\\
1 & \text{for $n=-1, 0$}.
\end{cases}
$$

For $a, b>0$, the beta function $\beta(a,b)$ is defined by the integral formula (sometimes called Euler's first integral
see Emil Artin \cite[p. ??]{artin:gamma})
\addtocounter{theorem}{1}
\begin{equation}
\label{E:beta:defn}
\beta(a,b) = \int_0^1{u^{a-1}(1-u)^{b-1} du} = 2 \int_0^{\pi/2}{ \cos^{2a}\theta \sin^{2b}\theta d\theta}.
\end{equation}
It is easy to see that $\beta(a,b) = \beta(b,a)$.
\ref{E:energy:tau:zero} can thus be rewritten in terms of the beta function as
\addtocounter{theorem}{1}
\begin{equation}
\label{E:energy:tau:zero:beta}
e_{p,q}:= \lim_{\tau \ra 0}e_{p,q,\tau} = \tfrac{1}{2} \beta(\tfrac{1}{2}q,\tfrac{1}{2}p) = \tfrac{1}{2} \beta(\tfrac{1}{2}p,\tfrac{1}{2}q).
\end{equation}
The beta function can be written in terms of gamma functions as
\addtocounter{theorem}{1}
\begin{equation}
\label{E:beta:gamma}
\beta(a,b) = \frac{\Gamma(a)\Gamma(b)}{\Gamma(a+b)}.
\end{equation}
\ref{E:energy:tau:zero:beta} and \ref{E:beta:gamma} allow us to give an alternative proof of \ref{E:energy:tau:zero:geom}
since applying \ref{E:vol:sphere} three times yields
$$ 2\frac{\vol{\Sph^{p+q-1}}}{\vol{\Sph^{p-1}}\vol{\Sph^{q-1}}} =
 \frac{\Gamma(\tfrac{1}{2}p) \Gamma(\tfrac{1}{2}q)}{\Gamma(\tfrac{1}{2}p+\tfrac{1}{2}q)} = \beta(\tfrac{1}{2}p,\tfrac{1}{2}q).$$

Now we can easily evaluate the beta function at half-integral values, and hence the limiting value of the energy,  using \ref{E:gamma:half:integer} and \ref{E:beta:gamma}.
There are three cases: (i) when $p$ and $q$ are both odd, (ii) when one of $p$ and $q$ is odd and one is even, (iii) when $p$ and $q$ are both even.

In case (i) we write $p=2\tilde{p}+1$, $q=2\tilde{q}+1$, and have
\addtocounter{theorem}{1}
\begin{equation}
\label{E:energy:podd:qodd}
2 e_{p,q} = \beta(\tfrac{1}{2}p, \tfrac{1}{2}q) =
\frac{(2\tilde{p}-1)!! (2\tilde{q}-1)!!}{2^{\tilde{p}+\tilde{q}} (\tilde{p}+\tilde{q})!} \pi \in \pi \Q.
\end{equation}
In case (ii), if say $p=2\tilde{p}+1$ is odd and $q=2\tilde{q}$ is even then
\addtocounter{theorem}{1}
\begin{equation}
\label{E:energy:podd:qeven}
2e_{p,q} = \beta(\tfrac{1}{2}p, \tfrac{1}{2}q) =
\frac{(2\tilde{p}-1)!! (\tilde{q}-1)! \, 2^{\tilde{q}}}{(2\tilde{p}+2\tilde{q} -1)!!} \in \Q.
\end{equation}
There is an analogous formula when $p$ is even and $q$ is odd.

In case (iii), with $p=2\tilde{p}$ and $q=2\tilde{q}$ then
\addtocounter{theorem}{1}
\begin{equation}
\label{E:energy:peven:qeven}
2e_{p,q} = \beta(\tfrac{1}{2}p, \tfrac{1}{2}q) = \frac{(\tilde{p}-1)! (\tilde{q}-1)!}{(\tilde{p}+\tilde{q}-1)!} \in \Q.
\end{equation}

We have the following so-called \textit{Stirling's formula} for $\Gamma$ \cite[3.9]{artin:gamma:function}
\addtocounter{theorem}{1}
\begin{equation}
\label{E:gamma:stirling}
\Gamma(x) = \sqrt{2\pi}\, x^{x-\frac{1}{2}}\, e^{-x + \mu(x)},
\end{equation}
where the (convex) function $\mu$ is defined by
\addtocounter{theorem}{1}
\begin{equation}
\label{E:mu}
\mu(x) := \sum_{n=0}^{\infty}{(x+n+\tfrac{1}{2}) \log\left(1+\frac{1}{x+n}\right)} - 1,
\end{equation}
and satisfies $0< \mu(x) < \tfrac{1}{12x}$.
The name, Stirling, comes from the fact that \ref{E:gamma:stirling} applied to $x=n$, leads to Stirling's approximation for
$n!$ when $n$ is a large positive integer.
More generally, \ref{E:gamma:stirling} can be used to give approximations to the gamma function for all sufficiently large $x$.
Combining \ref{E:gamma:stirling} with \ref{E:beta:gamma} we can rewrite $e_{p,q}$ as
\addtocounter{theorem}{1}
\begin{equation}
\label{E:energy:stirling}
e_{p,q} = \sqrt{\frac{p+q}{pq}} \sqrt{\frac{p^p\, q^q}{(p+q)^{p+q}}}\,\sqrt{\pi}\, e^{\mu(\frac{p}{2}) + \mu(\frac{q}{2}) - \mu(\frac{p}{2} + \frac{q}{2})}.
\end{equation}
Hence we obtain the following approximation to $e_{p,q}$
\addtocounter{theorem}{1}
\begin{equation}
\label{E:energy:approx}
 e_{p,q} \simeq \sqrt{\frac{p+q}{pq}} \sqrt{\frac{p^p\, q^q}{(p+q)^{p+q}}}\,\sqrt{\pi} =
 2\taumax \sqrt{\frac{p+q}{pq}} \sqrt{\pi}\ , \quad \text{for $p, q \gg 1$}.
\end{equation}
\bigskip

Let $n:=p+q$. It is interesting to consider how the values of $e_{p,q}$ change as we keep $n$ fixed but
increase $p$ from $1$ up to its maximum of $[n/2]$. Integration by parts applied to \ref{E:beta:defn} shows that
\addtocounter{theorem}{1}
\begin{equation}
\label{E:energy:recur}
e_{p,q} = \frac{p-2}{q} e_{p-2,q+2,0}\, , \quad \text{for $p>2$.}
\end{equation}
\ref{E:energy:recur} allows us to compare values of $e_{p,q}$ as we keep $n$ fixed and increase $p$ by increments of $2$ while
simultaneously decreasing $q$ by increments of $2$.
This recurrence relation clearly preserves each of the three cases (i)-(iii) given above.
We leave it to the interested reader to check that this recurrence relation for $e_{p,q}$ is compatible with the formulae for $e_{p,q}$ given in
\ref{E:energy:podd:qodd}, \ref{E:energy:podd:qeven} and \ref{E:energy:peven:qeven}.
Since $p\le q=n-p$, then $(p-2)/q<1$ and hence from \ref{E:energy:recur} we have
$$e_{1,n-1} > e_{3,n-3} > \ldots > e_{2k-1,n-2k+1} > e_{2k+1, n-2k-1} > \ldots $$
and
$$e_{2,n-2} > e_{4,n-4} > \ldots > e_{2k,n-2k} > e_{2k+2,n-2k-2} > \ldots$$
up to the middle dimension.

\ref{E:energy:recur} does not allow us to compare $e_{p,q}$ directly
between cases between the two cases $p$ even or $p$ odd.
If, however, $n$ is odd then we must always be in case (ii), where one of the pair $(p,q)$ is odd and one is even.
In this case by continuing the recurrence \ref{E:energy:recur} past the middle dimension and exploiting the fact that
$\beta(a,b) = \beta(b,a)$, we see that all admissible values of $(p,q)$ occur in this extended sequence. For example, for $n=7$ we obtain
$$ e_{2,5} = \frac{1}{5},\quad  e_{4,3}=e_{3,4} = \frac{2}{3} e_{2,5} = \frac{2}{15},\quad  e_{1,6}=e_{6,1} = 4 e_{4,3} = \frac{8}{3} e_{2,5} = \frac{8}{15},$$
and hence we have $e_{1,6}> e_{2,5} > e_{3,4}$.

\medskip

When $p=2$, it is easy to evaluate the integral expression for $e_{p,n-p}$ directly to show that
\addtocounter{theorem}{1}
\begin{equation}
\label{E:energy:n2}
e_{2,n-2}= \frac{1}{n-2},
\end{equation}
independent of whether $n$ is even or odd.

When $p=1$, the form of $e_{1,n}$ depends on whether $n$ is even or odd.
From \ref{E:energy:podd:qodd}, after some manipulation we have
\addtocounter{theorem}{1}
\begin{equation}
\label{E:energy:n1:even}
e_{1,2m-1} = \frac{8}{4^m} \binom{2m-3}{m-1} \frac{\pi}{2},
\end{equation}
while from \ref{E:energy:podd:qeven} we have
\addtocounter{theorem}{1}
\begin{equation}
\label{E:energy:n1:odd}
e_{1,2m} = \frac{4^m}{(4m-2) \binom{2m-2}{m-1}}\,.
\end{equation}

However, if we are only interested in the behaviour of $e_{1,m}$ for $m$ large then we do not need to distinguish between
the $n$ odd and $n$ even cases.
Recall
$$e_{1,m} = \tfrac{1}{2}\beta(\tfrac{1}{2},\tfrac{m}{2}) = \frac{1}{2}\Gamma(\tfrac{1}{2}). \left(\frac{\Gamma(\frac{m}{2})}{\Gamma(\frac{m+1}{2})}\right)
 = \frac{1}{2}\sqrt{\pi}. \left(\frac{\Gamma(\frac{m}{2})}{\Gamma(\frac{m+1}{2})}\right).$$
Now using \ref{E:gamma:stirling} applied to $x=m$ and $x=m+\tfrac{1}{2}$ it is straightforward to show that
$$ \frac{\Gamma(\frac{m}{2})}{\Gamma(\frac{m+1}{2})} \simeq \sqrt{\frac{2}{m}} \quad \text{for $m\gg 1$.}$$
Hence we have
$$e_{1,m} \simeq \sqrt{\frac{\pi}{2m}} \quad \text{for $m\gg 1$.}$$
Alternatively, one could have derived this formula separately in the even and odd cases from \ref{E:energy:n1:even} and \ref{E:energy:n1:odd} respectively.
In this case we would use Stirling's formula, $m! \simeq \sqrt{2\pi m}\frac{m^m}{e^m}$, to approximate $m!$,
and hence to approximate the middle binomial coefficient by
$$ \binom{2m}{m} \simeq \frac{4^m}{\sqrt{m\pi}}.$$

\medskip

\textit{Limit of the energy as $\tau \ra \taumax$:}
As $\tau \ra \taumax$, we have $y \ra \tfrac{q}{n}$, $1-y \ra \tfrac{p}{n}$ and $length(I_\tau) = \pt \ra \tfrac{\pi}{\omega_0}$ where
$\omega_0 = \sqrt{\frac{8n^3 \taumax^2}{pq}}$. Hence from \ref{E:energy:defn} we get
$$ \lim_{\tau \ra \taumax} e_{p,q,\tau} = \pp_{\taumax} \left(\frac{p}{n}\right)^{p-1} \left(\frac{q}{n}\right)^{q-1} = \pi \taumax \sqrt{\frac{2n}{pq}}.$$
Combining this with \ref{E:energy:stirling} we obtain
$$ \frac{e_{p,q,\taumax}}{e_{p,q,0}} = \sqrt{\frac{\pi}{2}}\, e^{\mu(\frac{p}{2}) + \mu(\frac{q}{2}) - \mu(\frac{p}{2}+\frac{q}{2})} \simeq \sqrt{\frac{\pi}{2}} \simeq 1.25333141 \quad \text{for  \ } p,q \gg 1.$$

Previously we saw also that in the $\tau \ra \taumax$ limit of $X_\tau$ we get a special Legendrian isometric immersion
of the Riemannian manifold
$$ M_{p,q}:= \Sph^1 (\sqrt{\lcm{(p,q)}}\,) \times \Sph^{p-1} \left(\sqrt{\tfrac{p}{n}}\,\right) \times \Sph^{q-1} \left(\sqrt{\tfrac{q}{n}}\, \right),$$
which has volume
\addtocounter{theorem}{1}
\begin{equation}
\label{E:vol:mpq}
\vol(M_{p,q}) = 2\pi \sqrt{\lcm{(p,q)}} . \left(\sqrt{\tfrac{p}{n}}\,\right)^{p-1} . \left(\sqrt{\tfrac{q}{n}}\,\right)^{q-1} \vol(\Sph^{p-1}) \times \vol(\Sph^{q-1}).
\end{equation}
We would like to compare the volume of $M_{p,q}$ to the volume of the unit sphere $\Sph^{p+q-1}$.
In particular, for fixed $n=p+q$, we would like to understand for what choice of $p$ (and hence also $q$ since $q=n-p$) the volume of
$M_{p,q}$ is closest to that of the unit sphere in $\R^{p+q}$. To this end we define
$$\Theta_{p,q}:= \vol(M_{p,q})/\vol(\Sph^{p+q-1}).$$
Using \ref{E:energy:tau:zero:geom} and \ref{E:vol:mpq} we can rewrite $\Theta_{p,q}$ as
$ \Theta_{p,q} = 2\pi \sqrt{\lcm(p,q)} \left(\frac{\sqrt{\frac{p}{n}}^{p-1}\, \sqrt{\frac{q}{n}}^{q-1}}{e_{p,q}}\right).$
Using \ref{E:energy:stirling} we can further rewrite this as
\addtocounter{theorem}{1}
\begin{equation}
\label{E:Theta:mu}
\Theta_{p,q} = 2\sqrt{\lcm(p,q)\, n\pi} \,e^{\mu(\frac{n}{2})-\mu(\frac{p}{2})- \mu(\frac{q}{2})},
\end{equation}
where $\mu$ is the positive convex function defined in \ref{E:mu}.
Hence for $p$ and $q$ large we obtain the approximation
\addtocounter{theorem}{1}
\begin{equation}
\label{E:Theta:asymptotics}
\Theta_{p,q} \simeq 2\sqrt{\lcm(p,q)\, n\pi}, \quad \text{for $p, q \gg 1$.}
\end{equation}
Since $n=p+q$, then clearly $\hcf(p,q)|\, n$ and recall that $\lcm(p,q)=pq/\hcf(p,q)$.
Hence from \ref{E:Theta:mu} and \ref{E:Theta:asymptotics} we see that for fixed $n=p+q$,
the behaviour of $\Theta_{p,q}$ depends heavily on the divisibility of the positive integer $n$.
For example, if $n$ is prime then $\lcm(p,q)=pq=p(n-p)$ for $1\le p \le n/2$.
In this case $\Theta_{p,q}$ is increasing in $p$ up to $n/2$ and the minimum value of
$\Theta_{p,q}$ occurs when $p=1$. The behaviour of $\Theta_{p,q}$ when $n=113$, which is prime, is illustrated in Figure \ref{F:theta113}.
On the other hand, if for example $n$ is even, with say $n=2m$ then when $p=q=m$, we have $\lcm(p,q)=p$.
The behaviour of $\Theta_{p,q}$ when $n=112 = 2^3. 7^1$ is illustrated in Figure \ref{F:theta112}.

\bigskip

We can also calculate the volume of $M_{p,q}$ by
\begin{eqnarray*}
\vol(M_{p,q}) & = & \left( \int_{0}^{per(w)}{y^{q-1} (1-y)^{p-1} dt} \right) \vol(\Sph^{p-1}). \vol(\Sph^{q-1})\\
              & = & per(w) . \left(\tfrac{p}{n}\right)^{p-1} . \left(\tfrac{q}{n}\right)^{q-1} . \vol(\Sph^{p-1}). \vol(\Sph^{q-1})\\
              & = &  \frac{per(w)}{\pp_{\taumax}} \pp_{\taumax}  \left(\tfrac{p}{n}\right)^{p-1}  \left(\tfrac{q}{n}\right)^{q-1} . \vol(\Sph^{p-1}). \vol(\Sph^{q-1})\\
              & = & \frac{per(w)}{\pp_{\taumax}} e_{p,q,\taumax} . \vol(\Sph^{p-1}). \vol(\Sph^{q-1}).
\end{eqnarray*}
Hence $$e_{p,q,\taumax} = \frac{\pp_{\taumax}}{per{w}} . \frac{\vol{M_{p,q}}}{\vol(\Sph^{p-1})\vol(\Sph^{q-1})}.$$

\vspace{1in}

\begin{lemma}
\label{L:exact:kernel}
\addtocounter{equation}{1}
The kernel of the linearised operator $\mathcal{L}= \Delta +2n$ on $\Sph^{n-1} \subset \R^n \subset \C^n$ has dimension
$d= \dim{\sun}-\dim{\son} = \tfrac{1}{2}(n+2)(n-1)$. Hence every solution of $\mathcal{L}f=0$ on $\Sph^{n-1}$ is
induced by some element of $\sun$.
\end{lemma}

\begin{proof}
Since $\Sph^{n-1}$ is special Legendrian and action by $\sun$ preserves the special Legendrian condition then every element of
$\sun$ which does not fix $\Sph^{n-1}$ induces a solution of $\mathcal{L} f=0$.
Hence $d \ge \dim{\sun}-\dim{\son} = n^2-1 - \tfrac{1}{2}n(n-1) = \tfrac{1}{2}(n^2+n-2)$.
On the other hand, it is well-known that the eigenvalues of $\Delta$ on $\Sph^{n-1}\subset \R^n$ are $\lambda_k = k(k+n-2)$, for $k\in \N$.
Moreover, the multiplicity of $\lambda_k$ is equal to the dimension of the space $V_k$ (of the restriction to
$\Sph^n$) of all harmonic polynomials on $\R^n$ of degree $k$. Hence the multiplicity of $\lambda_2 = 2n$
is equal to the dimension of the space of trace-free symmetric $n\times n$ real matrices, which has dimension equal to
$\tfrac{1}{2}n(n+1) -1 = \tfrac{1}{2}(n^2+n-2)$, from which the result follows.
\end{proof}

MOVED HERE BY NIKOS FEB15

For $p=1$ we have then
\begin{equation}
\addtocounter{theorem}{1}
\label{E:pt:tau:0:p:eq:1}
\pt \sim
T_{n-1}(\tau).
\end{equation}
Similarly in the case $p>1$
we have
\begin{equation}
\addtocounter{theorem}{1}
\label{E:pt:pm}
\pt^+\sim 
T_q(\tau),
\qquad
\quad
\pt^- \sim
T_p(\tau).
\end{equation}
For the angular period we have (recall $p\le q$ and $q\ge2$) that
\addtocounter{theorem}{1}
\begin{equation}
\label{E:phat}
\frac{d \pthat}{d \tau} \sim 
T_q(\tau),
\qquad\quad
\pthat- \frac{\pi}{2} \sim
\tau
T_q(\tau).
\end{equation}

\bigskip

\begin{prop}
\addtocounter{equation}{1}
\label{P:periods:p:eq:1}
The period $2\pt$ of $y_\tau$ satisfies
\begin{equation}
\addtocounter{theorem}{1}
\label{E:pt:tau:0:p:eq:1}
\pt =
\begin{cases}
C_{n} \tau^{-1+2/(n-1)} + \ldots , &\text{for $n>3$;}\\
C_3 \log{\tau^{-1}} + \ldots, &\text{for $n=3$.}
\end{cases}
\end{equation}
Moreover, $\pthat$ is a smooth function of $\tau$ for $0<\abs{\tau}< \taumax$ and satisfies as $\tau \ra 0$
\addtocounter{theorem}{1}
\begin{equation}
\label{E:p:hat:exp:p:eq:1}
\pthat= \frac{\pi}{2} + \ldots , \qquad \frac{d \pthat}{d \tau} = \ldots.
\end{equation}
Also in the limit as $\tau \ra \taumax$ we have
\addtocounter{theorem}{1}
\begin{equation}
\label{E:pthat:taumax:p:eq:1}
\lim_{\tau \ra \taumax}{\pthat} = \pi \sqrt{\frac{2(n-1)}{n}}.
\end{equation}
\end{prop}

\begin{proof} $\tau \ra 0$ asymptotics of $\pt$:\\

\textbf{FINISH THIS}

Asymptotics of $\pthat$ as $\tau \ra 0$:
For $\tau\in (0,\taumax)$ it is convenient to introduce $\pt^*$ which is defined to be the unique $t\in (0,\pt)$ such that
$y(\pt^*) = \tfrac{n-1}{n}$. $2\pt^*$ can be written in terms of the following integral:
$$ 2\pt^* = \int_{(n-1)/n}^{\ymax} \frac{dy}{\sqrt{f(y) - 4\tau^2}}.$$
By analysing the behaviour of the denominator close to $\ymax$, as we did above to find the asymptotics of $\pt$,
we find that $\pt^*$ remains bounded as $\tau \ra 0$.

Our strategy for proving the $\tau \ra 0$ asymptotic behaviour of $\pthat$ will be the following: we will analyse the behaviour of $\psi_2$ for $t\in (0,\pt^*)$,
of $\psi_1$ for $t\in (\pt^*,\pt)$ and the distinguished combination $\Psi= \psi_1 + (n-1) \psi_2$ on both these intervals.
Combining this information with the boundedness of $\pt^*$ and the $\tau \ra 0$ asymptotics of $\pt$ established in \ref{E:pt:tau:0:p:eq:1}
will lead to the claimed asymptotics.

We begin by studying $\Psi=  \psi_1 + (n-1) \psi_2$. The initial condition for $y$ together with
\ref{E:psi:real} and \ref{E:psi:imag} implies that $\Psi \in (0,\tfrac{\pi}{2})$
for $t\in (0,\pt)$. Hence $\Psi \in (-\tfrac{\pi}{2},0)$ for $t\in (\pt,2\pt)$,
by the third symmetry of $\Psi$ given in \ref{E:Psi:sym:p:eq:1}.
From \ref{E:psi:imag} we see that the extreme values of $\Psi(t)$
occur when $f(y)(t)$ is at its maximum, \textit{i.e.}
when $y=\tfrac{n-1}{n}$ and hence for $t\in [0,2\pt]$ when either $t=\pt^*$, or $t=2\pt-\pt^*$.
Hence the maximum value of $\Psi$, denoted $\Psi_{\text{max}}$ occurs when $t=\pt^*$ and satisfies
$$ \cos{\Psi_{\text{max}}} = \cos{\Psi(\pt^*)} =  \frac{\tau}{\taumax}, \quad
\Psi_{\text{max}} \in (0,\tfrac{\pi}{2})$$
or equivalently
\addtocounter{theorem}{1}
\begin{equation}
\label{E:Psi:max}
\Psi_{\text{max}} = \Psi(\pt^*) = \tfrac{\pi}{2}  - \arcsin(\tfrac{\tau}{\taumax}) = \tfrac{\pi}{2} + \alpha_\tau,
\end{equation}
where $\alpha_\tau$ was defined in \ref{E:alpha:tau}.
Similarly, the reflection symmetry of $\Psi$ about $t=\pt$ implies that the minimum value of $\Psi$,
denoted $\Psi_{\text{min}}$, occurs when $t=2\pt - \pt^*$ and satisfies
\addtocounter{theorem}{1}
\begin{equation}
\label{E:Psi:min}
\Psi_{\text{min}} =  - \Psi_{\text{max}} = \Psi(2\pt - \pt^*) = -(\tfrac{\pi}{2} + \alpha_\tau).
\end{equation}

Behaviour of $\psi_2$ for $t\in (0,\pt^*)$: elementary
manipulations show that
$$ \psi_2(\pt^*) = \psi_2(\pt^*) - \psi_2(0) = \int_{y_\text{max}}^{(n-1)/n}{\frac{d \psi_2}{dy}dy}.$$
Now for $t\in (0,\pt^*)$ we have $y\in
(\tfrac{n-1}{n},y_{\text{max}}) \subset (\tfrac{n-1}{n},1)$ and hence from the second equation of
\ref{E:y:psi:p:eq:1} we obtain the inequalities
\addtocounter{theorem}{1}
\begin{equation}
\label{E:dpsi2:dy} \frac{2\tau}{\dot{y}} < -\frac{d\psi_2}{dy} =
-\frac{\dot{\psi}_2}{\dot{y}} = \frac{2\tau}{y\dot{y}} <
\frac{2n\tau}{(n-1)\dot{y}}\, , \quad \text{for} \ t\in (0,\pt^*).
\end{equation}
Combining \ref{E:dpsi2:dy} with the previous expression for
$\psi_2(\pt^*)$ yields
\addtocounter{theorem}{1}
\begin{equation}
\label{E:psi2:pt:star} 2\tau \pt^* < -\psi_2(\pt^*) < \frac{2n\tau}{n-1} \pt^*.
\end{equation}

Behaviour of $\psi_1$ for $t\in (\pt^*,\pt)$: we have
$$ \psi_1(\pt) - \psi_1(\pt^*) = \int_{(n-1)/n}^{y_\text{min}}{\frac{d\psi_1}{dy}dy}.$$
For $t\in (\pt^*,\pt)$ we have $1-y \in (\frac{1}{n}, 1-y_{\text{min}}) \subset (\frac{1}{n}, 1)$, and
hence from the first equation of \ref{E:y:psi:p:eq:1} we obtain the inequalities
\addtocounter{theorem}{1}
\begin{equation}
\label{E:dpsi1:dy} \frac{2\tau}{\dot{y}} <  \frac{d\psi_1}{dy} =
 \frac{\dot{\psi}_1}{\dot{y}} = \frac{2\tau}{(1-y)\dot{y}} <
\frac{2n\tau}{\dot{y}}\, , \quad \text{for} \ t\in (\pt^*,\pt).
\end{equation}
Combining \ref{E:dpsi1:dy} with the previous expression for $\psi_1(\pt) - \psi_1(\pt^*)$ yields
\addtocounter{theorem}{1}
\begin{equation}
\label{E:psi1:pt}
2\tau (\pt- \pt^*) < \psi_1(\pt) - \psi_1(\pt^*) < 2\tau n (\pt - \pt^*).
\end{equation}

From \ref{E:Psi:min} we have
$$  \psi_1(\pt^*) =  \frac{\pi}{2} - (n-1)\, \psi_2(\pt^*) + \alpha_\tau.$$
Combining this with \ref{E:psi2:pt:star} and \ref{E:psi1:pt} we obtain
\addtocounter{theorem}{1}
\begin{equation}
\label{E:pthat:asymp}
2\tau \pt + 2(n-2) \tau \pt^* < \pthat - \left(\tfrac{\pi}{2} + \alpha_\tau\right)  <
2n\tau \pt.
\end{equation}
The $\tau \ra 0$ asymptotics of $\pthat$ now follow using the boundedness of $\pt^*$ as $\tau \ra 0$,
and the asymptotics of $\pt$ given in \ref{E:pt:tau:0:p:eq:1}.\\

\textbf{Asymptotics of the derivative of $\pthat$:}\\

\medskip

Asymptotics of $\pthat$ as $\tau \ra \taumax$: when $\tau = \taumax$
we have $\dot{\psi}_1 = \tfrac{2\taumax}{1-y} \equiv 2n\taumax$.
Hence we have
$$ \lim_{\tau \ra \taumax} \pthat = \lim_{\tau \ra \taumax}  \psi_1(\pt) = \lim_{\tau \ra \taumax}  \dot{\psi}_1 \, \pt =
2n\taumax \lim_{\tau \ra \taumax}{\pt},$$
and the claimed asymptotics for $\pthat$ follows from the asymptotics for
$\pt$ established in \ref{E:pt:taumax}.

\end{proof}

\begin{prop}
\addtocounter{equation}{1}
\label{P:periods:p:neq:q}
The partial-periods $\pt^+$ and $\pt^-$ of $y_\tau$ satisfy
\begin{equation}
\addtocounter{theorem}{1}
\label{E:pt:plus}
\pt^+ =
\begin{cases}
C_{q,n} \tau^{-1+2/q} + \ldots , &\text{for $q>2$;}\\
C_n \log{\tau^{-1}} + \ldots, &\text{for $q=2$;}
\end{cases}
\end{equation}
\addtocounter{theorem}{1}
\begin{equation}
\label{E:pt:minus}
\pt^- =
\begin{cases}
C_{p,n} \tau^{-1+2/p} + \ldots , &\text{for $p>2$;}\\
C_n \log{\tau^{-1}} + \ldots, &\text{for $p=2$.}
\end{cases}
\end{equation}
Moreover, $\pthat$ is a smooth function of $\tau$ for $0<\tau<\taumax$ and satisfies as $\tau \ra 0$
\addtocounter{theorem}{1}
\begin{equation}
\label{E:pthat:exp:p:neq:1}
\pthat = \frac{\pi}{2} + \ldots, \quad \frac{d\pthat}{d \tau} = \ldots
\end{equation}
Also in the limit as $\tau \ra \taumax$ we have
\addtocounter{theorem}{1}
\begin{equation}
\label{E:pthat:taumax:p:neq:1}
\lim_{\tau \ra \taumax} \pthat = \pi \sqrt{\frac{2pq}{n}}.
\end{equation}
\end{prop}

\begin{proof}
Here is a heuristic argument for the asymptotics of $\pt^+$ and $\pt^-$ as
$\tau \ra 0$.
It follows easily from \ref{E:y:dot} that
\addtocounter{theorem}{1}
\begin{equation}
\label{E:pt:plus:integral}
2\pt^- = \int_{\tfrac{q}{n}}^{y_\text{max}}{\frac{dy}{\sqrt{f(y)-4\tau^2}}},
\end{equation}
and
\addtocounter{theorem}{1}
\begin{equation}
\label{E:pt:minus:integral}
2\pt^+ = \int_{y_\text{min}}^{\tfrac{q}{n}}{\frac{dy}{\sqrt{f(y)-4\tau^2}}}.
\end{equation}
The integrands above are large only when $y$ is close to either $y_{\text{min}}$ or $y_{\text{max}}$.
Near $y_\text{max}$ we have $f(y) \sim (1-y)^{p}$, while near $y_\text{min}$ we have
$f(y) \sim y^q$.
Hence $$2\pt^- \sim \int_{\frac{q}{n}}^{1-(2\tau)^{2/p}}{\frac{dy}{\sqrt{(1-y)^{p}-4\tau^2}}},$$
while
$$ 2\pt^+ \sim \int_{(2\tau)^{2/q}}^{\tfrac{q}{n}}{\frac{dy}{\sqrt{y^q-4\tau^2}}}.$$
These integrands can be analysed by making the substitutions
$1-y = (2\tau)^{2/p}\cosh^{2/p}{\theta}$
and $y=(2\tau)^{2/q}\cosh^{2/q}{\theta}$ respectively, and lead to the asymptotics stated.

Asymptotics of $\pthat$ as $\tau \ra 0$: since the strategy is the same as that employed in the proof of
Proposition \ref{P:periods:p:eq:1} we shall be rather brief. By analysing the functions $\psi_1$ on $(0,\pt^+)$ and
$\psi_2$ on $(-\pt^-,0)$ in the same way as in the proof of Proposition \ref{P:periods:p:eq:1} we find
\addtocounter{theorem}{1}
\begin{equation}
\label{E:psi1:pt:pneq1}
2p\tau \pt^+ < p\psi_1(\pt^+) < 2n\tau \pt^+,
\end{equation}
and
\addtocounter{theorem}{1}
\begin{equation}
\label{E:psi2:pt:pneq1}
2q\tau \pt^- < q\psi_2(\pt^-) < 2n\tau \pt^-.
\end{equation}
If $t= \pt^+$ (or $t=-\pt^-$) we have $\dot{y}=0$ and $y=\ymin$ (or $y=\ymax$).
Hence \ref{E:psi:real:pneq1} and \ref{E:psi:imag:pneq1} imply that $e^{i(\Psi + \alpha_\tau)} = e^{i\pi/2}$.
Hence $\Psi = -\tfrac{\pi}{2} - \alpha_\tau$ at $t=\pt^+$ (or $t=-\pt^-$).
It follows using \ref{E:psi:period:p:neq:1} that
$$ \pthat - \left(\tfrac{\pi}{2} + \alpha_\tau\right) = p\,\psi_1(\pt^+) + q\, \psi_2(-\pt^-).$$
Combining this equality with the inequalities \ref{E:psi1:pt:pneq1} and \ref{E:psi2:pt:pneq1} yields
$$ 2\tau (p\, \pt^+ + q\, \pt^-) < \pthat - \left(\tfrac{\pi}{2} + \alpha_\tau\right) < 2n\tau (\pt^+ + \pt^-).$$
The asymptotics for $\pthat$ now follow using the asymptotics for $\pt^+$ and $\pt^-$ given in
\ref{E:pt:plus} and \ref{E:pt:minus}.

Asymptotics of $\pthat$ as $\tau \ra \taumax$: when $\tau = \taumax$, we have $y \equiv q/n$ and $p\,\dot{\psi}_1 \equiv 2n \taumax$.
Hence we have
$$ \lim_{\tau \ra \taumax}{2\pthat} = \lim_{\tau \ra \taumax}{p\, \psi_1(2\pt)} = 4n\taumax \lim_{\tau \ra \taumax}{\pt}.$$
The asymptotics for $\pthat$ now follow from the asymptotics for $\pt$ established in \ref{E:pt:taumax}.

\end{proof}

Because of the additional symmetries described in the previous proposition, Proposition \ref{P:periods:p:neq:q}
specialises to
\begin{prop}
\addtocounter{equation}{1}
\label{P:periods:p:eq:q}
The period $\pt$ satisfies
\begin{equation}
\addtocounter{theorem}{1}
\label{E:pt:p:eq:q}
\pt =
\begin{cases}
C_{p} \tau^{-1+2/p} + \ldots , &\text{for $p>2$;}\\
C_2 \log{\tau^{-1}} + \ldots, &\text{for $p=2$;}
\end{cases}
\end{equation}
Moreover, $\pthat$ is a smooth function of $\tau$ for $0<\tau<\taumax$ and satisfies as $\tau \ra 0$
\addtocounter{theorem}{1}
\begin{equation}
\label{E:pthat:exp:p:eq:1}
\pthat = \frac{\pi}{2} + \ldots, \quad \frac{d\pthat}{d \tau} = \ldots
\end{equation}
Also in the limit as $\tau \ra \taumax$ we have
\addtocounter{theorem}{1}
\begin{equation}
\label{E:pthat:taumax:p:neq:q}
\lim_{\tau \ra \taumax} \pthat = \pi \sqrt{p}.
\end{equation}
\end{prop}